\providecommand{\U}[1]{\protect\rule{.1in}{.1in}}
\newcounter{exer}
\numberwithin{exer}{section}
\theoremstyle{definition}
\newtheorem{theo}{Theorem}[section]
\newenvironment{theorem}[1][]
{\begin{theo}[#1]\begin{leftbar}}
{\end{leftbar}\end{theo}}
\newtheorem{lem}[theo]{Lemma}
\newenvironment{lemma}[1][]
{\begin{lem}[#1]\begin{leftbar}}
{\end{leftbar}\end{lem}}
\newtheorem{prop}[theo]{Proposition}
\newenvironment{proposition}[1][]
{\begin{prop}[#1]\begin{leftbar}}
{\end{leftbar}\end{prop}}
\newtheorem{defi}[theo]{Definition}
\newenvironment{definition}[1][]
{\begin{defi}[#1]\begin{leftbar}}
{\end{leftbar}\end{defi}}
\newtheorem{remk}[theo]{Remark}
\newenvironment{remark}[1][]
{\begin{remk}[#1]\begin{leftbar}}
{\end{leftbar}\end{remk}}
\newtheorem{coro}[theo]{Corollary}
\newenvironment{corollary}[1][]
{\begin{coro}[#1]\begin{leftbar}}
{\end{leftbar}\end{coro}}
\newtheorem{conv}[theo]{Convention}
\newenvironment{convention}[1][]
{\begin{conv}[#1]\begin{leftbar}}
{\end{leftbar}\end{conv}}
\newtheorem{quest}[theo]{Question}
\newenvironment{question}[1][]
{\begin{quest}[#1]\begin{leftbar}}
{\end{leftbar}\end{quest}}
\newtheorem{warn}[theo]{Warning}
\newtheorem{conj}[theo]{Conjecture}
\newtheorem{exam}[theo]{Example}
\newenvironment{example}[1][]
{\begin{exam}[#1]\begin{leftbar}}
{\end{leftbar}\end{exam}}
\newtheorem{exmp}[exer]{Exercise}
\newenvironment{statement}{\begin{quote}}{\end{quote}}
\let\sumnonlimits\sum
\let\prodnonlimits\prod
\let\cupnonlimits\bigcup
\let\capnonlimits\bigcap
\let\sqcupnonlimits\bigsqcup
\let\oplusnonlimits\bigoplus
\renewcommand{\sum}{\sumnonlimits\limits}
\renewcommand{\prod}{\prodnonlimits\limits}
\renewcommand{\bigcup}{\cupnonlimits\limits}
\renewcommand{\bigcap}{\capnonlimits\limits}
\renewcommand{\bigsqcup}{\sqcupnonlimits\limits}
\renewcommand{\bigoplus}{\oplusnonlimits\limits}
\newenvironment{verlong}{}{}
\newenvironment{vershort}{}{}
\newenvironment{noncompile}{}{}
\newcommand{\powset}[2][]{\ifthenelse{\equal{#2}{}}{\mathcal{P}\left(#1\right)}{\mathcal{P}_{#1}\left(#2\right)}}
\begin{document}

\title{The Redei--Berge symmetric function of a directed graph}
\author{Darij Grinberg and Richard P. Stanley}
\date{draft (Sections 1--2 finished, 3--8 outlined),
July 9, 2023}
\maketitle

\begin{abstract}
\textbf{Abstract.} Let $D=\left(  V,A\right)  $ be a digraph with $n$
vertices, where each arc $a\in A$ is a pair $\left(  u,v\right)  $ of two
vertices. We study the \emph{Redei--Berge symmetric function} $U_{D}$, defined
as the quasisymmetric function%
\[
\sum L_{\operatorname*{Des}\left(  w,D\right)  ,\ n}\in\operatorname*{QSym}.
\]
Here, the sum ranges over all lists $w=\left(  w_{1},w_{2},\ldots
,w_{n}\right)  $ that contain each vertex of $D$ exactly once, and the
corresponding addend is%
\[
L_{\operatorname*{Des}\left(  w,D\right)  ,\ n}:=\sum_{\substack{i_{1}\leq
i_{2}\leq\cdots\leq i_{n};\\i_{p}<i_{p+1}\text{ for each }p\text{ satisfying
}\left(  w_{p},w_{p+1}\right)  \in A}}x_{i_{1}}x_{i_{2}}\cdots x_{i_{n}}%
\]
(an instance of Gessel's fundamental quasisymmetric functions).

While $U_{D}$ is a specialization of Chow's path-cycle symmetric function,
which has been studied before, we prove some new formulas that express $U_{D}$
in terms of the power-sum symmetric functions. We show that $U_{D}$ is always
$p$-integral, and furthermore is $p$-positive whenever $D$ has no $2$-cycles.
When $D$ is a tournament, $U_{D}$ can be written as a polynomial in
$p_{1},2p_{3},2p_{5},2p_{7},\ldots$ with nonnegative integer coefficients. By
specializing these results, we obtain the famous theorems of Redei and Berge
on the number of Hamiltonian paths in digraphs and tournaments, as well as a
modulo-$4$ refinement of Redei's theorem. \medskip

\textbf{Keywords:} directed graph, symmetric function, tournament, Hamiltonian
path, power sum symmetric function.

\textbf{Mathematics Subject Classification 2020:} 05A05, 05A19, 05E05, 05C30.

\end{abstract}
\tableofcontents

\section{Definitions and the main theorems}

We begin with introducing the notations, some of which come from \cite[Problem
120]{EC2supp}. We use standard notations as defined, e.g., in \cite[Chapter
7]{EC2} and \cite[Chapters 2 and 5]{GriRei}.

\subsection{Digraphs, $V$-listings and $D$-descents}

We let $\mathbb{N}:=\left\{  0,1,2,\ldots\right\}  $ and $\mathbb{P}:=\left\{
1,2,3,\ldots\right\}  $. We set $\left[  n\right]  :=\left\{  1,2,\ldots
,n\right\}  $ for each $n\in\mathbb{Z}$. (This set $\left[  n\right]  $ is
empty if $n\leq0$.)

The words \textquotedblleft list\textquotedblright\ and \textquotedblleft
tuple\textquotedblright\ will be used interchangeably, and will always mean
finite ordered tuples.

We shall next introduce some basic notations regarding digraphs (i.e.,
directed graphs):

\begin{definition}
A \emph{digraph} means a pair $\left(  V,A\right)  $, where $V$ is a finite
set and where $A$ is a subset of $V\times V$. The elements of $V$ are called
the \emph{vertices} of this digraph, and the elements of $A$ are called the
\emph{arcs} of this digraph. For any further notations, we refer to standard
literature (the definitions in \cite[\S 1.1-\S 1.2]{17s-lec7} should suffice)
and common sense. (Our digraphs are allowed to have loops, but this has no
effect on what follows.)
\end{definition}

\begin{definition}
Let $D=\left(  V,A\right)  $ be a digraph. Then, the digraph $\left(
V,\ \left(  V\times V\right)  \setminus A\right)  $ will be denoted by
$\overline{D}$ and called the \emph{complement} of the digraph $D$. Its arcs
will be called the \emph{non-arcs} of $D$ (since they are precisely the pairs
$\left(  u,v\right)  \in V\times V$ that are not arcs of $D$).
\end{definition}

\Needspace{18pc}

\begin{example}
\label{exa.complement.3verts}If $D$ is the digraph%
\[
\left(  \left\{  1,2,3\right\}  ,\ \left\{  \left(  1,2\right)  ,\ \left(
2,2\right)  ,\ \left(  3,3\right)  \right\}  \right)  ,
\]
then its complement $\overline{D}$ is the digraph%
\[
\left(  \left\{  1,2,3\right\}  ,\ \left\{  \left(  1,1\right)  ,\ \left(
1,3\right)  ,\ \left(  2,1\right)  ,\ \left(  2,3\right)  ,\ \left(
3,1\right)  ,\ \left(  3,2\right)  \right\}  \right)  .
\]
Here are the two digraphs, drawn side by side:%
\[%
\begin{tabular}
[c]{|c|c|}\hline
$%
\begin{tikzpicture}
\draw[draw=black] (-2.1, -2.2) rectangle (2.6, 2.2);
\begin{scope}[every node/.style={circle,thick,draw=green!60!black}]
\node(1) at (0:1.5) {$1$};
\node(2) at (120:1.5) {$2$};
\node(3) at (240:1.5) {$3$};
\end{scope}
\begin{scope}[every edge/.style={draw=blue,line width=1.7pt}]
\path[->] (1) edge (2) (2) edge[loop left] (2) (3) edge[loop left] (3);
\end{scope}
\end{tikzpicture}%
$ & $%
\begin{tikzpicture}
\draw[draw=black] (-2.1, -2.2) rectangle (2.6, 2.2);
\begin{scope}[every node/.style={circle,thick,draw=green!60!black}]
\node(1) at (0:1.5) {$1$};
\node(2) at (120:1.5) {$2$};
\node(3) at (240:1.5) {$3$};
\end{scope}
\begin{scope}[every edge/.style={draw=red,line width=1.7pt}]
\path[->] (1) edge[loop right] (1) edge[bend left=20] (3);
\path[->] (2) edge (1) edge[bend left=20] (3);
\path[->] (3) edge[bend left=20] (1) edge[bend left=20] (2);
\end{scope}
\end{tikzpicture}%
$\\
$D$ & $\overline{D}$\\\hline
\end{tabular}
\ \ \ \ \ \ \ \ \ \ .
\]

\end{example}

\begin{definition}
Let $V$ be a finite set. A $V$\emph{-listing} will mean a list of elements of
$V$ that contains each element of $V$ exactly once.
\end{definition}

For example, $\left(  2,1,3\right)  $ is a $\left\{  1,2,3\right\}  $-listing.

Of course, if $V$ is a finite set, then there are exactly $\left\vert
V\right\vert !$ many $V$-listings. They are in a canonical bijection with the
bijective maps from $\left[  \left\vert V\right\vert \right]  $ to $V$, and in
a non-canonical bijection with the permutations of $V$.

\begin{convention}
\label{conv.wi}If $w$ is any list (i.e., tuple), and if $i$ is a positive
integer, then $w_{i}$ shall denote the $i$-th entry of $w$. (Thus, $w=\left(
w_{1},w_{2},\ldots,w_{k}\right)  $, where $k$ is the length of $w$.)
\end{convention}

\begin{definition}
Let $D=\left(  V,A\right)  $ be a digraph. Let $w=\left(  w_{1},w_{2}%
,\ldots,w_{n}\right)  $ be a $V$-listing. Then:

\begin{enumerate}
\item[\textbf{(a)}] A $D$\emph{-descent} of $w$ means an $i\in\left[
n-1\right]  $ satisfying $\left(  w_{i},w_{i+1}\right)  \in A$.

\item[\textbf{(b)}] We let $\operatorname*{Des}\left(  w,D\right)  $ denote
the set of all $D$-descents of $w$.
\end{enumerate}
\end{definition}

\begin{example}
Let $D$ be the digraph $D$ from Example \ref{exa.complement.3verts}, and let
$w$ be the $V$-listing $\left(  3,1,2\right)  $. Then, $2$ is a $D$-descent of
$w$ (since $\left(  w_{2},w_{3}\right)  =\left(  1,2\right)  \in A$), but $1$
is not a $D$-descent of $w$ (since $\left(  w_{1},w_{2}\right)  =\left(
3,1\right)  \notin A$). Hence, $\operatorname*{Des}\left(  w,D\right)
=\left\{  2\right\}  $.
\end{example}

\begin{example}
Let $n\in\mathbb{N}$, and let $V=\left[  n\right]  $. Let $D$ be the digraph
whose vertices are the elements of $V$ and whose arcs are all the pairs
$\left(  i,j\right)  \in\left[  n\right]  ^{2}$ satisfying $i>j$. Let $w$ be a
$V$-listing. Then, the $D$-descents of $w$ are exactly the descents of $w$ in
the usual sense (i.e., the numbers $i\in\left[  n-1\right]  $ satisfying
$w_{i}>w_{i+1}$).
\end{example}

We note that $D$-descents for general digraphs $D$ have already implicitly
appeared in the work of Foata and Zeilberger \cite{FoaZei96}, which considers
the number $\operatorname*{maj}\nolimits_{D}^{\prime}w:=\sum_{i\in
\operatorname*{Des}\left(  w,D\right)  }i$ for each $V$-listing $w$. We would
not be surprised if what follows can shed some new light on the results of
\cite{FoaZei96}, but so far we have not found any deeper connections.

\subsection{Quasisymmetric functions}

Next, we introduce some notations from the theory of quasisymmetric functions
(see, e.g., \cite[\S 7.19]{EC2} or \cite[Chapter 5]{GriRei}):

\begin{definition}
\ \ 

\begin{enumerate}
\item[\textbf{(a)}] A \emph{composition} means a finite list of positive
integers. If $\alpha=\left(  \alpha_{1},\alpha_{2},\ldots,\alpha_{k}\right)  $
is a composition, then the number $k$ is called the \emph{length} of $\alpha$,
whereas the number $\alpha_{1}+\alpha_{2}+\cdots+\alpha_{k}$ is called the
\emph{size} of $\alpha$. If $n\in\mathbb{N}$, then a \emph{composition of }$n$
shall mean a composition having size $n$.

\item[\textbf{(b)}] A \emph{partition} (or \emph{integer partition}) means a
composition that is weakly decreasing.
\end{enumerate}
\end{definition}

For example, $\left(  2,5,3\right)  $ is a composition of $10$ that has length
$3$ and is not a partition (since $2<5$).

\begin{definition}
Let $n\in\mathbb{N}$. For any subset $I$ of $\left[  n-1\right]  $, we let
$\operatorname*{comp}\left(  I,n\right)  $ be the list%
\[
\left(  i_{1}-i_{0},\ i_{2}-i_{1},\ i_{3}-i_{2},\ \ldots,\ i_{k}%
-i_{k-1}\right)  ,
\]
where $i_{0},i_{1},\ldots,i_{k}$ are the elements of $\left\{  0\right\}  \cup
I\cup\left\{  n\right\}  $ listed in strictly increasing order. This list
$\operatorname*{comp}\left(  I,n\right)  $ is a composition of $n$.
\end{definition}

\begin{example}
If $n=6$ and $I=\left\{  2,3,5\right\}  $, then $\operatorname*{comp}\left(
I,n\right)  =\left(  2,1,2,1\right)  $.
\end{example}

Note that $\operatorname*{comp}\left(  I,n\right)  $ is denoted by
$\operatorname*{co}\left(  I\right)  $ in \cite[\S 7.19]{EC2}, but we prefer
to make the dependence on $n$ explicit here. In the notation of
\cite[Definition 5.1.10]{GriRei}, the composition $\operatorname*{comp}\left(
I,n\right)  $ is the preimage of $I$ under the bijection
$D:\operatorname*{Comp}\nolimits_{n}\rightarrow2^{\left[  n-1\right]  }$.

For any $n\in\mathbb{N}$, the map
\begin{align*}
\left\{  \text{subsets of }\left[  n-1\right]  \right\}   &  \rightarrow
\left\{  \text{compositions of }n\right\}  ,\\
I  &  \mapsto\operatorname*{comp}\left(  I,n\right)
\end{align*}
is a bijection.

\begin{definition}
Consider the ring $\mathbb{Z}\left[  \left[  x_{1},x_{2},x_{3},\ldots\right]
\right]  $ of formal power series in countably many indeterminates
$x_{1},x_{2},x_{3},\ldots$. Two subrings of this ring $\mathbb{Z}\left[
\left[  x_{1},x_{2},x_{3},\ldots\right]  \right]  $ are:

\begin{itemize}
\item the ring $\Lambda$ of \emph{symmetric functions} (defined, e.g., in
\cite[\S 7.1]{EC2} or in \cite[\S 2.1]{GriRei});

\item the ring $\operatorname*{QSym}$ of \emph{quasisymmetric functions}
(defined, e.g., in \cite[\S 7.19]{EC2} or in \cite[\S 5.1]{GriRei}).
\end{itemize}
\end{definition}

We will not actually use any properties of these rings $\Lambda$ and
$\operatorname*{QSym}$ anywhere except in Sections \ref{sec.antipode},
\ref{sec.redei} and \ref{sec.mod4} (and even there, only $\Lambda$ will be
used); thus, a reader unfamiliar with symmetric functions can read
$\mathbb{Z}\left[  \left[  x_{1},x_{2},x_{3},\ldots\right]  \right]  $ instead
of $\Lambda$ or $\operatorname*{QSym}$ everywhere else.

\begin{definition}
Let $\alpha$ be a composition. Then, $L_{\alpha}$ will denote the
\emph{fundamental quasisymmetric function} corresponding to $\alpha$. This is
a formal power series in $\operatorname*{QSym}$, and is defined as follows:
Let $I$ be the unique subset of $\left[  n-1\right]  $ satisfying
$\alpha=\operatorname*{comp}\left(  I,n\right)  $. Then, we set%
\[
L_{\alpha}=\sum_{\substack{i_{1}\leq i_{2}\leq\cdots\leq i_{n};\\i_{p}%
<i_{p+1}\text{ for each }p\in I}}x_{i_{1}}x_{i_{2}}\cdots x_{i_{n}}%
\in\operatorname*{QSym}%
\]
(where the summation indices $i_{1},i_{2},\ldots,i_{n}$ range over
$\mathbb{P}$).
\end{definition}

See \cite[\S 7.19]{EC2} or \cite[\S 5]{GriRei} for more about these
fundamental quasisymmetric functions $L_{\alpha}$ (originally introduced by
Ira Gessel)\footnote{Note that the definition of $L_{\alpha}$ given in
\cite[Definition 5.2.4]{GriRei} differs from ours. However, it is equivalent
to ours, since \cite[Proposition 5.2.9]{GriRei} shows that the $L_{\alpha}$
defined in \cite[Definition 5.2.4]{GriRei} satisfy the same formula that we
used to define our $L_{\alpha}$.}. We will actually find it easier to index
them not by the compositions $\alpha$ but rather by the corresponding subsets
$I$ of $\left[  n-1\right]  $. Thus, we define:

\begin{definition}
Let $n\in\mathbb{N}$, and let $I$ be a subset of $\left[  n-1\right]  $. Then,
we will use the notation $L_{I,n}$ for $L_{\operatorname*{comp}\left(
I,n\right)  }$. Explicitly, we have%
\begin{equation}
L_{I,n}=\sum_{\substack{i_{1}\leq i_{2}\leq\cdots\leq i_{n};\\i_{p}%
<i_{p+1}\text{ for each }p\in I}}x_{i_{1}}x_{i_{2}}\cdots x_{i_{n}}%
\in\operatorname*{QSym} \label{eq.def.Lal.LIn}%
\end{equation}
(where the summation indices $i_{1},i_{2},\ldots,i_{n}$ range over
$\mathbb{P}$).
\end{definition}

\begin{example}
If $n=3$ and $I=\left\{  2\right\}  $, then%
\begin{align*}
L_{I,n}  &  =L_{\left\{  2\right\}  ,3}=\sum_{\substack{i_{1}\leq i_{2}\leq
i_{3};\\i_{p}<i_{p+1}\text{ for each }p\in\left\{  2\right\}  }}x_{i_{1}%
}x_{i_{2}}x_{i_{3}}=\sum_{i_{1}\leq i_{2}<i_{3}}x_{i_{1}}x_{i_{2}}x_{i_{3}}\\
&  =x_{1}x_{1}x_{2}+x_{1}x_{1}x_{3}+\cdots+x_{1}x_{2}x_{3}+x_{1}x_{2}%
x_{4}+\cdots+\cdots+x_{2}x_{2}x_{3}+\cdots.
\end{align*}

\end{example}

\subsection{The Redei--Berge symmetric function}

We are now ready to define the main protagonist of this paper:

\begin{definition}
\label{def.UD}Let $n\in\mathbb{N}$. Let $D=\left(  V,A\right)  $ be a digraph
with $n$ vertices. We define the \emph{Redei--Berge symmetric function}
$U_{D}$ to be the quasisymmetric function%
\[
\sum_{w\text{ is a }V\text{-listing}}L_{\operatorname*{Des}\left(  w,D\right)
,\ n}\in\operatorname*{QSym}.
\]

\end{definition}

\begin{example}
\label{exa.UD.1}Let $D$ be the digraph $D$ from Example
\ref{exa.complement.3verts}. Then,%
\begin{align*}
U_{D}  &  =\sum_{w\text{ is a }V\text{-listing}}L_{\operatorname*{Des}\left(
w,D\right)  ,\ 3}\\
&  =L_{\operatorname*{Des}\left(  \left(  1,2,3\right)  ,D\right)
,\ 3}+L_{\operatorname*{Des}\left(  \left(  1,3,2\right)  ,D\right)
,\ 3}+L_{\operatorname*{Des}\left(  \left(  2,1,3\right)  ,D\right)  ,\ 3}\\
&  \ \ \ \ \ \ \ \ \ \ +L_{\operatorname*{Des}\left(  \left(  2,3,1\right)
,D\right)  ,\ 3}+L_{\operatorname*{Des}\left(  \left(  3,1,2\right)
,D\right)  ,\ 3}+L_{\operatorname*{Des}\left(  \left(  3,2,1\right)
,D\right)  ,\ 3}\\
&  =L_{\left\{  1\right\}  ,\ 3}+L_{\varnothing,\ 3}+L_{\varnothing
,\ 3}+L_{\varnothing,\ 3}+L_{\left\{  2\right\}  ,\ 3}+L_{\varnothing,\ 3}\\
&  =4\cdot\underbrace{L_{\varnothing,\ 3}}_{=\sum_{i_{1}\leq i_{2}\leq i_{3}%
}x_{i_{1}}x_{i_{2}}x_{i_{3}}}+\underbrace{L_{\left\{  1\right\}  ,\ 3}}%
_{=\sum_{i_{1}<i_{2}\leq i_{3}}x_{i_{1}}x_{i_{2}}x_{i_{3}}}%
+\underbrace{L_{\left\{  2\right\}  ,\ 3}}_{=\sum_{i_{1}\leq i_{2}<i_{3}%
}x_{i_{1}}x_{i_{2}}x_{i_{3}}}\\
&  =4\cdot\sum_{i_{1}\leq i_{2}\leq i_{3}}x_{i_{1}}x_{i_{2}}x_{i_{3}}%
+\sum_{i_{1}<i_{2}\leq i_{3}}x_{i_{1}}x_{i_{2}}x_{i_{3}}+\sum_{i_{1}\leq
i_{2}<i_{3}}x_{i_{1}}x_{i_{2}}x_{i_{3}}.
\end{align*}
From this expression, we can easily see that $U_{D}$ is actually a symmetric
function, and can be written (e.g.) as $p_{1}^{3}+2p_{1}p_{2}+p_{3}$, where
$p_{k}:=x_{1}^{k}+x_{2}^{k}+x_{3}^{k}+\cdots$ is the $k$-th power-sum
symmetric function for each $k\geq1$.
\end{example}

The name \textquotedblleft Redei--Berge symmetric function\textquotedblright%
\ for the power series $U_{D}$ was chosen because (as we will soon see) it is
actually a symmetric function and is related to two classical results of Redei
and Berge on the number of Hamiltonian paths in digraphs. In \cite[Problem
120]{EC2supp}, it is called $U_{X}$, where $X$ is what we call $A$ (that is,
the set of arcs of $D$); but we shall here put the entire digraph $D$ into the subscript.

The Redei--Berge symmetric function $U_{D}$ equals the quasisymmetric function
$\Xi_{\overline{D}}\left(  x,0\right)  $ from Chow's \cite{Chow96}%
.\footnote{Indeed, this equality follows immediately from \cite[Proposition
7]{Chow96}, since the quasisymmetric function we call $L_{I,n}$ appears under
the name of $Q_{I,n}$ in \cite{Chow96}, and since our $\operatorname*{Des}%
\left(  w,\overline{D}\right)  $ is what is called $S\left(  w\right)  $ in
\cite{Chow96}.} It also is denoted by $\Pi_{\overline{D}}$ in \cite{Wisema07}%
.\footnote{Indeed, comparing the definition of $\Pi_{D}$ in \cite[Definition
2.2]{Wisema07} with the definition of $\Xi_{D}$ in \cite[\S 2]{Chow96} shows
that $\Pi_{D}=\Xi_{D}\left(  x,0\right)  $. Thus, $\Pi_{\overline{D}}%
=\Xi_{\overline{D}}\left(  x,0\right)  =U_{D}$ (as we already know).} Several
properties of $U_{D}$ have been shown in \cite{Chow96} and in \cite{Wisema07},
and some of them will be reproved here for the sake of self-containedness and
variety. However, our main results -- Theorems \ref{thm.UX.1}, \ref{thm.UX.2}
and \ref{thm.UX.3} -- appear to be new.

\begin{question}
Can these results be extended to the more general functions $\Xi_{D}\left(
x,y\right)  $ from \cite{Chow96}?
\end{question}

\subsection{Arcs and cyclic arcs}

The main results of this paper are explicit (albeit not, in general,
subtraction-free) expansions of $U_{D}$ in terms of the power-sum symmetric
functions. To state these, we need some more notations. We shall soon define
cycles of digraphs and cycles of permutations, and we will then connect the
two notions. First, some auxiliary notations:

\begin{definition}
\label{def.Arcs-and-Carcs}Let $V$ be a set. Let $v=\left(  v_{1},v_{2}%
,\ldots,v_{k}\right)  \in V^{k}$ be a nonempty tuple of elements of $V$.

\begin{enumerate}
\item[\textbf{(a)}] We define a subset $\operatorname*{Arcs}v$ of $V\times V$
by
\begin{align}
\operatorname*{Arcs}v:=\  &  \left\{  \left(  v_{i},v_{i+1}\right)
\ \mid\ i\in\left[  k-1\right]  \right\} \nonumber\\
=\  &  \left\{  \left(  v_{1},v_{2}\right)  ,\ \left(  v_{2},v_{3}\right)
,\ \ldots,\ \left(  v_{k-1},v_{k}\right)  \right\}
\label{eq.def.Arcs-and-Carcs.a.2}\\
\subseteq\  &  V\times V.\nonumber
\end{align}
This subset $\operatorname*{Arcs}v$ is called the \emph{arc set} of the tuple
$v$. Its elements $\left(  v_{i},v_{i+1}\right)  $ are called the \emph{arcs}
of $v$.

\item[\textbf{(b)}] We define a subset $\operatorname*{CArcs}v$ of $V\times V$
by%
\begin{align}
\operatorname*{CArcs}v:=\  &  \left\{  \left(  v_{i},v_{i+1}\right)
\ \mid\ i\in\left[  k\right]  \right\} \nonumber\\
=\  &  \left\{  \left(  v_{1},v_{2}\right)  ,\ \left(  v_{2},v_{3}\right)
,\ \ldots,\ \left(  v_{k-1},v_{k}\right)  ,\ \left(  v_{k},v_{1}\right)
\right\} \label{eq.def.Arcs-and-Carcs.b.2}\\
\subseteq\  &  V\times V,\nonumber
\end{align}
where we set $v_{k+1}:=v_{1}$. This subset $\operatorname*{CArcs}v$ is called
the \emph{cyclic arc set} of the tuple $v$. Its elements $\left(
v_{i},v_{i+1}\right)  $ are called the \emph{cyclic arcs} of $v$.

\item[\textbf{(c)}] The \emph{reversal} of $v$ is defined to be the tuple
$\left(  v_{k},v_{k-1},\ldots,v_{1}\right)  \in V^{k}$.
\end{enumerate}
\end{definition}

\begin{example}
Let $V=\mathbb{N}$ and $v=\left(  1,4,2,6\right)  \in V^{4}$. Then,
\begin{align*}
\operatorname*{Arcs}v  &  =\left\{  \left(  1,4\right)  ,\ \left(  4,2\right)
,\ \left(  2,6\right)  \right\}  \ \ \ \ \ \ \ \ \ \ \text{and}\\
\operatorname*{CArcs}v  &  =\left\{  \left(  1,4\right)  ,\ \left(
4,2\right)  ,\ \left(  2,6\right)  ,\ \left(  6,1\right)  \right\}  .
\end{align*}

\end{example}

Note that if we cyclically rotate a nonempty tuple $v\in V^{k}$, then the set
$\operatorname*{CArcs}v$ remains unchanged: i.e., for any $\left(  v_{1}%
,v_{2},\ldots,v_{k}\right)  \in V^{k}$, we have%
\[
\operatorname*{CArcs}\left(  v_{1},v_{2},\ldots,v_{k}\right)
=\operatorname*{CArcs}\left(  v_{2},v_{3},\ldots,v_{k},v_{1}\right)  .
\]

\subsection{Permutations and their cycles}

Now, let us discuss permutations and their cycles. We start with some basic notations:

\begin{definition}
Let $V$ be a finite set. Then, $\mathfrak{S}_{V}$ shall denote the symmetric
group of $V$ (that is, the group of all permutations of $V$).
\end{definition}

Note that the order of this group is $\left\vert \mathfrak{S}_{V}\right\vert
=\left\vert V\right\vert !$.

\begin{definition}
Let $V$ be a set.

\begin{enumerate}
\item[\textbf{(a)}] Two tuples $v\in V^{k}$ and $w\in V^{\ell}$ of elements of
$V$ are said to be \emph{rotation-equivalent} if $w$ can be obtained from $v$
by cyclic rotation, i.e., if $\ell=k$ and $w=\left(  v_{i},v_{i+1}%
,\ldots,v_{k},v_{1},v_{2},\ldots,v_{i-1}\right)  $ for some $i\in\left[
k\right]  $.

\item[\textbf{(b)}] The relation \textquotedblleft
rotation-equivalent\textquotedblright\ is an equivalence relation on the set
of all nonempty tuples of elements of $V$. Its equivalence classes are called
the \emph{rotation-equivalence classes}. In other words, the
rotation-equivalence classes are the orbits of the operation%
\[
\left(  a_{1},a_{2},\ldots,a_{k}\right)  \mapsto\left(  a_{2},a_{3}%
,\ldots,a_{k},a_{1}\right)
\]
on the set of all nonempty tuples of elements of $V$.

\item[\textbf{(c)}] The rotation-equivalence class that contains a given
nonempty tuple $v\in V^{k}$ will be denoted by $v_{\sim}$.
\end{enumerate}
\end{definition}

For instance, the tuple $\left(  1,2,3,4\right)  $ is rotation-equivalent to
$\left(  3,4,1,2\right)  $, but not to $\left(  4,3,2,1\right)  $. Thus,%
\[
\left(  1,2,3,4\right)  _{\sim}=\left(  3,4,1,2\right)  _{\sim}\neq\left(
4,3,2,1\right)  _{\sim}.
\]
Also,%
\[
\left(  1,3,6\right)  _{\sim}=\left\{  \left(  1,3,6\right)  ,\ \left(
3,6,1\right)  ,\ \left(  6,1,3\right)  \right\}  .
\]

\begin{definition}
\label{def.reqc.features}Let $V$ be a set. Let $\gamma$ be a
rotation-equivalence class (of nonempty tuples of elements of $V$). Then:

\begin{enumerate}
\item[\textbf{(a)}] All tuples $v\in\gamma$ have the same length (i.e., number
of entries). This length is denoted by $\ell\left(  \gamma\right)  $, and is
called the \emph{length} of $\gamma$. Thus, if $\gamma=v_{\sim}$ for some
tuple $v\in V^{k}$, then $\ell\left(  \gamma\right)  =k$.

\item[\textbf{(b)}] All tuples $v\in\gamma$ have the same cyclic arc set
$\operatorname*{CArcs}v$ (since $\operatorname*{CArcs}v$ remains unchanged if
we cyclically rotate $v$). This cyclic arc set is denoted by
$\operatorname*{CArcs}\gamma$, and is called the \emph{cyclic arc set} of
$\gamma$. Thus, the cyclic arc set of a rotation-equivalence class
$\gamma=\left(  v_{1},v_{2},\ldots,v_{k}\right)  _{\sim}$ is%
\[
\operatorname*{CArcs}\gamma=\left\{  \left(  v_{1},v_{2}\right)  ,\ \left(
v_{2},v_{3}\right)  ,\ \ldots,\ \left(  v_{k-1},v_{k}\right)  ,\ \left(
v_{k},v_{1}\right)  \right\}  .
\]

\item[\textbf{(c)}] All tuples $v\in\gamma$ have the same entries (up to
order). These entries are called the \emph{entries} of $v$. Thus, the entries
of a rotation-equivalence class $\gamma=\left(  v_{1},v_{2},\ldots
,v_{k}\right)  _{\sim}$ are $v_{1},v_{2},\ldots,v_{k}$.

\item[\textbf{(d)}] The reversals of all tuples $v\in\gamma$ are the elements
of a single rotation-equivalence class $\operatorname*{rev}\gamma$. This
latter class will be called the \emph{reversal} of $\gamma$. Thus, the
reversal of a rotation-equivalence class $\gamma=\left(  v_{1},v_{2}%
,\ldots,v_{k}\right)  _{\sim}$ is the rotation-equivalence class $\left(
v_{k},v_{k-1},\ldots,v_{1}\right)  _{\sim}$.

\item[\textbf{(e)}] We say that $\gamma$ is \emph{nontrivial} if $\ell\left(
\gamma\right)  >1$.
\end{enumerate}
\end{definition}

For instance, the rotation-equivalence class $\left(  3,1,4\right)  _{\sim}$
has length $3$, cyclic arc set $\left\{  \left(  3,1\right)  ,\ \left(
1,4\right)  ,\ \left(  4,3\right)  \right\}  $, and entries $3,1,4$. Its
reversal is $\left(  4,1,3\right)  _{\sim}$, and it is nontrivial (since
$\ell\left(  \left(  3,1,4\right)  _{\sim}\right)  =3>1$).

\begin{definition}
\label{def.perm.cycs}Let $V$ be a finite set. Let $\sigma\in\mathfrak{S}_{V}$
be any permutation.

\begin{enumerate}
\item[\textbf{(a)}] The \emph{cycles} of $\sigma$ are the rotation-equivalence
classes of the tuples of the form%
\[
\left(  \sigma^{0}\left(  i\right)  ,\ \sigma^{1}\left(  i\right)
,\ \ldots,\ \sigma^{k-1}\left(  i\right)  \right)  ,
\]
where $i$ is some element of $V$, and where $k$ is the smallest positive
integer satisfying $\sigma^{k}\left(  i\right)  =i$.

For example, the permutation $w_{0}\in\mathfrak{S}_{\left[  7\right]  }$ that
sends each $i\in\left[  7\right]  $ to $8-i$ has cycles $\left(  1,7\right)
_{\sim}$, $\left(  2,6\right)  _{\sim}$, $\left(  3,5\right)  _{\sim}$ and
$\left(  4\right)  _{\sim}$. (Note that we do allow a cycle to have length $1$.)

\item[\textbf{(b)}] The \emph{cycle type} of $\sigma$ means the partition
whose entries are the lengths of the cycles of $\sigma$. We denote this cycle
type by $\operatorname*{type}\sigma$. It is a partition of the number
$\left\vert V\right\vert $.

\item[\textbf{(c)}] We let $\operatorname*{Cycs}\sigma$ denote the set of all
cycles of $\sigma$.
\end{enumerate}
\end{definition}

\begin{example}
Let $w_{0}\in\mathfrak{S}_{\left[  7\right]  }$ be the permutation that sends
each $i\in\left[  7\right]  $ to $8-i$. We have already seen that $w_{0}$ has
cycles $\left(  1,7\right)  _{\sim}$, $\left(  2,6\right)  _{\sim}$, $\left(
3,5\right)  _{\sim}$ and $\left(  4\right)  _{\sim}$. Their respective lengths
are $2,2,2,1$. Thus, the cycle type of $w_{0}$ is $\operatorname*{type}%
w_{0}=\left(  2,2,2,1\right)  $. We have $\operatorname*{Cycs}\sigma=\left\{
\left(  1,7\right)  _{\sim},\ \left(  2,6\right)  _{\sim},\ \left(
3,5\right)  _{\sim},\ \left(  4\right)  _{\sim}\right\}  $. The first three of
the four cycles $\left(  1,7\right)  _{\sim}$, $\left(  2,6\right)  _{\sim}$,
$\left(  3,5\right)  _{\sim}$ and $\left(  4\right)  _{\sim}$ are nontrivial.
\end{example}

\subsection{$D$-paths and $D$-cycles}

Next, we define paths and cycles in a digraph:

\begin{definition}
Let $D=\left(  V,A\right)  $ be a digraph.

\begin{enumerate}
\item[\textbf{(a)}] A $D$\emph{-path} (or \emph{path of }$D$) shall mean a
nonempty tuple $v$ of distinct elements of $V$ such that $\operatorname*{Arcs}%
v\subseteq A$.

\item[\textbf{(b)}] A $D$\emph{-cycle} (or \emph{cycle of }$D$) shall mean a
rotation-equivalence class $\gamma$ of nonempty tuples of distinct elements of
$V$ such that $\operatorname*{CArcs}\gamma\subseteq A$.
\end{enumerate}
\end{definition}

We note that our notion of \textquotedblleft cycle of $D$\textquotedblright%
\ differs slightly from the common one used in graph theory\footnote{Namely,
cycles in graph theory have their first vertex repeated at the end, whereas
our cycles don't. However, this difference is purely notational: A cycle
$\left(  v_{1},v_{2},\ldots,v_{k}\right)  _{\sim}$ in our sense corresponds to
the cycle $\left(  v_{1},v_{2},\ldots,v_{k},v_{1}\right)  $ in the
graph-theorists' terminology.}.

\begin{example}
Let $D$ be the digraph $D$ from Example \ref{exa.complement.3verts}. Then:

\begin{enumerate}
\item[\textbf{(a)}] The pair $\left(  1,2\right)  $ as well as the three
$1$-tuples $\left(  1\right)  $, $\left(  2\right)  $ and $\left(  3\right)  $
are $D$-paths. The triple $\left(  1,2,2\right)  $ is not a $D$-path (even
though it satisfies the \textquotedblleft$\operatorname*{Arcs}v\subseteq
A$\textquotedblright\ condition), since its entries $1,2,2$ are not distinct.
The triple $\left(  1,2,3\right)  $ is not a $D$-path, since $\left(
2,3\right)  $ is not an arc of $D$.

The triple $\left(  2,3,1\right)  $ is a $\overline{D}$-path (and there are
several others).

\item[\textbf{(b)}] The only $D$-cycles are the rotation-equivalence classes
$\left(  2\right)  _{\sim}$ and $\left(  3\right)  _{\sim}$. The $\overline
{D}$-cycles are $\left(  1\right)  _{\sim}$, $\left(  1,3\right)  _{\sim}$,
$\left(  2,3\right)  _{\sim}$ and $\left(  2,1,3\right)  _{\sim}$.
\end{enumerate}
\end{example}

\subsection{The sets $\mathfrak{S}_{V}\left(  D\right)  $ and $\mathfrak{S}%
_{V}\left(  D,\overline{D}\right)  $}

Now, we can connect digraphs with permutations by comparing their cycles:

\begin{definition}
\label{def.SVD}Let $D=\left(  V,A\right)  $ be a digraph. Then, we
define\footnotemark%
\[
\mathfrak{S}_{V}\left(  D\right)  :=\left\{  \sigma\in\mathfrak{S}_{V}%
\ \mid\ \text{each nontrivial cycle of }\sigma\text{ is a }D\text{-cycle}%
\right\}
\]
and%
\[
\mathfrak{S}_{V}\left(  D,\overline{D}\right)  :=\left\{  \sigma
\in\mathfrak{S}_{V}\ \mid\ \text{each cycle of }\sigma\text{ is a
}D\text{-cycle or a }\overline{D}\text{-cycle}\right\}  .
\]

\end{definition}

\footnotetext{As we warned in Definition \ref{def.perm.cycs} \textbf{(a)}, we
are being cavalier about the distinction between rotation-equivalence classes
and their representatives. Thus, when we say that a certain cycle $\gamma$ of
$\sigma$ is a $D$-cycle, we really mean that some tuple in the
rotation-equivalence class $\gamma$ (and therefore every tuple in $\gamma$) is
a $D$-cycle.}Note that we could just as well replace \textquotedblleft each
cycle\textquotedblright\ by \textquotedblleft each nontrivial
cycle\textquotedblright\ in the definition of $\mathfrak{S}_{V}\left(
D,\overline{D}\right)  $, since a cycle of length $1$ is always a $D$-cycle or
a $\overline{D}$-cycle (depending on whether its only cyclic arc belongs to
$A$ or not). However, we could not replace \textquotedblleft nontrivial
cycle\textquotedblright\ by \textquotedblleft cycle\textquotedblright\ in the
definition of $\mathfrak{S}_{V}\left(  D\right)  $.

\begin{example}
Let $D$ be the digraph $D$ from Example \ref{exa.complement.3verts}. Let
$V=\left\{  1,2,3\right\}  $ be its set of vertices. Then:

\begin{enumerate}
\item[\textbf{(a)}] We have $\mathfrak{S}_{V}\left(  D\right)  =\left\{
\operatorname*{id}\nolimits_{V}\right\}  $, since the only $D$-cycles have
length $1$.

\item[\textbf{(b)}] We have%
\[
\mathfrak{S}_{V}\left(  D,\overline{D}\right)  =\left\{  \operatorname*{id}%
\nolimits_{V},\ \operatorname*{cyc}\nolimits_{1,3},\ \operatorname*{cyc}%
\nolimits_{2,3},\ \operatorname*{cyc}\nolimits_{1,3,2}\right\}  ,
\]
where $\operatorname*{cyc}\nolimits_{i_{1},i_{2},\ldots,i_{k}}$ denotes the
permutation that cyclically permutes the elements $i_{1},i_{2},\ldots,i_{k}$
while leaving all other elements of $V$ unchanged.
\end{enumerate}
\end{example}

\subsection{Formulas for $U_{D}$}

\subsubsection{The power-sum symmetric functions}

We now introduce some of the best-known (and easiest to define) symmetric functions:

\begin{definition}
\label{def.powersum}\ \ 

\begin{enumerate}
\item[\textbf{(a)}] For each positive integer $n$, we define the
\emph{power-sum symmetric function}
\[
p_{n}:=x_{1}^{n}+x_{2}^{n}+x_{3}^{n}+\cdots\in\Lambda.
\]

\item[\textbf{(b)}] If $\lambda=\left(  \lambda_{1},\lambda_{2},\ldots
,\lambda_{k}\right)  $ is a partition with $k$ positive entries, then we set%
\[
p_{\lambda}:=p_{\lambda_{1}}p_{\lambda_{2}}\cdots p_{\lambda_{k}}\in\Lambda.
\]

\end{enumerate}
\end{definition}

For instance, $p_{\left(  2,2,1\right)  }=p_{2}p_{2}p_{1}=\left(  x_{1}%
^{2}+x_{2}^{2}+x_{3}^{2}+\cdots\right)  ^{2}\left(  x_{1}+x_{2}+x_{3}%
+\cdots\right)  $.

\subsubsection{The first main theorem: general digraphs}

We now state our first main theorem (which will be proved in Section
\ref{sec.pf.thm.UX.1}):

\begin{theorem}
\label{thm.UX.1}Let $D=\left(  V,A\right)  $ be a digraph. Set%
\[
\varphi\left(  \sigma\right)  :=\sum_{\substack{\gamma\in\operatorname*{Cycs}%
\sigma;\\\gamma\text{ is a }D\text{-cycle}}}\left(  \ell\left(  \gamma\right)
-1\right)  \ \ \ \ \ \ \ \ \ \ \text{for each }\sigma\in\mathfrak{S}_{V}.
\]
Then,%
\[
U_{D}=\sum_{\sigma\in\mathfrak{S}_{V}\left(  D,\overline{D}\right)  }\left(
-1\right)  ^{\varphi\left(  \sigma\right)  }p_{\operatorname*{type}\sigma}.
\]

\end{theorem}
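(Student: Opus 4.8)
The plan is to prove the identity by comparing, on both sides, the coefficient of each monomial in $x_1,x_2,\ldots$. For the left-hand side I unfold the fundamental quasisymmetric functions: expanding each $L_{\operatorname*{Des}\left(w,D\right),\,n}$ shows that $U_D=\sum\prod_{v\in V}x_{c\left(v\right)}$, where the sum ranges over all pairs $\left(w,c\right)$ consisting of a $V$-listing $w=\left(w_1,\ldots,w_n\right)$ and a map $c\colon V\to\mathbb{P}$ for which the word $\left(c\left(w_1\right),\ldots,c\left(w_n\right)\right)$ is weakly increasing and satisfies $c\left(w_p\right)<c\left(w_{p+1}\right)$ whenever $\left(w_p,w_{p+1}\right)\in A$. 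For the right-hand side I use the elementary identity $p_{\operatorname*{type}\sigma}=\sum\prod_{v\in V}x_{f\left(v\right)}$, where $f$ ranges over all maps $V\to\mathbb{P}$ that are constant on each cycle of $\sigma$ (expand $p_{\operatorname*{type}\sigma}=\prod_j p_{\lambda_j}$ and assign one value to each cycle). Thus the theorem reduces to the claim that, for every fixed $c\colon V\to\mathbb{P}$, the number $N\left(c\right)$ of $V$-listings $w$ for which $c\circ w$ is weakly increasing and strictly increases across every $D$-descent equals $\sum_{\sigma}\left(-1\right)^{\varphi\left(\sigma\right)}$, where $\sigma$ runs over the permutations in $\mathfrak{S}_V\left(D,\overline{D}\right)$ on whose cycles $c$ is constant.

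Next I would factor this reduced claim over the level sets of $c$. Write $V=\bigsqcup_m V_m$ with $V_m=c^{-1}\left(m\right)$, and let $D_m$ be the subdigraph of $D$ induced on $V_m$. On the left, weak monotonicity of $c\circ w$ forces $w$ to list the classes $V_m$ consecutively in increasing order of $m$; this imposes no condition across class boundaries (where $c$ strictly increases) but forces, inside each class, an ordering in which no two consecutive entries form an arc of $D$. Hence $N\left(c\right)=\prod_m h\left(D_m\right)$, where $h\left(D'\right)$ denotes the number of orderings of the vertex set of a digraph $D'$ in which no two consecutive entries form an arc. On the right, since $c$ is constant on cycles, every cycle of $\sigma$ lies inside a single class, so $\sigma$ splits as a product of permutations of the $V_m$; both the constraint $\sigma\in\mathfrak{S}_V\left(D,\overline{D}\right)$ and the weight $\left(-1\right)^{\varphi\left(\sigma\right)}$ factor over classes, so the right-hand count becomes $\prod_m\sum_{\sigma\in\mathfrak{S}_{V_m}\left(D_m,\overline{D_m}\right)}\left(-1\right)^{\varphi\left(\sigma\right)}$. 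Everything therefore reduces to a single self-contained statement about one digraph $D'=\left(W,A'\right)$:
\[
h\left(D'\right)=\sum_{\sigma\in\mathfrak{S}_W\left(D',\overline{D'}\right)}\left(-1\right)^{\varphi\left(\sigma\right)} .
\]

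Finally I would prove this block identity. I expand the left-hand side by inclusion--exclusion: writing the ``no consecutive arc'' condition as $\prod_p\left(1-\left[\left(w_p,w_{p+1}\right)\in A'\right]\right)$ and expanding turns $h\left(D'\right)$ into a signed sum over ordered sequences of vertex-disjoint $D'$-paths covering $W$, a sequence of sizes $\ell_1,\ldots,\ell_m$ carrying the sign $\prod_i\left(-1\right)^{\ell_i-1}$. In parallel I rewrite the permutation sum: attaching to each cycle $\gamma$ the weight $\left(-1\right)^{\ell\left(\gamma\right)-1}$ when it is a $D'$-cycle, $1$ when it is a $\overline{D'}$-cycle, and $0$ otherwise lets me sum over all of $\mathfrak{S}_W$, and splitting each permutation into its $D'$-cycles and its $\overline{D'}$-cycles rewrites the sum as $\sum_{Y\subseteq W}\det\left(T_Y\right)\operatorname{per}\left(\overline{T}_{W\setminus Y}\right)$, where $T,\overline{T}$ are the arc-indicator matrices of $D'$ and $\overline{D'}$ and the subscripts denote principal submatrices. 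The remaining and main task is to match these two expansions -- equivalently, to exhibit a sign-reversing involution on $\mathfrak{S}_W\left(D',\overline{D'}\right)$ whose fixed points are counted by $h\left(D'\right)$; the natural candidate merges a suitable pair of $\overline{D'}$-cycles into a single $D'$-cycle (or splits one apart), toggling the sign, and I expect verifying that this involution is well defined and has exactly the right fixed points to be the hardest step of the argument.
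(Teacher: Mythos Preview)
Your reduction is essentially the paper's: expand $U_D$ over pairs $(w,c)$, expand $p_{\operatorname*{type}\sigma}$ over maps constant on cycles, and factor over the level sets of $c$ to reduce everything to the single ``block identity''
\[
h(D')=\sum_{\sigma\in\mathfrak{S}_W(D',\overline{D'})}(-1)^{\varphi(\sigma)}
\]
(with $h(D')$ the number of listings of $W$ having no two consecutive entries joined by a $D'$-arc, i.e., the number of hamps of $\overline{D'}$). Your inclusion--exclusion expansion of $h(D')$ as a signed sum over ordered sequences of disjoint $D'$-paths, and your rewriting of the permutation sum as $\sum_{Y}\det(T_Y)\operatorname{per}(\overline{T}_{W\setminus Y})$, are both correct.

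The gap is the last step. You propose a sign-reversing involution that ``merges a suitable pair of $\overline{D'}$-cycles into a single $D'$-cycle (or splits one apart)''. As stated this does not make sense: the arcs of a $\overline{D'}$-cycle are non-arcs of $D'$, so no amount of merging $\overline{D'}$-cycles produces a $D'$-cycle; and you give no rule for which pair to merge or how the sign flips. You yourself flag this as the hardest and unverified step; at present it is not a proof but a hope, and I do not see a natural involution of the sort you describe.

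The paper closes this gap by a different mechanism that avoids any involution. Rewrite your path-cover expansion as
\[
h(D')=\sum_{\substack{F\subseteq A'\\ F\text{ linear}}}(-1)^{|F|}\cdot\bigl(\#\text{ of }W\text{-listings }v\text{ with }F\subseteq\operatorname*{Arcs}v\bigr),
\]
where ``linear'' means $F$ is the arc set of a path cover $C$ of $W$. The key observation (the paper's Propositions~\ref{prop.linear.listing}(b) and~\ref{prop.linear.Aisg-num}) is that for each such $F$, the number of $W$-listings $v$ with $F\subseteq\operatorname*{Arcs}v$ equals the number of permutations $\sigma\in\mathfrak{S}_W$ with $F\subseteq\mathbf{A}_\sigma$; both are $|C|!$. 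Swapping these two counts and then reversing the order of summation gives
\[
h(D')=\sum_{\sigma\in\mathfrak{S}_W}\ \sum_{\substack{F\subseteq\mathbf{A}_\sigma\cap A'\\ F\text{ linear}}}(-1)^{|F|}.
\]
The inner sum is then computed cycle by cycle (Proposition~\ref{prop.linear-in-AsigA}): for a cycle $\gamma$ of $\sigma$, the proper subsets of $\operatorname*{CArcs}\gamma$ are exactly the linear ones, so the contribution of $\gamma$ is $(-1)^{\ell(\gamma)-1}$ if $\gamma$ is a $D'$-cycle, $1$ if it is a $\overline{D'}$-cycle, and $0$ otherwise. This is precisely $(-1)^{\varphi(\sigma)}\cdot[\sigma\in\mathfrak{S}_W(D',\overline{D'})]$, finishing the block identity. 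This ``double-counting via linear sets'' is the idea your proposal is missing; your determinant--permanent reformulation is correct but does not by itself supply it.
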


\begin{example}
Let $V=\left\{  1,2,3,4,5,6\right\}  $ and $D=\left(  V,\ V\times V\right)  $.
Let $\sigma\in\mathfrak{S}_{V}$ be the permutation whose cycles are $\left(
1,3\right)  _{\sim}$, $\left(  2,4,5\right)  _{\sim}$ and $\left(  6\right)
_{\sim}$. Then, every cycle of $\sigma$ is a $D$-cycle, and the number
$\varphi\left(  \sigma\right)  $ (as defined in Theorem \ref{thm.UX.1}) is
\begin{align*}
&  \left(  \ell\left(  \left(  1,3\right)  _{\sim}\right)  -1\right)  +\left(
\ell\left(  \left(  2,4,5\right)  _{\sim}\right)  -1\right)  +\left(
\ell\left(  \left(  6\right)  _{\sim}\right)  -1\right) \\
&  =\left(  2-1\right)  +\left(  3-1\right)  +\left(  1-1\right)  =3.
\end{align*}

\end{example}

\begin{example}
Let $D$ be the digraph $D$ from Example \ref{exa.complement.3verts}. Recall
that $\mathfrak{S}_{V}\left(  D,\overline{D}\right)  =\left\{
\operatorname*{id}\nolimits_{V},\ \operatorname*{cyc}\nolimits_{1,3}%
,\ \operatorname*{cyc}\nolimits_{2,3},\ \operatorname*{cyc}\nolimits_{1,3,2}%
\right\}  $. Thus, Theorem \ref{thm.UX.1} yields%
\begin{align*}
U_{D}  &  =\underbrace{\left(  -1\right)  ^{\varphi\left(  \operatorname*{id}%
\nolimits_{V}\right)  }}_{=\left(  -1\right)  ^{0}=1}%
\underbrace{p_{\operatorname*{type}\left(  \operatorname*{id}\nolimits_{V}%
\right)  }}_{=p_{\left(  1,1,1\right)  }=p_{1}^{3}}+\underbrace{\left(
-1\right)  ^{\varphi\left(  \operatorname*{cyc}\nolimits_{1,3}\right)  }%
}_{=\left(  -1\right)  ^{0}=1}\underbrace{p_{\operatorname*{type}\left(
\operatorname*{cyc}\nolimits_{1,3}\right)  }}_{=p_{\left(  2,1\right)  }%
=p_{2}p_{1}}\\
&  \ \ \ \ \ \ \ \ \ \ +\underbrace{\left(  -1\right)  ^{\varphi\left(
\operatorname*{cyc}\nolimits_{2,3}\right)  }}_{=\left(  -1\right)  ^{0}%
=1}\underbrace{p_{\operatorname*{type}\left(  \operatorname*{cyc}%
\nolimits_{2,3}\right)  }}_{=p_{\left(  2,1\right)  }=p_{2}p_{1}%
}+\underbrace{\left(  -1\right)  ^{\varphi\left(  \operatorname*{cyc}%
\nolimits_{1,3,2}\right)  }}_{=\left(  -1\right)  ^{0}=1}%
\underbrace{p_{\operatorname*{type}\left(  \operatorname*{cyc}%
\nolimits_{1,3,2}\right)  }}_{=p_{\left(  3\right)  }=p_{3}}\\
&  =p_{1}^{3}+p_{2}p_{1}+p_{2}p_{1}+p_{3}=p_{1}^{3}+2p_{1}p_{2}+p_{3}.
\end{align*}
This agrees with the result found in Example \ref{exa.UD.1}.
\end{example}

\begin{example}
Let $D$ be the digraph $\left(  V,A\right)  $, where $V=\left\{
1,2,3\right\}  $ and
\[
A=\left\{  \left(  1,3\right)  ,\ \left(  2,1\right)  ,\ \left(  3,1\right)
,\ \left(  3,2\right)  \right\}  .
\]
Then, a straightforward computation using Theorem \ref{thm.UX.1} shows that
$U_{D}=p_{1}^{3}-p_{1}p_{2}+p_{3}$. (This example is due to Ira Gessel.)
\end{example}

The following two corollaries can be easily obtained from Theorem
\ref{thm.UX.1} (see Section \ref{sec.pf.cors} for their proofs):

\begin{corollary}
\label{cor.UX.p-int}Let $D=\left(  V,A\right)  $ be a digraph. Then, $U_{D}$
is a $p$-integral symmetric function (i.e., a symmetric function that can be
written as a polynomial in $p_{1},p_{2},p_{3},\ldots$). That is, we have
$U_{D}\in\mathbb{Z}\left[  p_{1},p_{2},p_{3},\ldots\right]  $.
\end{corollary}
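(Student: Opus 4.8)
The plan is to derive Corollary~\ref{cor.UX.p-int} directly from Theorem~\ref{thm.UX.1}, which is available to us. The theorem expresses $U_D$ as the sum
\[
U_{D}=\sum_{\sigma\in\mathfrak{S}_{V}\left(  D,\overline{D}\right)  }\left(
-1\right)  ^{\varphi\left(  \sigma\right)  }p_{\operatorname*{type}\sigma},
\]
so the key observation is that every single addend on the right-hand side is already visibly an element of $\mathbb{Z}\left[p_{1},p_{2},p_{3},\ldots\right]$. Indeed, for each $\sigma\in\mathfrak{S}_{V}\left(D,\overline{D}\right)$, the cycle type $\operatorname*{type}\sigma$ is a partition of $\left\vert V\right\vert$, so $p_{\operatorname*{type}\sigma}=p_{\lambda_{1}}p_{\lambda_{2}}\cdots p_{\lambda_{k}}$ (by Definition~\ref{def.powersum}~\textbf{(b)}) is a product of power-sum symmetric functions, hence a monomial in the $p_{i}$'s; and the prefactor $\left(-1\right)^{\varphi\left(\sigma\right)}$ is an integer.

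First I would note that $\mathfrak{S}_{V}\left(D,\overline{D}\right)$ is a finite set (being a subset of the finite group $\mathfrak{S}_{V}$), so the sum in Theorem~\ref{thm.UX.1} is a finite $\mathbb{Z}$-linear combination of the elements $p_{\operatorname*{type}\sigma}$. Then I would invoke the fact that $\mathbb{Z}\left[p_{1},p_{2},p_{3},\ldots\right]$ is a subring of $\Lambda$ (and in particular a $\mathbb{Z}$-submodule closed under addition and under multiplication by integers), so that any finite integer linear combination of products of power sums again lies in $\mathbb{Z}\left[p_{1},p_{2},p_{3},\ldots\right]$. Concretely, each summand $\left(-1\right)^{\varphi\left(\sigma\right)}p_{\operatorname*{type}\sigma}$ lies in $\mathbb{Z}\left[p_{1},p_{2},p_{3},\ldots\right]$, and a finite sum of such summands therefore does too. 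This immediately gives $U_{D}\in\mathbb{Z}\left[p_{1},p_{2},p_{3},\ldots\right]$, which is exactly the claim.

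There is essentially no obstacle here: the corollary is a one-line consequence of the theorem, and the only thing to verify is that the building blocks of the right-hand side belong to the asserted ring, which is true by definition of $p_{\lambda}$ and of the polynomial ring $\mathbb{Z}\left[p_{1},p_{2},p_{3},\ldots\right]$. The most one might wish to spell out is that $p_{\operatorname*{type}\sigma}$ really is a polynomial in the $p_{i}$ (as opposed to, say, involving power sums with non-integer or repeated exponents in a problematic way), but this is immediate: it is literally a product of finitely many of the $p_{i}$, with repetition allowed, which is precisely what a monomial in $\mathbb{Z}\left[p_{1},p_{2},p_{3},\ldots\right]$ is. Thus the proof reduces to citing Theorem~\ref{thm.UX.1} and observing that the displayed expression is manifestly a $\mathbb{Z}$-polynomial in the power sums.
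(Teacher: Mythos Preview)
Your proposal is correct and follows essentially the same approach as the paper: cite Theorem~\ref{thm.UX.1}, note that each $p_{\operatorname*{type}\sigma}$ is by definition a monomial in the $p_i$'s, and conclude that the finite integer-coefficient sum lies in $\mathbb{Z}\left[p_1,p_2,p_3,\ldots\right]$.
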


\begin{corollary}
\label{cor.UX.if-each-D-cyc-odd}Let $D=\left(  V,A\right)  $ be a digraph.
Assume that every $D$-cycle has odd length. Then,
\[
U_{D}=\sum_{\sigma\in\mathfrak{S}_{V}\left(  D,\overline{D}\right)
}p_{\operatorname*{type}\sigma}\in\mathbb{N}\left[  p_{1},p_{2},p_{3}%
,\ldots\right]  .
\]

\end{corollary}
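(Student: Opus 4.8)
The plan is to derive this directly from Theorem \ref{thm.UX.1}, which already expresses $U_{D}$ as $\sum_{\sigma\in\mathfrak{S}_V\left(D,\overline{D}\right)}\left(-1\right)^{\varphi\left(\sigma\right)}p_{\operatorname*{type}\sigma}$. The only thing separating the desired conclusion from this formula is the sign $\left(-1\right)^{\varphi\left(\sigma\right)}$, so the entire argument reduces to showing that this sign equals $+1$ for every $\sigma\in\mathfrak{S}_V\left(D,\overline{D}\right)$ under the given hypothesis.

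First I would recall the definition
\[
\varphi\left(\sigma\right)=\sum_{\substack{\gamma\in\operatorname*{Cycs}\sigma;\\\gamma\text{ is a }D\text{-cycle}}}\left(\ell\left(\gamma\right)-1\right)
\]
and note that the summation runs only over those cycles $\gamma$ of $\sigma$ that are $D$-cycles. By hypothesis, every $D$-cycle has odd length, so each such $\gamma$ satisfies $\ell\left(\gamma\right)\in\left\{1,3,5,\ldots\right\}$, whence $\ell\left(\gamma\right)-1$ is even. Thus $\varphi\left(\sigma\right)$, being a sum of even integers, is itself even, and therefore $\left(-1\right)^{\varphi\left(\sigma\right)}=1$. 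This holds for every $\sigma\in\mathfrak{S}_V\left(D,\overline{D}\right)$.

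Substituting $\left(-1\right)^{\varphi\left(\sigma\right)}=1$ into the formula of Theorem \ref{thm.UX.1} yields $U_{D}=\sum_{\sigma\in\mathfrak{S}_V\left(D,\overline{D}\right)}p_{\operatorname*{type}\sigma}$. Since each $p_{\operatorname*{type}\sigma}$ is one of the products $p_{\lambda}$ and the coefficients obtained after collecting like terms are exactly the (nonnegative) numbers of permutations in $\mathfrak{S}_V\left(D,\overline{D}\right)$ of each cycle type, we conclude $U_{D}\in\mathbb{N}\left[p_{1},p_{2},p_{3},\ldots\right]$, as claimed.

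I do not expect any genuine obstacle here: the corollary is an immediate sign-tracking consequence of Theorem \ref{thm.UX.1}. The one point that deserves a sentence of care is that the hypothesis ``every $D$-cycle has odd length'' is precisely what is needed for the cycles appearing in the sum defining $\varphi\left(\sigma\right)$, which are exactly the $D$-cycles of $\sigma$; the $\overline{D}$-cycles of $\sigma$ contribute nothing to $\varphi\left(\sigma\right)$ and hence need not have odd length, so no stronger assumption is required.
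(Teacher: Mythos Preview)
Your proposal is correct and follows essentially the same approach as the paper: both observe that under the hypothesis every term $\ell(\gamma)-1$ in the definition of $\varphi(\sigma)$ is even, so $(-1)^{\varphi(\sigma)}=1$, and then substitute into Theorem~\ref{thm.UX.1}. The only cosmetic difference is that the paper phrases the $\mathbb{N}[p_1,p_2,\ldots]$ membership by noting each $p_{\operatorname*{type}\sigma}$ already lies there, whereas you phrase it via collecting like terms; both are valid.
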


\subsubsection{The second main theorem: tournaments}

After we will have proved Theorem \ref{thm.UX.1}, we will use it to derive a
simpler formula, which however is specific to tournaments. First, we recall
the definition of a tournament:

\begin{definition}
A \emph{tournament} means a digraph $D=\left(  V,A\right)  $ that satisfies
the following two axioms:

\begin{itemize}
\item \emph{Looplessness}: We have $\left(  u,u\right)  \notin A$ for any
$u\in V$.

\item \emph{Tournament axiom}: For any two distinct vertices $u$ and $v$ of
$D$, \textbf{exactly} one of the two pairs $\left(  u,v\right)  $ and $\left(
v,u\right)  $ is an arc of $D$.
\end{itemize}
\end{definition}

\begin{example}
Neither the digraph $D$ from Example \ref{exa.complement.3verts}, nor its
complement $\overline{D}$, is a tournament. Here is a tournament:%
\[%
\begin{tikzpicture}[scale=1.2]
\begin{scope}[every node/.style={circle,thick,draw=green!60!black}]
\node(1) at (0*360/5 : 2) {$1$};
\node(2) at (1*360/5 : 2) {$2$};
\node(3) at (2*360/5 : 2) {$3$};
\node(4) at (3*360/5 : 2) {$4$};
\node(5) at (4*360/5 : 2) {$5$};
\end{scope}
\begin{scope}[every edge/.style={draw=black,very thick}, every loop/.style={}]
\path[->] (1) edge (2) edge (4) edge (5);
\path[->] (2) edge (5);
\path[->] (3) edge (1) edge (2);
\path[->] (4) edge (2) edge (3) edge (5);
\path[->] (5) edge (3);
\end{scope}
\end{tikzpicture}%
\ \ .
\]

\end{example}

We can now state our second main theorem (which we will prove in Section
\ref{sec.pf.thm.UX.2}):

\begin{theorem}
\label{thm.UX.2}Let $D=\left(  V,A\right)  $ be a tournament. For each
$\sigma\in\mathfrak{S}_{V}$, let $\psi\left(  \sigma\right)  $ denote the
number of nontrivial cycles of $\sigma$. Then,%
\[
U_{D}=\sum_{\substack{\sigma\in\mathfrak{S}_{V}\left(  D\right)  ;\\\text{all
cycles of }\sigma\text{ have odd length}}}2^{\psi\left(  \sigma\right)
}p_{\operatorname*{type}\sigma}.
\]

\end{theorem}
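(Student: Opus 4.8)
The plan is to start from Theorem \ref{thm.UX.1}, which already expresses $U_D$ as a signed sum of $p_{\operatorname*{type}\sigma}$ over $\sigma\in\mathfrak{S}_{V}\left(D,\overline{D}\right)$, and then to exploit the rigidity that the tournament axiom imposes on such $\sigma$. First I would record three structural facts. Fix $\sigma\in\mathfrak{S}_{V}\left(D,\overline{D}\right)$. Every trivial cycle of $\sigma$ is a $\overline{D}$-cycle (its only cyclic arc is a loop, hence a non-arc, since $D$ is loopless) and is never a $D$-cycle. No cycle of $\sigma$ has length $2$: a $2$-cycle $\left(u,v\right)_{\sim}$ has cyclic arcs $\left(u,v\right)$ and $\left(v,u\right)$, and the tournament axiom says exactly one of them lies in $A$, so $\left(u,v\right)_{\sim}$ is neither a $D$-cycle nor a $\overline{D}$-cycle. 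Finally, for a cycle $\gamma$ of length $\geq 2$, the tournament axiom turns each reversed cyclic arc into a non-arc and each reversed non-arc into an arc; hence $\operatorname*{rev}\gamma$ is a $\overline{D}$-cycle exactly when $\gamma$ is a $D$-cycle, and a cycle of length $\geq 2$ can never be both. Moreover $\gamma\neq\operatorname*{rev}\gamma$ as soon as $\ell\left(\gamma\right)\geq 3$, since a tuple of distinct entries is never rotation-equivalent to its reversal.

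Next I would rewrite the sign. Because trivial cycles and (nonexistent) $2$-cycles contribute nothing, $\varphi\left(\sigma\right)$ is the sum of $\ell\left(\gamma\right)-1$ over the $D$-cycles $\gamma$ of $\sigma$ of length $\geq 3$. A $D$-cycle of odd length contributes an even number $\ell\left(\gamma\right)-1$, whereas one of even length contributes an odd number; hence $\left(-1\right)^{\varphi\left(\sigma\right)}=\left(-1\right)^{e\left(\sigma\right)}$, where $e\left(\sigma\right)$ denotes the number of even-length $D$-cycles of $\sigma$. I would then build a sign-reversing involution on the set of those $\sigma\in\mathfrak{S}_{V}\left(D,\overline{D}\right)$ possessing at least one even-length cycle: reverse the even cycle whose support contains the smallest vertex lying in some even cycle. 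Because there are no $2$-cycles, every even cycle has length $\geq 4$, so this reversal genuinely changes $\sigma$ and fixes the chosen support (making it an involution); and since reversing an even cycle toggles its $D$-cycle/$\overline{D}$-cycle status, it flips the parity of $e\left(\sigma\right)$ and hence the sign. All such terms cancel, so only permutations all of whose cycles have odd length survive, each with sign $+1$.

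It then remains to collect the surviving terms and produce the factor $2^{\psi}$. I would group the odd-cycled permutations $\sigma\in\mathfrak{S}_{V}\left(D,\overline{D}\right)$ by the operation of reversing an arbitrary subset of their nontrivial cycles; this is an equivalence relation preserving all cycle lengths, and every member of a class lies in $\mathfrak{S}_{V}\left(D,\overline{D}\right)$. Each class contains exactly one permutation $\tau$ all of whose nontrivial cycles are $D$-cycles (obtained by un-reversing every $\overline{D}$-cycle), and this $\tau$ lies in $\mathfrak{S}_{V}\left(D\right)$ with all cycles of odd length. Since each of the $\psi\left(\tau\right)$ nontrivial cycles, being of length $\geq 3$, reverses to a distinct $\overline{D}$-cycle, the class has exactly $2^{\psi\left(\tau\right)}$ elements, all of type $\operatorname*{type}\tau$ and all of sign $+1$. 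Summing over classes converts Theorem \ref{thm.UX.1} into $U_{D}=\sum 2^{\psi\left(\tau\right)}p_{\operatorname*{type}\tau}$ ranging over $\tau\in\mathfrak{S}_{V}\left(D\right)$ with all cycles odd, which is the claim.

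The hard part will be the sign bookkeeping together with two genericity checks that keep the combinatorics clean: that $\mathfrak{S}_{V}\left(D,\overline{D}\right)$ contains no $2$-cycle (so the even-cycle involution is fixed-point-free) and that a cycle of length $\geq 3$ never equals its own reversal (so each reversal class really has size $2^{\psi}$). Both follow directly from the tournament axiom and the distinctness of a cycle's entries, so I anticipate no serious difficulty beyond a careful case distinction between trivial, even, and odd cycles; the genuine content is entirely supplied by Theorem \ref{thm.UX.1}.
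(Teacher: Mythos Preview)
Your proposal is correct and follows essentially the same approach as the paper: both derive the result from Theorem~\ref{thm.UX.1} by exploiting that in a tournament the reversal of a nontrivial cycle swaps its $D$-cycle/$\overline{D}$-cycle status, so that even-length cycles cancel in pairs while each odd-cycled $\tau\in\mathfrak{S}_{V}(D)$ acquires a $2^{\psi(\tau)}$ multiplicity from its reversal class. The only difference is organizational---the paper first projects $\mathfrak{S}_{V}(D,\overline{D})\to\mathfrak{S}_{V}(D)$ via a map $\Psi$ and then performs the cancellation fiberwise, whereas you first run a global sign-reversing involution and then group the survivors---but the substance is identical.
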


Once this is proved, the following corollary will be easy to derive (see
Section \ref{sec.pf.cors} for the details):

\begin{corollary}
\label{cor.UX.tournament-N}Let $D=\left(  V,A\right)  $ be a tournament.
Then,
\[
U_{D}\in\mathbb{N}\left[  p_{1},2p_{3},2p_{5},2p_{7},\ldots\right]
=\mathbb{N}\left[  p_{1},\ 2p_{i}\ \mid\ i>1\text{ is odd}\right]  .
\]
(Here, $\mathbb{N}\left[  p_{1},2p_{3},2p_{5},2p_{7},\ldots\right]  $ means
the set of all values of the form $f\left(  p_{1},2p_{3},2p_{5},2p_{7}%
,\ldots\right)  $, where $f$ is a polynomial in countably many indeterminates
with coefficients in $\mathbb{N}$.)
\end{corollary}

\subsubsection{The third main theorem: digraphs with no $2$-cycles}

A more general version of Theorem \ref{thm.UX.2} is the following:

\begin{theorem}
\label{thm.UX.3}Let $D=\left(  V,A\right)  $ be a digraph. Assume that there
exist no two distinct vertices $u$ and $v$ of $D$ such that both pairs
$\left(  u,v\right)  $ and $\left(  v,u\right)  $ belong to $A$.

\begin{enumerate}
\item[\textbf{(a)}] Then, $U_{D}$ is a $p$-positive symmetric function (i.e.,
a symmetric function that can be written as a polynomial in $p_{1},p_{2}%
,p_{3},\ldots$ with coefficients in $\mathbb{N}$). That is, we have $U_{D}%
\in\mathbb{N}\left[  p_{1},p_{2},p_{3},\ldots\right]  $.

\item[\textbf{(b)}] Let us say that a rotation-equivalence class $\gamma$ of
nonempty tuples of elements of $V$ is \emph{risky} if its length is even and
it has the property that either $\gamma$ or the reversal of $\gamma$ is a
$D$-cycle. Then,%
\[
U_{D}=\sum_{\substack{\sigma\in\mathfrak{S}_{V}\left(  D,\overline{D}\right)
;\\\text{no cycle of }\sigma\text{ is risky}}}p_{\operatorname*{type}\sigma}.
\]

\end{enumerate}
\end{theorem}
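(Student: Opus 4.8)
The plan is to derive both parts from Theorem~\ref{thm.UX.1} by means of a sign-reversing involution on $\mathfrak{S}_V\left(D,\overline{D}\right)$. Since each summand $p_{\operatorname{type}\sigma}$ lies in $\mathbb{N}\left[p_1,p_2,p_3,\ldots\right]$, part~\textbf{(a)} follows immediately once part~\textbf{(b)} is proved (every coefficient in \textbf{(b)} is $+1$), so I would concentrate on \textbf{(b)}. The first step is to simplify the sign in Theorem~\ref{thm.UX.1}. A $D$-cycle $\gamma$ of $\sigma$ contributes $\ell\left(\gamma\right)-1$ to $\varphi\left(\sigma\right)$, and this is even exactly when $\ell\left(\gamma\right)$ is odd; hence $\left(-1\right)^{\varphi\left(\sigma\right)}=\left(-1\right)^{e\left(\sigma\right)}$, where $e\left(\sigma\right)$ denotes the number of $D$-cycles of $\sigma$ having even length. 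Thus I must show that, after the cancellation, precisely the $\sigma$ with $e\left(\sigma\right)=0$ survive, each with coefficient $+1$, and that these are exactly the $\sigma$ having no risky cycle.

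The key structural input is the hypothesis that $D$ has no $2$-cycles. This yields two facts about a cycle $\gamma=\left(v_1,\ldots,v_k\right)_{\sim}$ of a permutation (whose entries $v_i$ are therefore distinct): (i) if $k=2$, then $\gamma$ is not a $D$-cycle, since that would force both $\left(v_1,v_2\right)$ and $\left(v_2,v_1\right)$ into $A$; and (ii) if $k\ge 2$ and $\gamma$ is a $D$-cycle, then $\operatorname{rev}\gamma$ is a $\overline{D}$-cycle, because each reversed cyclic arc $\left(v_{i+1},v_i\right)$ then lies outside $A$. Since a rotation-equivalence class of length $2$ equals its own reversal, fact (i) shows that there is no risky cycle of length $2$; hence every risky cycle has even length $\ge 4$. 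Moreover, every even-length $D$-cycle is risky, so any $\sigma$ with no risky cycle satisfies $e\left(\sigma\right)=0$ and therefore has sign $+1$; these will be the fixed points.

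Next I would build the involution $\Phi$. Fix a total order on $V$. Given $\sigma\in\mathfrak{S}_V\left(D,\overline{D}\right)$ possessing at least one risky cycle, let $\gamma$ be the unique risky cycle containing the smallest vertex that lies in any risky cycle of $\sigma$, and let $\Phi\left(\sigma\right)$ be the permutation obtained from $\sigma$ by reversing $\gamma$ (that is, replacing $\sigma$ by $\sigma^{-1}$ on the support of $\gamma$ and leaving it unchanged elsewhere). I would verify the following. \emph{Well-definedness into $\mathfrak{S}_V\left(D,\overline{D}\right)$:} the untouched cycles are still $D$- or $\overline{D}$-cycles, and the reversed cycle $\operatorname{rev}\gamma$ is one too --- if $\gamma$ was a $D$-cycle then $\operatorname{rev}\gamma$ is a $\overline{D}$-cycle by fact (ii), while if $\gamma$ was a $\overline{D}$-cycle then, since $\gamma$ is risky, $\operatorname{rev}\gamma$ must be a $D$-cycle. \emph{Involution:} reversing one cycle preserves each cycle's set of entries, hence preserves which cycles are risky (riskiness is invariant under $\operatorname{rev}$) and preserves the set of vertices lying in risky cycles; so $\Phi\left(\sigma\right)$ has the same distinguished smallest vertex, located in the reversed cycle, whence $\Phi\left(\Phi\left(\sigma\right)\right)=\sigma$. \emph{Fixed-point-freeness:} since $\gamma$ has length $\ge 4$, we have $\sigma^{-1}\neq\sigma$ on its support, so $\Phi\left(\sigma\right)\neq\sigma$. \emph{Type-preserving and sign-reversing:} reversal keeps all cycle lengths, so $\operatorname{type}$ is unchanged, and it toggles the reversed even-length cycle between being a $D$-cycle and not, changing $e$ by $\pm 1$ and flipping the sign.

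Finally, applying $\Phi$ to the formula of Theorem~\ref{thm.UX.1} pairs every $\sigma$ possessing a risky cycle with $\Phi\left(\sigma\right)$; these two have equal $\operatorname{type}$ but opposite signs, so their contributions cancel. What remains is the sum of $p_{\operatorname{type}\sigma}$ over the $\sigma$ with no risky cycle, each of sign $+1$, which is exactly the claimed formula in \textbf{(b)}; and \textbf{(a)} follows. I expect the main obstacle to be the bookkeeping that makes $\Phi$ a genuine involution that lands back in $\mathfrak{S}_V\left(D,\overline{D}\right)$ --- specifically, verifying that reversing a $D$-cycle produces a $\overline{D}$-cycle, which is precisely where the no-$2$-cycle hypothesis enters, together with confirming that the canonical choice of risky cycle is stable under $\Phi$.
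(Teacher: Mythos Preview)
Your proposal is correct and rests on the same core idea as the paper: cancel contributions in Theorem~\ref{thm.UX.1} by reversing risky cycles, using the no-$2$-cycle hypothesis to guarantee that the reversal of a nontrivial $D$-cycle is a $\overline{D}$-cycle. The packaging differs slightly: the paper defines a ``normalization'' map $\Gamma$ that reverses \emph{all} risky non-$D$-cycles at once and then sums over each fiber $\Gamma^{-1}(\tau)$ (a set of size $2^k$ where $k$ is the number of risky cycles of $\tau$, whose signed sum vanishes unless $k=0$), whereas you build an explicit sign-reversing involution $\Phi$ that reverses a single canonically chosen risky cycle. Your involution argument is a bit more direct and avoids the intermediate set $\mathfrak{S}_V^\circ(D,\overline{D})$; the paper's fiber argument makes the $2^k$ structure visible. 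Both rely on exactly the same structural facts you identified (your facts (i) and (ii), and the observation that risky cycles have length $\geq 4$ so that reversal genuinely changes the permutation), and your verification that $\Phi$ is a well-defined involution on $\mathfrak{S}_V(D,\overline{D})$ is sound.
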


We will prove this in Section \ref{sec.pf.thm.UX.3}. Note that Theorem
\ref{thm.UX.3} \textbf{(a)} generalizes \cite[Theorem 7]{Chow96}.\footnote{To
see how, one needs to observe that
\par
\begin{enumerate}
\item any acyclic digraph $D$ satisfies the assumption of Theorem
\ref{thm.UX.3};
\par
\item the $\omega_{x}\Xi_{D}$ from \cite{Chow96} equals our $U_{D}$ in the
case when $D$ is acyclic.
\end{enumerate}
\par
The first of these two observations is obvious. The second follows from the
equality (\ref{eq.thm.antipode.U.omega}) further below, combined with the fact
that $\Xi_{D}=\Xi_{D}\left(  x,0\right)  $ when $D$ acyclic (since the
$y$-variables do not actually appear in $\Xi_{D}$ for lack of cycles), and the
fact that $U_{\overline{D}}=\Xi_{D}\left(  x,0\right)  $ (stated above in the
equivalent form $U_{D}=\Xi_{\overline{D}}\left(  x,0\right)  $).}

\begin{remark}
The converse of Theorem \ref{thm.UX.3} \textbf{(a)} does not hold. Indeed,
consider the digraph $D=\left(  V,A\right)  $ with $V=\left\{
1,2,3,4\right\}  $ and%
\[
A=\left\{  \left(  1,2\right)  ,\ \left(  2,1\right)  ,\ \left(  2,3\right)
,\ \left(  2,4\right)  ,\ \left(  3,4\right)  \right\}  .
\]
Then, $D$ does not satisfy the assumption of Theorem \ref{thm.UX.3} (since the
two distinct vertices $1$ and $2$ satisfy both $\left(  1,2\right)  \in A$ and
$\left(  2,1\right)  \in A$), but the corresponding symmetric function $U_{D}$
is $p$-positive (indeed, $U_{D}=p_{1}^{4}+p_{2}p_{1}^{2}+p_{3}p_{1}$). It
would be interesting to know some more precise criteria for the $p$-positivity
of $U_{D}$.
\end{remark}

The next sections are devoted to the proofs of the above results. Afterwards,
we will proceed with further properties of the Redei-Berge symmetric functions
$U_{D}$ (Section \ref{sec.antipode}), applications to reproving Redei's and
Berge's theorems (Section \ref{sec.redei}) and a (not very substantial)
generalization (Section \ref{sec.matrixgen}).

\subsection*{Remark on alternative versions}

\begin{vershort}
This paper also has a detailed version \cite{verlong}, which includes more
details (and less handwaving) in some of the proofs (and some straightforward
proofs that have been omitted from the present version).
\end{vershort}

\begin{verlong}
You are reading the detailed version of this paper. For the standard version
(which is shorter by virtue of omitting some straightforward proofs and some
details), see \cite{vershort}.
\end{verlong}

\section{\label{sec.pf.thm.UX.1}Proof of Theorem \ref{thm.UX.1}}

In the following, we will outline the proof of Theorem \ref{thm.UX.1}. We hope
that the proof can still be simplified further.

\subsection{Basic conventions}

The following two conventions are popular in enumerative combinatorics, and we
too will use them on occasion:

\begin{convention}
\label{conv.number}The symbol \# shall mean \textquotedblleft
number\textquotedblright. For instance, $\left(  \text{\# of subsets of
}\left\{  1,2,3\right\}  \right)  =8$.
\end{convention}

\begin{convention}
\label{conv.iverson}We shall use the \emph{Iverson bracket notation}: For any
logical statement $\mathcal{A}$, we let $\left[  \mathcal{A}\right]  $ denote
the truth value of $\mathcal{A}$. This is the number $%
\begin{cases}
1, & \text{if }\mathcal{A}\text{ is true};\\
0, & \text{if }\mathcal{A}\text{ is false}.
\end{cases}
$
\end{convention}

Our proof of Theorem \ref{thm.UX.1} will rely on many lemmas. The first is a
well-known cancellation lemma (see, e.g., \cite[Proposition 7.8.10]{mps}):

\begin{lemma}
\label{lem.cancel}Let $B$ be a finite set. Then, $\sum_{F\subseteq B}\left(
-1\right)  ^{\left\vert F\right\vert }=\left[  B=\varnothing\right]  $.
\end{lemma}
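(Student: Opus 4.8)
The plan is to prove this by a sign-reversing involution, which is the cleanest route and matches the cancellation philosophy that drives the proof of Theorem \ref{thm.UX.1}. First I would dispose of the trivial case $B=\varnothing$: here the only subset $F\subseteq B$ is $\varnothing$ itself, so the sum is $\left(-1\right)^{0}=1$, which agrees with $\left[B=\varnothing\right]=1$.

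For the main case, assume $B\neq\varnothing$ and fix an element $b\in B$. I would define a map $\tau$ on the set of subsets of $B$ by setting $\tau\left(F\right)=F\cup\left\{b\right\}$ if $b\notin F$, and $\tau\left(F\right)=F\setminus\left\{b\right\}$ otherwise. This $\tau$ is an involution (applying it twice restores $F$) with no fixed points, and it toggles the membership of $b$, so $\left\vert\tau\left(F\right)\right\vert=\left\vert F\right\vert\pm1$. Hence $\left(-1\right)^{\left\vert\tau\left(F\right)\right\vert}=-\left(-1\right)^{\left\vert F\right\vert}$, meaning that $\tau$ partitions the subsets of $B$ into pairs whose two contributions to $\sum_{F\subseteq B}\left(-1\right)^{\left\vert F\right\vert}$ cancel. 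Therefore the entire sum is $0=\left[B=\varnothing\right]$, as desired.

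An equally short alternative would invoke the binomial theorem: writing $n:=\left\vert B\right\vert$ and grouping subsets by their size, one gets $\sum_{F\subseteq B}\left(-1\right)^{\left\vert F\right\vert}=\sum_{k=0}^{n}\binom{n}{k}\left(-1\right)^{k}=\left(1-1\right)^{n}=0^{n}$, which equals $1$ when $n=0$ and $0$ otherwise. I slightly prefer the involution, since it avoids inspecting the expression $0^{n}$ and exhibits the cancellation explicitly. In any event, this lemma is entirely elementary and standard (the excerpt itself cites \cite[Proposition 7.8.10]{mps}), so there is no genuine obstacle here; the only point needing care is to confirm that $\tau$ is well-defined and fixed-point-free, which is immediate once $b$ has been fixed.
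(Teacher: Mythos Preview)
Your proof is correct; both the involution argument and the binomial-theorem alternative are standard and complete. The paper itself does not prove this lemma at all --- it merely labels it ``well-known'' and cites \cite[Proposition 7.8.10]{mps} --- so there is no approach to compare against, but your sign-reversing involution is exactly the sort of argument one would expect to find behind that citation.
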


\subsection{Path covers and linear sets}

We begin with some more notations:

\begin{definition}
Let $V$ be a finite set.

\begin{enumerate}
\item[\textbf{(a)}] A \emph{path} of $V$ means a nonempty tuple of distinct
elements of $V$.

\item[\textbf{(b)}] An element $v$ is said to \emph{belong} to a given tuple
$t$ if $v$ is an entry of $t$.

\item[\textbf{(c)}] A \emph{path cover} of $V$ means a set of paths of $V$
such that each $v\in V$ belongs to exactly one of these paths.
\end{enumerate}
\end{definition}

For example, $\left\{  \left(  1,4,3\right)  ,\left(  2,8\right)  ,\left(
5\right)  ,\left(  7,6\right)  \right\}  $ is a path cover of $\left[
8\right]  $. We stress once again the words \textquotedblleft exactly
one\textquotedblright\ in the definition of a path cover. Thus, the paths
constituting a path cover are disjoint (i.e., have no entries in common). For
instance, $\left\{  \left(  1,2\right)  ,\left(  2,3\right)  \right\}  $ is
\textbf{not} a path cover of $\left[  3\right]  $.

In Definition \ref{def.Arcs-and-Carcs} \textbf{(a)}, we have introduced the
arc set of a path of $V$ (and, more generally, of any nonempty tuple of
elements of $V$). We now extend this to path covers in the obvious way:

\begin{definition}
Let $V$ be a finite set.

\begin{enumerate}
\item[\textbf{(a)}] If $C$ is a path cover of $V$, then the \emph{arc set} of
$C$ is defined to be the subset%
\[
\bigcup_{v\in C}\operatorname*{Arcs}v\ \ \ \ \ \ \ \ \ \ \text{of }V\times V.
\]
This arc set will be denoted by $\operatorname*{Arcs}C$.

\item[\textbf{(b)}] A subset $F$ of $V\times V$ will be called \emph{linear}
if it is the arc set of some path cover of $V$.
\end{enumerate}
\end{definition}

For example, the path cover $\left\{  \left(  1,4,3\right)  ,\left(
2,8\right)  ,\left(  5\right)  ,\left(  7,6\right)  \right\}  $ of $\left[
8\right]  $ has arc set
\begin{align*}
&  \operatorname*{Arcs}\left\{  \left(  1,4,3\right)  ,\left(  2,8\right)
,\left(  5\right)  ,\left(  7,6\right)  \right\} \\
&  =\operatorname*{Arcs}\left(  1,4,3\right)  \cup\operatorname*{Arcs}\left(
2,8\right)  \cup\operatorname*{Arcs}\left(  5\right)  \cup\operatorname*{Arcs}%
\left(  7,6\right) \\
&  =\left\{  \left(  1,4\right)  ,\ \left(  4,3\right)  \right\}  \cup\left\{
\left(  2,8\right)  \right\}  \cup\varnothing\cup\left\{  \left(  7,6\right)
\right\} \\
&  =\left\{  \left(  1,4\right)  ,\ \left(  4,3\right)  ,\ \left(  2,8\right)
,\ \left(  7,6\right)  \right\}  .
\end{align*}
Thus, the latter set is linear (as a subset of $\left[  8\right]
\times\left[  8\right]  $).

Note that the notion of \textquotedblleft path of $V$\textquotedblright%
\ depends on $V$ alone, not on any digraph structure on $V$. Thus, if $V$ is
the vertex set of a digraph $D=\left(  V,A\right)  $, then a path of $V$ is
not the same as a $D$-path; in fact, the $D$-paths are precisely the paths $v$
of $V$ that satisfy $\operatorname*{Arcs}v\subseteq A$.

We shall now see a few properties and characterizations of linear subsets of
$V\times V$. Here is a first one, which will not be used in what follows but
might help in visualizing the concept:

\begin{proposition}
Let $V$ be a finite set. Let $F$ be a subset of $V\times V$. Then, $F$ is
linear if and only if the digraph $\left(  V,F\right)  $ has no cycles and no
vertices with outdegree $>1$ and no vertices with indegree $>1$.
\end{proposition}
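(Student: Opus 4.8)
The plan is to reformulate the two degree conditions in terms of a single combinatorial object and then handle the two implications separately. Observe that ``no vertex of $\left( V,F\right) $ has outdegree $>1$'' says precisely that for each $u\in V$ there is at most one $v$ with $\left( u,v\right) \in F$; equivalently, $F$ is the graph of a partial function $f$ on $V$ (with $f\left( u\right) =v$ meaning $\left( u,v\right) \in F$). Dually, ``no vertex has indegree $>1$'' says that $f$ is injective on its domain. So the first step is to record that the two degree bounds together are equivalent to: $F$ is the graph of a partial injection $f\colon V\rightharpoonup V$. This viewpoint will drive both directions.

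For the ``only if'' direction, suppose $F=\operatorname*{Arcs}C$ for a path cover $C$. Since the paths in $C$ are vertex-disjoint and each $v\in V$ lies in exactly one of them, the arcs of $F$ leaving (resp.\ entering) a given vertex all come from the single path containing it; and within one path $\left( v_{1},\ldots,v_{k}\right) $ each vertex is the tail (resp.\ head) of at most one arc. This yields both degree bounds. For the absence of cycles I would follow the successor partial function $f$ determined by $C$: on a path $\left( v_{1},\ldots,v_{k}\right) $ it sends $v_{i}\mapsto v_{i+1}$ and is undefined at $v_{k}$, so every forward orbit of $f$ is finite and terminates. A cycle $\gamma$ of $\left( V,F\right) $ would force a periodic orbit (its cyclic arcs all lie in $F$), which is impossible; this also excludes loops, since a path has distinct entries.

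For the ``if'' direction---the substantive one---I assume the three conditions and reconstruct a path cover. Viewing $F$ as the graph of a partial injection $f$, I decompose $V$ into the orbits of $f$: starting from a vertex with no $f$-preimage, apply $f$ repeatedly. Because $f$ is injective and $\left( V,F\right) $ has no cycle, each orbit is a finite chain $v_{1}\to v_{2}\to\cdots\to v_{k}$ of distinct vertices (injectivity forbids two predecessors, and the no-cycle hypothesis guarantees the chain is not periodic, so it has a starting vertex with no preimage and an ending vertex with no image). Reading each chain as the tuple $\left( v_{1},\ldots,v_{k}\right) $, and treating isolated vertices as singleton paths, produces a set $C$ of vertex-disjoint paths covering $V$, i.e., a path cover. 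A final check confirms $\operatorname*{Arcs}C=F$: every arc of $F$ is a consecutive pair in exactly one chain, and conversely the chains contribute exactly the arcs of $f$.

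The main obstacle I expect is precisely this orbit decomposition: one must argue carefully that, for a finite partial injection with no periodic orbit, every element lies in a \emph{unique} maximal chain, that these chains are exactly the members of a path cover, and that no arc of $F$ is lost or duplicated in passing to $\operatorname*{Arcs}C$. The ``no cycles'' hypothesis is exactly what rules out the only alternative (a periodic orbit, which would be a cycle of $\left( V,F\right) $), so the delicate point is to isolate cleanly where each of the three hypotheses is used and to verify the reconstruction is an exact inverse of the construction in the ``only if'' direction.
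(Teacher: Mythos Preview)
Your proposal is correct. The paper actually omits the proof of this proposition entirely (stating ``We omit the proof of this proposition, since we shall have no use for it''), so there is nothing to compare against; your argument via the partial injection $f$ and its orbit decomposition is the natural one, and you have correctly identified where each of the three hypotheses enters.
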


We omit the proof of this proposition, since we shall have no use for it.

The following is also easy to see:

\begin{proposition}
\label{prop.linear.subset}Let $V$ be a finite set. Let $F$ be a linear subset
of $V\times V$. Then, any subset of $F$ is linear as well.
\end{proposition}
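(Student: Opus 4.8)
The plan is to read a linear set $F$ as the ``edge set'' of a disjoint union of directed paths that together cover $V$, and to observe that deleting arcs from $F$ merely chops these paths into shorter pieces while keeping a path cover intact. Concretely, since $F$ is linear, I would fix a path cover $C$ of $V$ with $\operatorname*{Arcs}C=F$, and let $G\subseteq F$ be the given subset. For each path $v=\left(v_{1},v_{2},\ldots,v_{k}\right)\in C$, I would look at the positions $i\in\left[k-1\right]$ at which the arc $\left(v_{i},v_{i+1}\right)$ fails to lie in $G$, and cut $v$ at exactly these positions. This decomposes $v$ into consecutive contiguous subtuples $v^{\left(1\right)},v^{\left(2\right)},\ldots$ (the maximal runs of entries joined only by arcs of $G$).

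Each such subtuple is again a path of $V$: it is nonempty, and its entries are distinct, being a contiguous block of the distinct entries of $v$. I would then let $C^{\prime}$ be the set of all these subtuples, ranging over all $v\in C$, and verify two things: that $C^{\prime}$ is a path cover of $V$, and that $\operatorname*{Arcs}C^{\prime}=G$. For the first, every $u\in V$ belongs to exactly one path $v\in C$ (as $C$ is a path cover), and within that path to exactly one of the subtuples produced by the cutting; hence $u$ belongs to exactly one element of $C^{\prime}$. For the second, the arcs contributed by the subtuples coming from a fixed $v$ are precisely the arcs $\left(v_{i},v_{i+1}\right)$ that were \emph{not} cut, i.e.\ exactly the elements of $\operatorname*{Arcs}v$ lying in $G$; their union is therefore $\operatorname*{Arcs}v\cap G$. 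Taking the union over all $v\in C$ and using distributivity gives $\operatorname*{Arcs}C^{\prime}=\bigcup_{v\in C}\left(\operatorname*{Arcs}v\cap G\right)=\left(\bigcup_{v\in C}\operatorname*{Arcs}v\right)\cap G=F\cap G=G$, where the final step uses $G\subseteq F$. This exhibits $G$ as the arc set of the path cover $C^{\prime}$, so $G$ is linear.

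The one delicate point is the bookkeeping in the cutting step: I must make sure that the maximal $G$-runs genuinely partition the entries of each path and that their arc sets recover exactly $\operatorname*{Arcs}v\cap G$ with nothing extra, so that no deleted arc sneaks back in. This is where one should note that a tuple with distinct entries has pairwise distinct consecutive pairs, so each arc of $v$ is determined by its position and cannot be accidentally reintroduced by a neighbouring run.

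An alternative route, which is perhaps cleaner to formalize, is induction on $\left\vert F\setminus G\right\vert$: it suffices to prove that deleting a single arc from a linear set keeps it linear, which is exactly the special case in which one path of $C$ is split into two consecutive subpaths while every other path is left untouched; iterating then removes the arcs of $F\setminus G$ one at a time. I expect either route to be routine once the single-arc-deletion picture is pinned down.
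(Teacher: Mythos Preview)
Your proposal is correct, and the underlying idea is the same as the paper's: deleting arcs from a path cover merely splits paths into shorter contiguous pieces, yielding another path cover. The paper opts for the one-arc-at-a-time induction you describe as your alternative route (reducing to the observation that removing a single arc splits one path into two), whereas your primary write-up performs all the cuts simultaneously; these are two packagings of the same argument.
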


\begin{vershort}

\begin{proof}
It suffices to show that removing a single element $e$ from a linear subset
$F$ of $V\times V$ yields a linear subset. But this follows from the fact that
if we remove an arc $f$ from a path, then the path breaks up into two smaller
paths (the \textquotedblleft part before $f$\textquotedblright\ and the
\textquotedblleft part after $f$\textquotedblright).
\end{proof}
\end{vershort}

\begin{verlong}

\begin{proof}
It suffices to show that removing a single element $e$ from a linear subset
$F$ of $V\times V$ yields a linear subset. But this is easy:

Let $F$ be a linear subset of $V\times V$, and let $e$ be an element of $F$.
We must prove that $F\setminus\left\{  e\right\}  $ is again a linear subset.

The set $F$ is linear, i.e., is the arc set of some path cover of $V$ (by the
definition of \textquotedblleft linear\textquotedblright). In other words, we
have $F=\operatorname*{Arcs}C$ for some path cover $C$ of $V$. Consider this
path cover $C$.

We have $e\in F=\operatorname*{Arcs}C$. Thus, $e$ is an arc of some path $p\in
C$. Consider this $p$.

If we remove an arc $f$ from a path, then the path breaks up into two smaller
paths (the \textquotedblleft part before $f$\textquotedblright\ and the
\textquotedblleft part after $f$\textquotedblright)\ \ \ \ \footnote{To be
specific: If the path is $\left(  v_{1},v_{2},\ldots,v_{k}\right)  $, and if
the arc $f$ is $\left(  v_{i},v_{i+1}\right)  $, then the resulting two
smaller paths are $\left(  v_{1},v_{2},\ldots,v_{i}\right)  $ and $\left(
v_{i+1},v_{i+2},\ldots,v_{k}\right)  $.}. Thus, if we remove the arc $e$ from
the path $p$, then this path $p$ breaks up into the \textquotedblleft part
before $e$\textquotedblright\ and the \textquotedblleft part after
$e$\textquotedblright. Let us denote these two parts by $p^{\prime}$ and
$p^{\prime\prime}$. Let $C^{\prime}$ be the path cover of $V$ obtained from
$C$ by breaking up the path $p$ into its two parts $p^{\prime}$ and
$p^{\prime\prime}$ (that is, let $C^{\prime}:=\left(  C\setminus\left\{
p\right\}  \right)  \cup\left\{  p^{\prime},p^{\prime\prime}\right\}  $).
Then, $\operatorname*{Arcs}\left(  C^{\prime}\right)  =\underbrace{\left(
\operatorname*{Arcs}C\right)  }_{=F}\setminus\left\{  e\right\}
=F\setminus\left\{  e\right\}  $. This shows that $F\setminus\left\{
e\right\}  $ is the arc set of a path cover of $V$ (namely, of $C^{\prime}$).
In other words, $F\setminus\left\{  e\right\}  $ is linear. As explained
above, this completes our proof of Proposition \ref{prop.linear.subset}.
\end{proof}
\end{verlong}

This quickly leads to the following alternative characterization of linear subsets:

\begin{proposition}
\label{prop.linear.listing}Let $V$ be a finite set. Let $F$ be a subset of
$V\times V$. Then:

\begin{enumerate}
\item[\textbf{(a)}] If the subset $F$ is not linear, then there exists no
$V$-listing $v$ satisfying $F\subseteq\operatorname*{Arcs}v$.

\item[\textbf{(b)}] If $F=\operatorname*{Arcs}C$ for some path cover $C$ of
$V$, then there are exactly $\left\vert C\right\vert !$ many $V$-listings $v$
satisfying $F\subseteq\operatorname*{Arcs}v$. (Note that $\left\vert
C\right\vert $ is the number of paths in $C$.)

\item[\textbf{(c)}] The subset $F$ is linear if and only if it is a subset of
$\operatorname*{Arcs}v$ for some $V$-listing $v$.
\end{enumerate}
\end{proposition}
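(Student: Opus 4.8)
The plan is to derive all three parts from a single structural description of the $V$-listings $v$ satisfying $F\subseteq\operatorname*{Arcs}v$, treating part \textbf{(b)} as the crux and reading off \textbf{(a)} and \textbf{(c)} from it. I would handle \textbf{(a)} by contraposition. Suppose some $V$-listing $v$ satisfies $F\subseteq\operatorname*{Arcs}v$. Since the entries of $v$ are distinct and exhaust $V$, the single-path collection $\left\{ v\right\}$ is a path cover of $V$, and its arc set is exactly $\operatorname*{Arcs}v$; hence $\operatorname*{Arcs}v$ is linear. By Proposition \ref{prop.linear.subset}, every subset of a linear set is linear, so $F$ is linear. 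This is the contrapositive of the claim in \textbf{(a)}.

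For part \textbf{(b)}, write $C=\left\{ p_{1},p_{2},\ldots,p_{m}\right\}$ with $m=\left\vert C\right\vert$. The key claim is that a $V$-listing $v$ satisfies $F\subseteq\operatorname*{Arcs}v$ if and only if $v$ is the concatenation of the tuples $p_{1},p_{2},\ldots,p_{m}$ in some order, say $p_{\pi\left( 1\right) }p_{\pi\left( 2\right) }\cdots p_{\pi\left( m\right) }$ for a permutation $\pi$ of $\left[ m\right]$. The forward implication is the easy half: in such a concatenation, the entries of each $p_{i}$ appear consecutively and in their original order, so $\operatorname*{Arcs}p_{i}\subseteq\operatorname*{Arcs}v$ for every $i$, whence $F=\operatorname*{Arcs}C=\bigcup_{i=1}^{m}\operatorname*{Arcs}p_{i}\subseteq\operatorname*{Arcs}v$. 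Once the claim is established, I would observe that the $m!$ concatenations are pairwise distinct (each $p_{i}$ is nonempty, so scanning $v$ from left to right recovers $\pi$), giving exactly $m!=\left\vert C\right\vert !$ listings.

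The backward implication of that claim is the main obstacle, and I expect to spend the bulk of the argument there. Given a $V$-listing $v=\left( v_{1},v_{2},\ldots,v_{n}\right)$ with $F\subseteq\operatorname*{Arcs}v$, each arc $\left( a,b\right)$ of a path $p_{i}=\left( a_{1},a_{2},\ldots,a_{k}\right)$ lies in $\operatorname*{Arcs}v$, which forces $b$ to immediately follow $a$ in $v$; iterating over the successive arcs of $p_{i}$, the entries $a_{1},a_{2},\ldots,a_{k}$ occupy a contiguous block of positions of $v$, in the order dictated by $p_{i}$. Since the paths in $C$ are disjoint and cover $V$, these blocks partition the position set $\left[ n\right]$ into disjoint contiguous intervals; and a partition of $\left[ n\right]$ into contiguous intervals is automatically a left-to-right concatenation of those intervals. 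Hence $v$ is a concatenation of the $p_{i}$ in some order, which completes \textbf{(b)}.

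Finally, part \textbf{(c)} is immediate from the other two. The \textquotedblleft if\textquotedblright\ direction is exactly the contrapositive established in \textbf{(a)}: a subset of $\operatorname*{Arcs}v$ for some $V$-listing $v$ is linear. The \textquotedblleft only if\textquotedblright\ direction follows from \textbf{(b)}, since a linear $F$ equals $\operatorname*{Arcs}C$ for some path cover $C$, and then $\left\vert C\right\vert !\geq 1$ guarantees at least one $V$-listing $v$ with $F\subseteq\operatorname*{Arcs}v$.
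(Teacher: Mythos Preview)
Your proposal is correct and follows essentially the same approach as the paper: part \textbf{(a)} by contraposition via the path cover $\{v\}$ and Proposition~\ref{prop.linear.subset}, part \textbf{(b)} by identifying the admissible $V$-listings with the $\lvert C\rvert !$ concatenations of the paths in $C$, and part \textbf{(c)} as an immediate corollary of \textbf{(a)} and \textbf{(b)}. The only cosmetic slip is that you have swapped the labels ``forward'' and ``backward'' for the two directions of the biconditional in \textbf{(b)}; the arguments themselves are right.
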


\begin{vershort}

\begin{proof}
\textbf{(a)} It clearly suffices to prove the contrapositive: i.e., that if
$F\subseteq\operatorname*{Arcs}v$ for some $V$-listing $v$, then $F$ is linear.

Let us prove this. Assume that $F\subseteq\operatorname*{Arcs}v$ for some
$V$-listing $v$. Consider this $F$. Then, $\operatorname*{Arcs}v$ is linear
(since $\operatorname*{Arcs}v=\operatorname*{Arcs}\left\{  v\right\}  $ for
the path cover $\left\{  v\right\}  $), and thus Proposition
\ref{prop.linear.subset} shows that $F$ is also linear (since $F$ is a subset
of $\operatorname*{Arcs}v$). This completes the proof of part \textbf{(a)}.
\medskip

\textbf{(b)} Assume that $F=\operatorname*{Arcs}C$ for some path cover $C$ of
$V$. Consider this $C$.

Then, each $V$-listing $v$ satisfying $F\subseteq\operatorname*{Arcs}v$ can be
obtained by concatenating the paths in $C$ in some order (and conversely, each
such concatenation is a $V$-listing $v$ satisfying $F\subseteq
\operatorname*{Arcs}v$). There are clearly $\left\vert C\right\vert !$ many
such concatenations (since there are $\left\vert C\right\vert !$ many orders),
and they all lead to different $V$-listings $v$ (since the paths in $C$ are
disjoint and nonempty). Hence, there are exactly $\left\vert C\right\vert !$
many $V$-listings $v$ satisfying $F\subseteq\operatorname*{Arcs}v$. This
proves Proposition \ref{prop.linear.listing} \textbf{(b)}. \medskip

\textbf{(c)} $\Longrightarrow:$ This follows from part \textbf{(b)} (since
$\left\vert C\right\vert !>0$).

$\Longleftarrow:$ This is just the contrapositive of part \textbf{(a)}.
\end{proof}
\end{vershort}

\begin{verlong}

\begin{proof}
\textbf{(a)} We shall prove the contrapositive: i.e., that if there exists a
$V$-listing $v$ satisfying $F\subseteq\operatorname*{Arcs}v$, then $F$ is linear.

Indeed, assume that there exists a $V$-listing $v$ satisfying $F\subseteq
\operatorname*{Arcs}v$. Consider this $v$. Then, $v$ is a path of $V$ that
contains all elements of $V$. Hence, the $1$-element set $\left\{  v\right\}
$ is a path cover of $V$. Its arc set $\operatorname*{Arcs}\left\{  v\right\}
$ is therefore linear (by the definition of \textquotedblleft
linear\textquotedblright). In other words, the set $\operatorname*{Arcs}v$ is
linear (since $\operatorname*{Arcs}\left\{  v\right\}  =\operatorname*{Arcs}%
v$). Hence, Proposition \ref{prop.linear.subset} (applied to
$\operatorname*{Arcs}v$ instead of $F$) shows that any subset of
$\operatorname*{Arcs}v$ is linear as well. Thus, $F$ is linear (since $F$ is a
subset of $\operatorname*{Arcs}v$). This completes the proof of Proposition
\ref{prop.linear.listing} \textbf{(a)}. \medskip

\textbf{(b)} Assume that $F=\operatorname*{Arcs}C$ for some path cover $C$ of
$V$. Consider this $C$.

Then, each $V$-listing $v$ satisfying $F\subseteq\operatorname*{Arcs}v$ can be
obtained by concatenating the paths in $C$ in some
order\footnote{\textit{Proof.} This might appear intuitively clear, but let us
give a proof nevertheless.
\par
Let $v$ be a $V$-listing satisfying $F\subseteq\operatorname*{Arcs}v$. We must
prove that $v$ can be obtained by concatenating the paths in $C$ in some
order.
\par
Let $p_{1},p_{2},\ldots,p_{k}$ be the paths in $C$ (listed with no
repetitions). Thus, we must prove that $v$ is a concatenation of $p_{1}%
,p_{2},\ldots,p_{k}$ in some order.
\par
Note that the paths $p_{1},p_{2},\ldots,p_{k}$ are the distinct paths of the
path cover $C$. Thus, they are disjoint, i.e., have no entries in common.
Moreover, every element of $V$ belongs to one of the paths $p_{1},p_{2}%
,\ldots,p_{k}$ (since $C$ is a path cover of $V$).
\par
Note that $v$ is a $V$-listing. Hence, each element of $V$ appears exactly
once in $v$. In particular, no entry appears more than once in $v$.
\par
Fix $i\in\left[  k\right]  $. Write the path $p_{i}$ as $p_{i}=\left(
w_{1},w_{2},\ldots,w_{\ell}\right)  $.
\par
Let $j\in\left[  \ell-1\right]  $. Then, in particular, $w_{j}$ appears
exactly once in $v$ (since each element of $V$ appears exactly once in $v$).
Moreover, we have
\begin{align*}
\left(  w_{j},w_{j+1}\right)   &  \in\operatorname*{Arcs}\left(  p_{i}\right)
\subseteq\operatorname*{Arcs}C\ \ \ \ \ \ \ \ \ \ \left(  \text{since }%
p_{i}\in C\right) \\
&  =F\subseteq\operatorname*{Arcs}v.
\end{align*}
Thus, the tuple $v$ must have the form $\left(  \ldots,w_{j},w_{j+1}%
,\ldots\right)  $ (where each \textquotedblleft$\ldots$\textquotedblright%
\ stands for an arbitrary number of entries). In other words, $w_{j+1}$ must
be the next entry after $w_{j}$ in the $V$-listing $v$ (since $w_{j}$ appears
exactly once in $v$).
\par
Forget that we fixed $j$. We thus have shown that for each $j\in\left[
\ell-1\right]  $, the element $w_{j+1}$ must be the next entry after $w_{j}$
in the $V$-listing $v$. In other words, the entries $w_{1},w_{2}%
,\ldots,w_{\ell}$ must appear in $v$ in this order and as a contiguous block.
In other words, the tuple $\left(  w_{1},w_{2},\ldots,w_{\ell}\right)  $ is a
factor of the tuple $v$ (where a \textquotedblleft\emph{factor}%
\textquotedblright\ of a tuple $\left(  a_{1},a_{2},\ldots,a_{g}\right)  $
means a contiguous block $\left(  a_{j},a_{j+1},\ldots,a_{j+s}\right)  $ of
this tuple). In other words, the path $p_{i}$ is a factor of the tuple $v$
(since $p_{i}=\left(  w_{1},w_{2},\ldots,w_{\ell}\right)  $).
\par
Forget that we fixed $i$. We thus have shown that for each $i\in\left[
k\right]  $, the path $p_{i}$ is a factor of $v$. In other words, all $k$
paths $p_{1},p_{2},\ldots,p_{k}$ are factors of $v$. These factors are
nonempty (since paths are nonempty by definition) and do not overlap (since
the paths $p_{1},p_{2},\ldots,p_{k}$ have no entries in common), and therefore
appear in $v$ in a well-defined order. In other words, there exists a
permutation $\sigma$ of $\left[  k\right]  $ such that the paths
$p_{\sigma\left(  1\right)  },p_{\sigma\left(  2\right)  },\ldots
,p_{\sigma\left(  k\right)  }$ appear as factors of $v$ in this order (i.e.,
the factor $p_{\sigma\left(  1\right)  }$ appears before $p_{\sigma\left(
2\right)  }$, which in turn appears before $p_{\sigma\left(  3\right)  }$, and
so on). Consider this $\sigma$. Note that every element of $V$ belongs to one
of the paths $p_{\sigma\left(  1\right)  },p_{\sigma\left(  2\right)  }%
,\ldots,p_{\sigma\left(  k\right)  }$ (since every element of $V$ belongs to
one of the paths $p_{1},p_{2},\ldots,p_{k}$).
\par
We claim that these $k$ factors $p_{\sigma\left(  1\right)  },p_{\sigma\left(
2\right)  },\ldots,p_{\sigma\left(  k\right)  }$ cover the entire tuple $v$
(that is, every entry of $v$ belongs to one of these factors). In fact, if
this was not the case, then some entry of $v$ would lie outside all of these
$k$ factors $p_{\sigma\left(  1\right)  },p_{\sigma\left(  2\right)  }%
,\ldots,p_{\sigma\left(  k\right)  }$; but then this same entry would also
appear again inside one of these $k$ factors (since every element of $V$
belongs to one of the paths $p_{\sigma\left(  1\right)  },p_{\sigma\left(
2\right)  },\ldots,p_{\sigma\left(  k\right)  }$), and therefore would appear
in $v$ twice (once outside the $k$ factors, and once again inside one of
them), which would contradict the fact that no entry appears more than once in
$v$. Hence, the $k$ factors $p_{\sigma\left(  1\right)  },p_{\sigma\left(
2\right)  },\ldots,p_{\sigma\left(  k\right)  }$ cover the entire tuple $v$.
Since these $k$ factors do not overlap (because they are just a permutation of
the $k$ factors $p_{1},p_{2},\ldots,p_{k}$, which do not overlap), and since
they appear in $v$ in this order, we thus conclude that $v$ is the
concatenation of $p_{\sigma\left(  1\right)  },p_{\sigma\left(  2\right)
},\ldots,p_{\sigma\left(  k\right)  }$ in this order. Therefore, $v$ is a
concatenation of $p_{1},p_{2},\ldots,p_{k}$ in some order. This completes our
proof.} (and conversely, each such concatenation is a $V$-listing $v$
satisfying $F\subseteq\operatorname*{Arcs}v$). There are clearly $\left\vert
C\right\vert !$ many orders in which the paths in $C$ can be concatenated, and
they all lead to different concatenations (since the paths in $C$ are disjoint
and nonempty\footnote{Here is this argument in more detail: If you concatenate
the paths in $C$ in some order, then the order in which you concatenate them
can be reconstructed from the resulting concatenation, since it is precisely
the order in which the first entries of the paths appear in the concatenation.
This relies on the fact that the paths are nonempty (so that these first
entries exist) and disjoint (so that each of these first entries appears only
once in the concatenation). Thus, different orders in which we can concatenate
the paths lead to different resulting concatenations.}), i.e., to different
$V$-listings $v$. Hence, there are exactly $\left\vert C\right\vert !$ many
$V$-listings $v$ satisfying $F\subseteq\operatorname*{Arcs}v$. This proves
Proposition \ref{prop.linear.listing} \textbf{(b)}. \medskip

\textbf{(c)} $\Longrightarrow:$ Assume that $F$ is linear. Thus, $F$ is the
arc set of a path cover of $V$. In other words, $F=\operatorname*{Arcs}C$ for
some path cover $C$ of $V$. Consider this $C$. Proposition
\ref{prop.linear.listing} \textbf{(b)} yields that there are exactly
$\left\vert C\right\vert !$ many $V$-listings $v$ satisfying $F\subseteq
\operatorname*{Arcs}v$. Hence, there is at least one such $V$-listing $v$
(since $\left\vert C\right\vert !\geq1$). Thus, $F$ is a subset of
$\operatorname*{Arcs}v$ for some $V$-listing $v$ (namely, the $V$-listing we
just mentioned). This proves the \textquotedblleft$\Longrightarrow
$\textquotedblright\ direction of Proposition \ref{prop.linear.listing}
\textbf{(c)}.

$\Longleftarrow:$ Assume that $F$ is a subset of $\operatorname*{Arcs}v$ for
some $V$-listing $v$. In other words, there exists a $V$-listing $v$
satisfying $F\subseteq\operatorname*{Arcs}v$. However, if the set $F$ was not
linear, then Proposition \ref{prop.linear.listing} \textbf{(a)} would yield
that there exists no such $V$-listing; this would contradict the preceding
sentence. Hence, the set $F$ must be linear. This proves the \textquotedblleft%
$\Longleftarrow$\textquotedblright\ direction of Proposition
\ref{prop.linear.listing} \textbf{(c)}.
\end{proof}
\end{verlong}

Next, let us address a technical issue. We defined the notion of a
\textquotedblleft linear subset of $V\times V$\textquotedblright\ using path
covers of $V$. When we say that a certain set is \textquotedblleft
linear\textquotedblright, we are thus tacitly assuming that it is clear what
the relevant set $V$ is. This may cause an ambiguity: Sometimes, two different
sets $V_{1}$ and $V_{2}$ can reasonably qualify as $V$, and we may have a
subset $F$ of $V_{1}\times V_{1}$ that is also a subset of $V_{2}\times V_{2}%
$. In that case, when we say that $F$ is \textquotedblleft
linear\textquotedblright, do we mean that $F$ is linear as a subset of
$V_{1}\times V_{1}$ or as a subset of $V_{2}\times V_{2}$ ? Fortunately, this
does not matter (at least when $V_{1}$ is a subset of $V_{2}$), as the
following proposition shows:

\begin{proposition}
\label{prop.linear.VW}Let $V$ be a finite set. Let $W$ be a subset of $V$. Let
$F$ be a subset of $W\times W$. Then, $F$ is linear as a subset of $W\times W$
if and only if $F$ is linear as a subset of $V\times V$.
\end{proposition}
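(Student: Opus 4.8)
The plan is to prove the two implications separately, exploiting the fact that the arc set of a path cover only "sees" its paths of length $\geq 2$, since a $1$-tuple $\left(v\right)$ has empty arc set $\operatorname*{Arcs}\left(v\right)=\varnothing$. The easy direction is $W$-linear $\Longrightarrow$ $V$-linear. Here I would assume $F=\operatorname*{Arcs}C$ for some path cover $C$ of $W$, and extend $C$ to a path cover $C'$ of $V$ by adjoining the singleton path $\left(v\right)$ for each $v\in V\setminus W$. The set $C'$ is a path cover of $V$, since $C$ covers $W$ disjointly and the new singletons cover $V\setminus W$ disjointly. Because each adjoined singleton contributes no arcs, we get $\operatorname*{Arcs}C'=\operatorname*{Arcs}C=F$, so $F$ is linear as a subset of $V\times V$. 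Nothing here is subtle beyond checking the path-cover axioms, which are immediate.

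The converse, $V$-linear $\Longrightarrow$ $W$-linear, is where the work lies, and the step I expect to be the main obstacle is the following observation: if $F=\operatorname*{Arcs}C'$ for a path cover $C'$ of $V$ and $F\subseteq W\times W$, then every path $p\in C'$ of length $\geq 2$ consists entirely of elements of $W$. Indeed, writing $p=\left(v_{1},v_{2},\ldots,v_{k}\right)$ with $k\geq 2$, each arc $\left(v_{i},v_{i+1}\right)$ lies in $\operatorname*{Arcs}C'=F\subseteq W\times W$, so both of its endpoints lie in $W$; letting $i$ range over $\left[k-1\right]$ shows that all entries $v_{1},v_{2},\ldots,v_{k}$ lie in $W$. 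Singleton paths of $C'$, by contrast, may have their unique entry either in $W$ or in $V\setminus W$.

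Granting this observation, I would set $C:=\{p\in C' : \text{every entry of }p\text{ lies in }W\}$ and verify that $C$ is a path cover of $W$ with $\operatorname*{Arcs}C=F$. For the path-cover property: each $w\in W$ belongs to exactly one path of $C'$, and that path lies entirely in $W$ (by the observation if it is long, and trivially if it is the singleton $\left(w\right)$), hence belongs to $C$; disjointness of the paths in $C$ is inherited from $C'$. For the arc set: the inclusion $\operatorname*{Arcs}C\subseteq F$ is immediate from $C\subseteq C'$, while for the reverse inclusion, any arc of $F$ lies on some $p\in C'$, which therefore has length $\geq 2$ and so lies entirely in $W$, giving $p\in C$ and hence that arc lies in $\operatorname*{Arcs}C$. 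Thus $F=\operatorname*{Arcs}C$ exhibits $F$ as linear over $W\times W$, completing the proof.
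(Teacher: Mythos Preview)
Your proof is correct and takes essentially the same approach as the paper: the $\Longrightarrow$ direction is identical (adjoin singletons for $V\setminus W$), and in the $\Longleftarrow$ direction your observation that every path of length $\geq 2$ in $C'$ lies in $W$ is the contrapositive of the paper's observation that each $v\in V\setminus W$ must be covered by its singleton path, so your $C=\{p\in C':\text{all entries in }W\}$ coincides with the paper's $C'\setminus\{\left(v\right):v\in V\setminus W\}$.
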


\begin{vershort}

\begin{proof}
$\Longrightarrow:$ Assume that $F$ is linear as a subset of $W\times W$. Thus,
$F$ is the arc set of some path cover $C$ of $W$. If we add a trivial path
$\left(  v\right)  $ for each $v \in V \setminus W$ to this path cover $C$,
then it becomes a path cover of $V$, but its arc set does not change (and thus
remains $F$). Hence, $F$ is the arc set of the resulting path cover of $V$. In
other words, $F$ is linear as a subset of $V \times V$. \medskip

$\Longleftarrow:$ Assume that $F$ is linear as a subset of $V\times V$. Thus,
$F$ is the arc set of some path cover $C$ of $V$. Consider this $C$. For each
$v \in V \setminus W$, there must be a path in $C$ that contains $v$, and this
path must be the trivial path $\left(  v\right)  $ (since otherwise, this path
would have at least one arc containing $v$, and this arc would then belong to
$\operatorname{Arcs} C = F$; but this would contradict the fact that $F
\subseteq W \times W$). Hence, the path cover $C$ contains the trivial path
$\left(  v\right)  $ for each $v \in V \setminus W$. Removing all these
trivial paths will turn $C$ into a path cover of $W$, while leaving its arc
set unchanged (so it remains $F$). Hence, $F$ is the arc set of the resulting
path cover of $W$. In other words, $F$ is linear as a subset of $W \times W$.
\end{proof}
\end{vershort}

\begin{verlong}

\begin{proof}
A path $\left(  v_{1},v_{2},\ldots,v_{k}\right)  $ of $V$ will be called
\emph{trivial} if $k=1$, and \emph{nontrivial} otherwise. Clearly, if $v$ is a
trivial path, then $\operatorname*{Arcs}v=\varnothing$. On the other hand, if
$v$ is a nontrivial path, then $\operatorname*{Arcs}v\neq\varnothing$ (since a
path cannot be empty).

Now, we shall prove the \textquotedblleft$\Longrightarrow$\textquotedblright%
\ and \textquotedblleft$\Longleftarrow$\textquotedblright\ directions of
Proposition \ref{prop.linear.VW} separately:

$\Longrightarrow:$ Assume that $F$ is linear as a subset of $W\times W$. Thus,
$F$ is the arc set of some path cover of $W$. Let $C=\left\{  c_{1}%
,c_{2},\ldots,c_{k}\right\}  $ be this path cover; thus,
$F=\operatorname*{Arcs}C$. Now, let $v_{1},v_{2},\ldots,v_{\ell}$ be the
elements of $V\setminus W$ (each listed exactly once), and let us define a set%
\[
D:=\left\{  c_{1},c_{2},\ldots,c_{k},\left(  v_{1}\right)  ,\left(
v_{2}\right)  ,\ldots,\left(  v_{\ell}\right)  \right\}  =C\cup\left\{
\left(  v_{1}\right)  ,\left(  v_{2}\right)  ,\ldots,\left(  v_{\ell}\right)
\right\}  .
\]
Then, $D$ is a path cover of $V$ (in fact, $D$ is just the result of extending
$C$ to a path cover of $V$ by inserting a trivial path $\left(  v\right)  $
for each $v\in V\setminus W$). Furthermore,%
\begin{align*}
\operatorname*{Arcs}D  &  =\operatorname*{Arcs}\left(  C\cup\left\{  \left(
v_{1}\right)  ,\left(  v_{2}\right)  ,\ldots,\left(  v_{\ell}\right)
\right\}  \right)  \ \ \ \ \ \ \ \ \ \ \left(  \text{since }D=C\cup\left\{
\left(  v_{1}\right)  ,\left(  v_{2}\right)  ,\ldots,\left(  v_{\ell}\right)
\right\}  \right) \\
&  =\left(  \operatorname*{Arcs}C\right)  \cup\underbrace{\left(
\operatorname*{Arcs}\left(  \left(  v_{1}\right)  \right)  \right)
\cup\left(  \operatorname*{Arcs}\left(  \left(  v_{2}\right)  \right)
\right)  \cup\cdots\cup\left(  \operatorname*{Arcs}\left(  \left(  v_{\ell
}\right)  \right)  \right)  }_{\substack{=\varnothing\\\text{(since each
trivial path }\left(  v_{i}\right)  \text{ satisfies }\operatorname*{Arcs}%
\left(  \left(  v_{i}\right)  \right)  =\varnothing\text{)}}}\\
&  =\operatorname*{Arcs}C.
\end{align*}
Hence, $F=\operatorname*{Arcs}C=\operatorname*{Arcs}D$. This shows that $F$ is
the arc set of some path cover of $V$ (since $D$ is a path cover of $V$). In
other words, $F$ is linear as a subset of $V\times V$. The \textquotedblleft%
$\Longrightarrow$\textquotedblright\ direction of Proposition
\ref{prop.linear.VW} is thus proved.

$\Longleftarrow:$ Assume that $F$ is linear as a subset of $V\times V$. Thus,
$F$ is the arc set of some path cover of $V$. Let $C=\left\{  c_{1}%
,c_{2},\ldots,c_{k}\right\}  $ be this path cover; thus,
$F=\operatorname*{Arcs}C$. Now, let $v_{1},v_{2},\ldots,v_{\ell}$ be the
elements of $V\setminus W$ (each listed exactly once). Thus, $V\setminus
W=\left\{  v_{1},v_{2},\ldots,v_{\ell}\right\}  $.

Let $i\in\left[  \ell\right]  $. We shall prove that the trivial path $\left(
v_{i}\right)  $ belongs to $C$.

Indeed, from $v_{i}\in V\setminus W$, we obtain $v_{i}\in V$ and $v_{i}\notin
W$. The element $v_{i}$ must clearly belong to some path in $C$ (since $C$ is
a path cover of $V$). Let $p=\left(  p_{1},p_{2},\ldots,p_{r}\right)  $ be
this path. Thus, $v_{i}=p_{s}$ for some $s\in\left[  r\right]  $. Consider
this $s$.

Since the path $p$ belongs to $C$, we have $\operatorname*{Arcs}%
p\subseteq\operatorname*{Arcs}C=F\subseteq W\times W$.

However, if we had $s<r$, then we would have $\left(  p_{s},p_{s+1}\right)
\in\operatorname*{Arcs}p\subseteq W\times W$, which would entail $p_{s}\in W$,
which contradicts $p_{s}=v_{i}\notin W$. Thus, we cannot have $s<r$. Hence,
$s=r$ (since $s\in\left[  r\right]  $).

Furthermore, if we had $s>1$, then we would have $\left(  p_{s-1}%
,p_{s}\right)  \in\operatorname*{Arcs}p\subseteq W\times W$, which would
entail $p_{s}\in W$, which contradicts $p_{s}=v_{i}\notin W$. Thus, we cannot
have $s>1$. Hence, $s=1$ (since $s\in\left[  r\right]  $). Therefore,
$p_{s}=p_{1}$, so that $p_{1}=p_{s}=v_{i}$.

Comparing $s=1$ with $s=r$, we obtain $r=1$, so that $\left(  p_{1}%
,p_{2},\ldots,p_{r}\right)  =\left(  p_{1}\right)  =\left(  v_{i}\right)  $
(since $p_{1}=v_{i}$). Thus, $p=\left(  p_{1},p_{2},\ldots,p_{r}\right)
=\left(  v_{i}\right)  $, so that $\left(  v_{i}\right)  =p\in C$. In other
words, the trivial path $\left(  v_{i}\right)  $ belongs to $C$.

Forget that we fixed $i$. We thus have shown that for each $i\in\left[
\ell\right]  $, the trivial path $\left(  v_{i}\right)  $ belongs to $C$. In
other words, all $\ell$ trivial paths $\left(  v_{1}\right)  ,\left(
v_{2}\right)  ,\ldots,\left(  v_{\ell}\right)  $ belong to $C$. Let us set
$D:=C\setminus\left\{  \left(  v_{1}\right)  ,\left(  v_{2}\right)
,\ldots,\left(  v_{\ell}\right)  \right\}  $. Then,%
\[
C=D\cup\left\{  \left(  v_{1}\right)  ,\left(  v_{2}\right)  ,\ldots,\left(
v_{\ell}\right)  \right\}
\]
(since $\left(  v_{1}\right)  ,\left(  v_{2}\right)  ,\ldots,\left(  v_{\ell
}\right)  $ belong to $C$), so that%
\begin{align*}
\operatorname*{Arcs}C  &  =\operatorname*{Arcs}\left(  D\cup\left\{  \left(
v_{1}\right)  ,\left(  v_{2}\right)  ,\ldots,\left(  v_{\ell}\right)
\right\}  \right) \\
&  =\left(  \operatorname*{Arcs}D\right)  \cup\underbrace{\left(
\operatorname*{Arcs}\left(  \left(  v_{1}\right)  \right)  \right)
\cup\left(  \operatorname*{Arcs}\left(  \left(  v_{2}\right)  \right)
\right)  \cup\cdots\cup\left(  \operatorname*{Arcs}\left(  \left(  v_{\ell
}\right)  \right)  \right)  }_{\substack{=\varnothing\\\text{(since each
trivial path }\left(  v_{i}\right)  \text{ satisfies }\operatorname*{Arcs}%
\left(  \left(  v_{i}\right)  \right)  =\varnothing\text{)}}}\\
&  =\operatorname*{Arcs}D.
\end{align*}
In other words, $F=\operatorname*{Arcs}D$ (since $F=\operatorname*{Arcs}C$).

Recall that $C$ is a path cover of $V$. Hence, each $v\in V$ belongs to
exactly one path in $C$. Thus, it is easy to see that each path in $D$ is a
path of $W$\ \ \ \ \footnote{\textit{Proof.} Assume the contrary. Thus, some
path in $D$ is not a path of $W$. In other words, some path in $D$ contains an
element of $V\setminus W$ (since each path in $D$ is a path of $V$). Let $p$
be this path, and let $v$ be this element. Thus, $p\in D$ and $v\in V\setminus
W$, and $v$ belongs to $p$. Note that $p\in D\subseteq C$ (by the definition
of $D$).
\par
However, $v\in V\setminus W=\left\{  v_{1},v_{2},\ldots,v_{\ell}\right\}  $.
Thus, $v=v_{i}$ for some $i\in\left[  \ell\right]  $. Consider this $i$. Thus,
$v$ belongs to the trivial path $\left(  v_{i}\right)  $ (since $v=v_{i}$). We
have $\left(  v_{i}\right)  \in C$ (since all $\ell$ trivial paths $\left(
v_{1}\right)  ,\left(  v_{2}\right)  ,\ldots,\left(  v_{\ell}\right)  $ belong
to $C$) and $\left(  v_{i}\right)  \notin D$ (since $D$ was defined as
$C\setminus\left\{  \left(  v_{1}\right)  ,\left(  v_{2}\right)
,\ldots,\left(  v_{\ell}\right)  \right\}  $).
\par
Recall that $v$ belongs to exactly one path in $C$ (since $C$ is a path cover
of $V$). Since $v$ belongs to both paths $\left(  v_{i}\right)  $ and $p$
(both of which are paths in $C$, since $\left(  v_{i}\right)  \in C$ and $p\in
C$), this entails that these two paths $\left(  v_{i}\right)  $ and $p$ must
be identical. Hence, $\left(  v_{i}\right)  =p\in D$. But this contradicts
$\left(  v_{i}\right)  \notin D$. This contradiction shows that our assumption
was false, qed.}. Therefore, $D$ is a set of paths of $W$. Furthermore, each
$v\in W$ belongs to exactly one of these paths\footnote{\textit{Proof.} Let
$v\in W$. We must prove that $v$ belongs to exactly one of the paths in $D$.
\par
First of all, $v\in W\subseteq V$. Hence, $v$ belongs to exactly one path in
$C$ (since $C$ is a path cover of $V$).
\par
On the other hand, we have $v\notin\left\{  v_{1},v_{2},\ldots,v_{\ell
}\right\}  $ (because otherwise, we would have $v\in\left\{  v_{1}%
,v_{2},\ldots,v_{\ell}\right\}  =V\setminus W$, which would entail $v\notin
W$, but this would contradict $v\in W$). In other words, $v$ is not one of the
$\ell$ elements $v_{1},v_{2},\ldots,v_{\ell}$. In other words, $v$ belongs to
none of the trivial paths $\left(  v_{1}\right)  ,\left(  v_{2}\right)
,\ldots,\left(  v_{\ell}\right)  $. In other words, $v$ belongs to no paths in
$\left\{  \left(  v_{1}\right)  ,\left(  v_{2}\right)  ,\ldots,\left(
v_{\ell}\right)  \right\}  $.
\par
Now we know that:
\par
\begin{itemize}
\item the element $v$ belongs to exactly one path in $C$, but
\par
\item the element $v$ belongs to no paths in $\left\{  \left(  v_{1}\right)
,\left(  v_{2}\right)  ,\ldots,\left(  v_{\ell}\right)  \right\}  $.
\end{itemize}
\par
Combining these two facts, we see that $v$ must belong to exactly one path in
$C\setminus\left\{  \left(  v_{1}\right)  ,\left(  v_{2}\right)
,\ldots,\left(  v_{\ell}\right)  \right\}  $. In other words, $v$ belongs to
exactly one path in $D$ (since $D=C\setminus\left\{  \left(  v_{1}\right)
,\left(  v_{2}\right)  ,\ldots,\left(  v_{\ell}\right)  \right\}  $). This
completes our proof.}. Hence, $D$ is a path cover of $W$. Since
$F=\operatorname*{Arcs}D$, we thus conclude that $F$ is the arc set of some
path cover of $W$. In other words, $F$ is linear as a subset of $W\times W$.
The \textquotedblleft$\Longleftarrow$\textquotedblright\ direction of
Proposition \ref{prop.linear.VW} is thus proved.
\end{proof}
\end{verlong}

We will also use the following fact:

\begin{proposition}
\label{prop.linear.Vi}Let $V$ be a finite set. Let $V_{1},V_{2},\ldots,V_{k}$
be several disjoint subsets of $V$ such that $V=V_{1}\cup V_{2}\cup\cdots\cup
V_{k}$. For each $i\in\left[  k\right]  $, let $F_{i}$ be a subset of
$V_{i}\times V_{i}$. Let $F=F_{1}\cup F_{2}\cup\cdots\cup F_{k}$. Then, the
set $F$ is linear (as a subset of $V\times V$) if and only if all the subsets
$F_{i}$ for $i\in\left[  k\right]  $ are linear.
\end{proposition}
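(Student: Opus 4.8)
The plan is to reduce everything to two facts already at our disposal: that any subset of a linear set is linear (Proposition \ref{prop.linear.subset}), and that linearity does not depend on the ambient ground set (Proposition \ref{prop.linear.VW}). The governing structural observation is that, because the $V_{i}$ are disjoint and each $F_{i}\subseteq V_{i}\times V_{i}$, no arc of $F$ ever joins two different blocks; equivalently, $F\cap\left( V_{i}\times V_{i}\right) =F_{i}$, while $F\cap\left( V_{i}\times V_{j}\right) =\varnothing$ for $i\neq j$. I would prove the two implications separately.

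For the ``$\Longrightarrow$'' direction I would argue as follows. Assume $F$ is linear as a subset of $V\times V$, and fix $i\in\left[ k\right] $. Then $F_{i}\subseteq F$, so Proposition \ref{prop.linear.subset} shows that $F_{i}$ is linear as a subset of $V\times V$. Since $V_{i}\subseteq V$ and $F_{i}\subseteq V_{i}\times V_{i}$, Proposition \ref{prop.linear.VW} then upgrades this to the statement that $F_{i}$ is linear as a subset of $V_{i}\times V_{i}$. As $i$ was arbitrary, all the $F_{i}$ are linear. This direction needs no new idea; the only thing to watch is the bookkeeping of which ambient set each occurrence of ``linear'' refers to, which is precisely what Proposition \ref{prop.linear.VW} is for.

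For the ``$\Longleftarrow$'' direction I would build a path cover of $V$ by gluing together path covers of the blocks. Assume each $F_{i}$ is linear; by Proposition \ref{prop.linear.VW} we may write $F_{i}=\operatorname*{Arcs}C_{i}$ for some path cover $C_{i}$ of $V_{i}$. Set $C:=C_{1}\cup C_{2}\cup\cdots\cup C_{k}$. The key verification is that $C$ is a path cover of $V$: each $v\in V$ lies in exactly one block $V_{i}$ (the blocks partition $V$), hence in exactly one path of $C_{i}$, and in no path of any $C_{j}$ with $j\neq i$ (every path of $C_{j}$ consists solely of elements of $V_{j}$, and is nonempty, so it cannot contain $v$). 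Once this is checked, the arc sets add up cleanly, $\operatorname*{Arcs}C=\bigcup_{i=1}^{k}\operatorname*{Arcs}C_{i}=\bigcup_{i=1}^{k}F_{i}=F$, so $F$ is the arc set of a path cover of $V$, i.e.\ linear. (An alternative route is to invoke Proposition \ref{prop.linear.listing} \textbf{(c)} and concatenate a $V_{i}$-listing of each block into a single $V$-listing, but the path-cover gluing above is cleaner.)

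The proof has no genuine obstacle; the only places demanding care are (i) tracking the ambient set attached to the word ``linear,'' handled throughout by Proposition \ref{prop.linear.VW}, and (ii) confirming that the union $C$ really satisfies the ``exactly one path'' condition of a path cover, which reduces to the disjointness of the blocks together with the nonemptiness of paths. Empty blocks cause no trouble: if $V_{i}=\varnothing$ then $F_{i}=\varnothing$ is vacuously linear via the empty path cover $C_{i}=\varnothing$, contributing nothing to either $C$ or $F$.
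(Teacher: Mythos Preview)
Your proposal is correct and follows essentially the same approach as the paper: the ``$\Longrightarrow$'' direction via Proposition~\ref{prop.linear.subset} (with Proposition~\ref{prop.linear.VW} to adjust the ambient set), and the ``$\Longleftarrow$'' direction by taking path covers $C_i$ of each $V_i$ with $\operatorname*{Arcs}C_i=F_i$ and gluing them into a path cover $C=C_1\cup\cdots\cup C_k$ of $V$. Your write-up is in fact a bit more explicit than the paper's about the ambient-set bookkeeping and the ``exactly one path'' verification.
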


\begin{vershort}
\begin{proof}
This is straightforward.
\end{proof}
\end{vershort}

\begin{verlong}
\begin{proof}
$\Longrightarrow:$ Assume that $F$ is linear. Then, for each $i\in\left[
k\right]  $, the set $F_{i}$ is a subset of $F$ (since $F=F_{1}\cup F_{2}%
\cup\cdots\cup F_{k}\supseteq F_{i}$) and therefore is also
linear\footnote{Here, we are tacitly using Proposition \ref{prop.linear.VW},
which allows us to equivocate between \textquotedblleft linear as a subset of
$V\times V$\textquotedblright\ and \textquotedblleft linear as a subset of
$V_{i}\times V_{i}$\textquotedblright.} (by Proposition
\ref{prop.linear.subset}). Thus, the \textquotedblleft$\Longrightarrow
$\textquotedblright\ direction of Proposition \ref{prop.linear.Vi} is proved.

$\Longleftarrow:$ Assume that all the subsets $F_{i}$ for $i\in\left[
k\right]  $ are linear. We must prove that $F$ is linear.

Let $i\in\left[  k\right]  $. Then, $F_{i}$ is a linear subset of $V_{i}\times
V_{i}$. In other words, $F_{i}$ is the arc set of some path cover of $V_{i}$.
Let $C_{i}$ be this path cover; thus, $F_{i}=\operatorname*{Arcs}\left(
C_{i}\right)  $.

Forget that we fixed $i$. Thus, for each $i\in\left[  k\right]  $, we have
constructed a path cover $C_{i}$ of $V_{i}$ satisfying $F_{i}%
=\operatorname*{Arcs}\left(  C_{i}\right)  $. It is easy to see that the union
$C_{1}\cup C_{2}\cup\cdots\cup C_{k}$ of these path covers $C_{1},C_{2}%
,\ldots,C_{k}$ is a path cover of $V_{1}\cup V_{2}\cup\cdots\cup V_{k}$ (since
$V_{1},V_{2},\ldots,V_{k}$ are disjoint sets). In other words, $C_{1}\cup
C_{2}\cup\cdots\cup C_{k}$ is a path cover of $V$ (since $V=V_{1}\cup
V_{2}\cup\cdots\cup V_{k}$). Moreover, the arc set of this path cover is%
\begin{align*}
\operatorname*{Arcs}\left(  C_{1}\cup C_{2}\cup\cdots\cup C_{k}\right)   &
=\left(  \operatorname*{Arcs}\left(  C_{1}\right)  \right)  \cup\left(
\operatorname*{Arcs}\left(  C_{2}\right)  \right)  \cup\cdots\cup\left(
\operatorname*{Arcs}\left(  C_{k}\right)  \right) \\
&  =F_{1}\cup F_{2}\cup\cdots\cup F_{k}\ \ \ \ \ \ \ \ \ \ \left(  \text{since
}\operatorname*{Arcs}\left(  C_{i}\right)  =F_{i}\text{ for each }i\in\left[
k\right]  \right) \\
&  =F.
\end{align*}
Hence, $F$ is the arc set of a path cover of $V$ (namely, of the path cover
$C_{1}\cup C_{2}\cup\cdots\cup C_{k}$). In other words, $F$ is linear. This
proves the \textquotedblleft$\Longleftarrow$\textquotedblright\ direction of
Proposition \ref{prop.linear.Vi}.
\end{proof}
\end{verlong}

\subsection{The arrow set of a permutation}

We will now see another way to obtain subsets of $V\times V$:

\begin{definition}
\label{def.Asigma}Let $V$ be a finite set. Let $\sigma$ be a permutation of
$V$. Then, $\mathbf{A}_{\sigma}$ shall denote the subset%
\[
\left\{  \left(  v,\sigma\left(  v\right)  \right)  \ \mid\ v\in V\right\}
=\bigcup_{c\in\operatorname*{Cycs}\sigma}\operatorname*{CArcs}c
\]
of $V\times V$.
\end{definition}

\begin{example}
Let $V=\left\{  1,2,3,4,5,6\right\}  $, and let $\sigma$ be the permutation of
$V$ that sends $1,2,3,4,5,6$ to $2,3,1,5,4,6$ (respectively). Then,
\[
\operatorname*{Cycs}\sigma=\left\{  \left(  1,2,3\right)  ,\ \left(
4,5\right)  ,\ \left(  6\right)  \right\}
\]
and%
\begin{align*}
\mathbf{A}_{\sigma}  &  =\left\{  \left(  1,2\right)  ,\ \left(  2,3\right)
,\ \left(  3,1\right)  ,\ \left(  4,5\right)  ,\ \left(  5,4\right)
,\ \left(  6,6\right)  \right\} \\
&  =\underbrace{\operatorname*{CArcs}\left(  1,2,3\right)  }_{=\left\{
\left(  1,2\right)  ,\ \left(  2,3\right)  ,\ \left(  3,1\right)  \right\}
}\cup\underbrace{\operatorname*{CArcs}\left(  4,5\right)  }_{=\left\{  \left(
4,5\right)  ,\ \left(  5,4\right)  \right\}  }\cup
\underbrace{\operatorname*{CArcs}\left(  6\right)  }_{=\left\{  \left(
6,6\right)  \right\}  }.
\end{align*}

\end{example}

The following is a counterpart to Proposition \ref{prop.linear.listing}
\textbf{(b)}:

\begin{proposition}
\label{prop.linear.Aisg-num}Let $V$ be a finite set. Let $F$ be a subset of
$V\times V$. If $F=\operatorname*{Arcs}C$ for some path cover $C$ of $V$, then
there are exactly $\left\vert C\right\vert !$ many permutations $\sigma
\in\mathfrak{S}_{V}$ satisfying $F\subseteq\mathbf{A}_{\sigma}$. (Note that
$\left\vert C\right\vert $ is the number of paths in $C$.)
\end{proposition}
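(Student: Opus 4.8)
The plan is to turn the condition $F\subseteq\mathbf{A}_{\sigma}$ into a count of bijections between two explicit $m$-element sets, where $m=\left\vert C\right\vert$. First I would fix notation, writing the paths of $C$ as $P_{1},P_{2},\ldots,P_{m}$ with $P_{i}=\left(  P_{i,1},P_{i,2},\ldots,P_{i,\ell_{i}}\right)$. Since $F=\operatorname*{Arcs}C$ is exactly the set of pairs $\left(  P_{i,j},P_{i,j+1}\right)$ with $i\in\left[  m\right]$ and $j\in\left[  \ell_{i}-1\right]$, and since $F\subseteq\mathbf{A}_{\sigma}$ means (by the definition of $\mathbf{A}_{\sigma}$) that $\sigma\left(  u\right)  =v$ for every $\left(  u,v\right)  \in F$, I would record the equivalence: a permutation $\sigma\in\mathfrak{S}_{V}$ satisfies $F\subseteq\mathbf{A}_{\sigma}$ if and only if $\sigma\left(  P_{i,j}\right)  =P_{i,j+1}$ for all $i\in\left[  m\right]$ and all $j\in\left[  \ell_{i}-1\right]$.

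Next I would analyze how much these equations pin down. Let $S:=\left\{  P_{i,1}\ \mid\ i\in\left[  m\right]  \right\}$ and $L:=\left\{  P_{i,\ell_{i}}\ \mid\ i\in\left[  m\right]  \right\}$ be the sets of first and last entries of the paths; since the paths are pairwise disjoint, each of $S$ and $L$ has exactly $m$ elements. The equations prescribe $\sigma\left(  u\right)$ for every non-final entry $u$, i.e., for every $u\in V\setminus L$, and leave $\sigma\left(  u\right)$ unconstrained for $u\in L$. I would then verify that this prescribed partial map is a bijection from $V\setminus L$ onto $V\setminus S$: it is the ``shift by one position'' inside each path, injective because the entries within a path are distinct and distinct paths are disjoint, with image precisely $V\setminus S$. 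Consequently, $\sigma$ satisfies $F\subseteq\mathbf{A}_{\sigma}$ exactly when it extends this fixed bijection $V\setminus L\to V\setminus S$; and extending it to a permutation of $V$ forces $\sigma$ to restrict to a bijection $L\to S$, while conversely every bijection $L\to S$ produces a genuine permutation (injective and surjective because $S$ and $V\setminus S$ partition $V$). The number of bijections $L\to S$ is $\left\vert L\right\vert !=m!$, which gives the claimed count $\left\vert C\right\vert !$.

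The only nonroutine step is the middle one: checking that the prescribed values of $\sigma$ on $V\setminus L$ really do form a bijection onto $V\setminus S$, so that exactly the $m$ elements of $S$ survive as available images for $L$. The one subtlety I would flag is that $S$ and $L$ need not be disjoint -- a length-$1$ path $\left(  v\right)$ contributes the same element $v$ to both $S$ and $L$. This does not disturb the count, since the argument only uses $\left\vert S\right\vert =\left\vert L\right\vert =m$ together with the fact that the free data is a bijection between these two sets; but it is worth noting so that the ``shift'' picture is not misread as asserting $S\cap L=\varnothing$.
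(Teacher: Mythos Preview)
Your proposal is correct and follows essentially the same approach as the paper's proof: both identify the sets of starting points $S$ and ending points $L$ of the paths in $C$, observe that $F\subseteq\mathbf{A}_{\sigma}$ prescribes $\sigma$ on $V\setminus L$ as the ``shift'' map with image $V\setminus S$, and conclude that the remaining freedom is exactly a bijection $L\to S$, of which there are $\left\vert C\right\vert !$. Your explicit remark that $S$ and $L$ need not be disjoint (due to length-$1$ paths) is a nice touch that the paper's short version leaves implicit.
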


\begin{vershort}
\begin{proof}
Assume that $F=\operatorname*{Arcs}C$ for some path cover $C$ of $V$. Consider
this $C$. We shall refer to the paths in $C$ as \textquotedblleft%
$C$-paths\textquotedblright.

Let $k=\left\vert C\right\vert $. Let $s_{1},s_{2},\ldots,s_{k}$ be the
starting points (i.e., first entries) of the $C$-paths, and let $t_{1}%
,t_{2},\ldots,t_{k}$ be their respective ending points (i.e., last entries).
We note that a permutation $\sigma\in\mathfrak{S}_{V}$ satisfies
$F\subseteq\mathbf{A}_{\sigma}$ if and only if it has the property that
$\sigma\left(  v\right)  =w$ whenever $v$ and $w$ are two consecutive entries
of a $C$-path. Thus, the condition $F\subseteq\mathbf{A}_{\sigma}$ uniquely
determines the value $\sigma\left(  v\right)  $ for each $v\in V\setminus
\left\{  t_{1},t_{2},\ldots,t_{k}\right\}  $ (namely, $\sigma\left(  v\right)
$ has to be the next entry after $v$ on the $C$-path that contains $v$), and
uniquely determines the value $\sigma^{-1}\left(  w\right)  $ for each $w\in
V\setminus\left\{  s_{1},s_{2},\ldots,s_{k}\right\}  $ (namely, $\sigma
^{-1}\left(  w\right)  $ has to be the entry just before $v$ on the $C$-path
that contains $v$).

Hence, in order to construct a permutation $\sigma\in\mathfrak{S}_{V}$
satisfying $F\subseteq\mathbf{A}_{\sigma}$, we only need to specify the $k$
values $\sigma\left(  t_{1}\right)  ,\sigma\left(  t_{2}\right)
,\ldots,\sigma\left(  t_{k}\right)  $ (since all other values $\sigma\left(
v\right)  $ are already decided by the requirement $F\subseteq\mathbf{A}%
_{\sigma}$), and we must choose these $k$ values from the set $\left\{
s_{1},s_{2},\ldots,s_{k}\right\}  $ (since all other elements of $V$ have
already been assigned preimages under $\sigma$ by the requirement
$F\subseteq\mathbf{A}_{\sigma}$). Thus, we must choose a bijection from the
$k$-element set $\left\{  t_{1},t_{2},\ldots,t_{k}\right\}  $ to the
$k$-element set $\left\{  s_{1},s_{2},\ldots,s_{k}\right\}  $. This can be
done in $k!$ many ways, i.e., in $\left\vert C\right\vert !$ many ways (since
$k=\left\vert C\right\vert $). Thus, there are exactly $\left\vert
C\right\vert !$ many permutations $\sigma\in\mathfrak{S}_{V}$ satisfying
$F\subseteq\mathbf{A}_{\sigma}$.
\end{proof}
\end{vershort}

\begin{verlong}
Proposition \ref{prop.linear.Aisg-num} is easily proved, but the proof is
tricky to formalize due to its reliance on some enumerative ideas that are
intuitively clear yet notationally intricate. To prepare for this proof, we
begin with some basic enumerative results. First, a notation:

\begin{definition}
An \emph{injection} shall mean an injective map.
\end{definition}

(Of course, this is analogous to the concept of a \emph{bijection}, which
means a bijective map.)

Now, we can state our first enumerative result:\footnote{Recall Convention
\ref{conv.number}.}

\begin{proposition}
\label{prop.count.inj-maps.extend}Let $X$, $Y$ and $Z$ be three finite sets
such that $Y\subseteq X$. Let $f:Y\rightarrow Z$ be any injection. Then,%
\[
\left(  \text{\# of injections }g:X\rightarrow Z\text{ such that }\left.
g\mid_{Y}\right.  =f\right)  =\prod_{k=0}^{\left\vert X\right\vert -\left\vert
Y\right\vert -1}\left(  \left\vert Z\right\vert -\left\vert Y\right\vert
-k\right)  .
\]

\end{proposition}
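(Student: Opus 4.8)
The plan is to reduce the count to the standard enumeration of injections between two finite sets. The key observation is that an injection $g : X \to Z$ extending $f$ is completely determined by its restriction to $X \setminus Y$, and that this restriction must be an injection from $X \setminus Y$ into $Z \setminus f\left(Y\right)$.

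First I would define the map $\Phi$ that sends each injection $g : X \to Z$ satisfying $g\mid_Y = f$ to its restriction $g\mid_{X \setminus Y}$, and claim that $\Phi$ is a bijection from the set of all such injections $g$ onto the set of all injections $X \setminus Y \to Z \setminus f\left(Y\right)$. To see that $\Phi$ is well-defined, note that if $x \in X \setminus Y$, then $g\left(x\right) \neq g\left(y\right) = f\left(y\right)$ for all $y \in Y$ (by injectivity of $g$ together with $g\mid_Y = f$), so $g\left(x\right) \in Z \setminus f\left(Y\right)$; moreover $g\mid_{X\setminus Y}$ is injective as a restriction of the injective map $g$. For the inverse, given any injection $h : X \setminus Y \to Z \setminus f\left(Y\right)$, I would glue $f$ and $h$ into a single map $g : X \to Z$ (setting $g\mid_Y = f$ and $g\mid_{X \setminus Y} = h$); this $g$ is injective because the images $f\left(Y\right)$ and $h\left(X \setminus Y\right) \subseteq Z \setminus f\left(Y\right)$ are disjoint while $f$ and $h$ are each injective. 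These two constructions are mutually inverse, so $\Phi$ is a bijection, and hence the number we seek equals the number of injections $X \setminus Y \to Z \setminus f\left(Y\right)$.

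It then remains to count the injections from one finite set into another. Using $Y \subseteq X$, we have $\left\vert X \setminus Y \right\vert = \left\vert X \right\vert - \left\vert Y \right\vert$, and using the injectivity of $f$, we have $\left\vert Z \setminus f\left(Y\right) \right\vert = \left\vert Z \right\vert - \left\vert Y \right\vert$. The number of injections from an $m$-element set into an $n$-element set is the falling factorial $\prod_{k=0}^{m-1}\left(n - k\right)$ (easily proved by induction on $m$, by choosing the images of the elements one at a time and noting that after $k$ elements have been assigned there remain $n - k$ admissible values for the next one). Substituting $m = \left\vert X \right\vert - \left\vert Y \right\vert$ and $n = \left\vert Z \right\vert - \left\vert Y \right\vert$ yields exactly the claimed product.

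The computation is routine, so there is no genuine obstacle beyond bookkeeping; the one point deserving care is the disjointness of the two images $f\left(Y\right)$ and $h\left(X \setminus Y\right)$, which is what guarantees injectivity of the glued map $g$. It is also worth noting that when $\left\vert X \right\vert > \left\vert Z \right\vert$ the product correctly vanishes (one of its factors is $0$), matching the nonexistence of injections $X \to Z$ in that case.
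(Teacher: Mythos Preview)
Your proof is correct. The approach, however, differs from the paper's: the paper proceeds by induction on $\left\vert X\setminus Y\right\vert$, at each step fixing a single new element $p\in X\setminus Y$, splitting the count according to the value $g\left(p\right)\in Z\setminus f\left(Y\right)$, and invoking the induction hypothesis on the enlarged set $Y'=Y\cup\left\{p\right\}$. Your argument instead establishes in one stroke a bijection between the extensions of $f$ and the injections $X\setminus Y\to Z\setminus f\left(Y\right)$, then appeals to the standard falling-factorial count for the latter. Your route is shorter and more conceptual; the paper's route is more self-contained in that it does not defer any part of the counting to a cited fact, but at the cost of several auxiliary claims. Both are entirely standard, and your bijection is essentially what the paper's induction unwinds to.
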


\begin{proof}
We proceed by induction on $\left\vert X\setminus Y\right\vert $:

\textit{Base case:} If $\left\vert X\setminus Y\right\vert =0$, then the claim
of Proposition \ref{prop.count.inj-maps.extend} is easy to
verify\footnote{\textit{Proof.} Assume that $\left\vert X\setminus
Y\right\vert =0$. Then, $X\setminus Y=\varnothing$, so that $X\subseteq Y$.
Combining this with $Y\subseteq X$, we obtain $Y=X$. Hence, for any injection
$g:X\rightarrow Z$, we have $\left.  g\mid_{Y}\right.  =\left.  g\mid
_{X}\right.  =g$. Thus,%
\begin{align*}
&  \left(  \text{\# of injections }g:\underbrace{X}_{=Y}\rightarrow Z\text{
such that }\underbrace{\left.  g\mid_{Y}\right.  }_{=g}=f\right) \\
&  =\left(  \text{\# of injections }g:Y\rightarrow Z\text{ such that
}g=f\right) \\
&  =1
\end{align*}
(since $f$ itself is an injection $g:Y\rightarrow Z$ such that $g=f$, and
clearly there are no other such injections). Comparing this with%
\begin{align*}
&  \prod_{k=0}^{\left\vert X\right\vert -\left\vert Y\right\vert -1}\left(
\left\vert Z\right\vert -\left\vert Y\right\vert -k\right) \\
&  =\prod_{k=0}^{-1}\left(  \left\vert Z\right\vert -\left\vert Y\right\vert
-k\right)  \ \ \ \ \ \ \ \ \ \ \left(  \text{since }\left\vert X\right\vert
-\left\vert \underbrace{Y}_{=X}\right\vert -1=\left\vert X\right\vert
-\left\vert X\right\vert -1=-1\right) \\
&  =\left(  \text{empty product}\right)  =1,
\end{align*}
we obtain%
\[
\left(  \text{\# of injections }g:X\rightarrow Z\text{ such that }\left.
g\mid_{Y}\right.  =f\right)  =\prod_{k=0}^{\left\vert X\right\vert -\left\vert
Y\right\vert -1}\left(  \left\vert Z\right\vert -\left\vert Y\right\vert
-k\right)  .
\]
Thus, Proposition \ref{prop.count.inj-maps.extend} is proved under the
assumption that $\left\vert X\setminus Y\right\vert =0$.}. This completes the
base case.

\textit{Induction step:} Let $m\in\mathbb{N}$. Assume (as the induction
hypothesis) that Proposition \ref{prop.count.inj-maps.extend} holds for
$\left\vert X\setminus Y\right\vert =m$. We must now prove that Proposition
\ref{prop.count.inj-maps.extend} holds for $\left\vert X\setminus Y\right\vert
=m+1$ as well.

So let $X$, $Y$ and $Z$ be three finite sets such that $Y\subseteq X$. Let
$f:Y\rightarrow Z$ be any injection. Assume that $\left\vert X\setminus
Y\right\vert =m+1$. We must then prove that%
\[
\left(  \text{\# of injections }g:X\rightarrow Z\text{ such that }\left.
g\mid_{Y}\right.  =f\right)  =\prod_{k=0}^{\left\vert X\right\vert -\left\vert
Y\right\vert -1}\left(  \left\vert Z\right\vert -\left\vert Y\right\vert
-k\right)  .
\]

It is well-known that if $A$ and $B$ are two finite sets, and if
$\varphi:A\rightarrow B$ is an injection, then $\left\vert \varphi\left(
A\right)  \right\vert =\left\vert A\right\vert $ (since an injection sends
distinct elements to distinct elements, and thus all the $\left\vert
A\right\vert $ distinct elements of $A$ give rise to $\left\vert A\right\vert
$ distinct elements of $\varphi\left(  A\right)  $). We can apply this to
$A=Y$ and $B=Z$ and $\varphi=f$ (since $f:Y\rightarrow Z$ is an injection);
thus we obtain $\left\vert f\left(  Y\right)  \right\vert =\left\vert
Y\right\vert $.

From $Y\subseteq X$, we obtain $\left\vert X\setminus Y\right\vert =\left\vert
X\right\vert -\left\vert Y\right\vert $, so that $\left\vert X\right\vert
-\left\vert Y\right\vert =\left\vert X\setminus Y\right\vert =m+1\geq1$ (since
$m\geq0$). Hence, $\left\vert X\right\vert -\left\vert Y\right\vert -1\geq0$.

We have $\left\vert X\setminus Y\right\vert =m+1>m\geq0$, so that the set
$X\setminus Y$ is nonempty. In other words, the set $X\setminus Y$ has at
least one element $p$. Consider this $p$. (We can choose $p$ arbitrarily, but
we then keep it fixed for the rest of this proof.)

Thus, $p\in X\setminus Y$. In other words, $p\in X$ and $p\notin Y$. Let
$Y^{\prime}$ be the set $Y\cup\left\{  p\right\}  $. Thus,%
\[
Y^{\prime}=Y\cup\left\{  p\right\}  \subseteq X\ \ \ \ \ \ \ \ \ \ \left(
\text{since }Y\subseteq X\text{ and }p\in X\right)  .
\]
Furthermore, from $Y^{\prime}=Y\cup\left\{  p\right\}  $, we obtain%
\[
\left\vert Y^{\prime}\right\vert =\left\vert Y\cup\left\{  p\right\}
\right\vert =\left\vert Y\right\vert +1\ \ \ \ \ \ \ \ \ \ \left(  \text{since
}p\notin Y\right)  .
\]
Since $Y^{\prime}\subseteq X$, we have%
\begin{align*}
\left\vert X\setminus Y^{\prime}\right\vert  &  =\left\vert X\right\vert
-\underbrace{\left\vert Y^{\prime}\right\vert }_{=\left\vert Y\right\vert
+1}=\left\vert X\right\vert -\left(  \left\vert Y\right\vert +1\right)
=\underbrace{\left\vert X\right\vert -\left\vert Y\right\vert }%
_{\substack{=\left\vert X\setminus Y\right\vert \\\text{(since }Y\subseteq
X\text{)}}}-1\\
&  =\underbrace{\left\vert X\setminus Y\right\vert }_{=m+1}-1=\left(
m+1\right)  -1=m.
\end{align*}

From $Y^{\prime}=Y\cup\left\{  p\right\}  $, we also obtain
\[
Y^{\prime}\setminus\left\{  p\right\}  =\left(  Y\cup\left\{  p\right\}
\right)  \setminus\left\{  p\right\}  =Y\ \ \ \ \ \ \ \ \ \ \left(
\text{since }p\notin Y\right)  .
\]
Also,%
\[
Y\subseteq Y\cup\left\{  p\right\}  =Y^{\prime}.
\]

If $y\in Y^{\prime}$ is an element that satisfies $y\neq p$, then $y\in Y$
(since $y\in Y^{\prime}$ and $y\neq p$ lead to $y\in Y^{\prime}\setminus
\left\{  p\right\}  =Y$), and therefore $f\left(  y\right)  $ is well-defined
(since $f:Y\rightarrow Z$ is a map).

For any $z\in Z\setminus f\left(  Y\right)  $, we define a map
$f_{p\rightarrow z}:Y^{\prime}\rightarrow Z$ by setting%
\[
f_{p\rightarrow z}\left(  y\right)  =%
\begin{cases}
z, & \text{if }y=p;\\
f\left(  y\right)  , & \text{if }y\neq p
\end{cases}
\ \ \ \ \ \ \ \ \ \ \text{for each }y\in Y^{\prime}.
\]
This is well-defined, because if $y\in Y^{\prime}$ is an element that
satisfies $y\neq p$, then $f\left(  y\right)  $ is well-defined (as we saw in
the previous paragraph).

Now we claim the following:

\begin{statement}
\textit{Claim 1:} Let $z\in Z\setminus f\left(  Y\right)  $. Then, the map
$f_{p\rightarrow z}:Y^{\prime}\rightarrow Z$ is an injection.
\end{statement}

\begin{proof}
[Proof of Claim 1.]Let $u$ and $v$ be two elements of $Y^{\prime}$ satisfying
$f_{p\rightarrow z}\left(  u\right)  =f_{p\rightarrow z}\left(  v\right)  $.
We shall show that $u=v$.

Indeed, we are in one of the following four cases:

\textit{Case 1:} We have $u=p$ and $v=p$.

\textit{Case 2:} We have $u=p$ but not $v=p$.

\textit{Case 3:} We have $v=p$ but not $u=p$.

\textit{Case 4:} We have neither $u=p$ nor $v=p$.

Let us first consider Case 1. In this case, we have $u=p$ and $v=p$. Hence,
$u=p=v$. Thus, $u=v$ has been proved in Case 1.

Let us now consider Case 2. In this case, we have $u=p$ but not $v=p$. Hence,
$v\neq p$ (since we do not have $v=p$). Combining $v\in Y^{\prime}$ with
$v\neq p$, we obtain $v\in Y^{\prime}\setminus\left\{  p\right\}  =Y$.

The definition of $f_{p\rightarrow z}$ yields%
\[
f_{p\rightarrow z}\left(  u\right)  =%
\begin{cases}
z, & \text{if }u=p;\\
f\left(  u\right)  , & \text{if }u\neq p
\end{cases}
\ \ =z\ \ \ \ \ \ \ \ \ \ \left(  \text{since }u=p\right)  ,
\]
so that
\begin{align*}
z  &  =f_{p\rightarrow z}\left(  u\right)  =f_{p\rightarrow z}\left(  v\right)
\\
&  =%
\begin{cases}
z, & \text{if }v=p;\\
f\left(  v\right)  , & \text{if }v\neq p
\end{cases}
\ \ \ \ \ \ \ \ \ \ \left(  \text{by the definition of }f_{p\rightarrow
z}\right) \\
&  =f\left(  v\right)  \ \ \ \ \ \ \ \ \ \ \left(  \text{since }v\neq p\right)
\\
&  \in f\left(  Y\right)  \ \ \ \ \ \ \ \ \ \ \left(  \text{since }v\in
Y\right)  .
\end{align*}
However, from $z\in Z\setminus f\left(  Y\right)  $, we obtain $z\notin
f\left(  Y\right)  $. This contradicts $z\in f\left(  Y\right)  $. Thus we
have obtained a contradiction in Case 2. Hence, Case 2 cannot occur.

A similar argument (with the roles of $u$ and $v$ swapped) shows that Case 3
cannot occur.

Let us finally consider Case 4. In this case, we have neither $u=p$ nor $v=p$.
In other words, we have $u\neq p$ and $v\neq p$. Combining $v\in Y^{\prime}$
with $v\neq p$, we obtain $v\in Y^{\prime}\setminus\left\{  p\right\}  =Y$.
Similarly, $u\in Y$.

The definition of $f_{p\rightarrow z}$ yields%
\[
f_{p\rightarrow z}\left(  u\right)  =%
\begin{cases}
z, & \text{if }u=p;\\
f\left(  u\right)  , & \text{if }u\neq p
\end{cases}
\ \ =f\left(  u\right)  \ \ \ \ \ \ \ \ \ \ \left(  \text{since }u\neq
p\right)  .
\]
Hence,%
\begin{align*}
f\left(  u\right)   &  =f_{p\rightarrow z}\left(  u\right)  =f_{p\rightarrow
z}\left(  v\right) \\
&  =%
\begin{cases}
z, & \text{if }v=p;\\
f\left(  v\right)  , & \text{if }v\neq p
\end{cases}
\ \ \ \ \ \ \ \ \ \ \left(  \text{by the definition of }f_{p\rightarrow
z}\right) \\
&  =f\left(  v\right)  \ \ \ \ \ \ \ \ \ \ \left(  \text{since }v\neq
p\right)  .
\end{align*}

However, the map $f$ is an injection, i.e., is injective. Thus, if $a$ and $b$
are two elements of $Y$ satisfying $f\left(  a\right)  =f\left(  b\right)  $,
then $a=b$. Applying this to $a=u$ and $b=v$, we obtain $u=v$ (since $f\left(
u\right)  =f\left(  v\right)  $). Thus, we have proved $u=v$ in Case 4.

Let us summarize: We have proved that Cases 2 and 3 cannot occur; thus, we
must actually be in one of the two Cases 1 and 4. But we have also proved that
$u=v$ in each of the latter two Cases 1 and 4. Hence, $u=v$ always holds.

Forget that we fixed $u$ and $v$. We thus have shown that if $u$ and $v$ are
two elements of $Y^{\prime}$ satisfying $f_{p\rightarrow z}\left(  u\right)
=f_{p\rightarrow z}\left(  v\right)  $, then $u=v$. In other words, the map
$f_{p\rightarrow z}:Y^{\prime}\rightarrow Z$ is injective, i.e., an injection.
This proves Claim 1.
\end{proof}

\begin{statement}
\textit{Claim 2:} Let $g:X\rightarrow Z$ be any injection such that $\left.
g\mid_{Y}\right.  =f$. Then, $g\left(  p\right)  \in Z\setminus f\left(
Y\right)  $.
\end{statement}

\begin{proof}
[Proof of Claim 2.]Clearly, $g\left(  p\right)  \in Z$. We shall now show that
$g\left(  p\right)  \notin f\left(  Y\right)  $.

Indeed, assume the contrary. Thus, $g\left(  p\right)  \in f\left(  Y\right)
$. In other words, $g\left(  p\right)  =f\left(  y\right)  $ for some $y\in
Y$. Consider this $y$. From $y\in Y$, we obtain $\left(  g\mid_{Y}\right)
\left(  y\right)  =g\left(  y\right)  $. Hence, $g\left(  y\right)
=\underbrace{\left(  g\mid_{Y}\right)  }_{=f}\left(  y\right)  =f\left(
y\right)  =g\left(  p\right)  $ (since $g\left(  p\right)  =f\left(  y\right)
$).

However, the map $g$ is an injection, i.e., is injective. Hence, if $a$ and
$b$ are two elements of $X$ satisfying $g\left(  a\right)  =g\left(  b\right)
$, then $a=b$. Applying this to $a=y$ and $b=p$, we obtain $y=p$ (since
$g\left(  y\right)  =g\left(  p\right)  $). Hence, $p=y\in Y$. But this
contradicts $p\notin Y$. This contradiction shows that our assumption was false.

Hence, $g\left(  p\right)  \notin f\left(  Y\right)  $ is proved. Now,
combining $g\left(  p\right)  \in Z$ with $g\left(  p\right)  \notin f\left(
Y\right)  $, we obtain $g\left(  p\right)  \in Z\setminus f\left(  Y\right)
$. This proves Claim 2.
\end{proof}

\begin{statement}
\textit{Claim 3:} Let $g:X\rightarrow Z$ be any injection. Let $z\in
Z\setminus f\left(  Y\right)  $. Then, the statement \textquotedblleft$\left.
g\mid_{Y}\right.  =f$ and $g\left(  p\right)  =z$\textquotedblright\ is
equivalent to the statement \textquotedblleft$\left.  g\mid_{Y^{\prime}%
}\right.  =f_{p\rightarrow z}$\textquotedblright.
\end{statement}

\begin{proof}
[Proof of Claim 3.]We must prove that these two statements are equivalent,
i.e., that each of them implies the other.

Let us first show that the statement \textquotedblleft$\left.  g\mid
_{Y}\right.  =f$ and $g\left(  p\right)  =z$\textquotedblright\ implies the
statement \textquotedblleft$\left.  g\mid_{Y^{\prime}}\right.
=f_{p\rightarrow z}$\textquotedblright.

\textit{Proof that \textquotedblleft}$\left.  g\mid_{Y}\right.  =f$\textit{
and }$g\left(  p\right)  =z$\textit{\textquotedblright\ implies
\textquotedblleft}$\left.  g\mid_{Y^{\prime}}\right.  =f_{p\rightarrow z}%
$\textit{\textquotedblright:} Assume that the statement \textquotedblleft%
$\left.  g\mid_{Y}\right.  =f$ and $g\left(  p\right)  =z$\textquotedblright%
\ holds. We must prove that \textquotedblleft$\left.  g\mid_{Y^{\prime}%
}\right.  =f_{p\rightarrow z}$\textquotedblright\ holds as well.

Indeed, let $y\in Y^{\prime}$. We shall prove the equality $g\left(  y\right)
=f_{p\rightarrow z}\left(  y\right)  $. This equality is easily proved in the
case when $y=p$\ \ \ \ \footnote{\textit{Proof.} Assume that $y=p$. We must
prove that $g\left(  y\right)  =f_{p\rightarrow z}\left(  y\right)  $.
\par
Indeed, from $y=p$, we obtain $g\left(  y\right)  =g\left(  p\right)  =z$
(since we assumed \textquotedblleft$\left.  g\mid_{Y}\right.  =f$ and
$g\left(  p\right)  =z$\textquotedblright). On the other hand, the definition
of $f_{p\rightarrow z}$ yields%
\[
f_{p\rightarrow z}\left(  y\right)  =%
\begin{cases}
z, & \text{if }y=p;\\
f\left(  y\right)  , & \text{if }y\neq p
\end{cases}
\ \ =z\ \ \ \ \ \ \ \ \ \ \left(  \text{since }y=p\right)  .
\]
Comparing this with $g\left(  y\right)  =z$, we obtain $g\left(  y\right)
=f_{p\rightarrow z}\left(  y\right)  $. Qed.}. Thus, for the rest of this
proof, we WLOG assume that $y\neq p$. Combining $y\in Y^{\prime}$ with $y\neq
p$, we obtain $y\in Y^{\prime}\setminus\left\{  p\right\}  =Y$. Hence,
$\left(  g\mid_{Y}\right)  \left(  y\right)  =g\left(  y\right)  $. However,
$\left.  g\mid_{Y}\right.  =f$ (since we assumed \textquotedblleft$\left.
g\mid_{Y}\right.  =f$ and $g\left(  p\right)  =z$\textquotedblright). Thus,
$\underbrace{\left(  g\mid_{Y}\right)  }_{=f}\left(  y\right)  =f\left(
y\right)  $. Furthermore, the definition of $f_{p\rightarrow z}$ yields%
\begin{align*}
f_{p\rightarrow z}\left(  y\right)   &  =%
\begin{cases}
z, & \text{if }y=p;\\
f\left(  y\right)  , & \text{if }y\neq p
\end{cases}
\ \ =f\left(  y\right)  \ \ \ \ \ \ \ \ \ \ \left(  \text{since }y\neq
p\right) \\
&  =\left(  g\mid_{Y}\right)  \left(  y\right)  \ \ \ \ \ \ \ \ \ \ \left(
\text{since }\left(  g\mid_{Y}\right)  \left(  y\right)  =f\left(  y\right)
\right) \\
&  =g\left(  y\right)  .
\end{align*}
In other words, $g\left(  y\right)  =f_{p\rightarrow z}\left(  y\right)  $.
Thus, we have proved the equality $g\left(  y\right)  =f_{p\rightarrow
z}\left(  y\right)  $.

Hence, $\left(  g\mid_{Y^{\prime}}\right)  \left(  y\right)  =g\left(
y\right)  =f_{p\rightarrow z}\left(  y\right)  $.

Forget that we fixed $y$. We thus have shown that $\left(  g\mid_{Y^{\prime}%
}\right)  \left(  y\right)  =f_{p\rightarrow z}\left(  y\right)  $ for each
$y\in Y^{\prime}$. In other words, $\left.  g\mid_{Y^{\prime}}\right.
=f_{p\rightarrow z}$. We conclude that the statement \textquotedblleft$\left.
g\mid_{Y^{\prime}}\right.  =f_{p\rightarrow z}$\textquotedblright\ holds.

Thus, we have proved that the statement \textquotedblleft$\left.  g\mid
_{Y}\right.  =f$ and $g\left(  p\right)  =z$\textquotedblright\ implies the
statement \textquotedblleft$\left.  g\mid_{Y^{\prime}}\right.
=f_{p\rightarrow z}$\textquotedblright.

Let us now prove the reverse implication:

\textit{Proof that \textquotedblleft}$\left.  g\mid_{Y^{\prime}}\right.
=f_{p\rightarrow z}$\textit{\textquotedblright\ implies \textquotedblleft%
}$\left.  g\mid_{Y}\right.  =f$\textit{ and }$g\left(  p\right)
=z$\textquotedblright\textit{:} Assume that the statement \textquotedblleft%
$\left.  g\mid_{Y^{\prime}}\right.  =f_{p\rightarrow z}$\textquotedblright%
\ \ holds. We must prove that \textquotedblleft$\left.  g\mid_{Y}\right.  =f$
and $g\left(  p\right)  =z$\textquotedblright\ holds as well.

Indeed, we have $\left.  g\mid_{Y^{\prime}}\right.  =f_{p\rightarrow z}$
(since we assumed that \textquotedblleft$\left.  g\mid_{Y^{\prime}}\right.
=f_{p\rightarrow z}$\textquotedblright\ \ holds). Now, for each $y\in
Y^{\prime}$, we have%
\begin{align}
g\left(  y\right)   &  =\underbrace{\left(  g\mid_{Y^{\prime}}\right)
}_{=f_{p\rightarrow z}}\left(  y\right)  \ \ \ \ \ \ \ \ \ \ \left(
\text{since }y\in Y^{\prime}\right) \nonumber\\
&  =f_{p\rightarrow z}\left(  y\right) \nonumber\\
&  =%
\begin{cases}
z, & \text{if }y=p;\\
f\left(  y\right)  , & \text{if }y\neq p
\end{cases}
\label{pf.prop.count.inj-maps.extend.c4.pf.4}%
\end{align}
(by the definition of $f_{p\rightarrow z}$).

Hence, each $y\in Y$ satisfies%
\begin{align*}
\left(  g\mid_{Y}\right)  \left(  y\right)   &  =g\left(  y\right)
\ \ \ \ \ \ \ \ \ \ \left(  \text{since }y\in Y\right) \\
&  =%
\begin{cases}
z, & \text{if }y=p;\\
f\left(  y\right)  , & \text{if }y\neq p
\end{cases}
\ \ \ \ \ \ \ \ \ \ \left(  \text{by
(\ref{pf.prop.count.inj-maps.extend.c4.pf.4}) (since }y\in Y\subseteq
Y^{\prime}\text{)}\right) \\
&  =f\left(  y\right)  \ \ \ \ \ \ \ \ \ \ \left(  \text{since }y\neq p\text{
(because }y\in Y\text{, but }p\notin Y\text{)}\right)  .
\end{align*}
In other words, $\left.  g\mid_{Y}\right.  =f$. Furthermore, we have
$p\in\left\{  p\right\}  \subseteq Y\cup\left\{  p\right\}  =Y^{\prime}$.
Hence, (\ref{pf.prop.count.inj-maps.extend.c4.pf.4}) (applied to $y=p$) yields%
\[
g\left(  p\right)  =%
\begin{cases}
z, & \text{if }p=p;\\
f\left(  p\right)  , & \text{if }p\neq p
\end{cases}
\ \ =z\ \ \ \ \ \ \ \ \ \ \left(  \text{since }p=p\right)  .
\]

Thus, we have shown that $\left.  g\mid_{Y}\right.  =f$ and $g\left(
p\right)  =z$. In other words, \textquotedblleft$\left.  g\mid_{Y}\right.  =f$
and $g\left(  p\right)  =z$\textquotedblright\ holds. This completes the proof
that the statement \textquotedblleft$\left.  g\mid_{Y^{\prime}}\right.
=f_{p\rightarrow z}$\textquotedblright\ implies \textquotedblleft$\left.
g\mid_{Y}\right.  =f$ and $g\left(  p\right)  =z$\textquotedblright.

Altogether, we have now shown that the statement \textquotedblleft$\left.
g\mid_{Y}\right.  =f$ and $g\left(  p\right)  =z$\textquotedblright\ implies
the statement \textquotedblleft$\left.  g\mid_{Y^{\prime}}\right.
=f_{p\rightarrow z}$\textquotedblright, and vice versa. In other words, these
two statements are equivalent. This proves Claim 3.
\end{proof}

\begin{statement}
\textit{Claim 4:} Let $z\in Z\setminus f\left(  Y\right)  $. Then,%
\begin{align*}
&  \left(  \text{\# of injections }g:X\rightarrow Z\text{ such that }\left.
g\mid_{Y}\right.  =f\text{ and }g\left(  p\right)  =z\right) \\
&  =\prod_{k=1}^{\left\vert X\right\vert -\left\vert Y\right\vert -1}\left(
\left\vert Z\right\vert -\left\vert Y\right\vert -k\right)  .
\end{align*}

\end{statement}

\begin{proof}
[Proof of Claim 4.]Recall that $Y^{\prime}\subseteq X$ and $\left\vert
X\setminus Y^{\prime}\right\vert =m$. Also, the map $f_{p\rightarrow
z}:Y^{\prime}\rightarrow Z$ is an injection (by Claim 1). However, our
induction hypothesis says that Proposition \ref{prop.count.inj-maps.extend}
holds for $\left\vert X\setminus Y\right\vert =m$. Thus, we can apply
Proposition \ref{prop.count.inj-maps.extend} to $Y^{\prime}$ and
$f_{p\rightarrow z}$ instead of $Y$ and $f$. As a result, we obtain%
\begin{align*}
&  \left(  \text{\# of injections }g:X\rightarrow Z\text{ such that }\left.
g\mid_{Y^{\prime}}\right.  =f_{p\rightarrow z}\right) \\
&  =\prod_{k=0}^{\left\vert X\right\vert -\left\vert Y^{\prime}\right\vert
-1}\left(  \left\vert Z\right\vert -\left\vert Y^{\prime}\right\vert -k\right)
\\
&  =\underbrace{\prod_{k=0}^{\left\vert X\right\vert -\left(  \left\vert
Y\right\vert +1\right)  -1}}_{\substack{=\prod_{k=0}^{\left\vert X\right\vert
-\left\vert Y\right\vert -1-1}\\\text{(since }\left\vert X\right\vert -\left(
\left\vert Y\right\vert +1\right)  -1=\left\vert X\right\vert -\left\vert
Y\right\vert -1-1\text{)}}}\underbrace{\left(  \left\vert Z\right\vert
-\left(  \left\vert Y\right\vert +1\right)  -k\right)  }_{=\left\vert
Z\right\vert -\left\vert Y\right\vert -\left(  k+1\right)  }%
\ \ \ \ \ \ \ \ \ \ \left(  \text{since }\left\vert Y^{\prime}\right\vert
=\left\vert Y\right\vert +1\right) \\
&  =\prod_{k=0}^{\left\vert X\right\vert -\left\vert Y\right\vert -1-1}\left(
\left\vert Z\right\vert -\left\vert Y\right\vert -\left(  k+1\right)  \right)
\\
&  =\prod_{k=1}^{\left\vert X\right\vert -\left\vert Y\right\vert -1}\left(
\left\vert Z\right\vert -\left\vert Y\right\vert -k\right)
\end{align*}
(here, we have substituted $k$ for $k+1$ in the product).

However, for any injection $g:X\rightarrow Z$, the statement \textquotedblleft%
$\left.  g\mid_{Y}\right.  =f$ and $g\left(  p\right)  =z$\textquotedblright%
\ is equivalent to the statement \textquotedblleft$\left.  g\mid_{Y^{\prime}%
}\right.  =f_{p\rightarrow z}$\textquotedblright\ (by Claim 3). Hence, we
have
\begin{align*}
&  \left(  \text{\# of injections }g:X\rightarrow Z\text{ such that }\left.
g\mid_{Y}\right.  =f\text{ and }g\left(  p\right)  =z\right) \\
&  =\left(  \text{\# of injections }g:X\rightarrow Z\text{ such that }\left.
g\mid_{Y^{\prime}}\right.  =f_{p\rightarrow z}\right) \\
&  =\prod_{k=1}^{\left\vert X\right\vert -\left\vert Y\right\vert -1}\left(
\left\vert Z\right\vert -\left\vert Y\right\vert -k\right)  .
\end{align*}
This proves Claim 4.
\end{proof}

We are now almost done. Since $f\left(  Y\right)  $ is a subset of $Z$, we
have $\left\vert Z\setminus f\left(  Y\right)  \right\vert =\left\vert
Z\right\vert -\underbrace{\left\vert f\left(  Y\right)  \right\vert
}_{=\left\vert Y\right\vert }=\left\vert Z\right\vert -\left\vert Y\right\vert
$.

However, if $g:X\rightarrow Z$ is any injection such that $\left.  g\mid
_{Y}\right.  =f$, then $g\left(  p\right)  \in Z\setminus f\left(  Y\right)  $
(by Claim 2). Hence, in order to count the injections $g:X\rightarrow Z$ such
that $\left.  g\mid_{Y}\right.  =f$, we can split them up according to the
value of $g\left(  p\right)  $ as follows:%
\begin{align*}
&  \left(  \text{\# of injections }g:X\rightarrow Z\text{ such that }\left.
g\mid_{Y}\right.  =f\right) \\
&  =\sum_{z\in Z\setminus f\left(  Y\right)  }\underbrace{\left(  \text{\# of
injections }g:X\rightarrow Z\text{ such that }\left.  g\mid_{Y}\right.
=f\text{ and }g\left(  p\right)  =z\right)  }_{\substack{=\prod_{k=1}%
^{\left\vert X\right\vert -\left\vert Y\right\vert -1}\left(  \left\vert
Z\right\vert -\left\vert Y\right\vert -k\right)  \\\text{(by Claim 4)}}}\\
&  =\sum_{z\in Z\setminus f\left(  Y\right)  }\ \ \prod_{k=1}^{\left\vert
X\right\vert -\left\vert Y\right\vert -1}\left(  \left\vert Z\right\vert
-\left\vert Y\right\vert -k\right)  =\underbrace{\left\vert Z\setminus
f\left(  Y\right)  \right\vert }_{\substack{=\left\vert Z\right\vert
-\left\vert Y\right\vert \\=\left\vert Z\right\vert -\left\vert Y\right\vert
-0}}\cdot\prod_{k=1}^{\left\vert X\right\vert -\left\vert Y\right\vert
-1}\left(  \left\vert Z\right\vert -\left\vert Y\right\vert -k\right) \\
&  =\left(  \left\vert Z\right\vert -\left\vert Y\right\vert -0\right)
\cdot\prod_{k=1}^{\left\vert X\right\vert -\left\vert Y\right\vert -1}\left(
\left\vert Z\right\vert -\left\vert Y\right\vert -k\right)  .
\end{align*}
Comparing this with
\begin{align*}
\prod_{k=0}^{\left\vert X\right\vert -\left\vert Y\right\vert -1}\left(
\left\vert Z\right\vert -\left\vert Y\right\vert -k\right)   &  =\left(
\left\vert Z\right\vert -\left\vert Y\right\vert -0\right)  \cdot\prod
_{k=1}^{\left\vert X\right\vert -\left\vert Y\right\vert -1}\left(  \left\vert
Z\right\vert -\left\vert Y\right\vert -k\right) \\
&  \ \ \ \ \ \ \ \ \ \ \ \ \ \ \ \ \ \ \ \ \left(
\begin{array}
[c]{c}%
\text{here, we have split off the factor for }k=0\\
\text{from the product, since }\left\vert X\right\vert -\left\vert
Y\right\vert -1\geq0
\end{array}
\right)  ,
\end{align*}
we obtain%
\[
\left(  \text{\# of injections }g:X\rightarrow Z\text{ such that }\left.
g\mid_{Y}\right.  =f\right)  =\prod_{k=0}^{\left\vert X\right\vert -\left\vert
Y\right\vert -1}\left(  \left\vert Z\right\vert -\left\vert Y\right\vert
-k\right)  .
\]
In other words, the claim of Proposition \ref{prop.count.inj-maps.extend}
holds for our $X$, $Y$, $Z$ and $f$.

This completes the induction step. Thus, we have proved Proposition
\ref{prop.count.inj-maps.extend} by induction.
\end{proof}

From Proposition \ref{prop.count.inj-maps.extend}, we can easily derive the following:

\begin{corollary}
\label{cor.count.bij-maps.extend}Let $X$ be a finite set, and let $Y$ be a
subset of $X$. Let $f:Y\rightarrow X$ be any injection. Then,%
\[
\left(  \text{\# of permutations }\sigma\in\mathfrak{S}_{X}\text{ such that
}\left.  \sigma\mid_{Y}\right.  =f\right)  =\left(  \left\vert X\right\vert
-\left\vert Y\right\vert \right)  !.
\]

\end{corollary}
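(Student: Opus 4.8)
The plan is to deduce this directly from Proposition \ref{prop.count.inj-maps.extend} by specializing $Z = X$. Since $Y \subseteq X$ and $f : Y \to X$ is an injection, Proposition \ref{prop.count.inj-maps.extend} (applied with $Z = X$) immediately computes the number of injections $g : X \to X$ extending $f$ (i.e., satisfying $\left. g\mid_Y \right. = f$) as the product $\prod_{k=0}^{|X|-|Y|-1}\left(|X|-|Y|-k\right)$.

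The first genuine step is to replace ``injections $g : X \to X$'' by ``permutations $\sigma \in \mathfrak{S}_X$''. This rests on the standard fact that any injective map from a finite set to itself is automatically bijective (a self-injection of a finite set cannot miss an element without repeating one). Hence an injection $g : X \to X$ is the same thing as a permutation $\sigma \in \mathfrak{S}_X$, and the condition $\left. g\mid_Y \right. = f$ translates verbatim into $\left. \sigma \mid_Y \right. = f$. Thus the count of such permutations equals the count of such injections, which we have already obtained.

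The remaining step is a routine simplification of the product. Writing $m := |X| - |Y|$, the product becomes $\prod_{k=0}^{m-1}(m-k) = m(m-1)\cdots 2 \cdot 1 = m!$, that is, $\left(|X|-|Y|\right)!$, which is exactly the claimed answer.

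There is essentially no obstacle here: the only point requiring a moment's thought is the equivalence between self-injections and permutations of a finite set, and even that is entirely standard. The corollary is a clean specialization-plus-reindexing of the preceding proposition, so I would keep its proof to just these three sentences.
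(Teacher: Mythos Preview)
Your proposal is correct and follows essentially the same approach as the paper: apply Proposition \ref{prop.count.inj-maps.extend} with $Z = X$, use the pigeonhole-type fact that self-injections of a finite set are bijections to identify the injections $g:X\to X$ with the permutations $\sigma\in\mathfrak{S}_X$, and then simplify the resulting product $\prod_{k=0}^{|X|-|Y|-1}(|X|-|Y|-k)$ to $(|X|-|Y|)!$. The paper's proof is just a more verbose rendition of the same three steps.
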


\begin{proof}
We have $Y\subseteq X$ (since $Y$ is a subset of $X$) and thus $\left\vert
Y\right\vert \leq\left\vert X\right\vert $. Hence, $\left\vert X\right\vert
-\left\vert Y\right\vert \in\mathbb{N}$.

We recall the following basic fact about finite sets (one of the Pigeonhole
Principles): If $U$ and $V$ are two finite sets having the same size (i.e.,
satisfying $\left\vert U\right\vert =\left\vert V\right\vert $), then any
injective map from $U$ to $V$ is bijective.

Applying this to $U=X$ and $V=X$, we conclude that any injective map from $X$
to $X$ is bijective (since $X$ and $X$ are two finite sets having the same
size). In other words, any injection from $X$ to $X$ is bijective (since an
injection is the same as an injective map). Hence, any injection from $X$ to
$X$ is a bijection from $X$ to $X$. The converse of this claim is true as well
(since any bijection is obviously an injection). Combining the preceding two
sentences, we conclude that the injections from $X$ to $X$ are precisely the
bijections from $X$ to $X$. Therefore,
\begin{align*}
&  \left\{  \text{injections from }X\text{ to }X\right\} \\
&  =\left\{  \text{bijections from }X\text{ to }X\right\} \\
&  =\left\{  \text{permutations of }X\right\}  \ \ \ \ \ \ \ \ \ \ \left(
\begin{array}
[c]{c}%
\text{since a permutation of }X\text{ is defined}\\
\text{as a bijection from }X\text{ to }X
\end{array}
\right) \\
&  =\left\{  \text{permutations }\sigma\in\mathfrak{S}_{X}\right\}
\ \ \ \ \ \ \ \ \ \ \left(  \text{by the definition of }\mathfrak{S}%
_{X}\right)  .
\end{align*}
Thus,%
\[
\left\{  \text{permutations }\sigma\in\mathfrak{S}_{X}\right\}  =\left\{
\text{injections from }X\text{ to }X\right\}  .
\]
In other words, the permutations $\sigma\in\mathfrak{S}_{X}$ are precisely the
injections from $X$ to $X$. Hence,%
\begin{align*}
&  \left(  \text{\# of permutations }\sigma\in\mathfrak{S}_{X}\text{ such that
}\left.  \sigma\mid_{Y}\right.  =f\right) \\
&  =\left(  \text{\# of injections }\sigma\text{ from }X\text{ to }X\text{
such that }\left.  \sigma\mid_{Y}\right.  =f\right) \\
&  =\left(  \text{\# of injections }\sigma:X\rightarrow X\text{ such that
}\left.  \sigma\mid_{Y}\right.  =f\right) \\
&  =\left(  \text{\# of injections }g:X\rightarrow X\text{ such that }\left.
g\mid_{Y}\right.  =f\right) \\
&  \ \ \ \ \ \ \ \ \ \ \ \ \ \ \ \ \ \ \ \ \left(  \text{here, we have renamed
the index }\sigma\text{ as }g\right) \\
&  =\prod_{k=0}^{\left\vert X\right\vert -\left\vert Y\right\vert -1}\left(
\left\vert X\right\vert -\left\vert Y\right\vert -k\right)
\ \ \ \ \ \ \ \ \ \ \left(  \text{by Proposition
\ref{prop.count.inj-maps.extend}, applied to }Z=X\right) \\
&  =\prod_{i=1}^{\left\vert X\right\vert -\left\vert Y\right\vert
}i\ \ \ \ \ \ \ \ \ \ \left(  \text{here, we have substituted }i\text{ for
}\left\vert X\right\vert -\left\vert Y\right\vert -k\text{ in the
product}\right) \\
&  =1\cdot2\cdot\cdots\cdot\left(  \left\vert X\right\vert -\left\vert
Y\right\vert \right) \\
&  =\left(  \left\vert X\right\vert -\left\vert Y\right\vert \right)
!\ \ \ \ \ \ \ \ \ \ \left(  \text{since }\left\vert X\right\vert -\left\vert
Y\right\vert \in\mathbb{N}\right)  .
\end{align*}
This proves Corollary \ref{cor.count.bij-maps.extend}.
\end{proof}

Next, we introduce some notations for paths:

\begin{definition}
\label{def.linear.Aisg-num.next}Let $V$ be a finite set. Let $p$ be a path of
$V$. Then:

\begin{enumerate}
\item[\textbf{(a)}] We let $p_{\operatorname*{last}}$ denote the last entry of
$p$. (This is well-defined, since $p$ is a path, thus a nonempty tuple, and
therefore has a last entry.)

\item[\textbf{(b)}] Let $v$ be any entry of $p$ distinct from
$p_{\operatorname*{last}}$. Then, the tuple $p$ contains $v$ exactly
once\footnotemark. Furthermore, $v$ is an entry of $p$, but is not the last
entry of $p$ (since $v$ is distinct from $p_{\operatorname*{last}}$, which is
the last entry of $p$). Hence, the tuple $p$ has at least one entry coming
after $v$. We let $\operatorname*{next}\left(  p,v\right)  $ denote the next
entry after $v$ in the tuple $p$. (This is well-defined, since the tuple $p$
contains $v$ exactly once and has at least one entry coming after $v$.)
\end{enumerate}
\end{definition}

\textit{\footnotetext{\textit{Proof.} Clearly, the tuple $p$ contains $v$
(since $v$ is an entry of $p$). Furthermore, $p$ is a path of $V$, that is, a
nonempty tuple of distinct elements of $V$ (by the definition of a path of
$V$). Hence, in particular, the entries of $p$ are distinct. In other words,
$p$ does not contain any entry more than once. Hence, $p$ contains each entry
of $p$ exactly once. Thus, in particular, $p$ contains $v$ exactly once (since
$v$ is an entry of $p$).}}

\begin{example}
Assume that $V=\left[  10\right]  $ and $p=\left(  3,4,1,6,7\right)  $. Then,
$p_{\operatorname*{last}}=7$ and $\operatorname*{next}\left(  p,3\right)  =4$
and $\operatorname*{next}\left(  p,4\right)  =1$ and $\operatorname*{next}%
\left(  p,1\right)  =6$ and $\operatorname*{next}\left(  p,6\right)  =7$.
\end{example}

\begin{definition}
Let $V$ be a finite set. Let $C$ be a path cover of $V$. Let $w\in V$.

Recall that $C$ is a path cover of $V$. In other words, $C$ is a set of paths
of $V$ such that each $v\in V$ belongs to exactly one of these paths (by the
definition of a path cover). In particular, each $v\in V$ belongs to exactly
one of the paths in $C$. Applying this to $v=w$, we conclude that $w$ belongs
to exactly one of the paths in $C$. In other words, there is exactly one path
$p\in C$ such that $w$ belongs to $p$. In other words, there is exactly one
path $p\in C$ that contains $w$. We shall denote the latter path $p$ by
$\operatorname*{path}\left(  C,w\right)  $.
\end{definition}

\begin{example}
Assume that $V=\left[  6\right]  $ and $C=\left\{  \left(  1,6,4\right)
,\ \left(  5\right)  ,\ \left(  2,3\right)  \right\}  $. Then,%
\begin{align*}
\operatorname*{path}\left(  C,1\right)   &  =\operatorname*{path}\left(
C,6\right)  =\operatorname*{path}\left(  C,4\right)  =\left(  1,6,4\right)
;\\
\operatorname*{path}\left(  C,5\right)   &  =\left(  5\right)  ;\\
\operatorname*{path}\left(  C,2\right)   &  =\operatorname*{path}\left(
C,3\right)  =\left(  2,3\right)  .
\end{align*}

\end{example}

The following lemma will help us reduce Proposition \ref{prop.linear.Aisg-num}
to Corollary \ref{cor.count.bij-maps.extend}:

\begin{lemma}
\label{lem.linear.Aisg-num.1}Let $V$ be a finite set. Let $C$ be a path cover
of $V$.

Let $L=\left\{  p_{\operatorname*{last}}\ \mid\ p\in C\right\}  $. This is a
subset of $V$.

Define a map $f:V\setminus L\rightarrow V$ as follows: Let $w\in V\setminus
L$. Thus, $w\in V$ and $w\notin L$. Let $q$ be the path $\operatorname*{path}%
\left(  C,w\right)  $. Thus, $q$ is the unique path $p\in C$ that contains $w$
(by the definition of $\operatorname*{path}\left(  C,w\right)  $). Hence,
$q\in C$ is a path that contains $w$. We have $w\neq q_{\operatorname*{last}}$
(because otherwise, we would have%
\begin{align*}
w  &  =q_{\operatorname*{last}}\in\left\{  p_{\operatorname*{last}}%
\ \mid\ p\in C\right\}  \ \ \ \ \ \ \ \ \ \ \left(  \text{since }q\in C\right)
\\
&  =L,
\end{align*}
which would contradict $w\notin L$). Hence, $w$ is an entry of $q$ (since $q$
contains $w$) that is distinct from $q_{\operatorname*{last}}$ (since $w\neq
q_{\operatorname*{last}}$). Thus, $\operatorname*{next}\left(  q,w\right)  $
is well-defined (by Definition \ref{def.linear.Aisg-num.next} \textbf{(b)}).
We set $f\left(  w\right)  :=\operatorname*{next}\left(  q,w\right)  $.

Thus, we have defined a map $f:V\setminus L\rightarrow V$.

Now, we claim the following:

\begin{enumerate}
\item[\textbf{(a)}] This map $f$ is an injection.

\item[\textbf{(b)}] Let $\sigma\in\mathfrak{S}_{V}$. Let
$F=\operatorname*{Arcs}C$. Then, we have $F\subseteq\mathbf{A}_{\sigma}$ if
and only if $\left.  \sigma\mid_{V\setminus L}\right.  =f$.
\end{enumerate}
\end{lemma}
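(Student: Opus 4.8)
The plan is to reduce both claims to a single clean characterization of the arc set $F=\operatorname*{Arcs}C$, namely the assertion that for any two elements $u,v\in V$, one has $(u,v)\in F$ if and only if both $u\in V\setminus L$ and $v=f(u)$ hold. This is the heart of the matter, and once it is established both parts fall out almost immediately. To prove the characterization I would unfold the definitions: by definition, $F=\operatorname*{Arcs}C=\bigcup_{p\in C}\operatorname*{Arcs}p$, so $(u,v)\in F$ means that $(u,v)$ is an arc of some path $p\in C$, i.e.\ $v$ is the entry immediately following $u$ in $p$ while $u$ is not the last entry of $p$. Since $C$ is a path cover, the element $u$ belongs to exactly one path, which is precisely $\operatorname*{path}(C,u)$; hence the path $p$ in question must be $\operatorname*{path}(C,u)$. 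The condition ``$u$ is not the last entry of its path'' is exactly $u\neq(\operatorname*{path}(C,u))_{\operatorname*{last}}$, which by the definition of $L$ is equivalent to $u\notin L$, i.e.\ $u\in V\setminus L$. In that case $v=\operatorname*{next}(\operatorname*{path}(C,u),u)=f(u)$ by the definition of $f$. The converse direction is the same computation read backwards: if $u\in V\setminus L$ and $v=f(u)$, then $q:=\operatorname*{path}(C,u)\in C$ satisfies $u\neq q_{\operatorname*{last}}$, so $(u,f(u))=(u,\operatorname*{next}(q,u))\in\operatorname*{Arcs}q\subseteq F$.

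For part \textbf{(a)}, suppose $f(w_{1})=f(w_{2})=:y$ for some $w_{1},w_{2}\in V\setminus L$. By the characterization, $(w_{1},y)\in F$ and $(w_{2},y)\in F$. Now $y$ belongs to exactly one path $p\in C$ (the path-cover property), and any arc of $C$ ending in $y$ must be an arc of this single path $p$; but within the path $p$, whose entries are distinct, the element $y$ has at most one immediate predecessor. Hence $w_{1}=w_{2}$, proving that $f$ is an injection. (Alternatively, one could invoke the fact that a linear set induces a digraph with all indegrees $\leq 1$, but the direct argument via the path cover is self-contained.)

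For part \textbf{(b)}, I would simply translate the set-inclusion $F\subseteq\mathbf{A}_{\sigma}$ into a pointwise statement. By Definition \ref{def.Asigma}, we have $\mathbf{A}_{\sigma}=\left\{(v,\sigma(v))\ \mid\ v\in V\right\}$, so $F\subseteq\mathbf{A}_{\sigma}$ holds if and only if every arc $(u,v)\in F$ satisfies $\sigma(u)=v$. Using the characterization, the arcs $(u,v)\in F$ are exactly the pairs $(u,f(u))$ with $u$ ranging over $V\setminus L$; therefore $F\subseteq\mathbf{A}_{\sigma}$ is equivalent to the statement ``$\sigma(u)=f(u)$ for all $u\in V\setminus L$'', which is precisely $\sigma\mid_{V\setminus L}=f$.

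The main obstacle is none of the individual steps but rather the bookkeeping in the characterization: one must invoke the path-cover property (each vertex lies in a unique path) at exactly the right places, both to pin down the path $\operatorname*{path}(C,u)$ and to force uniqueness of the predecessor of $y$, and one must correctly identify the membership condition $u\notin L$ with ``$u$ is not the last entry of its path''. Everything else is a routine unwinding of the definitions of $\operatorname*{Arcs}$, $\operatorname*{next}$, and $\mathbf{A}_{\sigma}$.
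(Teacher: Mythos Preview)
Your proof is correct and follows essentially the same approach as the paper's own proof --- both arguments unfold the definitions of $\operatorname*{Arcs}C$, $\operatorname*{next}$, and $\mathbf{A}_{\sigma}$, using the path-cover property to pin down the unique path containing a given vertex. Your packaging is slightly cleaner: by isolating the characterization $F=\{(u,f(u))\mid u\in V\setminus L\}$ as a standalone claim, you derive both \textbf{(a)} and \textbf{(b)} from it in one stroke, whereas the paper proves a helper claim about $f$ and $\operatorname*{path}$ and then treats the two implications in \textbf{(b)} separately.
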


\begin{proof}
We begin by showing a general property of the map $f$:

\begin{statement}
\textit{Claim 1:} For any $w\in V\setminus L$, we have $f\left(  w\right)
=\operatorname*{next}\left(  \operatorname*{path}\left(  C,w\right)
,\ w\right)  $ and $\operatorname*{path}\left(  C,f\left(  w\right)  \right)
=\operatorname*{path}\left(  C,w\right)  $.
\end{statement}

\begin{proof}
[Proof of Claim 1.]Let $w\in V\setminus L$. Thus, $w\in V$ and $w\notin L$.
Let $q$ be the path $\operatorname*{path}\left(  C,w\right)  $. Thus, $q$ is
the unique path $p\in C$ that contains $w$ (by the definition of
$\operatorname*{path}\left(  C,w\right)  $). Hence, $q\in C$ is a path that
contains $w$. The definition of $f$ yields $f\left(  w\right)
=\operatorname*{next}\left(  q,w\right)  $. Since $q=\operatorname*{path}%
\left(  C,w\right)  $, we can rewrite this as $f\left(  w\right)
=\operatorname*{next}\left(  \operatorname*{path}\left(  C,w\right)
,\ w\right)  $.

However, $\operatorname*{next}\left(  q,w\right)  $ is defined as the next
entry after $w$ in the tuple $q$. Hence, in particular, $\operatorname*{next}%
\left(  q,w\right)  $ is an entry of $q$. In other words, $f\left(  w\right)
$ is an entry of $q$ (since $f\left(  w\right)  =\operatorname*{next}\left(
q,w\right)  $). In other words, the path $q$ contains $f\left(  w\right)  $.

However, $\operatorname*{path}\left(  C,f\left(  w\right)  \right)  $ is
defined as the unique path $p\in C$ that contains $f\left(  w\right)  $.
Hence, if $p\in C$ is a path that contains $f\left(  w\right)  $, then
$p=\operatorname*{path}\left(  C,f\left(  w\right)  \right)  $. Applying this
to $p=q$, we obtain $q=\operatorname*{path}\left(  C,f\left(  w\right)
\right)  $ (since $q\in C$ is a path that contains $f\left(  w\right)  $).
Comparing this with $q=\operatorname*{path}\left(  C,w\right)  $, we find
$\operatorname*{path}\left(  C,f\left(  w\right)  \right)
=\operatorname*{path}\left(  C,w\right)  $.

Thus, we have now shown that $f\left(  w\right)  =\operatorname*{next}\left(
\operatorname*{path}\left(  C,w\right)  ,\ w\right)  $ and
$\operatorname*{path}\left(  C,f\left(  w\right)  \right)
=\operatorname*{path}\left(  C,w\right)  $. This proves Claim 1.
\end{proof}

We shall now prove the two parts of Lemma \ref{lem.linear.Aisg-num.1}:
\medskip

\textbf{(a)} Let $u$ and $v$ be two elements of $V\setminus L$ satisfying
$f\left(  u\right)  =f\left(  v\right)  $. We shall prove that $u=v$.

Claim 1 (applied to $w=u$) yields $f\left(  u\right)  =\operatorname*{next}%
\left(  \operatorname*{path}\left(  C,u\right)  ,\ u\right)  $ and
$\operatorname*{path}\left(  C,f\left(  u\right)  \right)
=\operatorname*{path}\left(  C,u\right)  $.

Claim 1 (applied to $w=v$) yields $f\left(  v\right)  =\operatorname*{next}%
\left(  \operatorname*{path}\left(  C,v\right)  ,\ v\right)  $ and
$\operatorname*{path}\left(  C,f\left(  v\right)  \right)
=\operatorname*{path}\left(  C,v\right)  $.

From $\operatorname*{path}\left(  C,f\left(  u\right)  \right)
=\operatorname*{path}\left(  C,u\right)  $, we obtain%
\[
\operatorname*{path}\left(  C,u\right)  =\operatorname*{path}\left(
C,\underbrace{f\left(  u\right)  }_{=f\left(  v\right)  }\right)
=\operatorname*{path}\left(  C,f\left(  v\right)  \right)
=\operatorname*{path}\left(  C,v\right)  .
\]
Let us set $q:=\operatorname*{path}\left(  C,u\right)  $. Thus,
$q=\operatorname*{path}\left(  C,u\right)  =\operatorname*{path}\left(
C,v\right)  $.

We have $q=\operatorname*{path}\left(  C,u\right)  $. In other words, $q$ is
the unique path $p\in C$ that contains $u$ (since $\operatorname*{path}\left(
C,u\right)  $ is defined to be the unique path $p\in C$ that contains $u$).
Hence, $q\in C$ is a path that contains $u$. The same argument (applied to $v$
instead of $u$) shows that $q\in C$ is a path that contains $v$ (since
$q=\operatorname*{path}\left(  C,v\right)  $).

In particular, $q$ is a path of $V$. In other words, $q$ is a nonempty tuple
of distinct elements of $V$ (by the definition of a path of $V$). Hence, in
particular, the entries of $q$ are distinct.

Write the path $q$ as $q=\left(  q_{1},q_{2},\ldots,q_{k}\right)  $. Then,
$u=q_{i}$ for some $i\in\left[  k\right]  $ (since $q$ contains $u$).
Similarly, $v=q_{j}$ for some $j\in\left[  k\right]  $ (since $q$ contains
$v$). Consider this $i$ and this $j$.

But $\operatorname*{next}\left(  q,u\right)  $ is the next entry after $u$ in
the tuple $q$ (by the definition of $\operatorname*{next}\left(  q,u\right)
$). In other words,%
\[
\operatorname*{next}\left(  q,u\right)  =\left(  \text{the next entry after
}u\text{ in the tuple }q\right)  .
\]

Now,
\begin{align*}
f\left(  u\right)   &  =\operatorname*{next}\left(
\underbrace{\operatorname*{path}\left(  C,u\right)  }_{=q},\ u\right)
=\operatorname*{next}\left(  q,u\right) \\
&  =\left(  \text{the next entry after }u\text{ in the tuple }q\right) \\
&  =\left(  \text{the next entry after }q_{i}\text{ in the tuple }q\right)
\ \ \ \ \ \ \ \ \ \ \left(  \text{since }u=q_{i}\right) \\
&  =q_{i+1}\ \ \ \ \ \ \ \ \ \ \left(
\begin{array}
[c]{c}%
\text{since }q=\left(  q_{1},q_{2},\ldots,q_{k}\right)  \text{, so that the
entry }q_{i}\\
\text{is followed by }q_{i+1}\text{ in the tuple }q
\end{array}
\right)  .
\end{align*}
The same argument (applied to $v$ and $j$ instead of $u$ and $i$) shows that
$f\left(  v\right)  =q_{j+1}$ (since $\operatorname*{path}\left(  C,v\right)
=q$ and $v=q_{j}$). Note that the equalities $f\left(  u\right)  =q_{i+1}$ and
$f\left(  v\right)  =q_{j+1}$ show (in particular) that $q_{i+1}$ and
$q_{j+1}$ are well-defined, i.e., that the elements $i+1$ and $j+1$ belong to
$\left[  k\right]  $.

Now, from $f\left(  u\right)  =q_{i+1}$, we obtain%
\[
q_{i+1}=f\left(  u\right)  =f\left(  v\right)  =q_{j+1}.
\]

But we know that the entries of $q$ are distinct. In other words, $q_{1}%
,q_{2},\ldots,q_{k}$ are distinct (since $q_{1},q_{2},\ldots,q_{k}$ are the
entries of $q$ (because $q=\left(  q_{1},q_{2},\ldots,q_{k}\right)  $)). In
other words, if $a$ and $b$ are two elements of $\left[  k\right]  $
satisfying $q_{a}=q_{b}$, then $a=b$. Applying this to $a=i+1$ and $b=j+1$, we
conclude that $i+1=j+1$ (since $q_{i+1}=q_{j+1}$). Therefore, $i=j$. Hence,
$q_{i}=q_{j}=v$ (since $v=q_{j}$), so that $u=q_{i}=v$.

Now, forget that we fixed $u$ and $v$. We thus have shown that if $u$ and $v$
are two elements of $V\setminus L$ satisfying $f\left(  u\right)  =f\left(
v\right)  $, then $u=v$. In other words, the map $f$ is injective. In other
words, $f$ is an injection. This proves Lemma \ref{lem.linear.Aisg-num.1}
\textbf{(a)}. \medskip

\textbf{(b)} We must prove the equivalence $\left(  F\subseteq\mathbf{A}%
_{\sigma}\right)  \ \Longleftrightarrow\ \left(  \left.  \sigma\mid
_{V\setminus L}\right.  =f\right)  $. In order to do so, it clearly suffices
to prove the two implications $\left(  F\subseteq\mathbf{A}_{\sigma}\right)
\ \Longrightarrow\ \left(  \left.  \sigma\mid_{V\setminus L}\right.
=f\right)  $ and $\left(  F\subseteq\mathbf{A}_{\sigma}\right)
\ \Longleftarrow\ \left(  \left.  \sigma\mid_{V\setminus L}\right.  =f\right)
$. Let us do so:

\begin{proof}
[Proof of the implication $\left(  F\subseteq\mathbf{A}_{\sigma}\right)
\ \Longrightarrow\ \left(  \left.  \sigma\mid_{V\setminus L}\right.
=f\right)  $.]Assume that $F\subseteq\mathbf{A}_{\sigma}$ holds. We must show
that $\left.  \sigma\mid_{V\setminus L}\right.  =f$ holds.

Indeed, let $w\in V\setminus L$. Thus, $f\left(  w\right)  $ is well-defined.

Let $q$ be the path $\operatorname*{path}\left(  C,w\right)  $. Thus, $q$ is
the unique path $p\in C$ that contains $w$ (by the definition of
$\operatorname*{path}\left(  C,w\right)  $). Hence, $q\in C$ is a path that
contains $w$.

Write the path $q$ as $q=\left(  q_{1},q_{2},\ldots,q_{k}\right)  $. Then,
$w=q_{i}$ for some $i\in\left[  k\right]  $ (since $q$ contains $w$). Consider
this $i$.

The definition of $f$ yields
\begin{align*}
f\left(  w\right)   &  =\operatorname*{next}\left(  q,w\right) \\
&  =\left(  \text{the next entry after }w\text{ in the tuple }q\right) \\
&  \ \ \ \ \ \ \ \ \ \ \ \ \ \ \ \ \ \ \ \ \left(
\begin{array}
[c]{c}%
\text{since }\operatorname*{next}\left(  q,w\right)  \text{ is defined to be
the}\\
\text{next entry after }w\text{ in the tuple }q
\end{array}
\right) \\
&  =\left(  \text{the next entry after }q_{i}\text{ in the tuple }q\right)
\ \ \ \ \ \ \ \ \ \ \left(  \text{since }w=q_{i}\right) \\
&  =q_{i+1}\ \ \ \ \ \ \ \ \ \ \left(
\begin{array}
[c]{c}%
\text{since }q=\left(  q_{1},q_{2},\ldots,q_{k}\right)  \text{, so that the
entry }q_{i}\\
\text{is followed by }q_{i+1}\text{ in the tuple }q
\end{array}
\right)  .
\end{align*}
In particular, $q_{i+1}$ is well-defined, so that $i+1\in\left[  k\right]  $.
Hence, $i\in\left\{  0,1,\ldots,k-1\right\}  $. Since $i$ is positive, we thus
conclude that $i\in\left[  k-1\right]  $.

Now,
\begin{align*}
F  &  =\operatorname*{Arcs}C=\bigcup_{v\in C}\operatorname*{Arcs}%
v\ \ \ \ \ \ \ \ \ \ \left(  \text{by the definition of }\operatorname*{Arcs}%
C\right) \\
&  \supseteq\operatorname*{Arcs}q\ \ \ \ \ \ \ \ \ \ \left(
\begin{array}
[c]{c}%
\text{since }\operatorname*{Arcs}q\text{ is one of the terms in the union
}\bigcup_{v\in C}\operatorname*{Arcs}v\\
\text{(because }q\in C\text{)}%
\end{array}
\right) \\
&  =\operatorname*{Arcs}\left(  q_{1},q_{2},\ldots,q_{k}\right)
\ \ \ \ \ \ \ \ \ \ \left(  \text{since }q=\left(  q_{1},q_{2},\ldots
,q_{k}\right)  \right) \\
&  =\left\{  \left(  q_{1},q_{2}\right)  ,\ \left(  q_{2},q_{3}\right)
,\ \ldots,\ \left(  q_{k-1},q_{k}\right)  \right\}
\ \ \ \ \ \ \ \ \ \ \left(  \text{by (\ref{eq.def.Arcs-and-Carcs.a.2}),
applied to }v=q\text{ and }v_{j}=q_{j}\right)  .
\end{align*}

However, from $w=q_{i}$ and $f\left(  w\right)  =q_{i+1}$, we obtain
\begin{align*}
\left(  w,f\left(  w\right)  \right)   &  =\left(  q_{i},q_{i+1}\right) \\
&  \in\left\{  \left(  q_{1},q_{2}\right)  ,\ \left(  q_{2},q_{3}\right)
,\ \ldots,\ \left(  q_{k-1},q_{k}\right)  \right\}
\ \ \ \ \ \ \ \ \ \ \left(  \text{since }i\in\left[  k-1\right]  \right) \\
&  \subseteq F\ \ \ \ \ \ \ \ \ \ \left(  \text{since }F\supseteq\left\{
\left(  q_{1},q_{2}\right)  ,\ \left(  q_{2},q_{3}\right)  ,\ \ldots,\ \left(
q_{k-1},q_{k}\right)  \right\}  \right) \\
&  \subseteq\mathbf{A}_{\sigma}\\
&  =\left\{  \left(  v,\sigma\left(  v\right)  \right)  \ \mid\ v\in
V\right\}  \ \ \ \ \ \ \ \ \ \ \left(  \text{by the definition of }%
\mathbf{A}_{\sigma}\right)  .
\end{align*}
In other words, $\left(  w,f\left(  w\right)  \right)  =\left(  v,\sigma
\left(  v\right)  \right)  $ for some $v\in V$. Consider this $v$. From
$\left(  w,f\left(  w\right)  \right)  =\left(  v,\sigma\left(  v\right)
\right)  $, we obtain $w=v$ and $f\left(  w\right)  =\sigma\left(  v\right)  $.

Now, $w\in V\setminus L$, so that $\left(  \sigma\mid_{V\setminus L}\right)
\left(  w\right)  =\sigma\left(  \underbrace{w}_{=v}\right)  =\sigma\left(
v\right)  =f\left(  w\right)  $ (since $f\left(  w\right)  =\sigma\left(
v\right)  $).

Forget that we fixed $w$. We thus have shown that $\left(  \sigma
\mid_{V\setminus L}\right)  \left(  w\right)  =f\left(  w\right)  $ for each
$w\in V\setminus L$. In other words, $\left.  \sigma\mid_{V\setminus
L}\right.  =f$.

Altogether, we have now proved that $\left.  \sigma\mid_{V\setminus L}\right.
=f$ under the assumption that $F\subseteq\mathbf{A}_{\sigma}$. In other words,
we have proved the implication $\left(  F\subseteq\mathbf{A}_{\sigma}\right)
\ \Longrightarrow\ \left(  \left.  \sigma\mid_{V\setminus L}\right.
=f\right)  $.
\end{proof}

\begin{proof}
[Proof of the implication $\left(  F\subseteq\mathbf{A}_{\sigma}\right)
\ \Longleftarrow\ \left(  \left.  \sigma\mid_{V\setminus L}\right.  =f\right)
$.]Assume that $\left.  \sigma\mid_{V\setminus L}\right.  =f$ holds. We must
show that $F\subseteq\mathbf{A}_{\sigma}$ holds.

Indeed, let $a\in F$. Then,
\begin{align*}
a  &  \in F=\operatorname*{Arcs}C=\bigcup_{v\in C}\operatorname*{Arcs}%
v\ \ \ \ \ \ \ \ \ \ \left(  \text{by the definition of }\operatorname*{Arcs}%
C\right) \\
&  =\bigcup_{q\in C}\operatorname*{Arcs}q\ \ \ \ \ \ \ \ \ \ \left(
\text{here, we have renamed the index }v\text{ as }q\right)  .
\end{align*}

In other words, $a\in\operatorname*{Arcs}q$ for some $q\in C$. Consider this
$q$.

Recall that $C$ is a path cover of $V$. In other words, $C$ is a set of paths
of $V$ such that each $v\in V$ belongs to exactly one of these paths (by the
definition of a path cover). Hence, $C$ is a set of paths of $V$. Thus, $q$ is
a path of $V$ (since $q\in C$).

Write the path $q$ as $q=\left(  q_{1},q_{2},\ldots,q_{k}\right)  $. Thus,%
\begin{align*}
&  \operatorname*{Arcs}q\\
&  =\operatorname*{Arcs}\left(  q_{1},q_{2},\ldots,q_{k}\right) \\
&  =\left\{  \left(  q_{1},q_{2}\right)  ,\ \left(  q_{2},q_{3}\right)
,\ \ldots,\ \left(  q_{k-1},q_{k}\right)  \right\}
\ \ \ \ \ \ \ \ \ \ \left(  \text{by (\ref{eq.def.Arcs-and-Carcs.a.2}),
applied to }v=q\text{ and }v_{j}=q_{j}\right)  .
\end{align*}
Hence,%
\[
a\in\operatorname*{Arcs}q=\left\{  \left(  q_{1},q_{2}\right)  ,\ \left(
q_{2},q_{3}\right)  ,\ \ldots,\ \left(  q_{k-1},q_{k}\right)  \right\}  .
\]
In other words, $a=\left(  q_{i},q_{i+1}\right)  $ for some $i\in\left[
k-1\right]  $. Consider this $i$.

Clearly, $q_{i}$ is an entry of $q$. In other words, the path $q$ contains
$q_{i}$.

Recall that $\operatorname*{path}\left(  C,q_{i}\right)  $ is defined as the
unique path $p\in C$ that contains $q_{i}$. Hence, if $p\in C$ is a path that
contains $q_{i}$, then $p=\operatorname*{path}\left(  C,q_{i}\right)  $.
Applying this to $p=q$, we conclude that $q=\operatorname*{path}\left(
C,q_{i}\right)  $ (since $q\in C$ is a path that contains $q_{i}$).

We shall next show that $q_{i}\notin L$.

Indeed, assume the contrary. Thus, $q_{i}\in L=\left\{
p_{\operatorname*{last}}\ \mid\ p\in C\right\}  $ (by the definition of $L$).
In other words, $q_{i}=p_{\operatorname*{last}}$ for some $p\in C$. Consider
this $p$. Clearly, $p_{\operatorname*{last}}$ is the last entry of $p$ (by the
definition of $p_{\operatorname*{last}}$), and thus belongs to $p$. In other
words, $q_{i}$ belongs to $p$ (since $q_{i}=p_{\operatorname*{last}}$). But
$q_{i}$ also belongs to $q$ (since $q_{i}$ is an entry of $q$).

Recall that $C$ is a set of paths of $V$ such that each $v\in V$ belongs to
exactly one of these paths. Hence, in particular, each $v\in V$ belongs to
exactly one of the paths in $C$. Applying this to $v=q_{i}$, we conclude that
$q_{i}$ belongs to exactly one of the paths in $C$. However, both $p$ and $q$
are paths in $C$. Thus, if the paths $p$ and $q$ were distinct, then $q_{i}$
would belong to (at least) two distinct paths in $C$ (since $q_{i}$ belongs to
both $p$ and $q$), which would contradict the fact that $q_{i}$ belongs to
exactly one of the paths in $C$. Hence, the paths $p$ and $q$ cannot be
distinct. In other words, $p=q$.

Thus,
\begin{align*}
p_{\operatorname*{last}}  &  =q_{\operatorname*{last}}=\left(  \text{the last
entry of }q\right)  \ \ \ \ \ \ \ \ \ \ \left(  \text{by the definition of
}q_{\operatorname*{last}}\right) \\
&  =q_{k}\ \ \ \ \ \ \ \ \ \ \left(  \text{since }q=\left(  q_{1},q_{2}%
,\ldots,q_{k}\right)  \right)  .
\end{align*}
In other words, $q_{i}=q_{k}$ (since $q_{i}=p_{\operatorname*{last}}$).

However, $q$ is a path of $V$. In other words, $q$ is a nonempty tuple of
distinct elements of $V$ (by the definition of a path of $V$). Hence, in
particular, the entries of $q$ are distinct. In other words, $q_{1}%
,q_{2},\ldots,q_{k}$ are distinct (since $q_{1},q_{2},\ldots,q_{k}$ are the
entries of $q$ (because $q=\left(  q_{1},q_{2},\ldots,q_{k}\right)  $)). In
other words, if $b$ and $c$ are two elements of $\left[  k\right]  $
satisfying $q_{b}=q_{c}$, then $b=c$. Applying this to $b=i$ and $c=k$, we
conclude that $i=k$ (since $q_{i}=q_{k}$). Hence, $i=k\notin\left[
k-1\right]  $ (since $k>k-1$). But this contradicts $i\in\left[  k-1\right]  $.

This contradiction shows that our assumption was false. Hence, $q_{i}\notin L$
is proved.

Combining $q_{i}\in V$ with $q_{i}\notin L$, we obtain $q_{i}\in V\setminus
L$. Hence, $f\left(  q_{i}\right)  $ is well-defined.

Now, Claim 1 (applied to $w=q_{i}$) yields $f\left(  q_{i}\right)
=\operatorname*{next}\left(  \operatorname*{path}\left(  C,q_{i}\right)
,\ q_{i}\right)  $ and $\operatorname*{path}\left(  C,f\left(  q_{i}\right)
\right)  =\operatorname*{path}\left(  C,q_{i}\right)  $. Hence,%
\begin{align*}
f\left(  q_{i}\right)   &  =\operatorname*{next}\left(
\underbrace{\operatorname*{path}\left(  C,q_{i}\right)  }%
_{\substack{=q\\\text{(since }q=\operatorname*{path}\left(  C,q_{i}\right)
\text{)}}},\ q_{i}\right)  =\operatorname*{next}\left(  q,q_{i}\right) \\
&  =\left(  \text{the next entry after }q_{i}\text{ in the tuple }q\right) \\
&  \ \ \ \ \ \ \ \ \ \ \ \ \ \ \ \ \ \ \ \ \left(
\begin{array}
[c]{c}%
\text{since }\operatorname*{next}\left(  q,q_{i}\right)  \text{ is defined to
be the}\\
\text{next entry after }q_{i}\text{ in the tuple }q
\end{array}
\right) \\
&  =q_{i+1}\ \ \ \ \ \ \ \ \ \ \left(
\begin{array}
[c]{c}%
\text{since }q=\left(  q_{1},q_{2},\ldots,q_{k}\right)  \text{, so that the
entry }q_{i}\\
\text{is followed by }q_{i+1}\text{ in the tuple }q
\end{array}
\right)  .
\end{align*}

However, we assumed that $\left.  \sigma\mid_{V\setminus L}\right.  =f$ holds.
Thus, $\left(  \sigma\mid_{V\setminus L}\right)  \left(  q_{i}\right)
=f\left(  q_{i}\right)  =q_{i+1}$. Therefore,%
\[
q_{i+1}=\left(  \sigma\mid_{V\setminus L}\right)  \left(  q_{i}\right)
=\sigma\left(  q_{i}\right)  .
\]

Now,
\begin{align*}
a  &  =\left(  q_{i},\underbrace{q_{i+1}}_{=\sigma\left(  q_{i}\right)
}\right)  =\left(  q_{i},\sigma\left(  q_{i}\right)  \right) \\
&  \in\left\{  \left(  v,\sigma\left(  v\right)  \right)  \ \mid\ v\in
V\right\}  \ \ \ \ \ \ \ \ \ \ \left(  \text{since }q_{i}\in V\right) \\
&  =\mathbf{A}_{\sigma}\ \ \ \ \ \ \ \ \ \ \left(  \text{since }%
\mathbf{A}_{\sigma}\text{ is defined to be }\left\{  \left(  v,\sigma\left(
v\right)  \right)  \ \mid\ v\in V\right\}  \right)  .
\end{align*}

Forget that we fixed $a$. We thus have shown that $a\in\mathbf{A}_{\sigma}$
for each $a\in F$. In other words, $F\subseteq\mathbf{A}_{\sigma}$.

Altogether, we have now proved that $F\subseteq\mathbf{A}_{\sigma}$ under the
assumption that $\left.  \sigma\mid_{V\setminus L}\right.  =f$. In other
words, we have proved the implication $\left(  F\subseteq\mathbf{A}_{\sigma
}\right)  \ \Longleftarrow\ \left(  \left.  \sigma\mid_{V\setminus L}\right.
=f\right)  $.
\end{proof}

We have now proved the two implications $\left(  F\subseteq\mathbf{A}_{\sigma
}\right)  \ \Longrightarrow\ \left(  \left.  \sigma\mid_{V\setminus L}\right.
=f\right)  $ and $\left(  F\subseteq\mathbf{A}_{\sigma}\right)
\ \Longleftarrow\ \left(  \left.  \sigma\mid_{V\setminus L}\right.  =f\right)
$. Combining them, we obtain the equivalence
\[
\left(  F\subseteq\mathbf{A}_{\sigma}\right)  \ \Longleftrightarrow\ \left(
\left.  \sigma\mid_{V\setminus L}\right.  =f\right)  .
\]
Thus, Lemma \ref{lem.linear.Aisg-num.1} \textbf{(b)} is proved.
\end{proof}

We are now ready to prove Proposition \ref{prop.linear.Aisg-num}:

\begin{proof}
[Proof of Proposition \ref{prop.linear.Aisg-num}.]Assume that
$F=\operatorname*{Arcs}C$ for some path cover $C$ of $V$. Consider this path
cover $C$. Define the set $L$ and the map $f:V\setminus L\rightarrow V$ as in
Lemma \ref{lem.linear.Aisg-num.1}. Clearly, $L$ is a subset of $V$. Hence,
$\left\vert V\setminus L\right\vert =\left\vert V\right\vert -\left\vert
L\right\vert $. Also, $V\setminus L$ is a subset of $V$.

It is now easy to show that $\left\vert L\right\vert =\left\vert C\right\vert
$. Indeed:

\begin{proof}
[Proof of $\left\vert L\right\vert =\left\vert C\right\vert $.]Let $p$ and $q$
be two distinct paths in $C$. We shall show that $p_{\operatorname*{last}}\neq
q_{\operatorname*{last}}$.

Indeed, assume the contrary. Thus, $p_{\operatorname*{last}}%
=q_{\operatorname*{last}}$.

Note that $p_{\operatorname*{last}}$ is the last entry of $p$ (by the
definition of $p_{\operatorname*{last}}$). Hence, in particular,
$p_{\operatorname*{last}}$ belongs to $p$. Similarly, $q_{\operatorname*{last}%
}$ belongs to $q$. In other words, $p_{\operatorname*{last}}$ belongs to $q$
(since $p_{\operatorname*{last}}=q_{\operatorname*{last}}$).

Now, we know that $p_{\operatorname*{last}}$ belongs to both $p$ and $q$.
Since $p$ and $q$ are two distinct paths in $C$, we thus conclude that
$p_{\operatorname*{last}}$ belongs to (at least) two distinct paths in $C$.

However, $C$ is a path cover of $V$. In other words, $C$ is a set of paths of
$V$ such that each $v\in V$ belongs to exactly one of these paths (by the
definition of a path cover). In particular, each $v\in V$ belongs to exactly
one of the paths in $C$. Applying this to $v=p_{\operatorname*{last}}$, we
conclude that $p_{\operatorname*{last}}$ belongs to exactly one of the paths
in $C$. But this contradicts the fact that $p_{\operatorname*{last}}$ belongs
to (at least) two distinct paths in $C$.

This contradiction shows that our assumption was false. Hence,
$p_{\operatorname*{last}}\neq q_{\operatorname*{last}}$ is proved.

Forget that we fixed $p$ and $q$. We thus have shown that if $p$ and $q$ are
two distinct paths in $C$, then $p_{\operatorname*{last}}\neq
q_{\operatorname*{last}}$. In other words, the elements
$p_{\operatorname*{last}}$ for all $p\in C$ are distinct. Hence, there are
$\left\vert C\right\vert $ many such elements in total. In other words,
$\left\vert \left\{  p_{\operatorname*{last}}\ \mid\ p\in C\right\}
\right\vert =\left\vert C\right\vert $. Since $L=\left\{
p_{\operatorname*{last}}\ \mid\ p\in C\right\}  $ (by the definition of $L$),
we can rewrite this as $\left\vert L\right\vert =\left\vert C\right\vert $.
Thus, $\left\vert L\right\vert =\left\vert C\right\vert $ is proved.
\end{proof}

Now, Lemma \ref{lem.linear.Aisg-num.1} \textbf{(a)} yields that the map $f$ is
an injection. On the other hand, if $\sigma\in\mathfrak{S}_{V}$ is a
permutation, then the statement \textquotedblleft$F\subseteq\mathbf{A}%
_{\sigma}$\textquotedblright\ is equivalent to \textquotedblleft$\left.
\sigma\mid_{V\setminus L}\right.  =f$\textquotedblright\ (by Lemma
\ref{lem.linear.Aisg-num.1} \textbf{(b)}). Hence,%
\begin{align*}
&  \left(  \text{\# of permutations }\sigma\in\mathfrak{S}_{V}\text{
satisfying }F\subseteq\mathbf{A}_{\sigma}\right) \\
&  =\left(  \text{\# of permutations }\sigma\in\mathfrak{S}_{V}\text{
satisfying }\left.  \sigma\mid_{V\setminus L}\right.  =f\right) \\
&  =\left(  \text{\# of permutations }\sigma\in\mathfrak{S}_{V}\text{ such
that }\left.  \sigma\mid_{V\setminus L}\right.  =f\right) \\
&  =\left(  \left\vert V\right\vert -\underbrace{\left\vert V\setminus
L\right\vert }_{=\left\vert V\right\vert -\left\vert L\right\vert }\right)
!\ \ \ \ \ \ \ \ \ \ \left(  \text{by Corollary
\ref{cor.count.bij-maps.extend}, applied to }X=V\text{ and }Y=V\setminus
L\right) \\
&  =\left(  \underbrace{\left\vert V\right\vert -\left(  \left\vert
V\right\vert -\left\vert L\right\vert \right)  }_{=\left\vert L\right\vert
}\right)  !=\underbrace{\left\vert L\right\vert }_{=\left\vert C\right\vert
}!=\left\vert C\right\vert !.
\end{align*}

In other words, there are exactly $\left\vert C\right\vert !$ many
permutations $\sigma\in\mathfrak{S}_{V}$ satisfying $F\subseteq\mathbf{A}%
_{\sigma}$. This proves Proposition \ref{prop.linear.Aisg-num}.
\end{proof}
\end{verlong}

\subsection{Counting hamps by inclusion-exclusion}

Our next lemma will be about counting Hamiltonian paths -- which we abbreviate
as \textquotedblleft hamps\textquotedblright. Here is how they are defined:

\begin{definition}
\label{def.hamp}Let $D$ be a digraph. A \emph{hamp} of $D$ means a $D$-path
that contains each vertex of $D$. (The word \textquotedblleft
hamp\textquotedblright\ is short for \textquotedblleft Hamiltonian
path\textquotedblright.)
\end{definition}

For a digraph $D=\left(  V,A\right)  $, there is an obvious connection between
the linear subsets of $A$ and the hamps of $D$: If $v$ is a hamp of $D$, then
$\operatorname*{Arcs}v$ is a maximum-size linear subset of $A$ (and this
maximum size is $\left\vert V\right\vert -1$ if $V$ is nonempty). More
interestingly, there is a far less obvious connection between the linear
subsets of $A$ and the hamps of the complement $\overline{D}$:

\begin{lemma}
\label{lem.hamps-by-lin}Let $D=\left(  V,A\right)  $ be a digraph with
$V\neq\varnothing$. Then,%
\[
\sum_{F\subseteq A\text{ is linear}}\left(  -1\right)  ^{\left\vert
F\right\vert }\cdot\left(  \text{\# of }\sigma\in\mathfrak{S}_{V}\text{
satisfying }F\subseteq\mathbf{A}_{\sigma}\right)  =\left(  \text{\# of hamps
of }\overline{D}\right)  .
\]
(We are using Convention \ref{conv.number} here.)
\end{lemma}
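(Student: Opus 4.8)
The plan is to rewrite the left-hand side as a double sum and collapse it by sign-reversing cancellation. The first move I would make is to replace the count of permutations by a count of $V$-listings. Fix a linear subset $F\subseteq A$, and write $F=\operatorname{Arcs}C$ for some path cover $C$ of $V$ (possible by the definition of ``linear''). Then Proposition~\ref{prop.linear.Aisg-num} gives that the number of $\sigma\in\mathfrak{S}_V$ with $F\subseteq\mathbf{A}_\sigma$ equals $\left\vert C\right\vert !$, while Proposition~\ref{prop.linear.listing}~\textbf{(b)} gives that the number of $V$-listings $v$ with $F\subseteq\operatorname{Arcs}v$ is \emph{also} $\left\vert C\right\vert !$. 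Hence these two counts agree for every linear $F\subseteq A$, and I may substitute one for the other:
\[
\text{LHS}=\sum_{F\subseteq A\text{ is linear}}\left(-1\right)^{\left\vert F\right\vert }\cdot\left(\text{\# of }V\text{-listings }v\text{ with }F\subseteq\operatorname{Arcs}v\right).
\]

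Next I would exchange the order of summation, writing the inner count as a sum of $1$'s over the relevant $v$ and then swapping:
\[
\text{LHS}=\sum_{v\text{ is a }V\text{-listing}}\ \sum_{\substack{F\subseteq A\text{ is linear};\\F\subseteq\operatorname{Arcs}v}}\left(-1\right)^{\left\vert F\right\vert }.
\]
For a fixed $V$-listing $v$, the set $\operatorname{Arcs}v$ is linear (being the arc set of the one-path cover $\left\{v\right\}$), so by Proposition~\ref{prop.linear.subset} \emph{every} subset of $\operatorname{Arcs}v$ is linear. Thus the condition ``$F$ is linear'' becomes automatic once $F\subseteq\operatorname{Arcs}v$ is imposed, and the inner sum simply ranges over all $F\subseteq A\cap\operatorname{Arcs}v$. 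Applying Lemma~\ref{lem.cancel} with $B=A\cap\operatorname{Arcs}v$, this inner sum equals $\left[A\cap\operatorname{Arcs}v=\varnothing\right]$.

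It then remains only to identify the surviving terms. The equality $A\cap\operatorname{Arcs}v=\varnothing$ says that no arc of $v$ lies in $A$, i.e.\ $\operatorname{Arcs}v\subseteq\left(V\times V\right)\setminus A$, which is exactly the arc set of $\overline{D}$. Since $V\neq\varnothing$, a $V$-listing $v$ is a nonempty tuple of distinct vertices containing every vertex, so this condition says precisely that $v$ is a $\overline{D}$-path hitting all vertices, i.e.\ a hamp of $\overline{D}$. Therefore $\text{LHS}=\sum_{v}\left[v\text{ is a hamp of }\overline{D}\right]=\left(\text{\# of hamps of }\overline{D}\right)$, as claimed.

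I expect the main obstacle to be the very first step: recognizing that the two superficially unrelated counts — permutations $\sigma$ with $F\subseteq\mathbf{A}_\sigma$, and $V$-listings $v$ with $F\subseteq\operatorname{Arcs}v$ — coincide because both equal $\left\vert C\right\vert !$. Once this coincidence of counts (already packaged into Propositions~\ref{prop.linear.Aisg-num} and~\ref{prop.linear.listing}) is exploited, the remainder is a routine inclusion–exclusion collapse. I would also note explicitly that the hypothesis $V\neq\varnothing$ is genuinely needed, since for $V=\varnothing$ the empty listing would contribute $1$ to the left-hand side while $\overline{D}$ has no hamp.
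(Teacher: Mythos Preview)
Your proof is correct and follows essentially the same approach as the paper's: both hinge on the coincidence (via Propositions~\ref{prop.linear.listing}~\textbf{(b)} and~\ref{prop.linear.Aisg-num}) that the two counts equal $\left\vert C\right\vert !$, then collapse the resulting double sum over $(F,v)$ by Lemma~\ref{lem.cancel}. The only cosmetic difference is the direction of travel --- you go LHS $\to$ RHS while the paper goes RHS $\to$ LHS --- and where the paper drops non-linear $F$ using Proposition~\ref{prop.linear.listing}~\textbf{(a)}, you equivalently observe via Proposition~\ref{prop.linear.subset} that linearity is automatic once $F\subseteq\operatorname{Arcs}v$.
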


\begin{vershort}

\begin{proof}
We will use the Iverson bracket notation (as in Convention \ref{conv.iverson}%
). We have%
\begin{align*}
\sum_{v\text{ is a }V\text{-listing}}\ \ \underbrace{\sum
_{\substack{F\subseteq A;\\F\subseteq\operatorname*{Arcs}v}}\left(  -1\right)
^{\left\vert F\right\vert }}_{\substack{=\sum_{F\subseteq A\cap
\operatorname*{Arcs}v}\left(  -1\right)  ^{\left\vert F\right\vert }\\=\left[
A\cap\operatorname*{Arcs}v=\varnothing\right]  \\\text{(by Lemma
\ref{lem.cancel})}}}  &  =\sum_{v\text{ is a }V\text{-listing}}\left[
A\cap\operatorname*{Arcs}v=\varnothing\right] \\
&  =\left(  \text{\# of }V\text{-listings }v\text{ satisfying }A\cap
\operatorname*{Arcs}v=\varnothing\right) \\
&  =\left(  \text{\# of hamps of }\overline{D}\right)
\end{align*}
(since the hamps of $\overline{D}$ are precisely the $V$-listings $v$ such
that $A\cap\operatorname*{Arcs}v=\varnothing$). Thus,%
\begin{align}
\left(  \text{\# of hamps of }\overline{D}\right)   &  =\sum_{v\text{ is a
}V\text{-listing}}\ \ \sum_{\substack{F\subseteq A;\\F\subseteq
\operatorname*{Arcs}v}}\left(  -1\right)  ^{\left\vert F\right\vert
}\nonumber\\
&  =\sum_{F\subseteq A}\ \ \sum_{\substack{v\text{ is a }V\text{-listing;}%
\\F\subseteq\operatorname*{Arcs}v}}\left(  -1\right)  ^{\left\vert
F\right\vert }\nonumber\\
&  =\sum_{F\subseteq A\text{ is linear}}\ \ \sum_{\substack{v\text{ is a
}V\text{-listing;}\\F\subseteq\operatorname*{Arcs}v}}\left(  -1\right)
^{\left\vert F\right\vert } \label{pf.lem.hamps-by-lin.short.4}%
\end{align}
(here, we have restricted the outer sum to only the linear subsets $F$ of $A$,
because if a subset $F$ of $A$ is not linear, then the inner sum
$\sum_{\substack{v\text{ is a }V\text{-listing;}\\F\subseteq
\operatorname*{Arcs}v}}\left(  -1\right)  ^{\left\vert F\right\vert }$ is
empty\footnote{by Proposition \ref{prop.linear.listing} \textbf{(a)}}).

Now, let $F$ be a linear subset of $A$. Thus, $F=\operatorname*{Arcs}C$ for
some path cover $C$ of $V$. Consider this $C$. Then, Proposition
\ref{prop.linear.Aisg-num} yields that%
\begin{equation}
\left(  \text{\# of }\sigma\in\mathfrak{S}_{V}\text{ satisfying }%
F\subseteq\mathbf{A}_{\sigma}\right)  =\left\vert C\right\vert !.
\label{pf.lem.hamps-by-lin.short.5}%
\end{equation}
On the other hand, Proposition \ref{prop.linear.listing} \textbf{(b)} yields
that
\[
\left(  \text{\# of }V\text{-listings }v\text{ satisfying }F\subseteq
\operatorname*{Arcs}v\right)  =\left\vert C\right\vert !.
\]
Hence,%
\begin{align}
\sum_{\substack{v\text{ is a }V\text{-listing;}\\F\subseteq
\operatorname*{Arcs}v}}\left(  -1\right)  ^{\left\vert F\right\vert }  &
=\underbrace{\left(  \text{\# of }V\text{-listings }v\text{ satisfying
}F\subseteq\operatorname*{Arcs}v\right)  }_{\substack{=\left\vert C\right\vert
!\\=\left(  \text{\# of }\sigma\in\mathfrak{S}_{V}\text{ satisfying
}F\subseteq\mathbf{A}_{\sigma}\right)  \\\text{(by
(\ref{pf.lem.hamps-by-lin.short.5}))}}}\cdot\left(  -1\right)  ^{\left\vert
F\right\vert }\nonumber\\
&  =\left(  \text{\# of }\sigma\in\mathfrak{S}_{V}\text{ satisfying
}F\subseteq\mathbf{A}_{\sigma}\right)  \cdot\left(  -1\right)  ^{\left\vert
F\right\vert }\nonumber\\
&  =\left(  -1\right)  ^{\left\vert F\right\vert }\cdot\left(  \text{\# of
}\sigma\in\mathfrak{S}_{V}\text{ satisfying }F\subseteq\mathbf{A}_{\sigma
}\right)  . \label{pf.lem.hamps-by-lin.short.6}%
\end{align}

Forget that we fixed $F$. We thus have proved
(\ref{pf.lem.hamps-by-lin.short.6}) for each linear subset $F$ of $A$. Now,
(\ref{pf.lem.hamps-by-lin.short.4}) becomes%
\begin{align*}
\left(  \text{\# of hamps of }\overline{D}\right)   &  =\sum_{F\subseteq
A\text{ is linear}}\ \ \underbrace{\sum_{\substack{v\text{ is a }%
V\text{-listing;}\\F\subseteq\operatorname*{Arcs}v}}\left(  -1\right)
^{\left\vert F\right\vert }}_{\substack{=\left(  -1\right)  ^{\left\vert
F\right\vert }\cdot\left(  \text{\# of }\sigma\in\mathfrak{S}_{V}\text{
satisfying }F\subseteq\mathbf{A}_{\sigma}\right)  \\\text{(by
(\ref{pf.lem.hamps-by-lin.short.6}))}}}\\
&  =\sum_{F\subseteq A\text{ is linear}}\left(  -1\right)  ^{\left\vert
F\right\vert }\cdot\left(  \text{\# of }\sigma\in\mathfrak{S}_{V}\text{
satisfying }F\subseteq\mathbf{A}_{\sigma}\right)  .
\end{align*}
This proves Lemma \ref{lem.hamps-by-lin}.
\end{proof}
\end{vershort}

\begin{verlong}

\begin{proof}
The hamps of $D$ are precisely the $D$-paths that contain each vertex of $D$.
In other words, the hamps of $D$ are precisely the $D$-paths that are
$V$-listings (because a $D$-path contains each vertex of $D$ if and only if it
is a $V$-listing). In other words, the hamps of $D$ are precisely the
$V$-listings that are $D$-paths. In other words, the hamps of $D$ are
precisely the nonempty $V$-listings that are $D$-paths (since every
$V$-listing is nonempty\footnote{This is because $V\neq\varnothing$.}). In
other words, the hamps of $D$ are precisely the $V$-listings $v$ that satisfy
$\operatorname*{Arcs}v\subseteq A$ (since a nonempty $V$-listing $v$ is a
$D$-path if and only if it satisfies $\operatorname*{Arcs}v\subseteq A$).

Applying the same reasoning to the digraph $\overline{D}=\left(  V,\ \left(
V\times V\right)  \setminus A\right)  $ instead of the digraph $D=\left(
V,A\right)  $, we obtain the following: The hamps of $\overline{D}$ are
precisely the $V$-listings $v$ that satisfy $\operatorname*{Arcs}%
v\subseteq\left(  V\times V\right)  \setminus A$. In other words, the hamps of
$\overline{D}$ are precisely the $V$-listings $v$ that satisfy $A\cap
\operatorname*{Arcs}v=\varnothing$ (because if $v$ is a $V$-listing, then
$\operatorname*{Arcs}v$ is a subset of $V\times V$, and thus the statement
\textquotedblleft$\operatorname*{Arcs}v\subseteq\left(  V\times V\right)
\setminus A$\textquotedblright\ is equivalent to \textquotedblleft%
$A\cap\operatorname*{Arcs}v=\varnothing$\textquotedblright). Hence,%
\begin{align}
&  \left(  \text{\# of hamps of }\overline{D}\right) \nonumber\\
&  =\left(  \text{\# of }V\text{-listings }v\text{ that satisfy }%
A\cap\operatorname*{Arcs}v=\varnothing\right)  . \label{pf.lem.hamps-by-lin.1}%
\end{align}

We will use the Iverson bracket notation (as in Convention \ref{conv.iverson}%
). We have%
\begin{align*}
&  \sum_{v\text{ is a }V\text{-listing}}\ \ \underbrace{\sum
_{\substack{F\subseteq A;\\F\subseteq\operatorname*{Arcs}v}}\left(  -1\right)
^{\left\vert F\right\vert }}_{\substack{=\sum_{F\subseteq A\cap
\operatorname*{Arcs}v}\left(  -1\right)  ^{\left\vert F\right\vert }\\=\left[
A\cap\operatorname*{Arcs}v=\varnothing\right]  \\\text{(by Lemma
\ref{lem.cancel})}}}\\
&  =\sum_{v\text{ is a }V\text{-listing}}\left[  A\cap\operatorname*{Arcs}%
v=\varnothing\right] \\
&  =\sum_{\substack{v\text{ is a }V\text{-listing;}\\A\cap\operatorname*{Arcs}%
v=\varnothing}}\underbrace{\left[  A\cap\operatorname*{Arcs}v=\varnothing
\right]  }_{\substack{=1\\\text{(since }A\cap\operatorname*{Arcs}%
v=\varnothing\text{)}}}+\sum_{\substack{v\text{ is a }V\text{-listing;}%
\\\text{we don't have }A\cap\operatorname*{Arcs}v=\varnothing}%
}\underbrace{\left[  A\cap\operatorname*{Arcs}v=\varnothing\right]
}_{\substack{=0\\\text{(since we don't}\\\text{have }A\cap\operatorname*{Arcs}%
v=\varnothing\text{)}}}\\
&  =\sum_{\substack{v\text{ is a }V\text{-listing;}\\A\cap\operatorname*{Arcs}%
v=\varnothing}}1+\underbrace{\sum_{\substack{v\text{ is a }V\text{-listing;}%
\\\text{we don't have }A\cap\operatorname*{Arcs}v=\varnothing}}0}_{=0}%
=\sum_{\substack{v\text{ is a }V\text{-listing;}\\A\cap\operatorname*{Arcs}%
v=\varnothing}}1\\
&  =\left(  \text{\# of }V\text{-listings }v\text{ that satisfy }%
A\cap\operatorname*{Arcs}v=\varnothing\right)  .
\end{align*}
Comparing this with (\ref{pf.lem.hamps-by-lin.1}), we obtain%
\begin{align}
&  \left(  \text{\# of hamps of }\overline{D}\right) \nonumber\\
&  =\sum_{v\text{ is a }V\text{-listing}}\ \ \sum_{\substack{F\subseteq
A;\\F\subseteq\operatorname*{Arcs}v}}\left(  -1\right)  ^{\left\vert
F\right\vert }\nonumber\\
&  =\sum_{F\subseteq A}\ \ \sum_{\substack{v\text{ is a }V\text{-listing;}%
\\F\subseteq\operatorname*{Arcs}v}}\left(  -1\right)  ^{\left\vert
F\right\vert }\nonumber\\
&  =\sum_{F\subseteq A\text{ is linear}}\ \ \sum_{\substack{v\text{ is a
}V\text{-listing;}\\F\subseteq\operatorname*{Arcs}v}}\left(  -1\right)
^{\left\vert F\right\vert }+\sum_{F\subseteq A\text{ is not linear}%
}\ \ \underbrace{\sum_{\substack{v\text{ is a }V\text{-listing;}%
\\F\subseteq\operatorname*{Arcs}v}}\left(  -1\right)  ^{\left\vert
F\right\vert }}_{\substack{=0\\\text{(because Proposition
\ref{prop.linear.listing} \textbf{(a)}}\\\text{shows that this sum is empty)}%
}}\nonumber\\
&  =\sum_{F\subseteq A\text{ is linear}}\ \ \sum_{\substack{v\text{ is a
}V\text{-listing;}\\F\subseteq\operatorname*{Arcs}v}}\left(  -1\right)
^{\left\vert F\right\vert }+\underbrace{\sum_{F\subseteq A\text{ is not
linear}}\ \ 0}_{=0}\nonumber\\
&  =\sum_{F\subseteq A\text{ is linear}}\ \ \sum_{\substack{v\text{ is a
}V\text{-listing;}\\F\subseteq\operatorname*{Arcs}v}}\left(  -1\right)
^{\left\vert F\right\vert }. \label{pf.lem.hamps-by-lin.4}%
\end{align}

Now, let $F$ be a linear subset of $A$. Thus, $F=\operatorname*{Arcs}C$ for
some path cover $C$ of $V$. Consider this $C$. Then, Proposition
\ref{prop.linear.Aisg-num} yields that there are exactly $\left\vert
C\right\vert !$ many permutations $\sigma\in\mathfrak{S}_{V}$ satisfying
$F\subseteq\mathbf{A}_{\sigma}$. In other words, we have%
\begin{equation}
\left(  \text{\# of }\sigma\in\mathfrak{S}_{V}\text{ satisfying }%
F\subseteq\mathbf{A}_{\sigma}\right)  =\left\vert C\right\vert !.
\label{pf.lem.hamps-by-lin.5}%
\end{equation}
On the other hand, Proposition \ref{prop.linear.listing} \textbf{(b)} yields
that there are exactly $\left\vert C\right\vert !$ many $V$-listings $v$
satisfying $F\subseteq\operatorname*{Arcs}v$. In other words, we have
\[
\left(  \text{\# of }V\text{-listings }v\text{ satisfying }F\subseteq
\operatorname*{Arcs}v\right)  =\left\vert C\right\vert !.
\]
Comparing this with (\ref{pf.lem.hamps-by-lin.5}), we find%
\begin{align}
&  \left(  \text{\# of }V\text{-listings }v\text{ satisfying }F\subseteq
\operatorname*{Arcs}v\right) \nonumber\\
&  =\left(  \text{\# of }\sigma\in\mathfrak{S}_{V}\text{ satisfying
}F\subseteq\mathbf{A}_{\sigma}\right)  . \label{pf.lem.hamps-by-lin.8}%
\end{align}

Hence,%
\begin{align}
\sum_{\substack{v\text{ is a }V\text{-listing;}\\F\subseteq
\operatorname*{Arcs}v}}\left(  -1\right)  ^{\left\vert F\right\vert }  &
=\underbrace{\left(  \text{\# of }V\text{-listings }v\text{ satisfying
}F\subseteq\operatorname*{Arcs}v\right)  }_{\substack{=\left(  \text{\# of
}\sigma\in\mathfrak{S}_{V}\text{ satisfying }F\subseteq\mathbf{A}_{\sigma
}\right)  \\\text{(by (\ref{pf.lem.hamps-by-lin.8}))}}}\cdot\left(  -1\right)
^{\left\vert F\right\vert }\nonumber\\
&  =\left(  \text{\# of }\sigma\in\mathfrak{S}_{V}\text{ satisfying
}F\subseteq\mathbf{A}_{\sigma}\right)  \cdot\left(  -1\right)  ^{\left\vert
F\right\vert }\nonumber\\
&  =\left(  -1\right)  ^{\left\vert F\right\vert }\cdot\left(  \text{\# of
}\sigma\in\mathfrak{S}_{V}\text{ satisfying }F\subseteq\mathbf{A}_{\sigma
}\right)  . \label{pf.lem.hamps-by-lin.6}%
\end{align}

Forget that we fixed $F$. We thus have proved (\ref{pf.lem.hamps-by-lin.6})
for each linear subset $F$ of $A$. Now, (\ref{pf.lem.hamps-by-lin.4}) becomes%
\begin{align*}
\left(  \text{\# of hamps of }\overline{D}\right)   &  =\sum_{F\subseteq
A\text{ is linear}}\ \ \underbrace{\sum_{\substack{v\text{ is a }%
V\text{-listing;}\\F\subseteq\operatorname*{Arcs}v}}\left(  -1\right)
^{\left\vert F\right\vert }}_{\substack{=\left(  -1\right)  ^{\left\vert
F\right\vert }\cdot\left(  \text{\# of }\sigma\in\mathfrak{S}_{V}\text{
satisfying }F\subseteq\mathbf{A}_{\sigma}\right)  \\\text{(by
(\ref{pf.lem.hamps-by-lin.6}))}}}\\
&  =\sum_{F\subseteq A\text{ is linear}}\left(  -1\right)  ^{\left\vert
F\right\vert }\cdot\left(  \text{\# of }\sigma\in\mathfrak{S}_{V}\text{
satisfying }F\subseteq\mathbf{A}_{\sigma}\right)  .
\end{align*}
This proves Lemma \ref{lem.hamps-by-lin}.
\end{proof}
\end{verlong}

\subsection{Level decomposition and maps $f$ satisfying $f\circ\sigma=f$}

\begin{vershort}
This entire subsection is devoted to building up some language that will only
ever be used in the proof of Lemma \ref{lem.hamps-by-lin-f}. All proofs are
omitted, as they are straightforward exercises in understanding the underlying
definitions. (They can be found in the detailed version \cite{verlong}, too.)
\end{vershort}

\begin{verlong}
This entire subsection is devoted to building up some language that will only
ever be used in the proof of Lemma \ref{lem.hamps-by-lin-f}. A reader familiar
with combinatorial tropes should be able to skip all proofs in this
subsection, along with many of the statements; nothing substantial is being
done here, and all hindrances being surmounted are notational. We would not be
surprised if the entire argument could be simplified or made slicker using
some algebraic notions, but we have not been able to find such notions.
\end{verlong}

We shall study what happens when a function $f:V\rightarrow\mathbb{P}$ is
introduced into a digraph $D=\left(  V,A\right)  $. The nonempty fibers of $f$
(i.e., the sets $f^{-1}\left(  j\right)  $ for all $j\in f\left(  V\right)  $)
partition the vertex set $V$, and this leads to a decomposition of $D$ into
subdigraphs. Let us introduce some notation for this, starting with the case
of an arbitrary set $V$ (we will later specialize to digraphs):

\begin{definition}
\label{def.level.levsets}Let $V$ be any set. Let $f:V\rightarrow\mathbb{P}$ be
any map.

\begin{enumerate}
\item[\textbf{(a)}] For each $v\in V$, we will refer to the number $f\left(
v\right)  $ as the \emph{level} of $v$ (with respect to $f$).

\item[\textbf{(b)}] For each $j\in\mathbb{P}$, the subset $f^{-1}\left(
j\right)  $ of $V$ shall be called the $j$\emph{-th level set} of $f$.
\end{enumerate}
\end{definition}

\begin{example}
Let $V=\left\{  1,2,3\right\}  $. Let $f:V\rightarrow\mathbb{P}$ be given by
$f\left(  1\right)  =1$, $f\left(  2\right)  =4$ and $f\left(  3\right)  =1$.
Then, the level sets of $f$ are
\begin{align*}
f^{-1}\left(  1\right)   &  =\left\{  1,3\right\}  ,\ \ \ \ \ \ \ \ \ \ f^{-1}%
\left(  4\right)  =\left\{  2\right\}  ,\ \ \ \ \ \ \ \ \ \ \text{and}\\
f^{-1}\left(  j\right)   &  =\varnothing\text{ for all }j\in\mathbb{P}%
\setminus\left\{  1,4\right\}  .
\end{align*}

\end{example}

\begin{remark}
Let $V$ be any set. Let $f:V\rightarrow\mathbb{P}$ be any map. Let
$j\in\mathbb{P}$. Then, the $j$-th level set $f^{-1}\left(  j\right)  $ is
empty if and only if $j\notin f\left(  V\right)  $. Hence, the nonempty level
sets of $f$ correspond to the elements of $f\left(  V\right)  $.
\end{remark}

\begin{definition}
\label{def.level.levsubdig}Let $D=\left(  V,A\right)  $ be a digraph. Let
$f:V\rightarrow\mathbb{P}$ be any map.

\begin{enumerate}
\item[\textbf{(a)}] For each $j\in\mathbb{P}$, we define a subset $A_{j}$ of
$A$ by%
\begin{align}
A_{j}:  &  =\left\{  \left(  u,v\right)  \in A\ \mid\ u,v\in f^{-1}\left(
j\right)  \right\} \label{eq.def.level.levsubdig.a.Aj=1}\\
&  =\left\{  \left(  u,v\right)  \in A\ \mid\ f\left(  u\right)  =f\left(
v\right)  =j\right\} \label{eq.def.level.levsubdig.a.Aj=2}\\
&  =A\cap\left(  f^{-1}\left(  j\right)  \times f^{-1}\left(  j\right)
\right)  . \label{eq.def.level.levsubdig.a.Aj=3}%
\end{align}
This set $A_{j}$ is also a subset of $f^{-1}\left(  j\right)  \times
f^{-1}\left(  j\right)  $, of course.

\item[\textbf{(b)}] We let $A_{f}$ denote the subset
\[
\left\{  \left(  u,v\right)  \in A\ \mid\ f\left(  u\right)  =f\left(
v\right)  \right\}
\]
of $A$.

\item[\textbf{(c)}] For each $j\in\mathbb{P}$, we let $D_{j}$ denote the
digraph $\left(  f^{-1}\left(  j\right)  ,\ A_{j}\right)  $. This digraph
$D_{j}$ is the restriction of the digraph $D$ to the subset $f^{-1}\left(
j\right)  $ (that is, the digraph obtained from $D$ by removing all vertices
that don't belong to $f^{-1}\left(  j\right)  $ and all arcs that contain any
of these vertices).

This digraph $D_{j}$ will be called the $j$\emph{-th level subdigraph} of $D$
with respect to $f$. (We should properly call it $D_{j,f}$ instead of $D_{j}$,
but we will usually keep $f$ fixed when we study it.)
\end{enumerate}
\end{definition}

\begin{example}
Let $D$ be as in Example \ref{exa.complement.3verts}. Let $f:V\rightarrow
\mathbb{P}$ be given by $f\left(  1\right)  =1$, $f\left(  2\right)  =4$ and
$f\left(  3\right)  =1$. Then,
\begin{align*}
A_{1}  &  =\left\{  \left(  3,3\right)  \right\}  ,\ \ \ \ \ \ \ \ \ \ A_{4}%
=\left\{  \left(  2,2\right)  \right\}  ,\\
A_{j}  &  =\varnothing\text{ for all }j\in\mathbb{P}\setminus\left\{
1,4\right\}  ,
\end{align*}
and%
\[
A_{f}=\left\{  \left(  3,3\right)  ,\ \left(  2,2\right)  \right\}  .
\]
The level subdigraphs of $D$ are the two digraphs%
\[
D_{1}=\left(  \left\{  1,3\right\}  ,\ \left\{  \left(  3,3\right)  \right\}
\right)  \ \ \ \ \ \ \ \ \ \ \text{and}\ \ \ \ \ \ \ \ \ \ D_{4}=\left(
\left\{  2\right\}  ,\ \left\{  \left(  2,2\right)  \right\}  \right)
\]
(as well as the infinitely many empty digraphs $D_{j}$ for all $j\in
\mathbb{P}\setminus\left\{  1,4\right\}  $). Note that the arc $\left(
3,3\right)  $ of $D$ is contained in $D_{1}$, and the arc $\left(  2,2\right)
$ is contained in $D_{4}$, but the arc $\left(  1,2\right)  $ is contained in
none of the level subdigraphs (since its two endpoints $1$ and $2$ have
different levels).
\end{example}

\begin{remark}
Let $D=\left(  V,A\right)  $ be a digraph. Let $f:V\rightarrow\mathbb{P}$ be
any map. Let $j\in\mathbb{P}$. Then, the $j$-th level subdigraph $D_{j}$ and
its arc set $A_{j}$ are empty if $j\notin f\left(  V\right)  $. (However,
$A_{j}$ can be empty even if $j$ does belong to $f\left(  V\right)  $.)
\end{remark}

In the following, the symbols \textquotedblleft$\sqcup$\textquotedblright\ and
\textquotedblleft$\bigsqcup$\textquotedblright\ stand for unions of disjoint
sets. Thus, for example, \textquotedblleft$A_{1}\sqcup A_{2}\sqcup A_{3}%
\sqcup\cdots$\textquotedblright\ will mean the union of some (pairwise)
disjoint sets $A_{1},A_{2},A_{3},\ldots$.

\begin{proposition}
\label{prop.path-covers-djun}Let $V$ and $J$ be two finite sets. Let $V_{j}$
be a subset of $V$ for each $j\in J$. Assume that the sets $V_{j}$ for
different $j\in J$ are disjoint. Let $C_{j}$ be a path cover of $V_{j}$ for
each $j\in J$. Then:

\begin{enumerate}
\item[\textbf{(a)}] The sets $C_{j}$ for different $j\in J$ are disjoint.

\item[\textbf{(b)}] Their union $\bigsqcup_{j\in J}C_{j}$ is a path cover of
$\bigsqcup_{j\in J}V_{j}$, and its arc set is $\operatorname*{Arcs}\left(
\bigsqcup_{j\in J}C_{j}\right)  =\bigsqcup_{j\in J}\operatorname*{Arcs}\left(
C_{j}\right)  $.
\end{enumerate}
\end{proposition}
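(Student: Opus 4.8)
The plan is to prove both parts by unwinding the definitions of \textquotedblleft path cover\textquotedblright\ and \textquotedblleft arc set,\textquotedblright\ relying on two elementary observations: that every path is nonempty, and that every arc of a path in $C_{j}$ lies in $V_{j}\times V_{j}$. The latter follows at once, since each path in $C_{j}$ is a nonempty tuple of elements of $V_{j}$, so all its entries lie in $V_{j}$, and hence $\operatorname*{Arcs}\left(  C_{j}\right)  \subseteq V_{j}\times V_{j}$.

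For part \textbf{(a)}, I would fix two distinct indices $j,j^{\prime}\in J$ and a path $p\in C_{j}$. Since $p$ is nonempty, it has at least one entry, and that entry lies in $V_{j}$; as $V_{j}\cap V_{j^{\prime}}=\varnothing$ (by the disjointness assumption), this entry does not lie in $V_{j^{\prime}}$. On the other hand, every entry of every path in $C_{j^{\prime}}$ lies in $V_{j^{\prime}}$. Hence $p$ cannot be a path in $C_{j^{\prime}}$, so $p\notin C_{j^{\prime}}$. This shows $C_{j}\cap C_{j^{\prime}}=\varnothing$, proving that the sets $C_{j}$ for different $j\in J$ are disjoint.

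For part \textbf{(b)}, I would verify the defining property of a path cover directly. First, each element of $\bigsqcup_{j\in J}C_{j}$ is a path of some $V_{j}$, hence a path of $\bigsqcup_{j\in J}V_{j}$, since its entries are distinct elements of $V_{j}\subseteq\bigsqcup_{j\in J}V_{j}$. Next, given any $v\in\bigsqcup_{j\in J}V_{j}$, disjointness gives a unique $j\in J$ with $v\in V_{j}$; since $C_{j}$ is a path cover of $V_{j}$, the element $v$ belongs to exactly one path in $C_{j}$, and it belongs to no path in any $C_{j^{\prime}}$ with $j^{\prime}\neq j$ (all entries of such paths lie in $V_{j^{\prime}}$, which does not contain $v$). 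Thus $v$ belongs to exactly one path in the union, so $\bigsqcup_{j\in J}C_{j}$ is a path cover of $\bigsqcup_{j\in J}V_{j}$. Finally, the arc-set identity follows by expanding $\operatorname*{Arcs}\left(  \bigsqcup_{j\in J}C_{j}\right)  =\bigcup_{p\in\bigsqcup_{j\in J}C_{j}}\operatorname*{Arcs}p=\bigcup_{j\in J}\operatorname*{Arcs}\left(  C_{j}\right)  $; that this union is in fact \emph{disjoint} is exactly the containment $\operatorname*{Arcs}\left(  C_{j}\right)  \subseteq V_{j}\times V_{j}$ combined with the disjointness of the sets $V_{j}\times V_{j}$ (which is immediate from the disjointness of the $V_{j}$).

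All of the steps are routine bookkeeping, and there is no substantial obstacle. The only point requiring a moment's care is that disjointness of the $C_{j}$ in part \textbf{(a)} and disjointness of the arc sets in part \textbf{(b)} both ultimately rest on the nonemptiness of paths together with the containment $\operatorname*{Arcs}\left(  C_{j}\right)  \subseteq V_{j}\times V_{j}$; it is precisely the nonemptiness of paths (guaranteed by definition) that prevents distinct $C_{j}$ from sharing a phantom empty tuple.
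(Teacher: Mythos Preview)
Your proposal is correct and takes essentially the same approach as the paper: unwinding the definitions of path cover and arc set, using that paths are nonempty and that each path in $C_{j}$ has all its entries in $V_{j}$. The paper's proof is even terser (part \textbf{(b)} is dismissed as \textquotedblleft obvious from the definitions\textquotedblright), so your version is simply a more detailed write-up of the same argument.
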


\begin{verlong}

\begin{proof}
\textbf{(a)} It suffices to show that if $A$ and $B$ are two disjoint finite
sets, then any path cover of $A$ is disjoint from any path cover of $B$. But
this is clear, since the elements of a path cover of $A$ are paths of $A$,
whereas the elements of a path cover of $B$ are paths of $B$, and clearly a
path of $A$ cannot be a path of $B$ (since $A$ and $B$ are disjoint).

\textbf{(b)} This is obvious from the definitions of path covers and arc sets.
\end{proof}
\end{verlong}

\begin{corollary}
\label{cor.linear-djun}Let $V$ and $J$ be two finite sets. Let $V_{j}$ be a
subset of $V$ for each $j\in J$. Assume that the sets $V_{j}$ for different
$j\in J$ are disjoint. For each $j\in J$, let $F_{j}$ be a linear subset of
$V_{j}\times V_{j}$. Then, the union $\bigcup_{j\in J}F_{j}$ is a linear
subset of $V\times V$.
\end{corollary}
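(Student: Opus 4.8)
The plan is to reduce the corollary to the two earlier results Proposition \ref{prop.linear.Vi} and Proposition \ref{prop.linear.VW}. The point is that Corollary \ref{cor.linear-djun} does \emph{not} follow verbatim from Proposition \ref{prop.linear.Vi}, because here the subsets $V_{j}$ are only assumed to be pairwise disjoint, not assumed to cover $V$; there may well be vertices of $V$ lying in none of the $V_{j}$. The remedy is to first work over the union of the $V_{j}$ and then enlarge back up to $V$.

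First I would set $W:=\bigcup_{j\in J}V_{j}$, which is a subset of $V$ since each $V_{j}\subseteq V$. By construction, the sets $V_{j}$ are pairwise disjoint subsets of $W$ satisfying $W=\bigcup_{j\in J}V_{j}$; after relabeling the finite index set $J$ as $\left[  k\right]  $ with $k=\left\vert J\right\vert $, they satisfy exactly the hypotheses of Proposition \ref{prop.linear.Vi} with $W$ in the role of $V$. Since each $F_{j}$ is a linear subset of $V_{j}\times V_{j}$ by assumption, the relevant direction of Proposition \ref{prop.linear.Vi} yields that $F:=\bigcup_{j\in J}F_{j}$ is linear as a subset of $W\times W$.

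Next I would observe that $F\subseteq W\times W$: indeed, each $F_{j}\subseteq V_{j}\times V_{j}\subseteq W\times W$ because $V_{j}\subseteq W$. Hence Proposition \ref{prop.linear.VW} applies (with this $W\subseteq V$ and this $F$), telling me that $F$ is linear as a subset of $W\times W$ if and only if $F$ is linear as a subset of $V\times V$. Combining this equivalence with the conclusion of the previous paragraph gives that $F=\bigcup_{j\in J}F_{j}$ is linear as a subset of $V\times V$, which is the assertion of the corollary.

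I do not expect any genuine obstacle beyond the bookkeeping point noted above: one must remember that the $V_{j}$ need not exhaust $V$, and therefore must pass through $W$ and use Proposition \ref{prop.linear.VW} to transport linearity from $W\times W$ back up to $V\times V$. The degenerate case $J=\varnothing$ (where $W=\varnothing$ and $F=\varnothing$) is covered by the same argument, since the empty set is trivially linear.
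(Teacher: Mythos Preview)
Your proof is correct and follows essentially the same strategy as the paper: set $W=\bigcup_{j\in J}V_{j}$, establish linearity over $W\times W$, then lift to $V\times V$ via Proposition~\ref{prop.linear.VW}. The only cosmetic difference is that for the middle step the paper unpacks the path covers directly and invokes Proposition~\ref{prop.path-covers-djun}, whereas you cite the already-packaged Proposition~\ref{prop.linear.Vi}; both routes are valid and available at this point in the paper.
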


\begin{verlong}

\begin{proof}
Let $W$ be the union $\bigcup_{j\in J}V_{j}$. This union $W=\bigcup_{j\in
J}V_{j}$ is a subset of $V$ (since $V_{j}$ is a subset of $V$ for each $j\in
J$).

The sets $V_{j}$ for different $j\in J$ are disjoint (by assumption). Thus,
their union $\bigcup_{j\in J}V_{j}$ is a disjoint union. In other words,
$\bigcup_{j\in J}V_{j}=\bigsqcup_{j\in J}V_{j}$. In other words,
$W=\bigsqcup_{j\in J}V_{j}$ (since $W=\bigcup_{j\in J}V_{j}$).

For each $j\in J$, the set $F_{j}$ is a linear subset of $V_{j}\times V_{j}$
(by assumption), and thus is the arc set of some path cover $C_{j}$ of $V_{j}$
(by the definition of \textquotedblleft linear\textquotedblright). In other
words, for each $j\in J$, there exists a path cover $C_{j}$ of $V_{j}$ such
that
\begin{equation}
F_{j}=\operatorname*{Arcs}\left(  C_{j}\right)  . \label{pf.cor.linear-djun.1}%
\end{equation}
Consider these path covers $C_{j}$.

Proposition \ref{prop.path-covers-djun} \textbf{(a)} shows that these path
covers $C_{j}$ for different $j\in J$ are disjoint. Hence, their union
$\bigcup_{j\in J}C_{j}$ is a disjoint union. In other words, $\bigcup_{j\in
J}C_{j}=\bigsqcup_{j\in J}C_{j}$.

Proposition \ref{prop.path-covers-djun} \textbf{(b)} shows that their union
$\bigsqcup_{j\in J}C_{j}$ is a path cover of $\bigsqcup_{j\in J}V_{j}$, and
its arc set is $\operatorname*{Arcs}\left(  \bigsqcup_{j\in J}C_{j}\right)
=\bigsqcup_{j\in J}\operatorname*{Arcs}\left(  C_{j}\right)  $.

In particular, $\bigsqcup_{j\in J}C_{j}$ is a path cover of $\bigsqcup_{j\in
J}V_{j}=W$. The arc set of this path cover is%
\[
\operatorname*{Arcs}\left(  \bigsqcup_{j\in J}C_{j}\right)  =\bigsqcup_{j\in
J}\underbrace{\operatorname*{Arcs}\left(  C_{j}\right)  }_{\substack{=F_{j}%
\\\text{(by (\ref{pf.cor.linear-djun.1}))}}}=\bigsqcup_{j\in J}F_{j}%
=\bigcup_{j\in J}F_{j}.
\]
Hence, $\bigcup_{j\in J}F_{j}$ is the arc set of some path cover of $W$
(namely, of the path cover $\bigsqcup_{j\in J}C_{j}$). In other words, $F$ is
linear as a subset of $W\times W$ (by the definition of \textquotedblleft
linear\textquotedblright). Therefore, $F$ is linear as a subset of $V\times W$
(by Proposition \ref{prop.linear.VW}). This proves Corollary
\ref{cor.linear-djun}.
\end{proof}
\end{verlong}

\begin{proposition}
\label{prop.level.Af=sum}Let $D=\left(  V,A\right)  $ be a digraph. Let
$f:V\rightarrow\mathbb{P}$ be any map. Then, the sets $A_{1},A_{2}%
,A_{3},\ldots$ are disjoint, and their union is
\[
A_{1}\sqcup A_{2}\sqcup A_{3}\sqcup\cdots=\bigsqcup_{j\in f\left(  V\right)
}A_{j}=A_{f}.
\]

\end{proposition}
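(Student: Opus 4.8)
The plan is to prove the two assertions — disjointness of the sets $A_1, A_2, A_3, \ldots$ and the identity $\bigsqcup_{j \in f(V)} A_j = A_f$ — by elementary element-chasing, since everything follows directly from the three descriptions of $A_j$ recorded in Definition \ref{def.level.levsubdig} and from the definition of $A_f$.

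First I would establish disjointness. Suppose some pair $(u,v)$ lies in both $A_i$ and $A_j$ for indices $i, j \in \mathbb{P}$. By \eqref{eq.def.level.levsubdig.a.Aj=2}, membership in $A_i$ forces $f(u) = f(v) = i$, while membership in $A_j$ forces $f(u) = f(v) = j$; hence $i = f(u) = j$. This shows that two distinct sets $A_i$ and $A_j$ can share no element, so $A_1, A_2, A_3, \ldots$ are pairwise disjoint. In particular, every union appearing below is automatically a disjoint union, which legitimizes writing it with $\sqcup$.

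Next I would prove $\bigcup_{j \in \mathbb{P}} A_j = A_f$ by mutual inclusion. For "$\subseteq$", each $(u,v) \in A_j$ satisfies $f(u) = f(v) = j$ by \eqref{eq.def.level.levsubdig.a.Aj=2}, so in particular $f(u) = f(v)$, whence $(u,v) \in A_f$. For "$\supseteq$", given $(u,v) \in A_f$ we set $j := f(u) = f(v)$; then $j \in \mathbb{P}$ (since $f$ maps into $\mathbb{P}$), and $(u,v) \in A_j$ again by \eqref{eq.def.level.levsubdig.a.Aj=2}, so $(u,v) \in \bigcup_{j \in \mathbb{P}} A_j$.

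Finally I would observe that the union over all $j \in \mathbb{P}$ may be trimmed down to the union over $j \in f(V)$, because $A_j = \varnothing$ whenever $j \notin f(V)$: indeed, $j \notin f(V)$ gives $f^{-1}(j) = \varnothing$, and then $A_j = A \cap \left( f^{-1}(j) \times f^{-1}(j) \right) = \varnothing$ by \eqref{eq.def.level.levsubdig.a.Aj=3}. Combining the three steps yields $A_1 \sqcup A_2 \sqcup A_3 \sqcup \cdots = \bigsqcup_{j \in f(V)} A_j = A_f$, as claimed. There is no real obstacle here; the only point requiring any care is keeping track of which index set each union ranges over, so that discarding the empty sets $A_j$ with $j \notin f(V)$ is justified by \eqref{eq.def.level.levsubdig.a.Aj=3} rather than merely assumed.
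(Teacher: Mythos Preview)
Your proof is correct and follows essentially the same approach as the paper's: both establish disjointness via the observation that $(u,v)\in A_j$ forces $j=f(u)$, compute the full union over $\mathbb{P}$ to be $A_f$, and then trim to $j\in f(V)$ using $A_j=\varnothing$ for $j\notin f(V)$. The only cosmetic difference is that the paper writes the union identity as a single chain of set-builder equalities rather than two explicit inclusions.
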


\begin{verlong}

\begin{proof}
The sets%
\begin{equation}
A_{j}=\left\{  \left(  u,v\right)  \in A\ \mid\ f\left(  u\right)  =f\left(
v\right)  =j\right\}  \label{pf.prop.level.Af=sum.Aj}%
\end{equation}
for different $j\in\mathbb{P}$ are clearly disjoint, because a pair $\left(
u,v\right)  \in A$ cannot satisfy $f\left(  u\right)  =f\left(  v\right)  =j$
for two different values of $j$ at the same time. In other words, the sets
$A_{1},A_{2},A_{3},\ldots$ are disjoint. Hence, their union is%
\begin{align*}
A_{1}\sqcup A_{2}\sqcup A_{3}\sqcup\cdots &  =\bigsqcup_{j\in\mathbb{P}}%
A_{j}\\
&  =\bigsqcup_{j\in\mathbb{P}}\left\{  \left(  u,v\right)  \in A\ \mid
\ f\left(  u\right)  =f\left(  v\right)  =j\right\}
\ \ \ \ \ \ \ \ \ \ \left(  \text{by (\ref{pf.prop.level.Af=sum.Aj})}\right)
\\
&  =\left\{  \left(  u,v\right)  \in A\ \mid\ f\left(  u\right)  =f\left(
v\right)  =j\text{ for some }j\in\mathbb{P}\right\} \\
&  =\left\{  \left(  u,v\right)  \in A\ \mid\ f\left(  u\right)  =f\left(
v\right)  \right\} \\
&  =A_{f}\ \ \ \ \ \ \ \ \ \ \left(  \text{by the definition of }A_{f}\right)
.
\end{align*}

It remains to observe that $A_{1}\sqcup A_{2}\sqcup A_{3}\sqcup\cdots
=\bigsqcup_{j\in f\left(  V\right)  }A_{j}$ (since $A_{j}$ is empty whenever
$j\notin f\left(  V\right)  $).
\end{proof}
\end{verlong}

Let us now connect the level decomposition to linear sets:

\begin{proposition}
\label{prop.level.lin-sub-Af}Let $D=\left(  V,A\right)  $ be a digraph. Let
$f:V\rightarrow\mathbb{P}$ be any map. Let $F$ be any set. Then:

\begin{enumerate}
\item[\textbf{(a)}] The set $F$ is a linear subset of $A_{f}$ if and only if
$F$ can be written as $F=\bigsqcup_{j\in f\left(  V\right)  }F_{j}$, where
each $F_{j}$ is a linear subset of $A_{j}$.

\item[\textbf{(b)}] In this case, the subsets $F_{j}$ are uniquely determined
by $F$ (namely, $F_{j}=F\cap A_{j}$ for each $j\in f\left(  V\right)  $).
\end{enumerate}
\end{proposition}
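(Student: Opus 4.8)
The plan is to reduce the whole statement to the disjoint-union machinery already in place, above all Proposition \ref{prop.linear.Vi}. The structural input is that the single square $f^{-1}(j)\times f^{-1}(j)$ houses $A_j$ for each $j$, that these squares are pairwise disjoint, and that their diagonal blocks fill up $V$: concretely, the fibers $f^{-1}(j)$ for $j\in f(V)$ are pairwise disjoint subsets of $V$ whose union is all of $V$, we have $A_j\subseteq f^{-1}(j)\times f^{-1}(j)$ by (\ref{eq.def.level.levsubdig.a.Aj=3}), and Proposition \ref{prop.level.Af=sum} tells us $A_f=\bigsqcup_{j\in f(V)}A_j$ with the $A_j$ disjoint. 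Since $V$ is finite, $f(V)$ is a finite index set, so $\left(f^{-1}(j)\right)_{j\in f(V)}$ is exactly a partition of $V$ of the kind to which Proposition \ref{prop.linear.Vi} applies.

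For part \textbf{(a)} I would prove the two directions in one stroke via Proposition \ref{prop.linear.Vi}. For ``if'': given $F=\bigsqcup_{j\in f(V)}F_j$ with each $F_j$ a linear subset of $A_j\subseteq f^{-1}(j)\times f^{-1}(j)$, Proposition \ref{prop.linear.Vi} (with $V_j=f^{-1}(j)$ and the given $F_j$) shows that $F=\bigcup_{j}F_j$ is linear, and $F\subseteq\bigsqcup_j A_j=A_f$, so $F$ is a linear subset of $A_f$. For ``only if'': given a linear subset $F$ of $A_f$, set $F_j:=F\cap A_j$. Because $F\subseteq A_f=\bigsqcup_j A_j$ and the $A_j$ are disjoint, we get $F=\bigsqcup_{j\in f(V)}F_j$ with $F_j\subseteq A_j\subseteq f^{-1}(j)\times f^{-1}(j)$; then the forward implication of Proposition \ref{prop.linear.Vi} forces every $F_j$ to be linear, hence a linear subset of $A_j$. (Alternatively, the linearity of each $F_j\subseteq F$ in this direction is immediate from Proposition \ref{prop.linear.subset}.)

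For part \textbf{(b)}, the uniqueness drops out by intersecting any valid decomposition with a single block: if $F=\bigsqcup_{j\in f(V)}F_j$ with each $F_j\subseteq A_j$, then disjointness of the $A_j$ gives $F\cap A_k=\bigsqcup_{j\in f(V)}\left(F_j\cap A_k\right)=F_k$ for each $k\in f(V)$, so necessarily $F_k=F\cap A_k$. The one point I would take care to spell out --- and the only place where anything could go wrong --- is the standing ambiguity of the word ``linear'' between the ambient squares $V\times V$ and $f^{-1}(j)\times f^{-1}(j)$; this is precisely the equivocation licensed by Proposition \ref{prop.linear.VW} (since $f^{-1}(j)\subseteq V$), so it presents no real obstacle, only bookkeeping.
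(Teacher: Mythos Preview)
Your proof is correct, and for part \textbf{(b)} and the ``if'' direction of \textbf{(a)} it matches the paper's argument essentially verbatim.

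For the ``only if'' direction of \textbf{(a)}, however, you take a genuinely shorter route than the paper. The paper (in its detailed version) fixes a path cover $C$ witnessing the linearity of $F$, argues that every path in $C$ is \emph{level} (i.e., has constant $f$-value along its entries, because consecutive entries $v_i,v_{i+1}$ give an arc in $F\subseteq A_f$ and hence satisfy $f(v_i)=f(v_{i+1})$), splits $C$ into $C_j:=\{\text{paths in }C\text{ of level }j\}$, and then checks by hand that each $C_j$ is a path cover of $f^{-1}(j)$ with $\operatorname{Arcs}(C_j)\subseteq A_j$, so that $F_j:=\operatorname{Arcs}(C_j)$ works. You instead set $F_j:=F\cap A_j$ from the outset and invoke Proposition~\ref{prop.linear.Vi} (or, as you note, simply Proposition~\ref{prop.linear.subset}) to certify the linearity of each $F_j$. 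This is cleaner: it reuses machinery already proved rather than unpacking a path cover again, and it makes the role of the block decomposition $V=\bigsqcup_j f^{-1}(j)$ completely transparent. The paper's approach, by contrast, is more self-contained and shows concretely how the witnessing path cover itself decomposes along levels --- information that is not needed here but could be useful if one wanted finer control over the path covers. Your remark about Proposition~\ref{prop.linear.VW} handling the ambient-set ambiguity for ``linear'' is exactly the right caveat.
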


\begin{verlong}

\begin{proof}
\textbf{(a)} $\Longleftarrow:$ Assume that $F$ can be written as
$F=\bigsqcup_{j\in f\left(  V\right)  }F_{j}$, where each $F_{j}$ is a linear
subset of $A_{j}$. Consider these linear subsets $F_{j}$.

Each $F_{j}$ is a linear subset of $A_{j}$ (by assumption) and therefore is a
linear subset of $f^{-1}\left(  j\right)  \times f^{-1}\left(  j\right)  $ as
well (since $A_{j}\subseteq f^{-1}\left(  j\right)  \times f^{-1}\left(
j\right)  $). The level sets $f^{-1}\left(  j\right)  $ for different $j$'s
are disjoint. Thus, Corollary \ref{cor.linear-djun} (applied to $J=f\left(
V\right)  $ and $V_{j}=f^{-1}\left(  j\right)  $) shows that the union
$\bigcup_{j\in f\left(  V\right)  }F_{j}$ is a linear subset of $V\times V$.
In other words, $F$ is a linear subset of $V\times V$ (since $F=\bigsqcup
_{j\in f\left(  V\right)  }F_{j}=\bigcup_{j\in f\left(  V\right)  }F_{j}$).
Furthermore,%
\[
F=\bigsqcup_{j\in f\left(  V\right)  }\underbrace{F_{j}}_{\substack{\subseteq
A_{j}\\\text{(since }F_{j}\text{ is a linear subset of }A_{j}\\\text{(by
assumption))}}}\subseteq\bigsqcup_{j\in f\left(  V\right)  }A_{j}=A_{f}%
\]
(by Proposition \ref{prop.level.Af=sum}). Hence, $F$ is a subset of $A_{f}$.
This shows that $F$ is a linear subset of $A_{f}$ (since $F$ is linear). This
proves the \textquotedblleft$\Longleftarrow$\textquotedblright\ direction of
Proposition \ref{prop.level.lin-sub-Af} \textbf{(a)}.

$\Longrightarrow:$ Assume that $F$ is a linear subset of $A_{f}$. In
particular, $F$ is linear. Thus, $F$ is the arc set of a path cover $C$ of
$V$. Consider this $C$. Thus, $F=\operatorname*{Arcs}C$.

We say that a path of $V$ is \emph{level} if all entries of this path have the
same level (with respect to $f$). If $p$ is a level path of $V$, then the
\emph{level} of $p$ will mean the level of each entry of $p$.

We claim that each path in $C$ is level. Indeed, let $v=\left(  v_{1}%
,v_{2},\ldots,v_{k}\right)  $ be a path in $C$. Then, the pairs $\left(
v_{1},v_{2}\right)  ,\ \left(  v_{2},v_{3}\right)  ,\ \ldots,\ \left(
v_{k-1},v_{k}\right)  $ are arcs of $v$, thus belong to $\operatorname*{Arcs}%
v$, therefore belong to $\operatorname*{Arcs}C$ (since $v$ is a path in $C$,
and thus we have $\operatorname*{Arcs}v\subseteq\operatorname*{Arcs}C$). In
other words, each $i\in\left[  k-1\right]  $ satisfies $\left(  v_{i}%
,v_{i+1}\right)  \in\operatorname*{Arcs}C$. Hence, each $i\in\left[
k-1\right]  $ satisfies $\left(  v_{i},v_{i+1}\right)  \in\operatorname*{Arcs}%
C=F\subseteq A_{f}$ and therefore $f\left(  v_{i}\right)  =f\left(
v_{i+1}\right)  $ (by the definition of $A_{f}$). In other words, $f\left(
v_{1}\right)  =f\left(  v_{2}\right)  =\cdots=f\left(  v_{k}\right)  $. In
other words, all entries $v_{1},v_{2},\ldots,v_{k}$ of $v$ have the same
level. In other words, $v$ is level. Forget now that we fixed $v$. We thus
have shown that each path $v=\left(  v_{1},v_{2},\ldots,v_{k}\right)  $ in $C$
is level. In other words, each path in $C$ is level.

Hence, each path in $C$ has a (unique) level. Set%
\[
C_{j}:=\left\{  \text{all paths of level }j\text{ in }C\right\}
\ \ \ \ \ \ \ \ \ \ \text{for each }j\in\mathbb{P}.
\]
Then, the sets $C_{1},C_{2},C_{3},\ldots$ are disjoint (since a path cannot
have two different levels simultaneously), and their union $C_{1}\sqcup
C_{2}\sqcup C_{3}\sqcup\cdots$ is $C$ (since each path in $C$ has a level). In
particular, $C=C_{1}\sqcup C_{2}\sqcup C_{3}\sqcup\cdots$.

Let $j\in\mathbb{P}$. Then, $C_{j}$ is a path cover of $f^{-1}\left(
j\right)  $\ \ \ \ \footnote{\textit{Proof.} Let $p\in C_{j}$ be a path. Then,
$p$ is a path of level $j$ in $C$ (since $p\in C_{j}=\left\{  \text{all paths
of level }j\text{ in }C\right\}  $). Therefore, all entries of $p$ have level
$j$. In other words, all entries of $p$ belong to $f^{-1}\left(  j\right)  $.
Hence, $p$ is a path of $f^{-1}\left(  j\right)  $ (not just a path of $V$).
\par
Forget that we fixed $p$. Thus, we have shown that each path $p\in C_{j}$ is a
path of $f^{-1}\left(  j\right)  $. In other words, $C_{j}$ is a set of paths
of $f^{-1}\left(  j\right)  $.
\par
Any element $v\in f^{-1}\left(  j\right)  $ belongs to $V$, and therefore must
belong to a unique path in $C$ (since $C$ is a path cover of $V$). This latter
path must have level $j$ (since $v$ has level $j$) and therefore belong to
$C_{j}$ (by the definition of $C_{j}$). Hence, we conclude that any element
$v\in f^{-1}\left(  j\right)  $ belongs to a unique path in $C_{j}$. This
shows that $C_{j}$ is a path cover of $f^{-1}\left(  j\right)  $ (since
$C_{j}$ is a set of paths of $f^{-1}\left(  j\right)  $).}. Hence,
$\operatorname*{Arcs}\left(  C_{j}\right)  $ is a linear subset of
$f^{-1}\left(  j\right)  \times f^{-1}\left(  j\right)  $ (by the definition
of \textquotedblleft linear\textquotedblright). Furthermore, $C_{j}\subseteq
C$ (by the definition of $C_{j}$) and therefore $\operatorname*{Arcs}\left(
C_{j}\right)  \subseteq\operatorname*{Arcs}C=F\subseteq A_{f}\subseteq A$.
Combining this with $\operatorname*{Arcs}\left(  C_{j}\right)  \subseteq
f^{-1}\left(  j\right)  \times f^{-1}\left(  j\right)  $, we obtain
\begin{align*}
\operatorname*{Arcs}\left(  C_{j}\right)   &  \subseteq A\cap\left(
f^{-1}\left(  j\right)  \times f^{-1}\left(  j\right)  \right) \\
&  =A_{j}\ \ \ \ \ \ \ \ \ \ \left(  \text{since }A_{j}\text{ is defined to be
}A\cap\left(  f^{-1}\left(  j\right)  \times f^{-1}\left(  j\right)  \right)
\right)  .
\end{align*}
Thus, $\operatorname*{Arcs}\left(  C_{j}\right)  $ is a linear subset of
$A_{j}$ (since $\operatorname*{Arcs}\left(  C_{j}\right)  $ is linear).

Forget that we fixed $j$. We thus have shown that $\operatorname*{Arcs}\left(
C_{j}\right)  $ is a linear subset of $A_{j}$ for each $j\in\mathbb{P}$.

In other words, the sets $\operatorname*{Arcs}\left(  C_{1}\right)
,\ \operatorname*{Arcs}\left(  C_{2}\right)  ,\ \operatorname*{Arcs}\left(
C_{3}\right)  ,\ \ldots$ are subsets of the sets $A_{1},A_{2},A_{3},\ldots$,
respectively. Since the latter sets $A_{1},A_{2},A_{3},\ldots$ are disjoint
(by Proposition \ref{prop.level.Af=sum}), we thus conclude that their subsets
$\operatorname*{Arcs}\left(  C_{1}\right)  ,\ \operatorname*{Arcs}\left(
C_{2}\right)  ,\ \operatorname*{Arcs}\left(  C_{3}\right)  ,\ \ldots$ are
disjoint as well.

Moreover, each positive integer $j\notin f\left(  V\right)  $ satisfies
$\operatorname*{Arcs}\left(  C_{j}\right)  =\varnothing$%
\ \ \ \ \footnote{\textit{Proof.} Let $j$ be a positive integer such that
$j\notin f\left(  V\right)  $. Then, $f^{-1}\left(  j\right)  =\varnothing$.
However, we have shown above that $\operatorname*{Arcs}\left(  C_{j}\right)  $
is a linear subset of $A_{j}$. Hence,%
\begin{align*}
\operatorname*{Arcs}\left(  C_{j}\right)   &  \subseteq A_{j}=A\cap\left(
f^{-1}\left(  j\right)  \times f^{-1}\left(  j\right)  \right)
\ \ \ \ \ \ \ \ \ \ \left(  \text{by the definition of }A_{j}\right) \\
&  \subseteq\underbrace{f^{-1}\left(  j\right)  }_{=\varnothing}%
\times\underbrace{f^{-1}\left(  j\right)  }_{=\varnothing}=\varnothing
\times\varnothing=\varnothing,
\end{align*}
so that $\operatorname*{Arcs}\left(  C_{j}\right)  =\varnothing$.}.

From $C=C_{1}\sqcup C_{2}\sqcup C_{3}\sqcup\cdots=\bigsqcup_{j\in\mathbb{P}%
}C_{j}=\bigcup_{j\in\mathbb{P}}C_{j}$, we obtain%
\begin{align}
\operatorname*{Arcs}C  &  =\operatorname*{Arcs}\left(  \bigcup_{j\in
\mathbb{P}}C_{j}\right)  =\bigcup_{j\in\mathbb{P}}\operatorname*{Arcs}\left(
C_{j}\right) \nonumber\\
&  =\bigsqcup_{j\in\mathbb{P}}\operatorname*{Arcs}\left(  C_{j}\right)
\ \ \ \ \ \ \ \ \ \ \left(
\begin{array}
[c]{c}%
\text{since the}\\
\text{sets }\operatorname*{Arcs}\left(  C_{1}\right)  ,\ \operatorname*{Arcs}%
\left(  C_{2}\right)  ,\ \operatorname*{Arcs}\left(  C_{3}\right)  ,\ \ldots\\
\text{are disjoint}%
\end{array}
\right) \nonumber\\
&  =\bigsqcup_{j\in f\left(  V\right)  }\operatorname*{Arcs}\left(
C_{j}\right)  \label{pf.prop.level.lin-sub-Af.5}%
\end{align}
(since each $j\notin f\left(  V\right)  $ satisfies $\operatorname*{Arcs}%
\left(  C_{j}\right)  =\varnothing$).

Thus, $\operatorname*{Arcs}C$ can be written as $\bigsqcup_{j\in f\left(
V\right)  }F_{j}$, where each $F_{j}$ is a linear subset of $A_{j}$ (since
$\operatorname*{Arcs}\left(  C_{j}\right)  $ is a linear subset of $A_{j}$ for
each $j\in\mathbb{P}$). In other words, $F$ can be written in this way (since
$F=\operatorname*{Arcs}C$). This proves the \textquotedblleft$\Longrightarrow
$\textquotedblright\ direction of Proposition \ref{prop.level.lin-sub-Af}
\textbf{(a)}.

\textbf{(b)} Assume that $F$ is written as $F=\bigsqcup_{j\in f\left(
V\right)  }F_{j}$, where each $F_{j}$ is a linear subset of $A_{j}$. We must
show that $F_{j}=F\cap A_{j}$ for each $j\in f\left(  V\right)  $.

Indeed, we have $F=\bigsqcup_{j\in f\left(  V\right)  }F_{j}=\bigsqcup_{i\in
f\left(  V\right)  }F_{i}$.

Now, let $j\in f\left(  V\right)  $. Then, $F_{j}$ is a subset of $F$ (since
$F=\bigsqcup_{i\in f\left(  V\right)  }F_{i}$) and also a subset of $A_{j}$
(by definition of $F_{j}$). In other words, $F_{j}$ is a subset of both $F$
and $A_{j}$. Thus, $F_{j}$ is a subset of the intersection $F\cap A_{j}$ as
well. Let us now show that $F\cap A_{j}$ is a subset of $F_{j}$.

Indeed, let $\alpha\in F\cap A_{j}$. Then, $\alpha\in F\cap A_{j}\subseteq
F=\bigsqcup_{i\in f\left(  V\right)  }F_{i}$, so that $\alpha\in F_{i}$ for
some $i\in f\left(  V\right)  $. Consider this $i$. Then, $\alpha\in
F_{i}\subseteq A_{i}$ (by the definition of $F_{i}$). However, $\alpha\in
F\cap A_{j}\subseteq A_{j}$. Thus, the element $\alpha$ belongs to both sets
$A_{i}$ and $A_{j}$. Therefore, the sets $A_{i}$ and $A_{j}$ are not disjoint.
However, Proposition \ref{prop.level.Af=sum} shows that the sets $A_{1}%
,A_{2},A_{3},\ldots$ are disjoint. The only way to reconcile the previous two
sentences is when $i=j$.

Thus, we obtain $i=j$. Hence, $\alpha\in F_{j}$ (since $\alpha\in F_{i}$).

Forget that we fixed $\alpha$. We thus have shown that $\alpha\in F_{j}$ for
each $\alpha\in F\cap A_{j}$. In other words, $F\cap A_{j}\subseteq F_{j}$.
Since $F_{j}$ is (in turn) a subset of $F\cap A_{j}$, we thus conclude that
$F_{j}=F\cap A_{j}$. This completes the proof of Proposition
\ref{prop.level.lin-sub-Af} \textbf{(b)}.
\end{proof}
\end{verlong}

Next, we return to studying permutations.

When a set $V$ is a union of two disjoint subsets $A$ and $B$, and we are
given a permutation $\sigma_{A}$ of $A$ and a permutation $\sigma_{B}$ of $B$,
then we can combine these two permutations to obtain a permutation $\sigma
_{A}\oplus\sigma_{B}$ of $V$: Namely, this latter permutation sends each $a\in
A$ to $\sigma_{A}\left(  a\right)  $, and sends each $b\in B$ to $\sigma
_{B}\left(  b\right)  $. That is, this permutation $\sigma_{A}\oplus\sigma
_{B}$ is \textquotedblleft acting as $\sigma_{A}$\textquotedblright\ on the
subset $A$ and \textquotedblleft acting as $\sigma_{B}$\textquotedblright\ on
the subset $B$.

The same construction can be performed when $V$ is a union of more than two
disjoint subsets (and we are given a permutation of each of these subsets). We
will encounter this situation when a map $f:V\rightarrow\mathbb{P}$ subdivides
the set $V$ into its level sets $f^{-1}\left(  1\right)  ,\ f^{-1}\left(
2\right)  ,\ f^{-1}\left(  3\right)  ,\ \ldots$, and we are given a
permutation $\sigma_{j}\in\mathfrak{S}_{f^{-1}\left(  j\right)  }$ of each
level set $f^{-1}\left(  j\right)  $ (to be more precise, we only need
$\sigma_{j}$ to be given when $j\in f\left(  V\right)  $, since the level set
$f^{-1}\left(  j\right)  $ is empty otherwise). The permutation of $V$
obtained by combining these permutations $\sigma_{j}$ will then be denoted by
$\bigoplus_{j\in f\left(  V\right)  }\sigma_{j}$. Here is its explicit definition:

\begin{definition}
\label{def.level.decompose-perm}Let $V$ be any set. Let $f:V\rightarrow
\mathbb{P}$ be any map.

For each $j\in f\left(  V\right)  $, let $\sigma_{j}\in\mathfrak{S}%
_{f^{-1}\left(  j\right)  }$ be a permutation of the level set $f^{-1}\left(
j\right)  $.

Then, $\bigoplus_{j\in f\left(  V\right)  }\sigma_{j}$ shall denote the
permutation of $V$ that sends each $v\in V$ to $\sigma_{f\left(  v\right)
}\left(  v\right)  $. This is the permutation that acts as $\sigma_{j}$ on
each level set $f^{-1}\left(  j\right)  $.
\end{definition}

\begin{proposition}
\label{prop.level.decompose-perm}Let $V$ be any set. Let $f:V\rightarrow
\mathbb{P}$ be any map. Let $\sigma\in\mathfrak{S}_{V}$ be any permutation. Then:

\begin{enumerate}
\item[\textbf{(a)}] We have $f\circ\sigma=f$ if and only if $\sigma$ can be
written in the form $\sigma=\bigoplus_{j\in f\left(  V\right)  }\sigma_{j}$,
where $\sigma_{j}\in\mathfrak{S}_{f^{-1}\left(  j\right)  }$ for each $j\in
f\left(  V\right)  $.

\item[\textbf{(b)}] In this case, the permutations $\sigma_{j}$ for all $j\in
f\left(  V\right)  $ are uniquely determined by $\sigma$ (namely, $\sigma_{j}$
is the restriction of $\sigma$ to the subset $f^{-1}\left(  j\right)  $ for
each $j\in f\left(  V\right)  $).
\end{enumerate}
\end{proposition}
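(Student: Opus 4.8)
The plan is to interpret the equation $f\circ\sigma=f$ as the statement that $\sigma$ \emph{preserves levels}: explicitly, $f\left(  \sigma\left(  v\right)  \right)  =f\left(  v\right)  $ for every $v\in V$, which says that $\sigma$ sends each vertex to a vertex of the same level. I would first record this reformulation, and then prove the two parts in turn, using nothing beyond the definition of $\bigoplus_{j\in f\left(  V\right)  }\sigma_{j}$ (Definition \ref{def.level.decompose-perm}).

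For the \textquotedblleft$\Longleftarrow$\textquotedblright\ direction of part \textbf{(a)}, I would simply unwind that definition: if $\sigma=\bigoplus_{j\in f\left(  V\right)  }\sigma_{j}$ with each $\sigma_{j}\in\mathfrak{S}_{f^{-1}\left(  j\right)  }$, then for each $v\in V$ we have $\sigma\left(  v\right)  =\sigma_{f\left(  v\right)  }\left(  v\right)  \in f^{-1}\left(  f\left(  v\right)  \right)  $ (since $\sigma_{f\left(  v\right)  }$ permutes the level set $f^{-1}\left(  f\left(  v\right)  \right)  $), whence $f\left(  \sigma\left(  v\right)  \right)  =f\left(  v\right)  $ and thus $f\circ\sigma=f$.

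For the \textquotedblleft$\Longrightarrow$\textquotedblright\ direction, I would assume $f\circ\sigma=f$ and show that $\sigma$ restricts to a permutation of each level set. Fix $j\in f\left(  V\right)  $. Level preservation immediately gives $\sigma\left(  f^{-1}\left(  j\right)  \right)  \subseteq f^{-1}\left(  j\right)  $. To see that this restriction is actually \emph{onto} $f^{-1}\left(  j\right)  $ (which is the one place where a little care is needed, since $V$ is allowed to be infinite, so that injectivity alone would not force surjectivity), I would compose $f\circ\sigma=f$ with $\sigma^{-1}$ on the right to obtain $f=f\circ\sigma^{-1}$, so that $\sigma^{-1}$ also preserves levels and hence $\sigma^{-1}\left(  f^{-1}\left(  j\right)  \right)  \subseteq f^{-1}\left(  j\right)  $, i.e.\ $f^{-1}\left(  j\right)  \subseteq\sigma\left(  f^{-1}\left(  j\right)  \right)  $. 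Combining the two inclusions yields $\sigma\left(  f^{-1}\left(  j\right)  \right)  =f^{-1}\left(  j\right)  $, so that $\sigma_{j}:=\left.  \sigma\mid_{f^{-1}\left(  j\right)  }\right.  $ is a well-defined element of $\mathfrak{S}_{f^{-1}\left(  j\right)  }$. It then remains to check $\sigma=\bigoplus_{j\in f\left(  V\right)  }\sigma_{j}$, which is immediate: both sides send any $v\in V$ to $\sigma\left(  v\right)  $, since $v\in f^{-1}\left(  f\left(  v\right)  \right)  $ gives $\sigma_{f\left(  v\right)  }\left(  v\right)  =\sigma\left(  v\right)  $.

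Part \textbf{(b)} is then a formality: if $\sigma=\bigoplus_{j\in f\left(  V\right)  }\sigma_{j}$, then for each $j\in f\left(  V\right)  $ and each $v\in f^{-1}\left(  j\right)  $ we have $f\left(  v\right)  =j$ and hence $\sigma\left(  v\right)  =\sigma_{f\left(  v\right)  }\left(  v\right)  =\sigma_{j}\left(  v\right)  $, forcing $\sigma_{j}=\left.  \sigma\mid_{f^{-1}\left(  j\right)  }\right.  $ and thereby pinning down each $\sigma_{j}$ uniquely. The only genuine (and still mild) obstacle in the whole argument is the surjectivity step above, handled by the $\sigma^{-1}$-trick; everything else is a direct unwinding of the definition of $\bigoplus_{j\in f\left(  V\right)  }\sigma_{j}$.
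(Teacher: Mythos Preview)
Your proof is correct and follows essentially the same route as the paper's (verlong) proof: both directions of \textbf{(a)} are handled by unwinding the definition of $\bigoplus_{j}\sigma_{j}$, and the surjectivity of the restriction $\sigma\mid_{f^{-1}(j)}$ is obtained by the same $\sigma^{-1}$-trick (the paper phrases it as constructing the restriction $(\sigma^{-1})_{j}$ and noting it inverts $\sigma_{j}$, which amounts to the same thing). Your remark that this step is where care is needed if $V$ is infinite is apt; the paper's proof likewise works without assuming finiteness here.
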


\begin{verlong}

\begin{proof}
\textbf{(a)} $\Longrightarrow:$ Assume that $f\circ\sigma=f$.

Let $j\in f\left(  V\right)  $. Let $v\in f^{-1}\left(  j\right)  $. Then,
$f\left(  v\right)  =j$. However, $f\left(  \sigma\left(  v\right)  \right)
=\underbrace{\left(  f\circ\sigma\right)  }_{=f}\left(  v\right)  =f\left(
v\right)  =j$, so that $\sigma\left(  v\right)  \in f^{-1}\left(  j\right)  $.

Forget that we fixed $v$. We thus have shown that $\sigma\left(  v\right)  \in
f^{-1}\left(  j\right)  $ for each $v\in f^{-1}\left(  j\right)  $. Hence, the
map%
\begin{align*}
f^{-1}\left(  j\right)   &  \rightarrow f^{-1}\left(  j\right)  ,\\
v  &  \mapsto\sigma\left(  v\right)
\end{align*}
is well-defined. Let us denote this map by $\sigma_{j}$. This map $\sigma_{j}$
is the restriction of $\sigma$ to the subset $f^{-1}\left(  j\right)  $ of $V$.

The map $\sigma$ is a permutation, thus has an inverse $\sigma^{-1}$. From
$f\circ\sigma=f$, we obtain $\underbrace{f}_{=f\circ\sigma}\circ\sigma
^{-1}=f\circ\underbrace{\sigma\circ\sigma^{-1}}_{=\operatorname*{id}}=f$.
Hence, just as we have constructed a map $\sigma_{j}:f^{-1}\left(  j\right)
\rightarrow f^{-1}\left(  j\right)  $ by restricting the map $\sigma$ to
$f^{-1}\left(  j\right)  $, we can likewise construct a map $\left(
\sigma^{-1}\right)  _{j}:f^{-1}\left(  j\right)  \rightarrow f^{-1}\left(
j\right)  $ by restricting the map $\sigma^{-1}$ to $f^{-1}\left(  j\right)
$. These two maps $\sigma_{j}$ and $\left(  \sigma^{-1}\right)  _{j}$ are
mutually inverse (since they are restrictions of the mutually inverse maps
$\sigma$ and $\sigma^{-1}$). Hence, the map $\sigma_{j}$ is invertible, i.e.,
is a permutation of $f^{-1}\left(  j\right)  $. In other words, $\sigma_{j}%
\in\mathfrak{S}_{f^{-1}\left(  j\right)  }$.

Forget now that we fixed $j$. Thus, for each $j\in f\left(  V\right)  $, we
have constructed a permutation $\sigma_{j}\in\mathfrak{S}_{f^{-1}\left(
j\right)  }$ by restricting the map $\sigma$ to $f^{-1}\left(  j\right)  $.
These permutations clearly satisfy $\sigma=\bigoplus_{j\in f\left(  V\right)
}\sigma_{j}$ (since $V=\bigsqcup_{j\in f\left(  V\right)  }f^{-1}\left(
j\right)  $). This proves the \textquotedblleft$\Longrightarrow$%
\textquotedblright\ direction of Proposition \ref{prop.level.decompose-perm}
\textbf{(a)}.

$\Longleftarrow:$ Assume that $\sigma$ can be written in the form
$\sigma=\bigoplus_{j\in f\left(  V\right)  }\sigma_{j}$, where $\sigma_{j}%
\in\mathfrak{S}_{f^{-1}\left(  j\right)  }$ for each $j\in f\left(  V\right)
$.

Let $v\in V$. Let $i=f\left(  v\right)  $. Thus, $i\in f\left(  V\right)  $.
From $\sigma=\bigoplus_{j\in f\left(  V\right)  }\sigma_{j}$, we obtain
\begin{align*}
\sigma\left(  v\right)   &  =\sigma_{f\left(  v\right)  }\left(  v\right)
\ \ \ \ \ \ \ \ \ \ \left(  \text{by the definition of }\bigoplus_{j\in
f\left(  V\right)  }\sigma_{j}\right) \\
&  =\sigma_{i}\left(  v\right)  \ \ \ \ \ \ \ \ \ \ \left(  \text{since
}f\left(  v\right)  =i\right) \\
&  \in f^{-1}\left(  i\right)  \ \ \ \ \ \ \ \ \ \ \left(  \text{since }%
\sigma_{i}\in\mathfrak{S}_{f^{-1}\left(  i\right)  }\text{ is a map from
}f^{-1}\left(  i\right)  \text{ to }f^{-1}\left(  i\right)  \right)  .
\end{align*}
In other words, $f\left(  \sigma\left(  v\right)  \right)  =i$. Hence,
$\left(  f\circ\sigma\right)  \left(  v\right)  =f\left(  \sigma\left(
v\right)  \right)  =i=f\left(  v\right)  $.

Forget that we fixed $v$. We thus have shown that $\left(  f\circ
\sigma\right)  \left(  v\right)  =f\left(  v\right)  $ for each $v\in V$. In
other words, $f\circ\sigma=f$. We thus have proved the \textquotedblleft%
$\Longleftarrow$\textquotedblright\ direction of Proposition
\ref{prop.level.decompose-perm} \textbf{(a)}.

\textbf{(b)} This is obvious.
\end{proof}
\end{verlong}

Now, we recall the set $\mathbf{A}_{\sigma}$ defined in Definition
\ref{def.Asigma} for any finite set $V$ and any permutation $\sigma$ of $V$.

\begin{proposition}
\label{prop.level.Asig-f2}Let $V$ be a finite set. Let $f:V\rightarrow
\mathbb{P}$ be any map. Let $\sigma\in\mathfrak{S}_{V}$ be a permutation
satisfying $f\circ\sigma=f$. Write $\sigma$ in the form $\sigma=\bigoplus
_{j\in f\left(  V\right)  }\sigma_{j}$, where $\sigma_{j}\in\mathfrak{S}%
_{f^{-1}\left(  j\right)  }$ for each $j\in f\left(  V\right)  $. (This can be
done, because of Proposition \ref{prop.level.decompose-perm} \textbf{(a)}.)
Then,
\[
\mathbf{A}_{\sigma}=\bigsqcup_{j\in f\left(  V\right)  }\mathbf{A}_{\sigma
_{j}}.
\]

\end{proposition}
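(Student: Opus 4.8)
The plan is to unfold both sides of the claimed equality directly from the definitions and then match them term by term, using only the decomposition $\sigma=\bigoplus_{j\in f\left(V\right)}\sigma_{j}$ and the fact that the level sets partition $V$. First I would recall that, by Definition \ref{def.Asigma}, we have $\mathbf{A}_{\sigma}=\left\{\left(v,\sigma\left(v\right)\right)\ \mid\ v\in V\right\}$, and likewise $\mathbf{A}_{\sigma_{j}}=\left\{\left(v,\sigma_{j}\left(v\right)\right)\ \mid\ v\in f^{-1}\left(j\right)\right\}$ for each $j\in f\left(V\right)$. Since $\sigma=\bigoplus_{j\in f\left(V\right)}\sigma_{j}$, Definition \ref{def.level.decompose-perm} tells us that $\sigma\left(v\right)=\sigma_{f\left(v\right)}\left(v\right)$ for every $v\in V$; this is the one identity doing the real work.

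Next I would split the index set of the union defining $\mathbf{A}_{\sigma}$ according to levels. The level sets $f^{-1}\left(j\right)$ for $j\in f\left(V\right)$ partition $V$ (the empty level sets for $j\notin f\left(V\right)$ contribute nothing), so $V=\bigcup_{j\in f\left(V\right)}f^{-1}\left(j\right)$, and therefore $\mathbf{A}_{\sigma}=\bigcup_{j\in f\left(V\right)}\left\{\left(v,\sigma\left(v\right)\right)\ \mid\ v\in f^{-1}\left(j\right)\right\}$. Now for a fixed $j$ and a vertex $v\in f^{-1}\left(j\right)$ we have $f\left(v\right)=j$, whence $\sigma\left(v\right)=\sigma_{f\left(v\right)}\left(v\right)=\sigma_{j}\left(v\right)$; thus the $j$-th term of this union is exactly $\left\{\left(v,\sigma_{j}\left(v\right)\right)\ \mid\ v\in f^{-1}\left(j\right)\right\}=\mathbf{A}_{\sigma_{j}}$. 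Combining the last two displays yields $\mathbf{A}_{\sigma}=\bigcup_{j\in f\left(V\right)}\mathbf{A}_{\sigma_{j}}$.

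It remains to promote this ordinary union to a disjoint union, which is the only point requiring a separate (and still routine) argument. Here I would observe that each $\sigma_{j}\in\mathfrak{S}_{f^{-1}\left(j\right)}$ maps $f^{-1}\left(j\right)$ to itself, so $\mathbf{A}_{\sigma_{j}}\subseteq f^{-1}\left(j\right)\times f^{-1}\left(j\right)$. Because the level sets $f^{-1}\left(i\right)$ and $f^{-1}\left(j\right)$ are disjoint for $i\neq j$ (a pair cannot have two distinct levels), the squares $f^{-1}\left(i\right)\times f^{-1}\left(i\right)$ and $f^{-1}\left(j\right)\times f^{-1}\left(j\right)$ are disjoint as well, and hence so are $\mathbf{A}_{\sigma_{i}}$ and $\mathbf{A}_{\sigma_{j}}$. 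Therefore the union over $j\in f\left(V\right)$ is genuinely disjoint, giving $\mathbf{A}_{\sigma}=\bigsqcup_{j\in f\left(V\right)}\mathbf{A}_{\sigma_{j}}$ as desired. I do not anticipate any genuine obstacle here; the entire proposition is a bookkeeping unfolding, with the disjointness of the level sets being the only fact beyond the definitions that needs to be invoked.
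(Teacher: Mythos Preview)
Your proposal is correct and follows essentially the same approach as the paper: both unfold $\mathbf{A}_{\sigma}$ via the definition, split the index $v$ according to its level $f\left(v\right)$, use $\sigma\left(v\right)=\sigma_{j}\left(v\right)$ for $v\in f^{-1}\left(j\right)$, and verify disjointness via the containment $\mathbf{A}_{\sigma_{j}}\subseteq f^{-1}\left(j\right)\times f^{-1}\left(j\right)$. The only difference is presentational order (the paper handles disjointness first, you handle it last).
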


\begin{verlong}

\begin{proof}
We have $\sigma=\bigoplus_{j\in f\left(  V\right)  }\sigma_{j}$. Thus, for
each $j\in f\left(  V\right)  $ and each $v\in f^{-1}\left(  j\right)  $, we
have%
\begin{equation}
\sigma\left(  v\right)  =\sigma_{f\left(  v\right)  }\left(  v\right)
=\sigma_{j}\left(  v\right)  \label{pf.prop.level.Asig-f2.1}%
\end{equation}
(since $f\left(  v\right)  =j$ (because $v\in f^{-1}\left(  j\right)  $)).

It is easy to see that the sets $\mathbf{A}_{\sigma_{j}}$ for different $j\in
f\left(  V\right)  $ are disjoint\footnote{\textit{Proof.} Let $i$ and $j$ be
two distinct elements of $f\left(  V\right)  $. We must prove that
$\mathbf{A}_{\sigma_{i}}$ and $\mathbf{A}_{\sigma_{j}}$ are disjoint.
\par
Indeed, the sets $f^{-1}\left(  i\right)  $ and $f^{-1}\left(  j\right)  $ are
disjoint (since $i$ and $j$ are distinct). In other words, $f^{-1}\left(
i\right)  \cap f^{-1}\left(  j\right)  =\varnothing$. However, $\mathbf{A}%
_{\sigma_{j}}\subseteq f^{-1}\left(  j\right)  \times f^{-1}\left(  j\right)
$ (since $\sigma_{j}$ is a permutation of $f^{-1}\left(  j\right)  $) and
$\mathbf{A}_{\sigma_{i}}\subseteq f^{-1}\left(  i\right)  \times f^{-1}\left(
i\right)  $ (likewise). Hence,%
\begin{align*}
\underbrace{\mathbf{A}_{\sigma_{i}}}_{\subseteq f^{-1}\left(  i\right)  \times
f^{-1}\left(  i\right)  }\cap\underbrace{\mathbf{A}_{\sigma_{j}}}_{\subseteq
f^{-1}\left(  j\right)  \times f^{-1}\left(  j\right)  }  &  \subseteq\left(
f^{-1}\left(  i\right)  \times f^{-1}\left(  i\right)  \right)  \cap\left(
f^{-1}\left(  j\right)  \times f^{-1}\left(  j\right)  \right) \\
&  =\underbrace{\left(  f^{-1}\left(  i\right)  \cap f^{-1}\left(  j\right)
\right)  }_{=\varnothing}\times\underbrace{\left(  f^{-1}\left(  i\right)
\cap f^{-1}\left(  j\right)  \right)  }_{=\varnothing}\\
&  =\varnothing\times\varnothing=\varnothing.
\end{align*}
Hence, $\mathbf{A}_{\sigma_{i}}\cap\mathbf{A}_{\sigma_{j}}=\varnothing$. In
other words, $\mathbf{A}_{\sigma_{i}}$ and $\mathbf{A}_{\sigma_{j}}$ are
disjoint. This completes our proof.}. Hence, the union of these sets is a
disjoint union. That is,%
\[
\bigcup\limits_{j\in f\left(  V\right)  }\mathbf{A}_{\sigma_{j}}%
=\bigsqcup_{j\in f\left(  V\right)  }\mathbf{A}_{\sigma_{j}}.
\]

The definition of $\mathbf{A}_{\sigma}$ yields%
\begin{align*}
\mathbf{A}_{\sigma}  &  =\left\{  \left(  v,\sigma\left(  v\right)  \right)
\ \mid\ v\in V\right\} \\
&  =\bigcup_{j\in f\left(  V\right)  }\underbrace{\left\{  \left(
v,\sigma\left(  v\right)  \right)  \ \mid\ v\in V\text{ and }f\left(
v\right)  =j\right\}  }_{\substack{=\left\{  \left(  v,\sigma\left(  v\right)
\right)  \ \mid\ v\in f^{-1}\left(  j\right)  \right\}  \\\text{(since the
elements }v\in V\text{ satisfying }f\left(  v\right)  =j\\\text{are precisely
the elements of }f^{-1}\left(  j\right)  \text{)}}}\\
&  \ \ \ \ \ \ \ \ \ \ \ \ \ \ \ \ \ \ \ \ \left(  \text{since each }v\in
V\text{ satisfies }f\left(  v\right)  =j\text{ for some }j\in f\left(
V\right)  \right) \\
&  =\bigcup_{j\in f\left(  V\right)  }\left\{  \left(  v,\underbrace{\sigma
\left(  v\right)  }_{\substack{=\sigma_{j}\left(  v\right)  \\\text{(by
(\ref{pf.prop.level.Asig-f2.1}))}}}\right)  \ \mid\ v\in f^{-1}\left(
j\right)  \right\} \\
&  =\bigcup_{j\in f\left(  V\right)  }\underbrace{\left\{  \left(
v,\sigma_{j}\left(  v\right)  \right)  \ \mid\ v\in f^{-1}\left(  j\right)
\right\}  }_{\substack{=\mathbf{A}_{\sigma_{j}}\\\text{(since }\mathbf{A}%
_{\sigma_{j}}\text{ is defined as }\left\{  \left(  v,\sigma_{j}\left(
v\right)  \right)  \ \mid\ v\in f^{-1}\left(  j\right)  \right\}  \text{)}}}\\
&  =\bigcup_{j\in f\left(  V\right)  }\mathbf{A}_{\sigma_{j}}=\bigsqcup_{j\in
f\left(  V\right)  }\mathbf{A}_{\sigma_{j}}.
\end{align*}
This proves Proposition \ref{prop.level.Asig-f2}.
\end{proof}
\end{verlong}

Next, we connect the above construction with the level subdigraphs of a digraph:

\begin{proposition}
\label{prop.level.Asig-f}Let $D=\left(  V,A\right)  $ be a digraph. Let
$f:V\rightarrow\mathbb{P}$ be any map. Let $\sigma\in\mathfrak{S}_{V}$ be a
permutation satisfying $f\circ\sigma=f$. Then,
\[
\mathbf{A}_{\sigma}\cap A\subseteq A_{f}.
\]

\end{proposition}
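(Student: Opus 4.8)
The plan is to prove the inclusion $\mathbf{A}_{\sigma}\cap A\subseteq A_{f}$ by directly chasing an arbitrary element through the relevant definitions; no inductions, cancellations, or auxiliary lemmas are needed here, since the hypothesis $f\circ\sigma=f$ is exactly what is required to land inside $A_{f}$. Concretely, I would fix an arbitrary pair $\left(u,v\right)\in\mathbf{A}_{\sigma}\cap A$ and show that it lies in $A_{f}$, which (by Definition \ref{def.level.levsubdig} \textbf{(b)}) amounts to verifying the two conditions $\left(u,v\right)\in A$ and $f\left(u\right)=f\left(v\right)$.

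The first condition is immediate from $\left(u,v\right)\in\mathbf{A}_{\sigma}\cap A\subseteq A$. For the second, I would use the membership $\left(u,v\right)\in\mathbf{A}_{\sigma}$. By Definition \ref{def.Asigma}, we have $\mathbf{A}_{\sigma}=\left\{\left(w,\sigma\left(w\right)\right)\ \mid\ w\in V\right\}$, so $\left(u,v\right)=\left(w,\sigma\left(w\right)\right)$ for some $w\in V$; comparing entries yields $u=w$ and $v=\sigma\left(w\right)=\sigma\left(u\right)$. Now the hypothesis $f\circ\sigma=f$ gives
\[
f\left(v\right)=f\left(\sigma\left(u\right)\right)=\left(f\circ\sigma\right)\left(u\right)=f\left(u\right),
\]
which is precisely the second condition. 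Combining the two conditions, I conclude $\left(u,v\right)\in A_{f}$, and since $\left(u,v\right)$ was arbitrary, the desired inclusion $\mathbf{A}_{\sigma}\cap A\subseteq A_{f}$ follows.

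There is essentially no obstacle to overcome: the entire content is unpacking the defining formulas for $\mathbf{A}_{\sigma}$ and $A_{f}$ and applying the relation $f\circ\sigma=f$ to the single element $u$. The only point requiring minor care is the bookkeeping of the equalities $u=w$ and $v=\sigma\left(u\right)$ when rewriting the pair, but this is purely notational. Notably, the proof does not use the digraph structure beyond the inclusion into $A$, and it does not require decomposing $\sigma$ as in Proposition \ref{prop.level.decompose-perm}; the result is a clean consequence of the level-preservation hypothesis alone.
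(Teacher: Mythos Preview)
Your proof is correct and follows essentially the same approach as the paper's own proof: both pick an arbitrary element of $\mathbf{A}_{\sigma}\cap A$, write it as $\left(v,\sigma\left(v\right)\right)$ for some $v\in V$, and use $f\circ\sigma=f$ to verify $f\left(v\right)=f\left(\sigma\left(v\right)\right)$, placing the pair in $A_{f}$.
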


\begin{verlong}

\begin{proof}
Let $\alpha\in\mathbf{A}_{\sigma}\cap A$. Thus, $\alpha\in\mathbf{A}_{\sigma}$
and $\alpha\in A$. In particular, $\alpha\in\mathbf{A}_{\sigma}=\left\{
\left(  v,\sigma\left(  v\right)  \right)  \ \mid\ v\in V\right\}  $ (by the
definition of $\mathbf{A}_{\sigma}$). Hence, $\alpha=\left(  v,\sigma\left(
v\right)  \right)  $ for some $v\in V$. Consider this $v$. We have $f\left(
\sigma\left(  v\right)  \right)  =\underbrace{\left(  f\circ\sigma\right)
}_{=f}\left(  v\right)  =f\left(  v\right)  $. In other words, $f\left(
v\right)  =f\left(  \sigma\left(  v\right)  \right)  $. Hence, $\left(
v,\sigma\left(  v\right)  \right)  \in A_{f}$ (by the definition of $A_{f}$,
since $\left(  v,\sigma\left(  v\right)  \right)  =\alpha\in A$). In other
words, $\alpha\in A_{f}$ (since $\alpha=\left(  v,\sigma\left(  v\right)
\right)  $).

Forget that we fixed $\alpha$. We thus have proved that $\alpha\in A_{f}$ for
each $\alpha\in\mathbf{A}_{\sigma}\cap A$. In other words, $\mathbf{A}%
_{\sigma}\cap A\subseteq A_{f}$.
\end{proof}
\end{verlong}

Our last result in this section is the following trivial yet complex-looking
lemma, which will be used in the proof after it:

\begin{lemma}
\label{lem.level.djunex}Let $D=\left(  V,A\right)  $ be a digraph. Let
$f:V\rightarrow\mathbb{P}$ be any map. Let $\sigma_{j}\in\mathfrak{S}%
_{f^{-1}\left(  j\right)  }$ be a permutation for each $j\in f\left(
V\right)  $. Let $F_{j}$ be a subset of $A_{j}$ for each $j\in f\left(
V\right)  $. Then, we have the following logical equivalence:%
\[
\ \left(  \bigsqcup_{j\in f\left(  V\right)  }F_{j}\subseteq\bigsqcup_{j\in
f\left(  V\right)  }\mathbf{A}_{\sigma_{j}}\right)  \ \Longleftrightarrow
\ \left(  F_{j}\subseteq\mathbf{A}_{\sigma_{j}}\text{ for each }j\in f\left(
V\right)  \right)  .
\]

\end{lemma}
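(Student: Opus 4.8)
The plan is to prove both directions of the equivalence, exploiting the key fact (from Proposition \ref{prop.level.Af=sum}) that the sets $A_{1},A_{2},A_{3},\ldots$ are pairwise disjoint, together with the observation that $F_{j}\subseteq A_{j}$ and $\mathbf{A}_{\sigma_{j}}\subseteq f^{-1}\left(j\right)\times f^{-1}\left(j\right)$ for each $j$. The ``$\Longleftarrow$'' direction is nearly immediate: if $F_{j}\subseteq\mathbf{A}_{\sigma_{j}}$ for each $j\in f\left(V\right)$, then taking the union over all $j$ preserves the inclusion, so $\bigsqcup_{j}F_{j}\subseteq\bigsqcup_{j}\mathbf{A}_{\sigma_{j}}$.

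The substance lies in the ``$\Longrightarrow$'' direction, where I would argue one index at a time. First I would fix some $k\in f\left(V\right)$ and take an arbitrary element $\alpha\in F_{k}$. Since $F_{k}$ is one of the sets being unioned, we have $\alpha\in\bigsqcup_{j\in f\left(V\right)}F_{j}$, and hence by hypothesis $\alpha\in\bigsqcup_{j\in f\left(V\right)}\mathbf{A}_{\sigma_{j}}$. This means $\alpha\in\mathbf{A}_{\sigma_{i}}$ for some $i\in f\left(V\right)$. The crux is then to show $i=k$, which forces $\alpha\in\mathbf{A}_{\sigma_{k}}$ and completes the argument. To pin down $i=k$, I would use the containment $\alpha\in F_{k}\subseteq A_{k}$ on one side and $\alpha\in\mathbf{A}_{\sigma_{i}}\subseteq f^{-1}\left(i\right)\times f^{-1}\left(i\right)$ on the other.

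The cleanest way to extract $i=k$ is through the level structure: writing $\alpha=\left(u,v\right)$, the containment $\alpha\in A_{k}$ gives $f\left(u\right)=f\left(v\right)=k$ by \eqref{eq.def.level.levsubdig.a.Aj=2}, while $\alpha\in f^{-1}\left(i\right)\times f^{-1}\left(i\right)$ gives $f\left(u\right)=i$. Comparing these two equalities yields $k=f\left(u\right)=i$, as desired. (Equivalently, one can phrase this via disjointness: $\alpha\in A_{k}$ and $\alpha\in\mathbf{A}_{\sigma_{i}}\subseteq A_{i}$ would put $\alpha$ in both $A_{k}$ and $A_{i}$, and since the $A_{j}$ are disjoint by Proposition \ref{prop.level.Af=sum}, this forces $i=k$.) Thus $\alpha\in\mathbf{A}_{\sigma_{i}}=\mathbf{A}_{\sigma_{k}}$, and since $\alpha\in F_{k}$ was arbitrary, $F_{k}\subseteq\mathbf{A}_{\sigma_{k}}$; letting $k$ range over $f\left(V\right)$ finishes the proof.

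I do not anticipate any real obstacle here — the lemma is flagged in the text as ``trivial yet complex-looking,'' and indeed the only mildly delicate point is bookkeeping the two different ambient product sets ($A_{k}$ versus $f^{-1}\left(i\right)\times f^{-1}\left(i\right)$) and invoking disjointness at the right moment. The main thing to be careful about is not to conflate the union being disjoint (so that membership in it picks out a \emph{unique} summand) with the separate disjointness of the $A_{j}$; both are needed, but they play distinct roles, the former to assert $\alpha$ lies in exactly one $\mathbf{A}_{\sigma_{i}}$ and the latter to identify that unique index with $k$.
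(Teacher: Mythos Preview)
Your proposal is correct and follows essentially the same approach as the paper's proof: the $\Longleftarrow$ direction is immediate, and for $\Longrightarrow$ you fix an index, pick an element of the corresponding $F_{k}$, locate it in some $\mathbf{A}_{\sigma_{i}}$, and then argue $i=k$ via the level structure (the paper phrases this last step through the disjointness of the sets $f^{-1}(j)\times f^{-1}(j)$, which amounts to the same thing as your coordinate-wise computation of $f(u)$).

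One small correction to your parenthetical alternative: the containment $\mathbf{A}_{\sigma_{i}}\subseteq A_{i}$ is false in general, since $A_{i}=A\cap\left(f^{-1}(i)\times f^{-1}(i)\right)$ requires the pairs to lie in $A$, whereas $\mathbf{A}_{\sigma_{i}}$ need not. What is true (and what the paper uses) is $\mathbf{A}_{\sigma_{i}}\subseteq f^{-1}(i)\times f^{-1}(i)$, and the disjointness of the sets $f^{-1}(j)\times f^{-1}(j)$ then does the job. Your primary argument via $f(u)=k$ and $f(u)=i$ is unaffected by this and stands on its own.
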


\begin{verlong}

\begin{proof}
The sets $f^{-1}\left(  j\right)  $ for different $j\in f\left(  V\right)  $
are clearly disjoint. Hence, the sets $f^{-1}\left(  j\right)  \times
f^{-1}\left(  j\right)  $ for different $j\in f\left(  V\right)  $ are
disjoint as well\footnote{\textit{Proof.} Let $r$ and $s$ be two distinct
elements of $f\left(  V\right)  $. We must prove that the sets $f^{-1}\left(
r\right)  \times f^{-1}\left(  r\right)  $ and $f^{-1}\left(  s\right)  \times
f^{-1}\left(  s\right)  $ are disjoint.
\par
Indeed, $r$ and $s$ are distinct. Hence, $f^{-1}\left(  r\right)  \cap
f^{-1}\left(  s\right)  =\varnothing$ (since the sets $f^{-1}\left(  j\right)
$ for different $j\in f\left(  V\right)  $ are disjoint). Now,%
\begin{align*}
\left(  f^{-1}\left(  r\right)  \times f^{-1}\left(  r\right)  \right)
\cap\left(  f^{-1}\left(  s\right)  \times f^{-1}\left(  s\right)  \right)
&  =\underbrace{\left(  f^{-1}\left(  r\right)  \cap f^{-1}\left(  s\right)
\right)  }_{=\varnothing}\times\underbrace{\left(  f^{-1}\left(  r\right)
\cap f^{-1}\left(  s\right)  \right)  }_{=\varnothing}\\
&  =\varnothing\times\varnothing=\varnothing.
\end{align*}
In other words, the sets $f^{-1}\left(  r\right)  \times f^{-1}\left(
r\right)  $ and $f^{-1}\left(  s\right)  \times f^{-1}\left(  s\right)  $ are
disjoint. Qed.}.

For each $j\in f\left(  V\right)  $, we have%
\begin{align}
F_{j}  &  \subseteq A_{j}\ \ \ \ \ \ \ \ \ \ \left(  \text{by the definition
of }F_{j}\right) \nonumber\\
&  =A\cap\left(  f^{-1}\left(  j\right)  \times f^{-1}\left(  j\right)
\right)  \ \ \ \ \ \ \ \ \ \ \left(  \text{by the definition of }A_{j}\right)
\nonumber\\
&  \subseteq f^{-1}\left(  j\right)  \times f^{-1}\left(  j\right)  .
\label{pf.lem.level.djunex.1}%
\end{align}
In other words, for each $j\in f\left(  V\right)  $, the set $F_{j}$ is a
subset of $f^{-1}\left(  j\right)  \times f^{-1}\left(  j\right)  $. Hence,
the sets $F_{j}$ for different $j\in f\left(  V\right)  $ are
disjoint\footnote{\textit{Proof.} Let $r$ and $s$ be two distinct elements of
$f\left(  V\right)  $. We must prove that the sets $F_{r}$ and $F_{s}$ are
disjoint.
\par
Recall that the sets $f^{-1}\left(  j\right)  \times f^{-1}\left(  j\right)  $
for different $j\in f\left(  V\right)  $ are disjoint. Hence, the sets
$f^{-1}\left(  r\right)  \times f^{-1}\left(  r\right)  $ and $f^{-1}\left(
s\right)  \times f^{-1}\left(  s\right)  $ are disjoint (since $r$ and $s$ are
distinct elements of $f\left(  V\right)  $). In other words, $\left(
f^{-1}\left(  r\right)  \times f^{-1}\left(  r\right)  \right)  \cap\left(
f^{-1}\left(  s\right)  \times f^{-1}\left(  s\right)  \right)  =\varnothing$.
\par
We have%
\[
\underbrace{F_{r}}_{\substack{\subseteq f^{-1}\left(  r\right)  \times
f^{-1}\left(  r\right)  \\\text{(by (\ref{pf.lem.level.djunex.1}%
),}\\\text{applied to }j=r\text{)}}}\cap\underbrace{F_{s}}%
_{\substack{\subseteq f^{-1}\left(  s\right)  \times f^{-1}\left(  s\right)
\\\text{(by (\ref{pf.lem.level.djunex.1}),}\\\text{applied to }j=s\text{)}%
}}\subseteq\left(  f^{-1}\left(  r\right)  \times f^{-1}\left(  r\right)
\right)  \cap\left(  f^{-1}\left(  s\right)  \times f^{-1}\left(  s\right)
\right)  =\varnothing.
\]
Hence, $F_{r}\cap F_{s}=\varnothing$. In other words, the sets $F_{r}$ and
$F_{s}$ are disjoint. Qed.}. The disjoint union $\bigsqcup_{j\in f\left(
V\right)  }F_{j}$ thus is well-defined.

For each $j\in f\left(  V\right)  $, the set $\mathbf{A}_{\sigma_{j}}$ is a
subset of $f^{-1}\left(  j\right)  \times f^{-1}\left(  j\right)  $ (since
$\sigma_{j}$ is a permutation of $f^{-1}\left(  j\right)  $). In other words,
for each $j\in f\left(  V\right)  $, we have%
\begin{equation}
\mathbf{A}_{\sigma_{j}}\subseteq f^{-1}\left(  j\right)  \times f^{-1}\left(
j\right)  . \label{pf.lem.level.djunex.2}%
\end{equation}
Hence, the sets $\mathbf{A}_{\sigma_{j}}$ for different $j\in f\left(
V\right)  $ are disjoint\footnote{\textit{Proof.} Let $r$ and $s$ be two
distinct elements of $f\left(  V\right)  $. We must prove that the sets
$\mathbf{A}_{\sigma_{r}}$ and $\mathbf{A}_{\sigma_{s}}$ are disjoint.
\par
Recall that the sets $f^{-1}\left(  j\right)  \times f^{-1}\left(  j\right)  $
for different $j\in f\left(  V\right)  $ are disjoint. Hence, the sets
$f^{-1}\left(  r\right)  \times f^{-1}\left(  r\right)  $ and $f^{-1}\left(
s\right)  \times f^{-1}\left(  s\right)  $ are disjoint (since $r$ and $s$ are
distinct elements of $f\left(  V\right)  $). In other words, $\left(
f^{-1}\left(  r\right)  \times f^{-1}\left(  r\right)  \right)  \cap\left(
f^{-1}\left(  s\right)  \times f^{-1}\left(  s\right)  \right)  =\varnothing$.
\par
We have%
\[
\underbrace{\mathbf{A}_{\sigma_{r}}}_{\substack{\subseteq f^{-1}\left(
r\right)  \times f^{-1}\left(  r\right)  \\\text{(by
(\ref{pf.lem.level.djunex.2}),}\\\text{applied to }j=r\text{)}}}\cap
\underbrace{\mathbf{A}_{\sigma_{s}}}_{\substack{\subseteq f^{-1}\left(
s\right)  \times f^{-1}\left(  s\right)  \\\text{(by
(\ref{pf.lem.level.djunex.2}),}\\\text{applied to }j=s\text{)}}}\subseteq
\left(  f^{-1}\left(  r\right)  \times f^{-1}\left(  r\right)  \right)
\cap\left(  f^{-1}\left(  s\right)  \times f^{-1}\left(  s\right)  \right)
=\varnothing.
\]
Hence, $\mathbf{A}_{\sigma_{r}}\cap\mathbf{A}_{\sigma_{s}}=\varnothing$. In
other words, the sets $\mathbf{A}_{\sigma_{r}}$ and $\mathbf{A}_{\sigma_{s}}$
are disjoint. Qed.}. The disjoint union $\bigsqcup_{j\in f\left(  V\right)
}\mathbf{A}_{\sigma_{j}}$ thus is well-defined.

Our goal is to prove the equivalence%
\[
\ \left(  \bigsqcup_{j\in f\left(  V\right)  }F_{j}\subseteq\bigsqcup_{j\in
f\left(  V\right)  }\mathbf{A}_{\sigma_{j}}\right)  \ \Longleftrightarrow
\ \left(  F_{j}\subseteq\mathbf{A}_{\sigma_{j}}\text{ for each }j\in f\left(
V\right)  \right)  .
\]
The \textquotedblleft$\Longleftarrow$\textquotedblright\ direction of this
equivalence is obvious. Thus, we only need to prove the \textquotedblleft%
$\Longrightarrow$\textquotedblright\ direction.

Let us do this. We assume that $\bigsqcup_{j\in f\left(  V\right)  }%
F_{j}\subseteq\bigsqcup_{j\in f\left(  V\right)  }\mathbf{A}_{\sigma_{j}}$. We
must prove that $F_{j}\subseteq\mathbf{A}_{\sigma_{j}}$ for each $j\in
f\left(  V\right)  $.

Let $i\in f\left(  V\right)  $. Let $\alpha\in F_{i}$. Then,%
\[
\alpha\in F_{i}\subseteq\bigsqcup_{j\in f\left(  V\right)  }F_{j}%
\subseteq\bigsqcup_{j\in f\left(  V\right)  }\mathbf{A}_{\sigma_{j}}.
\]
In other words, $\alpha\in\mathbf{A}_{\sigma_{k}}$ for some $k\in f\left(
V\right)  $. Consider this $k$. The set $F_{i}$ is a subset of $f^{-1}\left(
i\right)  \times f^{-1}\left(  i\right)  $ (because for each $j\in f\left(
V\right)  $, the set $F_{j}$ is a subset of $f^{-1}\left(  j\right)  \times
f^{-1}\left(  j\right)  $). Thus, $F_{i}\subseteq f^{-1}\left(  i\right)
\times f^{-1}\left(  i\right)  $, so that $\alpha\in F_{i}\subseteq
f^{-1}\left(  i\right)  \times f^{-1}\left(  i\right)  $.

However, $\mathbf{A}_{\sigma_{k}}$ is a subset of $f^{-1}\left(  k\right)
\times f^{-1}\left(  k\right)  $ (because for each $j\in f\left(  V\right)  $,
the set $\mathbf{A}_{\sigma_{j}}$ is a subset of $f^{-1}\left(  j\right)
\times f^{-1}\left(  j\right)  $). In other words, $\mathbf{A}_{\sigma_{k}%
}\subseteq f^{-1}\left(  k\right)  \times f^{-1}\left(  k\right)  $. Hence,
$\alpha\in\mathbf{A}_{\sigma_{k}}\subseteq f^{-1}\left(  k\right)  \times
f^{-1}\left(  k\right)  $.

Thus, the two sets $f^{-1}\left(  i\right)  \times f^{-1}\left(  i\right)  $
and $f^{-1}\left(  k\right)  \times f^{-1}\left(  k\right)  $ both contain the
element $\alpha$. However, if we had $i\neq k$, then these two sets would be
disjoint (since the sets $f^{-1}\left(  j\right)  \times f^{-1}\left(
j\right)  $ for different $j\in f\left(  V\right)  $ are disjoint), which
would contradict the previous sentence. Thus, we have $i=k$. Hence, we can
rewrite $\alpha\in\mathbf{A}_{\sigma_{k}}$ (which we know to be true) as
$\alpha\in\mathbf{A}_{\sigma_{i}}$.

Forget that we fixed $\alpha$. We thus have shown that $\alpha\in
\mathbf{A}_{\sigma_{i}}$ for each $\alpha\in F_{i}$. In other words,
$F_{i}\subseteq\mathbf{A}_{\sigma_{i}}$.

Forget that we fixed $i$. We thus have proved that $F_{i}\subseteq
\mathbf{A}_{\sigma_{i}}$ for each $i\in f\left(  V\right)  $. In other words,
$F_{j}\subseteq\mathbf{A}_{\sigma_{j}}$ for each $j\in f\left(  V\right)  $.
This proves the \textquotedblleft$\Longrightarrow$\textquotedblright%
\ direction of the above equivalence. Thus, the proof of Lemma
\ref{lem.level.djunex} is complete.
\end{proof}
\end{verlong}

\subsection{An alternating sum involving permutations $\sigma$ with
$f\circ\sigma=f$}

Now, we come to a crucial lemma, which generalizes Lemma
\ref{lem.hamps-by-lin} to the case of a digraph $D=\left(  V,A\right)  $
\textquotedblleft shattered\textquotedblright\ by a map $f:V\rightarrow
\mathbb{P}$:

\begin{lemma}
\label{lem.hamps-by-lin-f}Let $D=\left(  V,A\right)  $ be a digraph. Let
$f:V\rightarrow\mathbb{P}$ be any map. For each $j\in\mathbb{P}$, we define a
digraph $D_{j}$ as in Definition \ref{def.level.levsubdig} \textbf{(c)}. Then,%
\[
\sum_{\substack{\sigma\in\mathfrak{S}_{V};\\f\circ\sigma=f}}\ \ \sum
_{\substack{F\subseteq\mathbf{A}_{\sigma}\cap A\\\text{is linear}}}\left(
-1\right)  ^{\left\vert F\right\vert }=\prod_{j\in f\left(  V\right)  }\left(
\text{\# of hamps of }\overline{D_{j}}\right)  .
\]

\end{lemma}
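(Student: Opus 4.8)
The plan is to prove this identity by interchanging the two summations on the left-hand side and then factoring the resulting expression as a product over the levels $j \in f\left(V\right)$, so that each factor can be evaluated by the already-proved Lemma \ref{lem.hamps-by-lin}. First I would swap the order of summation, rewriting the left-hand side as
\[
\sum_{\substack{F \subseteq A \\ \text{is linear}}} \left(-1\right)^{\left\vert F\right\vert} \cdot \left(\text{\# of }\sigma \in \mathfrak{S}_V\text{ with }f \circ \sigma = f\text{ and }F \subseteq \mathbf{A}_\sigma\right),
\]
using the fact that a subset of $\mathbf{A}_\sigma \cap A$ is precisely a subset of $A$ that is also a subset of $\mathbf{A}_\sigma$ (and linearity is a property of $F$ alone). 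Next I would observe that the inner count vanishes unless $F \subseteq A_f$: indeed, any $\sigma$ counted satisfies $f \circ \sigma = f$, so Proposition \ref{prop.level.Asig-f} gives $\mathbf{A}_\sigma \cap A \subseteq A_f$, and since $F \subseteq A$ and $F \subseteq \mathbf{A}_\sigma$ we get $F \subseteq A_f$. Hence the outer sum may be restricted to the linear subsets $F$ of $A_f$.

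The heart of the argument is a level-wise factorization. By Proposition \ref{prop.level.lin-sub-Af}, the linear subsets $F$ of $A_f$ correspond bijectively to the tuples $\left(F_j\right)_{j \in f\left(V\right)}$ in which each $F_j = F \cap A_j$ is a linear subset of $A_j$, with $F = \bigsqcup_{j \in f\left(V\right)} F_j$; in particular $\left(-1\right)^{\left\vert F\right\vert} = \prod_{j \in f\left(V\right)} \left(-1\right)^{\left\vert F_j\right\vert}$. Likewise, by Proposition \ref{prop.level.decompose-perm}, the permutations $\sigma$ with $f \circ \sigma = f$ correspond bijectively to tuples $\left(\sigma_j\right)_{j \in f\left(V\right)}$ with $\sigma_j \in \mathfrak{S}_{f^{-1}\left(j\right)}$, and Proposition \ref{prop.level.Asig-f2} gives $\mathbf{A}_\sigma = \bigsqcup_{j \in f\left(V\right)} \mathbf{A}_{\sigma_j}$. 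Combining these with Lemma \ref{lem.level.djunex}, the single condition $F \subseteq \mathbf{A}_\sigma$ becomes equivalent to the conjunction of the conditions $F_j \subseteq \mathbf{A}_{\sigma_j}$ over all $j \in f\left(V\right)$. Consequently the inner count factors as $\prod_{j \in f\left(V\right)} \left(\text{\# of }\sigma_j \in \mathfrak{S}_{f^{-1}\left(j\right)}\text{ with }F_j \subseteq \mathbf{A}_{\sigma_j}\right)$, and the whole expression becomes a sum over tuples $\left(F_j\right)_j$ of a product over $j$; by the distributive law this equals
\[
\prod_{j \in f\left(V\right)} \left( \sum_{\substack{F_j \subseteq A_j \\ \text{is linear}}} \left(-1\right)^{\left\vert F_j\right\vert} \cdot \left(\text{\# of }\sigma_j \in \mathfrak{S}_{f^{-1}\left(j\right)}\text{ with }F_j \subseteq \mathbf{A}_{\sigma_j}\right) \right).
\]

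Finally, each factor is exactly the left-hand side of Lemma \ref{lem.hamps-by-lin} applied to the level subdigraph $D_j = \left(f^{-1}\left(j\right),\ A_j\right)$ --- and since $j \in f\left(V\right)$ we have $f^{-1}\left(j\right) \neq \varnothing$, so that lemma's hypothesis is met --- whence each factor equals $\left(\text{\# of hamps of }\overline{D_j}\right)$. Taking the product over $j \in f\left(V\right)$ yields the claim, with the case $V = \varnothing$ covered as the trivial instance of the empty product. The step I expect to be the main obstacle is the careful justification of this product factorization: one must verify that the correspondence $F \leftrightarrow \left(F_j\right)_j$ is a genuine bijection onto all tuples of linear subsets of the $A_j$ ranging independently (Proposition \ref{prop.level.lin-sub-Af}), so that the sum over $F$ splits as a product of sums, and that the level-wise decompositions of $F$, of $\sigma$, and of $\mathbf{A}_\sigma$ are mutually compatible through Lemma \ref{lem.level.djunex} --- this compatibility is precisely the purpose for which all the machinery of the preceding subsection was assembled.
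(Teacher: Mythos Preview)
Your proposal is correct and follows essentially the same approach as the paper's proof: interchange the two sums, restrict to linear subsets $F\subseteq A_f$, then use Propositions \ref{prop.level.lin-sub-Af}, \ref{prop.level.decompose-perm}, \ref{prop.level.Asig-f2} and Lemma \ref{lem.level.djunex} to factor everything level-wise so that Lemma \ref{lem.hamps-by-lin} can be applied to each factor. The only cosmetic difference is that the paper first shows $\mathbf{A}_\sigma\cap A=\mathbf{A}_\sigma\cap A_f$ for each $\sigma$ before swapping, whereas you swap first (over all linear $F\subseteq A$) and then argue that the inner count vanishes unless $F\subseteq A_f$; both routes reach the same intermediate expression.
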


\begin{vershort}

\begin{proof}
We shall use the notations from Definition \ref{def.level.levsets}, Definition
\ref{def.level.levsubdig} and Definition \ref{def.level.decompose-perm}. We
recall that every $j\in\mathbb{P}$ satisfies $D_{j}=\left(  f^{-1}\left(
j\right)  ,\ A_{j}\right)  $ (by the definition of $D_{j}$).

Let $\sigma\in\mathfrak{S}_{V}$ be a permutation satisfying $f\circ\sigma=f$.
Then, Proposition \ref{prop.level.Asig-f} yields $\mathbf{A}_{\sigma}\cap
A\subseteq A_{f}$. Hence, $\mathbf{A}_{\sigma}\cap A=\mathbf{A}_{\sigma}\cap
A_{f}$ (because $\underbrace{\mathbf{A}_{\sigma}}_{=\mathbf{A}_{\sigma}%
\cap\mathbf{A}_{\sigma}}\cap A=\mathbf{A}_{\sigma}\cap\underbrace{\mathbf{A}%
_{\sigma}\cap A}_{\subseteq A_{f}}\subseteq\mathbf{A}_{\sigma}\cap A_{f}$ and
conversely $\mathbf{A}_{\sigma}\cap\underbrace{A_{f}}_{\subseteq A}%
\subseteq\mathbf{A}_{\sigma}\cap A$). Thus,%
\begin{align}
\sum_{\substack{F\subseteq\mathbf{A}_{\sigma}\cap A\\\text{is linear}}}\left(
-1\right)  ^{\left\vert F\right\vert }  &  =\sum_{\substack{F\subseteq
\mathbf{A}_{\sigma}\cap A_{f}\\\text{is linear}}}\left(  -1\right)
^{\left\vert F\right\vert }\nonumber\\
&  =\sum_{\substack{F\subseteq A_{f}\text{ is linear};\\F\subseteq
\mathbf{A}_{\sigma}}}\left(  -1\right)  ^{\left\vert F\right\vert }
\label{pf.lem.hamps-by-lin-f.short.1}%
\end{align}
(since a subset of $\mathbf{A}_{\sigma}\cap A_{f}$ is the same thing as a
subset $F$ of $A_{f}$ that satisfies $F\subseteq\mathbf{A}_{\sigma}$).

Forget that we fixed $\sigma$. We thus have proved
(\ref{pf.lem.hamps-by-lin-f.short.1}) for every $\sigma\in\mathfrak{S}_{V}$
satisfying $f\circ\sigma=f$.

Summing up the equality (\ref{pf.lem.hamps-by-lin-f.short.1}) over all
permutations $\sigma\in\mathfrak{S}_{V}$ satisfying $f\circ\sigma=f$, we
obtain
\begin{align}
&  \sum_{\substack{\sigma\in\mathfrak{S}_{V};\\f\circ\sigma=f}}\ \ \sum
_{\substack{F\subseteq\mathbf{A}_{\sigma}\cap A\\\text{is linear}}}\left(
-1\right)  ^{\left\vert F\right\vert }\nonumber\\
&  =\sum_{\substack{\sigma\in\mathfrak{S}_{V};\\f\circ\sigma=f}}\ \ \sum
_{\substack{F\subseteq A_{f}\text{ is linear};\\F\subseteq\mathbf{A}_{\sigma}%
}}\left(  -1\right)  ^{\left\vert F\right\vert }\nonumber\\
&  =\sum_{F\subseteq A_{f}\text{ is linear}}\ \ \underbrace{\sum
_{\substack{\sigma\in\mathfrak{S}_{V};\\F\subseteq\mathbf{A}_{\sigma}%
;\\f\circ\sigma=f}}\left(  -1\right)  ^{\left\vert F\right\vert }}_{=\left(
-1\right)  ^{\left\vert F\right\vert }\cdot\left(  \text{\# of }\sigma
\in\mathfrak{S}_{V}\text{ satisfying }F\subseteq\mathbf{A}_{\sigma}\text{ and
}f\circ\sigma=f\right)  }\nonumber\\
&  =\sum_{F\subseteq A_{f}\text{ is linear}}\left(  -1\right)  ^{\left\vert
F\right\vert }\cdot\left(  \text{\# of }\sigma\in\mathfrak{S}_{V}\text{
satisfying }F\subseteq\mathbf{A}_{\sigma}\text{ and }f\circ\sigma=f\right)  .
\label{pf.lem.hamps-by-lin-f.short.2}%
\end{align}

Now, a linear subset $F$ of $A_{f}$ is the same as a set $F$ of the form
$\bigsqcup_{j\in f\left(  V\right)  }F_{j}$, where each $F_{j}$ is a linear
subset of $A_{j}$ (by Proposition \ref{prop.level.lin-sub-Af} \textbf{(a)});
furthermore, if $F$ is such a subset, then the latter subsets $F_{j}$
satisfying $F=\bigsqcup_{j\in f\left(  V\right)  }F_{j}$ are uniquely
determined by $F$ (by Proposition \ref{prop.level.lin-sub-Af} \textbf{(b)}).
Hence, we can substitute $\bigsqcup_{j\in f\left(  V\right)  }F_{j}$ for $F$
in the sum on the right hand side of (\ref{pf.lem.hamps-by-lin-f.short.2}). We
thus obtain%
\begin{align}
&  \sum_{F\subseteq A_{f}\text{ is linear}}\left(  -1\right)  ^{\left\vert
F\right\vert }\cdot\left(  \text{\# of }\sigma\in\mathfrak{S}_{V}\text{
satisfying }F\subseteq\mathbf{A}_{\sigma}\text{ and }f\circ\sigma=f\right)
\nonumber\\
&  =\sum_{\substack{\left(  F_{j}\right)  _{j\in f\left(  V\right)  }\text{ is
a family}\\\text{of linear subsets }F_{j}\subseteq A_{j}}}\left(  -1\right)
^{\left\vert \bigsqcup_{j\in f\left(  V\right)  }F_{j}\right\vert }\nonumber\\
&  \ \ \ \ \ \ \ \ \ \ \cdot\left(  \text{\# of }\sigma\in\mathfrak{S}%
_{V}\text{ satisfying }\bigsqcup_{j\in f\left(  V\right)  }F_{j}%
\subseteq\mathbf{A}_{\sigma}\text{ and }f\circ\sigma=f\right)  .
\label{pf.lem.hamps-by-lin-f.short.5}%
\end{align}

Furthermore, a permutation $\sigma\in\mathfrak{S}_{V}$ satisfies $f\circ
\sigma=f$ if and only if it can be written in the form $\sigma=\bigoplus_{j\in
f\left(  V\right)  }\sigma_{j}$, where $\sigma_{j}\in\mathfrak{S}%
_{f^{-1}\left(  j\right)  }$ for each $j\in f\left(  V\right)  $ (by
Proposition \ref{prop.level.decompose-perm} \textbf{(a)}). Moreover, if
$\sigma$ is written in this way, then the permutations $\sigma_{j}$ are
uniquely determined by $\sigma$ (by Proposition
\ref{prop.level.decompose-perm} \textbf{(b)}), and we have $\mathbf{A}%
_{\sigma}=\bigsqcup_{j\in f\left(  V\right)  }\mathbf{A}_{\sigma_{j}}$ (by
Proposition \ref{prop.level.Asig-f2}). Hence, for each family $\left(
F_{j}\right)  _{j\in f\left(  V\right)  }$ of linear subsets $F_{j}\subseteq
A_{j}$, we have%
\begin{align}
&  \left(  \text{\# of }\sigma\in\mathfrak{S}_{V}\text{ satisfying }%
\bigsqcup_{j\in f\left(  V\right)  }F_{j}\subseteq\mathbf{A}_{\sigma}\text{
and }f\circ\sigma=f\right) \nonumber\\
&  =\left(  \text{\# of families }\left(  \sigma_{j}\right)  _{j\in f\left(
V\right)  }\in\prod_{j\in f\left(  V\right)  }\mathfrak{S}_{f^{-1}\left(
j\right)  }\text{ satisfying }\bigsqcup_{j\in f\left(  V\right)  }%
F_{j}\subseteq\bigsqcup_{j\in f\left(  V\right)  }\mathbf{A}_{\sigma_{j}%
}\right) \nonumber\\
&  =\left(  \text{\# of families }\left(  \sigma_{j}\right)  _{j\in f\left(
V\right)  }\in\prod_{j\in f\left(  V\right)  }\mathfrak{S}_{f^{-1}\left(
j\right)  }\text{ satisfying }F_{j}\subseteq\mathbf{A}_{\sigma_{j}}\text{ for
each }j\in f\left(  V\right)  \right) \nonumber\\
&  \ \ \ \ \ \ \ \ \ \ \ \ \ \ \ \ \ \ \ \ \left(
\begin{array}
[c]{c}%
\text{since the condition \textquotedblleft}\bigsqcup_{j\in f\left(  V\right)
}F_{j}\subseteq\bigsqcup_{j\in f\left(  V\right)  }\mathbf{A}_{\sigma_{j}%
}\text{\textquotedblright}\\
\text{is equivalent to \textquotedblleft}F_{j}\subseteq\mathbf{A}_{\sigma_{j}%
}\text{ for each }j\in f\left(  V\right)  \text{\textquotedblright}\\
\text{(by Lemma \ref{lem.level.djunex})}%
\end{array}
\right) \nonumber\\
&  =\prod_{j\in f\left(  V\right)  }\left(  \text{\# of }\sigma_{j}%
\in\mathfrak{S}_{f^{-1}\left(  j\right)  }\text{ satisfying }F_{j}%
\subseteq\mathbf{A}_{\sigma_{j}}\right) \nonumber\\
&  =\prod_{j\in f\left(  V\right)  }\left(  \text{\# of }\sigma\in
\mathfrak{S}_{f^{-1}\left(  j\right)  }\text{ satisfying }F_{j}\subseteq
\mathbf{A}_{\sigma}\right) \label{pf.lem.hamps-by-lin-f.short.6}\\
&  \ \ \ \ \ \ \ \ \ \ \ \ \ \ \ \ \ \ \ \ \left(  \text{here, we have renamed
the index }\sigma_{j}\text{ as }\sigma\right)  .\nonumber
\end{align}
Thus, (\ref{pf.lem.hamps-by-lin-f.short.5}) becomes%
\begin{align*}
&  \sum_{F\subseteq A_{f}\text{ is linear}}\left(  -1\right)  ^{\left\vert
F\right\vert }\cdot\left(  \text{\# of }\sigma\in\mathfrak{S}_{V}\text{
satisfying }F\subseteq\mathbf{A}_{\sigma}\text{ and }f\circ\sigma=f\right) \\
&  =\sum_{\substack{\left(  F_{j}\right)  _{j\in f\left(  V\right)  }\text{ is
a family}\\\text{of linear subsets }F_{j}\subseteq A_{j}}%
}\ \ \underbrace{\left(  -1\right)  ^{\left\vert \bigsqcup_{j\in f\left(
V\right)  }F_{j}\right\vert }}_{\substack{=\left(  -1\right)  ^{\sum_{j\in
f\left(  V\right)  }\left\vert F_{j}\right\vert }\\=\prod_{j\in f\left(
V\right)  }\left(  -1\right)  ^{\left\vert F_{j}\right\vert }}}\\
&  \ \ \ \ \ \ \ \ \ \ \cdot\underbrace{\left(  \text{\# of }\sigma
\in\mathfrak{S}_{V}\text{ satisfying }\bigsqcup_{j\in f\left(  V\right)
}F_{j}\subseteq\mathbf{A}_{\sigma}\text{ and }f\circ\sigma=f\right)
}_{\substack{=\prod_{j\in f\left(  V\right)  }\left(  \text{\# of }\sigma
\in\mathfrak{S}_{f^{-1}\left(  j\right)  }\text{ satisfying }F_{j}%
\subseteq\mathbf{A}_{\sigma}\right)  \\\text{(by
(\ref{pf.lem.hamps-by-lin-f.short.6}))}}}\\
&  =\sum_{\substack{\left(  F_{j}\right)  _{j\in f\left(  V\right)  }\text{ is
a family}\\\text{of linear subsets }F_{j}\subseteq A_{j}}}\underbrace{\left(
\prod_{j\in f\left(  V\right)  }\left(  -1\right)  ^{\left\vert F_{j}%
\right\vert }\right)  \cdot\prod_{j\in f\left(  V\right)  }\left(  \text{\# of
}\sigma\in\mathfrak{S}_{f^{-1}\left(  j\right)  }\text{ satisfying }%
F_{j}\subseteq\mathbf{A}_{\sigma}\right)  }_{=\prod_{j\in f\left(  V\right)
}\left(  \left(  -1\right)  ^{\left\vert F_{j}\right\vert }\cdot\left(
\text{\# of }\sigma\in\mathfrak{S}_{f^{-1}\left(  j\right)  }\text{ satisfying
}F_{j}\subseteq\mathbf{A}_{\sigma}\right)  \right)  }\\
&  =\sum_{\substack{\left(  F_{j}\right)  _{j\in f\left(  V\right)  }\text{ is
a family}\\\text{of linear subsets }F_{j}\subseteq A_{j}}}\prod_{j\in f\left(
V\right)  }\left(  \left(  -1\right)  ^{\left\vert F_{j}\right\vert }%
\cdot\left(  \text{\# of }\sigma\in\mathfrak{S}_{f^{-1}\left(  j\right)
}\text{ satisfying }F_{j}\subseteq\mathbf{A}_{\sigma}\right)  \right) \\
&  =\prod_{j\in f\left(  V\right)  }\ \ \sum_{\substack{F_{j}\subseteq
A_{j}\text{ is linear}}}\left(  -1\right)  ^{\left\vert F_{j}\right\vert
}\cdot\left(  \text{\# of }\sigma\in\mathfrak{S}_{f^{-1}\left(  j\right)
}\text{ satisfying }F_{j}\subseteq\mathbf{A}_{\sigma}\right) \\
&  \ \ \ \ \ \ \ \ \ \ \ \ \ \ \ \ \ \ \ \ \left(  \text{by the product
rule}\right) \\
&  =\prod_{j\in f\left(  V\right)  }\ \ \underbrace{\sum_{\substack{F\subseteq
A_{j}\text{ is linear}}}\left(  -1\right)  ^{\left\vert F\right\vert }%
\cdot\left(  \text{\# of }\sigma\in\mathfrak{S}_{f^{-1}\left(  j\right)
}\text{ satisfying }F\subseteq\mathbf{A}_{\sigma}\right)  }%
_{\substack{=\left(  \text{\# of hamps of }\overline{D_{j}}\right)
\\\text{(by Lemma \ref{lem.hamps-by-lin}, applied to }D_{j}=\left(
f^{-1}\left(  j\right)  ,\ A_{j}\right)  \text{ instead of }D=\left(
V,A\right)  \text{)}}}\\
&  \ \ \ \ \ \ \ \ \ \ \ \ \ \ \ \ \ \ \ \ \left(  \text{here, we have renamed
the summation index }F_{j}\text{ as }F\right) \\
&  =\prod_{j\in f\left(  V\right)  }\left(  \text{\# of hamps of }%
\overline{D_{j}}\right)  .
\end{align*}
In view of this, we can rewrite (\ref{pf.lem.hamps-by-lin-f.short.2}) as%
\[
\sum_{\substack{\sigma\in\mathfrak{S}_{V};\\f\circ\sigma=f}}\ \ \sum
_{\substack{F\subseteq\mathbf{A}_{\sigma}\cap A\\\text{is linear}}}\left(
-1\right)  ^{\left\vert F\right\vert }=\prod_{j\in f\left(  V\right)  }\left(
\text{\# of hamps of }\overline{D_{j}}\right)  .
\]
This proves Lemma \ref{lem.hamps-by-lin-f}.
\end{proof}
\end{vershort}

\begin{verlong}

\begin{proof}
We shall use the notations from Definition \ref{def.level.levsets}, Definition
\ref{def.level.levsubdig} and Definition \ref{def.level.decompose-perm}. We
recall that every $j\in\mathbb{P}$ satisfies $D_{j}=\left(  f^{-1}\left(
j\right)  ,\ A_{j}\right)  $ (by definition of $D_{j}$).

Let $\sigma\in\mathfrak{S}_{V}$ be a permutation satisfying $f\circ\sigma=f$.
Then, Proposition \ref{prop.level.Asig-f} yields $\mathbf{A}_{\sigma}\cap
A\subseteq A_{f}$. Hence, $\mathbf{A}_{\sigma}\cap A=\mathbf{A}_{\sigma}\cap
A_{f}$ (because $\underbrace{\mathbf{A}_{\sigma}}_{=\mathbf{A}_{\sigma}%
\cap\mathbf{A}_{\sigma}}\cap A=\mathbf{A}_{\sigma}\cap\underbrace{\mathbf{A}%
_{\sigma}\cap A}_{\subseteq A_{f}}\subseteq\mathbf{A}_{\sigma}\cap A_{f}$ and
conversely $\mathbf{A}_{\sigma}\cap\underbrace{A_{f}}_{\subseteq A}%
\subseteq\mathbf{A}_{\sigma}\cap A$). Hence,%
\begin{align}
\sum_{\substack{F\subseteq\mathbf{A}_{\sigma}\cap A\\\text{is linear}}}\left(
-1\right)  ^{\left\vert F\right\vert }  &  =\sum_{\substack{F\subseteq
\mathbf{A}_{\sigma}\cap A_{f}\\\text{is linear}}}\left(  -1\right)
^{\left\vert F\right\vert }\nonumber\\
&  =\sum_{\substack{F\subseteq A_{f}\text{ is linear};\\F\subseteq
\mathbf{A}_{\sigma}}}\left(  -1\right)  ^{\left\vert F\right\vert }
\label{pf.lem.hamps-by-lin-f.1}%
\end{align}
(since a subset of $\mathbf{A}_{\sigma}\cap A_{f}$ is the same thing as a
subset $F$ of $A_{f}$ that satisfies $F\subseteq\mathbf{A}_{\sigma}$).

Forget that we fixed $\sigma$. We thus have proved
(\ref{pf.lem.hamps-by-lin-f.1}) for every $\sigma\in\mathfrak{S}_{V}$
satisfying $f\circ\sigma=f$.

Summing up the equality (\ref{pf.lem.hamps-by-lin-f.1}) over all permutations
$\sigma\in\mathfrak{S}_{V}$ satisfying $f\circ\sigma=f$, we obtain
\begin{align}
&  \sum_{\substack{\sigma\in\mathfrak{S}_{V};\\f\circ\sigma=f}}\ \ \sum
_{\substack{F\subseteq\mathbf{A}_{\sigma}\cap A\\\text{is linear}}}\left(
-1\right)  ^{\left\vert F\right\vert }\nonumber\\
&  =\sum_{\substack{\sigma\in\mathfrak{S}_{V};\\f\circ\sigma=f}}\ \ \sum
_{\substack{F\subseteq A_{f}\text{ is linear};\\F\subseteq\mathbf{A}_{\sigma}%
}}\left(  -1\right)  ^{\left\vert F\right\vert }\nonumber\\
&  =\sum_{F\subseteq A_{f}\text{ is linear}}\ \ \underbrace{\sum
_{\substack{\sigma\in\mathfrak{S}_{V};\\F\subseteq\mathbf{A}_{\sigma}%
;\\f\circ\sigma=f}}\left(  -1\right)  ^{\left\vert F\right\vert }}_{=\left(
-1\right)  ^{\left\vert F\right\vert }\cdot\left(  \text{\# of }\sigma
\in\mathfrak{S}_{V}\text{ satisfying }F\subseteq\mathbf{A}_{\sigma}\text{ and
}f\circ\sigma=f\right)  }\nonumber\\
&  =\sum_{F\subseteq A_{f}\text{ is linear}}\left(  -1\right)  ^{\left\vert
F\right\vert }\cdot\left(  \text{\# of }\sigma\in\mathfrak{S}_{V}\text{
satisfying }F\subseteq\mathbf{A}_{\sigma}\text{ and }f\circ\sigma=f\right)  .
\label{pf.lem.hamps-by-lin-f.2}%
\end{align}

Now, we observe the following: If $F_{j}$ is a linear subset of $A_{j}$ for
each $j\in f\left(  V\right)  $, then the disjoint union $\bigsqcup_{j\in
f\left(  V\right)  }F_{j}$ is well-defined\footnote{\textit{Proof.} Let
$F_{j}$ be a linear subset of $A_{j}$ for each $j\in f\left(  V\right)  $. The
sets $A_{j}$ for different $j\in f\left(  V\right)  $ are disjoint (since
Proposition \ref{prop.level.Af=sum} yields that the sets $A_{1},A_{2}%
,A_{3},\ldots$ are disjoint). Hence, their subsets $F_{j}$ must be disjoint as
well (since $F_{j}$ is a subset of $A_{j}$ for each $j\in f\left(  V\right)
$). Thus, the disjoint union $\bigsqcup_{j\in f\left(  V\right)  }F_{j}$ is
well-defined.}, and is a linear subset of $A_{f}$ (by the \textquotedblleft%
$\Longleftarrow$\textquotedblright\ direction of Proposition
\ref{prop.level.lin-sub-Af} \textbf{(a)}, applied to $F=\bigsqcup_{j\in
f\left(  V\right)  }F_{j}$). Hence, the map%
\begin{align*}
&  \text{from }\left\{  \text{families }\left(  F_{j}\right)  _{j\in f\left(
V\right)  }\text{, where each }F_{j}\text{ is a linear subset of }%
A_{j}\right\} \\
&  \text{to }\left\{  \text{linear subsets of }A_{f}\right\} \\
&  \text{that sends each family }\left(  F_{j}\right)  _{j\in f\left(
V\right)  }\text{ to }\bigsqcup_{j\in f\left(  V\right)  }F_{j}%
\end{align*}
is well-defined. Moreover, this map is injective (since Proposition
\ref{prop.level.lin-sub-Af} \textbf{(b)} shows that the sets $F_{j}$ are
uniquely determined by their union $\bigsqcup_{j\in f\left(  V\right)  }F_{j}%
$) and surjective (by the \textquotedblleft$\Longrightarrow$\textquotedblright%
\ direction of Proposition \ref{prop.level.lin-sub-Af} \textbf{(a)}). Thus, it
is bijective. Hence, we can substitute $\bigsqcup_{j\in f\left(  V\right)
}F_{j}$ for $F$ in the sum on the right hand side of
(\ref{pf.lem.hamps-by-lin-f.2}). We thus obtain%
\begin{align}
&  \sum_{F\subseteq A_{f}\text{ is linear}}\left(  -1\right)  ^{\left\vert
F\right\vert }\cdot\left(  \text{\# of }\sigma\in\mathfrak{S}_{V}\text{
satisfying }F\subseteq\mathbf{A}_{\sigma}\text{ and }f\circ\sigma=f\right)
\nonumber\\
&  =\sum_{\substack{\left(  F_{j}\right)  _{j\in f\left(  V\right)  }\text{ is
a family}\\\text{of linear subsets }F_{j}\subseteq A_{j}}}\left(  -1\right)
^{\left\vert \bigsqcup_{j\in f\left(  V\right)  }F_{j}\right\vert }\nonumber\\
&  \ \ \ \ \ \ \ \ \ \ \cdot\left(  \text{\# of }\sigma\in\mathfrak{S}%
_{V}\text{ satisfying }\bigsqcup_{j\in f\left(  V\right)  }F_{j}%
\subseteq\mathbf{A}_{\sigma}\text{ and }f\circ\sigma=f\right)  .
\label{pf.lem.hamps-by-lin-f.5}%
\end{align}

Now, fix a family $\left(  F_{j}\right)  _{j\in f\left(  V\right)  }$ of
linear subsets $F_{j}\subseteq A_{j}$.

A permutation $\sigma\in\mathfrak{S}_{V}$ satisfies $f\circ\sigma=f$ if and
only if it can be written in the form $\sigma=\bigoplus_{j\in f\left(
V\right)  }\sigma_{j}$, where $\sigma_{j}\in\mathfrak{S}_{f^{-1}\left(
j\right)  }$ for each $j\in f\left(  V\right)  $ (by Proposition
\ref{prop.level.decompose-perm} \textbf{(a)}). Moreover, if $\sigma$ is
written in this way, then we have $\mathbf{A}_{\sigma}=\bigsqcup_{j\in
f\left(  V\right)  }\mathbf{A}_{\sigma_{j}}$ (by Proposition
\ref{prop.level.Asig-f2}).

Hence, if $\left(  \sigma_{j}\right)  _{j\in f\left(  V\right)  }\in
\prod_{j\in f\left(  V\right)  }\mathfrak{S}_{f^{-1}\left(  j\right)  }$ is a
family of permutations (i.e., if we are given a permutation $\sigma_{j}%
\in\mathfrak{S}_{f^{-1}\left(  j\right)  }$ for each $j\in f\left(  V\right)
$) satisfying $\bigsqcup_{j\in f\left(  V\right)  }F_{j}\subseteq
\bigsqcup_{j\in f\left(  V\right)  }\mathbf{A}_{\sigma_{j}}$, then
$\bigoplus_{j\in f\left(  V\right)  }\sigma_{j}$ is a permutation $\sigma
\in\mathfrak{S}_{V}$ satisfying $\bigsqcup_{j\in f\left(  V\right)  }%
F_{j}\subseteq\mathbf{A}_{\sigma}$ and $f\circ\sigma=f$%
\ \ \ \ \footnote{\textit{Proof.} Let $\left(  \sigma_{j}\right)  _{j\in
f\left(  V\right)  }\in\prod_{j\in f\left(  V\right)  }\mathfrak{S}%
_{f^{-1}\left(  j\right)  }$ be a family of permutations satisfying
$\bigsqcup_{j\in f\left(  V\right)  }F_{j}\subseteq\bigsqcup_{j\in f\left(
V\right)  }\mathbf{A}_{\sigma_{j}}$. Set $\sigma=\bigoplus_{j\in f\left(
V\right)  }\sigma_{j}$. Then, we must prove that $\bigsqcup_{j\in f\left(
V\right)  }F_{j}\subseteq\mathbf{A}_{\sigma}$ and $f\circ\sigma=f$.
\par
However, this is easy: The \textquotedblleft$\Longleftarrow$\textquotedblright%
\ direction of Proposition \ref{prop.level.decompose-perm} \textbf{(a)} yields
$f\circ\sigma=f$ (since $\sigma=\bigoplus_{j\in f\left(  V\right)  }\sigma
_{j}$ with $\sigma_{j}\in\mathfrak{S}_{f^{-1}\left(  j\right)  }$ for each
$j\in f\left(  V\right)  $). Thus, Proposition \ref{prop.level.Asig-f2} yields
$\mathbf{A}_{\sigma}=\bigsqcup_{j\in f\left(  V\right)  }\mathbf{A}%
_{\sigma_{j}}$. Hence, $\bigsqcup_{j\in f\left(  V\right)  }F_{j}%
\subseteq\bigsqcup_{j\in f\left(  V\right)  }\mathbf{A}_{\sigma_{j}%
}=\mathbf{A}_{\sigma}$. Thus, both $\bigsqcup_{j\in f\left(  V\right)  }%
F_{j}\subseteq\mathbf{A}_{\sigma}$ and $f\circ\sigma=f$ are proved.}. Thus,
the map
\begin{align*}
&  \text{from }\left\{  \text{families }\left(  \sigma_{j}\right)  _{j\in
f\left(  V\right)  }\in\prod_{j\in f\left(  V\right)  }\mathfrak{S}%
_{f^{-1}\left(  j\right)  }\text{ satisfying }\bigsqcup_{j\in f\left(
V\right)  }F_{j}\subseteq\bigsqcup_{j\in f\left(  V\right)  }\mathbf{A}%
_{\sigma_{j}}\right\} \\
&  \text{to }\left\{  \sigma\in\mathfrak{S}_{V}\text{ satisfying }%
\bigsqcup_{j\in f\left(  V\right)  }F_{j}\subseteq\mathbf{A}_{\sigma}\text{
and }f\circ\sigma=f\right\} \\
&  \text{that sends each family }\left(  \sigma_{j}\right)  _{j\in f\left(
V\right)  }\text{ to }\bigoplus_{j\in f\left(  V\right)  }\sigma_{j}%
\end{align*}
is well-defined. This map is furthermore surjective (this follows easily from
the \textquotedblleft$\Longrightarrow$\textquotedblright\ direction of
Proposition \ref{prop.level.decompose-perm} \textbf{(a)}%
\footnote{\textit{Proof.} Let $\sigma\in\mathfrak{S}_{V}$ be a permutation
satisfying $\bigsqcup_{j\in f\left(  V\right)  }F_{j}\subseteq\mathbf{A}%
_{\sigma}$ and $f\circ\sigma=f$. We must prove that $\sigma$ is an image of
some family $\left(  \sigma_{j}\right)  _{j\in f\left(  V\right)  }\in
\prod_{j\in f\left(  V\right)  }\mathfrak{S}_{f^{-1}\left(  j\right)  }$ under
the map we just constructed. In other words, we must prove that there exists a
family $\left(  \sigma_{j}\right)  _{j\in f\left(  V\right)  }\in\prod_{j\in
f\left(  V\right)  }\mathfrak{S}_{f^{-1}\left(  j\right)  }$ satisfying
$\bigsqcup_{j\in f\left(  V\right)  }F_{j}\subseteq\bigsqcup_{j\in f\left(
V\right)  }\mathbf{A}_{\sigma_{j}}$ such that $\sigma=\bigoplus_{j\in f\left(
V\right)  }\sigma_{j}$.
\par
However, the \textquotedblleft$\Longrightarrow$\textquotedblright\ direction
of Proposition \ref{prop.level.decompose-perm} \textbf{(a)} yields that
$\sigma$ can be written in the form $\sigma=\bigoplus_{j\in f\left(  V\right)
}\sigma_{j}$, where $\sigma_{j}\in\mathfrak{S}_{f^{-1}\left(  j\right)  }$ for
each $j\in f\left(  V\right)  $. In other words, there exists a family
$\left(  \sigma_{j}\right)  _{j\in f\left(  V\right)  }\in\prod_{j\in f\left(
V\right)  }\mathfrak{S}_{f^{-1}\left(  j\right)  }$ such that $\sigma
=\bigoplus_{j\in f\left(  V\right)  }\sigma_{j}$. This family $\left(
\sigma_{j}\right)  _{j\in f\left(  V\right)  }$ furthermore satisfies
$\bigsqcup_{j\in f\left(  V\right)  }F_{j}\subseteq\mathbf{A}_{\sigma
}=\bigsqcup_{j\in f\left(  V\right)  }\mathbf{A}_{\sigma_{j}}$ (by Proposition
\ref{prop.level.Asig-f2}). Thus, we have proved that there exists a family
$\left(  \sigma_{j}\right)  _{j\in f\left(  V\right)  }\in\prod_{j\in f\left(
V\right)  }\mathfrak{S}_{f^{-1}\left(  j\right)  }$ satisfying $\bigsqcup
_{j\in f\left(  V\right)  }F_{j}\subseteq\bigsqcup_{j\in f\left(  V\right)
}\mathbf{A}_{\sigma_{j}}$ such that $\sigma=\bigoplus_{j\in f\left(  V\right)
}\sigma_{j}$.}) and injective (since Proposition
\ref{prop.level.decompose-perm} \textbf{(b)} shows that the permutations
$\sigma_{j}$ are uniquely determined by $\sigma$ when $\sigma=\bigoplus_{j\in
f\left(  V\right)  }\sigma_{j}$). Thus, this map is bijective.

Hence, by the bijection principle, we have%
\begin{align}
&  \left(  \text{\# of }\sigma\in\mathfrak{S}_{V}\text{ satisfying }%
\bigsqcup_{j\in f\left(  V\right)  }F_{j}\subseteq\mathbf{A}_{\sigma}\text{
and }f\circ\sigma=f\right) \nonumber\\
&  =\left(  \text{\# of families }\left(  \sigma_{j}\right)  _{j\in f\left(
V\right)  }\in\prod_{j\in f\left(  V\right)  }\mathfrak{S}_{f^{-1}\left(
j\right)  }\text{ satisfying }\bigsqcup_{j\in f\left(  V\right)  }%
F_{j}\subseteq\bigsqcup_{j\in f\left(  V\right)  }\mathbf{A}_{\sigma_{j}%
}\right) \nonumber\\
&  =\left(  \text{\# of families }\left(  \sigma_{j}\right)  _{j\in f\left(
V\right)  }\in\prod_{j\in f\left(  V\right)  }\mathfrak{S}_{f^{-1}\left(
j\right)  }\text{ satisfying }F_{j}\subseteq\mathbf{A}_{\sigma_{j}}\text{ for
each }j\in f\left(  V\right)  \right) \nonumber\\
&  \ \ \ \ \ \ \ \ \ \ \ \ \ \ \ \ \ \ \ \ \left(
\begin{array}
[c]{c}%
\text{since the condition \textquotedblleft}\bigsqcup_{j\in f\left(  V\right)
}F_{j}\subseteq\bigsqcup_{j\in f\left(  V\right)  }\mathbf{A}_{\sigma_{j}%
}\text{\textquotedblright}\\
\text{is equivalent to \textquotedblleft}F_{j}\subseteq\mathbf{A}_{\sigma_{j}%
}\text{ for each }j\in f\left(  V\right)  \text{\textquotedblright}\\
\text{(by Lemma \ref{lem.level.djunex})}%
\end{array}
\right) \nonumber\\
&  =\prod_{j\in f\left(  V\right)  }\left(  \text{\# of }\sigma_{j}%
\in\mathfrak{S}_{f^{-1}\left(  j\right)  }\text{ satisfying }F_{j}%
\subseteq\mathbf{A}_{\sigma_{j}}\right)  \ \ \ \ \ \ \ \ \ \ \left(  \text{by
the product rule}\right) \nonumber\\
&  =\prod_{j\in f\left(  V\right)  }\left(  \text{\# of }\sigma\in
\mathfrak{S}_{f^{-1}\left(  j\right)  }\text{ satisfying }F_{j}\subseteq
\mathbf{A}_{\sigma}\right) \label{pf.lem.hamps-by-lin-f.6}\\
&  \ \ \ \ \ \ \ \ \ \ \ \ \ \ \ \ \ \ \ \ \left(  \text{here, we have renamed
the index }\sigma_{j}\text{ as }\sigma\right)  .\nonumber
\end{align}

Also, we have $\left\vert \bigsqcup_{j\in f\left(  V\right)  }F_{j}\right\vert
=\sum_{j\in f\left(  V\right)  }\left\vert F_{j}\right\vert $ (by the sum
rule), and thus%
\begin{equation}
\left(  -1\right)  ^{\left\vert \bigsqcup_{j\in f\left(  V\right)  }%
F_{j}\right\vert }=\left(  -1\right)  ^{\sum_{j\in f\left(  V\right)
}\left\vert F_{j}\right\vert }=\prod_{j\in f\left(  V\right)  }\left(
-1\right)  ^{\left\vert F_{j}\right\vert }. \label{pf.lem.hamps-by-lin-f.7a}%
\end{equation}

Multiplying the equalities (\ref{pf.lem.hamps-by-lin-f.7a}) and
(\ref{pf.lem.hamps-by-lin-f.6}), we obtain%
\begin{align}
&  \left(  -1\right)  ^{\left\vert \bigsqcup_{j\in f\left(  V\right)  }%
F_{j}\right\vert }\cdot\left(  \text{\# of }\sigma\in\mathfrak{S}_{V}\text{
satisfying }\bigsqcup_{j\in f\left(  V\right)  }F_{j}\subseteq\mathbf{A}%
_{\sigma}\text{ and }f\circ\sigma=f\right) \nonumber\\
&  =\left(  \prod_{j\in f\left(  V\right)  }\left(  -1\right)  ^{\left\vert
F_{j}\right\vert }\right)  \cdot\prod_{j\in f\left(  V\right)  }\left(
\text{\# of }\sigma\in\mathfrak{S}_{f^{-1}\left(  j\right)  }\text{ satisfying
}F_{j}\subseteq\mathbf{A}_{\sigma}\right) \nonumber\\
&  =\prod_{j\in f\left(  V\right)  }\left(  \left(  -1\right)  ^{\left\vert
F_{j}\right\vert }\cdot\left(  \text{\# of }\sigma\in\mathfrak{S}%
_{f^{-1}\left(  j\right)  }\text{ satisfying }F_{j}\subseteq\mathbf{A}%
_{\sigma}\right)  \right)  . \label{pf.lem.hamps-by-lin-f.7b}%
\end{align}

Forget that we fixed $\left(  F_{j}\right)  _{j\in f\left(  V\right)  }$. We
thus have proved the equality (\ref{pf.lem.hamps-by-lin-f.7b}) for any family
$\left(  F_{j}\right)  _{j\in f\left(  V\right)  }$ of linear subsets
$F_{j}\subseteq A_{j}$.

Now, (\ref{pf.lem.hamps-by-lin-f.5}) becomes%
\begin{align}
&  \sum_{F\subseteq A_{f}\text{ is linear}}\left(  -1\right)  ^{\left\vert
F\right\vert }\cdot\left(  \text{\# of }\sigma\in\mathfrak{S}_{V}\text{
satisfying }F\subseteq\mathbf{A}_{\sigma}\text{ and }f\circ\sigma=f\right)
\nonumber\\
&  =\sum_{\substack{\left(  F_{j}\right)  _{j\in f\left(  V\right)  }\text{ is
a family}\\\text{of linear subsets }F_{j}\subseteq A_{j}}}\nonumber\\
&  \ \ \ \ \ \ \ \ \ \ \cdot\underbrace{\left(  -1\right)  ^{\left\vert
\bigsqcup_{j\in f\left(  V\right)  }F_{j}\right\vert }\cdot\left(  \text{\# of
}\sigma\in\mathfrak{S}_{V}\text{ satisfying }\bigsqcup_{j\in f\left(
V\right)  }F_{j}\subseteq\mathbf{A}_{\sigma}\text{ and }f\circ\sigma=f\right)
}_{\substack{=\prod_{j\in f\left(  V\right)  }\left(  \left(  -1\right)
^{\left\vert F_{j}\right\vert }\cdot\left(  \text{\# of }\sigma\in
\mathfrak{S}_{f^{-1}\left(  j\right)  }\text{ satisfying }F_{j}\subseteq
\mathbf{A}_{\sigma}\right)  \right)  \\\text{(by
(\ref{pf.lem.hamps-by-lin-f.7b}))}}}\nonumber\\
&  =\sum_{\substack{\left(  F_{j}\right)  _{j\in f\left(  V\right)  }\text{ is
a family}\\\text{of linear subsets }F_{j}\subseteq A_{j}}}\prod_{j\in f\left(
V\right)  }\left(  \left(  -1\right)  ^{\left\vert F_{j}\right\vert }%
\cdot\left(  \text{\# of }\sigma\in\mathfrak{S}_{f^{-1}\left(  j\right)
}\text{ satisfying }F_{j}\subseteq\mathbf{A}_{\sigma}\right)  \right)
\nonumber\\
&  =\prod_{j\in f\left(  V\right)  }\ \ \sum_{\substack{F_{j}\subseteq
A_{j}\text{ is linear}}}\left(  -1\right)  ^{\left\vert F_{j}\right\vert
}\cdot\left(  \text{\# of }\sigma\in\mathfrak{S}_{f^{-1}\left(  j\right)
}\text{ satisfying }F_{j}\subseteq\mathbf{A}_{\sigma}\right) \nonumber\\
&  \ \ \ \ \ \ \ \ \ \ \ \ \ \ \ \ \ \ \ \ \left(  \text{by the product
rule}\right) \nonumber\\
&  =\prod_{j\in f\left(  V\right)  }\ \ \underbrace{\sum_{\substack{F\subseteq
A_{j}\text{ is linear}}}\left(  -1\right)  ^{\left\vert F\right\vert }%
\cdot\left(  \text{\# of }\sigma\in\mathfrak{S}_{f^{-1}\left(  j\right)
}\text{ satisfying }F\subseteq\mathbf{A}_{\sigma}\right)  }%
_{\substack{=\left(  \text{\# of hamps of }\overline{D_{j}}\right)
\\\text{(by Lemma \ref{lem.hamps-by-lin}, applied to }D_{j}=\left(
f^{-1}\left(  j\right)  ,\ A_{j}\right)  \text{ instead of }D=\left(
V,A\right)  \\\text{(since }f^{-1}\left(  j\right)  \neq\varnothing\text{
(because }j\in f\left(  V\right)  \text{)))}}}\nonumber\\
&  \ \ \ \ \ \ \ \ \ \ \ \ \ \ \ \ \ \ \ \ \left(  \text{here, we have renamed
the summation index }F_{j}\text{ as }F\right) \nonumber\\
&  =\prod_{j\in f\left(  V\right)  }\left(  \text{\# of hamps of }%
\overline{D_{j}}\right)  . \label{pf.lem.hamps-by-lin-f.15}%
\end{align}
Now, (\ref{pf.lem.hamps-by-lin-f.2}) becomes%
\begin{align*}
&  \sum_{\substack{\sigma\in\mathfrak{S}_{V};\\f\circ\sigma=f}}\ \ \sum
_{\substack{F\subseteq\mathbf{A}_{\sigma}\cap A\\\text{is linear}}}\left(
-1\right)  ^{\left\vert F\right\vert }\\
&  =\sum_{F\subseteq A_{f}\text{ is linear}}\left(  -1\right)  ^{\left\vert
F\right\vert }\cdot\left(  \text{\# of }\sigma\in\mathfrak{S}_{V}\text{
satisfying }F\subseteq\mathbf{A}_{\sigma}\text{ and }f\circ\sigma=f\right) \\
&  =\prod_{j\in f\left(  V\right)  }\left(  \text{\# of hamps of }%
\overline{D_{j}}\right)  \ \ \ \ \ \ \ \ \ \ \left(  \text{by
(\ref{pf.lem.hamps-by-lin-f.15})}\right)  .
\end{align*}
This proves Lemma \ref{lem.hamps-by-lin-f}.
\end{proof}
\end{verlong}

\begin{noncompile}
TODO: Deriving Lemma \ref{lem.hamps-by-lin-f} from Lemma
\ref{lem.hamps-by-lin} might have been an instance of the noncommutative
Cauchy kernel or Aguiar--Bergeron--Sottile universality; in this case, the
handwaving about disjoint unions could perhaps be avoided.
\end{noncompile}

\subsection{$\left(  f,D\right)  $-friendly $V$-listings}

The following restatement of Lemma \ref{lem.hamps-by-lin-f} will be useful for us:

\begin{lemma}
\label{lem.friendlies-by-f}Let $D=\left(  V,A\right)  $ be a digraph. Let
$f:V\rightarrow\mathbb{P}$ be any map. A $V$-listing $v=\left(  v_{1}%
,v_{2},\ldots,v_{n}\right)  $ will be called $\left(  f,D\right)
$\emph{-friendly} if it has the properties that $f\left(  v_{1}\right)  \leq
f\left(  v_{2}\right)  \leq\cdots\leq f\left(  v_{n}\right)  $ and that%
\begin{equation}
f\left(  v_{p}\right)  <f\left(  v_{p+1}\right)  \text{ for each }p\in\left[
n-1\right]  \text{ satisfying }\left(  v_{p},v_{p+1}\right)  \in A.
\label{eq.lem.friendlies-by-f.less}%
\end{equation}
Then,%
\[
\sum_{\substack{\sigma\in\mathfrak{S}_{V};\\f\circ\sigma=f}}\ \ \sum
_{\substack{F\subseteq\mathbf{A}_{\sigma}\cap A\\\text{is linear}}}\left(
-1\right)  ^{\left\vert F\right\vert }=\left(  \text{\# of }\left(
f,D\right)  \text{-friendly }V\text{-listings}\right)  .
\]

\end{lemma}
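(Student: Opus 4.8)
The plan is to feed the left-hand side directly into Lemma \ref{lem.hamps-by-lin-f}, which already evaluates it as $\prod_{j\in f\left(V\right)}\left(\text{\# of hamps of }\overline{D_{j}}\right)$. Hence the whole lemma reduces to the purely combinatorial claim
\[
\left(\text{\# of }\left(f,D\right)\text{-friendly }V\text{-listings}\right)=\prod_{j\in f\left(V\right)}\left(\text{\# of hamps of }\overline{D_{j}}\right),
\]
which I would establish by a bijection between $\left(f,D\right)$-friendly $V$-listings and families $\left(h_{j}\right)_{j\in f\left(V\right)}$ in which each $h_{j}$ is a hamp of $\overline{D_{j}}$. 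The product rule then finishes the count.

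To build the bijection, I would first observe that the monotonicity requirement $f\left(v_{1}\right)\leq f\left(v_{2}\right)\leq\cdots\leq f\left(v_{n}\right)$ forces the entries of any $\left(f,D\right)$-friendly listing $v$ to appear grouped by level: for each $j\in f\left(V\right)$, the entries lying in $f^{-1}\left(j\right)$ occupy one contiguous block of consecutive positions of $v$, and these blocks are arranged in increasing order of $j$. Reading off the block of level $j$ yields an ordering $h_{j}$ of all the vertices of $D_{j}$. The map $v\mapsto\left(h_{j}\right)_{j\in f\left(V\right)}$ is the bijection I propose; its inverse concatenates the $h_{j}$ in increasing order of $j$.

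The heart of the argument -- and the step I expect to require the most care -- is checking that condition (\ref{eq.lem.friendlies-by-f.less}) is exactly equivalent to the requirement that each block $h_{j}$ be a hamp of $\overline{D_{j}}$. Inside a single level block, any two consecutive entries $v_{p},v_{p+1}$ satisfy $f\left(v_{p}\right)=f\left(v_{p+1}\right)=j$, so (\ref{eq.lem.friendlies-by-f.less}) forbids $\left(v_{p},v_{p+1}\right)\in A$; since both entries lie in $f^{-1}\left(j\right)$, this is the same as $\left(v_{p},v_{p+1}\right)\notin A_{j}$, i.e.\ $\left(v_{p},v_{p+1}\right)$ is a non-arc of $D_{j}$, i.e.\ an arc of $\overline{D_{j}}$. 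Thus each block is a $\overline{D_{j}}$-path containing every vertex of $D_{j}$, which is precisely a hamp of $\overline{D_{j}}$. Conversely, at a boundary between two consecutive blocks the level strictly increases, so the hypothesis $\left(v_{p},v_{p+1}\right)\in A$ in (\ref{eq.lem.friendlies-by-f.less}) imposes nothing; this is exactly why the boundary arcs are unconstrained and the friendliness of $v$ is governed solely by the within-block orderings.

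Finally, I would verify that the two maps are mutually inverse: both decompose the vertex set into the same level sets, the forward map records the block orderings, and the backward map reassembles them in the canonical increasing-$j$ order (empty levels $j\notin f\left(V\right)$ contribute no block and need no attention). Once the bijection is established, the product rule applied to the families $\left(h_{j}\right)_{j\in f\left(V\right)}$ yields the displayed product, and combining this with Lemma \ref{lem.hamps-by-lin-f} completes the proof.
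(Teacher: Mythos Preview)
Your proposal is correct and follows essentially the same approach as the paper: both invoke Lemma~\ref{lem.hamps-by-lin-f} to reduce to the product $\prod_{j\in f(V)}(\text{\# of hamps of }\overline{D_{j}})$, and then establish the same bijection between $(f,D)$-friendly $V$-listings and families of hamps $(h_{j})_{j\in f(V)}$ via block decomposition by level, with the verification that condition~(\ref{eq.lem.friendlies-by-f.less}) restricted to a level-$j$ block is precisely the hamp condition for $\overline{D_{j}}$.
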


\begin{vershort}

\begin{proof}
For each $j\in f\left(  V\right)  $, we define a digraph $D_{j}$ as in
Definition \ref{def.level.levsubdig} \textbf{(c)}. This digraph $D_{j}$ is the
restriction of the digraph $D$ to the subset $f^{-1}\left(  j\right)  $. In
particular, its vertex set is $f^{-1}\left(  j\right)  $. In other words, its
vertices are precisely those vertices of $D$ that have level $j$ (with respect
to $f$).

Clearly, a $V$-listing $v=\left(  v_{1},v_{2},\ldots,v_{n}\right)  $ satisfies
$f\left(  v_{1}\right)  \leq f\left(  v_{2}\right)  \leq\cdots\leq f\left(
v_{n}\right)  $ if and only if it lists the vertices of $D$ in the order of
increasing level, i.e., if it first lists the vertices of the smallest level,
then the vertices of the second-smallest level, and so on.

In other words, a $V$-listing $v=\left(  v_{1},v_{2},\ldots,v_{n}\right)  $
satisfies $f\left(  v_{1}\right)  \leq f\left(  v_{2}\right)  \leq\cdots\leq
f\left(  v_{n}\right)  $ if and only if it can be constructed by choosing an
$f^{-1}\left(  j\right)  $-listing $v^{\left(  j\right)  }$ for each $j\in
f\left(  V\right)  $ and concatenating all these $f^{-1}\left(  j\right)
$-listings $v^{\left(  j\right)  }$ in the order of increasing $j$. Moreover,
if it can be constructed in this way, then its construction is unique (i.e.,
each $v^{\left(  j\right)  }$ is determined uniquely by $v$). Finally, for a
$V$-listing $v$ that is written as a concatenation of such $f^{-1}\left(
j\right)  $-listings $v^{\left(  j\right)  }$, the condition
(\ref{eq.lem.friendlies-by-f.less}) is equivalent to the condition that each
$v^{\left(  j\right)  }$ is a hamp of $\overline{D_{j}}$ (indeed, this is
easiest to see by rewriting (\ref{eq.lem.friendlies-by-f.less}) in the
contrapositive form \textquotedblleft if $p\in\left[  n-1\right]  $ satisfies
$f\left(  v_{p}\right)  =f\left(  v_{p+1}\right)  $, then $\left(
v_{p},v_{p+1}\right)  \notin A$\textquotedblright). Thus, a $V$-listing
$v=\left(  v_{1},v_{2},\ldots,v_{n}\right)  $ satisfies both $f\left(
v_{1}\right)  \leq f\left(  v_{2}\right)  \leq\cdots\leq f\left(
v_{n}\right)  $ and (\ref{eq.lem.friendlies-by-f.less}) at the same time if
and only if it can be constructed by choosing a hamp of $\overline{D_{j}}$ for
each $j\in f\left(  V\right)  $ and concatenating all these hamps in the order
of increasing $j$. In other words, a $V$-listing $v$ is $\left(  f,D\right)
$-friendly if and only if it can be constructed in this way. Since this
construction is unique, we thus have%
\[
\left(  \text{\# of }\left(  f,D\right)  \text{-friendly }V\text{-listings}%
\right)  =\prod_{j\in f\left(  V\right)  }\left(  \text{\# of hamps of
}\overline{D_{j}}\right)  .
\]
Thus, Lemma \ref{lem.friendlies-by-f} follows from Lemma
\ref{lem.hamps-by-lin-f}.
\end{proof}
\end{vershort}

\begin{verlong}

\begin{proof}
For each $j\in f\left(  V\right)  $, we define a digraph $D_{j}$ as in
Definition \ref{def.level.levsubdig} \textbf{(c)}. The vertex set of this
digraph $D_{j}$ is $f^{-1}\left(  j\right)  $. In other words, its vertices
are precisely those vertices of $D$ that have level $j$ (with respect to $f$).

Clearly, a $V$-listing $v=\left(  v_{1},v_{2},\ldots,v_{n}\right)  $ satisfies
$f\left(  v_{1}\right)  \leq f\left(  v_{2}\right)  \leq\cdots\leq f\left(
v_{n}\right)  $ if and only if it lists the vertices of $D$ in the order of
increasing level, i.e., if it first lists the vertices of the smallest level,
then the vertices of the second-smallest level, and so on.

If $\left(  a_{j}\right)  _{j\in f\left(  V\right)  }$ is a family of (finite)
lists (one list $a_{j}$ for each $j\in f\left(  V\right)  $), then
$\bigotimes\limits_{j\in f\left(  V\right)  }a_{j}$ shall denote the
concatenation of these lists $a_{j}$ in the order of increasing $j$ (that is,
the list starting with the entries of $a_{j}$ for the smallest $j\in f\left(
V\right)  $, then continuing with the entries of $a_{j}$ for the
second-smallest $j\in f\left(  V\right)  $, and so on). For instance, if
$f\left(  V\right)  =\left\{  2,3,5\right\}  $ and $a_{2}=\left(  u,v\right)
$ and $a_{3}=\left(  x,y,z\right)  $ and $a_{5}=\left(  p\right)  $, then
$\bigotimes\limits_{j\in f\left(  V\right)  }a_{j}=\left(  u,v,x,y,z,p\right)
$. The lists $a_{j}$ are called the \emph{factors} of the concatenation
$\bigotimes\limits_{j\in f\left(  V\right)  }a_{j}$.

Now, we shall prove five claims:

\begin{statement}
\textit{Claim 1:} Let $\left(  v^{\left(  j\right)  }\right)  _{j\in f\left(
V\right)  }$ be a family of lists, where each $v^{\left(  j\right)  }$ is an
$f^{-1}\left(  j\right)  $-listing. Write the concatenation $\bigotimes
\limits_{j\in f\left(  V\right)  }v^{\left(  j\right)  }$ in the form%
\[
\bigotimes\limits_{j\in f\left(  V\right)  }v^{\left(  j\right)  }=v=\left(
v_{1},v_{2},\ldots,v_{n}\right)  .
\]
Then, $v$ is a $V$-listing and satisfies $f\left(  v_{1}\right)  \leq f\left(
v_{2}\right)  \leq\cdots\leq f\left(  v_{n}\right)  $.
\end{statement}

[\textit{Proof of Claim 1:} For each $j\in f\left(  V\right)  $, the list
$v^{\left(  j\right)  }$ is an $f^{-1}\left(  j\right)  $-listing, thus a list
of elements of $f^{-1}\left(  j\right)  $, therefore a list of elements of $V$
(since $f^{-1}\left(  j\right)  \subseteq V$). Hence, the concatenation
$\bigotimes\limits_{j\in f\left(  V\right)  }v^{\left(  j\right)  }$ of these
lists $v^{\left(  j\right)  }$ is a list of elements of $V$ as well. In other
words, $v$ is a list of elements of $V$ (since $\bigotimes\limits_{j\in
f\left(  V\right)  }v^{\left(  j\right)  }=v$).

Moreover, each element of $V$ is contained exactly once in $v$%
\ \ \ \ \footnote{\textit{Proof.} Let $p$ be an arbitrary element of $V$. We
must then show that $p$ is contained exactly once in $v$.
\par
Indeed, let $i=f\left(  p\right)  $. Then, $p$ is an element of $f^{-1}\left(
i\right)  $. Also, $i=f\left(  p\right)  \in f\left(  V\right)  $.
\par
Moreover, the list $v^{\left(  i\right)  }$ is an $f^{-1}\left(  i\right)
$-listing (because for each $j\in f\left(  V\right)  $, the list $v^{\left(
j\right)  }$ is an $f^{-1}\left(  j\right)  $-listing). Hence, this list
$v^{\left(  i\right)  }$ contains each element of $f^{-1}\left(  i\right)  $
exactly once. In particular, this shows that $v^{\left(  i\right)  }$ contains
$p$ exactly once (since $p$ is an element of $f^{-1}\left(  i\right)  $). In
other words, $p$ appears exactly once in the list $v^{\left(  i\right)  }$.
\par
Now, let $j\in f\left(  V\right)  $ be distinct from $i$. Then, $j\neq
i=f\left(  p\right)  $, so that $f\left(  p\right)  \neq j$. Therefore, $p$ is
not an element of $f^{-1}\left(  j\right)  $. However, the list $v^{\left(
j\right)  }$ is an $f^{-1}\left(  j\right)  $-listing (by assumption), and
thus is a list of elements of $f^{-1}\left(  j\right)  $. Hence, this list
$v^{\left(  j\right)  }$ does not contain $p$ (since $p$ is not an element of
$f^{-1}\left(  j\right)  $). In other words, $p$ does not appear in this list
$v^{\left(  j\right)  }$.
\par
Forget that we fixed $j$. We thus have shown that if $j\in f\left(  V\right)
$ is distinct from $i$, then $p$ does not appear in the list $v^{\left(
j\right)  }$.
\par
Altogether, we now know that $p$ appears exactly once in the list $v^{\left(
i\right)  }$ but does not appear in the list $v^{\left(  j\right)  }$ for any
$j\in f\left(  V\right)  $ that is distinct from $i$. In other words, $p$
appears exactly once in the list $v^{\left(  i\right)  }$ and does not appear
in any other list $v^{\left(  j\right)  }$ with $j\neq i$. Consequently, $p$
appears exactly once in the concatenation $\bigotimes\limits_{j\in f\left(
V\right)  }v^{\left(  j\right)  }$. In other words, $p$ appears exactly once
in $v$ (since $v=\bigotimes\limits_{j\in f\left(  V\right)  }v^{\left(
j\right)  }$). In other words, $p$ is contained exactly once in $v$. Qed.}.
Hence, $v$ is a $V$-listing (since $v$ is a list of elements of $V$).

It remains to show that $f\left(  v_{1}\right)  \leq f\left(  v_{2}\right)
\leq\cdots\leq f\left(  v_{n}\right)  $.

We fix $p\in\left[  n-1\right]  $. We shall show that $f\left(  v_{p}\right)
\leq f\left(  v_{p+1}\right)  $.

Indeed, assume the contrary. Thus, $f\left(  v_{p}\right)  >f\left(
v_{p+1}\right)  $. Set $\alpha=f\left(  v_{p}\right)  $ and $\beta=f\left(
v_{p+1}\right)  $. Thus, $\alpha=f\left(  v_{p}\right)  >f\left(
v_{p+1}\right)  =\beta$. Moreover, $\alpha=f\left(  v_{p}\right)  \in f\left(
V\right)  $ and $\beta=f\left(  v_{p+1}\right)  \in f\left(  V\right)  $.
Furthermore, from $f\left(  v_{p}\right)  =\alpha$, we see that $v_{p}$ is an
element of $f^{-1}\left(  \alpha\right)  $. From $f\left(  v_{p+1}\right)
=\beta$, we see that $v_{p+1}$ is an element of $f^{-1}\left(  \beta\right)  $.

We recall that $v$ is a $V$-listing. Thus, each entry of $v$ appears only once
in $v$.

Recall that for each $j\in f\left(  V\right)  $, the list $v^{\left(
j\right)  }$ is an $f^{-1}\left(  j\right)  $-listing. Hence, in particular,
$v^{\left(  \alpha\right)  }$ is an $f^{-1}\left(  \alpha\right)  $-listing.
Hence, each element of $f^{-1}\left(  \alpha\right)  $ appears in $v^{\left(
\alpha\right)  }$. Thus, in particular, $v_{p}$ appears in $v^{\left(
\alpha\right)  }$ (since $v_{p}$ is an element of $f^{-1}\left(
\alpha\right)  $). The same argument (applied to $p+1$ and $\beta$ instead of
$p$ and $\alpha$) shows that $v_{p+1}$ appears in $v^{\left(  \beta\right)  }$.

However, $\beta<\alpha$ (since $\alpha>\beta$). Thus, in the concatenation
$\bigotimes\limits_{j\in f\left(  V\right)  }v^{\left(  j\right)  }$, the
factor $v^{\left(  \beta\right)  }$ appears to the left of the factor
$v^{\left(  \alpha\right)  }$ (since this concatenation is concatenating the
lists $v^{\left(  j\right)  }$ in the order of increasing $j$). Hence, in
particular, in this concatenation $\bigotimes\limits_{j\in f\left(  V\right)
}v^{\left(  j\right)  }$, the entry $v_{p+1}$ appears to the left of the entry
$v_{p}$ (since $v_{p+1}$ appears in $v^{\left(  \beta\right)  }$, whereas
$v_{p}$ appears in $v^{\left(  \alpha\right)  }$). In other words, in the list
$v$, the entry $v_{p+1}$ appears to the left of the entry $v_{p}$ (since
$v=\bigotimes\limits_{j\in f\left(  V\right)  }v^{\left(  j\right)  }$). On
the other hand, it is clear that the entry $v_{p+1}$ appears to the right of
the entry $v_{p}$ in the list $v$ (since $v=\left(  v_{1},v_{2},\ldots
,v_{n}\right)  $ and $p+1>p$).

Thus, we have shown that in the list $v$, the entry $v_{p+1}$ appears both to
the left and to the right of the entry $v_{p}$. Clearly, this is only possible
if one of these entries appears more than once in the list $v$. We thus
conclude that one of these entries appears more than once in the list $v$.
However, this contradicts the fact that each entry of $v$ appears only once in
$v$.

This contradiction shows that our assumption was false. Hence, $f\left(
v_{p}\right)  \leq f\left(  v_{p+1}\right)  $ is proved.

Forget that we fixed $p$. We thus have shown that $f\left(  v_{p}\right)  \leq
f\left(  v_{p+1}\right)  $ for each $p\in\left[  n-1\right]  $. In other
words, $f\left(  v_{1}\right)  \leq f\left(  v_{2}\right)  \leq\cdots\leq
f\left(  v_{n}\right)  $. This completes the proof of Claim 1.]

\begin{statement}
\textit{Claim 2:} Let $\left(  v^{\left(  j\right)  }\right)  _{j\in f\left(
V\right)  }$ be a family of lists, where each $v^{\left(  j\right)  }$ is an
$f^{-1}\left(  j\right)  $-listing. Then, this family $\left(  v^{\left(
j\right)  }\right)  _{j\in f\left(  V\right)  }$ can be uniquely reconstructed
from the concatenation $\bigotimes\limits_{j\in f\left(  V\right)  }v^{\left(
j\right)  }$.
\end{statement}

[\textit{Proof of Claim 2:} Fix $i\in f\left(  V\right)  $. Then, $v^{\left(
i\right)  }$ is an $f^{-1}\left(  i\right)  $-listing (since each $v^{\left(
j\right)  }$ is an $f^{-1}\left(  j\right)  $-listing). This $f^{-1}\left(
i\right)  $-listing $v^{\left(  i\right)  }$ is a factor of the concatenation
$\bigotimes\limits_{j\in f\left(  V\right)  }v^{\left(  j\right)  }$. This
factor $v^{\left(  i\right)  }$ consists entirely of elements of
$f^{-1}\left(  i\right)  $ (since it is an $f^{-1}\left(  i\right)
$-listing), whereas all the other factors $v^{\left(  j\right)  }$ of the
concatenation $\bigotimes\limits_{j\in f\left(  V\right)  }v^{\left(
j\right)  }$ contain no elements of $f^{-1}\left(  i\right)  $
whatsoever\footnote{\textit{Proof.} We must prove that if $j\in f\left(
V\right)  $ is distinct from $i$, then $v^{\left(  j\right)  }$ contains no
elements of $f^{-1}\left(  i\right)  $ whatsoever.
\par
So let $j\in f\left(  V\right)  $ be distinct from $i$. We must prove that
$v^{\left(  j\right)  }$ contains no elements of $f^{-1}\left(  i\right)  $
whatsoever.
\par
Assume the contrary. Thus, $v^{\left(  j\right)  }$ contains some element of
$f^{-1}\left(  i\right)  $. Let $p$ be this element. Then, $f\left(  p\right)
=i$ (since $p$ is an element of $f^{-1}\left(  i\right)  $). However,
$v^{\left(  j\right)  }$ is an $f^{-1}\left(  j\right)  $-listing (by
assumption), and thus is a list of elements of $f^{-1}\left(  j\right)  $.
Hence, each entry of $v^{\left(  j\right)  }$ belongs to $f^{-1}\left(
j\right)  $. Since $p$ is an entry of $v^{\left(  j\right)  }$ (because
$v^{\left(  j\right)  }$ contains $p$), we thus conclude that $p$ belongs to
$f^{-1}\left(  j\right)  $. In other words, $f\left(  p\right)  =j$.
Therefore, $j=f\left(  p\right)  =i$, which contradicts the fact that $j$ is
distinct from $i$.
\par
This contradiction shows that our assumption was false. Hence, we have shown
that $v^{\left(  j\right)  }$ contains no elements of $f^{-1}\left(  i\right)
$ whatsoever.}. Thus, we can reconstruct $v^{\left(  i\right)  }$ from
$\bigotimes\limits_{j\in f\left(  V\right)  }v^{\left(  j\right)  }$ by
removing all entries that don't belong to $f^{-1}\left(  i\right)  $ (since
this removal preserves the factor $v^{\left(  i\right)  }$ but makes all the
other factors $v^{\left(  j\right)  }$ disappear).

Forget that we fixed $i$. Thus, we have shown that for each $i\in f\left(
V\right)  $, we can reconstruct $v^{\left(  i\right)  }$ from $\bigotimes
\limits_{j\in f\left(  V\right)  }v^{\left(  j\right)  }$. In other words, we
can reconstruct the family $\left(  v^{\left(  i\right)  }\right)  _{i\in
f\left(  V\right)  }$ from $\bigotimes\limits_{j\in f\left(  V\right)
}v^{\left(  j\right)  }$. In other words, we can reconstruct the family
$\left(  v^{\left(  j\right)  }\right)  _{j\in f\left(  V\right)  }$ from
$\bigotimes\limits_{j\in f\left(  V\right)  }v^{\left(  j\right)  }$ (since
$\left(  v^{\left(  i\right)  }\right)  _{i\in f\left(  V\right)  }=\left(
v^{\left(  j\right)  }\right)  _{j\in f\left(  V\right)  }$). This proves
Claim 2.]

\begin{statement}
\textit{Claim 3:} Let $\left(  v^{\left(  j\right)  }\right)  _{j\in f\left(
V\right)  }$ be a family of lists, where each $v^{\left(  j\right)  }$ is a
hamp of $\overline{D_{j}}$. Then, the concatenation $\bigotimes\limits_{j\in
f\left(  V\right)  }v^{\left(  j\right)  }$ is an $\left(  f,D\right)
$-friendly $V$-listing.
\end{statement}

[\textit{Proof of Claim 3:} Let us write the concatenation $\bigotimes
\limits_{j\in f\left(  V\right)  }v^{\left(  j\right)  }$ in the form%
\[
\bigotimes\limits_{j\in f\left(  V\right)  }v^{\left(  j\right)  }=v=\left(
v_{1},v_{2},\ldots,v_{n}\right)  .
\]
For each $j\in f\left(  V\right)  $, the list $v^{\left(  j\right)  }$ is a
hamp of $\overline{D_{j}}$ and thus an $f^{-1}\left(  j\right)  $%
-listing\footnote{\textit{Proof.} Let $j\in f\left(  V\right)  $. Then, the
list $v^{\left(  j\right)  }$ is a hamp of $\overline{D_{j}}$. In other words,
this list $v^{\left(  j\right)  }$ is a $\overline{D_{j}}$-path that contains
each vertex of $\overline{D_{j}}$ (by the definition of a \textquotedblleft
hamp\textquotedblright).
\par
We recall that the digraph $D_{j}$ was defined to be $\left(  f^{-1}\left(
j\right)  ,\ A_{j}\right)  $. Hence, its complement $\overline{D_{j}}$ is
$\left(  f^{-1}\left(  j\right)  ,\ \left(  f^{-1}\left(  j\right)  \times
f^{-1}\left(  j\right)  \right)  \setminus A_{j}\right)  $ (by the definition
of the complement of a digraph). Thus, the vertices of $\overline{D_{j}}$ are
the elements of $f^{-1}\left(  j\right)  $, whereas the arcs of $\overline
{D_{j}}$ are the elements of $\left(  f^{-1}\left(  j\right)  \times
f^{-1}\left(  j\right)  \right)  \setminus A_{j}$.
\par
Now, recall that the list $v^{\left(  j\right)  }$ is a $\overline{D_{j}}%
$-path. In other words, $v^{\left(  j\right)  }$ is a nonempty tuple of
distinct elements of $f^{-1}\left(  j\right)  $ such that
$\operatorname*{Arcs}\left(  v^{\left(  j\right)  }\right)  \subseteq\left(
f^{-1}\left(  j\right)  \times f^{-1}\left(  j\right)  \right)  \setminus
A_{j}$ (by the definition of a \textquotedblleft$\overline{D_{j}}%
$-path\textquotedblright, since $\overline{D_{j}}=\left(  f^{-1}\left(
j\right)  ,\ \left(  f^{-1}\left(  j\right)  \times f^{-1}\left(  j\right)
\right)  \setminus A_{j}\right)  $). Thus, in particular, $v^{\left(
j\right)  }$ is a tuple of distinct elements of $f^{-1}\left(  j\right)  $.
Hence, $v^{\left(  j\right)  }$ contains no entry more than once.
\par
Furthermore, recall that the tuple $v^{\left(  j\right)  }$ contains each
vertex of $\overline{D_{j}}$. In other words, $v^{\left(  j\right)  }$
contains each element of $f^{-1}\left(  j\right)  $ (since the vertices of
$\overline{D_{j}}$ are the elements of $f^{-1}\left(  j\right)  $). Hence,
$v^{\left(  j\right)  }$ contains each element of $f^{-1}\left(  j\right)  $
exactly once (since $v^{\left(  j\right)  }$ contains no entry more than
once).
\par
Thus, we know that $v^{\left(  j\right)  }$ is a list of elements of
$f^{-1}\left(  j\right)  $ that contains each element of $f^{-1}\left(
j\right)  $ exactly once. In other words, $v^{\left(  j\right)  }$ is an
$f^{-1}\left(  j\right)  $-listing. Qed.}. Hence, Claim 1 shows that $v$ is a
$V$-listing and satisfies $f\left(  v_{1}\right)  \leq f\left(  v_{2}\right)
\leq\cdots\leq f\left(  v_{n}\right)  $.

We shall now show that this $V$-listing $v$ is $\left(  f,D\right)
$-friendly. Indeed, as we just proved, it satisfies $f\left(  v_{1}\right)
\leq f\left(  v_{2}\right)  \leq\cdots\leq f\left(  v_{n}\right)  $. In order
to prove that it is $\left(  f,D\right)  $-friendly, it thus suffices to show
that it satisfies%
\[
f\left(  v_{p}\right)  <f\left(  v_{p+1}\right)  \text{ for each }p\in\left[
n-1\right]  \text{ satisfying }\left(  v_{p},v_{p+1}\right)  \in A.
\]

So let us do this. Let $p\in\left[  n-1\right]  $ be such that $\left(
v_{p},v_{p+1}\right)  \in A$. We must prove that $f\left(  v_{p}\right)
<f\left(  v_{p+1}\right)  $.

Assume the contrary. Thus, $f\left(  v_{p}\right)  \geq f\left(
v_{p+1}\right)  $. Combining this with $f\left(  v_{p}\right)  \leq f\left(
v_{p+1}\right)  $ (which follows from $f\left(  v_{1}\right)  \leq f\left(
v_{2}\right)  \leq\cdots\leq f\left(  v_{n}\right)  $), we obtain $f\left(
v_{p}\right)  =f\left(  v_{p+1}\right)  $. Set $i=f\left(  v_{p}\right)  $.
Then, $i=f\left(  v_{p}\right)  \in f\left(  V\right)  $ and $i=f\left(
v_{p}\right)  =f\left(  v_{p+1}\right)  $. We have $v_{p}\in f^{-1}\left(
i\right)  $ (since $f\left(  v_{p}\right)  =i$) and $v_{p+1}\in f^{-1}\left(
i\right)  $ (since $f\left(  v_{p+1}\right)  =i$). In other words, $v_{p}$ and
$v_{p+1}$ belong to $f^{-1}\left(  i\right)  $.

In the above proof of Claim 2, we noticed the following: The factor
$v^{\left(  i\right)  }$ of the concatenation $\bigotimes\limits_{j\in
f\left(  V\right)  }v^{\left(  j\right)  }$ consists entirely of elements of
$f^{-1}\left(  i\right)  $, whereas all the other factors $v^{\left(
j\right)  }$ of this concatenation contain no elements of $f^{-1}\left(
i\right)  $ whatsoever. Hence, any entry of the concatenation $\bigotimes
\limits_{j\in f\left(  V\right)  }v^{\left(  j\right)  }$ that belongs to
$f^{-1}\left(  i\right)  $ must appear only in the $v^{\left(  i\right)  }$
factor of this concatenation.

However, $v_{p}$ and $v_{p+1}$ are consecutive entries of the concatenation
$\bigotimes\limits_{j\in f\left(  V\right)  }v^{\left(  j\right)  }$ (since
$\bigotimes\limits_{j\in f\left(  V\right)  }v^{\left(  j\right)  }=v=\left(
v_{1},v_{2},\ldots,v_{n}\right)  $). Since these two entries $v_{p}$ and
$v_{p+1}$ belong to $f^{-1}\left(  i\right)  $, we thus conclude that $v_{p}$
and $v_{p+1}$ appear only in the $v^{\left(  i\right)  }$ factor of this
concatenation (since any entry of the concatenation $\bigotimes\limits_{j\in
f\left(  V\right)  }v^{\left(  j\right)  }$ that belongs to $f^{-1}\left(
i\right)  $ must appear only in the $v^{\left(  i\right)  }$ factor of this
concatenation). Therefore, $v_{p}$ and $v_{p+1}$ are two consecutive entries
of $v^{\left(  i\right)  }$ (since $v_{p}$ and $v_{p+1}$ are consecutive
entries of the concatenation $\bigotimes\limits_{j\in f\left(  V\right)
}v^{\left(  j\right)  }$). In other words, the list $v^{\left(  i\right)  }$
has the form $\left(  \ldots,v_{p},v_{p+1},\ldots\right)  $ (where each
\textquotedblleft$\ldots$\textquotedblright\ stands for some number of
entries). Therefore, the pair $\left(  v_{p},v_{p+1}\right)  $ is an arc of
$v^{\left(  i\right)  }$. In other words, $\left(  v_{p},v_{p+1}\right)
\in\operatorname*{Arcs}\left(  v^{\left(  i\right)  }\right)  $.

However, $v^{\left(  i\right)  }$ is a hamp of $\overline{D_{i}}$ (since each
$v^{\left(  j\right)  }$ is a hamp of $\overline{D_{j}}$). In other words,
$v^{\left(  i\right)  }$ is a $\overline{D_{i}}$-path that contains each
vertex of $\overline{D_{i}}$ (by the definition of a \textquotedblleft
hamp\textquotedblright).

We recall that the digraph $D_{i}$ was defined to be $\left(  f^{-1}\left(
i\right)  ,\ A_{i}\right)  $. Hence, its complement $\overline{D_{i}}$ is
$\left(  f^{-1}\left(  i\right)  ,\ \left(  f^{-1}\left(  i\right)  \times
f^{-1}\left(  i\right)  \right)  \setminus A_{i}\right)  $ (by the definition
of the complement of a digraph). Since $v^{\left(  i\right)  }$ is a
$\overline{D_{i}}$-path, we thus have $\operatorname*{Arcs}\left(  v^{\left(
i\right)  }\right)  \subseteq\left(  f^{-1}\left(  i\right)  \times
f^{-1}\left(  i\right)  \right)  \setminus A_{i}$ (by the definition of a
\textquotedblleft$\overline{D_{i}}$-path\textquotedblright).

Hence, $\left(  v_{p},v_{p+1}\right)  \in\operatorname*{Arcs}\left(
v^{\left(  i\right)  }\right)  \subseteq\left(  f^{-1}\left(  i\right)  \times
f^{-1}\left(  i\right)  \right)  \setminus A_{i}$. In other words, $\left(
v_{p},v_{p+1}\right)  \in f^{-1}\left(  i\right)  \times f^{-1}\left(
i\right)  $ and $\left(  v_{p},v_{p+1}\right)  \notin A_{i}$.

Combining $\left(  v_{p},v_{p+1}\right)  \in A$ with $\left(  v_{p}%
,v_{p+1}\right)  \in f^{-1}\left(  i\right)  \times f^{-1}\left(  i\right)  $,
we obtain%
\[
\left(  v_{p},v_{p+1}\right)  \in A\cap\left(  f^{-1}\left(  i\right)  \times
f^{-1}\left(  i\right)  \right)  =A_{i}%
\]
(since (\ref{eq.def.level.levsubdig.a.Aj=3}) (applied to $j=i$) yields
$A_{i}=A\cap\left(  f^{-1}\left(  i\right)  \times f^{-1}\left(  i\right)
\right)  $). But this contradicts $\left(  v_{p},v_{p+1}\right)  \notin A_{i}%
$. This contradiction shows that our assumption was false. Hence, we have
shown that $f\left(  v_{p}\right)  <f\left(  v_{p+1}\right)  $.

Forget that we fixed $p$. We thus have proved that%
\[
f\left(  v_{p}\right)  <f\left(  v_{p+1}\right)  \text{ for each }p\in\left[
n-1\right]  \text{ satisfying }\left(  v_{p},v_{p+1}\right)  \in A.
\]
Since we furthermore know that $f\left(  v_{1}\right)  \leq f\left(
v_{2}\right)  \leq\cdots\leq f\left(  v_{n}\right)  $, we thus conclude that
the $V$-listing $v=\left(  v_{1},v_{2},\ldots,v_{n}\right)  $ is $\left(
f,D\right)  $-friendly (by the definition of \textquotedblleft$\left(
f,D\right)  $-friendly\textquotedblright). Hence, $v$ is an $\left(
f,D\right)  $-friendly $V$-listing. In other words, $\bigotimes\limits_{j\in
f\left(  V\right)  }v^{\left(  j\right)  }$ is an $\left(  f,D\right)
$-friendly $V$-listing (since $v=\bigotimes\limits_{j\in f\left(  V\right)
}v^{\left(  j\right)  }$). This proves Claim 3.]

\begin{statement}
\textit{Claim 4:} Let $v$ be an $\left(  f,D\right)  $-friendly $V$-listing.
Then, $v$ can be written in the form $v=\bigotimes\limits_{j\in f\left(
V\right)  }v^{\left(  j\right)  }$, where $v^{\left(  j\right)  }$ is a hamp
of $\overline{D_{j}}$ for each $j\in f\left(  V\right)  $.
\end{statement}

[\textit{Proof of Claim 4:} Let $j_{1},j_{2},\ldots,j_{q}$ be the elements of
$f\left(  V\right)  $, listed in increasing order (so that $j_{1}<j_{2}%
<\cdots<j_{q}$). Thus, $j_{1},j_{2},\ldots,j_{q}$ are all the levels (with
respect to $f$) that a vertex of $D$ can have, listed in increasing order.

Write the $V$-listing $v$ in the form $v=\left(  v_{1},v_{2},\ldots
,v_{n}\right)  $. As we assumed, this $V$-listing $v$ is $\left(  f,D\right)
$-friendly. In other words, it has the properties that $f\left(  v_{1}\right)
\leq f\left(  v_{2}\right)  \leq\cdots\leq f\left(  v_{n}\right)  $ and that%
\begin{equation}
f\left(  v_{p}\right)  <f\left(  v_{p+1}\right)  \text{ for each }p\in\left[
n-1\right]  \text{ satisfying }\left(  v_{p},v_{p+1}\right)  \in A
\label{pf.lem.friendlies-by-f.c4.1}%
\end{equation}
(by the definition of \textquotedblleft$\left(  f,D\right)  $%
-friendly\textquotedblright).

In particular, we have $f\left(  v_{1}\right)  \leq f\left(  v_{2}\right)
\leq\cdots\leq f\left(  v_{n}\right)  $. In other words, the entries of the
$V$-listing $v$ appear in $v$ in the order of increasing level. In other
words, the $V$-listing $v$ first lists the vertices of the smallest level,
then the vertices of the second-smallest level, and so on. Since $v$ is a
$V$-listing (i.e., contains each element of $V$ exactly once), we can restate
this as follows: The $V$-listing $v$ first lists each vertex of the smallest
level exactly once, then lists each vertex of the second-smallest level
exactly once, and so on. In other words, the $V$-listing $v$ first lists each
vertex of level $j_{1}$ exactly once, then lists each vertex of level $j_{2}$
exactly once, and so on (since $j_{1},j_{2},\ldots,j_{q}$ are all the levels
that a vertex of $D$ can have, listed in increasing order). In other words,
the $V$-listing $v$ first lists each element of $f^{-1}\left(  j_{1}\right)  $
exactly once, then lists each element of $f^{-1}\left(  j_{2}\right)  $
exactly once, and so on. In other words, the $V$-listing $v$ can be written as
a concatenation of an $f^{-1}\left(  j_{1}\right)  $-listing, an
$f^{-1}\left(  j_{2}\right)  $-listing, and so on (in this order).

In other words, $v$ can be written as a concatenation $\bigotimes\limits_{j\in
f\left(  V\right)  }v^{\left(  j\right)  }$, where $v^{\left(  j\right)  }$ is
an $f^{-1}\left(  j\right)  $-listing for each $j\in f\left(  V\right)  $
(since $j_{1},j_{2},\ldots,j_{q}$ are the elements of $f\left(  V\right)  $,
listed in increasing order).

Let us write $v$ in this way. Thus, $v^{\left(  j\right)  }$ is an
$f^{-1}\left(  j\right)  $-listing for each $j\in f\left(  V\right)  $, and we
have $v=\bigotimes\limits_{j\in f\left(  V\right)  }v^{\left(  j\right)  }$.

We shall now show that $v^{\left(  j\right)  }$ is a hamp of $\overline{D_{j}%
}$ for each $j\in f\left(  V\right)  $.

Indeed, let $i\in f\left(  V\right)  $ be arbitrary. We recall that the
digraph $D_{i}$ was defined to be $\left(  f^{-1}\left(  i\right)
,\ A_{i}\right)  $. Hence, its complement $\overline{D_{i}}$ is $\left(
f^{-1}\left(  i\right)  ,\ \left(  f^{-1}\left(  i\right)  \times
f^{-1}\left(  i\right)  \right)  \setminus A_{i}\right)  $ (by the definition
of the complement of a digraph). In particular, the vertices of $\overline
{D_{i}}$ are the elements of $f^{-1}\left(  i\right)  $.

Note that $v^{\left(  i\right)  }$ is an $f^{-1}\left(  i\right)  $-listing
(since $v^{\left(  j\right)  }$ is an $f^{-1}\left(  j\right)  $-listing for
each $j\in f\left(  V\right)  $). Thus, $v^{\left(  i\right)  }$ is a list of
all elements of $f^{-1}\left(  i\right)  $. In particular, all entries of the
list $v^{\left(  i\right)  }$ belong to $f^{-1}\left(  i\right)  $. Note that
the set $f^{-1}\left(  i\right)  $ is nonempty (since $i\in f\left(  V\right)
$), so that any $f^{-1}\left(  i\right)  $-listing must also be nonempty.
Hence, $v^{\left(  i\right)  }$ is nonempty (since $v^{\left(  i\right)  }$ is
an $f^{-1}\left(  i\right)  $-listing). Furthermore, $v^{\left(  i\right)  }$
is a tuple of distinct elements of $f^{-1}\left(  i\right)  $ (since
$v^{\left(  i\right)  }$ is an $f^{-1}\left(  i\right)  $-listing).

Clearly, $v^{\left(  i\right)  }$ is a factor of the concatenation
$\bigotimes\limits_{j\in f\left(  V\right)  }v^{\left(  j\right)  }$. Thus,
$v^{\left(  i\right)  }$ is a contiguous block of the list $\bigotimes
\limits_{j\in f\left(  V\right)  }v^{\left(  j\right)  }$. In other words,
$v^{\left(  i\right)  }$ is a contiguous block of the list $v$ (since
$v=\bigotimes\limits_{j\in f\left(  V\right)  }v^{\left(  j\right)  }$). In
other words, $v^{\left(  i\right)  }=\left(  v_{k},v_{k+1},\ldots,v_{\ell
}\right)  $ for some two elements $k$ and $\ell$ of $\left[  n\right]  $
(since $v=\left(  v_{1},v_{2},\ldots,v_{n}\right)  $). Consider these $k$ and
$\ell$. From $v^{\left(  i\right)  }=\left(  v_{k},v_{k+1},\ldots,v_{\ell
}\right)  $, we obtain%
\begin{align}
\operatorname*{Arcs}\left(  v^{\left(  i\right)  }\right)   &
=\operatorname*{Arcs}\left(  \left(  v_{k},v_{k+1},\ldots,v_{\ell}\right)
\right) \nonumber\\
&  =\left\{  \left(  v_{k},v_{k+1}\right)  ,\ \left(  v_{k+1},v_{k+2}\right)
,\ \ldots,\ \left(  v_{\ell-1},v_{\ell}\right)  \right\} \nonumber\\
&  =\left\{  \left(  v_{p},v_{p+1}\right)  \ \mid\ p\in\left\{  k,k+1,\ldots
,\ell-1\right\}  \right\}  . \label{pf.lem.friendlies-by-f.c4.Arcs}%
\end{align}

Now, let $p\in\left\{  k,k+1,\ldots,\ell-1\right\}  $. We shall show that
$\left(  v_{p},v_{p+1}\right)  \in\left(  f^{-1}\left(  i\right)  \times
f^{-1}\left(  i\right)  \right)  \setminus A_{i}$.

Indeed, $p\in\left\{  k,k+1,\ldots,\ell-1\right\}  \subseteq\left\{
1,2,\ldots,n-1\right\}  $ (since $k\geq1$ and $\underbrace{\ell}_{\leq
n}-1\leq n-1$). In other words, $p\in\left[  n-1\right]  $ (since $\left[
n-1\right]  =\left\{  1,2,\ldots,n-1\right\}  $).

Also, from $p\in\left\{  k,k+1,\ldots,\ell-1\right\}  $, we see that both $p$
and $p+1$ belong to the set $\left\{  k,k+1,\ldots,\ell\right\}  $. Hence,
both $v_{p}$ and $v_{p+1}$ are entries of the list $\left(  v_{k}%
,v_{k+1},\ldots,v_{\ell}\right)  $. In other words, both $v_{p}$ and $v_{p+1}$
are entries of the list $v^{\left(  i\right)  }$ (since $v^{\left(  i\right)
}=\left(  v_{k},v_{k+1},\ldots,v_{\ell}\right)  $). Hence, both $v_{p}$ and
$v_{p+1}$ belong to $f^{-1}\left(  i\right)  $ (since all entries of the list
$v^{\left(  i\right)  }$ belong to $f^{-1}\left(  i\right)  $). Therefore,
$\left(  v_{p},v_{p+1}\right)  \in f^{-1}\left(  i\right)  \times
f^{-1}\left(  i\right)  $.

Now, we shall show that $\left(  v_{p},v_{p+1}\right)  \notin A_{i}$. Indeed,
assume the contrary. Thus,
\begin{align*}
\left(  v_{p},v_{p+1}\right)   &  \in A_{i}=A\cap\left(  f^{-1}\left(
i\right)  \times f^{-1}\left(  i\right)  \right)  \ \ \ \ \ \ \ \ \ \ \left(
\text{by (\ref{eq.def.level.levsubdig.a.Aj=3}), applied to }j=i\right) \\
&  \subseteq A
\end{align*}
and therefore $f\left(  v_{p}\right)  <f\left(  v_{p+1}\right)  $ (by
(\ref{pf.lem.friendlies-by-f.c4.1})). However, $f\left(  v_{p}\right)  =i$
(since $v_{p}$ belongs to $f^{-1}\left(  i\right)  $) and $f\left(
v_{p+1}\right)  =i$ (since $v_{p+1}$ belongs to $f^{-1}\left(  i\right)  $),
so that $f\left(  v_{p}\right)  =i=f\left(  v_{p+1}\right)  $. This
contradicts $f\left(  v_{p}\right)  <f\left(  v_{p+1}\right)  $. This
contradiction shows that our assumption was false. Hence, $\left(
v_{p},v_{p+1}\right)  \notin A_{i}$ is proved.

Combining $\left(  v_{p},v_{p+1}\right)  \in f^{-1}\left(  i\right)  \times
f^{-1}\left(  i\right)  $ with $\left(  v_{p},v_{p+1}\right)  \notin A_{i}$,
we obtain $\left(  v_{p},v_{p+1}\right)  \in\left(  f^{-1}\left(  i\right)
\times f^{-1}\left(  i\right)  \right)  \setminus A_{i}$.

Forget that we fixed $p$. We thus have proved that $\left(  v_{p}%
,v_{p+1}\right)  \in\left(  f^{-1}\left(  i\right)  \times f^{-1}\left(
i\right)  \right)  \setminus A_{i}$ for each $p\in\left\{  k,k+1,\ldots
,\ell-1\right\}  $. In other words,%
\[
\left\{  \left(  v_{p},v_{p+1}\right)  \ \mid\ p\in\left\{  k,k+1,\ldots
,\ell-1\right\}  \right\}  \subseteq\left(  f^{-1}\left(  i\right)  \times
f^{-1}\left(  i\right)  \right)  \setminus A_{i}.
\]
In view of (\ref{pf.lem.friendlies-by-f.c4.Arcs}), we can rewrite this as
\[
\operatorname*{Arcs}\left(  v^{\left(  i\right)  }\right)  \subseteq\left(
f^{-1}\left(  i\right)  \times f^{-1}\left(  i\right)  \right)  \setminus
A_{i}.
\]

Now, we know that $v^{\left(  i\right)  }$ is a nonempty tuple of distinct
elements of $f^{-1}\left(  i\right)  $ and has the property that
$\operatorname*{Arcs}\left(  v^{\left(  i\right)  }\right)  \subseteq\left(
f^{-1}\left(  i\right)  \times f^{-1}\left(  i\right)  \right)  \setminus
A_{i}$. In other words, $v^{\left(  i\right)  }$ is a $\overline{D_{i}}$-path
(by the definition of a \textquotedblleft$\overline{D_{i}}$%
-path\textquotedblright, since the digraph $\overline{D_{i}}$ is $\left(
f^{-1}\left(  i\right)  ,\ \left(  f^{-1}\left(  i\right)  \times
f^{-1}\left(  i\right)  \right)  \setminus A_{i}\right)  $). This
$\overline{D_{i}}$-path $v^{\left(  i\right)  }$ furthermore contains each
element of $f^{-1}\left(  i\right)  $ (since it is an $f^{-1}\left(  i\right)
$-listing). In other words, this $\overline{D_{i}}$-path $v^{\left(  i\right)
}$ contains each vertex of $\overline{D_{i}}$ (since the vertices of
$\overline{D_{i}}$ are the elements of $f^{-1}\left(  i\right)  $).

In other words, $v^{\left(  i\right)  }$ is a hamp of $\overline{D_{i}}$ (by
the definition of a hamp).

Forget that we fixed $i$. We thus have shown that $v^{\left(  i\right)  }$ is
a hamp of $\overline{D_{i}}$ for each $i\in f\left(  V\right)  $. Renaming the
variable $i$ as $j$ in this sentence, we obtain the following: $v^{\left(
j\right)  }$ is a hamp of $\overline{D_{j}}$ for each $j\in f\left(  V\right)
$.

We have thus written $v$ in the form $v=\bigotimes\limits_{j\in f\left(
V\right)  }v^{\left(  j\right)  }$, where $v^{\left(  j\right)  }$ is a hamp
of $\overline{D_{j}}$ for each $j\in f\left(  V\right)  $. This shows that $v$
can be written in this form. Claim 4 is thus proven.]

\begin{statement}
\textit{Claim 5:} Let $\left(  v^{\left(  j\right)  }\right)  _{j\in f\left(
V\right)  }$ be a family of lists, where each $v^{\left(  j\right)  }$ is a
hamp of $\overline{D_{j}}$. Then, this family $\left(  v^{\left(  j\right)
}\right)  _{j\in f\left(  V\right)  }$ can be uniquely reconstructed from the
concatenation $\bigotimes\limits_{j\in f\left(  V\right)  }v^{\left(
j\right)  }$.
\end{statement}

[\textit{Proof of Claim 5:} For each $j\in f\left(  V\right)  $, the list
$v^{\left(  j\right)  }$ is a hamp of $\overline{D_{j}}$ and thus an
$f^{-1}\left(  j\right)  $-listing\footnote{This can be shown just as in the
proof of Claim 3.}. Hence, Claim 2 shows that the family $\left(  v^{\left(
j\right)  }\right)  _{j\in f\left(  V\right)  }$ can be uniquely reconstructed
from the concatenation $\bigotimes\limits_{j\in f\left(  V\right)  }v^{\left(
j\right)  }$. This proves Claim 5.]

\medskip

Now, if $\left(  v^{\left(  j\right)  }\right)  _{j\in f\left(  V\right)  }%
\in\prod_{j\in f\left(  V\right)  }\left\{  \text{hamps of }\overline{D_{j}%
}\right\}  $ is any family (i.e., if $\left(  v^{\left(  j\right)  }\right)
_{j\in f\left(  V\right)  }$ is any family of lists such that each $v^{\left(
j\right)  }$ is a hamp of $\overline{D_{j}}$), then the concatenation
$\bigotimes\limits_{j\in f\left(  V\right)  }v^{\left(  j\right)  }$ is an
$\left(  f,D\right)  $-friendly $V$-listing (by Claim 3). Hence, the map%
\begin{align*}
\prod_{j\in f\left(  V\right)  }\left\{  \text{hamps of }\overline{D_{j}%
}\right\}   &  \rightarrow\left\{  \left(  f,D\right)  \text{-friendly
}V\text{-listings}\right\}  ,\\
\left(  v^{\left(  j\right)  }\right)  _{j\in f\left(  V\right)  }  &
\mapsto\bigotimes\limits_{j\in f\left(  V\right)  }v^{\left(  j\right)  }%
\end{align*}
is well-defined. This map is furthermore injective (since Claim 5 shows that a
family $\left(  v^{\left(  j\right)  }\right)  _{j\in f\left(  V\right)  }%
\in\prod_{j\in f\left(  V\right)  }\left\{  \text{hamps of }\overline{D_{j}%
}\right\}  $ can be uniquely reconstructed from the concatenation
$\bigotimes\limits_{j\in f\left(  V\right)  }v^{\left(  j\right)  }$) and
surjective (since Claim 4 says that any $\left(  f,D\right)  $-friendly
$V$-listing $v$ can be written in the form $v=\bigotimes\limits_{j\in f\left(
V\right)  }v^{\left(  j\right)  }$ for some family $\left(  v^{\left(
j\right)  }\right)  _{j\in f\left(  V\right)  }\in\prod_{j\in f\left(
V\right)  }\left\{  \text{hamps of }\overline{D_{j}}\right\}  $). Therefore,
this map is bijective. The bijection principle thus yields%
\begin{align*}
&  \left\vert \left\{  \left(  f,D\right)  \text{-friendly }V\text{-listings}%
\right\}  \right\vert \\
&  =\left\vert \prod_{j\in f\left(  V\right)  }\left\{  \text{hamps of
}\overline{D_{j}}\right\}  \right\vert =\prod_{j\in f\left(  V\right)
}\underbrace{\left\vert \left\{  \text{hamps of }\overline{D_{j}}\right\}
\right\vert }_{=\left(  \text{\# of hamps of }\overline{D_{j}}\right)  }\\
&  =\prod_{j\in f\left(  V\right)  }\left(  \text{\# of hamps of }%
\overline{D_{j}}\right)  .
\end{align*}
However, Lemma \ref{lem.hamps-by-lin-f} yields%
\[
\sum_{\substack{\sigma\in\mathfrak{S}_{V};\\f\circ\sigma=f}}\ \ \sum
_{\substack{F\subseteq\mathbf{A}_{\sigma}\cap A\\\text{is linear}}}\left(
-1\right)  ^{\left\vert F\right\vert }=\prod_{j\in f\left(  V\right)  }\left(
\text{\# of hamps of }\overline{D_{j}}\right)  .
\]
Comparing these two equalities, we obtain%
\begin{align*}
\sum_{\substack{\sigma\in\mathfrak{S}_{V};\\f\circ\sigma=f}}\ \ \sum
_{\substack{F\subseteq\mathbf{A}_{\sigma}\cap A\\\text{is linear}}}\left(
-1\right)  ^{\left\vert F\right\vert }  &  =\left\vert \left\{  \left(
f,D\right)  \text{-friendly }V\text{-listings}\right\}  \right\vert \\
&  =\left(  \text{\# of }\left(  f,D\right)  \text{-friendly }%
V\text{-listings}\right)  .
\end{align*}
This proves Lemma \ref{lem.friendlies-by-f}.
\end{proof}
\end{verlong}

\subsection{A bit of P\'{o}lya counting}

The following lemma is well-known, e.g., from the theory of P\'{o}lya enumeration:

\begin{lemma}
\label{lem.ptype-as-sum}Let $V$ be a finite set. Let $\sigma\in\mathfrak{S}%
_{V}$ be a permutation of $V$. Then,%
\[
\sum_{\substack{f:V\rightarrow\mathbb{P}\text{;}\\f\circ\sigma=f}%
}\ \ \prod_{v\in V}x_{f\left(  v\right)  }=p_{\operatorname*{type}\sigma}.
\]

\end{lemma}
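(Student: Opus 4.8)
The plan is to prove Lemma \ref{lem.ptype-as-sum} by decomposing both the set of maps $f$ (satisfying $f\circ\sigma=f$) and the monomial $\prod_{v\in V}x_{f(v)}$ according to the cycles of $\sigma$. The key observation is that the condition $f\circ\sigma=f$ forces $f$ to be constant on each cycle of $\sigma$: indeed, if $v$ and $\sigma(v)$ always have the same $f$-value, then iterating shows $f(v)=f(\sigma(v))=f(\sigma^2(v))=\cdots$, so all entries of a given cycle share one common value. Thus a map $f:V\to\mathbb{P}$ with $f\circ\sigma=f$ is the same datum as a choice of one positive integer $f_c$ for each cycle $c\in\operatorname{Cycs}\sigma$, and the correspondence is a bijection.

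First I would establish this bijection rigorously: a map $f$ with $f\circ\sigma=f$ corresponds to a family $(f_c)_{c\in\operatorname{Cycs}\sigma}\in\mathbb{P}^{\operatorname{Cycs}\sigma}$, where $f_c$ is the common value of $f$ on the entries of the cycle $c$. Next, I would rewrite the monomial under this decomposition. Since each cycle $c$ of length $\ell(c)$ contributes $\ell(c)$ vertices, all mapped to the single level $f_c$, we have
\[
\prod_{v\in V}x_{f(v)}=\prod_{c\in\operatorname{Cycs}\sigma}\ \prod_{v\text{ is an entry of }c}x_{f_c}=\prod_{c\in\operatorname{Cycs}\sigma}x_{f_c}^{\ell(c)}.
\]
Summing over all valid $f$ then becomes summing over all families $(f_c)_c$, and the sum factors as a product over cycles:
\[
\sum_{\substack{f:V\rightarrow\mathbb{P};\\f\circ\sigma=f}}\ \prod_{v\in V}x_{f(v)}=\sum_{(f_c)_c\in\mathbb{P}^{\operatorname{Cycs}\sigma}}\ \prod_{c\in\operatorname{Cycs}\sigma}x_{f_c}^{\ell(c)}=\prod_{c\in\operatorname{Cycs}\sigma}\ \sum_{f_c\in\mathbb{P}}x_{f_c}^{\ell(c)}.
\]

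Finally, I would recognize each inner sum as a power-sum symmetric function: $\sum_{k\in\mathbb{P}}x_k^{\ell(c)}=p_{\ell(c)}$, by Definition \ref{def.powersum} \textbf{(a)}. Hence the product over cycles becomes $\prod_{c\in\operatorname{Cycs}\sigma}p_{\ell(c)}$, which is exactly $p_{\operatorname{type}\sigma}$, since by Definition \ref{def.perm.cycs} \textbf{(b)} the cycle type $\operatorname{type}\sigma$ is the partition whose parts are the cycle lengths $\ell(c)$. I do not anticipate a serious obstacle here; the statement is genuinely elementary. The only point demanding minor care is justifying the interchange of summation and product (distributing an infinite sum over a finite product of infinite sums), which is legitimate because we work in the ring of formal power series and each factor $p_{\ell(c)}$ is a well-defined element of $\Lambda$; the product of finitely many such power series is computed coefficientwise, matching the grouped sum. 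I would state this passage carefully but without belaboring the formal-power-series bookkeeping, citing the standard convergence of such products in $\mathbb{Z}[[x_1,x_2,\ldots]]$.
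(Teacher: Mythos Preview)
Your proposal is correct and follows essentially the same approach as the paper: both arguments observe that $f\circ\sigma=f$ is equivalent to $f$ being constant on each cycle of $\sigma$, set up the resulting bijection with $\mathbb{P}^{\operatorname{Cycs}\sigma}$ (the paper lists the cycles as $\gamma_1,\ldots,\gamma_k$ and uses $\mathbb{P}^k$ instead, but this is only notational), then factor the sum as a product over cycles and recognize each factor as a power sum $p_{\ell(c)}$.
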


\begin{vershort}

\begin{proof}
Let $\gamma_{1},\gamma_{2},\ldots,\gamma_{k}$ be the cycles of $\sigma$,
listed with no repetition. For each $i\in\left[  k\right]  $, let $V_{i}$ be
the set of entries of the cycle $\gamma_{i}$. Thus, $V=V_{1}\sqcup V_{2}%
\sqcup\cdots\sqcup V_{k}$. For each $i\in\left[  k\right]  $, the set $V_{i}$
is the set of entries of a cycle of $\sigma$ (namely, of $\gamma_{i}$), and
thus can be written as $\left\{  \sigma^{j}\left(  v_{i}\right)  \ \mid
\ j\in\mathbb{N}\right\}  $ for some $v_{i}\in V_{i}$.

Hence, a map $f:V\rightarrow\mathbb{P}$ satisfies $f\circ\sigma=f$ if and only
if $f$ is constant on each of the $k$ sets $V_{1},V_{2},\ldots,V_{k}$. Hence,
in order to construct a map $f:V\rightarrow\mathbb{P}$ that satisfies
$f\circ\sigma=f$, we only need to choose the values $a_{1},a_{2},\ldots,a_{k}$
that it takes on these $k$ sets (i.e., for each $i\in\left[  k\right]  $, we
need to choose the value $a_{i}$ that $f$ takes on all elements of $V_{i}$).

Let us be more precise: For each $k$-tuple $\left(  a_{1},a_{2},\ldots
,a_{k}\right)  \in\mathbb{P}^{k}$, there is a unique map $\Gamma\left(
a_{1},a_{2},\ldots,a_{k}\right)  :V\rightarrow\mathbb{P}$ that sends each
element of $V_{1}$ to $a_{1}$, each element of $V_{2}$ to $a_{2}$, and so on
(since $V=V_{1}\sqcup V_{2}\sqcup\cdots\sqcup V_{k}$). The latter map
$\Gamma\left(  a_{1},a_{2},\ldots,a_{k}\right)  $ is a map $f:V\rightarrow
\mathbb{P}$ that satisfies $f\circ\sigma=f$ (by the preceding paragraph, since
it is constant on each of the $k$ sets $V_{1},V_{2},\ldots,V_{k}$). Thus, we
obtain a map%
\begin{align*}
\Gamma:\mathbb{P}^{k}  &  \rightarrow\left\{  f:V\rightarrow\mathbb{P}%
\ \mid\ f\circ\sigma=f\right\}  ,\\
\left(  a_{1},a_{2},\ldots,a_{k}\right)   &  \mapsto\Gamma\left(  a_{1}%
,a_{2},\ldots,a_{k}\right)  .
\end{align*}
This map $\Gamma$ is easily seen to be injective (since $V_{1},V_{2}%
,\ldots,V_{k}$ are nonempty) and surjective (again by the previous paragraph).
Hence, it is bijective. Thus, substituting $\Gamma\left(  a_{1},a_{2}%
,\ldots,a_{k}\right)  $ for $f$ in the sum $\sum_{\substack{f:V\rightarrow
\mathbb{P}\text{;}\\f\circ\sigma=f}}\ \ \prod_{v\in V}x_{f\left(  v\right)  }%
$, we obtain%
\begin{align}
\sum_{\substack{f:V\rightarrow\mathbb{P}\text{;}\\f\circ\sigma=f}%
}\ \ \prod_{v\in V}x_{f\left(  v\right)  }  &  =\sum_{\left(  a_{1}%
,a_{2},\ldots,a_{k}\right)  \in\mathbb{P}^{k}}\ \ \underbrace{\prod_{v\in V}%
}_{\substack{=\prod_{i=1}^{k}\ \ \prod_{v\in V_{i}}\\\text{(since }%
V=V_{1}\sqcup V_{2}\sqcup\cdots\sqcup V_{k}\text{)}}}x_{\left(  \Gamma\left(
a_{1},a_{2},\ldots,a_{k}\right)  \right)  \left(  v\right)  }\nonumber\\
&  =\sum_{\left(  a_{1},a_{2},\ldots,a_{k}\right)  \in\mathbb{P}^{k}}%
\ \ \prod_{i=1}^{k}\ \ \prod_{v\in V_{i}}\underbrace{x_{\left(  \Gamma\left(
a_{1},a_{2},\ldots,a_{k}\right)  \right)  \left(  v\right)  }}%
_{\substack{_{\substack{=x_{a_{i}}}}\\\text{(since the map }\Gamma\left(
a_{1},a_{2},\ldots,a_{k}\right)  \\\text{sends each element of }V_{i}\text{ to
}a_{i}\text{)}}}\nonumber\\
&  =\sum_{\left(  a_{1},a_{2},\ldots,a_{k}\right)  \in\mathbb{P}^{k}}%
\ \ \prod_{i=1}^{k}\ \ \underbrace{\prod_{v\in V_{i}}x_{a_{i}}}_{=x_{a_{i}%
}^{\left\vert V_{i}\right\vert }}=\sum_{\left(  a_{1},a_{2},\ldots
,a_{k}\right)  \in\mathbb{P}^{k}}\ \ \prod_{i=1}^{k}x_{a_{i}}^{\left\vert
V_{i}\right\vert }\nonumber\\
&  =\prod_{i=1}^{k}\ \ \underbrace{\sum_{a\in\mathbb{P}}x_{a}^{\left\vert
V_{i}\right\vert }}_{\substack{=x_{1}^{\left\vert V_{i}\right\vert }%
+x_{2}^{\left\vert V_{i}\right\vert }+x_{3}^{\left\vert V_{i}\right\vert
}+\cdots\\=p_{\left\vert V_{i}\right\vert }\\\text{(by the definition of
}p_{\left\vert V_{i}\right\vert }\text{)}}}\ \ \ \ \ \ \ \ \ \ \left(
\text{by the product rule}\right) \nonumber\\
&  =\prod_{i=1}^{k}p_{\left\vert V_{i}\right\vert }.
\label{pf.lem.ptype-as-sum.4}%
\end{align}

However, the permutation $\sigma$ has cycles $\gamma_{1},\gamma_{2}%
,\ldots,\gamma_{k}$, and their respective sets of entries are $V_{1}%
,V_{2},\ldots,V_{k}$. Thus, the lengths of the cycles of $\sigma$ are
$\left\vert V_{1}\right\vert ,\left\vert V_{2}\right\vert ,\ldots,\left\vert
V_{k}\right\vert $. But the entries of the partition $\operatorname*{type}%
\sigma$ are precisely the lengths of the cycles of $\sigma$ (by the definition
of $\operatorname*{type}\sigma$), and thus must be $\left\vert V_{1}%
\right\vert ,\left\vert V_{2}\right\vert ,\ldots,\left\vert V_{k}\right\vert $
in some order (by the preceding sentence). Therefore, the partition
$\operatorname*{type}\sigma$ can be obtained from the $k$-tuple $\left(
\left\vert V_{1}\right\vert ,\left\vert V_{2}\right\vert ,\ldots,\left\vert
V_{k}\right\vert \right)  $ by sorting the entries in weakly decreasing order.
Hence,%
\[
p_{\operatorname*{type}\sigma}=\prod_{i=1}^{k}p_{\left\vert V_{i}\right\vert
}.
\]
Comparing this with (\ref{pf.lem.ptype-as-sum.4}), we obtain
\[
\sum_{\substack{f:V\rightarrow\mathbb{P}\text{;}\\f\circ\sigma=f}%
}\ \ \prod_{v\in V}x_{f\left(  v\right)  }=p_{\operatorname*{type}\sigma}.
\]
This proves Lemma \ref{lem.ptype-as-sum}.
\end{proof}
\end{vershort}

\begin{verlong}

\begin{proof}
Let $\gamma_{1},\gamma_{2},\ldots,\gamma_{k}$ be the cycles of $\sigma$,
listed with no repetition\footnote{Keep in mind that a cycle is a
rotation-equivalence class. Thus, \textquotedblleft listed with no
repetition\textquotedblright\ means that no two of $\gamma_{1},\gamma
_{2},\ldots,\gamma_{k}$ are the same rotation-equivalence class. For example,
if $\gamma_{1}$ is $\left(  1,2\right)  _{\sim}$, then $\gamma_{2}$ cannot be
$\left(  2,1\right)  _{\sim}$.}. For each $i\in\left[  k\right]  $, let
$V_{i}$ be the set of entries of the cycle $\gamma_{i}$. We shall now collect
some basic properties of these cycles $\gamma_{i}$ and the corresponding sets
$V_{i}$:

\begin{statement}
\textit{Claim 1:} Let $v\in V$. Then, there exists a unique $i\in\left[
k\right]  $ such that $v\in V_{i}$.
\end{statement}

[\textit{Proof of Claim 1:} We know that $\sigma$ is a permutation of $V$.
Hence, each element of $V$ belongs to exactly one cycle of $\sigma$. In
particular, $v$ belongs to exactly one cycle of $\sigma$. In other words,
there exists exactly one cycle of $\sigma$ such that $v$ is an entry of this
cycle. In other words, there exists a unique $i\in\left[  k\right]  $ such
that $v$ is an entry of $\gamma_{i}$ (since $\gamma_{1},\gamma_{2}%
,\ldots,\gamma_{k}$ are the cycles of $\sigma$, listed with no repetition). In
other words, there exists a unique $i\in\left[  k\right]  $ such that $v\in
V_{i}$ (since the statement \textquotedblleft$v\in V_{i}$\textquotedblright%
\ is equivalent to the statement \textquotedblleft$v$ is an entry of
$\gamma_{i}$\textquotedblright\ \ \ \ \footnote{because $V_{i}$ is the set of
entries of $\gamma_{i}$}). This proves Claim 1.] \medskip

\begin{statement}
\textit{Claim 2:} Let $i\in\left[  k\right]  $. Then:

\textbf{(a)} We have $\sigma\left(  V_{i}\right)  \subseteq V_{i}$.

\textbf{(b)} There exists an element $v_{i}\in V_{i}$ such that%
\[
V_{i}=\left\{  \sigma^{j}\left(  v_{i}\right)  \ \mid\ j\in\mathbb{N}\right\}
.
\]

\end{statement}

[\textit{Proof of Claim 2:} It is well-known (from Definition
\ref{def.perm.cycs} \textbf{(a)}) that each cycle of $\sigma$ has the form
\[
\left(  \sigma^{0}\left(  w\right)  ,\ \sigma^{1}\left(  w\right)
,\ \sigma^{2}\left(  w\right)  ,\ \ldots,\ \sigma^{q-1}\left(  w\right)
\right)  ,
\]
where $w$ is an element of $V$ and where $q$ is the smallest positive integer
satisfying $\sigma^{q}\left(  w\right)  =w$. Thus, in particular, $\gamma_{i}$
has this form (since $\gamma_{i}$ is a cycle of $\sigma$). In other words,
there exists an element $w$ of $V$ such that%
\[
\gamma_{i}=\left(  \sigma^{0}\left(  w\right)  ,\ \sigma^{1}\left(  w\right)
,\ \sigma^{2}\left(  w\right)  ,\ \ldots,\ \sigma^{q-1}\left(  w\right)
\right)  ,
\]
where $q$ is the smallest positive integer satisfying $\sigma^{q}\left(
w\right)  =w$. Let us consider this $w$ and this $q$.

We have $\gamma_{i}=\left(  \sigma^{0}\left(  w\right)  ,\ \sigma^{1}\left(
w\right)  ,\ \sigma^{2}\left(  w\right)  ,\ \ldots,\ \sigma^{q-1}\left(
w\right)  \right)  $. Thus, the entries of the cycle $\gamma_{i}$ are
$\sigma^{0}\left(  w\right)  ,\ \sigma^{1}\left(  w\right)  ,\ \sigma
^{2}\left(  w\right)  ,\ \ldots,\ \sigma^{q-1}\left(  w\right)  $.

The set $V_{i}$ was defined as the set of entries of the cycle $\gamma_{i}$.
Thus,%
\[
V_{i}=\left\{  \sigma^{0}\left(  w\right)  ,\ \sigma^{1}\left(  w\right)
,\ \sigma^{2}\left(  w\right)  ,\ \ldots,\ \sigma^{q-1}\left(  w\right)
\right\}
\]
(since the entries of the cycle $\gamma_{i}$ are $\sigma^{0}\left(  w\right)
,\ \sigma^{1}\left(  w\right)  ,\ \sigma^{2}\left(  w\right)  ,\ \ldots
,\ \sigma^{q-1}\left(  w\right)  $). Since $q$ is a positive integer, we have
$q-1\in\mathbb{N}$ and thus $0\in\left\{  0,1,\ldots,q-1\right\}  $. Hence,%
\[
\sigma^{0}\left(  w\right)  \in\left\{  \sigma^{0}\left(  w\right)
,\ \sigma^{1}\left(  w\right)  ,\ \sigma^{2}\left(  w\right)  ,\ \ldots
,\ \sigma^{q-1}\left(  w\right)  \right\}  =V_{i}.
\]
In other words, $w\in V_{i}$ (since $\underbrace{\sigma^{0}}%
_{=\operatorname*{id}}\left(  w\right)  =\operatorname*{id}\left(  w\right)
=w$).

\textbf{(a)} Let $g\in V_{i}$. We shall prove that $\sigma\left(  g\right)
\in V_{i}$.

Indeed, $g\in V_{i}=\left\{  \sigma^{0}\left(  w\right)  ,\ \sigma^{1}\left(
w\right)  ,\ \sigma^{2}\left(  w\right)  ,\ \ldots,\ \sigma^{q-1}\left(
w\right)  \right\}  $. In other words, $g=\sigma^{r}\left(  w\right)  $ for
some $r\in\left\{  0,1,\ldots,q-1\right\}  $. Consider this $r$. Applying the
map $\sigma$ to both sides of $g=\sigma^{r}\left(  w\right)  $, we obtain
$\sigma\left(  g\right)  =\sigma\left(  \sigma^{r}\left(  w\right)  \right)
=\underbrace{\left(  \sigma\circ\sigma^{r}\right)  }_{=\sigma^{r+1}}\left(
w\right)  =\sigma^{r+1}\left(  w\right)  $.

We are in one of the following two cases:

\textit{Case 1:} We have $r\neq q-1$.

\textit{Case 2:} We have $r=q-1$.

Let us first consider Case 1. In this case, we have $r\neq q-1$. Combining
this with $r\in\left\{  0,1,\ldots,q-1\right\}  $, we obtain $r\in\left\{
0,1,\ldots,q-1\right\}  \setminus\left\{  q-1\right\}  =\left\{
0,1,\ldots,q-2\right\}  $. Hence, $r+1\in\left\{  1,2,\ldots,q-1\right\}
\subseteq\left\{  0,1,\ldots,q-1\right\}  $. Thus,
\[
\sigma^{r+1}\left(  w\right)  \in\left\{  \sigma^{0}\left(  w\right)
,\ \sigma^{1}\left(  w\right)  ,\ \sigma^{2}\left(  w\right)  ,\ \ldots
,\ \sigma^{q-1}\left(  w\right)  \right\}  =V_{i}.
\]

Now, $\sigma\left(  g\right)  =\sigma^{r+1}\left(  w\right)  \in V_{i}$.
Hence, $\sigma\left(  g\right)  \in V_{i}$ is proved in Case 1.

Let us next consider Case 2. In this case, we have $r=q-1$. Hence, $r+1=q$.
Now, $\sigma\left(  g\right)  =\sigma^{r+1}\left(  w\right)  =\sigma
^{q}\left(  w\right)  $ (since $r+1=q$), so that $\sigma\left(  g\right)
=\sigma^{q}\left(  w\right)  =w\in V_{i}$. Hence, $\sigma\left(  g\right)  \in
V_{i}$ is proved in Case 2.

We have now proved $\sigma\left(  g\right)  \in V_{i}$ in both Cases 1 and 2.
Hence, $\sigma\left(  g\right)  \in V_{i}$ always holds.

Forget that we fixed $g$. We thus have shown that $\sigma\left(  g\right)  \in
V_{i}$ for each $g\in V_{i}$. In other words, $\sigma\left(  V_{i}\right)
\subseteq V_{i}$. This proves Claim 2 \textbf{(a)}.

\textbf{(b)} We shall first show that $\sigma^{j}\left(  w\right)  \in V_{i}$
for each $j\in\mathbb{N}$.

Indeed, we shall prove this by induction on $j$:

\textit{Base case:} Our claim $\sigma^{j}\left(  w\right)  \in V_{i}$ holds
for $j=0$, since $\sigma^{0}\left(  w\right)  \in V_{i}$.

\textit{Induction step:} Let $s\in\mathbb{N}$. Assume (as the induction
hypothesis) that $\sigma^{j}\left(  w\right)  \in V_{i}$ holds for $j=s$. We
must prove that $\sigma^{j}\left(  w\right)  \in V_{i}$ holds for $j=s+1$.

We have assumed that $\sigma^{j}\left(  w\right)  \in V_{i}$ holds for $j=s$.
In other words, $\sigma^{s}\left(  w\right)  \in V_{i}$. Now,
\[
\underbrace{\sigma^{s+1}}_{=\sigma\circ\sigma^{s}}\left(  w\right)  =\left(
\sigma\circ\sigma^{s}\right)  \left(  w\right)  =\sigma\left(
\underbrace{\sigma^{s}\left(  w\right)  }_{\in V_{i}}\right)  \in\sigma\left(
V_{i}\right)  \subseteq V_{i}%
\]
(by Claim 2 \textbf{(a)}). In other words, $\sigma^{j}\left(  w\right)  \in
V_{i}$ holds for $j=s+1$. This completes the induction step.

Thus, we have proved that $\sigma^{j}\left(  w\right)  \in V_{i}$ for each
$j\in\mathbb{N}$. In other words,%
\[
\left\{  \sigma^{j}\left(  w\right)  \ \mid\ j\in\mathbb{N}\right\}  \subseteq
V_{i}.
\]
Combining this with%
\begin{align*}
V_{i}  &  =\left\{  \sigma^{0}\left(  w\right)  ,\ \sigma^{1}\left(  w\right)
,\ \sigma^{2}\left(  w\right)  ,\ \ldots,\ \sigma^{q-1}\left(  w\right)
\right\} \\
&  =\left\{  \sigma^{j}\left(  w\right)  \ \mid\ j\in\left\{  0,1,\ldots
,q-1\right\}  \right\} \\
&  \subseteq\left\{  \sigma^{j}\left(  w\right)  \ \mid\ j\in\mathbb{N}%
\right\}  \ \ \ \ \ \ \ \ \ \ \left(  \text{since }\left\{  0,1,\ldots
,q-1\right\}  \subseteq\mathbb{N}\right)  ,
\end{align*}
we obtain $V_{i}=\left\{  \sigma^{j}\left(  w\right)  \ \mid\ j\in
\mathbb{N}\right\}  $. Since we know that $w\in V_{i}$, we can thus conclude
that there exists an element $v_{i}\in V_{i}$ such that $V_{i}=\left\{
\sigma^{j}\left(  v_{i}\right)  \ \mid\ j\in\mathbb{N}\right\}  $ (namely,
$v_{i}=w$). This proves Claim 2 \textbf{(b)}.] \medskip

\begin{statement}
\textit{Claim 3:} The entries of the partition $\operatorname*{type}\sigma$
are the numbers $\left\vert V_{1}\right\vert ,\left\vert V_{2}\right\vert
,\ldots,\left\vert V_{k}\right\vert $ in some order.
\end{statement}

[\textit{Proof of Claim 3:} For each $i\in\left[  k\right]  $, we have
\begin{equation}
\left(  \text{the length of the cycle }\gamma_{i}\right)  =\left\vert
V_{i}\right\vert \label{pf.lem.ptype-as-sum.c3.pf.1}%
\end{equation}
\footnote{\textit{Proof.} Let $i\in\left[  k\right]  $. Then, $V_{i}$ is the
set of entries of $\gamma_{i}$ (by the definition of $V_{i}$). Hence, the
elements of $V_{i}$ are the entries of $\gamma_{i}$.
\par
The cycle $\gamma_{i}$ of $\sigma$ is a rotation-equivalence class of tuples
of distinct elements (since any cycle of any permutation is such a class).
Hence, its entries are distinct.
\par
Now,%
\begin{align*}
\left\vert V_{i}\right\vert  &  =\left(  \text{the number of distinct elements
of }V_{i}\right) \\
&  =\left(  \text{the number of distinct entries of }\gamma_{i}\right) \\
&  \ \ \ \ \ \ \ \ \ \ \ \ \ \ \ \ \ \ \ \ \left(  \text{since the elements of
}V_{i}\text{ are the entries of }\gamma_{i}\right) \\
&  =\left(  \text{the number of entries of }\gamma_{i}\right) \\
&  \ \ \ \ \ \ \ \ \ \ \ \ \ \ \ \ \ \ \ \ \left(  \text{since the entries of
}\gamma_{i}\text{ are distinct}\right) \\
&  =\left(  \text{the length of the cycle }\gamma_{i}\right)  .
\end{align*}
Therefore, $\left(  \text{the length of the cycle }\gamma_{i}\right)
=\left\vert V_{i}\right\vert $.}. However, we defined $\operatorname*{type}%
\sigma$ to be the partition whose entries are the lengths of the cycles of
$\sigma$. Thus,%
\begin{align*}
&  \left(  \text{the entries of }\operatorname*{type}\sigma\right) \\
&  =\left(  \text{the lengths of the cycles of }\sigma\right) \\
&  \ \ \ \ \ \ \ \ \ \ \ \ \ \ \ \ \ \ \ \ \left(  \text{where we disregard
the order of the entries}\right) \\
&  =\left(  \text{the lengths of the cycles }\gamma_{1},\gamma_{2}%
,\ldots,\gamma_{k}\right) \\
&  \ \ \ \ \ \ \ \ \ \ \ \ \ \ \ \ \ \ \ \ \left(
\begin{array}
[c]{c}%
\text{since the cycles of }\sigma\text{ are }\gamma_{1},\gamma_{2}%
,\ldots,\gamma_{k}\\
\text{(listed without repetition)}%
\end{array}
\right) \\
&  =\left(  \text{the numbers }\left\vert V_{1}\right\vert ,\left\vert
V_{2}\right\vert ,\ldots,\left\vert V_{k}\right\vert \right)
\ \ \ \ \ \ \ \ \ \ \left(  \text{by (\ref{pf.lem.ptype-as-sum.c3.pf.1}%
)}\right)  .
\end{align*}
In other words, the entries of $\operatorname*{type}\sigma$ are the numbers
$\left\vert V_{1}\right\vert ,\left\vert V_{2}\right\vert ,\ldots,\left\vert
V_{k}\right\vert $ in some order. This proves Claim 3.] \medskip

Now, we introduce two crucial pieces of notation:

\begin{itemize}
\item For each $v\in V$, we let $\operatorname*{ind}v$ denote the unique
$i\in\left[  k\right]  $ such that $v\in V_{i}$. (This is well-defined, since
Claim 1 shows that there indeed exists a unique $i\in\left[  k\right]  $ such
that $v\in V_{i}$.)

\item For any $k$-tuple $\left(  a_{1},a_{2},\ldots,a_{k}\right)
\in\mathbb{P}^{k}$, we define a map%
\[
g\left[  a_{1},a_{2},\ldots,a_{k}\right]  :V\rightarrow\mathbb{P}%
\]
by setting
\[
\left(  \left(  g\left[  a_{1},a_{2},\ldots,a_{k}\right]  \right)  \left(
v\right)  :=a_{\operatorname*{ind}v}\ \ \ \ \ \ \ \ \ \ \text{for each }v\in
V\right)  .
\]

\end{itemize}

We observe the following:

\begin{statement}
\textit{Claim 4:} Let $j\in\left[  k\right]  $, and let $v\in V_{j}$. Then,
$\operatorname*{ind}v=j$.
\end{statement}

[\textit{Proof of Claim 4:} Recall that $\operatorname*{ind}v$ is defined as
the unique $i\in\left[  k\right]  $ such that $v\in V_{i}$. Hence, if some
$i\in\left[  k\right]  $ satisfies $v\in V_{i}$, then $\operatorname*{ind}%
v=i$. Applying this to $i=j$, we obtain $\operatorname*{ind}v=j$ (since $v\in
V_{j}$). This proves Claim 4.]

\begin{statement}
\textit{Claim 5:} For any $k$-tuple $\left(  a_{1},a_{2},\ldots,a_{k}\right)
\in\mathbb{P}^{k}$, we have $g\left[  a_{1},a_{2},\ldots,a_{k}\right]
\in\left\{  f:V\rightarrow\mathbb{P}\ \mid\ f\circ\sigma=f\right\}  $.
\end{statement}

[\textit{Proof of Claim 5:} Let $\left(  a_{1},a_{2},\ldots,a_{k}\right)
\in\mathbb{P}^{k}$ be a $k$-tuple. Then, $g\left[  a_{1},a_{2},\ldots
,a_{k}\right]  $ is a map from $V$ to $\mathbb{P}$. We shall now show that
$\left(  g\left[  a_{1},a_{2},\ldots,a_{k}\right]  \right)  \circ
\sigma=g\left[  a_{1},a_{2},\ldots,a_{k}\right]  $.

Indeed, let $v\in V$ be arbitrary. Recall that $\operatorname*{ind}v$ is
defined as the unique $i\in\left[  k\right]  $ such that $v\in V_{i}$. Hence,
$\operatorname*{ind}v\in\left[  k\right]  $ and $v\in V_{\operatorname*{ind}%
v}$. Thus, Claim 2 \textbf{(a)} (applied to $i=\operatorname*{ind}v$) yields
$\sigma\left(  V_{\operatorname*{ind}v}\right)  \subseteq
V_{\operatorname*{ind}v}$. From $v\in V_{\operatorname*{ind}v}$, we obtain
$\sigma\left(  v\right)  \in\sigma\left(  V_{\operatorname*{ind}v}\right)
\subseteq V_{\operatorname*{ind}v}$.

Therefore, Claim 4 (applied to $\operatorname*{ind}v$ and $\sigma\left(
v\right)  $ instead of $j$ and $v$) yields $\operatorname*{ind}\left(
\sigma\left(  v\right)  \right)  =\operatorname*{ind}v$ (since $\sigma\left(
v\right)  \in V_{\operatorname*{ind}v}$).

The definition of $g\left[  a_{1},a_{2},\ldots,a_{k}\right]  $ yields%
\begin{align*}
\left(  g\left[  a_{1},a_{2},\ldots,a_{k}\right]  \right)  \left(  v\right)
&  =a_{\operatorname*{ind}v}\ \ \ \ \ \ \ \ \ \ \text{and}\\
\left(  g\left[  a_{1},a_{2},\ldots,a_{k}\right]  \right)  \left(
\sigma\left(  v\right)  \right)   &  =a_{\operatorname*{ind}\left(
\sigma\left(  v\right)  \right)  }=a_{\operatorname*{ind}v}%
\ \ \ \ \ \ \ \ \ \ \left(  \text{since }\operatorname*{ind}\left(
\sigma\left(  v\right)  \right)  =\operatorname*{ind}v\right)  .
\end{align*}
Comparing these two equalities, we obtain%
\[
\left(  g\left[  a_{1},a_{2},\ldots,a_{k}\right]  \right)  \left(  v\right)
=\left(  g\left[  a_{1},a_{2},\ldots,a_{k}\right]  \right)  \left(
\sigma\left(  v\right)  \right)  =\left(  \left(  g\left[  a_{1},a_{2}%
,\ldots,a_{k}\right]  \right)  \circ\sigma\right)  \left(  v\right)  .
\]

Forget that we fixed $v$. We thus have proved that \newline$\left(  g\left[
a_{1},a_{2},\ldots,a_{k}\right]  \right)  \left(  v\right)  =\left(  \left(
g\left[  a_{1},a_{2},\ldots,a_{k}\right]  \right)  \circ\sigma\right)  \left(
v\right)  $ for each $v\in V$. In other words, $g\left[  a_{1},a_{2}%
,\ldots,a_{k}\right]  =\left(  g\left[  a_{1},a_{2},\ldots,a_{k}\right]
\right)  \circ\sigma$. In other words, $\left(  g\left[  a_{1},a_{2}%
,\ldots,a_{k}\right]  \right)  \circ\sigma=g\left[  a_{1},a_{2},\ldots
,a_{k}\right]  $.

Thus, $g\left[  a_{1},a_{2},\ldots,a_{k}\right]  $ is a map $f:V\rightarrow
\mathbb{P}$ satisfying $f\circ\sigma=f$. In other words, $g\left[  a_{1}%
,a_{2},\ldots,a_{k}\right]  \in\left\{  f:V\rightarrow\mathbb{P}\ \mid
\ f\circ\sigma=f\right\}  $. This proves Claim 5.] \medskip

Claim 5 allows us to define a map%
\begin{align*}
\Gamma:\mathbb{P}^{k}  &  \rightarrow\left\{  f:V\rightarrow\mathbb{P}%
\ \mid\ f\circ\sigma=f\right\}  ,\\
\left(  a_{1},a_{2},\ldots,a_{k}\right)   &  \mapsto g\left[  a_{1}%
,a_{2},\ldots,a_{k}\right]  .
\end{align*}
Consider this map $\Gamma$. Next, we claim:

\begin{statement}
\textit{Claim 6:} The map $\Gamma$ is injective.
\end{statement}

[\textit{Proof of Claim 6:} Let $\left(  a_{1},a_{2},\ldots,a_{k}\right)  $
and $\left(  b_{1},b_{2},\ldots,b_{k}\right)  $ be two elements of
$\mathbb{P}^{k}$ satisfying $\Gamma\left(  a_{1},a_{2},\ldots,a_{k}\right)
=\Gamma\left(  b_{1},b_{2},\ldots,b_{k}\right)  $. We shall show that $\left(
a_{1},a_{2},\ldots,a_{k}\right)  =\left(  b_{1},b_{2},\ldots,b_{k}\right)  $.

Indeed, let us fix $i\in\left[  k\right]  $. Claim 2 \textbf{(b)} shows that
there exists an element $v_{i}\in V_{i}$ such that%
\[
V_{i}=\left\{  \sigma^{j}\left(  v_{i}\right)  \ \mid\ j\in\mathbb{N}\right\}
.
\]
Consider this $v_{i}$. Claim 4 (applied to $j=i$ and $v=v_{i}$) yields
$\operatorname*{ind}\left(  v_{i}\right)  =i$ (since $v_{i}\in V_{i}$). The
definition of $g\left[  a_{1},a_{2},\ldots,a_{k}\right]  $ yields%
\[
\left(  g\left[  a_{1},a_{2},\ldots,a_{k}\right]  \right)  \left(
v_{i}\right)  =a_{\operatorname*{ind}\left(  v_{i}\right)  }=a_{i}%
\ \ \ \ \ \ \ \ \ \ \left(  \text{since }\operatorname*{ind}\left(
v_{i}\right)  =i\right)  .
\]
The definition of $\Gamma$ yields $\Gamma\left(  a_{1},a_{2},\ldots
,a_{k}\right)  =g\left[  a_{1},a_{2},\ldots,a_{k}\right]  $. Thus,%
\begin{equation}
\underbrace{\left(  \Gamma\left(  a_{1},a_{2},\ldots,a_{k}\right)  \right)
}_{=g\left[  a_{1},a_{2},\ldots,a_{k}\right]  }\left(  v_{i}\right)  =\left(
g\left[  a_{1},a_{2},\ldots,a_{k}\right]  \right)  \left(  v_{i}\right)
=a_{i}. \label{pf.lem.ptype-as-sum.c6.pf.1}%
\end{equation}
The same argument (applied to $\left(  b_{1},b_{2},\ldots,b_{k}\right)  $
instead of $\left(  a_{1},a_{2},\ldots,a_{k}\right)  $) yields%
\begin{equation}
\left(  \Gamma\left(  b_{1},b_{2},\ldots,b_{k}\right)  \right)  \left(
v_{i}\right)  =b_{i}. \label{pf.lem.ptype-as-sum.c6.pf.2}%
\end{equation}

However, (\ref{pf.lem.ptype-as-sum.c6.pf.1}) yields%
\[
a_{i}=\underbrace{\left(  \Gamma\left(  a_{1},a_{2},\ldots,a_{k}\right)
\right)  }_{=\Gamma\left(  b_{1},b_{2},\ldots,b_{k}\right)  }\left(
v_{i}\right)  =\left(  \Gamma\left(  b_{1},b_{2},\ldots,b_{k}\right)  \right)
\left(  v_{i}\right)  =b_{i}\ \ \ \ \ \ \ \ \ \ \left(  \text{by
(\ref{pf.lem.ptype-as-sum.c6.pf.2})}\right)  .
\]

Forget that we fixed $i$. We thus have proved that $a_{i}=b_{i}$ for each
$i\in\left[  k\right]  $. In other words, $\left(  a_{1},a_{2},\ldots
,a_{k}\right)  =\left(  b_{1},b_{2},\ldots,b_{k}\right)  $.

Forget that we fixed $\left(  a_{1},a_{2},\ldots,a_{k}\right)  $ and $\left(
b_{1},b_{2},\ldots,b_{k}\right)  $. We thus have shown that if $\left(
a_{1},a_{2},\ldots,a_{k}\right)  $ and $\left(  b_{1},b_{2},\ldots
,b_{k}\right)  $ are two elements of $\mathbb{P}^{k}$ satisfying
$\Gamma\left(  a_{1},a_{2},\ldots,a_{k}\right)  =\Gamma\left(  b_{1}%
,b_{2},\ldots,b_{k}\right)  $, then $\left(  a_{1},a_{2},\ldots,a_{k}\right)
=\left(  b_{1},b_{2},\ldots,b_{k}\right)  $. In other words, the map $\Gamma$
is injective. This proves Claim 6.]

\begin{statement}
\textit{Claim 7:} The map $\Gamma$ is surjective.
\end{statement}

[\textit{Proof of Claim 7:} Let $h\in\left\{  f:V\rightarrow\mathbb{P}%
\ \mid\ f\circ\sigma=f\right\}  $. We shall construct a $k$-tuple $\left(
a_{1},a_{2},\ldots,a_{k}\right)  \in\mathbb{P}^{k}$ such that $h=\Gamma\left(
a_{1},a_{2},\ldots,a_{k}\right)  $.

Indeed, we have $h\in\left\{  f:V\rightarrow\mathbb{P}\ \mid\ f\circ
\sigma=f\right\}  $. In other words, $h$ is a map $f:V\rightarrow\mathbb{P}$
satisfying $f\circ\sigma=f$. In other words, $h$ is a map from $V$ to
$\mathbb{P}$ and satisfies $h\circ\sigma=h$. Hence,
\begin{equation}
h\circ\sigma^{j}=h \label{pf.lem.ptype-as-sum.c7.pf.1}%
\end{equation}
for each $j\in\mathbb{N}$\ \ \ \ \footnote{\textit{Proof of
(\ref{pf.lem.ptype-as-sum.c7.pf.1}):} We prove
(\ref{pf.lem.ptype-as-sum.c7.pf.1}) by induction on $j$:
\par
\textit{Base case:} We have $h\circ\underbrace{\sigma^{0}}%
_{=\operatorname*{id}}=h\circ\operatorname*{id}=h$. In other words,
(\ref{pf.lem.ptype-as-sum.c7.pf.1}) holds for $j=0$.
\par
\textit{Induction step:} Let $i\in\mathbb{N}$. Assume (as the induction
hypothesis) that (\ref{pf.lem.ptype-as-sum.c7.pf.1}) holds for $j=i$. We must
prove that (\ref{pf.lem.ptype-as-sum.c7.pf.1}) holds for $j=i+1$.
\par
We have assumed that (\ref{pf.lem.ptype-as-sum.c7.pf.1}) holds for $j=i$. In
other words, $h\circ\sigma^{i}=h$. Now,%
\[
h\circ\underbrace{\sigma^{i+1}}_{=\sigma\circ\sigma^{i}}=\underbrace{h\circ
\sigma}_{=h}\circ\sigma^{i}=h\circ\sigma^{i}=h.
\]
In other words, (\ref{pf.lem.ptype-as-sum.c7.pf.1}) holds for $j=i+1$. This
completes the induction step. Thus, we have proved
(\ref{pf.lem.ptype-as-sum.c7.pf.1}) by induction.}.

For each $i\in\left[  k\right]  $, there exists an element $v_{i}\in V_{i}$
such that%
\begin{equation}
V_{i}=\left\{  \sigma^{j}\left(  v_{i}\right)  \ \mid\ j\in\mathbb{N}\right\}
\label{pf.lem.ptype-as-sum.c7.pf.2}%
\end{equation}
(by Claim 2 \textbf{(b)}). Consider such a $v_{i}$ for each $i\in\left[
k\right]  $. For each $i\in\left[  k\right]  $, we set
\[
a_{i}:=h\left(  v_{i}\right)  .
\]
Thus, we have defined $k$ elements $a_{1},a_{2},\ldots,a_{k}\in\mathbb{P}$.
Hence, $\left(  a_{1},a_{2},\ldots,a_{k}\right)  \in\mathbb{P}^{k}$.

Now, we shall show that $h=\Gamma\left(  a_{1},a_{2},\ldots,a_{k}\right)  $.

Indeed, let $v\in V$ be arbitrary. Recall that $\operatorname*{ind}v$ is
defined as the unique $i\in\left[  k\right]  $ such that $v\in V_{i}$. Hence,
$\operatorname*{ind}v\in\left[  k\right]  $ and $v\in V_{\operatorname*{ind}%
v}$. Therefore,%
\[
v\in V_{\operatorname*{ind}v}=\left\{  \sigma^{j}\left(
v_{\operatorname*{ind}v}\right)  \ \mid\ j\in\mathbb{N}\right\}
\]
(by (\ref{pf.lem.ptype-as-sum.c7.pf.2}), applied to $i=\operatorname*{ind}v$).
In other words, $v=\sigma^{j}\left(  v_{\operatorname*{ind}v}\right)  $ for
some $j\in\mathbb{N}$. Consider this $j$. Now,%
\[
h\left(  \underbrace{v}_{=\sigma^{j}\left(  v_{\operatorname*{ind}v}\right)
}\right)  =h\left(  \sigma^{j}\left(  v_{\operatorname*{ind}v}\right)
\right)  =\underbrace{\left(  h\circ\sigma^{j}\right)  }%
_{\substack{=h\\\text{(by (\ref{pf.lem.ptype-as-sum.c7.pf.1}))}}}\left(
v_{\operatorname*{ind}v}\right)  =h\left(  v_{\operatorname*{ind}v}\right)  .
\]
Comparing this with%
\begin{align*}
\underbrace{\left(  \Gamma\left(  a_{1},a_{2},\ldots,a_{k}\right)  \right)
}_{\substack{=g\left[  a_{1},a_{2},\ldots,a_{k}\right]  \\\text{(by the
definition of }\Gamma\text{)}}}\left(  v\right)   &  =\left(  g\left[
a_{1},a_{2},\ldots,a_{k}\right]  \right)  \left(  v\right) \\
&  =a_{\operatorname*{ind}v}\ \ \ \ \ \ \ \ \ \ \left(  \text{by the
definition of }g\left[  a_{1},a_{2},\ldots,a_{k}\right]  \right) \\
&  =h\left(  v_{\operatorname*{ind}v}\right)  \ \ \ \ \ \ \ \ \ \ \left(
\text{by the definition of }a_{\operatorname*{ind}v}\right)  ,
\end{align*}
we obtain%
\[
h\left(  v\right)  =\left(  \Gamma\left(  a_{1},a_{2},\ldots,a_{k}\right)
\right)  \left(  v\right)  .
\]

Forget that we fixed $v$. We thus have proved that $h\left(  v\right)
=\left(  \Gamma\left(  a_{1},a_{2},\ldots,a_{k}\right)  \right)  \left(
v\right)  $ for each $v\in V$. In other words, $h=\Gamma\left(  a_{1}%
,a_{2},\ldots,a_{k}\right)  $. Hence, $h\in\Gamma\left(  \mathbb{P}%
^{k}\right)  $ (since $\left(  a_{1},a_{2},\ldots,a_{k}\right)  \in
\mathbb{P}^{k}$).

Forget that we fixed $h$. We thus have proved that $h\in\Gamma\left(
\mathbb{P}^{k}\right)  $ for each $h\in\left\{  f:V\rightarrow\mathbb{P}%
\ \mid\ f\circ\sigma=f\right\}  $. In other words, $\left\{  f:V\rightarrow
\mathbb{P}\ \mid\ f\circ\sigma=f\right\}  \subseteq\Gamma\left(
\mathbb{P}^{k}\right)  $. In other words, the map $\Gamma$ is surjective. This
proves Claim 7.] \medskip

\begin{statement}
\textit{Claim 8:} Let $\left(  a_{1},a_{2},\ldots,a_{k}\right)  \in
\mathbb{P}^{k}$. Then,%
\[
\prod_{v\in V}x_{\left(  \Gamma\left(  a_{1},a_{2},\ldots,a_{k}\right)
\right)  \left(  v\right)  }=\prod_{i=1}^{k}x_{a_{i}}^{\left\vert
V_{i}\right\vert }.
\]

\end{statement}

[\textit{Proof of Claim 8:} For each $v\in V$, we have%
\begin{align*}
\underbrace{\left(  \Gamma\left(  a_{1},a_{2},\ldots,a_{k}\right)  \right)
}_{\substack{=g\left[  a_{1},a_{2},\ldots,a_{k}\right]  \\\text{(by the
definition of }\Gamma\text{)}}}\left(  v\right)   &  =\left(  g\left[
a_{1},a_{2},\ldots,a_{k}\right]  \right)  \left(  v\right) \\
&  =a_{\operatorname*{ind}v}\ \ \ \ \ \ \ \ \ \ \left(  \text{by the
definition of }g\left[  a_{1},a_{2},\ldots,a_{k}\right]  \right)  .
\end{align*}
Thus, for each $v\in V$, we have%
\[
x_{\left(  \Gamma\left(  a_{1},a_{2},\ldots,a_{k}\right)  \right)  \left(
v\right)  }=x_{a_{\operatorname*{ind}v}}.
\]
Multiplying these equalities for all $v\in V$, we obtain%
\begin{align}
\prod_{v\in V}x_{\left(  \Gamma\left(  a_{1},a_{2},\ldots,a_{k}\right)
\right)  \left(  v\right)  }  &  =\prod_{v\in V}x_{a_{\operatorname*{ind}v}%
}=\prod_{j=1}^{k}\ \ \prod_{\substack{v\in V;\\\operatorname*{ind}%
v=j}}\underbrace{x_{a_{\operatorname*{ind}v}}}_{\substack{=x_{a_{j}%
}\\\text{(since }\operatorname*{ind}v=j\text{)}}}\nonumber\\
&  \ \ \ \ \ \ \ \ \ \ \ \ \ \ \ \ \ \ \ \ \left(
\begin{array}
[c]{c}%
\text{here, we have split the product}\\
\text{according to the value of }\operatorname*{ind}v
\end{array}
\right) \nonumber\\
&  =\prod_{j=1}^{k}\ \ \prod_{\substack{v\in V;\\\operatorname*{ind}%
v=j}}x_{a_{j}}. \label{pf.lem.ptype-as-sum.c8.pf.1}%
\end{align}

Now, fix $j\in\left[  k\right]  $. Then, $\left\{  v\in V\ \mid
\ \operatorname*{ind}v=j\right\}  =V_{j}$\ \ \ \ \footnote{\textit{Proof.} If
$v\in V_{j}$, then $v\in V$ (since $V_{j}$ is a subset of $V$) and
$\operatorname*{ind}v=j$ (by Claim 4). Thus, every element of $V_{j}$ is an
element $v\in V$ satisfying $\operatorname*{ind}v=j$. In other words,
$V_{j}\subseteq\left\{  v\in V\ \mid\ \operatorname*{ind}v=j\right\}  $.
\par
Let $v\in V$ satisfy $\operatorname*{ind}v=j$. Recall that
$\operatorname*{ind}v$ is defined as the unique $i\in\left[  k\right]  $ such
that $v\in V_{i}$. Hence, $\operatorname*{ind}v\in\left[  k\right]  $ and
$v\in V_{\operatorname*{ind}v}$. Thus, $v\in V_{\operatorname*{ind}v}=V_{j}$
(since $\operatorname*{ind}v=j$). In other words, $v$ belongs to $V_{j}$.
\par
Forget that we fixed $v$. We thus have shown that every $v\in V$ satisfying
$\operatorname*{ind}v=j$ must belong to $V_{j}$. In other words, $\left\{
v\in V\ \mid\ \operatorname*{ind}v=j\right\}  \subseteq V_{j}$. Combining this
with $V_{j}\subseteq\left\{  v\in V\ \mid\ \operatorname*{ind}v=j\right\}  $,
we obtain $\left\{  v\in V\ \mid\ \operatorname*{ind}v=j\right\}  =V_{j}$.}.
Hence, the product sign \textquotedblleft$\prod_{\substack{v\in
V;\\\operatorname*{ind}v=j}}$\textquotedblright\ can be rewritten as
\textquotedblleft$\prod_{v\in V_{j}}$\textquotedblright. Thus, in particular,%
\begin{equation}
\prod_{\substack{v\in V;\\\operatorname*{ind}v=j}}x_{a_{j}}=\prod_{v\in V_{j}%
}x_{a_{j}}=x_{a_{j}}^{\left\vert V_{j}\right\vert }.
\label{pf.lem.ptype-as-sum.c8.pf.2}%
\end{equation}

Now, forget that we fixed $j$. We thus have proved
(\ref{pf.lem.ptype-as-sum.c8.pf.2}) for each $j\in\left[  k\right]  $. Now,
(\ref{pf.lem.ptype-as-sum.c8.pf.1}) becomes%
\[
\prod_{v\in V}x_{\left(  \Gamma\left(  a_{1},a_{2},\ldots,a_{k}\right)
\right)  \left(  v\right)  }=\prod_{j=1}^{k}\ \ \underbrace{\prod
_{\substack{v\in V;\\\operatorname*{ind}v=j}}x_{a_{j}}}_{\substack{=x_{a_{j}%
}^{\left\vert V_{j}\right\vert }\\\text{(by (\ref{pf.lem.ptype-as-sum.c8.pf.2}%
))}}}=\prod_{j=1}^{k}x_{a_{j}}^{\left\vert V_{j}\right\vert }=\prod_{i=1}%
^{k}x_{a_{i}}^{\left\vert V_{i}\right\vert }%
\]
(here, we have renamed the index $j$ as $i$ in the product). This proves Claim
8.] \medskip

Now, our proof is almost complete. The map $\Gamma:\mathbb{P}^{k}%
\rightarrow\left\{  f:V\rightarrow\mathbb{P}\ \mid\ f\circ\sigma=f\right\}  $
is injective (by Claim 6) and surjective (by Claim 7); thus, it is bijective.
In other words, $\Gamma$ is a bijection. Hence, we can substitute
$\Gamma\left(  a_{1},a_{2},\ldots,a_{k}\right)  $ for $f$ in the sum
$\sum_{\substack{f:V\rightarrow\mathbb{P}\text{;}\\f\circ\sigma=f}%
}\ \ \prod_{v\in V}x_{f\left(  v\right)  }$. We thus obtain%
\begin{align}
\sum_{\substack{f:V\rightarrow\mathbb{P}\text{;}\\f\circ\sigma=f}%
}\ \ \prod_{v\in V}x_{f\left(  v\right)  }  &  =\sum_{\left(  a_{1}%
,a_{2},\ldots,a_{k}\right)  \in\mathbb{P}^{k}}\ \ \underbrace{\prod_{v\in
V}x_{\left(  \Gamma\left(  a_{1},a_{2},\ldots,a_{k}\right)  \right)  \left(
v\right)  }}_{\substack{=\prod_{i=1}^{k}x_{a_{i}}^{\left\vert V_{i}\right\vert
}\\\text{(by Claim 8)}}}\nonumber\\
&  =\sum_{\left(  a_{1},a_{2},\ldots,a_{k}\right)  \in\mathbb{P}^{k}}%
\ \ \prod_{i=1}^{k}x_{a_{i}}^{\left\vert V_{i}\right\vert }.
\label{pf.lem.ptype-as-sum.lhs=}%
\end{align}

On the other hand, Claim 3 shows that the entries of the partition
$\operatorname*{type}\sigma$ are the numbers $\left\vert V_{1}\right\vert
,\left\vert V_{2}\right\vert ,\ldots,\left\vert V_{k}\right\vert $ in some
order. In other words, there exists a permutation $\tau$ of $\left[  k\right]
$ such that $\operatorname*{type}\sigma=\left(  \left\vert V_{\tau\left(
1\right)  }\right\vert ,\left\vert V_{\tau\left(  2\right)  }\right\vert
,\ldots,\left\vert V_{\tau\left(  k\right)  }\right\vert \right)  $. Consider
this $\tau$. The map $\tau:\left[  k\right]  \rightarrow\left[  k\right]  $ is
a bijection (since it is a permutation of $\left[  k\right]  $). From
$\operatorname*{type}\sigma=\left(  \left\vert V_{\tau\left(  1\right)
}\right\vert ,\left\vert V_{\tau\left(  2\right)  }\right\vert ,\ldots
,\left\vert V_{\tau\left(  k\right)  }\right\vert \right)  $, we obtain%
\begin{align*}
p_{\operatorname*{type}\sigma}  &  =p_{\left(  \left\vert V_{\tau\left(
1\right)  }\right\vert ,\left\vert V_{\tau\left(  2\right)  }\right\vert
,\ldots,\left\vert V_{\tau\left(  k\right)  }\right\vert \right)  }\\
&  =p_{\left\vert V_{\tau\left(  1\right)  }\right\vert }p_{\left\vert
V_{\tau\left(  2\right)  }\right\vert }\cdots p_{\left\vert V_{\tau\left(
k\right)  }\right\vert }\ \ \ \ \ \ \ \ \ \ \left(  \text{by the definition of
}p_{\left(  \left\vert V_{\tau\left(  1\right)  }\right\vert ,\left\vert
V_{\tau\left(  2\right)  }\right\vert ,\ldots,\left\vert V_{\tau\left(
k\right)  }\right\vert \right)  }\right) \\
&  =\prod_{i\in\left[  k\right]  }p_{\left\vert V_{\tau\left(  i\right)
}\right\vert }\\
&  =\underbrace{\prod_{i\in\left[  k\right]  }}_{=\prod_{i=1}^{k}%
}\underbrace{p_{\left\vert V_{i}\right\vert }}_{\substack{=x_{1}^{\left\vert
V_{i}\right\vert }+x_{2}^{\left\vert V_{i}\right\vert }+x_{3}^{\left\vert
V_{i}\right\vert }+\cdots\\\text{(by the definition of }p_{\left\vert
V_{i}\right\vert }\text{)}}}\ \ \ \ \ \ \ \ \ \ \left(
\begin{array}
[c]{c}%
\text{here, we have substituted }i\text{ for }\tau\left(  i\right)  \text{ in
the}\\
\text{product, since }\tau:\left[  k\right]  \rightarrow\left[  k\right]
\text{ is a bijection}%
\end{array}
\right) \\
&  =\prod_{i=1}^{k}\underbrace{\left(  x_{1}^{\left\vert V_{i}\right\vert
}+x_{2}^{\left\vert V_{i}\right\vert }+x_{3}^{\left\vert V_{i}\right\vert
}+\cdots\right)  }_{=\sum_{a\in\mathbb{P}}x_{a}^{\left\vert V_{i}\right\vert
}}=\prod_{i=1}^{k}\ \ \sum_{a\in\mathbb{P}}x_{a}^{\left\vert V_{i}\right\vert
}\\
&  =\sum_{\left(  a_{1},a_{2},\ldots,a_{k}\right)  \in\mathbb{P}^{k}}%
\ \ \prod_{i=1}^{k}x_{a_{i}}^{\left\vert V_{i}\right\vert }%
\ \ \ \ \ \ \ \ \ \ \left(  \text{by the product rule}\right)  .
\end{align*}
Comparing this with (\ref{pf.lem.ptype-as-sum.lhs=}), we obtain%
\[
\sum_{\substack{f:V\rightarrow\mathbb{P}\text{;}\\f\circ\sigma=f}%
}\ \ \prod_{v\in V}x_{f\left(  v\right)  }=p_{\operatorname*{type}\sigma}.
\]
Thus, Lemma \ref{lem.ptype-as-sum} is proven.
\end{proof}
\end{verlong}

\subsection{A final alternating sum}

We need one more alternating-sum identity:

\begin{proposition}
\label{prop.linear-in-AsigA}Let $D=\left(  V,A\right)  $ be a digraph. Let
$\sigma\in\mathfrak{S}_{V}$ be a permutation of $V$. Then,%
\[
\sum_{\substack{F\subseteq\mathbf{A}_{\sigma}\cap A\\\text{is linear}}}\left(
-1\right)  ^{\left\vert F\right\vert }=%
\begin{cases}
\left(  -1\right)  ^{\varphi\left(  \sigma\right)  }, & \text{if }\sigma
\in\mathfrak{S}_{V}\left(  D,\overline{D}\right)  ;\\
0, & \text{else,}%
\end{cases}
\]
where we set%
\[
\varphi\left(  \sigma\right)  :=\sum_{\substack{\gamma\in\operatorname*{Cycs}%
\sigma;\\\gamma\text{ is a }D\text{-cycle}}}\left(  \ell\left(  \gamma\right)
-1\right)  .
\]

\end{proposition}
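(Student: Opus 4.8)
The plan is to decompose the sum as a product over the cycles of $\sigma$, evaluate each factor by a one-line inclusion--exclusion, and then read off the two cases.

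First I would exploit the structure of $\mathbf{A}_{\sigma}$. By Definition \ref{def.Asigma}, $\mathbf{A}_{\sigma}=\bigsqcup_{\gamma\in\operatorname{Cycs}\sigma}\operatorname{CArcs}\gamma$, and the cyclic arc sets $\operatorname{CArcs}\gamma$ live on the pairwise disjoint vertex sets $V_{\gamma}$ of the cycles, which partition $V$. Intersecting with $A$ preserves this, so $\mathbf{A}_{\sigma}\cap A=\bigsqcup_{\gamma}\left(\operatorname{CArcs}\gamma\cap A\right)$ with each $\operatorname{CArcs}\gamma\cap A\subseteq V_{\gamma}\times V_{\gamma}$. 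Any $F\subseteq\mathbf{A}_{\sigma}\cap A$ then splits uniquely as $F=\bigsqcup_{\gamma}F_{\gamma}$ with $F_{\gamma}=F\cap\left(V_{\gamma}\times V_{\gamma}\right)$, and Proposition \ref{prop.linear.Vi} says that $F$ is linear if and only if every $F_{\gamma}$ is. Since $\left(-1\right)^{\left\vert F\right\vert}=\prod_{\gamma}\left(-1\right)^{\left\vert F_{\gamma}\right\vert}$, the product rule turns the sum into
\[
\sum_{\substack{F\subseteq\mathbf{A}_{\sigma}\cap A\\\text{is linear}}}\left(-1\right)^{\left\vert F\right\vert}=\prod_{\gamma\in\operatorname{Cycs}\sigma}S\left(\gamma\right),\qquad S\left(\gamma\right):=\sum_{\substack{F\subseteq\operatorname{CArcs}\gamma\cap A\\\text{is linear}}}\left(-1\right)^{\left\vert F\right\vert}.
\]

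Next I would compute a single factor $S\left(\gamma\right)$. The crucial elementary fact is that, writing $k=\ell\left(\gamma\right)$, the linear subsets of $\operatorname{CArcs}\gamma$ are exactly its proper subsets. Indeed, any $V_{\gamma}$-listing $u$ has $\left\vert\operatorname{Arcs}u\right\vert=k-1<k=\left\vert\operatorname{CArcs}\gamma\right\vert$, so Proposition \ref{prop.linear.listing} \textbf{(c)} forbids the full cyclic set from being linear; conversely, deleting any one cyclic arc $\left(v_{j},v_{j+1}\right)$ yields $\operatorname{Arcs}u$ for the listing $u$ obtained by cutting the cycle there, so each proper subset is contained in such an $\operatorname{Arcs}u$ and is linear by Proposition \ref{prop.linear.listing} \textbf{(c)}. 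Now there are two cases. If $\gamma$ is a $D$-cycle, then $\operatorname{CArcs}\gamma\cap A=\operatorname{CArcs}\gamma$, so $S\left(\gamma\right)$ sums $\left(-1\right)^{\left\vert F\right\vert}$ over all proper subsets $F$ of $\operatorname{CArcs}\gamma$; by Lemma \ref{lem.cancel} this is $0-\left(-1\right)^{k}=\left(-1\right)^{\ell\left(\gamma\right)-1}$. If $\gamma$ is not a $D$-cycle, then $B:=\operatorname{CArcs}\gamma\cap A$ is a proper subset of $\operatorname{CArcs}\gamma$, hence linear, and so are all of its subsets (Proposition \ref{prop.linear.subset}); thus $S\left(\gamma\right)=\sum_{F\subseteq B}\left(-1\right)^{\left\vert F\right\vert}=\left[B=\varnothing\right]$ by Lemma \ref{lem.cancel}, and $B=\varnothing$ is precisely the condition that $\gamma$ be a $\overline{D}$-cycle.

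Finally I would assemble the product. A cycle cannot be simultaneously a $D$-cycle and a $\overline{D}$-cycle, since its cyclic arc set is nonempty, so the three possibilities for each $\gamma$ give $S\left(\gamma\right)=\left(-1\right)^{\ell\left(\gamma\right)-1}$ (for $D$-cycles), $S\left(\gamma\right)=1$ (for $\overline{D}$-cycles that are not $D$-cycles), and $S\left(\gamma\right)=0$ (for cycles that are neither). Hence the product vanishes exactly when some cycle is neither a $D$- nor a $\overline{D}$-cycle, i.e.\ when $\sigma\notin\mathfrak{S}_{V}\left(D,\overline{D}\right)$; and when $\sigma\in\mathfrak{S}_{V}\left(D,\overline{D}\right)$ only the $D$-cycle factors differ from $1$, giving $\prod_{\gamma\text{ a }D\text{-cycle}}\left(-1\right)^{\ell\left(\gamma\right)-1}=\left(-1\right)^{\varphi\left(\sigma\right)}$, as claimed. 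The only genuinely delicate point is the characterization of the linear subsets of a directed cycle, together with the bookkeeping that makes the product factorization rigorous via Proposition \ref{prop.linear.Vi}; everything else is a direct application of Lemma \ref{lem.cancel}.
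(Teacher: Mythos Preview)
Your proof is correct and follows essentially the same approach as the paper: both factor the sum over the cycles of $\sigma$ via Proposition \ref{prop.linear.Vi}, evaluate each cycle's contribution by observing that the linear subsets of $\operatorname{CArcs}\gamma$ are precisely its proper subsets, and then apply Lemma \ref{lem.cancel} in the same three-way case split ($D$-cycle, $\overline{D}$-cycle, neither). The only cosmetic difference is that the paper first isolates the single-cycle case as a separate claim before assembling the product, whereas you compute the factors inline; your use of Proposition \ref{prop.linear.listing} \textbf{(c)} (together with the implicit appeal to Proposition \ref{prop.linear.VW} to pass between $V$ and $V_\gamma$) to characterize the linear subsets of a cyclic arc set is a clean way to handle what the paper does more directly.
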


\begin{vershort}
\begin{proof}
Let us first prove the proposition in the case when $\sigma$ has only one
cycle. That is, we shall first prove the following claim:

\begin{statement}
\textit{Claim 1:} Assume that $\sigma$ has a unique cycle $\gamma$. Then,%
\[
\sum_{\substack{F\subseteq\mathbf{A}_{\sigma}\cap A\\\text{is linear}}}\left(
-1\right)  ^{\left\vert F\right\vert }=%
\begin{cases}
\left(  -1\right)  ^{\ell\left(  \gamma\right)  -1}, & \text{if }\gamma\text{
is a }D\text{-cycle;}\\
1, & \text{if }\gamma\text{ is a }\overline{D}\text{-cycle;}\\
0, & \text{else.}%
\end{cases}
\]

\end{statement}

[\textit{Proof of Claim 1:} We have assumed that $\sigma$ has a unique cycle
$\gamma$. Thus, $\mathbf{A}_{\sigma}=\operatorname*{CArcs}\gamma$. Hence, each
proper subset of $\mathbf{A}_{\sigma}$ is linear\footnote{since the removal of
any cyclic arc from a cycle turns the cycle into a path, and the removal of
any further arcs will break this path into smaller paths}, but $\mathbf{A}%
_{\sigma}$ itself is not\footnote{since the digraph $\left(  V,\mathbf{A}%
_{\sigma}\right)  $ has the cycle $\gamma$}. Furthermore, we have $\left\vert
\mathbf{A}_{\sigma}\right\vert =\left\vert \operatorname*{CArcs}%
\gamma\right\vert =\ell\left(  \gamma\right)  $.

The sum $\sum_{\substack{F\subseteq\mathbf{A}_{\sigma}\cap A\\\text{is
linear}}}\left(  -1\right)  ^{\left\vert F\right\vert }$ depends on whether
$\gamma$ is a $D$-cycle, a $\overline{D}$-cycle or neither:

\begin{itemize}
\item If $\gamma$ is a $D$-cycle, then all arcs in $\mathbf{A}_{\sigma}$
belong to $A$, and therefore we have $\mathbf{A}_{\sigma}\cap A=\mathbf{A}%
_{\sigma}$. Thus, in this case, we have%
\begin{align*}
&  \left\{  \text{linear subsets of }\mathbf{A}_{\sigma}\cap A\right\} \\
&  =\left\{  \text{linear subsets of }\mathbf{A}_{\sigma}\right\}  =\left\{
\text{proper subsets of }\mathbf{A}_{\sigma}\right\}
\end{align*}
(since each proper subset of $\mathbf{A}_{\sigma}$ is linear, but
$\mathbf{A}_{\sigma}$ itself is not). Hence, in this case, we have%
\begin{align*}
\sum_{\substack{F\subseteq\mathbf{A}_{\sigma}\cap A\\\text{is linear}}}\left(
-1\right)  ^{\left\vert F\right\vert }  &  =\sum_{\substack{F\subseteq
\mathbf{A}_{\sigma}\\\text{is a proper subset}}}\left(  -1\right)
^{\left\vert F\right\vert }=\underbrace{\sum_{F\subseteq\mathbf{A}_{\sigma}%
}\left(  -1\right)  ^{\left\vert F\right\vert }}_{\substack{=0\\\text{(by
Lemma \ref{lem.cancel},}\\\text{since }\mathbf{A}_{\sigma}\neq\varnothing
\text{)}}}-\left(  -1\right)  ^{\left\vert \mathbf{A}_{\sigma}\right\vert }\\
&  =-\left(  -1\right)  ^{\left\vert \mathbf{A}_{\sigma}\right\vert }=\left(
-1\right)  ^{\left\vert \mathbf{A}_{\sigma}\right\vert -1}\\
&  =\left(  -1\right)  ^{\ell\left(  \gamma\right)  -1}%
\ \ \ \ \ \ \ \ \ \ \left(  \text{since }\left\vert \mathbf{A}_{\sigma
}\right\vert =\ell\left(  \gamma\right)  \right)  .
\end{align*}

\item If $\gamma$ is a $\overline{D}$-cycle, then no arcs in $\mathbf{A}%
_{\sigma}$ belong to $A$, and therefore we have $\mathbf{A}_{\sigma}\cap
A=\varnothing$. Thus, in this case, we have%
\[
\left\{  \text{linear subsets of }\mathbf{A}_{\sigma}\cap A\right\}  =\left\{
\text{linear subsets of }\varnothing\right\}  =\left\{  \varnothing\right\}
.
\]
Hence, in this case, the sum $\sum_{\substack{F\subseteq\mathbf{A}_{\sigma
}\cap A\\\text{is linear}}}\left(  -1\right)  ^{\left\vert F\right\vert }$ has
only one addend, namely the addend corresponding to $F=\varnothing$. Thus,
this sum equals $1$ in this case.

\item If $\gamma$ is neither a $D$-cycle nor a $\overline{D}$-cycle, then we
have $\mathbf{A}_{\sigma}\cap A\neq\varnothing$ (since $\mathbf{A}_{\sigma
}\cap A=\varnothing$ would imply that $\mathbf{A}_{\sigma}\subseteq\left(
V\times V\right)  \setminus A$, whence $\operatorname*{CArcs}\gamma
=\mathbf{A}_{\sigma}\subseteq\left(  V\times V\right)  \setminus A$; but this
would mean that $\gamma$ is a $\overline{D}$-cycle) and $\mathbf{A}_{\sigma
}\not \subseteq A$ (since $\mathbf{A}_{\sigma}\subseteq A$ would mean that
$\gamma$ is a $D$-cycle). Hence, in this case, any $F\subseteq\mathbf{A}%
_{\sigma}\cap A$ is a proper subset of $\mathbf{A}_{\sigma}$ (since
$\mathbf{A}_{\sigma}\not \subseteq A$ shows that $\mathbf{A}_{\sigma}\cap A$
is a proper subset of $\mathbf{A}_{\sigma}$) and therefore linear (since each
proper subset of $\mathbf{A}_{\sigma}$ is linear). Thus, in this case, we
have
\begin{align*}
\sum_{\substack{F\subseteq\mathbf{A}_{\sigma}\cap A\\\text{is linear}}}\left(
-1\right)  ^{\left\vert F\right\vert }  &  =\sum_{F\subseteq\mathbf{A}%
_{\sigma}\cap A}\left(  -1\right)  ^{\left\vert F\right\vert }=\left[
\mathbf{A}_{\sigma}\cap A=\varnothing\right]  \ \ \ \ \ \ \ \ \ \ \left(
\text{by Lemma \ref{lem.cancel}}\right) \\
&  =0\ \ \ \ \ \ \ \ \ \ \left(  \text{since }\mathbf{A}_{\sigma}\cap
A\neq\varnothing\right)  .
\end{align*}

\end{itemize}

Combining these three cases, we see that
\[
\sum_{\substack{F\subseteq\mathbf{A}_{\sigma}\cap A\\\text{is linear}}}\left(
-1\right)  ^{\left\vert F\right\vert }=%
\begin{cases}
\left(  -1\right)  ^{\ell\left(  \gamma\right)  -1}, & \text{if }\gamma\text{
is a }D\text{-cycle;}\\
1, & \text{if }\gamma\text{ is a }\overline{D}\text{-cycle;}\\
0, & \text{else.}%
\end{cases}
\]
This proves Claim 1.] \medskip

Let us now prove Proposition \ref{prop.linear-in-AsigA} in the general case.
Let $\gamma_{1},\gamma_{2},\ldots,\gamma_{k}$ be the cycles of $\sigma$,
listed with no repetition. Thus, these cycles $\gamma_{1},\gamma_{2}%
,\ldots,\gamma_{k}$ are distinct, and we have $\operatorname*{Cycs}%
\sigma=\left\{  \gamma_{1},\gamma_{2},\ldots,\gamma_{k}\right\}  $.

For each $i\in\left[  k\right]  $, let $V_{i}$ be the set of entries of the
cycle $\gamma_{i}$. Thus, $V=V_{1}\sqcup V_{2}\sqcup\cdots\sqcup V_{k}$.

Furthermore, for each $i\in\left[  k\right]  $, we let $D_{i}$ be the digraph
obtained from $D$ by removing all vertices that are not in $V_{i}$, and we let
$A_{i}$ be the set of all arcs of $D_{i}$. Thus, $A_{i}=A\cap\left(
V_{i}\times V_{i}\right)  $ and $D_{i}=\left(  V_{i},A_{i}\right)  $.

For each $i\in\left[  k\right]  $, let $\sigma_{i}$ be the permutation of
$V_{i}$ obtained by restricting $\sigma$ to $V_{i}$ (this is well-defined,
since $V_{i}$ is the set of entries of a cycle of $\sigma$ and thus preserved
under $\sigma$). This permutation $\sigma_{i}$ has a unique cycle, namely
$\gamma_{i}$. Thus, for each $i\in\left[  k\right]  $, Claim 1 (applied to
$D_{i}=\left(  V_{i},A_{i}\right)  $ and $\sigma_{i}$ and $\gamma_{i}$ instead
of $D=\left(  V,A\right)  $ and $\sigma$ and $\gamma$) yields%
\begin{align}
\sum_{\substack{F\subseteq\mathbf{A}_{\sigma_{i}}\cap A_{i}\\\text{is linear}%
}}\left(  -1\right)  ^{\left\vert F\right\vert }  &  =%
\begin{cases}
\left(  -1\right)  ^{\ell\left(  \gamma_{i}\right)  -1}, & \text{if }%
\gamma_{i}\text{ is a }D_{i}\text{-cycle;}\\
1, & \text{if }\gamma_{i}\text{ is a }\overline{D_{i}}\text{-cycle;}\\
0, & \text{else}%
\end{cases}
\nonumber\\
&  =%
\begin{cases}
\left(  -1\right)  ^{\ell\left(  \gamma_{i}\right)  -1}, & \text{if }%
\gamma_{i}\text{ is a }D\text{-cycle;}\\
1, & \text{if }\gamma_{i}\text{ is a }\overline{D}\text{-cycle;}\\
0, & \text{else}%
\end{cases}
\label{pf.prop.linear-in-AsigA.7}%
\end{align}
(since the statement \textquotedblleft$\gamma_{i}$ is a $D_{i}$%
-cycle\textquotedblright\ is equivalent to \textquotedblleft$\gamma_{i}$ is a
$D$-cycle\textquotedblright, and the statement \textquotedblleft$\gamma_{i}$
is a $\overline{D_{i}}$-cycle\textquotedblright\ is equivalent to
\textquotedblleft$\gamma_{i}$ is a $\overline{D}$-cycle\textquotedblright).

It is easy to see that
\begin{equation}
\mathbf{A}_{\sigma_{i}}\cap A_{i}=\mathbf{A}_{\sigma_{i}}\cap A
\label{pf.prop.linear-in-AsigA.8}%
\end{equation}
for each $i\in\left[  k\right]  $\ \ \ \ \footnote{\textit{Proof.} Let
$i\in\left[  k\right]  $. Then, $\mathbf{A}_{\sigma_{i}}\subseteq V_{i}\times
V_{i}$ (since $\sigma_{i}$ is a permutation of $V_{i}$), so that
$\mathbf{A}_{\sigma_{i}}\cap\left(  V_{i}\times V_{i}\right)  =\mathbf{A}%
_{\sigma_{i}}$. Therefore,%
\[
\mathbf{A}_{\sigma_{i}}\cap\underbrace{A_{i}}_{=A\cap\left(  V_{i}\times
V_{i}\right)  }=\mathbf{A}_{\sigma_{i}}\cap A\cap\left(  V_{i}\times
V_{i}\right)  =\underbrace{\mathbf{A}_{\sigma_{i}}\cap\left(  V_{i}\times
V_{i}\right)  }_{=\mathbf{A}_{\sigma_{i}}}\cap A=\mathbf{A}_{\sigma_{i}}\cap
A.
\]
This proves (\ref{pf.prop.linear-in-AsigA.8}).}. Hence, we can rewrite
(\ref{pf.prop.linear-in-AsigA.7}) as follows: For each $i\in\left[  k\right]
$, we have%
\begin{equation}
\sum_{\substack{F\subseteq\mathbf{A}_{\sigma_{i}}\cap A\\\text{is linear}%
}}\left(  -1\right)  ^{\left\vert F\right\vert }=%
\begin{cases}
\left(  -1\right)  ^{\ell\left(  \gamma_{i}\right)  -1}, & \text{if }%
\gamma_{i}\text{ is a }D\text{-cycle;}\\
1, & \text{if }\gamma_{i}\text{ is a }\overline{D}\text{-cycle;}\\
0, & \text{else.}%
\end{cases}
\label{pf.prop.linear-in-AsigA.7b}%
\end{equation}

The definition of $\varphi\left(  \sigma\right)  $ yields%
\[
\varphi\left(  \sigma\right)  =\sum_{\substack{\gamma\in\operatorname*{Cycs}%
\sigma;\\\gamma\text{ is a }D\text{-cycle}}}\left(  \ell\left(  \gamma\right)
-1\right)  =\sum_{\substack{i\in\left[  k\right]  ;\\\gamma_{i}\text{ is a
}D\text{-cycle}}}\left(  \ell\left(  \gamma_{i}\right)  -1\right)
\]
(since $\gamma_{1},\gamma_{2},\ldots,\gamma_{k}$ are distinct, and
$\operatorname*{Cycs}\sigma=\left\{  \gamma_{1},\gamma_{2},\ldots,\gamma
_{k}\right\}  $). Therefore,%
\begin{align}
\left(  -1\right)  ^{\varphi\left(  \sigma\right)  }  &  =\left(  -1\right)
^{\sum_{\substack{i\in\left[  k\right]  ;\\\gamma_{i}\text{ is a
}D\text{-cycle}}}\left(  \ell\left(  \gamma_{i}\right)  -1\right)
}\nonumber\\
&  =\prod_{\substack{i\in\left[  k\right]  ;\\\gamma_{i}\text{ is a
}D\text{-cycle}}}\left(  -1\right)  ^{\ell\left(  \gamma_{i}\right)  -1}.
\label{pf.prop.linear-in-AsigA.sign}%
\end{align}

However, it is easy to see that $\mathbf{A}_{\sigma}=\mathbf{A}_{\sigma_{1}%
}\sqcup\mathbf{A}_{\sigma_{2}}\sqcup\cdots\sqcup\mathbf{A}_{\sigma_{k}}$
(since $\sigma_{1},\sigma_{2},\ldots,\sigma_{k}$ are the restrictions of
$\sigma$ to the subsets $V_{1},V_{2},\ldots,V_{k}$, which cover $V$ without
overlap). Thus,%
\begin{align*}
\mathbf{A}_{\sigma}\cap A  &  =\left(  \mathbf{A}_{\sigma_{1}}\sqcup
\mathbf{A}_{\sigma_{2}}\sqcup\cdots\sqcup\mathbf{A}_{\sigma_{k}}\right)  \cap
A\\
&  =\left(  \mathbf{A}_{\sigma_{1}}\cap A\right)  \sqcup\left(  \mathbf{A}%
_{\sigma_{2}}\cap A\right)  \sqcup\cdots\sqcup\left(  \mathbf{A}_{\sigma_{k}%
}\cap A\right)  .
\end{align*}
Hence, a subset $F$ of $\mathbf{A}_{\sigma}\cap A$ is the same thing as a
(necessarily disjoint) union $F_{1}\sqcup F_{2}\sqcup\cdots\sqcup F_{k}$ of
subsets $F_{i}\subseteq\mathbf{A}_{\sigma_{i}}\cap A$ for all $i\in\left[
k\right]  $\ \ \ \ \footnote{This sentence should be understood as follows:
\par
\begin{enumerate}
\item A subset $F$ of $\mathbf{A}_{\sigma}\cap A$ is the same thing as a union
$F_{1}\cup F_{2}\cup\cdots\cup F_{k}$ of subsets $F_{i}\subseteq
\mathbf{A}_{\sigma_{i}}\cap A$ for all $i\in\left[  k\right]  $.
\par
\item Any union of the latter form is a disjoint union (thus can be written as
$F_{1}\sqcup F_{2}\sqcup\cdots\sqcup F_{k}$).
\end{enumerate}
}. Moreover, in this case, the subsets $F_{i}$ for all $i\in\left[  k\right]
$ are uniquely determined by $F$ (namely, we have $F_{i}=F\cap\mathbf{A}%
_{\sigma_{i}}$ for each $i\in\left[  k\right]  $). Finally, the former subset
$F$ is linear if and only if all the latter subsets $F_{i}$ are
linear\footnote{\textit{Proof.} Let $F$ be a subset of $\mathbf{A}_{\sigma
}\cap A$, and let us assume that $F$ is written as a disjoint union
$F_{1}\sqcup F_{2}\sqcup\cdots\sqcup F_{k}$ of subsets $F_{i}\subseteq
\mathbf{A}_{\sigma_{i}}\cap A$ for all $i\in\left[  k\right]  $. We must prove
that $F$ is linear if and only if all the subsets $F_{i}$ are linear.
\par
For each $i\in\left[  k\right]  $, we have $F_{i}\subseteq\mathbf{A}%
_{\sigma_{i}}\cap A\subseteq\mathbf{A}_{\sigma_{i}}\subseteq V_{i}\times
V_{i}$ (because $\sigma_{i}$ is a permutation of $V_{i}$). In other words, for
each $i\in\left[  k\right]  $, the set $F_{i}$ is a subset of $V_{i}\times
V_{i}$. We have $F=F_{1}\sqcup F_{2}\sqcup\cdots\sqcup F_{k}=F_{1}\cup
F_{2}\cup\cdots\cup F_{k}$. Moreover, the sets $V_{1},V_{2},\ldots,V_{k}$ are
disjoint subsets of $V$ and satisfy $V=V_{1}\cup V_{2}\cup\cdots\cup V_{k}$.
Hence, Proposition \ref{prop.linear.Vi} shows that the set $F$ is linear if
and only if all the subsets $F_{i}$ for $i\in\left[  k\right]  $ are linear.
This completes our proof.}. Hence, we can substitute $F_{1}\sqcup F_{2}%
\sqcup\cdots\sqcup F_{k}$ for $F$ in the sum $\sum_{\substack{F\subseteq
\mathbf{A}_{\sigma}\cap A\\\text{is linear}}}\left(  -1\right)  ^{\left\vert
F\right\vert }$. We thus obtain
\begin{align*}
\sum_{\substack{F\subseteq\mathbf{A}_{\sigma}\cap A\\\text{is linear}}}\left(
-1\right)  ^{\left\vert F\right\vert }  &  =\sum_{\substack{\left(
F_{i}\right)  _{i\in\left[  k\right]  }\text{ is a family,}\\\text{where each
}F_{i}\text{ is a linear}\\\text{subset of }\mathbf{A}_{\sigma_{i}}\cap
A}}\ \ \underbrace{\left(  -1\right)  ^{\left\vert F_{1}\sqcup F_{2}%
\sqcup\cdots\sqcup F_{k}\right\vert }}_{=\left(  -1\right)  ^{\left\vert
F_{1}\right\vert +\left\vert F_{2}\right\vert +\cdots+\left\vert
F_{k}\right\vert }=\prod_{i=1}^{k}\left(  -1\right)  ^{\left\vert
F_{i}\right\vert }}\\
&  =\sum_{\substack{\left(  F_{i}\right)  _{i\in\left[  k\right]  }\text{ is a
family,}\\\text{where each }F_{i}\text{ is a linear}\\\text{subset of
}\mathbf{A}_{\sigma_{i}}\cap A}}\ \ \prod_{i=1}^{k}\left(  -1\right)
^{\left\vert F_{i}\right\vert }\\
&  =\prod_{i=1}^{k}\ \ \sum_{\substack{F_{i}\subseteq\mathbf{A}_{\sigma_{i}%
}\cap A\\\text{is linear}}}\left(  -1\right)  ^{\left\vert F_{i}\right\vert
}\ \ \ \ \ \ \ \ \ \ \left(  \text{by the product rule}\right) \\
&  =\prod_{i=1}^{k}\ \ \sum_{\substack{F\subseteq\mathbf{A}_{\sigma_{i}}\cap
A\\\text{is linear}}}\left(  -1\right)  ^{\left\vert F\right\vert
}\ \ \ \ \ \ \ \ \ \ \left(
\begin{array}
[c]{c}%
\text{here, we have renamed the}\\
\text{summation index }F_{i}\text{ as }F
\end{array}
\right) \\
&  =\prod_{i=1}^{k}%
\begin{cases}
\left(  -1\right)  ^{\ell\left(  \gamma_{i}\right)  -1}, & \text{if }%
\gamma_{i}\text{ is a }D\text{-cycle;}\\
1, & \text{if }\gamma_{i}\text{ is a }\overline{D}\text{-cycle;}\\
0, & \text{else}%
\end{cases}
\ \ \ \ \ \ \ \ \ \ \left(  \text{by (\ref{pf.prop.linear-in-AsigA.7b}%
)}\right)  .
\end{align*}
The right hand side of this equality is clearly $0$ unless each of the cycles
$\gamma_{1},\gamma_{2},\ldots,\gamma_{k}$ is a $D$-cycle or a $\overline{D}%
$-cycle; otherwise, it equals%
\[
\prod_{i=1}^{k}%
\begin{cases}
\left(  -1\right)  ^{\ell\left(  \gamma_{i}\right)  -1}, & \text{if }%
\gamma_{i}\text{ is a }D\text{-cycle;}\\
1, & \text{if }\gamma_{i}\text{ is a }\overline{D}\text{-cycle}%
\end{cases}
=\prod_{\substack{i\in\left[  k\right]  ;\\\gamma_{i}\text{ is a
}D\text{-cycle}}}\left(  -1\right)  ^{\ell\left(  \gamma_{i}\right)
-1}=\left(  -1\right)  ^{\varphi\left(  \sigma\right)  }%
\]
(by (\ref{pf.prop.linear-in-AsigA.sign})). Hence, in either case, it equals
\begin{align*}
&
\begin{cases}
\left(  -1\right)  ^{\varphi\left(  \sigma\right)  }, & \text{if each of
}\gamma_{1},\gamma_{2},\ldots,\gamma_{k}\text{ is a }D\text{-cycle or a
}\overline{D}\text{-cycle};\\
0, & \text{else}%
\end{cases}
\\
&  =%
\begin{cases}
\left(  -1\right)  ^{\varphi\left(  \sigma\right)  }, & \text{if each cycle of
}\sigma\text{ is a }D\text{-cycle or a }\overline{D}\text{-cycle};\\
0, & \text{else}%
\end{cases}
\\
&  \ \ \ \ \ \ \ \ \ \ \ \ \ \ \ \ \ \ \ \ \left(  \text{since }\gamma
_{1},\gamma_{2},\ldots,\gamma_{k}\text{ are the cycles of }\sigma\right) \\
&  =%
\begin{cases}
\left(  -1\right)  ^{\varphi\left(  \sigma\right)  }, & \text{if }\sigma
\in\mathfrak{S}_{V}\left(  D,\overline{D}\right)  ;\\
0, & \text{else}%
\end{cases}
\end{align*}
(by the definition of $\mathfrak{S}_{V}\left(  D,\overline{D}\right)  $). This
proves Proposition \ref{prop.linear-in-AsigA}.
\end{proof}
\end{vershort}

\begin{verlong}
Our proof of this proposition requires several auxiliary results. We begin by
proving some lemmas on the linearity of certain sets:

\begin{lemma}
\label{lem.linear.sub-of-path}Let $V$ be a finite set. Let $p$ be a path of
$V$. Then, $\operatorname*{Arcs}p$ is a linear subset of $V\times V$.
\end{lemma}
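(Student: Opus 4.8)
The plan is to reduce $\operatorname*{Arcs}p$ to an arc set of a one-element path cover on a smaller ground set, and then invoke Proposition~\ref{prop.linear.VW} to transfer linearity back up to $V$. Concretely, let $W$ be the set of entries of the path $p$. Since $p$ is a path of $V$, its entries are elements of $V$, so $W$ is a subset of $V$. Moreover, $p$ is by definition a nonempty tuple of \emph{distinct} elements of $V$, and $W$ is exactly its set of entries; hence $p$ contains each element of $W$ exactly once, i.e., $p$ is a $W$-listing.

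Given this, the single-path set $\left\{  p\right\}  $ is a path cover of $W$ (each element of $W$ belongs to exactly one of its paths, namely $p$). Its arc set is $\operatorname*{Arcs}\left\{  p\right\}  =\operatorname*{Arcs}p$, so $\operatorname*{Arcs}p$ is, by the very definition of \textquotedblleft linear\textquotedblright, a linear subset of $W\times W$. It remains only to upgrade this to linearity as a subset of $V\times V$. For that, first observe that $\operatorname*{Arcs}p\subseteq W\times W$: every arc of $p$ has the form $\left(  v_{i},v_{i+1}\right)  $ with both $v_{i}$ and $v_{i+1}$ entries of $p$, hence in $W$. Now Proposition~\ref{prop.linear.VW} (applied to this $W$ and $F=\operatorname*{Arcs}p$) yields that $\operatorname*{Arcs}p$ is linear as a subset of $W\times W$ if and only if it is linear as a subset of $V\times V$; combining this equivalence with the linearity already established over $W$ gives the claim.

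I do not expect any genuine obstacle here; the statement is essentially a bookkeeping lemma. The only point requiring care is the standing ambiguity flagged in the text right before Proposition~\ref{prop.linear.VW}, namely that \textquotedblleft linear\textquotedblright\ is defined relative to an ambient ground set, and $\operatorname*{Arcs}p$ is naturally exhibited as linear over $W$ rather than over $V$. This is exactly the issue that Proposition~\ref{prop.linear.VW} was designed to resolve, so the argument closes cleanly once that proposition is cited. (Alternatively, one could avoid the detour through $W$ by directly checking that $\left\{  p\right\}  \cup\left\{  \left(  v\right)  \ \mid\ v\in V\setminus W\right\}  $ is a path cover of $V$ whose arc set equals $\operatorname*{Arcs}p$, since each adjoined trivial path $\left(  v\right)  $ has empty arc set; but the route via Proposition~\ref{prop.linear.VW} is shorter and reuses existing machinery.)
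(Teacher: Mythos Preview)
Your proof is correct and follows essentially the same route as the paper's own argument: restrict to the set $W$ of entries of $p$, observe that $\{p\}$ is a path cover of $W$ with arc set $\operatorname*{Arcs}p$, and then invoke Proposition~\ref{prop.linear.VW} to transfer linearity from $W\times W$ to $V\times V$. The alternative you mention (adjoining trivial paths $(v)$ for $v\in V\setminus W$) is also valid and is in fact the mechanism hidden inside the proof of Proposition~\ref{prop.linear.VW} itself.
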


\begin{proof}
Recall that a path of $V$ means a nonempty tuple of distinct elements of $V$.
Hence, $p$ is a nonempty tuple of distinct elements of $V$ (since $p$ is a
path of $V$).

Let $W$ be the set of all entries of $p$. Then, $W$ is a subset of $V$ (since
$p$ is a tuple of elements of $V$). Moreover, the entries of $p$ are precisely
the elements of $W$ (since $W$ is the set of all entries of $p$). Thus, all
entries of $p$ belong to $W$. Hence, $p$ is a tuple of elements of $W$. Thus,
$p$ is a nonempty tuple of distinct elements of $W$ (since $p$ is a nonempty
tuple of distinct elements of $V$). In other words, $p$ is a path of $W$ (by
the definition of a \textquotedblleft path of $W$\textquotedblright).

Each $v\in W$ is an element of $W$. In other words, each $v\in W$ is an entry
of $p$ (since the entries of $p$ are precisely the elements of $W$). In other
words, each $v\in W$ belongs to the path $p$.

Now, we claim that the $1$-element set $\left\{  p\right\}  $ is a path cover
of $W$. Indeed, $\left\{  p\right\}  $ is clearly a set of paths of $W$ (since
$p$ is a path of $W$) and has the property that each $v\in W$ belongs to
exactly one of these paths (because each $v\in W$ belongs to the path $p$). In
other words, $\left\{  p\right\}  $ is a path cover of $W$ (by the definition
of a \textquotedblleft path cover\textquotedblright). The arc set
$\operatorname*{Arcs}\left\{  p\right\}  $ of this path cover is
\begin{align*}
\operatorname*{Arcs}\left\{  p\right\}   &  =\bigcup_{v\in\left\{  p\right\}
}\operatorname*{Arcs}v\ \ \ \ \ \ \ \ \ \ \left(  \text{by the definition of
}\operatorname*{Arcs}\left\{  p\right\}  \right) \\
&  =\operatorname*{Arcs}p.
\end{align*}
Thus, $\operatorname*{Arcs}p$ is the arc set of some path cover of $W$
(namely, of the path cover $\left\{  p\right\}  $).

It is furthermore easy to see that $\operatorname*{Arcs}p$ is a subset of
$W\times W$\ \ \ \ \footnote{\textit{Proof.} Write the path $p$ as $\left(
v_{1},v_{2},\ldots,v_{k}\right)  $. Then, the entries of $p$ are $v_{1}%
,v_{2},\ldots,v_{k}$. However, we know that all entries of $p$ belong to $W$.
In other words, all of $v_{1},v_{2},\ldots,v_{k}$ belong to $W$ (since the
entries of $p$ are $v_{1},v_{2},\ldots,v_{k}$). Hence, the pairs $\left(
v_{1},v_{2}\right)  ,\ \left(  v_{2},v_{3}\right)  ,\ \ldots,\ \left(
v_{k-1},v_{k}\right)  $ belong to $W\times W$.
\par
However, $p=\left(  v_{1},v_{2},\ldots,v_{k}\right)  $. Thus,%
\begin{align*}
\operatorname*{Arcs}p  &  =\operatorname*{Arcs}\left(  v_{1},v_{2}%
,\ldots,v_{k}\right) \\
&  =\left\{  \left(  v_{1},v_{2}\right)  ,\ \left(  v_{2},v_{3}\right)
,\ \ldots,\ \left(  v_{k-1},v_{k}\right)  \right\}
\ \ \ \ \ \ \ \ \ \ \left(  \text{by the definition of }\operatorname*{Arcs}%
\left(  v_{1},v_{2},\ldots,v_{k}\right)  \right) \\
&  \subseteq W\times W\ \ \ \ \ \ \ \ \ \ \left(  \text{since the pairs
}\left(  v_{1},v_{2}\right)  ,\ \left(  v_{2},v_{3}\right)  ,\ \ldots
,\ \left(  v_{k-1},v_{k}\right)  \text{ belong to }W\times W\right)  .
\end{align*}
In other words, $\operatorname*{Arcs}p$ is a subset of $W\times W$.}.

Now, recall that a subset $F$ of $W\times W$ is said to be linear if it is the
arc set of some path cover of $W$ (by the definition of \textquotedblleft
linear\textquotedblright). Hence, the subset $\operatorname*{Arcs}p$ of
$W\times W$ is linear (since it is the arc set of some path cover of $W$).

However, Proposition \ref{prop.linear.VW} (applied to $F=\operatorname*{Arcs}%
p$) shows that $\operatorname*{Arcs}p$ is linear as a subset of $W\times W$ if
and only if $\operatorname*{Arcs}p$ is linear as a subset of $V\times V$.
Thus, $\operatorname*{Arcs}p$ is linear as a subset of $V\times V$ (since
$\operatorname*{Arcs}p$ is linear as a subset of $W\times W$). This proves
Lemma \ref{lem.linear.sub-of-path}.
\end{proof}

\begin{lemma}
\label{lem.cycle.rotate}Let $V$ be a finite set. Let $\sigma\in\mathfrak{S}%
_{V}$ be a permutation of $V$. Let $\gamma$ be a cycle of $\sigma$. Let
$a\in\operatorname*{CArcs}\gamma$. Then, there exists a tuple $w=\left(
w_{1},w_{2},\ldots,w_{k}\right)  \in\gamma$ such that $a=\left(  w_{k}%
,w_{1}\right)  $.
\end{lemma}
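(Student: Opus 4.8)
The plan is to unpack the definitions of cycle, cyclic arc set, and rotation-equivalence class and then simply produce the desired tuple by a cyclic rotation. First I would recall from Definition \ref{def.reqc.features} \textbf{(b)} that, writing $\gamma = \left( v_{1}, v_{2}, \ldots, v_{k} \right)_{\sim}$ for some representative tuple, the cyclic arc set is
\[
\operatorname*{CArcs}\gamma = \left\{ \left( v_{1}, v_{2} \right), \left( v_{2}, v_{3} \right), \ldots, \left( v_{k-1}, v_{k} \right), \left( v_{k}, v_{1} \right) \right\}.
\]
Since $a \in \operatorname*{CArcs}\gamma$, the element $a$ must equal one of these pairs, i.e.\ $a = \left( v_{i}, v_{i+1} \right)$ for some $i \in \left[ k \right]$ (with the convention $v_{k+1} := v_{1}$ from Definition \ref{def.Arcs-and-Carcs} \textbf{(b)}).

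Next I would perform the cyclic rotation that brings $v_{i}$ to the last position. Concretely, set
\[
w := \left( v_{i+1}, v_{i+2}, \ldots, v_{k}, v_{1}, v_{2}, \ldots, v_{i} \right),
\]
which is obtained from the representative $\left( v_{1}, v_{2}, \ldots, v_{k} \right)$ by a cyclic rotation; hence $w \in \gamma$ because $\gamma$ is a rotation-equivalence class. Writing $w = \left( w_{1}, w_{2}, \ldots, w_{k} \right)$, the first entry is $w_{1} = v_{i+1}$ and the last entry is $w_{k} = v_{i}$, so that $\left( w_{k}, w_{1} \right) = \left( v_{i}, v_{i+1} \right) = a$, exactly as desired. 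This handles the generic case $i < k$; the boundary case $i = k$ (where $a = \left( v_{k}, v_{1} \right)$) is handled by taking $w$ to be the original representative $\left( v_{1}, v_{2}, \ldots, v_{k} \right)$ itself, for which $\left( w_{k}, w_{1} \right) = \left( v_{k}, v_{1} \right) = a$.

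I do not expect any serious obstacle here; the statement is essentially bookkeeping about rotations, and the only mild subtlety is making the index arithmetic clean. To avoid splitting into cases, I would prefer to treat the indices modulo $k$ from the start: interpret $v_{i+1}$ via the convention $v_{k+1} := v_{1}$, and define $w$ by the single formula above with the understanding that when $i = k$ the rotation is the identity. The one point worth stating carefully is that a cyclic rotation of a representative of $\gamma$ is again in $\gamma$, which is immediate from the definition of rotation-equivalence (Definition \ref{def.reqc.features} and the surrounding discussion). With that observed, the verification $\left( w_{k}, w_{1} \right) = a$ is a direct reading-off of first and last entries, completing the proof.
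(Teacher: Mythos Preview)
Your proposal is correct and follows essentially the same argument as the paper: pick a representative $(v_1,\ldots,v_k)$ of $\gamma$, locate $a=(v_i,v_{i+1})$, and take the cyclic rotation $w=(v_{i+1},\ldots,v_k,v_1,\ldots,v_i)$. The paper's proof is identical in structure, differing only in that it spells out the verifications $w_k=v_i$ and $w_1=v_{i+1}$ via an explicit case split on $i\neq k$ versus $i=k$, which you already acknowledge can be handled uniformly with the convention $v_{k+1}:=v_1$.
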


\begin{proof}
The cycle $\gamma$ is a cycle of $\sigma$, and thus is a rotation-equivalence
class of nonempty tuples of distinct elements of $V$ (since any cycle of
$\sigma$ is such a class). Hence, we can write $\gamma$ in the form
$\gamma=v_{\sim}$, where $v$ is a nonempty tuple of distinct elements of $V$.
Consider this $v$.

Write this tuple $v$ as $v=\left(  v_{1},v_{2},\ldots,v_{k}\right)  $. Then,
$k\geq1$ (since $v$ is nonempty), and the entries $v_{1},v_{2},\ldots,v_{k}$
of $v$ are distinct (since $v$ is a tuple of distinct elements of $V$). Let us
set $v_{k+1}:=v_{1}$. We have%
\begin{align}
\operatorname*{CArcs}\gamma &  =\operatorname*{CArcs}\left(  v_{\sim}\right)
\ \ \ \ \ \ \ \ \ \ \left(  \text{since }\gamma=v_{\sim}\right) \nonumber\\
&  =\operatorname*{CArcs}v\ \ \ \ \ \ \ \ \ \ \left(  \text{by the definition
of }\operatorname*{CArcs}\left(  v_{\sim}\right)  \text{, since }v\in v_{\sim
}\right) \nonumber\\
&  =\left\{  \left(  v_{i},v_{i+1}\right)  \ \mid\ i\in\left[  k\right]
\right\}  \label{pf.lem.cycle.rotate.1}%
\end{align}
(by the definition of $\operatorname*{CArcs}v$, since $v=\left(  v_{1}%
,v_{2},\ldots,v_{k}\right)  $ and $v_{k+1}=v_{1}$). Thus,%
\begin{align}
\operatorname*{CArcs}\gamma &  =\left\{  \left(  v_{i},v_{i+1}\right)
\ \mid\ i\in\left[  k\right]  \right\} \nonumber\\
&  =\left\{  \left(  v_{1},v_{2}\right)  ,\ \left(  v_{2},v_{3}\right)
,\ \ldots,\ \left(  v_{k},v_{k+1}\right)  \right\}  .
\label{pf.lem.cycle.rotate.2}%
\end{align}

We have
\[
a\in\operatorname*{CArcs}\gamma=\left\{  \left(  v_{i},v_{i+1}\right)
\ \mid\ i\in\left[  k\right]  \right\}  \ \ \ \ \ \ \ \ \ \ \left(  \text{by
(\ref{pf.lem.cycle.rotate.1})}\right)  .
\]
In other words, there exists some $i\in\left[  k\right]  $ such that
$a=\left(  v_{i},v_{i+1}\right)  $. Consider this $i$.

We now define a $k$-tuple $w\in V^{k}$ by%
\[
w:=\left(  v_{i+1},v_{i+2},\ldots,v_{k},v_{1},v_{2},\ldots,v_{i}\right)  .
\]
Thus, $w$ can be obtained from $v$ by a cyclic rotation (specifically, by
cyclically rotating $v$ a total of $i$ steps to the left). Hence, $w$ is
rotation-equivalent to $v$. Thus, $w$ belongs to the same rotation-equivalence
class as $v$. In other words, $w\in v_{\sim}$ (since $v_{\sim}$ is the
rotation-equivalence class to which $v$ belongs). In other words, $w\in\gamma$
(since $\gamma=v_{\sim}$). In other words, $\left(  w_{1},w_{2},\ldots
,w_{k}\right)  \in\gamma$ (since $w=\left(  w_{1},w_{2},\ldots,w_{k}\right)  $).

Let us write the $k$-tuple $w\in V^{k}$ as $w=\left(  w_{1},w_{2},\ldots
,w_{k}\right)  $. Then, it is easy to see that $v_{i}=w_{k}$%
\ \ \ \ \footnote{\textit{Proof.} We have $w=\left(  v_{i+1},v_{i+2}%
,\ldots,v_{k},v_{1},v_{2},\ldots,v_{i}\right)  $. Thus,
\begin{align*}
&  \left(  \text{the last entry of the }k\text{-tuple }w\right) \\
&  =\left(  \text{the last entry of the }k\text{-tuple }\left(  v_{i+1}%
,v_{i+2},\ldots,v_{k},v_{1},v_{2},\ldots,v_{i}\right)  \right) \\
&  =v_{i}\ \ \ \ \ \ \ \ \ \ \left(  \text{since }i\in\left[  k\right]
\right)  .
\end{align*}
Hence,%
\[
v_{i}=\left(  \text{the last entry of the }k\text{-tuple }w\right)  =w_{k}%
\]
(since $w=\left(  w_{1},w_{2},\ldots,w_{k}\right)  $).} and $v_{i+1}=w_{1}%
$\ \ \ \ \footnote{\textit{Proof.} We are in one of the following two cases:
\par
\textit{Case 1:} We have $i\neq k$.
\par
\textit{Case 2:} We have $i=k$.
\par
Let us first consider Case 1. In this case, we have $i\neq k$. Combining
$i\in\left[  k\right]  $ with $i\neq k$, we find $i\in\left[  k\right]
\setminus\left\{  k\right\}  =\left[  k-1\right]  $. However, we have
$w=\left(  v_{i+1},v_{i+2},\ldots,v_{k},v_{1},v_{2},\ldots,v_{i}\right)  $.
Thus,
\begin{align*}
&  \left(  \text{the first entry of the }k\text{-tuple }w\right) \\
&  =\left(  \text{the first entry of the }k\text{-tuple }\left(
v_{i+1},v_{i+2},\ldots,v_{k},v_{1},v_{2},\ldots,v_{i}\right)  \right) \\
&  =v_{i+1}\ \ \ \ \ \ \ \ \ \ \left(  \text{since }i\in\left[  k-1\right]
\right)  .
\end{align*}
Hence, $v_{i+1}=\left(  \text{the first entry of the }k\text{-tuple }w\right)
=w_{1}$ (since $w=\left(  w_{1},w_{2},\ldots,w_{k}\right)  $). Thus,
$v_{i+1}=w_{1}$ is proved in Case 1.
\par
Let us now consider Case 2. In this case, we have $i=k$. Hence, $v_{i+1}%
=v_{k+1}=v_{1}$. However, we have
\begin{align*}
w  &  =\left(  v_{i+1},v_{i+2},\ldots,v_{k},v_{1},v_{2},\ldots,v_{i}\right) \\
&  =\left(  v_{k+1},v_{k+2},\ldots,v_{k},v_{1},v_{2},\ldots,v_{k}\right)
\ \ \ \ \ \ \ \ \ \ \left(  \text{since }i=k\right) \\
&  =\left(  v_{1},v_{2},\ldots,v_{k}\right)  .
\end{align*}
Hence,%
\[
\left(  \text{the first entry of the }k\text{-tuple }w\right)  =v_{1}%
=v_{i+1}\ \ \ \ \ \ \ \ \ \ \left(  \text{since }v_{i+1}=v_{1}\right)  .
\]
Thus, $v_{i+1}=\left(  \text{the first entry of the }k\text{-tuple }w\right)
=w_{1}$ (since $w=\left(  w_{1},w_{2},\ldots,w_{k}\right)  $). Thus,
$v_{i+1}=w_{1}$ is proved in Case 2.
\par
We have now proved $v_{i+1}=w_{1}$ in both Cases 1 and 2. Thus, $v_{i+1}%
=w_{1}$ always holds.}. Now,%
\[
a=\left(  \underbrace{v_{i}}_{=w_{k}},\underbrace{v_{i+1}}_{=w_{1}}\right)
=\left(  w_{k},w_{1}\right)  .
\]

We have thus found a tuple $w=\left(  w_{1},w_{2},\ldots,w_{k}\right)
\in\gamma$ such that $a=\left(  w_{k},w_{1}\right)  $. Hence, such a tuple
exists. This proves Lemma \ref{lem.cycle.rotate}.
\end{proof}

\begin{lemma}
\label{lem.carcs.linear1}Let $V$ be a finite set. Let $\sigma\in
\mathfrak{S}_{V}$ be a permutation of $V$. Let $\gamma$ be a cycle of $\sigma
$. Let $C=\operatorname*{CArcs}\gamma$. Then:

\begin{enumerate}
\item[\textbf{(a)}] The subset $C$ of $V\times V$ is not linear.

\item[\textbf{(b)}] Every proper subset of $C$ is linear.
\end{enumerate}
\end{lemma}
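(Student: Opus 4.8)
The plan is to prove the two parts separately, using the characterization of linear sets via $V$-listings for part \textbf{(a)} and the cycle-rotation lemma together with Lemma \ref{lem.linear.sub-of-path} for part \textbf{(b)}.

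For part \textbf{(a)}, I would fix a representative $v=(v_1,v_2,\ldots,v_k)\in\gamma$ (with $v_{k+1}:=v_1$), so that the entries $v_1,v_2,\ldots,v_k$ are distinct and $C=\operatorname*{CArcs}\gamma=\{(v_i,v_{i+1})\mid i\in[k]\}$. Assume for contradiction that $C$ is linear. Then Proposition \ref{prop.linear.listing} \textbf{(c)} provides a $V$-listing $w=(w_1,\ldots,w_n)$ with $C\subseteq\operatorname*{Arcs}w$. Since $w$ is a $V$-listing, each $v_i$ equals $w_{\pi(i)}$ for a unique index $\pi(i)\in[n]$. Now I would use the fact that each arc $(v_i,v_{i+1})\in C\subseteq\operatorname*{Arcs}w=\{(w_j,w_{j+1})\mid j\in[n-1]\}$ must have the form $(w_j,w_{j+1})$ with $w_j=v_i$; distinctness of the entries of $w$ forces $j=\pi(i)$, and hence $\pi(i+1)=\pi(i)+1$ for every $i\in[k]$. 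Telescoping this relation around the cycle (recall $v_{k+1}=v_1$, so $\pi(k+1)=\pi(1)$) yields $\pi(1)=\pi(1)+k$, i.e.\ $k=0$, contradicting $k\geq1$. Thus $C$ is not linear.

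For part \textbf{(b)}, it suffices to show that $C\setminus\{a\}$ is linear for every $a\in C$. Indeed, any proper subset $F\subsetneq C$ satisfies $F\subseteq C\setminus\{a\}$ for some $a\in C\setminus F$ (which is nonempty), so once $C\setminus\{a\}$ is known to be linear, Proposition \ref{prop.linear.subset} makes $F$ linear as well. To handle a fixed $a\in C=\operatorname*{CArcs}\gamma$, I would invoke Lemma \ref{lem.cycle.rotate} to obtain a tuple $w=(w_1,w_2,\ldots,w_k)\in\gamma$ with $a=(w_k,w_1)$. Since $w\in\gamma$, we have $C=\operatorname*{CArcs}\gamma=\operatorname*{CArcs}w=\{(w_1,w_2),\ldots,(w_{k-1},w_k),(w_k,w_1)\}$, and because the entries of $w$ are distinct the arc $(w_k,w_1)$ is not among the path arcs $(w_1,w_2),\ldots,(w_{k-1},w_k)$; hence $C\setminus\{a\}=\operatorname*{Arcs}w$. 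Finally, $w$ is a nonempty tuple of distinct elements of $V$, i.e.\ a path of $V$, so Lemma \ref{lem.linear.sub-of-path} shows that $\operatorname*{Arcs}w$ is linear. Therefore $C\setminus\{a\}$ is linear, completing the reduction.

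The routine parts are the index bookkeeping; the main obstacle I anticipate is making the cyclic telescoping in part \textbf{(a)} airtight, in particular treating the index arithmetic modulo $k$ and the wrap-around $v_{k+1}=v_1$ correctly, and confirming that the whole argument degenerates gracefully in the trivial case $k=1$ (where $C=\{(v_1,v_1)\}$ is a single loop): there the relation $\pi(2)=\pi(1)+1$ together with $\pi(2)=\pi(1)$ already forces the contradiction in \textbf{(a)}, while in \textbf{(b)} the only proper subset is $\varnothing=\operatorname*{Arcs}(w_1)$, which is linear.
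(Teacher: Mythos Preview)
Your proposal is correct and follows essentially the same approach as the paper. The only difference is cosmetic: in part \textbf{(a)} you invoke Proposition \ref{prop.linear.listing} \textbf{(c)} to obtain a single $V$-listing $w$ with $C\subseteq\operatorname*{Arcs}w$, whereas the paper works directly from the path-cover definition of linearity and then shows that all the $v_i$ land in one path of the cover; your route is marginally cleaner but the underlying telescoping-index argument is identical, and part \textbf{(b)} matches the paper's proof almost verbatim.
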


\begin{proof}
[Proof of Lemma \ref{lem.carcs.linear1}.]The cycle $\gamma$ is a cycle of
$\sigma$, and thus is a rotation-equivalence class of nonempty tuples of
distinct elements of $V$ (since any cycle of $\sigma$ is such a class). Hence,
we can write $\gamma$ in the form $\gamma=v_{\sim}$, where $v$ is a nonempty
tuple of distinct elements of $V$. Consider this $v$.

Write the tuple $v$ as $v=\left(  v_{1},v_{2},\ldots,v_{k}\right)  $. Then,
$k\geq1$ (since $v$ is nonempty), and the entries $v_{1},v_{2},\ldots,v_{k}$
of $v$ are distinct (since $v$ is a tuple of distinct elements of $V$). Let us
set $v_{k+1}:=v_{1}$. We have%
\begin{align}
C  &  =\operatorname*{CArcs}\gamma=\operatorname*{CArcs}\left(  v_{\sim
}\right)  \ \ \ \ \ \ \ \ \ \ \left(  \text{since }\gamma=v_{\sim}\right)
\nonumber\\
&  =\operatorname*{CArcs}v\ \ \ \ \ \ \ \ \ \ \left(  \text{by the definition
of }\operatorname*{CArcs}\left(  v_{\sim}\right)  \text{, since }v\in v_{\sim
}\right) \nonumber\\
&  =\left\{  \left(  v_{i},v_{i+1}\right)  \ \mid\ i\in\left[  k\right]
\right\}  \label{pf.lem.carcs.linear1.1}%
\end{align}
(by the definition of $\operatorname*{CArcs}v$, since $v=\left(  v_{1}%
,v_{2},\ldots,v_{k}\right)  $ and $v_{k+1}=v_{1}$). \medskip

\textbf{(a)} Assume the contrary. Thus, $C$ is linear. In other words, $C$ is
the arc set of some path cover of $V$ (by the definition of \textquotedblleft
linear\textquotedblright). Let $P$ be this path cover. Thus, $C$ is the arc
set of $P$. In other words, $C=\operatorname*{Arcs}P$.

We have $C=\operatorname*{Arcs}P=\bigcup_{q\in P}\operatorname*{Arcs}q$ (by
the definition of $\operatorname*{Arcs}P$).

Now, $1\in\left[  k\right]  $ (since $k\geq1$) and therefore
\begin{align*}
\left(  v_{1},v_{2}\right)   &  \in\left\{  \left(  v_{i},v_{i+1}\right)
\ \mid\ i\in\left[  k\right]  \right\}  =C\ \ \ \ \ \ \ \ \ \ \left(  \text{by
(\ref{pf.lem.carcs.linear1.1})}\right) \\
&  =\bigcup_{q\in P}\operatorname*{Arcs}q.
\end{align*}
In other words, there exists a path $q\in P$ such that $\left(  v_{1}%
,v_{2}\right)  \in\operatorname*{Arcs}q$. Consider such a path $q$, and denote
it by $w$. Thus, $w\in P$ and $\left(  v_{1},v_{2}\right)  \in
\operatorname*{Arcs}w$.

Let us write the path $w$ as $w=\left(  w_{1},w_{2},\ldots,w_{\ell}\right)  $.
Thus, the definition of $\operatorname*{Arcs}w$ yields%
\[
\operatorname*{Arcs}w=\left\{  \left(  w_{1},w_{2}\right)  ,\ \left(
w_{2},w_{3}\right)  ,\ \ldots,\ \left(  w_{\ell-1},w_{\ell}\right)  \right\}
.
\]
Hence, $\left(  v_{1},v_{2}\right)  \in\operatorname*{Arcs}w=\left\{  \left(
w_{1},w_{2}\right)  ,\ \left(  w_{2},w_{3}\right)  ,\ \ldots,\ \left(
w_{\ell-1},w_{\ell}\right)  \right\}  $. In other words, there exists a
$p\in\left[  \ell-1\right]  $ such that $\left(  v_{1},v_{2}\right)  =\left(
w_{p},w_{p+1}\right)  $. Consider this $p$.

From $\left(  v_{1},v_{2}\right)  =\left(  w_{p},w_{p+1}\right)  $, we obtain
$v_{1}=w_{p}$ and $v_{2}=w_{p+1}$.

Note that $w_{1},w_{2},\ldots,w_{\ell}$ are the entries of $w$ (because
$w=\left(  w_{1},w_{2},\ldots,w_{\ell}\right)  $).

We know that $w$ is a path of $V$ (since $w\in P$, but $P$ is a path cover of
$V$), thus a nonempty tuple of distinct elements of $V$ (since a path of $V$
is defined to be a nonempty tuple of distinct elements of $V$). Hence, the
entries of $w$ are distinct. In other words, $w_{1},w_{2},\ldots,w_{\ell}$ are
distinct (since $w_{1},w_{2},\ldots,w_{\ell}$ are the entries of $w$).

We now claim the following:

\begin{statement}
\textit{Claim 1:} For each $i\in\left[  k+1\right]  $, we have $p-1+i\in
\left[  \ell\right]  $ and $v_{i}=w_{p-1+i}$.
\end{statement}

[\textit{Proof of Claim 1:} We proceed by induction on $i$:

\textit{Base case:} We have $p-1+1=p\in\left[  \ell-1\right]  \subseteq\left[
\ell\right]  $ and $v_{1}=w_{p}=w_{p-1+1}$ (since $p=p-1+1$). In other words,
Claim 1 holds for $i=1$.

\textit{Induction step:} Let $j\in\left[  k\right]  $. Assume (as the
induction hypothesis) that Claim 1 holds for $i=j$. We must prove that Claim 1
holds for $i=j+1$ as well.

We have assumed that Claim 1 holds for $i=j$. In other words, we have
$p-1+j\in\left[  \ell\right]  $ and $v_{j}=w_{p-1+j}$.

We have $j\in\left[  k\right]  $. Thus,%
\[
\left(  v_{j},v_{j+1}\right)  \in\left\{  \left(  v_{i},v_{i+1}\right)
\ \mid\ i\in\left[  k\right]  \right\}  =\bigcup_{q\in P}\operatorname*{Arcs}%
q.
\]
Hence, there exists a path $q\in P$ such that $\left(  v_{j},v_{j+1}\right)
\in\operatorname*{Arcs}q$. Consider this $q$. From $\left(  v_{j}%
,v_{j+1}\right)  \in\operatorname*{Arcs}q$, it follows that $v_{j}$ and
$v_{j+1}$ are two entries of the path $q$ (since both entries of any pair in
$\operatorname*{Arcs}q$ are entries of $q$). Hence, in particular, $v_{j}$ is
an entry of $q$. Also, from $v_{j}=w_{p-1+j}$, it follows that $v_{j}$ is an
entry of the path $w$ (since $w_{1},w_{2},\ldots,w_{\ell}$ are the entries of
$w$).

Now, $v_{j}$ is both an entry of $q$ and an entry of $w$. In other words,
$v_{j}$ belongs to the paths $q$ and $w$.

Recall that $P$ is a path cover of $V$. Hence, each $v\in V$ belongs to
exactly one of the paths in $P$. In particular, $v_{j}$ belongs to exactly one
of the paths in $P$.

However, we know that $v_{j}$ belongs to the paths $q$ and $w$. If these paths
$q$ and $w$ were distinct, then this would mean that $v_{j}$ belongs to at
least two of the paths in $P$ (since both $q$ and $w$ are paths in $P$); but
this would contradict the fact that $v_{j}$ belongs to exactly one of the
paths in $P$. Hence, the paths $q$ and $w$ cannot be distinct.

In other words, $q=w$. However,
\[
\left(  v_{j},v_{j+1}\right)  \in\operatorname*{Arcs}\underbrace{q}%
_{=w}=\operatorname*{Arcs}w=\left\{  \left(  w_{1},w_{2}\right)  ,\ \left(
w_{2},w_{3}\right)  ,\ \ldots,\ \left(  w_{\ell-1},w_{\ell}\right)  \right\}
.
\]
In other words, there exists some $z\in\left[  \ell-1\right]  $ such that
$\left(  v_{j},v_{j+1}\right)  =\left(  w_{z},w_{z+1}\right)  $. Consider this
$z$.

From $\left(  v_{j},v_{j+1}\right)  =\left(  w_{z},w_{z+1}\right)  $, we
obtain $v_{j}=w_{z}$ and $v_{j+1}=w_{z+1}$. Comparing $v_{j}=w_{z}$ with
$v_{j}=w_{p-1+j}$, we obtain $w_{z}=w_{p-1+j}$. This entails $z=p-1+j$ (since
$w_{1},w_{2},\ldots,w_{\ell}$ are distinct). Hence, $p-1+j=z\in\left[
\ell-1\right]  $, so that $p-1+j\leq\ell-1$. Adding $1$ to both sides of this
inequality, we obtain $p+j\leq\ell$. In other words, $p-1+\left(  j+1\right)
\leq\ell$ (since $p-1+\left(  j+1\right)  =p+j$). Thus, $p-1+\left(
j+1\right)  \in\left[  \ell\right]  $. Moreover, we now know that
$v_{j+1}=w_{z+1}=w_{p-1+\left(  j+1\right)  }$ (since $\underbrace{z}%
_{=p-1+j}+1=p-1+j+1=p-1+\left(  j+1\right)  $).

Altogether, we have shown that $p-1+\left(  j+1\right)  \in\left[
\ell\right]  $ and $v_{j+1}=w_{p-1+\left(  j+1\right)  }$. In other words,
Claim 1 holds for $i=j+1$. This completes the induction step. Thus, Claim 1 is
proven.] \medskip

Now, $k+1\in\left[  k+1\right]  $ (since $k+1\geq k\geq1$). Hence, we can
apply Claim 1 to $i=k+1$, and thus obtain $p-1+\left(  k+1\right)  \in\left[
\ell\right]  $ and $v_{k+1}=w_{p-1+\left(  k+1\right)  }$. In other words,
$p+k\in\left[  \ell\right]  $ and $v_{k+1}=w_{p+k}$ (since $p-1+\left(
k+1\right)  =p+k$).

However, $v_{k+1}=v_{1}=w_{p}$. Hence, $w_{p}=v_{k+1}=w_{p+k}$. This entails
$p=p+k$ (since $w_{1},w_{2},\ldots,w_{\ell}$ are distinct). Hence, $k=0$,
which contradicts $k\geq1>0$. This contradiction shows that our assumption was
false. Thus, Lemma \ref{lem.carcs.linear1} \textbf{(a)} is proved. \medskip

\textbf{(b)} Let $D$ be a proper subset of $C$. We must show that $D$ is linear.

We have $C\setminus D\neq\varnothing$ (since $D$ is a proper subset of $C$).
Hence, there exists some $a\in C\setminus D$. Consider this $a$.

We have $a\in C\setminus D$. In other words, $a\in C$ and $a\notin D$.

We have $a\in C=\operatorname*{CArcs}\gamma$. Hence, Lemma
\ref{lem.cycle.rotate} shows that there exists a tuple $w=\left(  w_{1}%
,w_{2},\ldots,w_{k}\right)  \in\gamma$ such that $a=\left(  w_{k}%
,w_{1}\right)  $. Consider this tuple $w=\left(  w_{1},w_{2},\ldots
,w_{k}\right)  $. The entries of $w$ are $w_{1},w_{2},\ldots,w_{k}$ (since
$w=\left(  w_{1},w_{2},\ldots,w_{k}\right)  $).

The tuple $w$ belongs to $\gamma$ (since $w\in\gamma$), but $\gamma$ is a
rotation-equivalence class of nonempty tuples of distinct elements of $V$.
Hence, $w$ is a nonempty tuple of distinct elements of $V$. Hence, $w$ is a
path of $V$ (by the definition of a \textquotedblleft path of $V$%
\textquotedblright).

In particular, $w$ is a tuple of distinct elements of $V$. In other words, the
entries of $w$ are distinct. In other words, $w_{1},w_{2},\ldots,w_{k}$ are
distinct (since the entries of $w$ are $w_{1},w_{2},\ldots,w_{k}$). The arc
set of this path $w$ is%
\begin{equation}
\operatorname*{Arcs}w=\left\{  \left(  w_{1},w_{2}\right)  ,\ \left(
w_{2},w_{3}\right)  ,\ \ldots,\ \left(  w_{k-1},w_{k}\right)  \right\}
\label{pf.lem.carcs.linear1.b.1}%
\end{equation}
(by (\ref{eq.def.Arcs-and-Carcs.a.2}), applied to $w=\left(  w_{1}%
,w_{2},\ldots,w_{k}\right)  $ instead of $v=\left(  v_{1},v_{2},\ldots
,v_{k}\right)  $).

We have $a=\left(  w_{k},w_{1}\right)  $, thus $\left(  w_{k},w_{1}\right)
=a$.

We know that $\gamma$ is a rotation-equivalence class that contains $w$ (since
$w\in\gamma$). Hence, $\gamma$ is the rotation-equivalence class of $w$. In
other words, $\gamma=w_{\sim}$. Hence,
\begin{align*}
\operatorname*{CArcs}\gamma &  =\operatorname*{CArcs}\left(  w_{\sim}\right)
\\
&  =\operatorname*{CArcs}w\ \ \ \ \ \ \ \ \ \ \left(  \text{by the definition
of }\operatorname*{CArcs}\left(  w_{\sim}\right)  \text{, since }w\in w_{\sim
}\right) \\
&  =\left\{  \left(  w_{1},w_{2}\right)  ,\ \left(  w_{2},w_{3}\right)
,\ \ldots,\ \left(  w_{k-1},w_{k}\right)  ,\ \left(  w_{k},w_{1}\right)
\right\}
\end{align*}
(by (\ref{eq.def.Arcs-and-Carcs.b.2}), applied to $w=\left(  w_{1}%
,w_{2},\ldots,w_{k}\right)  $ instead of $v=\left(  v_{1},v_{2},\ldots
,v_{k}\right)  $). Hence,%
\begin{align*}
\operatorname*{CArcs}\gamma &  =\left\{  \left(  w_{1},w_{2}\right)
,\ \left(  w_{2},w_{3}\right)  ,\ \ldots,\ \left(  w_{k-1},w_{k}\right)
,\ \left(  w_{k},w_{1}\right)  \right\} \\
&  =\underbrace{\left\{  \left(  w_{1},w_{2}\right)  ,\ \left(  w_{2}%
,w_{3}\right)  ,\ \ldots,\ \left(  w_{k-1},w_{k}\right)  \right\}
}_{\substack{=\operatorname*{Arcs}w\\\text{(by (\ref{pf.lem.carcs.linear1.b.1}%
))}}}\cup\left\{  \underbrace{\left(  w_{k},w_{1}\right)  }_{=a}\right\} \\
&  =\left(  \operatorname*{Arcs}w\right)  \cup\left\{  a\right\}  .
\end{align*}

The $k$ pairs $\left(  w_{1},w_{2}\right)  ,\ \left(  w_{2},w_{3}\right)
,\ \ldots,\ \left(  w_{k-1},w_{k}\right)  ,\ \left(  w_{k},w_{1}\right)  $ are
distinct (since their first entries $w_{1},w_{2},\ldots,w_{k}$ are distinct).
Hence, in particular, the last of these $k$ pairs is not among the remaining
$k-1$ pairs. In other words,
\[
\left(  w_{k},w_{1}\right)  \notin\left\{  \left(  w_{1},w_{2}\right)
,\ \left(  w_{2},w_{3}\right)  ,\ \ldots,\ \left(  w_{k-1},w_{k}\right)
\right\}  =\operatorname*{Arcs}w
\]
(by (\ref{pf.lem.carcs.linear1.b.1})). Thus,%
\[
a=\left(  w_{k},w_{1}\right)  \notin\operatorname*{Arcs}w.
\]

Now,%
\begin{align*}
\underbrace{C}_{\substack{=\operatorname*{CArcs}\gamma\\=\left(
\operatorname*{Arcs}w\right)  \cup\left\{  a\right\}  }}\setminus\left\{
a\right\}   &  =\left(  \left(  \operatorname*{Arcs}w\right)  \cup\left\{
a\right\}  \right)  \setminus\left\{  a\right\}  =\left(  \operatorname*{Arcs}%
w\right)  \setminus\left\{  a\right\} \\
&  =\operatorname*{Arcs}w\ \ \ \ \ \ \ \ \ \ \left(  \text{since }%
a\notin\operatorname*{Arcs}w\right)  .
\end{align*}

However, $D$ is a subset of $C$ that does not contain $a$ (since $a\notin D$).
In other words, $D$ is a subset of $C\setminus\left\{  a\right\}  $. In other
words, $D$ is a subset of $\operatorname*{Arcs}w$ (since $C\setminus\left\{
a\right\}  =\operatorname*{Arcs}w$).

However, $w$ is a path of $V$. Hence, Lemma \ref{lem.linear.sub-of-path}
(applied to $p=w$) shows that $\operatorname*{Arcs}w$ is a linear subset of
$V\times V$. Thus, Proposition \ref{prop.linear.subset} (applied to
$F=\operatorname*{Arcs}w$) shows that any subset of $\operatorname*{Arcs}w$ is
linear as well. Thus, $D$ is linear (since $D$ is a subset of
$\operatorname*{Arcs}w$). This completes the proof of Lemma
\ref{lem.carcs.linear1} \textbf{(b)}.
\end{proof}

\begin{lemma}
\label{lem.carcs.len=num}Let $V$ be a finite set. Let $\sigma\in
\mathfrak{S}_{V}$ be a permutation of $V$. Let $\gamma$ be a cycle of $\sigma
$. Then, $\left\vert \operatorname*{CArcs}\gamma\right\vert =\ell\left(
\gamma\right)  \geq1$.
\end{lemma}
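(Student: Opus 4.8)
The statement to prove is Lemma~\ref{lem.carcs.len=num}: for a cycle $\gamma$ of a permutation $\sigma\in\mathfrak{S}_V$, we have $\left\vert\operatorname*{CArcs}\gamma\right\vert=\ell\left(\gamma\right)\geq1$. The plan is to unfold both sides directly from the definitions. A cycle of $\sigma$ is a rotation-equivalence class of a nonempty tuple of distinct elements of $V$, so I would begin by writing $\gamma=v_{\sim}$ for some nonempty tuple $v=\left(v_{1},v_{2},\ldots,v_{k}\right)$ of distinct elements of $V$. Since $v$ is nonempty, we have $k\geq1$, and by Definition~\ref{def.reqc.features}~\textbf{(a)} the length $\ell\left(\gamma\right)$ equals $k$. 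This immediately gives $\ell\left(\gamma\right)=k\geq1$, which is the easy half of the claim.

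For the count of cyclic arcs, I would invoke the formula already derived in the excerpt. Setting $v_{k+1}:=v_{1}$, Definition~\ref{def.reqc.features}~\textbf{(b)} (or equivalently~(\ref{eq.def.Arcs-and-Carcs.b.2})) gives
\[
\operatorname*{CArcs}\gamma=\left\{\left(v_{1},v_{2}\right),\ \left(v_{2},v_{3}\right),\ \ldots,\ \left(v_{k-1},v_{k}\right),\ \left(v_{k},v_{1}\right)\right\}=\left\{\left(v_{i},v_{i+1}\right)\ \mid\ i\in\left[k\right]\right\}.
\]
It then remains to show that these $k$ listed pairs are distinct, so that the cardinality of $\operatorname*{CArcs}\gamma$ is exactly $k$. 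This is where the hypothesis that the entries of $v$ are distinct does the real work: the first coordinates of the pairs $\left(v_{1},v_{2}\right),\ \left(v_{2},v_{3}\right),\ \ldots,\ \left(v_{k},v_{1}\right)$ are $v_{1},v_{2},\ldots,v_{k}$, which are pairwise distinct. Hence any two of these pairs differ already in their first coordinate, so all $k$ pairs are distinct and $\left\vert\operatorname*{CArcs}\gamma\right\vert=k$.

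Combining the two computations yields $\left\vert\operatorname*{CArcs}\gamma\right\vert=k=\ell\left(\gamma\right)$ and $\ell\left(\gamma\right)=k\geq1$, which is precisely the assertion of the lemma. I would close by noting that $\operatorname*{CArcs}\gamma$ is well-defined on the rotation-equivalence class $\gamma$ (not merely on the representative $v$) by Definition~\ref{def.reqc.features}~\textbf{(b)}, so the choice of representative $v$ is immaterial.

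There is essentially no obstacle here; the only point requiring a word of care is the distinctness of the $k$ pairs, and even that reduces to the distinctness of their first coordinates $v_{1},v_{2},\ldots,v_{k}$, which is guaranteed by $\gamma$ being a rotation-equivalence class of a tuple of \emph{distinct} elements. The proof is thus a short unwinding of definitions followed by this one-line distinctness observation.
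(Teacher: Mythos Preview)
Your proof is correct and follows essentially the same approach as the paper's: write $\gamma=v_{\sim}$ for a nonempty tuple $v=\left(v_{1},\ldots,v_{k}\right)$ of distinct elements, note $\ell\left(\gamma\right)=k\geq1$, list the $k$ cyclic arcs explicitly, and observe that they are distinct because their first coordinates $v_{1},\ldots,v_{k}$ are distinct. The closing remark about well-definedness on the rotation-equivalence class is a harmless extra that the paper omits.
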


\begin{proof}
The cycle $\gamma$ is a cycle of $\sigma$, and thus is a rotation-equivalence
class of nonempty tuples of distinct elements of $V$ (since any cycle of
$\sigma$ is such a class). Hence, we can write $\gamma$ in the form
$\gamma=v_{\sim}$, where $v$ is a nonempty tuple of distinct elements of $V$.
Consider this $v$.

Write the tuple $v$ as $v=\left(  v_{1},v_{2},\ldots,v_{k}\right)  $. Then,
the entries $v_{1},v_{2},\ldots,v_{k}$ of $v$ are distinct (since $v$ is a
tuple of distinct elements of $V$). We have $\ell\left(  v\right)  =k$ (since
$v=\left(  v_{1},v_{2},\ldots,v_{k}\right)  $ is a $k$-tuple) and $\ell\left(
v\right)  \geq1$ (since $v$ is nonempty).

From $\gamma=v_{\sim}$, we obtain $\ell\left(  \gamma\right)  =\ell\left(
v_{\sim}\right)  =\ell\left(  v\right)  $ (by Definition
\ref{def.reqc.features} \textbf{(a)}). Hence, $\ell\left(  \gamma\right)
=\ell\left(  v\right)  =k$ and $\ell\left(  \gamma\right)  =\ell\left(
v\right)  \geq1$.

On the other hand, from $\gamma=v_{\sim}$, we obtain%
\begin{align*}
\operatorname*{CArcs}\gamma &  =\operatorname*{CArcs}\left(  v_{\sim}\right)
\\
&  =\operatorname*{CArcs}v\ \ \ \ \ \ \ \ \ \ \left(  \text{by the definition
of }\operatorname*{CArcs}\left(  v_{\sim}\right)  \text{, since }v\in v_{\sim
}\right) \\
&  =\left\{  \left(  v_{1},v_{2}\right)  ,\ \left(  v_{2},v_{3}\right)
,\ \ldots,\ \left(  v_{k-1},v_{k}\right)  ,\ \left(  v_{k},v_{1}\right)
\right\}
\end{align*}
(by (\ref{eq.def.Arcs-and-Carcs.b.2}), since $v=\left(  v_{1},v_{2}%
,\ldots,v_{k}\right)  $).

However, the $k$ pairs $\left(  v_{1},v_{2}\right)  ,\ \left(  v_{2}%
,v_{3}\right)  ,\ \ldots,\ \left(  v_{k-1},v_{k}\right)  ,\ \left(
v_{k},v_{1}\right)  $ are distinct (since their first entries $v_{1}%
,v_{2},\ldots,v_{k}$ are distinct). Hence, the set \newline$\left\{  \left(
v_{1},v_{2}\right)  ,\ \left(  v_{2},v_{3}\right)  ,\ \ldots,\ \left(
v_{k-1},v_{k}\right)  ,\ \left(  v_{k},v_{1}\right)  \right\}  $ of these $k$
pairs has size $k$. In other words,%
\[
\left\vert \left\{  \left(  v_{1},v_{2}\right)  ,\ \left(  v_{2},v_{3}\right)
,\ \ldots,\ \left(  v_{k-1},v_{k}\right)  ,\ \left(  v_{k},v_{1}\right)
\right\}  \right\vert =k.
\]
This rewrites as $\left\vert \operatorname*{CArcs}\gamma\right\vert
=\ell\left(  \gamma\right)  $ (since we have $\ell\left(  \gamma\right)  =k$
and \newline$\operatorname*{CArcs}\gamma=\left\{  \left(  v_{1},v_{2}\right)
,\ \left(  v_{2},v_{3}\right)  ,\ \ldots,\ \left(  v_{k-1},v_{k}\right)
,\ \left(  v_{k},v_{1}\right)  \right\}  $). Hence, $\left\vert
\operatorname*{CArcs}\gamma\right\vert =\ell\left(  \gamma\right)  \geq1$.
Thus, Lemma \ref{lem.carcs.len=num} is proved.
\end{proof}

\begin{lemma}
\label{lem.carcs.linear2}Let $D=\left(  V,A\right)  $ be a digraph. Let
$\sigma\in\mathfrak{S}_{V}$ be a permutation of $V$. Let $\gamma$ be a cycle
of $\sigma$. Let $C=\operatorname*{CArcs}\gamma$. Then:

\begin{enumerate}
\item[\textbf{(a)}] If $\gamma$ is a $D$-cycle, then $\sum
_{\substack{F\subseteq C\cap A\\\text{is linear}}}\left(  -1\right)
^{\left\vert F\right\vert }=\left(  -1\right)  ^{\ell\left(  \gamma\right)
-1}$.

\item[\textbf{(b)}] If $\gamma$ is a $\overline{D}$-cycle, then $\sum
_{\substack{F\subseteq C\cap A\\\text{is linear}}}\left(  -1\right)
^{\left\vert F\right\vert }=1$.

\item[\textbf{(c)}] If $\gamma$ is neither a $D$-cycle nor a $\overline{D}%
$-cycle, then $\sum_{\substack{F\subseteq C\cap A\\\text{is linear}}}\left(
-1\right)  ^{\left\vert F\right\vert }=0$.
\end{enumerate}
\end{lemma}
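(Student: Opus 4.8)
The plan is to reduce Lemma \ref{lem.carcs.linear2} to the two structural facts already established in Lemmas \ref{lem.carcs.linear1} and \ref{lem.carcs.len=num}, by analyzing the set $C\cap A$ in each of the three cases. The set $C=\operatorname*{CArcs}\gamma$ is the cyclic arc set of a single cycle; by Lemma \ref{lem.carcs.linear1} \textbf{(a)} it is not linear, while by Lemma \ref{lem.carcs.linear1} \textbf{(b)} every \emph{proper} subset of $C$ is linear. Moreover, Lemma \ref{lem.carcs.len=num} tells us $\left\vert C\right\vert =\ell\left(\gamma\right)\geq1$, so $C\neq\varnothing$.

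For part \textbf{(a)}, suppose $\gamma$ is a $D$-cycle. By the definition of a $D$-cycle, $\operatorname*{CArcs}\gamma\subseteq A$, so $C\subseteq A$ and hence $C\cap A=C$. Therefore the linear subsets of $C\cap A$ are exactly the linear subsets of $C$, which (by Lemma \ref{lem.carcs.linear1}) are precisely the proper subsets of $C$. Thus I would compute
\[
\sum_{\substack{F\subseteq C\cap A\\\text{is linear}}}\left(-1\right)^{\left\vert F\right\vert}
=\sum_{\substack{F\subseteq C\\\text{is a proper subset}}}\left(-1\right)^{\left\vert F\right\vert}
=\underbrace{\sum_{F\subseteq C}\left(-1\right)^{\left\vert F\right\vert}}_{\substack{=0\\\text{(by Lemma \ref{lem.cancel}, since }C\neq\varnothing\text{)}}}-\left(-1\right)^{\left\vert C\right\vert}
=-\left(-1\right)^{\left\vert C\right\vert}=\left(-1\right)^{\left\vert C\right\vert-1}.
\]
Since $\left\vert C\right\vert=\ell\left(\gamma\right)$ by Lemma \ref{lem.carcs.len=num}, this equals $\left(-1\right)^{\ell\left(\gamma\right)-1}$, as desired.

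For part \textbf{(b)}, if $\gamma$ is a $\overline{D}$-cycle, then $\operatorname*{CArcs}\gamma\subseteq\left(V\times V\right)\setminus A$, so $C\cap A=\varnothing$. The only subset of $\varnothing$ is $\varnothing$ itself, and it is linear (being the arc set of the path cover consisting of trivial paths only). Hence the sum has exactly one addend, $\left(-1\right)^{\left\vert\varnothing\right\vert}=1$. For part \textbf{(c)}, if $\gamma$ is neither a $D$-cycle nor a $\overline{D}$-cycle, then $C\not\subseteq A$ (otherwise $\gamma$ would be a $D$-cycle) and $C\cap A\neq\varnothing$ (otherwise $C\subseteq\left(V\times V\right)\setminus A$, making $\gamma$ a $\overline{D}$-cycle). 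The condition $C\not\subseteq A$ forces $C\cap A$ to be a proper subset of $C$, so \emph{every} subset $F\subseteq C\cap A$ is a proper subset of $C$ and therefore linear (by Lemma \ref{lem.carcs.linear1} \textbf{(b)}). Consequently the linearity constraint is vacuous, and Lemma \ref{lem.cancel} gives
\[
\sum_{\substack{F\subseteq C\cap A\\\text{is linear}}}\left(-1\right)^{\left\vert F\right\vert}
=\sum_{F\subseteq C\cap A}\left(-1\right)^{\left\vert F\right\vert}
=\left[C\cap A=\varnothing\right]=0,
\]
since $C\cap A\neq\varnothing$.

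The main obstacle, such as it is, is purely bookkeeping: correctly translating the definitions of $D$-cycle and $\overline{D}$-cycle into the inclusions $C\subseteq A$ and $C\subseteq\left(V\times V\right)\setminus A$, and carefully justifying in each case \emph{which} subsets of $C\cap A$ are linear so that Lemma \ref{lem.cancel} applies cleanly. No genuinely new idea is needed beyond the cancellation lemma plus the two structural lemmas; the only subtlety is the observation in case \textbf{(c)} that $C\not\subseteq A$ makes $C\cap A$ proper, which is what lets me drop the linearity restriction and invoke Lemma \ref{lem.cancel} directly.
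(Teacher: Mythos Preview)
Your proof is correct and follows essentially the same approach as the paper's own proof: reduce each case via Lemmas \ref{lem.carcs.linear1} and \ref{lem.carcs.len=num} to an application of Lemma \ref{lem.cancel}, using the observation that the linear subsets of $C$ are exactly its proper subsets. The only cosmetic difference is that in part \textbf{(b)} you justify linearity of $\varnothing$ via the trivial path cover, whereas the paper invokes Lemma \ref{lem.carcs.linear1} \textbf{(b)} (since $\varnothing$ is a proper subset of the nonempty $C$); both are valid.
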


\begin{proof}
We know that $\gamma$ is a cycle of $\sigma$, thus a rotation-equivalence
class of nonempty tuples of distinct elements of $V$ (since any cycle of
$\sigma$ is such a rotation-equivalence class).

Lemma \ref{lem.carcs.len=num} yields $\left\vert \operatorname*{CArcs}%
\gamma\right\vert =\ell\left(  \gamma\right)  \geq1$. From
$C=\operatorname*{CArcs}\gamma$, we obtain $\left\vert C\right\vert
=\left\vert \operatorname*{CArcs}\gamma\right\vert =\ell\left(  \gamma\right)
\geq1>0$, so that the set $C$ is nonempty. In other words, $C\neq\varnothing$.
Also, $C=\operatorname*{CArcs}\gamma\subseteq V\times V$.\medskip

\textbf{(a)} Assume that $\gamma$ is a $D$-cycle. Thus, $\operatorname*{CArcs}%
\gamma\subseteq A$ (by the definition of a $D$-cycle). In other words,
$C\subseteq A$ (since $C=\operatorname*{CArcs}\gamma$). Therefore, $C\cap
A=C$. Hence,%
\begin{equation}
\sum_{\substack{F\subseteq C\cap A\\\text{is linear}}}\left(  -1\right)
^{\left\vert F\right\vert }=\sum_{\substack{F\subseteq C\\\text{is linear}%
}}\left(  -1\right)  ^{\left\vert F\right\vert }=\sum_{\substack{F\text{ is a
linear}\\\text{subset of }C}}\left(  -1\right)  ^{\left\vert F\right\vert }.
\label{pf.lem.carcs.linear2.1}%
\end{equation}

On the other hand, Lemma \ref{lem.cancel} (applied to $B=C$) yields
$\sum_{F\subseteq C}\left(  -1\right)  ^{\left\vert F\right\vert }=\left[
C=\varnothing\right]  =0$ (since $C\neq\varnothing$). Hence,%
\[
0=\sum_{F\subseteq C}\left(  -1\right)  ^{\left\vert F\right\vert }=\left(
-1\right)  ^{\left\vert C\right\vert }+\sum_{\substack{F\subseteq C;\\F\neq
C}}\left(  -1\right)  ^{\left\vert F\right\vert }%
\]
(here, we have split off the addend for $F=C$ from the sum). Therefore,%
\begin{equation}
\sum_{\substack{F\subseteq C;\\F\neq C}}\left(  -1\right)  ^{\left\vert
F\right\vert }=-\left(  -1\right)  ^{\left\vert C\right\vert }=\left(
-1\right)  ^{\left\vert C\right\vert -1}=\left(  -1\right)  ^{\ell\left(
\gamma\right)  -1} \label{pf.lem.carcs.linear2.2}%
\end{equation}
(since $\left\vert C\right\vert =\ell\left(  \gamma\right)  $).

However, from Lemma \ref{lem.carcs.linear1}, we easily obtain%
\[
\left\{  F\subseteq C\ \mid\ F\neq C\right\}  =\left\{  \text{linear subsets
of }C\right\}
\]
\footnote{\textit{Proof.} We shall first prove that $\left\{  F\subseteq
C\ \mid\ F\neq C\right\}  \subseteq\left\{  \text{linear subsets of
}C\right\}  $.
\par
Indeed, let $G\in\left\{  F\subseteq C\ \mid\ F\neq C\right\}  $. Thus, $G$ is
a subset $F\subseteq C$ satisfying $F\neq C$. In other words, $G$ is a subset
of $C$ such that $G\neq C$. In other words, $G$ is a proper subset of $C$.
Hence, $G$ is linear (since Lemma \ref{lem.carcs.linear1} \textbf{(b)} shows
that every proper subset of $C$ is linear). Therefore, $G$ is a linear subset
of $C$. Hence, $G\in\left\{  \text{linear subsets of }C\right\}  $.
\par
Forget that we fixed $G$. We thus have proved that each $G\in\left\{
F\subseteq C\ \mid\ F\neq C\right\}  $ satisfies $G\in\left\{  \text{linear
subsets of }C\right\}  $. In other words, we have%
\[
\left\{  F\subseteq C\ \mid\ F\neq C\right\}  \subseteq\left\{  \text{linear
subsets of }C\right\}  .
\]
\par
Let us now prove the reverse inclusion.
\par
Indeed, let $H\in\left\{  \text{linear subsets of }C\right\}  $. Thus, $H$ is
a linear subset of $C$. Hence, $H$ is a subset of $C$, so that $H\subseteq C$.
The set $H$ is linear, whereas the set $C$ is not (by Lemma
\ref{lem.carcs.linear1} \textbf{(a)}). Thus, $H$ cannot be identical with $C$.
In other words, $H\neq C$. Combining $H\subseteq C$ with $H\neq C$, we see
that $H$ is a subset $F$ of $C$ satisfying $F\neq C$. In other words,
$H\in\left\{  F\subseteq C\ \mid\ F\neq C\right\}  $.
\par
Forget that we fixed $H$. We thus have proved that each $H\in\left\{
\text{linear subsets of }C\right\}  $ satisfies $H\in\left\{  F\subseteq
C\ \mid\ F\neq C\right\}  $. In other words, we have%
\[
\left\{  \text{linear subsets of }C\right\}  \subseteq\left\{  F\subseteq
C\ \mid\ F\neq C\right\}  .
\]
Combining this with%
\[
\left\{  F\subseteq C\ \mid\ F\neq C\right\}  \subseteq\left\{  \text{linear
subsets of }C\right\}  ,
\]
we obtain
\[
\left\{  F\subseteq C\ \mid\ F\neq C\right\}  =\left\{  \text{linear subsets
of }C\right\}  .
\]
}. Hence, the summation sign \textquotedblleft$\sum_{\substack{F\subseteq
C;\\F\neq C}}$\textquotedblright\ can be rewritten as \textquotedblleft%
$\sum_{\substack{F\text{ is a linear}\\\text{subset of }C}}$\textquotedblright%
. Thus,
\[
\sum_{\substack{F\subseteq C;\\F\neq C}}\left(  -1\right)  ^{\left\vert
F\right\vert }=\sum_{\substack{F\text{ is a linear}\\\text{subset of }%
C}}\left(  -1\right)  ^{\left\vert F\right\vert }.
\]
Comparing this with (\ref{pf.lem.carcs.linear2.1}), we find%
\[
\sum_{\substack{F\subseteq C\cap A\\\text{is linear}}}\left(  -1\right)
^{\left\vert F\right\vert }=\sum_{\substack{F\subseteq C;\\F\neq C}}\left(
-1\right)  ^{\left\vert F\right\vert }=\left(  -1\right)  ^{\ell\left(
\gamma\right)  -1}\ \ \ \ \ \ \ \ \ \ \left(  \text{by
(\ref{pf.lem.carcs.linear2.2})}\right)  .
\]
This proves Lemma \ref{lem.carcs.linear2} \textbf{(a)}. \medskip

\textbf{(b)} Assume that $\gamma$ is a $\overline{D}$-cycle. Thus,
$\operatorname*{CArcs}\gamma\subseteq\left(  V\times V\right)  \setminus A$
(by the definition of a $\overline{D}$-cycle, since $\overline{D}=\left(
V,\ \left(  V\times V\right)  \setminus A\right)  $). In other words,
$C\subseteq\left(  V\times V\right)  \setminus A$ (since
$C=\operatorname*{CArcs}\gamma$). Therefore, $C\cap A=\varnothing
$\ \ \ \ \footnote{\textit{Proof.} Let $c\in C\cap A$. Thus, $c\in C\cap
A\subseteq C\subseteq\left(  V\times V\right)  \setminus A$, so that $c\notin
A$. But this contradicts $c\in C\cap A\subseteq A$.
\par
Forget that we fixed $c$. We thus have obtained a contradiction for each $c\in
C\cap A$. Hence, there is no $c\in C\cap A$. In other words, $C\cap
A=\varnothing$.}. Hence,%
\begin{equation}
\sum_{\substack{F\subseteq C\cap A\\\text{is linear}}}\left(  -1\right)
^{\left\vert F\right\vert }=\sum_{\substack{F\subseteq\varnothing\\\text{is
linear}}}\left(  -1\right)  ^{\left\vert F\right\vert }.
\label{pf.lem.carcs.linear2.4}%
\end{equation}
However, the set $\varnothing$ is linear (as a subset of $V\times
V$)\ \ \ \ \footnote{\textit{Proof.} We have $\varnothing\subseteq C$ and
$\varnothing\neq C$ (since $C\neq\varnothing$). Hence, $\varnothing$ is a
proper subset of $C$. However, Lemma \ref{lem.carcs.linear1} \textbf{(b)}
shows that every proper subset of $C$ is linear. Thus, $\varnothing$ is linear
(since $\varnothing$ is a proper subset of $C$).}. Thus, the only linear
subset of $\varnothing$ is $\varnothing$ itself (since $\varnothing$ is a
linear subset of $\varnothing$, and is clearly the only subset of
$\varnothing$). Therefore, the sum $\sum_{\substack{F\subseteq\varnothing
\\\text{is linear}}}\left(  -1\right)  ^{\left\vert F\right\vert }$ has only
one addend, namely the addend for $F=\varnothing$. Thus, this sum rewrites as
follows:%
\begin{align*}
\sum_{\substack{F\subseteq\varnothing\\\text{is linear}}}\left(  -1\right)
^{\left\vert F\right\vert }  &  =\left(  -1\right)  ^{\left\vert
\varnothing\right\vert }=\left(  -1\right)  ^{0}\ \ \ \ \ \ \ \ \ \ \left(
\text{since }\left\vert \varnothing\right\vert =0\right) \\
&  =1.
\end{align*}
Hence, (\ref{pf.lem.carcs.linear2.4}) rewrites as%
\[
\sum_{\substack{F\subseteq C\cap A\\\text{is linear}}}\left(  -1\right)
^{\left\vert F\right\vert }=1.
\]
This proves Lemma \ref{lem.carcs.linear2} \textbf{(b)}. \medskip

\textbf{(c)} Assume that $\gamma$ is neither a $D$-cycle nor a $\overline{D}%
$-cycle. Hence, $C\not \subseteq A$\ \ \ \ \footnote{\textit{Proof.} Assume
the contrary. Thus, $C\subseteq A$. In other words, $\operatorname*{CArcs}%
\gamma\subseteq A$ (since $C=\operatorname*{CArcs}\gamma$).
\par
Recall that $\gamma$ is a rotation-equivalence class of nonempty tuples of
distinct elements of $V$. Hence, from $\operatorname*{CArcs}\gamma\subseteq
A$, we conclude that $\gamma$ is a $D$-cycle (by the definition of a
$D$-cycle). This contradicts the fact that $\gamma$ is not a $D$-cycle. This
contradiction shows that our assumption was false, qed.}. Hence, $C\cap A\neq
C$. However, $C\cap A$ is clearly a subset of $C$. Thus, $C\cap A$ is a proper
subset of $C$ (since $C\cap A\neq C$).

Furthermore, $C\cap A\neq\varnothing$\ \ \ \ \footnote{\textit{Proof.} Assume
the contrary. Thus, $C\cap A=\varnothing$. However, we have $X\setminus
Y=X\setminus\left(  X\cap Y\right)  $ for any two sets $X$ and $Y$. Applying
this to $X=C$ and $Y=A$, we obtain $C\setminus A=C\setminus\underbrace{\left(
C\cap A\right)  }_{=\varnothing}=C\setminus\varnothing=C$. Therefore,
$C=\underbrace{C}_{\subseteq V\times V}\setminus A\subseteq\left(  V\times
V\right)  \setminus A$. In other words, $\operatorname*{CArcs}\gamma
\subseteq\left(  V\times V\right)  \setminus A$ (since
$C=\operatorname*{CArcs}\gamma$).
\par
Recall that $\gamma$ is a rotation-equivalence class of nonempty tuples of
distinct elements of $V$. Hence, from $\operatorname*{CArcs}\gamma
\subseteq\left(  V\times V\right)  \setminus A$, we conclude that $\gamma$ is
a $\overline{D}$-cycle (by the definition of a $\overline{D}$-cycle, since
$\overline{D}=\left(  V,\ \left(  V\times V\right)  \setminus A\right)  $).
This contradicts the fact that $\gamma$ is not a $\overline{D}$-cycle. This
contradiction shows that our assumption was false, qed.}.

Now, from Lemma \ref{lem.carcs.linear1}, we easily obtain%
\[
\left\{  F\subseteq C\cap A\right\}  =\left\{  \text{linear subsets of }C\cap
A\right\}
\]
\footnote{\textit{Proof.} We shall first prove that $\left\{  F\subseteq C\cap
A\right\}  \subseteq\left\{  \text{linear subsets of }C\cap A\right\}  $.
\par
Indeed, let $G\in\left\{  F\subseteq C\cap A\right\}  $. Thus, $G$ is a subset
of $C\cap A$. Hence, $G$ is a proper subset of $C$ (since $G$ is a subset of
$C\cap A$, but $C\cap A$ is a proper subset of $C$). Hence, $G$ is linear
(since Lemma \ref{lem.carcs.linear1} \textbf{(b)} shows that every proper
subset of $C$ is linear). Therefore, $G$ is a linear subset of $C\cap A$.
Hence, $G\in\left\{  \text{linear subsets of }C\cap A\right\}  $.
\par
Forget that we fixed $G$. We thus have proved that each $G\in\left\{
F\subseteq C\cap A\right\}  $ satisfies $G\in\left\{  \text{linear subsets of
}C\cap A\right\}  $. In other words, we have%
\[
\left\{  F\subseteq C\cap A\right\}  \subseteq\left\{  \text{linear subsets of
}C\cap A\right\}  .
\]
\par
Let us now prove the reverse inclusion.
\par
Indeed, let $H\in\left\{  \text{linear subsets of }C\cap A\right\}  $. Thus,
$H$ is a linear subset of $C\cap A$. Hence, $H$ is a subset of $C\cap A$, so
that $H\subseteq C\cap A$. In other words, $H\in\left\{  F\subseteq C\cap
A\right\}  $.
\par
Forget that we fixed $H$. We thus have proved that each $H\in\left\{
\text{linear subsets of }C\cap A\right\}  $ satisfies $H\in\left\{  F\subseteq
C\cap A\right\}  $. In other words, we have%
\[
\left\{  \text{linear subsets of }C\cap A\right\}  \subseteq\left\{
F\subseteq C\cap A\right\}  .
\]
Combining this with%
\[
\left\{  F\subseteq C\cap A\right\}  \subseteq\left\{  \text{linear subsets of
}C\cap A\right\}  ,
\]
we obtain
\[
\left\{  F\subseteq C\cap A\right\}  =\left\{  \text{linear subsets of }C\cap
A\right\}  .
\]
}. Hence, the summation sign \textquotedblleft$\sum_{F\subseteq C\cap A}%
$\textquotedblright\ can be rewritten as \textquotedblleft$\sum
_{\substack{F\text{ is a linear}\\\text{subset of }C\cap A}}$%
\textquotedblright. Thus,
\[
\sum_{F\subseteq C\cap A}\left(  -1\right)  ^{\left\vert F\right\vert }%
=\sum_{\substack{F\text{ is a linear}\\\text{subset of }C\cap A}}\left(
-1\right)  ^{\left\vert F\right\vert }=\sum_{\substack{F\subseteq C\cap
A\\\text{is linear}}}\left(  -1\right)  ^{\left\vert F\right\vert }.
\]
Therefore,%
\begin{align*}
\sum_{\substack{F\subseteq C\cap A\\\text{is linear}}}\left(  -1\right)
^{\left\vert F\right\vert }  &  =\sum_{F\subseteq C\cap A}\left(  -1\right)
^{\left\vert F\right\vert }\\
&  =\left[  C\cap A=\varnothing\right]  \ \ \ \ \ \ \ \ \ \ \left(  \text{by
Lemma \ref{lem.cancel}, applied to }B=C\cap A\right) \\
&  =0\ \ \ \ \ \ \ \ \ \ \left(  \text{since }C\cap A\neq\varnothing\right)  .
\end{align*}
This proves Lemma \ref{lem.carcs.linear2} \textbf{(c)}.
\end{proof}

Before we state the next lemma, let us again recall that the symbols
\textquotedblleft$\sqcup$\textquotedblright\ and \textquotedblleft$\bigsqcup
$\textquotedblright\ stand for unions of disjoint sets.

\begin{lemma}
\label{lem.carcs.linear-split}Let $D=\left(  V,A\right)  $ be a digraph. Let
$\sigma\in\mathfrak{S}_{V}$ be a permutation of $V$. Let $\gamma_{1}%
,\gamma_{2},\ldots,\gamma_{k}$ be the cycles of $\sigma$, listed with no
repetition\footnotemark. For each $i\in\left[  k\right]  $, let $C_{i}%
:=\operatorname*{CArcs}\left(  \gamma_{i}\right)  $.

Let $F$ be any set. Then:

\begin{enumerate}
\item[\textbf{(a)}] The set $F$ is a linear subset of $\mathbf{A}_{\sigma}\cap
A$ if and only if $F$ can be written as $F=\bigsqcup_{j\in\left[  k\right]
}F_{j}$, where each $F_{j}$ is a linear subset of $C_{j}\cap A$.

\item[\textbf{(b)}] In this case, the subsets $F_{j}$ are uniquely determined
by $F$ (namely, $F_{j}=F\cap C_{j}$ for each $j\in\left[  k\right]  $).
\end{enumerate}
\end{lemma}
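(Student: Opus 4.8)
The plan is to reduce everything to the disjoint-decomposition machinery already in place, in exact parallel with the proof of Proposition~\ref{prop.level.lin-sub-Af}, now using the entry sets of the cycles in place of the level sets. First I would introduce, for each $i\in\left[  k\right]  $, the set $V_{i}$ of entries of the cycle $\gamma_{i}$. Since the cycles of a permutation partition its ground set, the sets $V_{1},V_{2},\ldots,V_{k}$ are pairwise disjoint and satisfy $V=V_{1}\sqcup V_{2}\sqcup\cdots\sqcup V_{k}$. Each $C_{i}=\operatorname{CArcs}\left(  \gamma_{i}\right)  $ consists of pairs of cyclically consecutive entries of $\gamma_{i}$, and is thus a subset of $V_{i}\times V_{i}$; consequently the sets $C_{1},C_{2},\ldots,C_{k}$ are pairwise disjoint, as they lie in the pairwise disjoint sets $V_{i}\times V_{i}$.

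Next I would record the two decompositions that drive the argument. By Definition~\ref{def.Asigma}, together with the fact that $\operatorname{Cycs}\sigma=\left\{  \gamma_{1},\gamma_{2},\ldots,\gamma_{k}\right\}  $, we have $\mathbf{A}_{\sigma}=\bigcup_{i\in\left[  k\right]  }C_{i}=\bigsqcup_{i\in\left[  k\right]  }C_{i}$, and therefore $\mathbf{A}_{\sigma}\cap A=\bigsqcup_{i\in\left[  k\right]  }\left(  C_{i}\cap A\right)  $ (again a disjoint union, since the $C_{i}$ are disjoint). These play the role of Proposition~\ref{prop.level.Af=sum}. Now for part \textbf{(a)}: for the ``$\Longleftarrow$'' direction, assume $F=\bigsqcup_{j\in\left[  k\right]  }F_{j}$ with each $F_{j}$ a linear subset of $C_{j}\cap A$. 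Since $F_{j}\subseteq C_{j}\cap A\subseteq C_{j}\subseteq V_{j}\times V_{j}$ and the $V_{j}$ are disjoint, Corollary~\ref{cor.linear-djun} shows that $F=\bigcup_{j}F_{j}$ is a linear subset of $V\times V$; and $F\subseteq\bigsqcup_{j}\left(  C_{j}\cap A\right)  =\mathbf{A}_{\sigma}\cap A$, so $F$ is a linear subset of $\mathbf{A}_{\sigma}\cap A$. For the ``$\Longrightarrow$'' direction, assume $F$ is a linear subset of $\mathbf{A}_{\sigma}\cap A$, and set $F_{j}:=F\cap C_{j}$ for each $j$. Each $F_{j}$ is a subset of the linear set $F$, hence linear by Proposition~\ref{prop.linear.subset} (and thus linear as a subset of $V_{j}\times V_{j}$ as well, by Proposition~\ref{prop.linear.VW}); moreover $F_{j}\subseteq C_{j}$ together with $F_{j}\subseteq F\subseteq A$ gives $F_{j}\subseteq C_{j}\cap A$. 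Finally, from $F\subseteq\mathbf{A}_{\sigma}\cap A\subseteq\mathbf{A}_{\sigma}=\bigsqcup_{j}C_{j}$ I would conclude $F=\bigsqcup_{j}\left(  F\cap C_{j}\right)  =\bigsqcup_{j}F_{j}$, which is the desired form.

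For part \textbf{(b)}, uniqueness comes from intersecting with $C_{j}$: if $F=\bigsqcup_{i}F_{i}$ with each $F_{i}\subseteq C_{i}\cap A\subseteq C_{i}$, then $F\cap C_{j}=\bigsqcup_{i}\left(  F_{i}\cap C_{j}\right)  $, and since the $C_{i}$ are pairwise disjoint this collapses to $F_{j}$, giving $F_{j}=F\cap C_{j}$. The only genuine content is the disjointness of the entry sets $V_{i}$ and the inclusions $C_{i}\subseteq V_{i}\times V_{i}$, together with the two invocations of the linearity lemmas. I expect the main obstacle to be precisely this bookkeeping --- verifying $C_{i}\subseteq V_{i}\times V_{i}$ and the disjointness of the $V_{i}$ directly from the definition of the cycles of a permutation; once that is in hand, the remainder is a direct transcription of the level-decomposition argument.
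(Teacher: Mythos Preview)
Your proposal is correct and follows essentially the same approach as the paper's proof: introduce the entry sets $V_i$, verify $C_i\subseteq V_i\times V_i$ and the disjointness of the $C_i$, use Corollary~\ref{cor.linear-djun} for the ``$\Longleftarrow$'' direction and Proposition~\ref{prop.linear.subset} for the ``$\Longrightarrow$'' direction, and deduce uniqueness from the disjointness of the $C_i$. The paper spells out the bookkeeping (disjointness of the $V_i$, $C_i\subseteq V_i\times V_i$, and $\mathbf{A}_{\sigma}=\bigcup_{j}C_{j}$) in more detail, but the argument is the same.
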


\footnotetext{Keep in mind that a cycle is a rotation-equivalence class. Thus,
\textquotedblleft listed with no repetition\textquotedblright\ means that no
two of $\gamma_{1},\gamma_{2},\ldots,\gamma_{k}$ are the same
rotation-equivalence class. For example, if $\gamma_{1}$ is $\left(
1,2\right)  _{\sim}$, then $\gamma_{2}$ cannot be $\left(  2,1\right)  _{\sim
}$.}

\begin{proof}
Recall that $\gamma_{1},\gamma_{2},\ldots,\gamma_{k}$ are the cycles of
$\sigma$, listed with no repetition. Thus, these cycles $\gamma_{1},\gamma
_{2},\ldots,\gamma_{k}$ are distinct, and we have $\operatorname*{Cycs}%
\sigma=\left\{  \gamma_{1},\gamma_{2},\ldots,\gamma_{k}\right\}  $ (since
$\operatorname*{Cycs}\sigma$ is defined to be the set of all cycles of
$\sigma$).

We know that $\sigma$ is a permutation of $V$. Hence, each element of $V$
belongs to exactly one cycle of $\sigma$. In other words, each element of $V$
belongs to exactly one of the cycles $\gamma_{1},\gamma_{2},\ldots,\gamma_{k}$
(since $\gamma_{1},\gamma_{2},\ldots,\gamma_{k}$ are the cycles of $\sigma$,
listed with no repetition).

For each $i\in\left[  k\right]  $, let $V_{i}$ be the set of all entries of
the cycle $\gamma_{i}$. Thus, $V_{1},V_{2},\ldots,V_{k}$ are $k$ subsets of
$V$. These $k$ subsets $V_{1},V_{2},\ldots,V_{k}$ are furthermore
disjoint\footnote{\textit{Proof.} Let $i$ and $j$ be two distinct elements of
$\left[  k\right]  $. We shall show that $V_{i}\cap V_{j}=\varnothing$.
\par
Indeed, assume the contrary. Thus, $V_{i}\cap V_{j}\neq\varnothing$. Hence,
there exists some element $v\in V_{i}\cap V_{j}$. Consider this $v$.
\par
We have $v\in V_{i}\cap V_{j}\subseteq V_{i}$. In other words, $v$ is an entry
of the cycle $\gamma_{i}$ (since $V_{i}$ was defined as the set of all entries
of the cycle $\gamma_{i}$).
\par
We have $v\in V_{i}\cap V_{j}\subseteq V_{j}$. In other words, $v$ is an entry
of the cycle $\gamma_{j}$ (since $V_{j}$ was defined as the set of all entries
of the cycle $\gamma_{j}$).
\par
The two cycles $\gamma_{i}$ and $\gamma_{j}$ have the entry $v$ in common
(since $v$ is an entry of the cycle $\gamma_{i}$, and since $v$ is an entry of
the cycle $\gamma_{j}$).
\par
However, the cycles $\gamma_{1},\gamma_{2},\ldots,\gamma_{k}$ are distinct.
Hence, the two cycles $\gamma_{i}$ and $\gamma_{j}$ are distinct (since $i$
and $j$ are distinct). However, two distinct cycles of $\sigma$ cannot have
any entries in common (by the basic properties of the cycles of a
permutation). Thus, $\gamma_{i}$ and $\gamma_{j}$ have no entries in common
(since $\gamma_{i}$ and $\gamma_{j}$ are two distinct cycles of $\sigma$).
This contradicts the fact that the two cycles $\gamma_{i}$ and $\gamma_{j}$
have the entry $v$ in common. This contradiction shows that our assumption was
false. In other words, we have $V_{i}\cap V_{j}=\varnothing$.
\par
Forget that we fixed $i$ and $j$. We thus have proved that $V_{i}\cap
V_{j}=\varnothing$ whenever $i$ and $j$ are two distinct elements of $\left[
k\right]  $. In other words, the subsets $V_{1},V_{2},\ldots,V_{k}$ are
disjoint.}. In other words, the sets $V_{j}$ for different $j\in\left[
k\right]  $ are disjoint.

For any given $i\in\left[  k\right]  $ and any given $v\in V$, we have the
following logical equivalence:%
\begin{equation}
\left(  v\in V_{i}\right)  \Longleftrightarrow\left(  v\text{ is an entry of
the cycle }\gamma_{i}\right)  \label{pf.lem.carcs.linear-split.equiva}%
\end{equation}
(since $V_{i}$ is the set of all entries of the cycle $\gamma_{i}$).

We have $V=V_{1}\cup V_{2}\cup\cdots\cup V_{k}$%
\ \ \ \ \footnote{\textit{Proof.} Let $v\in V$. Then, $v$ belongs to exactly
one of the cycles $\gamma_{1},\gamma_{2},\ldots,\gamma_{k}$ (since each
element of $V$ belongs to exactly one of the cycles $\gamma_{1},\gamma
_{2},\ldots,\gamma_{k}$). In other words, there is exactly one $i\in\left[
k\right]  $ such that $v$ belongs to $\gamma_{i}$. Consider this $i$. Now, $v$
belongs to $\gamma_{i}$. In other words, $v$ is an entry of the cycle
$\gamma_{i}$. Hence, $v\in V_{i}$ (by (\ref{pf.lem.carcs.linear-split.equiva}%
)). Thus, $v\in V_{i}\subseteq V_{1}\cup V_{2}\cup\cdots\cup V_{k}$ (since
$V_{i}$ is one of the terms in the union $V_{1}\cup V_{2}\cup\cdots\cup V_{k}%
$).
\par
Forget that we fixed $v$. We thus have shown that $v\in V_{1}\cup V_{2}%
\cup\cdots\cup V_{k}$ for each $v\in V$. In other words, $V\subseteq V_{1}\cup
V_{2}\cup\cdots\cup V_{k}$. Combining this with $V_{1}\cup V_{2}\cup\cdots\cup
V_{k}\subseteq V$ (which is obvious, since $V_{1},V_{2},\ldots,V_{k}$ are
subsets of $V$), we obtain $V=V_{1}\cup V_{2}\cup\cdots\cup V_{k}$.}. Thus,%
\[
V=V_{1}\cup V_{2}\cup\cdots\cup V_{k}=\bigcup_{j\in\left[  k\right]  }%
V_{j}=\bigsqcup_{j\in\left[  k\right]  }V_{j}%
\]
(since the sets $V_{j}$ for different $j\in\left[  k\right]  $ are disjoint).

For each $i\in\left[  k\right]  $, we have $C_{i}\subseteq V_{i}\times V_{i}%
$\ \ \ \ \footnote{\textit{Proof.} Let $i\in\left[  k\right]  $. Then,
$C_{i}=\operatorname*{CArcs}\left(  \gamma_{i}\right)  $ (by the definition of
$C_{i}$). Write the cycle $\gamma_{i}$ in the form $\gamma_{i}=\left(
a_{1},a_{2},\ldots,a_{p}\right)  _{\sim}$. Thus, the entries of the cycle
$\gamma_{i}$ are $a_{1},a_{2},\ldots,a_{p}$. Hence, $V_{i}=\left\{
a_{1},a_{2},\ldots,a_{p}\right\}  $ (since $V_{i}$ was defined as the set of
all entries of the cycle $\gamma_{i}$). Therefore, $a_{1},a_{2},\ldots,a_{p}$
are elements of $V_{i}$. Hence, the pairs $\left(  a_{1},a_{2}\right)
,\ \left(  a_{2},a_{3}\right)  ,\ \ldots,\ \left(  a_{p-1},a_{p}\right)
,\ \left(  a_{p},a_{1}\right)  $ are elements of $V_{i}\times V_{i}$ (since
all their entries belong to $V_{i}$).
\par
However, from $\gamma_{i}=\left(  a_{1},a_{2},\ldots,a_{p}\right)  _{\sim}$,
we obtain%
\begin{align*}
\operatorname*{CArcs}\left(  \gamma_{i}\right)   &  =\operatorname*{CArcs}%
\left(  \left(  a_{1},a_{2},\ldots,a_{p}\right)  _{\sim}\right) \\
&  =\operatorname*{CArcs}\left(  a_{1},a_{2},\ldots,a_{p}\right)
\ \ \ \ \ \ \ \ \ \ \left(
\begin{array}
[c]{c}%
\text{since Definition \ref{def.reqc.features} \textbf{(b)} yields}\\
\text{that }\operatorname*{CArcs}\left(  w_{\sim}\right)
=\operatorname*{CArcs}w\text{ for each }w\in V^{p}%
\end{array}
\right) \\
&  =\left\{  \left(  a_{1},a_{2}\right)  ,\ \left(  a_{2},a_{3}\right)
,\ \ldots,\ \left(  a_{p-1},a_{p}\right)  ,\ \left(  a_{p},a_{1}\right)
\right\} \\
&  \ \ \ \ \ \ \ \ \ \ \ \ \ \ \ \ \ \ \ \ \left(
\begin{array}
[c]{c}%
\text{by (\ref{eq.def.Arcs-and-Carcs.b.2}), applied to }a=\left(  a_{1}%
,a_{2},\ldots,a_{p}\right) \\
\text{instead of }v=\left(  v_{1},v_{2},\ldots,v_{k}\right)
\end{array}
\right) \\
&  \subseteq V_{i}\times V_{i}%
\end{align*}
(since the pairs $\left(  a_{1},a_{2}\right)  ,\ \left(  a_{2},a_{3}\right)
,\ \ldots,\ \left(  a_{p-1},a_{p}\right)  ,\ \left(  a_{p},a_{1}\right)  $ are
elements of $V_{i}\times V_{i}$). Therefore,%
\[
C_{i}=\operatorname*{CArcs}\left(  \gamma_{i}\right)  \subseteq V_{i}\times
V_{i},
\]
qed.}. Renaming the variable $i$ as $j$ in this sentence, we obtain the
following: For each $j\in\left[  k\right]  $, we have%
\begin{equation}
C_{j}\subseteq V_{j}\times V_{j}. \label{pf.lem.carcs.linear-split.Cjsub}%
\end{equation}
Hence, the sets $C_{1},C_{2},\ldots,C_{k}$ are
disjoint\footnote{\textit{Proof.} Let $i$ and $j$ be two distinct elements of
$\left[  k\right]  $. We shall show that $C_{i}\cap C_{j}=\varnothing$.
\par
Indeed, $i$ and $j$ are distinct, and thus we have $V_{i}\cap V_{j}%
=\varnothing$ (since the $k$ subsets $V_{1},V_{2},\ldots,V_{k}$ are disjoint).
However, $C_{j}\subseteq V_{j}\times V_{j}$ (by
(\ref{pf.lem.carcs.linear-split.Cjsub})) and $C_{i}\subseteq V_{i}\times
V_{i}$ (similarly). Hence,%
\[
\underbrace{C_{i}}_{\subseteq V_{i}\times V_{i}}\cap\underbrace{C_{j}%
}_{\subseteq V_{j}\times V_{j}}\subseteq\left(  V_{i}\times V_{i}\right)
\cap\left(  V_{j}\times V_{j}\right)  =\underbrace{\left(  V_{i}\cap
V_{j}\right)  }_{=\varnothing}\times\underbrace{\left(  V_{i}\cap
V_{j}\right)  }_{=\varnothing}=\varnothing\times\varnothing=\varnothing.
\]
Therefore, $C_{i}\cap C_{j}=\varnothing$.
\par
Forget that we fixed $i$ and $j$. We thus have shown that $C_{i}\cap
C_{j}=\varnothing$ whenever $i$ and $j$ are two distinct elements of $\left[
k\right]  $. In other words, the sets $C_{1},C_{2},\ldots,C_{k}$ are
disjoint.}.

The definition of $\mathbf{A}_{\sigma}$ yields%
\begin{align}
\mathbf{A}_{\sigma}  &  =\bigcup_{c\in\operatorname*{Cycs}\sigma
}\operatorname*{CArcs}c\nonumber\\
&  =\left(  \operatorname*{CArcs}\left(  \gamma_{1}\right)  \right)
\cup\left(  \operatorname*{CArcs}\left(  \gamma_{2}\right)  \right)
\cup\cdots\cup\left(  \operatorname*{CArcs}\left(  \gamma_{k}\right)  \right)
\nonumber\\
&  \ \ \ \ \ \ \ \ \ \ \ \ \ \ \ \ \ \ \ \ \left(  \text{since }%
\operatorname*{Cycs}\sigma=\left\{  \gamma_{1},\gamma_{2},\ldots,\gamma
_{k}\right\}  \right) \nonumber\\
&  =\bigcup_{j\in\left[  k\right]  }\underbrace{\operatorname*{CArcs}\left(
\gamma_{j}\right)  }_{\substack{=C_{j}\\\text{(since }C_{j}\text{ was
defined}\\\text{to be }\operatorname*{CArcs}\left(  \gamma_{j}\right)
\text{)}}}\nonumber\\
&  =\bigcup_{j\in\left[  k\right]  }C_{j}.
\label{pf.lem.carcs.linear-split.Asig=un}%
\end{align}

\medskip

\textbf{(a)} We must prove that $F$ is a linear subset of $\mathbf{A}_{\sigma
}\cap A$ if and only if $F$ can be written as $F=\bigsqcup_{j\in\left[
k\right]  }F_{j}$, where each $F_{j}$ is a linear subset of $C_{j}\cap A$.

We shall prove the \textquotedblleft$\Longleftarrow$\textquotedblright\ and
\textquotedblleft$\Longrightarrow$\textquotedblright\ directions of this
equivalence separately:

$\Longleftarrow:$ Assume that $F$ can be written as $F=\bigsqcup_{j\in\left[
k\right]  }F_{j}$, where each $F_{j}$ is a linear subset of $C_{j}\cap A$.
Consider these subsets $F_{j}$.

Let $j\in\left[  k\right]  $. Then, $F_{j}$ is a linear subset of $C_{j}\cap
A$ (according to the preceding paragraph). Thus, $F_{j}\subseteq C_{j}\cap
A\subseteq C_{j}\subseteq V_{j}\times V_{j}$ (by
(\ref{pf.lem.carcs.linear-split.Cjsub})). Therefore, $F_{j}$ is a linear
subset of $V_{j}\times V_{j}$ (since $F_{j}$ is linear).

Forget that we fixed $j$. We thus have shown that for each $j\in\left[
k\right]  $, the set $F_{j}$ is a linear subset of $V_{j}\times V_{j}$. Hence,
Corollary \ref{cor.linear-djun} (applied to $J=\left[  k\right]  $) shows that
the union $\bigcup_{j\in J}F_{j}$ is a linear subset of $V\times V$. In other
words, $F$ is a linear subset of $V\times V$ (since $F=\bigsqcup_{j\in\left[
k\right]  }F_{j}=\bigcup_{j\in J}F_{j}$).

Finally,%
\begin{align*}
F  &  =\bigsqcup_{j\in\left[  k\right]  }F_{j}=\bigcup_{j\in\left[  k\right]
}\underbrace{F_{j}}_{\substack{\subseteq C_{j}\cap A\\\text{(since }%
F_{j}\text{ is a linear subset of }C_{j}\cap A\\\text{(by assumption))}%
}}\subseteq\bigcup_{j\in\left[  k\right]  }\left(  C_{j}\cap A\right) \\
&  =\underbrace{\left(  \bigcup_{j\in\left[  k\right]  }C_{j}\right)
}_{\substack{=\mathbf{A}_{\sigma}\\\text{(by
(\ref{pf.lem.carcs.linear-split.Asig=un}))}}}\cap A=\mathbf{A}_{\sigma}\cap A.
\end{align*}
Hence, $F$ is a subset of $\mathbf{A}_{\sigma}\cap A$. This shows that $F$ is
a linear subset of $\mathbf{A}_{\sigma}\cap A$ (since $F$ is linear). This
proves the \textquotedblleft$\Longleftarrow$\textquotedblright\ direction of
Lemma \ref{lem.carcs.linear-split} \textbf{(a)}.

$\Longrightarrow:$ Assume that $F$ is a linear subset of $\mathbf{A}_{\sigma
}\cap A$. Thus,%
\begin{align*}
F  &  \subseteq\mathbf{A}_{\sigma}\cap A\subseteq\mathbf{A}_{\sigma}%
=\bigcup_{j\in\left[  k\right]  }C_{j}\ \ \ \ \ \ \ \ \ \ \left(  \text{by
(\ref{pf.lem.carcs.linear-split.Asig=un})}\right) \\
&  =C_{1}\cup C_{2}\cup\cdots\cup C_{k}.
\end{align*}

For each $j\in\left[  k\right]  $, let us set $G_{j}:=F\cap C_{j}$. Thus,%
\begin{align*}
G_{1}\cup G_{2}\cup\cdots\cup G_{k}  &  =\left(  F\cap C_{1}\right)
\cup\left(  F\cap C_{2}\right)  \cup\cdots\cup\left(  F\cap C_{k}\right) \\
&  =F\cap\left(  C_{1}\cup C_{2}\cup\cdots\cup C_{k}\right)
\end{align*}
(since $\left(  X\cap Y_{1}\right)  \cup\left(  X\cap Y_{2}\right)  \cup
\cdots\cup\left(  X\cap Y_{n}\right)  =X\cap\left(  Y_{1}\cup Y_{2}\cup
\cdots\cup Y_{n}\right)  $ for any sets $X,Y_{1},Y_{2},\ldots,Y_{n}$). Hence,%
\[
G_{1}\cup G_{2}\cup\cdots\cup G_{k}=F\cap\left(  C_{1}\cup C_{2}\cup\cdots\cup
C_{k}\right)  =F
\]
(since $F\subseteq C_{1}\cup C_{2}\cup\cdots\cup C_{k}$).

Now, for each $j\in\left[  k\right]  $, the set $G_{j}$ is a linear subset of
$C_{j}\cap A$\ \ \ \ \footnote{\textit{Proof.} Let $j\in\left[  k\right]  $.
Then, $G_{j}=\underbrace{F}_{\subseteq\mathbf{A}_{\sigma}\cap A\subseteq
A}\cap C_{j}\subseteq A\cap C_{j}=C_{j}\cap A$. In other words, $G_{j}$ is a
subset of $C_{j}\cap A$. Furthermore, $G_{j}=F\cap C_{j}\subseteq F$, so that
$G_{j}$ is a subset of $F$. However, $F$ is a linear subset of $V\times V$
(since $F$ is linear, and $F\subseteq\mathbf{A}_{\sigma}\subseteq V\times V$).
Thus, Proposition \ref{prop.linear.subset} shows that any subset of $F$ is
linear as well. Therefore, $G_{j}$ is linear (since $G_{j}$ is a subset of
$F$). Hence, $G_{j}$ is a linear subset of $C_{j}\cap A$ (since $G_{j}$ is a
subset of $C_{j}\cap A$), qed.}. Moreover, the sets $G_{1},G_{2},\ldots,G_{k}$
are disjoint\footnote{\textit{Proof.} Let $i$ and $j$ be two distinct elements
of $\left[  k\right]  $. We shall show that $G_{i}\cap G_{j}=\varnothing$.
\par
Indeed, $i$ and $j$ are distinct, and thus we have $C_{i}\cap C_{j}%
=\varnothing$ (since the $k$ sets $C_{1},C_{2},\ldots,C_{k}$ are disjoint).
However, the definition of $G_{j}$ yields
\[
G_{j}=F\cap C_{j}\subseteq C_{j}.
\]
The same argument (applied to $i$ instead of $j$) yields $G_{i}\subseteq
C_{i}$. Hence,%
\[
\underbrace{G_{i}}_{\subseteq C_{i}}\cap\underbrace{G_{j}}_{\subseteq C_{j}%
}\subseteq C_{i}\cap C_{j}=\varnothing.
\]
Therefore, $G_{i}\cap G_{j}=\varnothing$.
\par
Forget that we fixed $i$ and $j$. We thus have shown that $G_{i}\cap
G_{j}=\varnothing$ whenever $i$ and $j$ are two distinct elements of $\left[
k\right]  $. In other words, the sets $G_{1},G_{2},\ldots,G_{k}$ are
disjoint.}. Thus, the disjoint union $G_{1}\sqcup G_{2}\sqcup\cdots\sqcup
G_{k}=\bigsqcup_{j\in\left[  k\right]  }G_{j}$ is well-defined. This disjoint
union is%
\[
\bigsqcup_{j\in\left[  k\right]  }G_{j}=G_{1}\sqcup G_{2}\sqcup\cdots\sqcup
G_{k}=G_{1}\cup G_{2}\cup\cdots\cup G_{k}=F.
\]
Thus, $F=\bigsqcup_{j\in\left[  k\right]  }G_{j}$.

Altogether, we have now shown that $F=\bigsqcup_{j\in\left[  k\right]  }G_{j}%
$, and that each $G_{j}$ is a linear subset of $C_{j}\cap A$. Hence, $F$ can
be written as $F=\bigsqcup_{j\in\left[  k\right]  }F_{j}$, where each $F_{j}$
is a linear subset of $C_{j}\cap A$ (namely, for $F_{j}=G_{j}$). This proves
the \textquotedblleft$\Longrightarrow$\textquotedblright\ direction of Lemma
\ref{lem.carcs.linear-split} \textbf{(a)}. \medskip

\textbf{(b)} Assume that $F$ is written as $F=\bigsqcup_{j\in\left[  k\right]
}F_{j}$, where each $F_{j}$ is a linear subset of $C_{j}\cap A$. We must show
that $F_{j}=F\cap C_{j}$ for each $j\in\left[  k\right]  $.

Indeed, we have $F=\bigsqcup_{j\in\left[  k\right]  }F_{j}=\bigsqcup
_{i\in\left[  k\right]  }F_{i}$.

Now, let $j\in\left[  k\right]  $. Then, $F_{j}$ is a subset of $F$ (since
$F=\bigsqcup_{i\in\left[  k\right]  }F_{i}$) and also a subset of $C_{j}\cap
A$ (since we required that each $F_{j}$ be a linear subset of $C_{j}\cap A$).
In other words, $F_{j}$ is a subset of both $F$ and $C_{j}\cap A$. Thus,
$F_{j}$ is a subset of the intersection $F\cap\left(  C_{j}\cap A\right)  $ as
well. In other words,%
\[
F_{j}\subseteq F\cap\underbrace{\left(  C_{j}\cap A\right)  }_{\subseteq
C_{j}}\subseteq F\cap C_{j}.
\]

Let us now show that $F\cap C_{j}\subseteq F_{j}$.

Indeed, let $\alpha\in F\cap C_{j}$. Then, $\alpha\in F\cap C_{j}\subseteq
F=\bigsqcup_{i\in\left[  k\right]  }F_{i}$. Hence, $\alpha\in F_{i}$ for some
$i\in\left[  k\right]  $. Consider this $i$. Recall that $F_{j}$ is a subset
of $C_{j}\cap A$; thus, $F_{j}\subseteq C_{j}\cap A\subseteq C_{j}$. The same
argument (applied to $i$ instead of $j$) yields $F_{i}\subseteq C_{i}$. Hence,
$\alpha\in F_{i}\subseteq C_{i}$. However, $\alpha\in F\cap C_{j}\subseteq
C_{j}$. Thus, the element $\alpha$ belongs to both sets $C_{i}$ and $C_{j}$.
Therefore, the sets $C_{i}$ and $C_{j}$ have at least one element in common.
In other words, the sets $C_{i}$ and $C_{j}$ are not disjoint. However, we
know that the sets $C_{1},C_{2},\ldots,C_{k}$ are disjoint. The only way to
reconcile the previous two sentences is when $i=j$.

Thus, we obtain $i=j$. Hence, $\alpha\in F_{i}=F_{j}$ (since $i=j$).

Forget that we fixed $\alpha$. We thus have shown that $\alpha\in F_{j}$ for
each $\alpha\in F\cap C_{j}$. In other words, $F\cap C_{j}\subseteq F_{j}$.
Combining this with $F_{j}\subseteq F\cap C_{j}$, we conclude that
$F_{j}=F\cap C_{j}$. This completes the proof of Lemma
\ref{lem.carcs.linear-split} \textbf{(b)}.
\end{proof}

We can now prove Proposition \ref{prop.linear-in-AsigA}:

\begin{proof}
[Proof of Proposition \ref{prop.linear-in-AsigA}.]Let $\gamma_{1},\gamma
_{2},\ldots,\gamma_{k}$ be the cycles of $\sigma$, listed with no repetition
(as in Lemma \ref{lem.carcs.linear-split}). Thus, these cycles $\gamma
_{1},\gamma_{2},\ldots,\gamma_{k}$ are distinct, and we have
$\operatorname*{Cycs}\sigma=\left\{  \gamma_{1},\gamma_{2},\ldots,\gamma
_{k}\right\}  $ (since $\operatorname*{Cycs}\sigma$ is defined to be the set
of all cycles of $\sigma$).

For each $i\in\left[  k\right]  $, let $C_{i}:=\operatorname*{CArcs}\left(
\gamma_{i}\right)  $. Then, the sets $C_{1},C_{2},\ldots,C_{k}$ are
disjoint\footnote{Indeed, this was shown during our above proof of Lemma
\ref{lem.carcs.linear-split}.}.

Now, we observe the following: If $F_{j}$ is a linear subset of $C_{j}\cap A$
for each $j\in\left[  k\right]  $, then the disjoint union $\bigsqcup
_{j\in\left[  k\right]  }F_{j}$ is well-defined\footnote{\textit{Proof.} Let
$F_{j}$ be a linear subset of $C_{j}\cap A$ for each $j\in\left[  k\right]  $.
Then, in particular, we have $F_{j}\subseteq C_{j}\cap A\subseteq C_{j}$ for
each $j\in\left[  k\right]  $. In other words, $F_{j}$ is a subset of $C_{j}$
for each $j\in\left[  k\right]  $. In other words, the sets $F_{1}%
,F_{2},\ldots,F_{k}$ are subsets of $C_{1},C_{2},\ldots,C_{k}$, respectively.
\par
However, the sets $C_{1},C_{2},\ldots,C_{k}$ are disjoint. Thus, their subsets
$F_{1},F_{2},\ldots,F_{k}$ are disjoint as well (since the sets $F_{1}%
,F_{2},\ldots,F_{k}$ are subsets of $C_{1},C_{2},\ldots,C_{k}$, respectively).
In other words, the sets $F_{j}$ for different $j\in\left[  k\right]  $ are
disjoint. Thus, the disjoint union $\bigsqcup_{j\in\left[  k\right]  }F_{j}$
is well-defined.}, and is a linear subset of $\mathbf{A}_{\sigma}\cap A$ (by
the \textquotedblleft$\Longleftarrow$\textquotedblright\ direction of Lemma
\ref{lem.carcs.linear-split} \textbf{(a)}, applied to $F=\bigsqcup
_{j\in\left[  k\right]  }F_{j}$). Hence, the map%
\begin{align*}
&  \text{from }\left\{  \text{families }\left(  F_{j}\right)  _{j\in\left[
k\right]  }\text{, where each }F_{j}\text{ is a linear subset of }C_{j}\cap
A\right\} \\
&  \text{to }\left\{  \text{linear subsets of }\mathbf{A}_{\sigma}\cap
A\right\} \\
&  \text{that sends each family }\left(  F_{j}\right)  _{j\in\left[  k\right]
}\text{ to }\bigsqcup_{j\in\left[  k\right]  }F_{j}%
\end{align*}
is well-defined. Moreover, this map is injective (since Lemma
\ref{lem.carcs.linear-split} \textbf{(b)} shows that the sets $F_{j}$ are
uniquely determined by their union $\bigsqcup_{j\in\left[  k\right]  }F_{j}$)
and surjective (by the \textquotedblleft$\Longrightarrow$\textquotedblright%
\ direction of Lemma \ref{lem.carcs.linear-split} \textbf{(a)}). Thus, it is
bijective. Hence, we can substitute $\bigsqcup_{j\in\left[  k\right]  }F_{j}$
for $F$ in the sum $\sum_{\substack{F\subseteq\mathbf{A}_{\sigma}\cap
A\\\text{is linear}}}\left(  -1\right)  ^{\left\vert F\right\vert }$. We thus
obtain%
\begin{align}
&  \sum_{\substack{F\subseteq\mathbf{A}_{\sigma}\cap A\\\text{is linear}%
}}\left(  -1\right)  ^{\left\vert F\right\vert }\nonumber\\
&  =\sum_{\substack{\left(  F_{j}\right)  _{j\in\left[  k\right]  }\text{ is a
family}\\\text{of linear subsets }F_{j}\subseteq C_{j}\cap A}}\left(
-1\right)  ^{\left\vert \bigsqcup_{j\in\left[  k\right]  }F_{j}\right\vert }.
\label{pf.prop.linear-in-AsigA.long.1}%
\end{align}

However, if $\left(  F_{j}\right)  _{j\in\left[  k\right]  }$ is a family of
linear subsets $F_{j}\subseteq C_{j}\cap A$, then%
\[
\left\vert \bigsqcup_{j\in\left[  k\right]  }F_{j}\right\vert =\sum
_{j\in\left[  k\right]  }\left\vert F_{j}\right\vert
\ \ \ \ \ \ \ \ \ \ \left(  \text{by the sum rule}\right)
\]
and thus%
\begin{equation}
\left(  -1\right)  ^{\left\vert \bigsqcup_{j\in\left[  k\right]  }%
F_{j}\right\vert }=\left(  -1\right)  ^{\sum_{j\in\left[  k\right]
}\left\vert F_{j}\right\vert }=\prod_{j\in\left[  k\right]  }\left(
-1\right)  ^{\left\vert F_{j}\right\vert }.
\label{pf.prop.linear-in-AsigA.long.2}%
\end{equation}

Hence, (\ref{pf.prop.linear-in-AsigA.long.1}) becomes%
\begin{align}
&  \sum_{\substack{F\subseteq\mathbf{A}_{\sigma}\cap A\\\text{is linear}%
}}\left(  -1\right)  ^{\left\vert F\right\vert }\nonumber\\
&  =\sum_{\substack{\left(  F_{j}\right)  _{j\in\left[  k\right]  }\text{ is a
family}\\\text{of linear subsets }F_{j}\subseteq C_{j}\cap A}%
}\underbrace{\left(  -1\right)  ^{\left\vert \bigsqcup_{j\in\left[  k\right]
}F_{j}\right\vert }}_{\substack{=\prod_{j\in\left[  k\right]  }\left(
-1\right)  ^{\left\vert F_{j}\right\vert }\\\text{(by
(\ref{pf.prop.linear-in-AsigA.long.2}))}}}\nonumber\\
&  =\sum_{\substack{\left(  F_{j}\right)  _{j\in\left[  k\right]  }\text{ is a
family}\\\text{of linear subsets }F_{j}\subseteq C_{j}\cap A}}\ \ \prod
_{j\in\left[  k\right]  }\left(  -1\right)  ^{\left\vert F_{j}\right\vert
}\nonumber\\
&  =\prod_{j\in\left[  k\right]  }\ \ \underbrace{\sum_{\substack{F_{j}%
\subseteq C_{j}\cap A\\\text{is linear}}}}_{\substack{=\sum_{\substack{F_{j}%
\subseteq\left(  \operatorname*{CArcs}\left(  \gamma_{j}\right)  \right)  \cap
A\\\text{is linear}}}\\\text{(since }C_{j}=\operatorname*{CArcs}\left(
\gamma_{j}\right)  \\\text{(by the definition of }C_{j}\text{))}}}\left(
-1\right)  ^{\left\vert F_{j}\right\vert }\ \ \ \ \ \ \ \ \ \ \left(  \text{by
the product rule}\right) \nonumber\\
&  =\prod_{j\in\left[  k\right]  }\ \ \sum_{\substack{F_{j}\subseteq\left(
\operatorname*{CArcs}\left(  \gamma_{j}\right)  \right)  \cap A\\\text{is
linear}}}\left(  -1\right)  ^{\left\vert F_{j}\right\vert }\nonumber\\
&  =\prod_{j\in\left[  k\right]  }\ \ \sum_{\substack{F\subseteq\left(
\operatorname*{CArcs}\left(  \gamma_{j}\right)  \right)  \cap A\\\text{is
linear}}}\left(  -1\right)  ^{\left\vert F\right\vert }
\label{pf.prop.linear-in-AsigA.long.3}%
\end{align}
(here, we have renamed the summation index $F_{j}$ as $F$).

On the other hand, recall that the cycles $\gamma_{1},\gamma_{2},\ldots
,\gamma_{k}$ are distinct, and that we have $\operatorname*{Cycs}%
\sigma=\left\{  \gamma_{1},\gamma_{2},\ldots,\gamma_{k}\right\}  $. Hence,
$\gamma_{1},\gamma_{2},\ldots,\gamma_{k}$ are the elements of the set
$\operatorname*{Cycs}\sigma$, listed with no repetition. In other words, the
map
\begin{align*}
\left[  k\right]   &  \rightarrow\operatorname*{Cycs}\sigma,\\
j  &  \mapsto\gamma_{j}%
\end{align*}
is a bijection. Hence, we can substitute $\gamma_{j}$ for $\gamma$ in the
product \newline$\prod_{\gamma\in\operatorname*{Cycs}\sigma}\ \ \sum
_{\substack{F\subseteq\left(  \operatorname*{CArcs}\gamma\right)  \cap
A\\\text{is linear}}}\left(  -1\right)  ^{\left\vert F\right\vert }$. We thus
obtain%
\[
\prod_{\gamma\in\operatorname*{Cycs}\sigma}\ \ \sum_{\substack{F\subseteq
\left(  \operatorname*{CArcs}\gamma\right)  \cap A\\\text{is linear}}}\left(
-1\right)  ^{\left\vert F\right\vert }=\prod_{j\in\left[  k\right]  }%
\ \ \sum_{\substack{F\subseteq\left(  \operatorname*{CArcs}\left(  \gamma
_{j}\right)  \right)  \cap A\\\text{is linear}}}\left(  -1\right)
^{\left\vert F\right\vert }.
\]
Comparing this with (\ref{pf.prop.linear-in-AsigA.long.3}), we obtain%
\begin{align}
&  \sum_{\substack{F\subseteq\mathbf{A}_{\sigma}\cap A\\\text{is linear}%
}}\left(  -1\right)  ^{\left\vert F\right\vert }\nonumber\\
&  =\prod_{\gamma\in\operatorname*{Cycs}\sigma}\ \ \sum_{\substack{F\subseteq
\left(  \operatorname*{CArcs}\gamma\right)  \cap A\\\text{is linear}}}\left(
-1\right)  ^{\left\vert F\right\vert }. \label{pf.prop.linear-in-AsigA.long.4}%
\end{align}

Next, the following three claims follow easily from Lemma
\ref{lem.carcs.linear2}:

\begin{statement}
\textit{Claim 1:} Let $\gamma\in\operatorname*{Cycs}\sigma$. Assume that
$\gamma$ is a $D$-cycle. Then,%
\[
\sum_{\substack{F\subseteq\left(  \operatorname*{CArcs}\gamma\right)  \cap
A\\\text{is linear}}}\left(  -1\right)  ^{\left\vert F\right\vert }=\left(
-1\right)  ^{\ell\left(  \gamma\right)  -1}.
\]

\end{statement}

\begin{statement}
\textit{Claim 2:} Let $\gamma\in\operatorname*{Cycs}\sigma$. Assume that
$\gamma$ is a $\overline{D}$-cycle. Then,%
\[
\sum_{\substack{F\subseteq\left(  \operatorname*{CArcs}\gamma\right)  \cap
A\\\text{is linear}}}\left(  -1\right)  ^{\left\vert F\right\vert }=1.
\]

\end{statement}

\begin{statement}
\textit{Claim 3:} Let $\gamma\in\operatorname*{Cycs}\sigma$. Assume that
$\gamma$ is neither a $D$-cycle nor a $\overline{D}$-cycle. Then,%
\[
\sum_{\substack{F\subseteq\left(  \operatorname*{CArcs}\gamma\right)  \cap
A\\\text{is linear}}}\left(  -1\right)  ^{\left\vert F\right\vert }=0.
\]

\end{statement}

\begin{proof}
[Proof of Claim 1.]We have $\gamma\in\operatorname*{Cycs}\sigma$. In other
words, $\gamma$ is a cycle of $\sigma$ (since $\operatorname*{Cycs}\sigma$ is
the set of all cycles of $\sigma$). Hence, Lemma \ref{lem.carcs.linear2}
\textbf{(a)} (applied to $C=\operatorname*{CArcs}\gamma$) yields
$\sum_{\substack{F\subseteq\left(  \operatorname*{CArcs}\gamma\right)  \cap
A\\\text{is linear}}}\left(  -1\right)  ^{\left\vert F\right\vert }=\left(
-1\right)  ^{\ell\left(  \gamma\right)  -1}$. This proves Claim 1.
\end{proof}

\begin{proof}
[Proof of Claim 2.]We have $\gamma\in\operatorname*{Cycs}\sigma$. In other
words, $\gamma$ is a cycle of $\sigma$ (since $\operatorname*{Cycs}\sigma$ is
the set of all cycles of $\sigma$). Hence, Lemma \ref{lem.carcs.linear2}
\textbf{(b)} (applied to $C=\operatorname*{CArcs}\gamma$) yields
$\sum_{\substack{F\subseteq\left(  \operatorname*{CArcs}\gamma\right)  \cap
A\\\text{is linear}}}\left(  -1\right)  ^{\left\vert F\right\vert }=1$. This
proves Claim 2.
\end{proof}

\begin{proof}
[Proof of Claim 3.]We have $\gamma\in\operatorname*{Cycs}\sigma$. In other
words, $\gamma$ is a cycle of $\sigma$ (since $\operatorname*{Cycs}\sigma$ is
the set of all cycles of $\sigma$). Hence, Lemma \ref{lem.carcs.linear2}
\textbf{(c)} (applied to $C=\operatorname*{CArcs}\gamma$) yields
$\sum_{\substack{F\subseteq\left(  \operatorname*{CArcs}\gamma\right)  \cap
A\\\text{is linear}}}\left(  -1\right)  ^{\left\vert F\right\vert }=0$. This
proves Claim 3.
\end{proof}

Now, we shall prove the following two claims:

\begin{statement}
\textit{Claim 4:} If $\sigma\in\mathfrak{S}_{V}\left(  D,\overline{D}\right)
$, then $\sum_{\substack{F\subseteq\mathbf{A}_{\sigma}\cap A\\\text{is
linear}}}\left(  -1\right)  ^{\left\vert F\right\vert }=\left(  -1\right)
^{\varphi\left(  \sigma\right)  }$.
\end{statement}

\begin{statement}
\textit{Claim 5:} If $\sigma\notin\mathfrak{S}_{V}\left(  D,\overline
{D}\right)  $, then $\sum_{\substack{F\subseteq\mathbf{A}_{\sigma}\cap
A\\\text{is linear}}}\left(  -1\right)  ^{\left\vert F\right\vert }=0$.
\end{statement}

\begin{proof}
[Proof of Claim 4.]Assume that $\sigma\in\mathfrak{S}_{V}\left(
D,\overline{D}\right)  $. Then, each cycle of $\sigma$ is a $D$-cycle or a
$\overline{D}$-cycle (by the definition of $\mathfrak{S}_{V}\left(
D,\overline{D}\right)  $). In other words, if $\gamma\in\operatorname*{Cycs}%
\sigma$ is not a $D$-cycle, then $\gamma$ is a $\overline{D}$%
-cycle\footnote{\textit{Proof.} Let $\gamma\in\operatorname*{Cycs}\sigma$ be
not a $D$-cycle. We must prove that $\gamma$ is a $\overline{D}$-cycle.
\par
We have $\gamma\in\operatorname*{Cycs}\sigma$. In other words, $\gamma$ is a
cycle of $\sigma$ (since $\operatorname*{Cycs}\sigma$ is the set of all cycles
of $\sigma$). Hence, $\gamma$ is a $D$-cycle or a $\overline{D}$-cycle (since
each cycle of $\sigma$ is a $D$-cycle or a $\overline{D}$-cycle). Since
$\gamma$ is not a $D$-cycle, we thus conclude that $\gamma$ is a $\overline
{D}$-cycle. Qed.} and thus satisfies
\begin{equation}
\sum_{\substack{F\subseteq\left(  \operatorname*{CArcs}\gamma\right)  \cap
A\\\text{is linear}}}\left(  -1\right)  ^{\left\vert F\right\vert }=1
\label{pf.prop.linear-in-AsigA.long.c4.pf.1}%
\end{equation}
(by Claim 2).

Now, (\ref{pf.prop.linear-in-AsigA.long.4}) becomes%
\begin{align*}
&  \sum_{\substack{F\subseteq\mathbf{A}_{\sigma}\cap A\\\text{is linear}%
}}\left(  -1\right)  ^{\left\vert F\right\vert }\\
&  =\prod_{\gamma\in\operatorname*{Cycs}\sigma}\ \ \sum_{\substack{F\subseteq
\left(  \operatorname*{CArcs}\gamma\right)  \cap A\\\text{is linear}}}\left(
-1\right)  ^{\left\vert F\right\vert }\\
&  =\left(  \prod_{\substack{\gamma\in\operatorname*{Cycs}\sigma
;\\\gamma\text{ is a }D\text{-cycle}}}\ \ \underbrace{\sum
_{\substack{F\subseteq\left(  \operatorname*{CArcs}\gamma\right)  \cap
A\\\text{is linear}}}\left(  -1\right)  ^{\left\vert F\right\vert }%
}_{\substack{=\left(  -1\right)  ^{\ell\left(  \gamma\right)  -1}\\\text{(by
Claim 1)}}}\right)  \cdot\left(  \prod_{\substack{\gamma\in
\operatorname*{Cycs}\sigma;\\\gamma\text{ is not a }D\text{-cycle}%
}}\ \ \underbrace{\sum_{\substack{F\subseteq\left(  \operatorname*{CArcs}%
\gamma\right)  \cap A\\\text{is linear}}}\left(  -1\right)  ^{\left\vert
F\right\vert }}_{\substack{=1\\\text{(by
(\ref{pf.prop.linear-in-AsigA.long.c4.pf.1}))}}}\right) \\
&  \ \ \ \ \ \ \ \ \ \ \ \ \ \ \ \ \ \ \ \ \left(
\begin{array}
[c]{c}%
\text{here, we have split our product in two: one}\\
\text{that contains all }\gamma\text{'s that are }D\text{-cycles, and}\\
\text{one that contain all other }\gamma\text{'s}%
\end{array}
\right) \\
&  =\left(  \prod_{\substack{\gamma\in\operatorname*{Cycs}\sigma
;\\\gamma\text{ is a }D\text{-cycle}}}\left(  -1\right)  ^{\ell\left(
\gamma\right)  -1}\right)  \cdot\underbrace{\left(  \prod_{\substack{\gamma
\in\operatorname*{Cycs}\sigma;\\\gamma\text{ is not a }D\text{-cycle}%
}}1\right)  }_{=1}\\
&  =\prod_{\substack{\gamma\in\operatorname*{Cycs}\sigma;\\\gamma\text{ is a
}D\text{-cycle}}}\left(  -1\right)  ^{\ell\left(  \gamma\right)  -1}.
\end{align*}
Comparing this with%
\begin{align*}
\left(  -1\right)  ^{\varphi\left(  \sigma\right)  }  &  =\left(  -1\right)
^{\sum_{\substack{\gamma\in\operatorname*{Cycs}\sigma;\\\gamma\text{ is a
}D\text{-cycle}}}\left(  \ell\left(  \gamma\right)  -1\right)  }%
\ \ \ \ \ \ \ \ \ \ \left(
\begin{array}
[c]{c}%
\text{since }\varphi\left(  \sigma\right)  =\sum_{\substack{\gamma
\in\operatorname*{Cycs}\sigma;\\\gamma\text{ is a }D\text{-cycle}}}\left(
\ell\left(  \gamma\right)  -1\right) \\
\text{(by the definition of }\varphi\left(  \sigma\right)  \text{)}%
\end{array}
\right) \\
&  =\prod_{\substack{\gamma\in\operatorname*{Cycs}\sigma;\\\gamma\text{ is a
}D\text{-cycle}}}\left(  -1\right)  ^{\ell\left(  \gamma\right)  -1},
\end{align*}
we obtain $\sum_{\substack{F\subseteq\mathbf{A}_{\sigma}\cap A\\\text{is
linear}}}\left(  -1\right)  ^{\left\vert F\right\vert }=\left(  -1\right)
^{\varphi\left(  \sigma\right)  }$. Thus, Claim 4 is proven.
\end{proof}

\begin{proof}
[Proof of Claim 5.]Assume that $\sigma\notin\mathfrak{S}_{V}\left(
D,\overline{D}\right)  $. Then, not each cycle of $\sigma$ is a $D$-cycle or a
$\overline{D}$-cycle (by the definition of $\mathfrak{S}_{V}\left(
D,\overline{D}\right)  $). In other words, there exists some cycle of $\sigma$
that is neither a $D$-cycle nor a $\overline{D}$-cycle. Let $\delta$ be such a
cycle. Then, $\delta$ is a cycle of $\sigma$. In other words, $\delta
\in\operatorname*{Cycs}\sigma$ (since $\operatorname*{Cycs}\sigma$ is the set
of all cycles of $\sigma$). We know that $\delta$ is neither a $D$-cycle nor a
$\overline{D}$-cycle (by its definition). Thus, Claim 3 (applied to
$\gamma=\delta$) yields%
\begin{equation}
\sum_{\substack{F\subseteq\left(  \operatorname*{CArcs}\delta\right)  \cap
A\\\text{is linear}}}\left(  -1\right)  ^{\left\vert F\right\vert }=0.
\label{pf.prop.linear-in-AsigA.long.c4.pf.5}%
\end{equation}

However, $\delta\in\operatorname*{Cycs}\sigma$. Thus, the sum $\sum
_{\substack{F\subseteq\left(  \operatorname*{CArcs}\delta\right)  \cap
A\\\text{is linear}}}\left(  -1\right)  ^{\left\vert F\right\vert }$ is one of
the factors of the product $\prod_{\gamma\in\operatorname*{Cycs}\sigma
}\ \ \sum_{\substack{F\subseteq\left(  \operatorname*{CArcs}\gamma\right)
\cap A\\\text{is linear}}}\left(  -1\right)  ^{\left\vert F\right\vert }$
(namely, the factor for $\gamma=\delta$). Since the former sum is $0$ (by
(\ref{pf.prop.linear-in-AsigA.long.c4.pf.5})), we can rewrite this as follows:
The number $0$ is one of the factors of the product $\prod_{\gamma
\in\operatorname*{Cycs}\sigma}\ \ \sum_{\substack{F\subseteq\left(
\operatorname*{CArcs}\gamma\right)  \cap A\\\text{is linear}}}\left(
-1\right)  ^{\left\vert F\right\vert }$. In other words, the latter product
has a factor equal to $0$. Therefore, this product must be $0$ (because if a
product has a factor equal to $0$, then this product must be $0$). In other
words,%
\[
\prod_{\gamma\in\operatorname*{Cycs}\sigma}\ \ \sum_{\substack{F\subseteq
\left(  \operatorname*{CArcs}\gamma\right)  \cap A\\\text{is linear}}}\left(
-1\right)  ^{\left\vert F\right\vert }=0.
\]
Now, (\ref{pf.prop.linear-in-AsigA.long.4}) becomes%
\[
\sum_{\substack{F\subseteq\mathbf{A}_{\sigma}\cap A\\\text{is linear}}}\left(
-1\right)  ^{\left\vert F\right\vert }=\prod_{\gamma\in\operatorname*{Cycs}%
\sigma}\ \ \sum_{\substack{F\subseteq\left(  \operatorname*{CArcs}%
\gamma\right)  \cap A\\\text{is linear}}}\left(  -1\right)  ^{\left\vert
F\right\vert }=0.
\]
This proves Claim 5.
\end{proof}

Combining Claim 4 with Claim 5, we obtain%
\[
\sum_{\substack{F\subseteq\mathbf{A}_{\sigma}\cap A\\\text{is linear}}}\left(
-1\right)  ^{\left\vert F\right\vert }=%
\begin{cases}
\left(  -1\right)  ^{\varphi\left(  \sigma\right)  }, & \text{if }\sigma
\in\mathfrak{S}_{V}\left(  D,\overline{D}\right)  ;\\
0, & \text{else.}%
\end{cases}
\]
This proves Proposition \ref{prop.linear-in-AsigA}.
\end{proof}
\end{verlong}

\subsection{A trivial lemma}

We need one more trivial \textquotedblleft data conversion\textquotedblright\ lemma:

\begin{lemma}
\label{lem.maps-vs-tups}Let $V$ be a finite set. Let $w=\left(  w_{1}%
,w_{2},\ldots,w_{n}\right)  $ be a $V$-listing. Then, the map%
\begin{align*}
\left\{  \text{maps }f:V\rightarrow\mathbb{P}\right\}   &  \rightarrow
\mathbb{P}^{n},\\
f  &  \mapsto\left(  f\left(  w_{1}\right)  ,f\left(  w_{2}\right)
,\ldots,f\left(  w_{n}\right)  \right)
\end{align*}
is well-defined and is a bijection.
\end{lemma}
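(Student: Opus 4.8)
The plan is to exploit the single structural fact that makes this lemma true: since $w=\left(  w_{1},w_{2},\ldots,w_{n}\right)  $ is a $V$-listing, it contains each element of $V$ exactly once, so $n=\left\vert V\right\vert $ and the map $\left[  n\right]  \rightarrow V,\ i\mapsto w_{i}$ is a bijection. Let me call this auxiliary bijection $\iota$; its inverse $\iota^{-1}:V\rightarrow\left[  n\right]  $ assigns to each $v\in V$ the unique index $i\in\left[  n\right]  $ with $w_{i}=v$. Once this is recorded, specifying a map $f:V\rightarrow\mathbb{P}$ is \textquotedblleft the same thing\textquotedblright\ as specifying its values along $w$, and the lemma becomes a routine verification.

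First I would check well-definedness, which is immediate: for any map $f:V\rightarrow\mathbb{P}$, each $f\left(  w_{i}\right)  $ lies in $\mathbb{P}$, so the tuple $\left(  f\left(  w_{1}\right)  ,f\left(  w_{2}\right)  ,\ldots,f\left(  w_{n}\right)  \right)  $ is a genuine element of $\mathbb{P}^{n}$. Denote the map in question by $\Phi$. Next I would prove injectivity: if $f,g:V\rightarrow\mathbb{P}$ satisfy $\Phi\left(  f\right)  =\Phi\left(  g\right)  $, then $f\left(  w_{i}\right)  =g\left(  w_{i}\right)  $ for every $i\in\left[  n\right]  $; since every $v\in V$ equals $w_{i}$ for some $i\in\left[  n\right]  $ (because the entries $w_{1},w_{2},\ldots,w_{n}$ exhaust $V$), this yields $f\left(  v\right)  =g\left(  v\right)  $ for all $v\in V$, hence $f=g$.

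Finally I would prove surjectivity by explicit construction: given any $\left(  a_{1},a_{2},\ldots,a_{n}\right)  \in\mathbb{P}^{n}$, define $f:V\rightarrow\mathbb{P}$ by $f\left(  v\right)  :=a_{\iota^{-1}\left(  v\right)  }$ for each $v\in V$. This is well-defined precisely because $\iota^{-1}\left(  v\right)  $ is the \emph{unique} index $i$ with $w_{i}=v$ (so no conflicting assignment arises), and by construction $f\left(  w_{i}\right)  =a_{i}$ for all $i\in\left[  n\right]  $, giving $\Phi\left(  f\right)  =\left(  a_{1},a_{2},\ldots,a_{n}\right)  $. Combining the three verifications shows $\Phi$ is a bijection. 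There is no genuine obstacle here; the only point requiring a word of care is the surjectivity construction, where one must invoke uniqueness of the index $i$ with $w_{i}=v$ (i.e. that $w$ has distinct entries, which holds since it is a $V$-listing) to be sure $f$ is single-valued.
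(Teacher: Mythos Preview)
Your proof is correct and follows essentially the same approach as the paper's (detailed) proof: both verify well-definedness, then injectivity by noting that the entries $w_{1},\ldots,w_{n}$ exhaust $V$, and finally surjectivity by defining $f\left(v\right)$ to be $a_{i}$ for the unique index $i$ with $w_{i}=v$. Your use of the auxiliary bijection $\iota$ is a minor notational convenience but does not change the argument.
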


\begin{vershort}

\begin{proof}
This is just saying that every map $f:V\rightarrow\mathbb{P}$ can be encoded
by its list of values $\left(  f\left(  w_{1}\right)  ,f\left(  w_{2}\right)
,\ldots,f\left(  w_{n}\right)  \right)  $, and conversely, that any list
$\left(  i_{1},i_{2},\ldots,i_{n}\right)  \in\mathbb{P}^{n}$ is the list of
values of a unique map $f:V\rightarrow\mathbb{P}$. Both of these claims are
clear, since $w_{1},w_{2},\ldots,w_{n}$ are the elements of $V$ (listed with
no repetitions).
\end{proof}
\end{vershort}

\begin{verlong}

\begin{proof}
Recall that $\left(  w_{1},w_{2},\ldots,w_{n}\right)  $ is a $V$-listing,
i.e., a list of elements of $V$ that contains each element of $V$ exactly once
(by the definition of a $V$-listing). Hence, in particular, $\left(
w_{1},w_{2},\ldots,w_{n}\right)  $ is a list of elements of $V$. In other
words, $w_{i}\in V$ for each $i\in\left[  n\right]  $.

Now, if $f:V\rightarrow\mathbb{P}$ is a map, then each $i\in\left[  n\right]
$ satisfies $f\left(  w_{i}\right)  \in\mathbb{P}$ (since $w_{i}\in V$ by the
preceding sentence), and thus the $n$-tuple $\left(  f\left(  w_{1}\right)
,f\left(  w_{2}\right)  ,\ldots,f\left(  w_{n}\right)  \right)  $ belongs to
$\mathbb{P}^{n}$. Thus, the map%
\begin{align*}
\left\{  \text{maps }f:V\rightarrow\mathbb{P}\right\}   &  \rightarrow
\mathbb{P}^{n},\\
f  &  \mapsto\left(  f\left(  w_{1}\right)  ,f\left(  w_{2}\right)
,\ldots,f\left(  w_{n}\right)  \right)
\end{align*}
is well-defined. Let us denote this map by $K$. It remains to prove that this
map $K$ is a bijection.

Let us first show that $K$ is injective. Indeed, let $f$ and $g$ be two maps
from $V$ to $\mathbb{P}$ that satisfy $K\left(  f\right)  =K\left(  g\right)
$. We shall show that $f=g$.

The definition of $K$ yields
\begin{align}
K\left(  f\right)   &  =\left(  f\left(  w_{1}\right)  ,f\left(  w_{2}\right)
,\ldots,f\left(  w_{n}\right)  \right) \label{pf.lem.maps-vs-tups.inj.f}\\
\text{and}\ \ \ \ \ \ \ \ \ \ K\left(  g\right)   &  =\left(  g\left(
w_{1}\right)  ,g\left(  w_{2}\right)  ,\ldots,g\left(  w_{n}\right)  \right)
. \label{pf.lem.maps-vs-tups.inj.g}%
\end{align}

We assumed that $K\left(  f\right)  =K\left(  g\right)  $. In view of
(\ref{pf.lem.maps-vs-tups.inj.f}) and (\ref{pf.lem.maps-vs-tups.inj.g}), we
can rewrite this as%
\[
\left(  f\left(  w_{1}\right)  ,f\left(  w_{2}\right)  ,\ldots,f\left(
w_{n}\right)  \right)  =\left(  g\left(  w_{1}\right)  ,g\left(  w_{2}\right)
,\ldots,g\left(  w_{n}\right)  \right)  .
\]
In other words,%
\begin{equation}
f\left(  w_{i}\right)  =g\left(  w_{i}\right)  \ \ \ \ \ \ \ \ \ \ \text{for
each }i\in\left[  n\right]  . \label{pf.lem.maps-vs-tups.inj.1}%
\end{equation}

Now, let $v\in V$ be arbitrary. Then, the list $\left(  w_{1},w_{2}%
,\ldots,w_{n}\right)  $ contains $v$ exactly once (since this list $\left(
w_{1},w_{2},\ldots,w_{n}\right)  $ contains each element of $V$ exactly once).
In other words, there is exactly one $i\in\left[  n\right]  $ such that
$w_{i}=v$. Consider this $i$. From (\ref{pf.lem.maps-vs-tups.inj.1}), we
obtain $f\left(  w_{i}\right)  =g\left(  w_{i}\right)  $. In view of $w_{i}%
=v$, we can rewrite this as $f\left(  v\right)  =g\left(  v\right)  $.

Forget that we fixed $v$. We thus have shown that $f\left(  v\right)
=g\left(  v\right)  $ for each $v\in V$. In other words, $f=g$.

Forget that we fixed $f$ and $g$. We have thus shown that if $f$ and $g$ are
two maps from $V$ to $\mathbb{P}$ that satisfy $K\left(  f\right)  =K\left(
g\right)  $, then $f=g$. In other words, the map $K$ is injective.

Now, let us prove that $K$ is surjective. Indeed, let $a\in\mathbb{P}^{n}$ be
arbitrary. We shall construct a map $f:V\rightarrow\mathbb{P}$ such that
$K\left(  f\right)  =a$.

According to Convention \ref{conv.wi}, we can write the $n$-tuple
$a\in\mathbb{P}^{n}$ as $a=\left(  a_{1},a_{2},\ldots,a_{n}\right)  $.

Recall that $\left(  w_{1},w_{2},\ldots,w_{n}\right)  $ is a $V$-listing,
i.e., a list of elements of $V$ that contains each element of $V$ exactly once
(by the definition of a $V$-listing).

Now, we define a map $f:V\rightarrow\mathbb{P}$ as follows:

Let $v\in V$. Then, the list $\left(  w_{1},w_{2},\ldots,w_{n}\right)  $
contains $v$ exactly once (since this list $\left(  w_{1},w_{2},\ldots
,w_{n}\right)  $ contains each element of $V$ exactly once). In other words,
there is exactly one $i\in\left[  n\right]  $ such that $w_{i}=v$. Consider
this $i$. Define $f\left(  v\right)  $ to be $a_{i}$. This is an element of
$\mathbb{P}$ (since it is an entry of the $n$-tuple $a\in\mathbb{P}^{n}$).

Thus, we have defined an element $f\left(  v\right)  $ of $\mathbb{P}$ for
each $v\in V$. In other words, we have defined a map $f:V\rightarrow
\mathbb{P}$. Its definition has the following consequence: If $v\in V$ is
arbitrary, and if $i\in\left[  n\right]  $ is an element satisfying $w_{i}=v$,
then%
\begin{equation}
f\left(  v\right)  =a_{i}. \label{pf.lem.maps-vs-tups.surj.3}%
\end{equation}

Now, we shall show that $K\left(  f\right)  =a$.

Indeed, the definition of $K$ yields $K\left(  f\right)  =\left(  f\left(
w_{1}\right)  ,f\left(  w_{2}\right)  ,\ldots,f\left(  w_{n}\right)  \right)
$. However, each $i\in\left[  n\right]  $ satisfies $w_{i}=w_{i}$ (obviously)
and thus $f\left(  w_{i}\right)  =a_{i}$ (by (\ref{pf.lem.maps-vs-tups.surj.3}%
), applied to $v=w_{i}$). In other words, we have%
\[
\left(  f\left(  w_{1}\right)  ,f\left(  w_{2}\right)  ,\ldots,f\left(
w_{n}\right)  \right)  =\left(  a_{1},a_{2},\ldots,a_{n}\right)  .
\]
In view of $K\left(  f\right)  =\left(  f\left(  w_{1}\right)  ,f\left(
w_{2}\right)  ,\ldots,f\left(  w_{n}\right)  \right)  $ and $a=\left(
a_{1},a_{2},\ldots,a_{n}\right)  $, we can rewrite this as $K\left(  f\right)
=a$. Thus, the map $K$ takes $a$ as a value (namely, at the input $f$).

Forget that we fixed $a$. We thus have shown that if $a\in\mathbb{P}^{n}$ is
arbitrary, then the map $K$ takes $a$ as a value. In other words, the map $K$
is surjective.

Now we know that the map $K$ is both injective and surjective. In other words,
$K$ is bijective. In other words, $K$ is a bijection.

So we have shown that the map $K$ is well-defined and is a bijection. In other
words, the map%
\begin{align*}
\left\{  \text{maps }f:V\rightarrow\mathbb{P}\right\}   &  \rightarrow
\mathbb{P}^{n},\\
f  &  \mapsto\left(  f\left(  w_{1}\right)  ,f\left(  w_{2}\right)
,\ldots,f\left(  w_{n}\right)  \right)
\end{align*}
is well-defined and is a bijection (since this map is what we called $K$).
This completes the proof of Lemma \ref{lem.maps-vs-tups}.
\end{proof}
\end{verlong}

\subsection{The proof of Theorem \ref{thm.UX.1}}

We are now ready to prove Theorem \ref{thm.UX.1}:

\begin{proof}
[Proof of Theorem \ref{thm.UX.1}.]Let $n=\left\vert V\right\vert $. Thus, the
digraph $D=\left(  V,A\right)  $ has $n$ vertices. Moreover, each $V$-listing
$w$ has $n$ entries (since $\left\vert V\right\vert =n$), thus satisfies
$w=\left(  w_{1},w_{2},\ldots,w_{n}\right)  $.

We will use a definition that we made back in Lemma \ref{lem.friendlies-by-f}:
If $f:V\rightarrow\mathbb{P}$ is a map, and if $v=\left(  v_{1},v_{2}%
,\ldots,v_{n}\right)  $ is a $V$-listing, then this $V$-listing $v$ will be
called $\left(  f,D\right)  $\emph{-friendly} if it has the properties that
$f\left(  v_{1}\right)  \leq f\left(  v_{2}\right)  \leq\cdots\leq f\left(
v_{n}\right)  $ and that%
\[
f\left(  v_{p}\right)  <f\left(  v_{p+1}\right)  \text{ for each }p\in\left[
n-1\right]  \text{ satisfying }\left(  v_{p},v_{p+1}\right)  \in A.
\]

The definition of $U_{D}$ yields
\[
U_{D}=\sum_{w\text{ is a }V\text{-listing}}L_{\operatorname*{Des}\left(
w,D\right)  ,\ n}.
\]
We shall now try to understand the addends in this sum better.

We fix a $V$-listing $w$. Then, $w$ has $n$ entries (since $\left\vert
V\right\vert =n$), and thus satisfies $w=\left(  w_{1},w_{2},\ldots
,w_{n}\right)  $. Moreover, the list $\left(  w_{1},w_{2},\ldots,w_{n}\right)
=w$ is a $V$-listing, i.e., consists of all elements of $V$ and contains each
of these elements exactly once. In other words, $\left(  w_{1},w_{2}%
,\ldots,w_{n}\right)  $ is a list of all elements of $V$, with no repetitions.
Hence, if we are given an element $c_{v}$ of $\mathbb{Z}\left[  \left[
x_{1},x_{2},x_{3},\ldots\right]  \right]  $ for each $v\in V$, then%
\begin{equation}
\prod_{v\in V}c_{v}=c_{w_{1}}c_{w_{2}}\cdots c_{w_{n}}. \label{pf.thm.UX.1.1}%
\end{equation}
Thus, if $f:V\rightarrow\mathbb{P}$ is any map, then
\begin{equation}
\prod_{v\in V}x_{f\left(  v\right)  }=x_{f\left(  w_{1}\right)  }x_{f\left(
w_{2}\right)  }\cdots x_{f\left(  w_{n}\right)  } \label{pf.thm.UX.1.1f}%
\end{equation}
(by (\ref{pf.thm.UX.1.1}), applied to $c_{v}=x_{f\left(  v\right)  }$).

However, the definition of $L_{\operatorname*{Des}\left(  w,D\right)  ,\ n}$
yields%
\begin{align}
L_{\operatorname*{Des}\left(  w,D\right)  ,\ n}  &  =\sum_{\substack{i_{1}\leq
i_{2}\leq\cdots\leq i_{n};\\i_{p}<i_{p+1}\text{ for each }p\in
\operatorname*{Des}\left(  w,D\right)  }}x_{i_{1}}x_{i_{2}}\cdots x_{i_{n}%
}\nonumber\\
&  =\sum_{\substack{\left(  i_{1},i_{2},\ldots,i_{n}\right)  \in\mathbb{P}%
^{n};\\i_{1}\leq i_{2}\leq\cdots\leq i_{n};\\i_{p}<i_{p+1}\text{ for each
}p\in\operatorname*{Des}\left(  w,D\right)  }}x_{i_{1}}x_{i_{2}}\cdots
x_{i_{n}} \label{pf.thm.UX.1.L1}%
\end{align}
(here, we have added the \textquotedblleft$\left(  i_{1},i_{2},\ldots
,i_{n}\right)  \in\mathbb{P}^{n}$\textquotedblright\ condition under the
summation sign, since this condition is tacitly implied when we sum over
$i_{1}\leq i_{2}\leq\cdots\leq i_{n}$).

We recall that $\operatorname*{Des}\left(  w,D\right)  $ is defined as the set
of all $D$-descents of $w$, but these $D$-descents are defined as the elements
$i\in\left[  n-1\right]  $ satisfying $\left(  w_{i},w_{i+1}\right)  \in A$.
Hence, $\operatorname*{Des}\left(  w,D\right)  $ is the set of all elements
$i\in\left[  n-1\right]  $ satisfying $\left(  w_{i},w_{i+1}\right)  \in A$.
Thus, an element of $\operatorname*{Des}\left(  w,D\right)  $ is the same
thing as an element $i\in\left[  n-1\right]  $ satisfying $\left(
w_{i},w_{i+1}\right)  \in A$. Renaming the variable $i$ as $p$ in this
sentence, we obtain the following: An element of $\operatorname*{Des}\left(
w,D\right)  $ is the same thing as an element $p\in\left[  n-1\right]  $
satisfying $\left(  w_{p},w_{p+1}\right)  \in A$.

Lemma \ref{lem.maps-vs-tups} yields that the map
\begin{align*}
\left\{  \text{maps }f:V\rightarrow\mathbb{P}\right\}   &  \rightarrow
\mathbb{P}^{n},\\
f  &  \mapsto\left(  f\left(  w_{1}\right)  ,f\left(  w_{2}\right)
,\ldots,f\left(  w_{n}\right)  \right)
\end{align*}
is well-defined and is a bijection. Hence, we can substitute $\left(  f\left(
w_{1}\right)  ,f\left(  w_{2}\right)  ,\ldots,f\left(  w_{n}\right)  \right)
$ for $\left(  i_{1},i_{2},\ldots,i_{n}\right)  $ in the sum on the right hand
side of (\ref{pf.thm.UX.1.L1}). We thus obtain
\begin{align*}
&  \sum_{\substack{\left(  i_{1},i_{2},\ldots,i_{n}\right)  \in\mathbb{P}%
^{n};\\i_{1}\leq i_{2}\leq\cdots\leq i_{n};\\i_{p}<i_{p+1}\text{ for each
}p\in\operatorname*{Des}\left(  w,D\right)  }}x_{i_{1}}x_{i_{2}}\cdots
x_{i_{n}}\\
&  =\sum_{\substack{f:V\rightarrow\mathbb{P}\text{ is a map};\\f\left(
w_{1}\right)  \leq f\left(  w_{2}\right)  \leq\cdots\leq f\left(
w_{n}\right)  ;\\f\left(  w_{p}\right)  <f\left(  w_{p+1}\right)  \text{ for
each }p\in\operatorname*{Des}\left(  w,D\right)  }}\underbrace{x_{f\left(
w_{1}\right)  }x_{f\left(  w_{2}\right)  }\cdots x_{f\left(  w_{n}\right)  }%
}_{\substack{=\prod_{v\in V}x_{f\left(  v\right)  }\\\text{(by
(\ref{pf.thm.UX.1.1f}))}}}\\
&  =\sum_{\substack{f:V\rightarrow\mathbb{P}\text{ is a map};\\f\left(
w_{1}\right)  \leq f\left(  w_{2}\right)  \leq\cdots\leq f\left(
w_{n}\right)  ;\\f\left(  w_{p}\right)  <f\left(  w_{p+1}\right)  \text{ for
each }p\in\operatorname*{Des}\left(  w,D\right)  }}\prod_{v\in V}x_{f\left(
v\right)  }\\
&  =\sum_{\substack{f:V\rightarrow\mathbb{P}\text{ is a map};\\f\left(
w_{1}\right)  \leq f\left(  w_{2}\right)  \leq\cdots\leq f\left(
w_{n}\right)  ;\\f\left(  w_{p}\right)  <f\left(  w_{p+1}\right)  \text{ for
each }p\in\left[  n-1\right]  \\\text{satisfying }\left(  w_{p},w_{p+1}%
\right)  \in A}}\prod_{v\in V}x_{f\left(  v\right)  }%
\end{align*}
(here, we have replaced the condition \textquotedblleft$p\in
\operatorname*{Des}\left(  w,D\right)  $\textquotedblright\ under the
summation sign by the equivalent condition \textquotedblleft$p\in\left[
n-1\right]  $ satisfying $\left(  w_{p},w_{p+1}\right)  \in A$%
\textquotedblright, because an element of $\operatorname*{Des}\left(
w,D\right)  $ is the same thing as an element $p\in\left[  n-1\right]  $
satisfying $\left(  w_{p},w_{p+1}\right)  \in A$). Thus, (\ref{pf.thm.UX.1.L1}%
) becomes%
\begin{align}
L_{\operatorname*{Des}\left(  w,D\right)  ,\ n}  &  =\sum_{\substack{\left(
i_{1},i_{2},\ldots,i_{n}\right)  \in\mathbb{P}^{n};\\i_{1}\leq i_{2}\leq
\cdots\leq i_{n};\\i_{p}<i_{p+1}\text{ for each }p\in\operatorname*{Des}%
\left(  w,D\right)  }}x_{i_{1}}x_{i_{2}}\cdots x_{i_{n}}\nonumber\\
&  =\sum_{\substack{f:V\rightarrow\mathbb{P}\text{ is a map};\\f\left(
w_{1}\right)  \leq f\left(  w_{2}\right)  \leq\cdots\leq f\left(
w_{n}\right)  ;\\f\left(  w_{p}\right)  <f\left(  w_{p+1}\right)  \text{ for
each }p\in\left[  n-1\right]  \\\text{satisfying }\left(  w_{p},w_{p+1}%
\right)  \in A}}\prod_{v\in V}x_{f\left(  v\right)  }. \label{pf.thm.UX.1.L2}%
\end{align}

The sum on the right hand side of (\ref{pf.thm.UX.1.L2}) ranges over all maps
$f:V\rightarrow\mathbb{P}$ that satisfy the condition
\begin{align*}
&  \ \ \text{\textquotedblleft}f\left(  w_{1}\right)  \leq f\left(
w_{2}\right)  \leq\cdots\leq f\left(  w_{n}\right)  \text{\textquotedblright%
}\\
\wedge &  \text{\ \ \textquotedblleft}f\left(  w_{p}\right)  <f\left(
w_{p+1}\right)  \text{ for each }p\in\left[  n-1\right]  \text{ satisfying
}\left(  w_{p},w_{p+1}\right)  \in A\text{\textquotedblright.}%
\end{align*}
However, this condition is equivalent to the condition \textquotedblleft the
$V$-listing $w$ is $\left(  f,D\right)  $-friendly\textquotedblright\ (because
this is how the notion of \textquotedblleft$\left(  f,D\right)  $%
-friendly\textquotedblright\ was defined). Therefore, we can replace the
former condition by the latter condition under the summation sign on the right
hand side of (\ref{pf.thm.UX.1.L2}). Thus, we can rewrite
(\ref{pf.thm.UX.1.L2}) as follows:%
\begin{equation}
L_{\operatorname*{Des}\left(  w,D\right)  ,\ n}=\sum_{\substack{f:V\rightarrow
\mathbb{P}\text{ is a map};\\\text{the }V\text{-listing }w\text{ is }\left(
f,D\right)  \text{-friendly}}}\prod_{v\in V}x_{f\left(  v\right)  }.
\label{pf.thm.UX.1.L3}%
\end{equation}

Now, forget that we fixed $w$. We thus have proved (\ref{pf.thm.UX.1.L3}) for
each $V$-listing $w$.

\begin{vershort}
Now,
\begin{align*}
U_{D}  &  =\sum_{w\text{ is a }V\text{-listing}}L_{\operatorname*{Des}\left(
w,D\right)  ,\ n}\\
&  =\underbrace{\sum_{w\text{ is a }V\text{-listing}}\ \ \sum
_{\substack{f:V\rightarrow\mathbb{P}\text{ is a map};\\\text{the
}V\text{-listing }w\text{ is }\left(  f,D\right)  \text{-friendly}}}}%
_{=\sum_{f:V\rightarrow\mathbb{P}}\ \ \sum_{w\text{ is an }\left(  f,D\right)
\text{-friendly }V\text{-listing}}}\prod_{v\in V}x_{f\left(  v\right)
}\ \ \ \ \ \ \ \ \ \ \left(  \text{by (\ref{pf.thm.UX.1.L3})}\right) \\
&  =\sum_{f:V\rightarrow\mathbb{P}}\ \ \underbrace{\sum_{w\text{ is an
}\left(  f,D\right)  \text{-friendly }V\text{-listing}}\ \ \prod_{v\in
V}x_{f\left(  v\right)  }}_{=\left(  \text{\# of }\left(  f,D\right)
\text{-friendly }V\text{-listings}\right)  \cdot\prod_{v\in V}x_{f\left(
v\right)  }}\\
&  =\sum_{f:V\rightarrow\mathbb{P}}\underbrace{\left(  \text{\# of }\left(
f,D\right)  \text{-friendly }V\text{-listings}\right)  }_{\substack{=\sum
_{\substack{\sigma\in\mathfrak{S}_{V};\\f\circ\sigma=f}}\ \ \sum
_{\substack{F\subseteq\mathbf{A}_{\sigma}\cap A\\\text{is linear}}}\left(
-1\right)  ^{\left\vert F\right\vert }\\\text{(by Lemma
\ref{lem.friendlies-by-f})}}}\cdot\prod_{v\in V}x_{f\left(  v\right)  }\\
&  =\sum_{f:V\rightarrow\mathbb{P}}\ \ \sum_{\substack{\sigma\in
\mathfrak{S}_{V};\\f\circ\sigma=f}}\ \ \sum_{\substack{F\subseteq
\mathbf{A}_{\sigma}\cap A\\\text{is linear}}}\left(  -1\right)  ^{\left\vert
F\right\vert }\cdot\prod_{v\in V}x_{f\left(  v\right)  }\\
&  =\sum_{\sigma\in\mathfrak{S}_{V}}\ \ \underbrace{\sum_{\substack{F\subseteq
\mathbf{A}_{\sigma}\cap A\\\text{is linear}}}\left(  -1\right)  ^{\left\vert
F\right\vert }}_{\substack{=%
\begin{cases}
\left(  -1\right)  ^{\varphi\left(  \sigma\right)  }, & \text{if }\sigma
\in\mathfrak{S}_{V}\left(  D,\overline{D}\right)  ;\\
0, & \text{else}%
\end{cases}
\\\text{(by Proposition \ref{prop.linear-in-AsigA})}}}\underbrace{\sum
_{\substack{f:V\rightarrow\mathbb{P}\text{;}\\f\circ\sigma=f}}\ \ \prod_{v\in
V}x_{f\left(  v\right)  }}_{\substack{=p_{\operatorname*{type}\sigma
}\\\text{(by Lemma \ref{lem.ptype-as-sum})}}}\\
&  =\sum_{\sigma\in\mathfrak{S}_{V}}%
\begin{cases}
\left(  -1\right)  ^{\varphi\left(  \sigma\right)  }, & \text{if }\sigma
\in\mathfrak{S}_{V}\left(  D,\overline{D}\right)  ;\\
0, & \text{else}%
\end{cases}
\ \ p_{\operatorname*{type}\sigma}\\
&  =\sum_{\sigma\in\mathfrak{S}_{V}\left(  D,\overline{D}\right)  }\left(
-1\right)  ^{\varphi\left(  \sigma\right)  }p_{\operatorname*{type}\sigma}.
\end{align*}

\end{vershort}

\begin{verlong}
Now,%
\begin{align*}
U_{D}  &  =\sum_{w\text{ is a }V\text{-listing}}L_{\operatorname*{Des}\left(
w,D\right)  ,\ n}\\
&  =\underbrace{\sum_{w\text{ is a }V\text{-listing}}\ \ \sum
_{\substack{f:V\rightarrow\mathbb{P}\text{ is a map};\\\text{the
}V\text{-listing }w\text{ is }\left(  f,D\right)  \text{-friendly}}}}%
_{=\sum_{f:V\rightarrow\mathbb{P}\text{ is a map}}\ \ \sum_{\substack{w\text{
is a }V\text{-listing;}\\\text{the }V\text{-listing }w\text{ is }\left(
f,D\right)  \text{-friendly}}}}\prod_{v\in V}x_{f\left(  v\right)
}\ \ \ \ \ \ \ \ \ \ \left(  \text{by (\ref{pf.thm.UX.1.L3})}\right) \\
&  =\underbrace{\sum_{f:V\rightarrow\mathbb{P}\text{ is a map}}}%
_{=\sum_{f:V\rightarrow\mathbb{P}}}\ \ \underbrace{\sum_{\substack{w\text{ is
a }V\text{-listing;}\\\text{the }V\text{-listing }w\text{ is }\left(
f,D\right)  \text{-friendly}}}}_{=\sum_{w\text{ is an }\left(  f,D\right)
\text{-friendly }V\text{-listing}}}\ \ \prod_{v\in V}x_{f\left(  v\right)  }\\
&  =\sum_{f:V\rightarrow\mathbb{P}}\ \ \underbrace{\sum_{w\text{ is an
}\left(  f,D\right)  \text{-friendly }V\text{-listing}}\ \ \prod_{v\in
V}x_{f\left(  v\right)  }}_{\substack{=\left(  \text{\# of }\left(
f,D\right)  \text{-friendly }V\text{-listings}\right)  \cdot\prod_{v\in
V}x_{f\left(  v\right)  }\\\text{(since all addends of this sum have the same
value }\prod_{v\in V}x_{f\left(  v\right)  }\text{)}}}\\
&  =\sum_{f:V\rightarrow\mathbb{P}}\underbrace{\left(  \text{\# of }\left(
f,D\right)  \text{-friendly }V\text{-listings}\right)  }_{\substack{=\sum
_{\substack{\sigma\in\mathfrak{S}_{V};\\f\circ\sigma=f}}\ \ \sum
_{\substack{F\subseteq\mathbf{A}_{\sigma}\cap A\\\text{is linear}}}\left(
-1\right)  ^{\left\vert F\right\vert }\\\text{(by Lemma
\ref{lem.friendlies-by-f})}}}\cdot\prod_{v\in V}x_{f\left(  v\right)  }\\
&  =\underbrace{\sum_{f:V\rightarrow\mathbb{P}}\ \ \sum_{\substack{\sigma
\in\mathfrak{S}_{V};\\f\circ\sigma=f}}\ \ \sum_{\substack{F\subseteq
\mathbf{A}_{\sigma}\cap A\\\text{is linear}}}}_{=\sum_{\sigma\in
\mathfrak{S}_{V}}\ \ \sum_{\substack{F\subseteq\mathbf{A}_{\sigma}\cap
A\\\text{is linear}}}\ \ \sum_{\substack{f:V\rightarrow\mathbb{P}%
\text{;}\\f\circ\sigma=f}}}\left(  -1\right)  ^{\left\vert F\right\vert }%
\cdot\prod_{v\in V}x_{f\left(  v\right)  }\\
&  =\sum_{\sigma\in\mathfrak{S}_{V}}\ \ \sum_{\substack{F\subseteq
\mathbf{A}_{\sigma}\cap A\\\text{is linear}}}\ \ \sum
_{\substack{f:V\rightarrow\mathbb{P}\text{;}\\f\circ\sigma=f}}\left(
-1\right)  ^{\left\vert F\right\vert }\cdot\prod_{v\in V}x_{f\left(  v\right)
}\\
&  =\sum_{\sigma\in\mathfrak{S}_{V}}\ \ \underbrace{\sum_{\substack{F\subseteq
\mathbf{A}_{\sigma}\cap A\\\text{is linear}}}\left(  -1\right)  ^{\left\vert
F\right\vert }}_{\substack{=%
\begin{cases}
\left(  -1\right)  ^{\varphi\left(  \sigma\right)  }, & \text{if }\sigma
\in\mathfrak{S}_{V}\left(  D,\overline{D}\right)  ;\\
0, & \text{else}%
\end{cases}
\\\text{(by Proposition \ref{prop.linear-in-AsigA})}}}\underbrace{\sum
_{\substack{f:V\rightarrow\mathbb{P}\text{;}\\f\circ\sigma=f}}\ \ \prod_{v\in
V}x_{f\left(  v\right)  }}_{\substack{=p_{\operatorname*{type}\sigma
}\\\text{(by Lemma \ref{lem.ptype-as-sum})}}}
\end{align*}%
\begin{align*}
&  =\sum_{\sigma\in\mathfrak{S}_{V}}%
\begin{cases}
\left(  -1\right)  ^{\varphi\left(  \sigma\right)  }, & \text{if }\sigma
\in\mathfrak{S}_{V}\left(  D,\overline{D}\right)  ;\\
0, & \text{else}%
\end{cases}
\ \ p_{\operatorname*{type}\sigma}\\
&  =\underbrace{\sum_{\substack{\sigma\in\mathfrak{S}_{V};\\\sigma
\in\mathfrak{S}_{V}\left(  D,\overline{D}\right)  }}}_{\substack{=\sum
_{\sigma\in\mathfrak{S}_{V}\left(  D,\overline{D}\right)  }\\\text{(since
}\mathfrak{S}_{V}\left(  D,\overline{D}\right)  \\\text{is a subset of
}\mathfrak{S}_{V}\text{)}}}\underbrace{%
\begin{cases}
\left(  -1\right)  ^{\varphi\left(  \sigma\right)  }, & \text{if }\sigma
\in\mathfrak{S}_{V}\left(  D,\overline{D}\right)  ;\\
0, & \text{else}%
\end{cases}
}_{\substack{=\left(  -1\right)  ^{\varphi\left(  \sigma\right)
}\\\text{(since we have }\sigma\in\mathfrak{S}_{V}\left(  D,\overline
{D}\right)  \text{)}}}\ \ p_{\operatorname*{type}\sigma}\\
&  \ \ \ \ \ \ \ \ \ \ +\sum_{\substack{\sigma\in\mathfrak{S}_{V};\\\text{we
don't have }\sigma\in\mathfrak{S}_{V}\left(  D,\overline{D}\right)
}}\underbrace{%
\begin{cases}
\left(  -1\right)  ^{\varphi\left(  \sigma\right)  }, & \text{if }\sigma
\in\mathfrak{S}_{V}\left(  D,\overline{D}\right)  ;\\
0, & \text{else}%
\end{cases}
}_{\substack{=0\\\text{(since we don't have }\sigma\in\mathfrak{S}_{V}\left(
D,\overline{D}\right)  \text{)}}}\ \ p_{\operatorname*{type}\sigma}\\
&  \ \ \ \ \ \ \ \ \ \ \ \ \ \ \ \ \ \ \ \ \left(  \text{since each }\sigma
\in\mathfrak{S}_{V}\text{ either satisfies }\sigma\in\mathfrak{S}_{V}\left(
D,\overline{D}\right)  \text{ or doesn't}\right) \\
&  =\sum_{\sigma\in\mathfrak{S}_{V}\left(  D,\overline{D}\right)  }\left(
-1\right)  ^{\varphi\left(  \sigma\right)  }p_{\operatorname*{type}\sigma
}+\underbrace{\sum_{\substack{\sigma\in\mathfrak{S}_{V};\\\text{we don't have
}\sigma\in\mathfrak{S}_{V}\left(  D,\overline{D}\right)  }%
}0p_{\operatorname*{type}\sigma}}_{=0}\\
&  =\sum_{\sigma\in\mathfrak{S}_{V}\left(  D,\overline{D}\right)  }\left(
-1\right)  ^{\varphi\left(  \sigma\right)  }p_{\operatorname*{type}\sigma}.
\end{align*}

\end{verlong}

\begin{vershort}
\noindent This proves Theorem \ref{thm.UX.1}.
\end{vershort}
\end{proof}

\section{\label{sec.pf.thm.UX.2}Proof of Theorem \ref{thm.UX.2}}

Theorem \ref{thm.UX.2} can be derived from Theorem \ref{thm.UX.1} by combining
some addends that have the same $p_{\operatorname*{type}\sigma}$ factor.
Depending on the respective $\left(  -1\right)  ^{\varphi\left(
\sigma\right)  }$ factors, these addends either cancel each other out or
combine to form a multiple of $p_{\operatorname*{type}\sigma}$.

\begin{proof}
[Proof of Theorem \ref{thm.UX.2}.]We have assumed that $D$ is a tournament.
Hence, for any two distinct vertices $u$ and $v$ of $D$, we have the logical
equivalences%
\[
\left(  \left(  u,v\right)  \text{ is an arc of }D\right)
\ \Longleftrightarrow\ \left(  \left(  v,u\right)  \text{ is an arc of
}\overline{D}\right)
\]
and%
\[
\left(  \left(  u,v\right)  \text{ is an arc of }\overline{D}\right)
\ \Longleftrightarrow\ \left(  \left(  v,u\right)  \text{ is an arc of
}D\right)  .
\]
Therefore, the reversal\footnote{See Definition \ref{def.reqc.features} for
the meanings of \textquotedblleft reversal\textquotedblright\ and
\textquotedblleft nontrivial\textquotedblright.} of a nontrivial $D$-cycle is
always a nontrivial $\overline{D}$-cycle, and vice versa.

We define a map $\Psi:\mathfrak{S}_{V}\left(  D,\overline{D}\right)
\rightarrow\mathfrak{S}_{V}\left(  D\right)  $ as follows: If $\sigma
\in\mathfrak{S}_{V}\left(  D\right)  $, then we let $\Psi\left(
\sigma\right)  $ be the permutation obtained from $\sigma$ by reversing each
cycle of $\sigma$ that is a nontrivial $\overline{D}$-cycle (i.e., replacing
this cycle of $\sigma$ by its reversal, i.e., replacing $\sigma$ by
$\sigma^{-1}$ on all entries of this cycle)\footnote{Here is what this means
in rigorous terms: We let $\Psi\left(  \sigma\right)  $ be the permutation of
$V$ defined by setting%
\begin{align*}
\left(  \Psi\left(  \sigma\right)  \right)  \left(  z\right)   &  =%
\begin{cases}
\sigma^{-1}\left(  z\right)  , & \text{if }z\text{ is an entry of a cycle of
}\sigma\text{ that is a nontrivial }\overline{D}\text{-cycle};\\
\sigma\left(  z\right)  , & \text{otherwise}%
\end{cases}
\\
&  \ \ \ \ \ \ \ \ \ \ \text{for each }z\in V.
\end{align*}
The cycles of this permutation $\Psi\left(  \sigma\right)  $ are precisely
\par
\begin{itemize}
\item the reversals of those cycles of $\sigma$ that are nontrivial
$\overline{D}$-cycles, and
\par
\item the remaining cycles of $\sigma$.
\end{itemize}
}. This map $\Psi$ is well-defined (i.e., we really have $\Psi\left(
\sigma\right)  \in\mathfrak{S}_{V}\left(  D\right)  $ for each $\sigma
\in\mathfrak{S}_{V}\left(  D,\overline{D}\right)  $), because as we just said,
the reversal of a nontrivial $\overline{D}$-cycle is always a nontrivial
$D$-cycle. Moreover, the map $\Psi$ preserves the cycle type of a permutation
-- i.e., we have%
\begin{equation}
\operatorname*{type}\left(  \Psi\left(  \sigma\right)  \right)
=\operatorname*{type}\sigma\label{pf.thm.UX.2.types}%
\end{equation}
for each $\sigma\in\mathfrak{S}_{V}\left(  D,\overline{D}\right)  $.

Now, Theorem \ref{thm.UX.1} yields%
\begin{align}
U_{D}  &  =\sum_{\sigma\in\mathfrak{S}_{V}\left(  D,\overline{D}\right)
}\left(  -1\right)  ^{\varphi\left(  \sigma\right)  }%
\underbrace{p_{\operatorname*{type}\sigma}}%
_{\substack{=p_{\operatorname*{type}\left(  \Psi\left(  \sigma\right)
\right)  }\\\text{(by (\ref{pf.thm.UX.2.types}))}}}=\sum_{\sigma
\in\mathfrak{S}_{V}\left(  D,\overline{D}\right)  }\left(  -1\right)
^{\varphi\left(  \sigma\right)  }p_{\operatorname*{type}\left(  \Psi\left(
\sigma\right)  \right)  }\nonumber\\
&  =\sum_{\tau\in\mathfrak{S}_{V}\left(  D\right)  }\ \ \sum_{\substack{\sigma
\in\mathfrak{S}_{V}\left(  D,\overline{D}\right)  ;\\\Psi\left(
\sigma\right)  =\tau}}\left(  -1\right)  ^{\varphi\left(  \sigma\right)
}p_{\operatorname*{type}\tau}\ \ \ \ \ \ \ \ \ \ \left(
\begin{array}
[c]{c}%
\text{here, we have split up the sum}\\
\text{according to the value of }\Psi\left(  \sigma\right)
\end{array}
\right) \nonumber\\
&  =\sum_{\tau\in\mathfrak{S}_{V}\left(  D\right)  }\left(  \sum
_{\substack{\sigma\in\mathfrak{S}_{V}\left(  D,\overline{D}\right)
;\\\Psi\left(  \sigma\right)  =\tau}}\left(  -1\right)  ^{\varphi\left(
\sigma\right)  }\right)  p_{\operatorname*{type}\tau}. \label{pf.thm.UX.2.2}%
\end{align}

Now, we claim that each $\tau\in\mathfrak{S}_{V}\left(  D\right)  $ satisfies%
\begin{equation}
\sum_{\substack{\sigma\in\mathfrak{S}_{V}\left(  D,\overline{D}\right)
;\\\Psi\left(  \sigma\right)  =\tau}}\left(  -1\right)  ^{\varphi\left(
\sigma\right)  }=%
\begin{cases}
2^{\psi\left(  \tau\right)  }, & \text{if all cycles of }\tau\text{ have odd
length};\\
0, & \text{otherwise.}%
\end{cases}
\label{pf.thm.UX.2.cancel}%
\end{equation}

[\textit{Proof of (\ref{pf.thm.UX.2.cancel}):} Let $\tau\in\mathfrak{S}%
_{V}\left(  D\right)  $. Then, $\tau$ has exactly $\psi\left(  \tau\right)  $
many nontrivial cycles (by the definition of $\psi\left(  \tau\right)  $), and
all of these nontrivial cycles are $D$-cycles (by the definition of
$\mathfrak{S}_{V}\left(  D\right)  $). The permutations $\sigma\in
\mathfrak{S}_{V}\left(  D,\overline{D}\right)  $ that satisfy $\Psi\left(
\sigma\right)  =\tau$ can be obtained by choosing some of these nontrivial
cycles and reversing them, which turns them into $\overline{D}$-cycles. This
can be done in $2^{\psi\left(  \tau\right)  }$ many ways, since each of the
$\psi\left(  \tau\right)  $ many nontrivial cycles can be either reversed or
not. If all cycles of $\tau$ have odd length, then all $2^{\psi\left(
\tau\right)  }$ permutations $\sigma$ obtained in this way will satisfy
$\left(  -1\right)  ^{\varphi\left(  \sigma\right)  }=1$ (because
$\varphi\left(  \sigma\right)  =\sum_{\substack{\gamma\in\operatorname*{Cycs}%
\sigma;\\\gamma\text{ is a }D\text{-cycle}}}\left(  \underbrace{\ell\left(
\gamma\right)  }_{\text{odd}}-1\right)  $ will always be even); therefore, the
sum $\sum_{\substack{\sigma\in\mathfrak{S}_{V}\left(  D,\overline{D}\right)
;\\\Psi\left(  \sigma\right)  =\tau}}\left(  -1\right)  ^{\varphi\left(
\sigma\right)  }$ will be a sum of $2^{\psi\left(  \tau\right)  }$ many $1$s
and therefore simplify to $2^{\psi\left(  \tau\right)  }$. On the other hand,
if not all cycles of $\tau$ have odd length, then there is at least one cycle
$\delta$ of $\tau$ that has even length, and of course this cycle $\delta$
will be nontrivial (since a trivial cycle has odd length); thus, among the
permutations $\sigma\in\mathfrak{S}_{V}\left(  D,\overline{D}\right)  $ that
satisfy $\Psi\left(  \sigma\right)  =\tau$, there will be as many that have
$\delta$ reversed as ones that have $\delta$ not reversed, and the parities of
$\varphi\left(  \sigma\right)  $ for the former will be opposite from the
parities of $\varphi\left(  \sigma\right)  $ for the latter; thus, the sum
$\sum_{\substack{\sigma\in\mathfrak{S}_{V}\left(  D,\overline{D}\right)
;\\\Psi\left(  \sigma\right)  =\tau}}\left(  -1\right)  ^{\varphi\left(
\sigma\right)  }$ will have equally many $1$s and $-1$s among its addends, and
therefore will simplify to $0$. In either case, we obtain
(\ref{pf.thm.UX.2.cancel}).] \medskip

Now, (\ref{pf.thm.UX.2.2}) becomes%
\begin{align*}
U_{D}  &  =\sum_{\tau\in\mathfrak{S}_{V}\left(  D\right)  }\underbrace{\left(
\sum_{\substack{\sigma\in\mathfrak{S}_{V}\left(  D,\overline{D}\right)
;\\\Psi\left(  \sigma\right)  =\tau}}\left(  -1\right)  ^{\varphi\left(
\sigma\right)  }\right)  }_{\substack{=%
\begin{cases}
2^{\psi\left(  \tau\right)  }, & \text{if all cycles of }\tau\text{ have odd
length};\\
0, & \text{otherwise}%
\end{cases}
\\\text{(by (\ref{pf.thm.UX.2.cancel}))}}}p_{\operatorname*{type}\tau}\\
&  =\sum_{\tau\in\mathfrak{S}_{V}\left(  D\right)  }%
\begin{cases}
2^{\psi\left(  \tau\right)  }, & \text{if all cycles of }\tau\text{ have odd
length};\\
0, & \text{otherwise}%
\end{cases}
\ \ p_{\operatorname*{type}\tau}\\
&  =\sum_{\substack{\tau\in\mathfrak{S}_{V}\left(  D\right)  ;\\\text{all
cycles of }\tau\text{ have odd length}}}2^{\psi\left(  \tau\right)
}p_{\operatorname*{type}\tau}=\sum_{\substack{\sigma\in\mathfrak{S}_{V}\left(
D\right)  ;\\\text{all cycles of }\sigma\text{ have odd length}}%
}2^{\psi\left(  \sigma\right)  }p_{\operatorname*{type}\sigma}.
\end{align*}
This proves Theorem \ref{thm.UX.2}.
\end{proof}

\section{\label{sec.pf.cors}Proving the corollaries}

Let us now quickly go through the proofs of the corollaries we stated after
Theorem \ref{thm.UX.1} and after Theorem \ref{thm.UX.2}:

\begin{proof}
[Proof of Corollary \ref{cor.UX.p-int}.]We let $\mathbb{N}\left[  p_{1}%
,p_{2},p_{3},\ldots\right]  $ denote the set of all polynomials in
$p_{1},p_{2},p_{3},\ldots$ with coefficients in $\mathbb{N}$.

For each integer partition $\lambda$, we have%
\begin{equation}
p_{\lambda}\in\mathbb{N}\left[  p_{1},p_{2},p_{3},\ldots\right]
\label{pf.cor.UX.p-int.1}%
\end{equation}
(by the definition of $p_{\lambda}$).

Theorem \ref{thm.UX.1} yields%
\begin{align*}
U_{D}  &  =\sum_{\sigma\in\mathfrak{S}_{V}\left(  D,\overline{D}\right)
}\left(  -1\right)  ^{\varphi\left(  \sigma\right)  }%
\underbrace{p_{\operatorname*{type}\sigma}}_{\substack{\in\mathbb{N}\left[
p_{1},p_{2},p_{3},\ldots\right]  \\\text{(by (\ref{pf.cor.UX.p-int.1}))}}}\\
&  \in\sum_{\sigma\in\mathfrak{S}_{V}\left(  D,\overline{D}\right)  }\left(
-1\right)  ^{\varphi\left(  \sigma\right)  }\mathbb{N}\left[  p_{1}%
,p_{2},p_{3},\ldots\right]  \subseteq\mathbb{Z}\left[  p_{1},p_{2}%
,p_{3},\ldots\right]  .
\end{align*}
This proves Corollary \ref{cor.UX.p-int}.
\end{proof}

\begin{proof}
[Proof of Corollary \ref{cor.UX.if-each-D-cyc-odd}.]Let $2\mathbb{Z}$ denote
the set of all even integers.

Let $\sigma\in\mathfrak{S}_{V}\left(  D,\overline{D}\right)  $. The definition
of $\varphi\left(  \sigma\right)  $ in Theorem \ref{thm.UX.1} yields%
\[
\varphi\left(  \sigma\right)  =\sum_{\substack{\gamma\in\operatorname*{Cycs}%
\sigma;\\\gamma\text{ is a }D\text{-cycle}}}\underbrace{\left(  \ell\left(
\gamma\right)  -1\right)  }_{\substack{\in2\mathbb{Z}\\\text{(since }%
\ell\left(  \gamma\right)  \text{ is odd}\\\text{(because every }%
D\text{-cycle}\\\text{has odd length))}}}\in2\mathbb{Z},
\]
so that%
\begin{equation}
\left(  -1\right)  ^{\varphi\left(  \sigma\right)  }=1.
\label{pf.cor.UX.if-each-D-cyc-odd.1}%
\end{equation}

Theorem \ref{thm.UX.1} now yields%
\begin{align*}
U_{D}  &  =\sum_{\sigma\in\mathfrak{S}_{V}\left(  D,\overline{D}\right)
}\underbrace{\left(  -1\right)  ^{\varphi\left(  \sigma\right)  }%
}_{\substack{=1\\\text{(by (\ref{pf.cor.UX.if-each-D-cyc-odd.1}))}%
}}p_{\operatorname*{type}\sigma}\\
&  =\sum_{\sigma\in\mathfrak{S}_{V}\left(  D,\overline{D}\right)
}\underbrace{p_{\operatorname*{type}\sigma}}_{\substack{\in\mathbb{N}\left[
p_{1},p_{2},p_{3},\ldots\right]  \\\text{(by (\ref{pf.cor.UX.p-int.1}))}}}\\
&  \in\sum_{\sigma\in\mathfrak{S}_{V}\left(  D,\overline{D}\right)
}\mathbb{N}\left[  p_{1},p_{2},p_{3},\ldots\right]  \subseteq\mathbb{N}\left[
p_{1},p_{2},p_{3},\ldots\right]  .
\end{align*}
This proves Corollary \ref{cor.UX.if-each-D-cyc-odd}.
\end{proof}

\begin{proof}
[Proof of Corollary \ref{cor.UX.tournament-N}.]For each $\sigma\in
\mathfrak{S}_{V}$, let $\psi\left(  \sigma\right)  $ denote the number of
nontrivial cycles of $\sigma$.

Let $\sigma\in\mathfrak{S}_{V}\left(  D\right)  $ be a permutation whose all
cycles have odd length. We shall show that $2^{\psi\left(  \sigma\right)
}p_{\operatorname*{type}\sigma}\in\mathbb{N}\left[  p_{1},2p_{3},2p_{5}%
,2p_{7},\ldots\right]  $.

Indeed, let $k_{1},k_{2},\ldots,k_{s}$ be the lengths of all cycles of
$\sigma$, listed in decreasing order. Then, the numbers $k_{1},k_{2}%
,\ldots,k_{s}$ are odd (since all cycles of $\sigma$ have odd length).
Moreover, the definition of $\operatorname*{type}\sigma$ yields
$\operatorname*{type}\sigma=\left(  k_{1},k_{2},\ldots,k_{s}\right)  $.
Furthermore,
\begin{align*}
\psi\left(  \sigma\right)   &  =\left(  \text{\# of nontrivial cycles of
}\sigma\right) \\
&  =\left(  \text{\# of cycles of }\sigma\text{ that have length }>1\right) \\
&  =\left(  \text{\# of }i\in\left[  s\right]  \text{ such that }%
k_{i}>1\right) \\
&  \ \ \ \ \ \ \ \ \ \ \ \ \ \ \ \ \ \ \ \ \left(  \text{since the lengths of
all cycles of }\sigma\text{ are }k_{1},k_{2},\ldots,k_{s}\right) \\
&  =\sum_{i=1}^{s}\left[  k_{i}>1\right]
\end{align*}
(here, we are using the Iverson bracket notation), so that%
\begin{equation}
2^{\psi\left(  \sigma\right)  }=2^{\sum_{i=1}^{s}\left[  k_{i}>1\right]
}=\prod_{i=1}^{s}2^{\left[  k_{i}>1\right]  }.
\label{pf.cor.UX.tournament-N.1}%
\end{equation}
Now, recall that $\operatorname*{type}\sigma=\left(  k_{1},k_{2},\ldots
,k_{s}\right)  $. Hence, the definition of $p_{\operatorname*{type}\sigma}$
yields
\begin{equation}
p_{\operatorname*{type}\sigma}=p_{k_{1}}p_{k_{2}}\cdots p_{k_{s}}=\prod
_{i=1}^{s}p_{k_{i}}. \label{pf.cor.UX.tournament-N.2}%
\end{equation}
Multiplying the equalities (\ref{pf.cor.UX.tournament-N.1}) and
(\ref{pf.cor.UX.tournament-N.2}), we obtain
\begin{align}
2^{\psi\left(  \sigma\right)  }p_{\operatorname*{type}\sigma}  &  =\left(
\prod_{i=1}^{s}2^{\left[  k_{i}>1\right]  }\right)  \left(  \prod_{i=1}%
^{s}p_{k_{i}}\right)  =\prod_{i=1}^{s}\underbrace{\left(  2^{\left[
k_{i}>1\right]  }p_{k_{i}}\right)  }_{\substack{\in\left\{  p_{1}%
,2p_{3},2p_{5},2p_{7},\ldots\right\}  \\\text{(since }k_{i}\text{ is
odd}\\\text{(because }k_{1},k_{2},\ldots,k_{s}\text{ are odd))}}}\nonumber\\
&  =\left(  \text{a product of }s\text{ elements of the set }\left\{
p_{1},2p_{3},2p_{5},2p_{7},\ldots\right\}  \right) \nonumber\\
&  \in\mathbb{N}\left[  p_{1},2p_{3},2p_{5},2p_{7},\ldots\right]  .
\label{pf.cor.UX.tournament-N.4}%
\end{align}

Forget that we fixed $\sigma$. We thus have proved
(\ref{pf.cor.UX.tournament-N.4}) for each permutation $\sigma\in
\mathfrak{S}_{V}\left(  D\right)  $ whose all cycles have odd length. Now,
Theorem \ref{thm.UX.2} yields%
\[
U_{D}=\sum_{\substack{\sigma\in\mathfrak{S}_{V}\left(  D\right)  ;\\\text{all
cycles of }\sigma\text{ have odd length}}}\underbrace{2^{\psi\left(
\sigma\right)  }p_{\operatorname*{type}\sigma}}_{\substack{\in\mathbb{N}%
\left[  p_{1},2p_{3},2p_{5},2p_{7},\ldots\right]  \\\text{(by
(\ref{pf.cor.UX.tournament-N.4}))}}}\in\mathbb{N}\left[  p_{1},2p_{3}%
,2p_{5},2p_{7},\ldots\right]  .
\]
This proves Corollary \ref{cor.UX.tournament-N}.
\end{proof}

\section{\label{sec.pf.thm.UX.3}Proof of Theorem \ref{thm.UX.3}}

The proof of Theorem \ref{thm.UX.3} is a slightly more complicated variant of
our above proof of Theorem \ref{thm.UX.2}.

\begin{proof}
[Proof of Theorem \ref{thm.UX.3}.]\textbf{(b)} First, we attempt to gain a
better understanding of risky cycles.

We start by noticing that the reversal of a risky rotation-equivalence class
is again risky.

We have assumed that there exist no two distinct vertices $u$ and $v$ of $D$
such that both pairs $\left(  u,v\right)  $ and $\left(  v,u\right)  $ belong
to $A$. In other words, if $\left(  u,v\right)  $ is an arc of $D$ with $u\neq
v$, then $\left(  v,u\right)  $ is not an arc of $D$, and thus $\left(
v,u\right)  $ must be an arc of $\overline{D}$.

Hence, if $v$ is any $D$-cycle of length $\geq2$, then the reversal of $v$
must be a $\overline{D}$-cycle, and thus cannot be a $D$-cycle. Therefore, in
particular, if $v$ is a risky rotation-equivalence class of tuples of elements
of $V$, then either $v$ or the reversal of $v$ is a $D$-cycle (by the
definition of \textquotedblleft risky\textquotedblright), but not both at the
same time.

Consequently, if $v$ is a risky rotation-equivalence class of tuples of
elements of $V$, then $v$ and the reversal of $v$ cannot be identical, i.e.,
we must have%
\begin{equation}
v\neq\operatorname*{rev}v. \label{pf.thm.UX.3.not-rev}%
\end{equation}

We define a subset $\mathfrak{S}_{V}^{\circ}\left(  D,\overline{D}\right)  $
of $\mathfrak{S}_{V}\left(  D,\overline{D}\right)  $ by%
\[
\mathfrak{S}_{V}^{\circ}\left(  D,\overline{D}\right)  :=\left\{  \sigma
\in\mathfrak{S}_{V}\left(  D,\overline{D}\right)  \ \mid\ \text{each risky
cycle of }\sigma\text{ is a }D\text{-cycle}\right\}  .
\]

We define a map $\Gamma:\mathfrak{S}_{V}\left(  D,\overline{D}\right)
\rightarrow\mathfrak{S}_{V}^{\circ}\left(  D,\overline{D}\right)  $ as
follows: If $\sigma\in\mathfrak{S}_{V}\left(  D,\overline{D}\right)  $, then
we let $\Gamma\left(  \sigma\right)  $ be the permutation obtained from
$\sigma$ by reversing each risky cycle of $\sigma$ that is not a $D$-cycle
(i.e., replacing this cycle of $\sigma$ by its reversal, i.e., replacing
$\sigma$ by $\sigma^{-1}$ on all entries of this cycle). This map $\Gamma$ is
well-defined (i.e., we really have $\Gamma\left(  \sigma\right)
\in\mathfrak{S}_{V}^{\circ}\left(  D,\overline{D}\right)  $ for each
$\sigma\in\mathfrak{S}_{V}\left(  D,\overline{D}\right)  $), because if a
risky tuple is not a $D$-cycle, then its reversal is a $D$-cycle (by the
definition of \textquotedblleft risky\textquotedblright). Moreover, the map
$\Gamma$ preserves the cycle type of a permutation -- i.e., we have%
\begin{equation}
\operatorname*{type}\left(  \Gamma\left(  \sigma\right)  \right)
=\operatorname*{type}\sigma\label{pf.cor.UX.p-pos.types}%
\end{equation}
for each $\sigma\in\mathfrak{S}_{V}\left(  D,\overline{D}\right)  $.

Now, Theorem \ref{thm.UX.1} yields%
\begin{align}
U_{D}  &  =\sum_{\sigma\in\mathfrak{S}_{V}\left(  D,\overline{D}\right)
}\left(  -1\right)  ^{\varphi\left(  \sigma\right)  }%
\underbrace{p_{\operatorname*{type}\sigma}}%
_{\substack{=p_{\operatorname*{type}\left(  \Gamma\left(  \sigma\right)
\right)  }\\\text{(by (\ref{pf.cor.UX.p-pos.types}))}}}=\sum_{\sigma
\in\mathfrak{S}_{V}\left(  D,\overline{D}\right)  }\left(  -1\right)
^{\varphi\left(  \sigma\right)  }p_{\operatorname*{type}\left(  \Gamma\left(
\sigma\right)  \right)  }\nonumber\\
&  =\sum_{\tau\in\mathfrak{S}_{V}^{\circ}\left(  D,\overline{D}\right)
}\ \ \sum_{\substack{\sigma\in\mathfrak{S}_{V}\left(  D,\overline{D}\right)
;\\\Gamma\left(  \sigma\right)  =\tau}}\left(  -1\right)  ^{\varphi\left(
\sigma\right)  }p_{\operatorname*{type}\tau}\ \ \ \ \ \ \ \ \ \ \left(
\begin{array}
[c]{c}%
\text{here, we have split up the sum}\\
\text{according to the value of }\Gamma\left(  \sigma\right)
\end{array}
\right) \nonumber\\
&  =\sum_{\tau\in\mathfrak{S}_{V}^{\circ}\left(  D,\overline{D}\right)
}\left(  \sum_{\substack{\sigma\in\mathfrak{S}_{V}\left(  D,\overline
{D}\right)  ;\\\Gamma\left(  \sigma\right)  =\tau}}\left(  -1\right)
^{\varphi\left(  \sigma\right)  }\right)  p_{\operatorname*{type}\tau}.
\label{pf.cor.UX.p-pos.2}%
\end{align}

Now, we claim that each $\tau\in\mathfrak{S}_{V}^{\circ}\left(  D,\overline
{D}\right)  $ satisfies%
\begin{equation}
\sum_{\substack{\sigma\in\mathfrak{S}_{V}\left(  D,\overline{D}\right)
;\\\Gamma\left(  \sigma\right)  =\tau}}\left(  -1\right)  ^{\varphi\left(
\sigma\right)  }=%
\begin{cases}
\left(  -1\right)  ^{\varphi\left(  \tau\right)  }, & \text{if no cycle of
}\tau\text{ is risky};\\
0, & \text{otherwise.}%
\end{cases}
\label{pf.cor.UX.p-pos.cancel}%
\end{equation}

[\textit{Proof of (\ref{pf.cor.UX.p-pos.cancel}):} Let $\tau\in\mathfrak{S}%
_{V}^{\circ}\left(  D,\overline{D}\right)  $. Let $c_{1},c_{2},\ldots,c_{k}$
be the risky cycles of $\tau$. All of these $k$ risky cycles $c_{1}%
,c_{2},\ldots,c_{k}$ are $D$-cycles (since $\tau\in\mathfrak{S}_{V}^{\circ
}\left(  D,\overline{D}\right)  $). The permutations $\sigma\in\mathfrak{S}%
_{V}\left(  D,\overline{D}\right)  $ that satisfy $\Gamma\left(
\sigma\right)  =\tau$ can be obtained by choosing some of these $k$ risky
cycles $c_{1},c_{2},\ldots,c_{k}$ of $\tau$ and reversing them, which turns
them into $\overline{D}$-cycles (because if $v$ is any $D$-cycle of length
$\geq2$, then the reversal of $v$ must be a $\overline{D}$-cycle). This can be
done in $2^{k}$ many ways, since each of the $k$ risky cycles $c_{1}%
,c_{2},\ldots,c_{k}$ can be either reversed or not\footnote{Fineprint: All of
these $k$ risky cycles are distinct from their reversals (by
(\ref{pf.thm.UX.3.not-rev})). Thus, each of the $2^{k}$ possible choices of
risky cycles to reverse leads to a different permutation $\sigma
\in\mathfrak{S}_{V}\left(  D,\overline{D}\right)  $.}. The sum $\sum
_{\substack{\sigma\in\mathfrak{S}_{V}\left(  D,\overline{D}\right)
;\\\Gamma\left(  \sigma\right)  =\tau}}\left(  -1\right)  ^{\varphi\left(
\sigma\right)  }$ thus has $2^{k}$ many addends, and each of these addends
corresponds to one way to decide which of the $k$ risky cycles $c_{1}%
,c_{2},\ldots,c_{k}$ to reverse and which not to reverse. If $k=0$, then this
sum therefore simplifies to $\left(  -1\right)  ^{\varphi\left(  \tau\right)
}$. If, on the other hand, we have $k\neq0$, then this sum equals
$0$\ \ \ \ \footnote{\textit{Proof.} Assume that $k\neq0$. Thus, $k\geq1$, so
that the risky cycle $c_{1}$ exists. If $\sigma\in\mathfrak{S}_{V}\left(
D,\overline{D}\right)  $ is such that $\Gamma\left(  \sigma\right)  =\tau$,
then either the cycle $c_{1}$ or its reversal (but not both) is a cycle of
$\sigma$. Thus,
\begin{align}
&  \sum_{\substack{\sigma\in\mathfrak{S}_{V}\left(  D,\overline{D}\right)
;\\\Gamma\left(  \sigma\right)  =\tau}}\left(  -1\right)  ^{\varphi\left(
\sigma\right)  }\nonumber\\
&  =\sum_{\substack{\sigma\in\mathfrak{S}_{V}\left(  D,\overline{D}\right)
;\\\Gamma\left(  \sigma\right)  =\tau;\\c_{1}\text{ is a cycle of }\sigma
}}\left(  -1\right)  ^{\varphi\left(  \sigma\right)  }+\sum_{\substack{\sigma
\in\mathfrak{S}_{V}\left(  D,\overline{D}\right)  ;\\\Gamma\left(
\sigma\right)  =\tau;\\c_{1}\text{ is not a cycle of }\sigma}}\left(
-1\right)  ^{\varphi\left(  \sigma\right)  }.
\label{pf.cor.UX.p-pos.cancel.pf.1}%
\end{align}
The two sums on the right hand side of this equality have the same number of
addends, and there is in fact a bijection between the addends of the former
and those of the latter (given by replacing the cycle $c_{1}$ by its reversal
or vice versa). Moreover, this bijection toggles the parity of the number
$\varphi\left(  \sigma\right)  $ (that is, it changes this number from odd to
even or vice versa), since $\varphi\left(  \sigma\right)  $ is defined to be
the sum $\sum_{\substack{\gamma\in\operatorname*{Cycs}\sigma;\\\gamma\text{ is
a }D\text{-cycle}}}\left(  \ell\left(  \gamma\right)  -1\right)  $ (which
contains the odd addend $\ell\left(  c_{1}\right)  -1$ when $c_{1}$ is a cycle
of $\sigma$, but does not contain this addend when $c_{1}$ is not a cycle of
$\sigma$). Hence, this bijection flips the sign $\left(  -1\right)
^{\varphi\left(  \sigma\right)  }$. Therefore, the addends in the first sum on
the right hand side of (\ref{pf.cor.UX.p-pos.cancel.pf.1}) cancel those in the
second. Therefore, the two sums add up to $0$. The equality
(\ref{pf.cor.UX.p-pos.cancel.pf.1}) thus simplifies to $\sum_{\substack{\sigma
\in\mathfrak{S}_{V}\left(  D,\overline{D}\right)  ;\\\Gamma\left(
\sigma\right)  =\tau}}\left(  -1\right)  ^{\varphi\left(  \sigma\right)  }=0$,
qed.}. Combining the results from both of these cases, we obtain
\begin{align*}
\sum_{\substack{\sigma\in\mathfrak{S}_{V}\left(  D,\overline{D}\right)
;\\\Gamma\left(  \sigma\right)  =\tau}}\left(  -1\right)  ^{\varphi\left(
\sigma\right)  }  &  =%
\begin{cases}
\left(  -1\right)  ^{\varphi\left(  \tau\right)  }, & \text{if }k=0;\\
0, & \text{otherwise}%
\end{cases}
\\
&  =%
\begin{cases}
\left(  -1\right)  ^{\varphi\left(  \tau\right)  }, & \text{if no cycle of
}\tau\text{ is risky};\\
0, & \text{otherwise.}%
\end{cases}
\end{align*}
(since $k$ is the number of risky cycles of $\tau$). This proves
(\ref{pf.cor.UX.p-pos.cancel}).] \medskip

Now, (\ref{pf.cor.UX.p-pos.2}) becomes%
\begin{align*}
U_{D}  &  =\sum_{\tau\in\mathfrak{S}_{V}^{\circ}\left(  D,\overline{D}\right)
}\underbrace{\left(  \sum_{\substack{\sigma\in\mathfrak{S}_{V}\left(
D,\overline{D}\right)  ;\\\Gamma\left(  \sigma\right)  =\tau}}\left(
-1\right)  ^{\varphi\left(  \sigma\right)  }\right)  }_{\substack{=%
\begin{cases}
\left(  -1\right)  ^{\varphi\left(  \tau\right)  }, & \text{if no cycle of
}\tau\text{ is risky};\\
0, & \text{otherwise}%
\end{cases}
\\\text{(by (\ref{pf.cor.UX.p-pos.cancel}))}}}p_{\operatorname*{type}\tau}\\
&  =\sum_{\tau\in\mathfrak{S}_{V}^{\circ}\left(  D,\overline{D}\right)  }%
\begin{cases}
\left(  -1\right)  ^{\varphi\left(  \tau\right)  }, & \text{if no cycle of
}\tau\text{ is risky};\\
0, & \text{otherwise}%
\end{cases}
\ \ p_{\operatorname*{type}\tau}\\
&  =\sum_{\substack{\tau\in\mathfrak{S}_{V}^{\circ}\left(  D,\overline
{D}\right)  ;\\\text{no cycle of }\tau\text{ is risky}}%
}\ \ \underbrace{\left(  -1\right)  ^{\varphi\left(  \tau\right)  }%
}_{\substack{=1\\\text{(since no cycle of }\tau\text{ is risky,}\\\text{and
thus it is easy to see}\\\text{that }\varphi\left(  \tau\right)  \text{ is
even)}}}p_{\operatorname*{type}\tau}\\
&  =\sum_{\substack{\tau\in\mathfrak{S}_{V}^{\circ}\left(  D,\overline
{D}\right)  ;\\\text{no cycle of }\tau\text{ is risky}}%
}p_{\operatorname*{type}\tau}=\sum_{\substack{\tau\in\mathfrak{S}_{V}\left(
D,\overline{D}\right)  ;\\\text{no cycle of }\tau\text{ is risky}%
}}p_{\operatorname*{type}\tau}%
\end{align*}
(since the permutations $\tau\in\mathfrak{S}_{V}^{\circ}\left(  D,\overline
{D}\right)  $ that have no risky cycles are precisely the permutations
$\tau\in\mathfrak{S}_{V}\left(  D,\overline{D}\right)  $ that have no risky
cycles\footnote{This follows trivially from the definition of $\mathfrak{S}%
_{V}^{\circ}\left(  D,\overline{D}\right)  $.}). Renaming the summation index
$\tau$ as $\sigma$ on the right hand side, we obtain%
\[
U_{D}=\sum_{\substack{\sigma\in\mathfrak{S}_{V}\left(  D,\overline{D}\right)
;\\\text{no cycle of }\sigma\text{ is risky}}}p_{\operatorname*{type}\sigma}.
\]
This proves Theorem \ref{thm.UX.3} \textbf{(b)}.

\textbf{(a)} This follows trivially from part \textbf{(b)}, since $p_{\lambda
}\in\mathbb{N}\left[  p_{1},p_{2},p_{3},\ldots\right]  $ for each partition
$\lambda$.
\end{proof}

\section{\label{sec.redei}Recovering Redei's and Berge's theorems}

We shall now derive two well-known theorems from Theorem \ref{thm.UX.1} and
Theorem \ref{thm.UX.2}.

We recall Convention \ref{conv.number} and Definition \ref{def.hamp}. The two
theorems we shall derive are the following:

\begin{theorem}
[R\'{e}dei's Theorem]\label{thm.tourn.redei}Let $D$ be a tournament. Then, the
\# of hamps of $D$ is odd. Here, we agree to consider the empty list $\left(
{}\right)  $ as a hamp of the empty tournament with $0$ vertices.
\end{theorem}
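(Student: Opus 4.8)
The plan is to read the hamp count out of $U_{D}$ by one linear functional on $\operatorname{QSym}$, and then extract its parity from the power-sum expansion of Theorem~\ref{thm.UX.2}. Write $n=\left\vert V\right\vert$. The starting observation is that a hamp of $D$ is exactly a $V$-listing $w=\left(w_{1},\ldots,w_{n}\right)$ all of whose positions are $D$-descents: indeed $w$ is a $D$-path containing every vertex if and only if $\left(w_{i},w_{i+1}\right)\in A$ for all $i\in\left[n-1\right]$, i.e. $\operatorname*{Des}\left(w,D\right)=\left[n-1\right]$. Since the fundamental quasisymmetric functions $L_{I,n}$ for $I\subseteq\left[n-1\right]$ form a basis of the degree-$n$ part of $\operatorname{QSym}$, there is a well-defined linear functional $\Phi$ on that space sending $L_{I,n}$ to $\left[I=\left[n-1\right]\right]$ (the coefficient, in the fundamental basis, of the top element $L_{\left[n-1\right],n}=e_{n}$). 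Applying $\Phi$ to the defining expansion $U_{D}=\sum_{w}L_{\operatorname*{Des}\left(w,D\right),\,n}$ gives at once $\Phi\left(U_{D}\right)=\#\left\{w:\operatorname*{Des}\left(w,D\right)=\left[n-1\right]\right\}=\left(\text{\# of hamps of }D\right)$.

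Next I would apply the \emph{same} $\Phi$ to the other expression for $U_{D}$, the one furnished by Theorem~\ref{thm.UX.2}: $U_{D}=\sum_{\sigma}2^{\psi\left(\sigma\right)}p_{\operatorname*{type}\sigma}$, where $\sigma$ runs over the permutations in $\mathfrak{S}_{V}\left(D\right)$ all of whose cycles have odd length. By linearity, $\left(\text{\# of hamps of }D\right)=\sum_{\sigma}2^{\psi\left(\sigma\right)}\Phi\left(p_{\operatorname*{type}\sigma}\right)$. Now reduce modulo $2$: every $\sigma$ with at least one nontrivial cycle has $\psi\left(\sigma\right)\geq1$, so its coefficient $2^{\psi\left(\sigma\right)}$ is even and the term drops out. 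The only survivor is $\sigma=\operatorname{id}_{V}$, which does occur in the sum (it has no nontrivial cycles, so it lies vacuously in $\mathfrak{S}_{V}\left(D\right)$, and all its cycles have the odd length $1$); it contributes $2^{0}p_{\left(1^{n}\right)}=p_{1}^{n}$. Hence $\left(\text{\# of hamps of }D\right)\equiv\Phi\left(p_{1}^{n}\right)\pmod 2$.

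What remains is the single computation $\Phi\left(p_{1}^{n}\right)=1$, which I expect to be the only step needing real care; everything before it is formal. Here $p_{1}^{n}=h_{1}^{n}$, and it is classical that $h_{1}^{n}$ expands in the fundamental basis as a sum indexed by $\mathfrak{S}_{n}$ in which the coefficient of $L_{\left[n-1\right],n}$ equals the number of permutations of $\left[n\right]$ whose descent set is all of $\left[n-1\right]$, namely $1$ (the unique strictly decreasing permutation). Thus $\Phi\left(p_{1}^{n}\right)=1$, giving $\left(\text{\# of hamps of }D\right)\equiv1\pmod 2$, i.e. odd. The degenerate case $n=0$ needs only a remark: the empty list is the sole $V$-listing and is a hamp, so the count is $1$, again odd (and the argument above in fact goes through verbatim, $\operatorname{id}_{\varnothing}$ being the only permutation). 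The main obstacle is precisely pinning down $\Phi\left(p_{1}^{n}\right)=1$: one either invokes the standard fundamental-basis expansion of $h_{1}^{n}$, or verifies it self-containedly by passing to the monomial basis $M_{\operatorname*{comp}\left(J,n\right)}$ and running an inclusion--exclusion over the subsets $J\subseteq\left[n-1\right]$ to show the top fundamental coefficient of $p_{1}^{n}$ is $1$.
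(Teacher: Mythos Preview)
Your argument is correct and follows the same overall template as the paper's proof---apply a linear functional to both the defining sum for $U_{D}$ and to the power-sum expansion from Theorem~\ref{thm.UX.2}, then reduce modulo $2$---but the choice of functional is different and this buys you something. The paper uses the evaluation homomorphism $\zeta\colon f\mapsto f(1,0,0,\ldots)$, which satisfies $\zeta(L_{I,n})=[I=\varnothing]$ and hence yields $\zeta(U_{D})=(\text{\# of hamps of }\overline{D})$; the paper then has to invoke Berge's theorem (Theorem~\ref{thm.hamp.Dbar}) to pass from $\overline{D}$ to $D$. Your functional $\Phi$ instead extracts the coefficient of $L_{[n-1],n}=e_{n}$, giving $(\text{\# of hamps of }D)$ directly and bypassing Berge entirely. (The two functionals are essentially $\omega$-duals of one another, consistent with Theorem~\ref{thm.antipode.U}.) The trade-off is that the paper's $\zeta$ is a ring homomorphism, so $\zeta(p_{\lambda})=1$ is immediate, whereas your $\Phi$ is merely linear: you need the separate computation $\Phi(p_{1}^{n})=1$ (which you correctly supply via the classical expansion $h_{1}^{n}=\sum_{\sigma\in\mathfrak{S}_{n}}L_{\operatorname*{Des}(\sigma),n}$), and you are implicitly using that each $\Phi(p_{\operatorname*{type}\sigma})$ is an \emph{integer} so that multiplying by $2^{\psi(\sigma)}$ genuinely kills the term modulo $2$. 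This last point is true because the $L_{I,n}$ form a $\mathbb{Z}$-basis of the degree-$n$ part of $\operatorname*{QSym}$, but it deserves an explicit sentence in a polished write-up.
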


\begin{theorem}
[Berge's Theorem]\label{thm.hamp.Dbar}Let $D$ be a digraph. Then,%
\[
\left(  \text{\# of hamps of }\overline{D}\right)  \equiv\left(  \text{\# of
hamps of }D\right)  \operatorname{mod}2.
\]

\end{theorem}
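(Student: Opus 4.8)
The plan is to evaluate the symmetric function $U_{D}$ at the specialization $x_{1}=1$, $x_{i}=0$ for all $i>1$, and to read off the number of hamps of $\overline{D}$ from two different formulas for $U_{D}$. First I would observe that this specialization kills every fundamental quasisymmetric function except the ones indexed by the empty set: indeed, in the defining sum (\ref{eq.def.Lal.LIn}) for $L_{I,n}$, setting $x_{1}=1$ and all other variables to $0$ leaves only the monomial with $i_{1}=i_{2}=\cdots=i_{n}=1$, and this monomial survives precisely when no strict inequality $i_{p}<i_{p+1}$ is forced, i.e.\ when $I=\varnothing$. Hence $L_{I,n}$ evaluates to $\left[I=\varnothing\right]$ under this specialization. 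Applying this to the definition of $U_{D}$ gives
\[
U_{D}\left(1,0,0,\ldots\right)=\sum_{w\text{ is a }V\text{-listing}}\left[\operatorname*{Des}\left(w,D\right)=\varnothing\right]=\left(\text{\# of }V\text{-listings }w\text{ with }\operatorname*{Des}\left(w,D\right)=\varnothing\right).
\]
The next step is to identify these $V$-listings: $\operatorname*{Des}\left(w,D\right)=\varnothing$ means no $i$ satisfies $\left(w_{i},w_{i+1}\right)\in A$, i.e.\ $\operatorname*{Arcs}w\cap A=\varnothing$, i.e.\ $\operatorname*{Arcs}w\subseteq\left(V\times V\right)\setminus A$, which is exactly the condition for $w$ to be a hamp of $\overline{D}$. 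Thus $U_{D}\left(1,0,0,\ldots\right)=\left(\text{\# of hamps of }\overline{D}\right)$.

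Now I would feed in Theorem \ref{thm.UX.1}. Since $p_{k}\left(1,0,0,\ldots\right)=1$ for every $k$, we have $p_{\lambda}\left(1,0,0,\ldots\right)=1$ for every partition $\lambda$, so specializing the formula $U_{D}=\sum_{\sigma\in\mathfrak{S}_{V}\left(D,\overline{D}\right)}\left(-1\right)^{\varphi\left(\sigma\right)}p_{\operatorname*{type}\sigma}$ yields
\[
\left(\text{\# of hamps of }\overline{D}\right)=\sum_{\sigma\in\mathfrak{S}_{V}\left(D,\overline{D}\right)}\left(-1\right)^{\varphi\left(\sigma\right)}.
\]
This is a sum of $\pm1$'s indexed by $\mathfrak{S}_{V}\left(D,\overline{D}\right)$, and since $+1\equiv-1\equiv1\ \operatorname{mod}2$, the whole sum is congruent modulo $2$ to the number of its terms; that is, $\left(\text{\# of hamps of }\overline{D}\right)\equiv\left\vert\mathfrak{S}_{V}\left(D,\overline{D}\right)\right\vert\ \operatorname{mod}2$.

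To finish, I would apply the identical argument with $\overline{D}$ in place of $D$. Because $\overline{\overline{D}}=D$, the first half gives $U_{\overline{D}}\left(1,0,0,\ldots\right)=\left(\text{\# of hamps of }D\right)$, and Theorem \ref{thm.UX.1} applied to $\overline{D}$ gives $\left(\text{\# of hamps of }D\right)\equiv\left\vert\mathfrak{S}_{V}\left(\overline{D},D\right)\right\vert\ \operatorname{mod}2$. Since the defining condition of $\mathfrak{S}_{V}\left(D,\overline{D}\right)$ (every cycle is a $D$-cycle or a $\overline{D}$-cycle) is manifestly symmetric in $D$ and $\overline{D}$, we have $\mathfrak{S}_{V}\left(\overline{D},D\right)=\mathfrak{S}_{V}\left(D,\overline{D}\right)$, so both hamp counts are congruent modulo $2$ to the same integer $\left\vert\mathfrak{S}_{V}\left(D,\overline{D}\right)\right\vert$, which proves Theorem \ref{thm.hamp.Dbar}. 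The only genuinely computational step is the specialization lemma $L_{I,n}\left(1,0,0,\ldots\right)=\left[I=\varnothing\right]$, which I expect to be the main (and entirely routine) obstacle; everything after it is a matter of bookkeeping and the trivial observation that a signed count and an unsigned count have the same parity.
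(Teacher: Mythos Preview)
Your proof is correct and is essentially identical to the paper's own argument: the paper packages your specialization as a homomorphism $\zeta$ (sending $x_{1}\mapsto 1$ and all other $x_{i}\mapsto 0$), proves your three observations as Lemma~\ref{lem.zeta.L}, Lemma~\ref{lem.zeta.p} and Lemma~\ref{lem.zeta.U}, records the resulting formula and its mod-$2$ reduction as Theorem~\ref{thm.hamps-formula-Dbar}, and then deduces Berge's theorem exactly as you do by swapping $D$ with $\overline{D}$ and using $\mathfrak{S}_{V}\left(\overline{D},D\right)=\mathfrak{S}_{V}\left(D,\overline{D}\right)$.
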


Theorem \ref{thm.tourn.redei} originates in Laszlo R\'{e}dei's 1933 paper
\cite{Redei33} (see \cite[proof of Theorem 14]{Moon13} for an English
translation of his proof). Theorem \ref{thm.hamp.Dbar} was found by Claude
Berge (see \cite[\S 10.1, Theorem 1]{Berge76}, \cite[\S 10.1, Theorem
1]{Berge91}, \cite[solution to problem 7.8]{Tomesc85}, \cite[Exercise
5.19]{Lovasz07} or \cite[Theorem 1.3.6]{17s-lec7} for his proof, and
\cite[Corollaire 5.1]{Lass02} for another). Berge used Theorem
\ref{thm.hamp.Dbar} to give a new and simpler proof of Theorem
\ref{thm.tourn.redei} (see \cite[\S 10.2, Theorem 6]{Berge91} or
\cite[Exercise 5.20]{Lovasz07} or \cite[Theorem 1.6.1]{17s-lec7}).

We can now give new proofs for both theorems. This will rely on the symmetric
function $U_{D}$ and also on a few simple tools:

We define $\zeta:\operatorname*{QSym}\rightarrow\mathbb{Z}$ to be the
evaluation homomorphism that sends each quasisymmetric function $f\in
\operatorname*{QSym}$ to its evaluation $f\left(  1,0,0,0,\ldots\right)  $
(obtained by setting $x_{1}$ to be $1$ and setting all other variables
$x_{2},x_{3},x_{4},\ldots$ to be $0$). Note that $\zeta$ is a $\mathbb{Z}%
$-algebra homomorphism.\footnote{We don't really need $\operatorname*{QSym}$
here. We could just as well define $\zeta$ on the ring of bounded-degree power
series (that is, of all power series $f\in\mathbb{Z}\left[  \left[
x_{1},x_{2},x_{3},\ldots\right]  \right]  $ for which there exists an
$N\in\mathbb{N}$ such that no monomial of degree $>N$ appears in $f$).
However, we cannot define $\zeta$ on the whole ring $\mathbb{Z}\left[  \left[
x_{1},x_{2},x_{3},\ldots\right]  \right]  $, since $\zeta$ would have to send
$1+x_{1}+x_{1}^{2}+x_{1}^{3}+\cdots$ to $1+1+1^{2}+1^{3}+\cdots$.} We shall
show two simple lemmas:

\begin{lemma}
\label{lem.zeta.L}Let $n\in\mathbb{N}$. Let $I$ be a subset of $\left[
n-1\right]  $. Then, $\zeta\left(  L_{I,\ n}\right)  =\left[  I=\varnothing
\right]  $ (where we are using the Iverson bracket notation).
\end{lemma}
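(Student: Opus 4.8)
The plan is to compute $\zeta\left(  L_{I,n}\right)  $ directly by unwinding the definition of $\zeta$ together with the explicit formula (\ref{eq.def.Lal.LIn}) for $L_{I,n}$. Recall that $\zeta$ sends a quasisymmetric function to its value at $\left(  x_{1},x_{2},x_{3},\ldots\right)  =\left(  1,0,0,0,\ldots\right)  $. Applying this substitution term by term to the sum (\ref{eq.def.Lal.LIn}), each monomial $x_{i_{1}}x_{i_{2}}\cdots x_{i_{n}}$ becomes $1$ if every index $i_{j}$ equals $1$, and becomes $0$ as soon as some index $i_{j}$ exceeds $1$ (since then the factor $x_{i_{j}}$ is set to $0$). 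Because this substitution annihilates all but at most one monomial, there is no difficulty interchanging $\zeta$ with the (infinite) summation.

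First I would isolate the unique tuple that can survive. Among all summation tuples $\left(  i_{1},i_{2},\ldots,i_{n}\right)  $ of positive integers satisfying $i_{1}\leq i_{2}\leq\cdots\leq i_{n}$, the only one whose monomial is not sent to $0$ by $\zeta$ is the constant tuple $\left(  1,1,\ldots,1\right)  $. Then I would check whether this tuple actually occurs in the summation range of (\ref{eq.def.Lal.LIn}). It automatically satisfies the weak chain $i_{1}\leq i_{2}\leq\cdots\leq i_{n}$; the only remaining constraint is that $i_{p}<i_{p+1}$ hold for each $p\in I$. Since all entries of the constant tuple are equal, the strict inequality $i_{p}<i_{p+1}$ fails for every $p$, so the constant tuple lies in the summation range if and only if there is no such constraint at all, i.e.\ if and only if $I=\varnothing$.

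Putting these observations together yields the two cases. If $I=\varnothing$, then the constant tuple $\left(  1,1,\ldots,1\right)  $ belongs to the sum and contributes a single $1$, while every other tuple contributes $0$, so $\zeta\left(  L_{I,n}\right)  =1$. If $I\neq\varnothing$, then the constant tuple is excluded from the sum, and every tuple that does appear has some entry $>1$ and hence contributes $0$; thus $\zeta\left(  L_{I,n}\right)  =0$. In either case $\zeta\left(  L_{I,n}\right)  =\left[  I=\varnothing\right]  $, as claimed. The degenerate case $n=0$ is covered automatically: then $[n-1]=\varnothing$ forces $I=\varnothing$, the quasisymmetric function $L_{\varnothing,0}$ is the empty product $1$, and $\zeta\left(  1\right)  =1=\left[  \varnothing=\varnothing\right]  $.

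I do not expect any genuine obstacle in this argument; it is purely a matter of reading off the defining sum. The only step meriting a word of care is the justification for evaluating the infinite sum monomial-by-monomial, but this is immediate, since the substitution $x_{2}=x_{3}=\cdots=0$ leaves at most the single monomial $x_{1}^{n}$ alive.
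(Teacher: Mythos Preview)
Your proof is correct and follows essentially the same approach as the paper's own proof: both evaluate $\zeta$ on the defining sum for $L_{I,n}$ term by term, observe that only the tuple $(1,1,\ldots,1)$ can survive the substitution, and then check that this tuple lies in the summation range precisely when $I=\varnothing$.
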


\begin{proof}
The definition of $L_{I,\ n}$ yields%
\[
L_{I,\ n}=\sum_{\substack{i_{1}\leq i_{2}\leq\cdots\leq i_{n};\\i_{p}%
<i_{p+1}\text{ for each }p\in I}}x_{i_{1}}x_{i_{2}}\cdots x_{i_{n}}.
\]

When we apply $\zeta$ to the sum on the right hand side (i.e., substitute $1$
for $x_{1}$ and substitute $0$ for $x_{2},x_{3},x_{4},\ldots$), any addend
that contains at least one of the variables $x_{2},x_{3},x_{4},\ldots$ becomes
$0$, whereas any addend that only contains copies of $x_{1}$ becomes $1$.
Hence, $\zeta\left(  L_{I,\ n}\right)  $ is the number of addends that only
contain copies of $x_{1}$. But this number is $1$ if $I=\varnothing$ (namely,
in this case, the addend for $\left(  i_{1},i_{2},\ldots,i_{n}\right)
=\left(  1,1,\ldots,1\right)  $ fits the bill), and is $0$ if $I\neq
\varnothing$ (because in this case, the condition \textquotedblleft%
$i_{p}<i_{p+1}$ for each $p\in I$\textquotedblright\ forces at least one of
the $n$ numbers $i_{1},i_{2},\ldots,i_{n}$ in each addend $x_{i_{1}}x_{i_{2}%
}\cdots x_{i_{n}}$ to be larger than $1$, and therefore each addend contains
at least one of $x_{2},x_{3},x_{4},\ldots$). Thus, altogether, this number is
$\left[  I=\varnothing\right]  $.\ This proves Lemma \ref{lem.zeta.L}.
\end{proof}

\begin{lemma}
\label{lem.zeta.p}Let $\lambda$ be any partition. Then,%
\[
\zeta\left(  p_{\lambda}\right)  =1.
\]

\end{lemma}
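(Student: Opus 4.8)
The plan is to reduce everything to the multiplicativity of $\zeta$ and a one-line evaluation of each $p_{n}$. Since $\zeta$ is by definition a $\mathbb{Z}$-algebra homomorphism, it respects products, so the whole statement will follow once I know the value of $\zeta$ on each power-sum $p_{n}$.

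First I would compute $\zeta\left(p_{n}\right)$ for an arbitrary positive integer $n$. By Definition \ref{def.powersum} \textbf{(a)}, we have $p_{n}=x_{1}^{n}+x_{2}^{n}+x_{3}^{n}+\cdots$. Applying $\zeta$ means substituting $x_{1}=1$ and $x_{2}=x_{3}=x_{4}=\cdots=0$, so every summand except $x_{1}^{n}$ vanishes, and
\[
\zeta\left(p_{n}\right)=1^{n}+0^{n}+0^{n}+\cdots=1 .
\]

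Next, write $\lambda=\left(\lambda_{1},\lambda_{2},\ldots,\lambda_{k}\right)$. By Definition \ref{def.powersum} \textbf{(b)}, we have $p_{\lambda}=p_{\lambda_{1}}p_{\lambda_{2}}\cdots p_{\lambda_{k}}$. Since $\zeta$ is a $\mathbb{Z}$-algebra homomorphism, it is multiplicative, so
\[
\zeta\left(p_{\lambda}\right)=\zeta\left(p_{\lambda_{1}}\right)\zeta\left(p_{\lambda_{2}}\right)\cdots\zeta\left(p_{\lambda_{k}}\right)=\underbrace{1\cdot1\cdots1}_{k\text{ factors}}=1 ,
\]
using the previous computation for each factor. (The degenerate case $k=0$, in which $p_{\lambda}$ is the empty product $1$, is covered as well, since $\zeta\left(1\right)=1$ because $\zeta$ is a ring homomorphism.)

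There is essentially no obstacle here: the statement is immediate once one invokes that $\zeta$ is multiplicative, which was recorded just before the lemma. The only point worth stating explicitly is the evaluation $\zeta\left(p_{n}\right)=1$, which is where the specific choice of the substitution $\left(1,0,0,0,\ldots\right)$ enters; everything else is a formal consequence of $\zeta$ being an algebra homomorphism.
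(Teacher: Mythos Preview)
Your proof is correct and follows essentially the same approach as the paper: evaluate $\zeta\left(p_{n}\right)=1$ by direct substitution, then use multiplicativity of $\zeta$ on the product $p_{\lambda}=p_{\lambda_{1}}\cdots p_{\lambda_{k}}$. The only cosmetic difference is ordering (you compute $\zeta\left(p_{n}\right)$ first, the paper does it mid-proof) and your explicit mention of the $k=0$ case.
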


\begin{proof}
Write the partition $\lambda$ in the form $\lambda=\left(  \lambda_{1}%
,\lambda_{2},\ldots,\lambda_{k}\right)  $, where the $k$ entries $\lambda
_{1},\lambda_{2},\ldots,\lambda_{k}$ are positive. Then, the definition of
$p_{\lambda}$ yields $p_{\lambda}=p_{\lambda_{1}}p_{\lambda_{2}}\cdots
p_{\lambda_{k}}$. Hence,%
\begin{align}
\zeta\left(  p_{\lambda}\right)   &  =\zeta\left(  p_{\lambda_{1}}%
p_{\lambda_{2}}\cdots p_{\lambda_{k}}\right)  =\zeta\left(  p_{\lambda_{1}%
}\right)  \zeta\left(  p_{\lambda_{2}}\right)  \cdots\zeta\left(
p_{\lambda_{k}}\right) \nonumber\\
&  \ \ \ \ \ \ \ \ \ \ \ \ \ \ \ \ \ \ \ \ \left(  \text{since }\zeta\text{ is
a }\mathbb{Z}\text{-algebra homomorphism}\right) \nonumber\\
&  =\prod_{i=1}^{k}\zeta\left(  p_{\lambda_{i}}\right)  .
\label{pf.lem.zeta.p.1}%
\end{align}
However, for each positive integer $n$, we have $p_{n}=x_{1}^{n}+x_{2}%
^{n}+x_{3}^{n}+\cdots$ (by the definition of $p_{n}$) and%
\begin{align}
\zeta\left(  p_{n}\right)   &  =p_{n}\left(  1,0,0,0,\ldots\right)
\ \ \ \ \ \ \ \ \ \ \left(  \text{by the definition of }\zeta\right)
\nonumber\\
&  =\underbrace{1^{n}}_{=1}+\underbrace{0^{n}+0^{n}+0^{n}+\cdots
}_{\substack{=0\\\text{(since }n\text{ is positive)}}%
}\ \ \ \ \ \ \ \ \ \ \left(  \text{since }p_{n}=x_{1}^{n}+x_{2}^{n}+x_{3}%
^{n}+\cdots\right) \nonumber\\
&  =1. \label{pf.lem.zeta.p.2}%
\end{align}
Hence, (\ref{pf.lem.zeta.p.1}) becomes%
\[
\zeta\left(  p_{\lambda}\right)  =\prod_{i=1}^{k}\underbrace{\zeta\left(
p_{\lambda_{i}}\right)  }_{\substack{=1\\\text{(by (\ref{pf.lem.zeta.p.2}%
),}\\\text{since }\lambda_{i}\text{ is positive)}}}=\prod_{i=1}^{k}1=1.
\]
This proves Lemma \ref{lem.zeta.p}.
\end{proof}

\begin{lemma}
\label{lem.zeta.U}Let $D$ be a digraph. Then,%
\[
\zeta\left(  U_{D}\right)  =\left(  \text{\# of hamps of }\overline{D}\right)
.
\]

\end{lemma}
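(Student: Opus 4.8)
The plan is to work directly from the definition of $U_{D}$ rather than from its power-sum expansion. Setting $n:=\left\vert V\right\vert$, Definition \ref{def.UD} gives $U_{D}=\sum_{w\text{ is a }V\text{-listing}}L_{\operatorname*{Des}\left(w,D\right),\ n}$. Since $\zeta$ is a $\mathbb{Z}$-algebra homomorphism, and in particular $\mathbb{Z}$-linear, I would apply it termwise to obtain $\zeta\left(U_{D}\right)=\sum_{w\text{ is a }V\text{-listing}}\zeta\left(L_{\operatorname*{Des}\left(w,D\right),\ n}\right)$. This reduces everything to evaluating $\zeta$ on a single fundamental quasisymmetric function, which is exactly what Lemma \ref{lem.zeta.L} does.

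Next I would invoke Lemma \ref{lem.zeta.L} with $I=\operatorname*{Des}\left(w,D\right)$, which yields $\zeta\left(L_{\operatorname*{Des}\left(w,D\right),\ n}\right)=\left[\operatorname*{Des}\left(w,D\right)=\varnothing\right]$ for each $V$-listing $w$ (using the Iverson bracket notation of Convention \ref{conv.iverson}). Summing, this gives $\zeta\left(U_{D}\right)=\sum_{w\text{ is a }V\text{-listing}}\left[\operatorname*{Des}\left(w,D\right)=\varnothing\right]$, which is precisely the \# of $V$-listings $w$ satisfying $\operatorname*{Des}\left(w,D\right)=\varnothing$ (in the sense of Convention \ref{conv.number}).

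The remaining task is to recognize this count as the \# of hamps of $\overline{D}$. By definition, $\operatorname*{Des}\left(w,D\right)$ is the set of all $i\in\left[n-1\right]$ with $\left(w_{i},w_{i+1}\right)\in A$; hence $\operatorname*{Des}\left(w,D\right)=\varnothing$ holds if and only if no such $i$ exists, i.e.\ if and only if $A\cap\operatorname*{Arcs}w=\varnothing$. But this is exactly the characterization of the hamps of $\overline{D}$ among the $V$-listings established in the proof of Lemma \ref{lem.hamps-by-lin} (namely, the hamps of $\overline{D}$ are precisely the $V$-listings $w$ with $\operatorname*{Arcs}w\subseteq\left(V\times V\right)\setminus A$). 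Therefore the count above equals the \# of hamps of $\overline{D}$, proving the lemma.

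There is no genuine obstacle here: the statement is essentially a one-line consequence of Lemma \ref{lem.zeta.L} together with the combinatorial reading of the descent set, so I would not expect any step to require grinding. The only point meriting a word of care is the degenerate case $V=\varnothing$ (so $n=0$): there the unique $V$-listing is the empty list, its $D$-descent set is vacuously empty, and $\zeta\left(L_{\varnothing,\ 0}\right)=\zeta\left(1\right)=1$, which matches the convention (stated in Theorem \ref{thm.tourn.redei}) that the empty list is the single hamp of the empty digraph. One could alternatively route through Theorem \ref{thm.UX.1} and Lemma \ref{lem.zeta.p} to get $\zeta\left(U_{D}\right)=\sum_{\sigma\in\mathfrak{S}_{V}\left(D,\overline{D}\right)}\left(-1\right)^{\varphi\left(\sigma\right)}$, but extracting the hamp count from that alternating sum would be far less direct, so I prefer the termwise argument above.
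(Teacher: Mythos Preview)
Your proposal is correct and follows essentially the same route as the paper: apply $\zeta$ termwise to $U_{D}=\sum_{w}L_{\operatorname*{Des}\left(w,D\right),\ n}$, invoke Lemma \ref{lem.zeta.L} to get the Iverson-bracket sum, and then identify the $V$-listings $w$ with $\operatorname*{Des}\left(w,D\right)=\varnothing$ as precisely the hamps of $\overline{D}$. The paper derives that last identification directly rather than citing the proof of Lemma \ref{lem.hamps-by-lin}, but the argument is the same.
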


\begin{proof}
Write $D$ as $D=\left(  V,A\right)  $, and set $n:=\left\vert V\right\vert $.
Then, $\overline{D}=\left(  V,\ \left(  V\times V\right)  \setminus A\right)
$. Hence, a hamp of $\overline{D}$ is the same as a $V$-listing $w$ such that
each $i\in\left[  n-1\right]  $ satisfies $\left(  w_{i},w_{i+1}\right)
\in\left(  V\times V\right)  \setminus A$. In other words, a hamp of
$\overline{D}$ is the same as a $V$-listing $w$ such that no $i\in\left[
n-1\right]  $ satisfies $\left(  w_{i},w_{i+1}\right)  \in A$. In other words,
a hamp of $\overline{D}$ is the same as a $V$-listing $w$ that satisfies
$\operatorname*{Des}\left(  w,D\right)  =\varnothing$ (because
$\operatorname*{Des}\left(  w,D\right)  $ is defined to be the set of all
$i\in\left[  n-1\right]  $ satisfying $\left(  w_{i},w_{i+1}\right)  \in A$).
Therefore,%
\begin{align}
&  \left(  \text{\# of hamps of }\overline{D}\right) \nonumber\\
&  =\left(  \text{\# of }V\text{-listings }w\text{ that satisfy }%
\operatorname*{Des}\left(  w,D\right)  =\varnothing\right)  .
\label{pf.lem.zeta.U.1}%
\end{align}

The definition of $U_{D}$ yields%
\[
U_{D}=\sum_{w\text{ is a }V\text{-listing}}L_{\operatorname*{Des}\left(
w,D\right)  ,\ n}.
\]
Hence,%
\begin{align*}
\zeta\left(  U_{D}\right)   &  =\zeta\left(  \sum_{w\text{ is a }%
V\text{-listing}}L_{\operatorname*{Des}\left(  w,D\right)  ,\ n}\right) \\
&  =\sum_{w\text{ is a }V\text{-listing}}\underbrace{\zeta\left(
L_{\operatorname*{Des}\left(  w,D\right)  ,\ n}\right)  }_{\substack{=\left[
\operatorname*{Des}\left(  w,D\right)  =\varnothing\right]  \\\text{(by Lemma
\ref{lem.zeta.L})}}}\ \ \ \ \ \ \ \ \ \ \left(  \text{since the map }%
\zeta\text{ is }\mathbb{Z}\text{-linear}\right) \\
&  =\sum_{w\text{ is a }V\text{-listing}}\left[  \operatorname*{Des}\left(
w,D\right)  =\varnothing\right] \\
&  =\left(  \text{\# of }V\text{-listings }w\text{ that satisfy }%
\operatorname*{Des}\left(  w,D\right)  =\varnothing\right) \\
&  =\left(  \text{\# of hamps of }\overline{D}\right)
\ \ \ \ \ \ \ \ \ \ \left(  \text{by (\ref{pf.lem.zeta.U.1})}\right)  .
\end{align*}
This proves Lemma \ref{lem.zeta.U}.
\end{proof}

We can now state a formula for the \# of hamps of $\overline{D}$ for any
digraph $D$:

\begin{theorem}
\label{thm.hamps-formula-Dbar}Let $D=\left(  V,A\right)  $ be a digraph. Then:

\textbf{(a)} Set%
\[
\varphi\left(  \sigma\right)  :=\sum_{\substack{\gamma\in\operatorname*{Cycs}%
\sigma;\\\gamma\text{ is a }D\text{-cycle}}}\left(  \ell\left(  \gamma\right)
-1\right)  \ \ \ \ \ \ \ \ \ \ \text{for each }\sigma\in\mathfrak{S}_{V}.
\]
Then,
\[
\left(  \text{\# of hamps of }\overline{D}\right)  =\sum_{\sigma
\in\mathfrak{S}_{V}\left(  D,\overline{D}\right)  }\left(  -1\right)
^{\varphi\left(  \sigma\right)  }.
\]

\textbf{(b)} We have $\left(  \text{\# of hamps of }\overline{D}\right)
\equiv\left\vert \mathfrak{S}_{V}\left(  D,\overline{D}\right)  \right\vert
\operatorname{mod}2$.
\end{theorem}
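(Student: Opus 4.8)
The statement has two parts, and part \textbf{(b)} will follow from part \textbf{(a)} by a trivial parity argument, so the real work is in part \textbf{(a)}. The key observation is that I already have two independent handles on the quantity $\left(\text{\# of hamps of }\overline{D}\right)$: on the one hand, Lemma \ref{lem.zeta.U} tells me that this number equals $\zeta\left(U_{D}\right)$; on the other hand, Theorem \ref{thm.UX.1} gives me an explicit expansion of $U_{D}$ in terms of power-sum symmetric functions. The plan is to combine these by applying the evaluation homomorphism $\zeta$ to the formula of Theorem \ref{thm.UX.1}, and then to simplify using Lemma \ref{lem.zeta.p}, which says that $\zeta$ sends every $p_{\lambda}$ to $1$.

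First I would prove part \textbf{(a)}. Starting from Theorem \ref{thm.UX.1}, I have
\[
U_{D}=\sum_{\sigma\in\mathfrak{S}_{V}\left(D,\overline{D}\right)}\left(-1\right)^{\varphi\left(\sigma\right)}p_{\operatorname*{type}\sigma}.
\]
Applying the $\mathbb{Z}$-linear map $\zeta$ to both sides and using that $\zeta\left(p_{\operatorname*{type}\sigma}\right)=1$ for every $\sigma$ (by Lemma \ref{lem.zeta.p}, since $\operatorname*{type}\sigma$ is a partition), I obtain
\[
\zeta\left(U_{D}\right)=\sum_{\sigma\in\mathfrak{S}_{V}\left(D,\overline{D}\right)}\left(-1\right)^{\varphi\left(\sigma\right)}.
\]
Then Lemma \ref{lem.zeta.U} identifies the left-hand side as $\left(\text{\# of hamps of }\overline{D}\right)$, which is exactly the claim of part \textbf{(a)}. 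This is a short computation with no real obstacle; I only need to be careful that $\varphi$ here is defined exactly as in Theorem \ref{thm.UX.1}, which it is.

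For part \textbf{(b)}, I would work modulo $2$. Since $\left(-1\right)^{\varphi\left(\sigma\right)}\equiv 1\operatorname{mod}2$ for every integer $\varphi\left(\sigma\right)$ (as $-1\equiv 1\operatorname{mod}2$), the formula from part \textbf{(a)} reduces modulo $2$ to
\[
\left(\text{\# of hamps of }\overline{D}\right)=\sum_{\sigma\in\mathfrak{S}_{V}\left(D,\overline{D}\right)}\left(-1\right)^{\varphi\left(\sigma\right)}\equiv\sum_{\sigma\in\mathfrak{S}_{V}\left(D,\overline{D}\right)}1=\left\vert\mathfrak{S}_{V}\left(D,\overline{D}\right)\right\vert\operatorname{mod}2,
\]
which is precisely part \textbf{(b)}. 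There is essentially no difficulty here; the entire theorem is a direct specialization of Theorem \ref{thm.UX.1} via the evaluation $\zeta$. The only thing to watch is that all the invoked lemmas (\ref{lem.zeta.U}, \ref{lem.zeta.p}) and Theorem \ref{thm.UX.1} apply to an arbitrary digraph $D=\left(V,A\right)$ with no extra hypotheses, so no case analysis or side conditions are needed. If I wanted to foreshadow the applications, I would note that part \textbf{(b)} is already the engine behind Berge's Theorem (\ref{thm.hamp.Dbar}), since $\left\vert\mathfrak{S}_{V}\left(D,\overline{D}\right)\right\vert$ is symmetric in $D$ and $\overline{D}$.
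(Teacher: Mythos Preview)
Your proposal is correct and follows essentially the same approach as the paper: apply $\zeta$ to the formula of Theorem \ref{thm.UX.1}, use Lemma \ref{lem.zeta.p} to turn each $p_{\operatorname*{type}\sigma}$ into $1$, and identify the left side via Lemma \ref{lem.zeta.U}; then reduce modulo $2$ for part \textbf{(b)}. The paper's proof is essentially line-for-line the same.
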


\begin{proof}
\textbf{(a)} Theorem \ref{thm.UX.1} yields%
\[
U_{D}=\sum_{\sigma\in\mathfrak{S}_{V}\left(  D,\overline{D}\right)  }\left(
-1\right)  ^{\varphi\left(  \sigma\right)  }p_{\operatorname*{type}\sigma}.
\]
Hence,%
\begin{align*}
\zeta\left(  U_{D}\right)   &  =\zeta\left(  \sum_{\sigma\in\mathfrak{S}%
_{V}\left(  D,\overline{D}\right)  }\left(  -1\right)  ^{\varphi\left(
\sigma\right)  }p_{\operatorname*{type}\sigma}\right) \\
&  =\sum_{\sigma\in\mathfrak{S}_{V}\left(  D,\overline{D}\right)  }\left(
-1\right)  ^{\varphi\left(  \sigma\right)  }\underbrace{\zeta\left(
p_{\operatorname*{type}\sigma}\right)  }_{\substack{=1\\\text{(by Lemma
\ref{lem.zeta.p},}\\\text{applied to }\lambda=\operatorname*{type}%
\sigma\text{)}}}\ \ \ \ \ \ \ \ \ \ \left(  \text{since }\zeta\text{ is
}\mathbb{Z}\text{-linear}\right) \\
&  =\sum_{\sigma\in\mathfrak{S}_{V}\left(  D,\overline{D}\right)  }\left(
-1\right)  ^{\varphi\left(  \sigma\right)  }.
\end{align*}
However, Lemma \ref{lem.zeta.U} yields%
\[
\zeta\left(  U_{D}\right)  =\left(  \text{\# of hamps of }\overline{D}\right)
.
\]
Comparing these two equalities, we find%
\[
\left(  \text{\# of hamps of }\overline{D}\right)  =\sum_{\sigma
\in\mathfrak{S}_{V}\left(  D,\overline{D}\right)  }\left(  -1\right)
^{\varphi\left(  \sigma\right)  }.
\]
This proves Theorem \ref{thm.hamps-formula-Dbar} \textbf{(a)}.

\textbf{(b)} Theorem \ref{thm.hamps-formula-Dbar} \textbf{(a)} yields%
\[
\left(  \text{\# of hamps of }\overline{D}\right)  =\sum_{\sigma
\in\mathfrak{S}_{V}\left(  D,\overline{D}\right)  }\underbrace{\left(
-1\right)  ^{\varphi\left(  \sigma\right)  }}_{\substack{\equiv
1\operatorname{mod}2\\\text{(since }\left(  -1\right)  ^{k}\equiv
1\operatorname{mod}2\\\text{for any }k\in\mathbb{Z}\text{)}}}\equiv
\sum_{\sigma\in\mathfrak{S}_{V}\left(  D,\overline{D}\right)  }1=\left\vert
\mathfrak{S}_{V}\left(  D,\overline{D}\right)  \right\vert \operatorname{mod}%
2.
\]
This proves Theorem \ref{thm.hamps-formula-Dbar} \textbf{(b)}.
\end{proof}

We are now ready to prove R\'{e}dei's and Berge's theorems:

\begin{proof}
[Proof of Theorem \ref{thm.hamp.Dbar}.]We have $\mathfrak{S}_{V}\left(
\overline{D},D\right)  =\mathfrak{S}_{V}\left(  D,\overline{D}\right)  $
(since the digraphs $D$ and $\overline{D}$ play symmetric roles in the
definition of $\mathfrak{S}_{V}\left(  D,\overline{D}\right)  $). However, it
is also easy to see (using the definition of the complement of a digraph) that
$\overline{\overline{D}}=D$.

Theorem \ref{thm.hamps-formula-Dbar} \textbf{(b)} yields
\begin{equation}
\left(  \text{\# of hamps of }\overline{D}\right)  \equiv\left\vert
\mathfrak{S}_{V}\left(  D,\overline{D}\right)  \right\vert \operatorname{mod}%
2. \label{pf.thm.hamp.Dbar.1}%
\end{equation}
However, Theorem \ref{thm.hamps-formula-Dbar} \textbf{(b)} (applied to
$\overline{D}$ instead of $D$) yields%
\[
\left(  \text{\# of hamps of }\overline{\overline{D}}\right)  \equiv\left\vert
\mathfrak{S}_{V}\left(  \overline{D},\overline{\overline{D}}\right)
\right\vert \operatorname{mod}2.
\]
We can rewrite this as%
\[
\left(  \text{\# of hamps of }D\right)  \equiv\left\vert \mathfrak{S}%
_{V}\left(  \overline{D},D\right)  \right\vert \operatorname{mod}2
\]
(since $\overline{\overline{D}}=D$). Hence,%
\begin{align*}
\left(  \text{\# of hamps of }D\right)   &  \equiv\left\vert \mathfrak{S}%
_{V}\left(  \overline{D},D\right)  \right\vert =\left\vert \mathfrak{S}%
_{V}\left(  D,\overline{D}\right)  \right\vert \ \ \ \ \ \ \ \ \ \ \left(
\text{since }\mathfrak{S}_{V}\left(  \overline{D},D\right)  =\mathfrak{S}%
_{V}\left(  D,\overline{D}\right)  \right) \\
&  \equiv\left(  \text{\# of hamps of }\overline{D}\right)  \operatorname{mod}%
2\ \ \ \ \ \ \ \ \ \ \left(  \text{by (\ref{pf.thm.hamp.Dbar.1})}\right)  .
\end{align*}
This proves Theorem \ref{thm.hamp.Dbar}.
\end{proof}

\begin{proof}
[Proof of Theorem \ref{thm.tourn.redei}.]Write the tournament $D$ as
$D=\left(  V,A\right)  $. Set $n:=\left\vert V\right\vert $.

For each $\sigma\in\mathfrak{S}_{V}$, let $\psi\left(  \sigma\right)  $ denote
the number of nontrivial cycles of $\sigma$. Then, Theorem \ref{thm.UX.2}
yields%
\[
U_{D}=\sum_{\substack{\sigma\in\mathfrak{S}_{V}\left(  D\right)  ;\\\text{all
cycles of }\sigma\text{ have odd length}}}2^{\psi\left(  \sigma\right)
}p_{\operatorname*{type}\sigma}.
\]
Hence,%
\begin{align}
\zeta\left(  U_{D}\right)   &  =\zeta\left(  \sum_{\substack{\sigma
\in\mathfrak{S}_{V}\left(  D\right)  ;\\\text{all cycles of }\sigma\text{ have
odd length}}}2^{\psi\left(  \sigma\right)  }p_{\operatorname*{type}\sigma
}\right) \nonumber\\
&  =\sum_{\substack{\sigma\in\mathfrak{S}_{V}\left(  D\right)  ;\\\text{all
cycles of }\sigma\text{ have odd length}}}2^{\psi\left(  \sigma\right)
}\underbrace{\zeta\left(  p_{\operatorname*{type}\sigma}\right)
}_{\substack{=1\\\text{(by Lemma \ref{lem.zeta.p},}\\\text{applied to }%
\lambda=\operatorname*{type}\sigma\text{)}}}\ \ \ \ \ \ \ \ \ \ \left(
\text{since }\zeta\text{ is }\mathbb{Z}\text{-linear}\right) \nonumber\\
&  =\sum_{\substack{\sigma\in\mathfrak{S}_{V}\left(  D\right)  ;\\\text{all
cycles of }\sigma\text{ have odd length}}}2^{\psi\left(  \sigma\right)
}\label{pf.thm.tourn.redei.zUD=}\\
&  =\underbrace{2^{\psi\left(  \operatorname*{id}\nolimits_{V}\right)  }%
}_{\substack{=1\\\text{(since }\psi\left(  \operatorname*{id}\nolimits_{V}%
\right)  =0\text{)}}}+\sum_{\substack{\sigma\in\mathfrak{S}_{V}\left(
D\right)  ;\\\text{all cycles of }\sigma\text{ have odd length;}\\\sigma
\neq\operatorname*{id}\nolimits_{V}}}\underbrace{2^{\psi\left(  \sigma\right)
}}_{\substack{\equiv0\operatorname{mod}2\\\text{(since }\psi\left(
\sigma\right)  \geq1\\\text{(because }\sigma\neq\operatorname*{id}%
\nolimits_{V}\text{ shows}\\\text{that }\sigma\text{ has at least}\\\text{one
nontrivial cycle))}}}\nonumber\\
&  \ \ \ \ \ \ \ \ \ \ \ \ \ \ \ \ \ \ \ \ \left(
\begin{array}
[c]{c}%
\text{here, we have split off the addend for }\sigma=\operatorname*{id}%
\nolimits_{V}\\
\text{from the sum (since }\operatorname*{id}\nolimits_{V}\in\mathfrak{S}%
_{V}\left(  D\right)  \text{, and since}\\
\text{all cycles of }\operatorname*{id}\nolimits_{V}\text{ have odd length)}%
\end{array}
\right) \nonumber\\
&  \equiv1+\underbrace{\sum_{\substack{\sigma\in\mathfrak{S}_{V}\left(
D\right)  ;\\\text{all cycles of }\sigma\text{ have odd length;}\\\sigma
\neq\operatorname*{id}\nolimits_{V}}}0}_{=0}=1\operatorname{mod}2.\nonumber
\end{align}
In view of%
\begin{align*}
\zeta\left(  U_{D}\right)   &  =\left(  \text{\# of hamps of }\overline
{D}\right)  \ \ \ \ \ \ \ \ \ \ \left(  \text{by Lemma \ref{lem.zeta.U}%
}\right) \\
&  \equiv\left(  \text{\# of hamps of }D\right)  \operatorname{mod}%
2\ \ \ \ \ \ \ \ \ \ \left(  \text{by Theorem \ref{thm.hamp.Dbar}}\right)  ,
\end{align*}
we can rewrite this as%
\[
\left(  \text{\# of hamps of }D\right)  \equiv1\operatorname{mod}2.
\]
In other words, the \# of hamps of $D$ is odd. This proves Theorem
\ref{thm.tourn.redei}.
\end{proof}

\section{\label{sec.mod4}A modulo-$4$ improvement of Redei's theorem}

We can extend Redei's theorem (Theorem \ref{thm.tourn.redei}) to a somewhat
stronger result:

\begin{theorem}
\label{thm.tourn.mod4}Let $D$ be a tournament. Then,
\[
\left(  \text{\# of hamps of }D\right)  \equiv1+2\left(  \text{\# of
nontrivial odd }D\text{-cycles}\right)  \operatorname{mod}4.
\]
Here:

\begin{itemize}
\item We agree to consider the empty list $\left(  {}\right)  $ as a hamp of
the empty tournament with $0$ vertices (even though it is not a path).

\item We say that a $D$-cycle is \emph{odd} if its length is odd.

\item We say that a $D$-cycle is \emph{nontrivial} if its length is $>1$.
(This was already said in Definition \ref{def.reqc.features} \textbf{(e)}.)
\end{itemize}
\end{theorem}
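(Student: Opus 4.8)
The plan is to reuse the machinery from the proof of R\'edei's theorem (Theorem \ref{thm.tourn.redei}) and to squeeze out a modulo-$4$ refinement by tracking exactly which permutations contribute a summand that survives reduction modulo $4$. First I would recall that the proof of Theorem \ref{thm.tourn.redei} already establishes that for a tournament $D=\left(  V,A\right)  $,
\[
\zeta\left(  U_{D}\right)  =\sum_{\substack{\sigma\in\mathfrak{S}_{V}\left(
D\right)  ;\\\text{all cycles of }\sigma\text{ have odd length}}}2^{\psi\left(
\sigma\right)  },
\]
where $\psi\left(  \sigma\right)  $ denotes the number of nontrivial cycles of $\sigma$; this comes from applying $\zeta$ to Theorem \ref{thm.UX.2} and using Lemma \ref{lem.zeta.p}. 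The new observation is that modulo $4$ almost all summands vanish: a summand $2^{\psi\left(  \sigma\right)  }$ is $\equiv0\operatorname{mod}4$ whenever $\psi\left(  \sigma\right)  \geq2$, equals $1$ when $\psi\left(  \sigma\right)  =0$ (i.e.\ $\sigma=\operatorname{id}\nolimits_{V}$), and equals $2$ when $\psi\left(  \sigma\right)  =1$. Hence
\[
\zeta\left(  U_{D}\right)  \equiv1+2\cdot\left(  \text{\# of }\sigma
\in\mathfrak{S}_{V}\left(  D\right)  \text{ with all cycles odd and }
\psi\left(  \sigma\right)  =1\right)  \operatorname{mod}4.
\]

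Next I would identify the permutations counted in this last term. A permutation $\sigma\in\mathfrak{S}_{V}\left(  D\right)  $ with $\psi\left(  \sigma\right)  =1$ and all cycles of odd length has exactly one nontrivial cycle $\gamma$; by the definition of $\mathfrak{S}_{V}\left(  D\right)  $ this $\gamma$ must be a $D$-cycle, of odd length $>1$, and $\sigma$ fixes every vertex outside $\gamma$. Conversely, every nontrivial odd $D$-cycle $\gamma$ produces exactly one such $\sigma$ (act as the cyclic shift prescribed by $\gamma$ on its entries, and as the identity elsewhere). This is a bijection between these permutations and the nontrivial odd $D$-cycles, so the parenthesized count equals $\left(  \text{\# of nontrivial odd }D\text{-cycles}\right)  $, giving
\[
\zeta\left(  U_{D}\right)  \equiv1+2\cdot\left(  \text{\# of nontrivial odd
}D\text{-cycles}\right)  \operatorname{mod}4.
\]

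Finally I would replace $\zeta\left(  U_{D}\right)  $ by the hamp count for $D$ itself. Lemma \ref{lem.zeta.U} gives $\zeta\left(  U_{D}\right)  =\left(  \text{\# of hamps of }\overline{D}\right)  $, so it remains to prove that for a tournament $D$ one has the \emph{exact} equality $\left(  \text{\# of hamps of }\overline{D}\right)  =\left(  \text{\# of hamps of }D\right)  $. I expect this to be the only genuinely delicate point: it is strictly stronger than the modulo-$2$ congruence furnished by Berge's Theorem \ref{thm.hamp.Dbar}, and it is precisely what lets the conclusion be phrased in terms of $D$ rather than $\overline{D}$. The argument is that the map sending a list $w=\left(  w_{1},w_{2},\ldots,w_{n}\right)  $ to its reversal $\left(  w_{n},w_{n-1},\ldots,w_{1}\right)  $ is a bijection between the hamps of $D$ and the hamps of $\overline{D}$: for two distinct consecutive vertices $u,v$ the tournament axiom makes $\left(  u,v\right)  \in A$ equivalent to $\left(  v,u\right)  \in\left(  V\times V\right)  \setminus A$, and a hamp, having distinct vertices, never uses a loop, so the loops present in $\overline{D}$ are irrelevant.

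Combining the three displays then yields
\[
\left(  \text{\# of hamps of }D\right)  \equiv1+2\left(  \text{\# of
nontrivial odd }D\text{-cycles}\right)  \operatorname{mod}4,
\]
which is the assertion of Theorem \ref{thm.tourn.mod4}; the empty tournament is handled by the conventions already adopted (the empty list counting as a single hamp, and there being no nontrivial cycles).
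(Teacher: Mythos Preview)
Your proposal is correct and follows essentially the same approach as the paper: the paper likewise starts from the identity $\zeta\left(U_{D}\right)=\sum 2^{\psi\left(\sigma\right)}$, splits it modulo $4$ according to whether $\psi\left(\sigma\right)$ is $0$, $1$, or $\geq2$, identifies the $\psi\left(\sigma\right)=1$ contributions with nontrivial odd $D$-cycles via the same bijection you describe (this is the paper's Lemma \ref{lem.psi=1}), and then passes from $\overline{D}$ to $D$ via the reversal bijection and the tournament axiom. You also correctly flagged that this last step needs the exact equality rather than the mere congruence from Berge's theorem.
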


To prove this, we shall need a simple lemma:

\begin{lemma}
\label{lem.psi=1}Let $D=\left(  V,A\right)  $ be a digraph. For each
$\sigma\in\mathfrak{S}_{V}$, let $\psi\left(  \sigma\right)  $ denote the
number of nontrivial cycles of $\sigma$. Let $\mathfrak{S}_{V}%
^{\operatorname*{odd}}\left(  D\right)  $ denote the set of all permutations
$\sigma\in\mathfrak{S}_{V}\left(  D\right)  $ such that all cycles of $\sigma$
have odd length. Then,%
\begin{align*}
&  \left(  \text{\# of permutations }\sigma\in\mathfrak{S}_{V}%
^{\operatorname*{odd}}\left(  D\right)  \text{ satisfying }\psi\left(
\sigma\right)  =1\right) \\
&  =\left(  \text{\# of nontrivial odd }D\text{-cycles}\right)  .
\end{align*}
(We are here using the same notations as in Theorem \ref{thm.tourn.mod4}.)
\end{lemma}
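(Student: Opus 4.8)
Lemma \ref{lem.psi=1} asserts a bijection between two finite sets: the permutations $\sigma \in \mathfrak{S}_V^{\operatorname*{odd}}(D)$ with exactly one nontrivial cycle, and the nontrivial odd $D$-cycles. Let me think about what these objects are.

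The plan is to construct an explicit bijection between the two sets and verify it is well-defined in both directions.

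First I would unpack the left-hand side. A permutation $\sigma \in \mathfrak{S}_V^{\operatorname*{odd}}(D)$ satisfies $\psi(\sigma) = 1$ exactly when it has precisely one nontrivial cycle, all of whose cycles (trivial and nontrivial) have odd length. Since trivial cycles (fixed points) are length $1$ and thus automatically odd, the odd-length condition bites only on the single nontrivial cycle. Moreover, because $\sigma \in \mathfrak{S}_V(D)$, every nontrivial cycle of $\sigma$ must be a $D$-cycle; so the unique nontrivial cycle $\gamma$ of such a $\sigma$ is a nontrivial $D$-cycle, and it is odd. Conversely, given a nontrivial odd $D$-cycle $\gamma$ (with entry set $W \subseteq V$), I would associate to it the permutation $\sigma_\gamma \in \mathfrak{S}_V$ that cyclically permutes the entries of $\gamma$ according to $\gamma$ and fixes every vertex in $V \setminus W$.

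The key steps, in order, are: (1) Check that $\sigma_\gamma$ lands in $\mathfrak{S}_V^{\operatorname*{odd}}(D)$ with $\psi(\sigma_\gamma) = 1$. Its cycles are $\gamma$ together with singletons $(v)_\sim$ for $v \in V \setminus W$; all have odd length (since $\gamma$ is odd and singletons are length $1$), its unique nontrivial cycle is $\gamma$ which is a $D$-cycle, so $\sigma_\gamma \in \mathfrak{S}_V(D)$, and $\psi(\sigma_\gamma) = 1$ since $\gamma$ is its only nontrivial cycle. (2) Check that the map $\sigma \mapsto (\text{the unique nontrivial cycle of } \sigma)$ sends $\mathfrak{S}_V^{\operatorname*{odd}}(D)$-elements with $\psi = 1$ to nontrivial odd $D$-cycles, as sketched above. (3) Verify the two maps are mutually inverse: starting from $\gamma$, forming $\sigma_\gamma$, and extracting its nontrivial cycle returns $\gamma$; and starting from $\sigma$ with unique nontrivial cycle $\gamma$, the permutation $\sigma_{\gamma}$ recovers $\sigma$ because $\sigma$ acts as $\gamma$ on $W$ and must fix every other vertex (any vertex outside $W$ lies in a trivial cycle of $\sigma$, hence is fixed). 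This last point is the crux: I must confirm that a $\psi(\sigma)=1$ permutation has no nontrivial behavior away from $\gamma$, which follows since all its other cycles are trivial.

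The main obstacle, though minor, is the bookkeeping around rotation-equivalence classes: a $D$-cycle is a rotation-equivalence class $\gamma$, not a literal tuple, so I must be careful that the assignment $\gamma \mapsto \sigma_\gamma$ is well-defined independently of the chosen representative tuple. This is exactly the cavalier identification flagged in the footnote to Definition \ref{def.SVD}; concretely, any two rotation-equivalent representatives of $\gamma$ induce the same cyclic permutation of $W$, so $\sigma_\gamma$ is unambiguous. Once this is pinned down, the bijection principle immediately yields the claimed equality of cardinalities, completing the proof.
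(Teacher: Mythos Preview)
Your proposal is correct and follows essentially the same approach as the paper: the paper defines $\operatorname*{perm}\gamma$ (your $\sigma_\gamma$) and shows that $\gamma \mapsto \operatorname*{perm}\gamma$ is a bijection from nontrivial odd $D$-cycles to the permutations in question. The only cosmetic difference is that the paper verifies injectivity and surjectivity separately, whereas you exhibit the inverse map $\sigma \mapsto (\text{unique nontrivial cycle of }\sigma)$ and check the two maps are mutually inverse; these are the same argument in slightly different dress.
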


\begin{proof}
If $\gamma=\left(  a_{1},a_{2},\ldots,a_{k}\right)  _{\sim}$ is any $D$-cycle
(or, more generally, any cycle of the digraph $\left(  V,\ V\times V\right)
$), then $\operatorname*{perm}\gamma$ shall denote the permutation of $V$ that
sends the elements $a_{1},a_{2},\ldots,a_{k-1},a_{k}$ to $a_{2},a_{3}%
,\ldots,a_{k},a_{1}$ (respectively) while leaving all other elements of $V$
unchanged. (This permutation $\operatorname*{perm}\gamma$ is what is usually
called \textquotedblleft the cycle $\left(  a_{1},a_{2},\ldots,a_{k}\right)
$\textquotedblright\ in group theory.)

If $\gamma$ is any nontrivial $D$-cycle, then the permutation
$\operatorname*{perm}\gamma$ belongs to $\mathfrak{S}_{V}\left(  D\right)  $
(since its only nontrivial cycle is $\gamma$, which is a $D$-cycle) and
satisfies $\psi\left(  \operatorname*{perm}\gamma\right)  =1$ (by the
definition of $\psi\left(  \operatorname*{perm}\gamma\right)  $). Moreover, if
$\gamma$ is a nontrivial \textbf{odd} $D$-cycle, then this permutation
$\operatorname*{perm}\gamma$ furthermore has the property that all its cycles
have odd length (since its only nontrivial cycle $\gamma$ is odd, whereas its
trivial cycles have length $1$, which is also odd), i.e., belongs to
$\mathfrak{S}_{V}^{\operatorname*{odd}}\left(  D\right)  $ (since we know that
it belongs to $\mathfrak{S}_{V}\left(  D\right)  $). Thus, we obtain a map%
\begin{align*}
&  \text{from }\left\{  \text{nontrivial odd }D\text{-cycles}\right\} \\
&  \text{to }\left\{  \text{permutations }\sigma\in\mathfrak{S}_{V}%
^{\operatorname*{odd}}\left(  D\right)  \text{ satisfying }\psi\left(
\sigma\right)  =1\right\}
\end{align*}
which sends each nontrivial odd $D$-cycle $\gamma$ to the permutation
$\operatorname*{perm}\gamma$. This map is furthermore injective (because any
distinct nontrivial $D$-cycles $\gamma$ and $\delta$ will always give rise to
different permutations $\operatorname*{perm}\gamma$ and $\operatorname*{perm}%
\delta$) and surjective\footnote{\textit{Proof.} If $\sigma\in\mathfrak{S}%
_{V}\left(  D\right)  $ is a permutation satisfying $\psi\left(
\sigma\right)  =1$, then $\sigma=\operatorname*{perm}\gamma$ where $\gamma$ is
the unique nontrivial cycle of $\sigma$. Moreover, this cycle $\gamma$ is a
$D$-cycle (since $\sigma\in\mathfrak{S}_{V}\left(  D\right)  $). If we
furthermore assume that $\sigma\in\mathfrak{S}_{V}^{\operatorname*{odd}%
}\left(  D\right)  $, then this cycle $\gamma$ has odd length (since
$\sigma\in\mathfrak{S}_{V}^{\operatorname*{odd}}\left(  D\right)  $ entails
that all cycles of $\sigma$ have odd length), i.e., is odd.}. Thus, this map
is bijective. Hence, the bijection principle yields%
\begin{align*}
&  \left(  \text{\# of nontrivial odd }D\text{-cycles}\right) \\
&  =\left(  \text{\# of permutations }\sigma\in\mathfrak{S}_{V}%
^{\operatorname*{odd}}\left(  D\right)  \text{ satisfying }\psi\left(
\sigma\right)  =1\right)  .
\end{align*}
This proves Lemma \ref{lem.psi=1}.
\end{proof}

We can now prove Theorem \ref{thm.tourn.mod4}:

\begin{proof}
[Proof of Theorem \ref{thm.tourn.mod4}.]We use the same notations as in
Section \ref{sec.redei}. Write the tournament $D$ as $D=\left(  V,A\right)  $.

For each $\sigma\in\mathfrak{S}_{V}$, let $\psi\left(  \sigma\right)  $ denote
the number of nontrivial cycles of $\sigma$. Let $\mathfrak{S}_{V}%
^{\operatorname*{odd}}\left(  D\right)  $ denote the set of all permutations
$\sigma\in\mathfrak{S}_{V}\left(  D\right)  $ such that all cycles of $\sigma$
have odd length. Note that the identity permutation $\operatorname*{id}%
\nolimits_{V}$ belongs to $\mathfrak{S}_{V}^{\operatorname*{odd}}\left(
D\right)  $, since all its cycles are trivial.

Then, from (\ref{pf.thm.tourn.redei.zUD=}), we have%
\begin{align*}
\zeta\left(  U_{D}\right)   &  =\sum_{\substack{\sigma\in\mathfrak{S}%
_{V}\left(  D\right)  ;\\\text{all cycles of }\sigma\text{ have odd length}%
}}2^{\psi\left(  \sigma\right)  }=\sum_{\sigma\in\mathfrak{S}_{V}%
^{\operatorname*{odd}}\left(  D\right)  }2^{\psi\left(  \sigma\right)  }\\
&  \ \ \ \ \ \ \ \ \ \ \ \ \ \ \ \ \ \ \ \ \left(
\begin{array}
[c]{c}%
\text{since the permutations }\sigma\in\mathfrak{S}_{V}\left(  D\right)
\text{ such that all cycles}\\
\text{of }\sigma\text{ have odd length are precisely the elements of
}\mathfrak{S}_{V}^{\operatorname*{odd}}\left(  D\right)
\end{array}
\right) \\
&  \equiv\sum_{\substack{\sigma\in\mathfrak{S}_{V}^{\operatorname*{odd}%
}\left(  D\right)  ;\\\psi\left(  \sigma\right)  =0}}\underbrace{2^{\psi
\left(  \sigma\right)  }}_{\substack{=1\\\text{(since }\psi\left(
\sigma\right)  =0\text{)}}}+\sum_{\substack{\sigma\in\mathfrak{S}%
_{V}^{\operatorname*{odd}}\left(  D\right)  ;\\\psi\left(  \sigma\right)
=1}}\underbrace{2^{\psi\left(  \sigma\right)  }}_{\substack{=2\\\text{(since
}\psi\left(  \sigma\right)  =1\text{)}}}+\sum_{\substack{\sigma\in
\mathfrak{S}_{V}^{\operatorname*{odd}}\left(  D\right)  ;\\\psi\left(
\sigma\right)  \geq2}}\underbrace{2^{\psi\left(  \sigma\right)  }%
}_{\substack{\equiv0\operatorname{mod}4\\\text{(since }\psi\left(
\sigma\right)  \geq2\text{)}}}\\
&  \ \ \ \ \ \ \ \ \ \ \ \ \ \ \ \ \ \ \ \ \left(
\begin{array}
[c]{c}%
\text{here, we have split our sum according to}\\
\text{whether }\psi\left(  \sigma\right)  \text{ is }0\text{ or }1\text{ or
}\geq2
\end{array}
\right) \\
&  \equiv\sum_{\substack{\sigma\in\mathfrak{S}_{V}^{\operatorname*{odd}%
}\left(  D\right)  ;\\\psi\left(  \sigma\right)  =0}}1+\sum_{\substack{\sigma
\in\mathfrak{S}_{V}^{\operatorname*{odd}}\left(  D\right)  ;\\\psi\left(
\sigma\right)  =1}}2+\underbrace{\sum_{\substack{\sigma\in\mathfrak{S}%
_{V}^{\operatorname*{odd}}\left(  D\right)  ;\\\psi\left(  \sigma\right)
\geq2}}0}_{=0}\\
&  =\underbrace{\sum_{\substack{\sigma\in\mathfrak{S}_{V}^{\operatorname*{odd}%
}\left(  D\right)  ;\\\psi\left(  \sigma\right)  =0}}1}_{=\left(  \text{\# of
permutations }\sigma\in\mathfrak{S}_{V}^{\operatorname*{odd}}\left(  D\right)
\text{ satisfying }\psi\left(  \sigma\right)  =0\right)  \cdot1}\\
&  \ \ \ \ \ \ \ \ \ \ +\underbrace{\sum_{\substack{\sigma\in\mathfrak{S}%
_{V}^{\operatorname*{odd}}\left(  D\right)  ;\\\psi\left(  \sigma\right)
=1}}2}_{=\left(  \text{\# of permutations }\sigma\in\mathfrak{S}%
_{V}^{\operatorname*{odd}}\left(  D\right)  \text{ satisfying }\psi\left(
\sigma\right)  =1\right)  \cdot2}\\
&  =\underbrace{\left(  \text{\# of permutations }\sigma\in\mathfrak{S}%
_{V}^{\operatorname*{odd}}\left(  D\right)  \text{ satisfying }\psi\left(
\sigma\right)  =0\right)  }_{\substack{=1\\\text{(since the only permutation
}\sigma\in\mathfrak{S}_{V}^{\operatorname*{odd}}\left(  D\right)
\\\text{satisfying }\psi\left(  \sigma\right)  =0\text{ is the identity
permutation)}}}\cdot\,1\\
&  \ \ \ \ \ \ \ \ \ \ +\underbrace{\left(  \text{\# of permutations }%
\sigma\in\mathfrak{S}_{V}^{\operatorname*{odd}}\left(  D\right)  \text{
satisfying }\psi\left(  \sigma\right)  =1\right)  }_{\substack{=\left(
\text{\# of nontrivial odd }D\text{-cycles}\right)  \\\text{(by Lemma
\ref{lem.psi=1})}}}\cdot\,2\\
&  =1\cdot1+\left(  \text{\# of nontrivial odd }D\text{-cycles}\right)
\cdot2\\
&  =1+2\left(  \text{\# of nontrivial odd }D\text{-cycles}\right)
\operatorname{mod}4.
\end{align*}
Comparing this with%
\[
\zeta\left(  U_{D}\right)  =\left(  \text{\# of hamps of }\overline{D}\right)
\ \ \ \ \ \ \ \ \ \ \left(  \text{by Lemma \ref{lem.zeta.U}}\right)  ,
\]
we obtain%
\begin{align}
&  \left(  \text{\# of hamps of }\overline{D}\right) \nonumber\\
&  \equiv1+2\left(  \text{\# of nontrivial odd }D\text{-cycles}\right)
\operatorname{mod}4. \label{pf.thm.tourn.mod4.3}%
\end{align}

However, recall that $D$ is a tournament. Hence, the tournament axiom shows
that a pair $\left(  u,v\right)  $ of two distinct elements of $V$ is an arc
of $D$ if and only if the pair $\left(  v,u\right)  $ is not. In other words,
a pair $\left(  u,v\right)  $ of two distinct elements of $V$ is an arc of $D$
if and only if the pair $\left(  v,u\right)  $ is an arc of $\overline{D}$.
Thus, if $v=\left(  v_{1},v_{2},\ldots,v_{k}\right)  $ is a hamp of $D$, then
its reversal $\operatorname*{rev}v=\left(  v_{k},v_{k-1},\ldots,v_{1}\right)
$ is a hamp of $\overline{D}$. Hence, we obtain a map%
\begin{align*}
\left\{  \text{hamps of }D\right\}   &  \rightarrow\left\{  \text{hamps of
}\overline{D}\right\}  ,\\
v  &  \mapsto\operatorname*{rev}v.
\end{align*}
This map is furthermore easily seen to be injective and surjective. Hence, it
is bijective. Thus, the bijection principle yields%
\begin{align*}
\left(  \text{\# of hamps of }D\right)   &  =\left(  \text{\# of hamps of
}\overline{D}\right) \\
&  \equiv1+2\left(  \text{\# of nontrivial odd }D\text{-cycles}\right)
\operatorname{mod}4
\end{align*}
(by (\ref{pf.thm.tourn.mod4.3})). This proves Theorem \ref{thm.tourn.mod4}.
\end{proof}

\section{\label{sec.antipode}The antipode and the omega involution}

Next, we will discuss how the Redei-Berge symmetric functions $U_{D}$
interplay with two well-known involutions on the ring $\Lambda$: the omega
involution $\omega$ and the antipode map $S$.

We shall not recall the standard definitions of these involutions $\omega$ and
$S$ (see, e.g., \cite[\S 2.4]{GriRei}); however, we shall briefly state the
few properties that will be used in what follows. Both the \emph{omega
involution} $\omega$ and the \emph{antipode }$S$ of $\Lambda$ are
endomorphisms of the $\mathbb{Z}$-algebra $\Lambda$; they satisfy the
equalities%
\begin{equation}
S\left(  p_{n}\right)  =-p_{n} \label{eq.def.antipode.S}%
\end{equation}
and%
\begin{equation}
\omega\left(  p_{n}\right)  =\left(  -1\right)  ^{n-1}p_{n}
\label{eq.def.antipode.omega}%
\end{equation}
for every positive integer $n$ (see \cite[Proposition 2.4.1 (i)]{GriRei} and
\cite[Proposition 2.4.3 (c)]{GriRei}). Moreover, if $f\in\Lambda$ is a
homogeneous power series of degree $n$, then%
\begin{equation}
S\left(  f\right)  =\left(  -1\right)  ^{n}\omega\left(  f\right)
\label{eq.def.antipode.Somega}%
\end{equation}
(this is \cite[Proposition 2.4.3 (e)]{GriRei}). We now claim the following theorem:

\begin{theorem}
\label{thm.antipode.U}Let $D=\left(  V,A\right)  $ be a digraph. Then,%
\begin{equation}
\omega\left(  U_{D}\right)  =U_{\overline{D}}. \label{eq.thm.antipode.U.omega}%
\end{equation}
Furthermore, if $n:=\left\vert V\right\vert $, then%
\begin{equation}
S\left(  U_{D}\right)  =\left(  -1\right)  ^{n}U_{\overline{D}}.
\label{eq.thm.antipode.U.S}%
\end{equation}

\end{theorem}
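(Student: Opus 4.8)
The plan is to prove both equalities \eqref{eq.thm.antipode.U.omega} and \eqref{eq.thm.antipode.U.S} by reducing everything to the power-sum expansion furnished by Theorem \ref{thm.UX.1}, which gives us an explicit handle on $U_D$ and on $U_{\overline D}$ in terms of the same combinatorial data. The crucial observation is that the sets $\mathfrak{S}_V(D,\overline D)$ and $\mathfrak{S}_V(\overline D, D)$ coincide (the two digraphs enter symmetrically in Definition \ref{def.SVD}), and that a cycle $\gamma$ of a permutation $\sigma \in \mathfrak{S}_V(D,\overline D)$ is a $D$-cycle if and only if it is \emph{not} a $\overline D$-cycle, unless $\ell(\gamma)=1$ (a length-$1$ cycle is always both, or rather is a $D$-cycle or $\overline D$-cycle depending on its single cyclic arc). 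This means that the two numbers
\[
\varphi_D(\sigma) := \sum_{\substack{\gamma\in\operatorname*{Cycs}\sigma;\\\gamma\text{ is a }D\text{-cycle}}}\left(\ell(\gamma)-1\right)
\qquad\text{and}\qquad
\varphi_{\overline D}(\sigma) := \sum_{\substack{\gamma\in\operatorname*{Cycs}\sigma;\\\gamma\text{ is a }\overline D\text{-cycle}}}\left(\ell(\gamma)-1\right)
\]
are closely linked for any fixed $\sigma\in\mathfrak{S}_V(D,\overline D)$.

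First I would prove \eqref{eq.thm.antipode.U.omega}. Apply $\omega$ to the expansion $U_D=\sum_{\sigma\in\mathfrak{S}_V(D,\overline D)}(-1)^{\varphi_D(\sigma)}p_{\operatorname*{type}\sigma}$ from Theorem \ref{thm.UX.1}. Since $\omega$ is a $\mathbb{Z}$-algebra endomorphism and $p_{\operatorname*{type}\sigma}=\prod_{\gamma\in\operatorname*{Cycs}\sigma}p_{\ell(\gamma)}$, the formula \eqref{eq.def.antipode.omega} gives
\[
\omega\left(p_{\operatorname*{type}\sigma}\right)=\prod_{\gamma\in\operatorname*{Cycs}\sigma}(-1)^{\ell(\gamma)-1}p_{\ell(\gamma)}=(-1)^{\sum_{\gamma}(\ell(\gamma)-1)}\,p_{\operatorname*{type}\sigma}.
\]
The exponent $\sum_{\gamma\in\operatorname*{Cycs}\sigma}(\ell(\gamma)-1)$ equals $\varphi_D(\sigma)+\varphi_{\overline D}(\sigma)$ whenever $\sigma\in\mathfrak{S}_V(D,\overline D)$, because then every cycle is a $D$-cycle or a $\overline D$-cycle, and the only overlap (cycles that are both) are those of length $1$, which contribute $\ell(\gamma)-1=0$ to all three sums. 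Combining the sign $(-1)^{\varphi_D(\sigma)}$ with $(-1)^{\varphi_D(\sigma)+\varphi_{\overline D}(\sigma)}$ yields $(-1)^{2\varphi_D(\sigma)+\varphi_{\overline D}(\sigma)}=(-1)^{\varphi_{\overline D}(\sigma)}$. Thus $\omega(U_D)=\sum_{\sigma\in\mathfrak{S}_V(D,\overline D)}(-1)^{\varphi_{\overline D}(\sigma)}p_{\operatorname*{type}\sigma}$, and since $\mathfrak{S}_V(D,\overline D)=\mathfrak{S}_V(\overline D, D)$, this is exactly the Theorem \ref{thm.UX.1} expansion of $U_{\overline D}$ (applied to $\overline D$ in place of $D$, noting $\overline{\overline D}=D$). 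Hence $\omega(U_D)=U_{\overline D}$.

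For \eqref{eq.thm.antipode.U.S}, I would invoke \eqref{eq.def.antipode.Somega}: the power series $U_D$ is homogeneous of degree $n=\lvert V\rvert$ (each addend $p_{\operatorname*{type}\sigma}$ has degree $\lvert V\rvert$, since $\operatorname*{type}\sigma$ is a partition of $\lvert V\rvert$), so $S(U_D)=(-1)^n\omega(U_D)=(-1)^n U_{\overline D}$ by the just-proved equality. Alternatively one can mimic the $\omega$-computation using \eqref{eq.def.antipode.S}, getting $S(p_{\operatorname*{type}\sigma})=(-1)^{\ell(\operatorname*{type}\sigma)}p_{\operatorname*{type}\sigma}$ where $\ell$ is the number of cycles; but the homogeneity route is cleaner. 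The main obstacle is the careful bookkeeping in identifying $\sum_\gamma(\ell(\gamma)-1)$ with $\varphi_D(\sigma)+\varphi_{\overline D}(\sigma)$ and confirming that length-$1$ cycles cause no double-counting error — I would state this as a small lemma (every cycle of a $\sigma\in\mathfrak{S}_V(D,\overline D)$ is a $D$-cycle or a $\overline D$-cycle, and the two categories overlap only in trivial cycles, all of which contribute $0$) and verify it directly from Definition \ref{def.SVD} before assembling the sign computation.
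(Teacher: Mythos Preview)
Your proof is correct and essentially identical to the paper's: both apply $\omega$ to the power-sum expansion from Theorem \ref{thm.UX.1}, identify $\sum_{\gamma\in\operatorname*{Cycs}\sigma}(\ell(\gamma)-1)$ with $\varphi_D(\sigma)+\varphi_{\overline D}(\sigma)$ for $\sigma\in\mathfrak{S}_V(D,\overline D)$, and then obtain \eqref{eq.thm.antipode.U.S} from homogeneity via \eqref{eq.def.antipode.Somega}. One minor remark: a cycle is in fact \emph{never} both a $D$-cycle and a $\overline D$-cycle (since $\operatorname*{CArcs}\gamma\neq\varnothing$ while $A$ and $(V\times V)\setminus A$ are disjoint), so your hedging about length-$1$ overlap is unnecessary, though harmless since those cycles contribute $0$ anyway.
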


\begin{proof}
The definition of $\overline{D}$ yields that $\overline{\overline{D}}=D$.
Hence, the definition of $\mathfrak{S}_{V}\left(  D,\overline{D}\right)  $
yields that $\mathfrak{S}_{V}\left(  \overline{D},D\right)  =\mathfrak{S}%
_{V}\left(  D,\overline{D}\right)  $.

For each $\sigma\in\mathfrak{S}_{V}$, we set%
\[
\varphi\left(  \sigma\right)  :=\sum_{\substack{\gamma\in\operatorname*{Cycs}%
\sigma;\\\gamma\text{ is a }D\text{-cycle}}}\left(  \ell\left(  \gamma\right)
-1\right)  \ \ \ \ \ \ \ \ \ \ \text{and}\ \ \ \ \ \ \ \ \ \ \overline
{\varphi}\left(  \sigma\right)  :=\sum_{\substack{\gamma\in
\operatorname*{Cycs}\sigma;\\\gamma\text{ is a }\overline{D}\text{-cycle}%
}}\left(  \ell\left(  \gamma\right)  -1\right)  .
\]

Now, it is easy to see that%
\begin{equation}
\omega\left(  \left(  -1\right)  ^{\varphi\left(  \sigma\right)
}p_{\operatorname*{type}\sigma}\right)  =\left(  -1\right)  ^{\overline
{\varphi}\left(  \sigma\right)  }p_{\operatorname*{type}\sigma}
\label{pf.thm.antipode.U.4}%
\end{equation}
for each $\sigma\in\mathfrak{S}_{V}\left(  D,\overline{D}\right)  $.

[\textit{Proof of (\ref{pf.thm.antipode.U.4}):} Let $\sigma\in\mathfrak{S}%
_{V}\left(  D,\overline{D}\right)  $. Let $k_{1},k_{2},\ldots,k_{s}$ be the
lengths of all cycles of $\sigma$, listed in decreasing order. Then, the
definition of $\operatorname*{type}\sigma$ yields $\operatorname*{type}%
\sigma=\left(  k_{1},k_{2},\ldots,k_{s}\right)  $. Hence,
\begin{equation}
p_{\operatorname*{type}\sigma}=p_{\left(  k_{1},k_{2},\ldots,k_{s}\right)
}=p_{k_{1}}p_{k_{2}}\cdots p_{k_{s}}=\prod_{\gamma\in\operatorname*{Cycs}%
\sigma}p_{\ell\left(  \gamma\right)  } \label{pf.thm.antipode.U.4.pf.0}%
\end{equation}
(since $k_{1},k_{2},\ldots,k_{s}$ are the lengths of all cycles of $\sigma$).
Hence,%
\begin{align}
\omega\left(  \left(  -1\right)  ^{\varphi\left(  \sigma\right)
}p_{\operatorname*{type}\sigma}\right)   &  =\omega\left(  \left(  -1\right)
^{\varphi\left(  \sigma\right)  }\prod_{\gamma\in\operatorname*{Cycs}\sigma
}p_{\ell\left(  \gamma\right)  }\right) \nonumber\\
&  =\left(  -1\right)  ^{\varphi\left(  \sigma\right)  }\prod_{\gamma
\in\operatorname*{Cycs}\sigma}\underbrace{\omega\left(  p_{\ell\left(
\gamma\right)  }\right)  }_{\substack{=\left(  -1\right)  ^{\ell\left(
\gamma\right)  -1}p_{\ell\left(  \gamma\right)  }\\\text{(by
(\ref{eq.def.antipode.omega}))}}}\ \ \ \ \ \ \ \ \ \ \left(
\begin{array}
[c]{c}%
\text{since }\omega\text{ is a }\mathbb{Z}\text{-algebra}\\
\text{homomorphism}%
\end{array}
\right) \nonumber\\
&  =\left(  -1\right)  ^{\varphi\left(  \sigma\right)  }\prod_{\gamma
\in\operatorname*{Cycs}\sigma}\left(  \left(  -1\right)  ^{\ell\left(
\gamma\right)  -1}p_{\ell\left(  \gamma\right)  }\right) \nonumber\\
&  =\left(  -1\right)  ^{\varphi\left(  \sigma\right)  }\underbrace{\left(
\prod_{\gamma\in\operatorname*{Cycs}\sigma}\left(  -1\right)  ^{\ell\left(
\gamma\right)  -1}\right)  }_{=\left(  -1\right)  ^{\sum_{\gamma
\in\operatorname*{Cycs}\sigma}\left(  \ell\left(  \gamma\right)  -1\right)  }%
}\underbrace{\prod_{\gamma\in\operatorname*{Cycs}\sigma}p_{\ell\left(
\gamma\right)  }}_{\substack{=p_{\operatorname*{type}\sigma}\\\text{(by
(\ref{pf.thm.antipode.U.4.pf.0}))}}}\nonumber\\
&  =\left(  -1\right)  ^{\varphi\left(  \sigma\right)  }\left(  -1\right)
^{\sum_{\gamma\in\operatorname*{Cycs}\sigma}\left(  \ell\left(  \gamma\right)
-1\right)  }p_{\operatorname*{type}\sigma}. \label{pf.thm.antipode.U.4.pf.1}%
\end{align}

However, each $\gamma\in\operatorname*{Cycs}\sigma$ is either a $D$-cycle or a
$\overline{D}$-cycle (since $\sigma\in\mathfrak{S}_{V}\left(  D,\overline
{D}\right)  $), but cannot be both at the same time (since $D$ and
$\overline{D}$ have no arcs in common). Thus,%
\begin{align*}
\sum_{\gamma\in\operatorname*{Cycs}\sigma}\left(  \ell\left(  \gamma\right)
-1\right)   &  =\underbrace{\sum_{\substack{\gamma\in\operatorname*{Cycs}%
\sigma;\\\gamma\text{ is a }D\text{-cycle}}}\left(  \ell\left(  \gamma\right)
-1\right)  }_{\substack{=\varphi\left(  \sigma\right)  \\\text{(by the
definition of }\varphi\left(  \sigma\right)  \text{)}}}+\underbrace{\sum
_{\substack{\gamma\in\operatorname*{Cycs}\sigma;\\\gamma\text{ is a }%
\overline{D}\text{-cycle}}}\left(  \ell\left(  \gamma\right)  -1\right)
}_{\substack{=\overline{\varphi}\left(  \sigma\right)  \\\text{(by the
definition of }\overline{\varphi}\left(  \sigma\right)  \text{)}}}\\
&  =\varphi\left(  \sigma\right)  +\overline{\varphi}\left(  \sigma\right)  .
\end{align*}
Thus, (\ref{pf.thm.antipode.U.4.pf.1}) rewrites as%
\[
\omega\left(  \left(  -1\right)  ^{\varphi\left(  \sigma\right)
}p_{\operatorname*{type}\sigma}\right)  =\underbrace{\left(  -1\right)
^{\varphi\left(  \sigma\right)  }\left(  -1\right)  ^{\varphi\left(
\sigma\right)  +\overline{\varphi}\left(  \sigma\right)  }}_{=\left(
-1\right)  ^{\overline{\varphi}\left(  \sigma\right)  }}%
p_{\operatorname*{type}\sigma}=\left(  -1\right)  ^{\overline{\varphi}\left(
\sigma\right)  }p_{\operatorname*{type}\sigma}.
\]
This proves (\ref{pf.thm.antipode.U.4}).] \medskip

Now, Theorem \ref{thm.UX.1} yields%
\begin{equation}
U_{D}=\sum_{\sigma\in\mathfrak{S}_{V}\left(  D,\overline{D}\right)  }\left(
-1\right)  ^{\varphi\left(  \sigma\right)  }p_{\operatorname*{type}\sigma}.
\label{pf.thm.antipode.U.8}%
\end{equation}
Also, Theorem \ref{thm.UX.1} (applied to $\overline{D}$, $\left(  V\times
V\right)  \setminus A$ and $\overline{\varphi}$ instead of $D$, $A$ and
$\varphi$) yields%
\begin{align*}
U_{\overline{D}}  &  =\sum_{\sigma\in\mathfrak{S}_{V}\left(  \overline
{D},D\right)  }\left(  -1\right)  ^{\overline{\varphi}\left(  \sigma\right)
}p_{\operatorname*{type}\sigma}\\
&  =\sum_{\sigma\in\mathfrak{S}_{V}\left(  D,\overline{D}\right)
}\underbrace{\left(  -1\right)  ^{\overline{\varphi}\left(  \sigma\right)
}p_{\operatorname*{type}\sigma}}_{\substack{=\omega\left(  \left(  -1\right)
^{\varphi\left(  \sigma\right)  }p_{\operatorname*{type}\sigma}\right)
\\\text{(by (\ref{pf.thm.antipode.U.4}))}}}\ \ \ \ \ \ \ \ \ \ \left(
\text{since }\mathfrak{S}_{V}\left(  \overline{D},D\right)  =\mathfrak{S}%
_{V}\left(  D,\overline{D}\right)  \right) \\
&  =\sum_{\sigma\in\mathfrak{S}_{V}\left(  D,\overline{D}\right)  }%
\omega\left(  \left(  -1\right)  ^{\varphi\left(  \sigma\right)
}p_{\operatorname*{type}\sigma}\right) \\
&  =\omega\left(  \underbrace{\sum_{\sigma\in\mathfrak{S}_{V}\left(
D,\overline{D}\right)  }\left(  -1\right)  ^{\varphi\left(  \sigma\right)
}p_{\operatorname*{type}\sigma}}_{\substack{=U_{D}\\\text{(by
(\ref{pf.thm.antipode.U.8}))}}}\right)  \ \ \ \ \ \ \ \ \ \ \left(
\text{since }\omega\text{ is }\mathbb{Z}\text{-linear}\right) \\
&  =\omega\left(  U_{D}\right)  .
\end{align*}
This proves (\ref{eq.thm.antipode.U.omega}).

Now, let $n:=\left\vert V\right\vert $. Then, the definition of $U_{D}$ easily
yields that $U_{D}$ is homogeneous of degree $n$. Hence,
(\ref{eq.def.antipode.Somega}) (applied to $f=U_{D}$) yields
\[
S\left(  U_{D}\right)  =\left(  -1\right)  ^{n}\omega\left(  U_{D}\right)
=\left(  -1\right)  ^{n}U_{\overline{D}}\ \ \ \ \ \ \ \ \ \ \left(  \text{by
(\ref{eq.thm.antipode.U.omega})}\right)  .
\]
Thus, (\ref{eq.thm.antipode.U.S}) is proved. This completes the proof of
Theorem \ref{thm.antipode.U}.
\end{proof}

Theorem \ref{thm.antipode.U} can also be proved directly from the definition
of $U_{D}$, using the formula for the antipode of a fundamental quasisymmetric
function (\cite[(5.2.7)]{GriRei}). Indeed, three different proofs of Theorem
\ref{thm.antipode.U} (specifically, of (\ref{eq.thm.antipode.U.omega})) are
found in \cite{Chow96} (where (\ref{eq.thm.antipode.U.omega}) appears as
\cite[Corollary 2]{Chow96}), one of which is doing just this. A fourth proof
can be found in \cite[(6)]{Wisema07}.

We can use Theorem \ref{thm.antipode.U} to give a new proof of Berge's theorem
(Theorem \ref{thm.hamp.Dbar}). For this purpose, we recall the $\mathbb{Z}%
$-algebra homomorphism $\zeta$ introduced in Section \ref{sec.redei}. We need
another simple property of this $\zeta$:

\begin{lemma}
\label{lem.zeta.parity}Let $f\in\mathbb{Z}\left[  p_{1},p_{2},p_{3}%
,\ldots\right]  $. Then, $\zeta\left(  \omega\left(  f\right)  \right)
\equiv\zeta\left(  f\right)  \operatorname{mod}2$.
\end{lemma}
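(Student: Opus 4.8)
The plan is to reduce everything to the power-sum generators $p_{n}$, where both $\zeta$ and $\omega$ act explicitly, and then lift the resulting congruence to arbitrary $f$ by $\mathbb{Z}$-linearity.

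First I would record the two ingredients that trivialize the power-sum case. Lemma \ref{lem.zeta.p} gives $\zeta\left(  p_{\lambda}\right)  =1$ for every partition $\lambda$, and (\ref{eq.def.antipode.omega}) gives $\omega\left(  p_{n}\right)  =\left(  -1\right)  ^{n-1}p_{n}$ for every positive integer $n$. Since $\omega$ is a $\mathbb{Z}$-algebra homomorphism, applying it to a monomial $p_{\lambda}=p_{\lambda_{1}}p_{\lambda_{2}}\cdots p_{\lambda_{k}}$ yields $\omega\left(  p_{\lambda}\right)  =\left(  \prod_{i}\left(  -1\right)  ^{\lambda_{i}-1}\right)  p_{\lambda}=\pm p_{\lambda}$. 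Applying $\zeta$ and using $\zeta\left(  p_{\lambda}\right)  =1$ then gives $\zeta\left(  \omega\left(  p_{\lambda}\right)  \right)  =\prod_{i}\left(  -1\right)  ^{\lambda_{i}-1}\in\left\{  1,-1\right\}  $. Since $\pm1\equiv1\operatorname{mod}2$, I conclude that $\zeta\left(  \omega\left(  p_{\lambda}\right)  \right)  \equiv1=\zeta\left(  p_{\lambda}\right)  \operatorname{mod}2$ for each partition $\lambda$.

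Next I would lift this from monomials to general $f$. By hypothesis $f\in\mathbb{Z}\left[  p_{1},p_{2},p_{3},\ldots\right]  $, so $f$ can be written as a finite $\mathbb{Z}$-linear combination $f=\sum_{\lambda}c_{\lambda}p_{\lambda}$ of the monomials $p_{\lambda}$ (with $c_{\lambda}\in\mathbb{Z}$). Both $\omega$ and $\zeta$ are $\mathbb{Z}$-linear, so $\zeta\left(  \omega\left(  f\right)  \right)  =\sum_{\lambda}c_{\lambda}\zeta\left(  \omega\left(  p_{\lambda}\right)  \right)  $ and $\zeta\left(  f\right)  =\sum_{\lambda}c_{\lambda}\zeta\left(  p_{\lambda}\right)  $. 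Comparing these termwise via the congruence from the previous paragraph gives $\zeta\left(  \omega\left(  f\right)  \right)  \equiv\sum_{\lambda}c_{\lambda}\zeta\left(  p_{\lambda}\right)  =\zeta\left(  f\right)  \operatorname{mod}2$, which is the claim.

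There is essentially no hard step here; the one point that deserves a moment's care is the opening of the second paragraph — namely that the hypothesis $f\in\mathbb{Z}\left[  p_{1},p_{2},p_{3},\ldots\right]  $ genuinely furnishes a representation of $f$ as an integer-coefficient polynomial in the power sums (this is exactly what $p$-integrality means), which is what licenses restricting attention to the monomials $p_{\lambda}$. I would not even need uniqueness of this representation, since the two quantities being compared, $\zeta\left(  \omega\left(  f\right)  \right)  $ and $\zeta\left(  f\right)  $, depend only on $f$ and not on how it is written, so any single representation suffices.
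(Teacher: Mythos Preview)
Your proof is correct and follows essentially the same approach as the paper. The only cosmetic difference is that the paper checks the congruence on the single generators $p_{n}$ and then invokes the fact that $\pi\circ\zeta\circ\omega$ and $\pi\circ\zeta$ are $\mathbb{Z}$-algebra homomorphisms agreeing on generators, whereas you check directly on all monomials $p_{\lambda}$ and then use $\mathbb{Z}$-linearity; both routes are equivalent and equally short.
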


\begin{proof}
Let $\pi:\mathbb{Z}\rightarrow\mathbb{Z}/2$ be the projection map that sends
each integer to its congruence class modulo $2$. This $\pi$ is a $\mathbb{Z}%
$-algebra homomorphism.

For each positive integer $n$, we have
\begin{align*}
\zeta\left(  \omega\left(  p_{n}\right)  \right)   &  =\zeta\left(  \left(
-1\right)  ^{n-1}p_{n}\right)  \ \ \ \ \ \ \ \ \ \ \left(  \text{by
(\ref{eq.def.antipode.omega})}\right) \\
&  =\underbrace{\left(  -1\right)  ^{n-1}}_{\equiv1\operatorname{mod}2}%
\zeta\left(  p_{n}\right)  \ \ \ \ \ \ \ \ \ \ \left(  \text{since }%
\zeta\text{ is }\mathbb{Z}\text{-linear}\right) \\
&  \equiv\zeta\left(  p_{n}\right)  \operatorname{mod}2
\end{align*}
and thus%
\[
\pi\left(  \zeta\left(  \omega\left(  p_{n}\right)  \right)  \right)
=\pi\left(  \zeta\left(  p_{n}\right)  \right)
\]
(since two integers $a$ and $b$ satisfy $a\equiv b\operatorname{mod}2$ if and
only if $\pi\left(  a\right)  =\pi\left(  b\right)  $). In other words, for
each positive integer $n$, we have%
\[
\left(  \pi\circ\zeta\circ\omega\right)  \left(  p_{n}\right)  =\left(
\pi\circ\zeta\right)  \left(  p_{n}\right)  .
\]
In other words, the two maps $\pi\circ\zeta\circ\omega$ and $\pi\circ\zeta$
agree on each of the generators $p_{1},p_{2},p_{3},\ldots$ of the $\mathbb{Z}%
$-algebra $\mathbb{Z}\left[  p_{1},p_{2},p_{3},\ldots\right]  $. Since these
two maps are $\mathbb{Z}$-algebra homomorphisms (because $\pi$, $\zeta$ and
$\omega$ are $\mathbb{Z}$-algebra homomorphisms), this shows that these two
maps agree on the entire $\mathbb{Z}$-algebra $\mathbb{Z}\left[  p_{1}%
,p_{2},p_{3},\ldots\right]  $. Hence, $\left(  \pi\circ\zeta\circ
\omega\right)  \left(  f\right)  =\left(  \pi\circ\zeta\right)  \left(
f\right)  $. In other words, $\pi\left(  \zeta\left(  \omega\left(  f\right)
\right)  \right)  =\pi\left(  \zeta\left(  f\right)  \right)  $. In other
words, $\zeta\left(  \omega\left(  f\right)  \right)  \equiv\zeta\left(
f\right)  \operatorname{mod}2$ (since two integers $a$ and $b$ satisfy
$a\equiv b\operatorname{mod}2$ if and only if $\zeta\left(  a\right)
=\zeta\left(  b\right)  $). This proves Lemma \ref{lem.zeta.parity}.
\end{proof}

\begin{proof}
[Second proof of Theorem \ref{thm.hamp.Dbar}.]From
(\ref{eq.thm.antipode.U.omega}), we obtain $\omega\left(  U_{D}\right)
=U_{\overline{D}}$.

Corollary \ref{cor.UX.p-int} yields $U_{D}\in\mathbb{Z}\left[  p_{1}%
,p_{2},p_{3},\ldots\right]  $. Hence, Lemma \ref{lem.zeta.parity} (applied to
$f=U_{D}$) yields that%
\[
\zeta\left(  \omega\left(  U_{D}\right)  \right)  \equiv\zeta\left(
U_{D}\right)  \operatorname{mod}2.
\]
In view of%
\[
\zeta\left(  U_{D}\right)  =\left(  \text{\# of hamps of }\overline{D}\right)
\ \ \ \ \ \ \ \ \ \ \left(  \text{by Lemma \ref{lem.zeta.U}}\right)
\]
and%
\begin{align*}
\zeta\left(  \underbrace{\omega\left(  U_{D}\right)  }_{=U_{\overline{D}}%
}\right)   &  =\zeta\left(  U_{\overline{D}}\right)  =\left(  \text{\# of
hamps of }\overline{\overline{D}}\right)  \ \ \ \ \ \ \ \ \ \ \left(
\begin{array}
[c]{c}%
\text{by Lemma \ref{lem.zeta.U},}\\
\text{applied to }\overline{D}\text{ instead of }D
\end{array}
\right) \\
&  =\left(  \text{\# of hamps of }D\right)  \ \ \ \ \ \ \ \ \ \ \left(
\text{since }\overline{\overline{D}}=D\right)  ,
\end{align*}
we can rewrite this as%
\[
\left(  \text{\# of hamps of }D\right)  \equiv\left(  \text{\# of hamps of
}\overline{D}\right)  \operatorname{mod}2.
\]
This proves Theorem \ref{thm.hamp.Dbar} again.
\end{proof}

\section{\label{sec.matrixgen}A multiparameter deformation}

Let us now briefly discuss a multiparameter deformation of the Redei-Berge
symmetric functions $U_{D}$, which replaces the digraph $D$ by an arbitrary matrix.

We fix a commutative ring $\mathbf{k}$, which we shall now be using instead of
$\mathbb{Z}$ as a base ring for our power series.

We fix an $n\in\mathbb{N}$, and a set $V$ with $n$ elements.

For any $a\in V\times V$, we fix an element $t_{a}\in\mathbf{k}$. (Thus, the
family $\left(  t_{\left(  i,j\right)  }\right)  _{i,j\in V}$ of these
elements can be viewed as a $V\times V$-matrix.)

For any $a\in V\times V$, we set $s_{a}:=t_{a}+1\in\mathbf{k}$.

The following definition is inspired by a comment from Mike Zabrocki:

\begin{definition}
We define the \emph{deformed Redei--Berge symmetric function} $\widetilde{U}%
_{t}$ to be the formal power series%
\begin{align*}
\widetilde{U}_{t}  &  =\sum_{\substack{w=\left(  w_{1},w_{2},\ldots
,w_{n}\right)  \\\text{is a }V\text{-listing}}}\ \ \sum_{i_{1}\leq i_{2}%
\leq\cdots\leq i_{n}}\left(  \prod_{\substack{k\in\left[  n-1\right]
;\\i_{k}=i_{k+1}}}s_{\left(  w_{k},w_{k+1}\right)  }\right)  x_{i_{1}}%
x_{i_{2}}\cdots x_{i_{n}}\\
&  \in\mathbf{k}\left[  \left[  x_{1},x_{2},x_{3},\ldots\right]  \right]  .
\end{align*}

\end{definition}

For example, if $n=2$ and $V=\left\{  1,2\right\}  $, then%
\begin{align*}
\widetilde{U}_{t}  &  =\sum_{i_{1}<i_{2}}x_{i_{1}}x_{i_{2}}+\sum_{i_{1}=i_{2}%
}t_{\left(  1,2\right)  }x_{i_{1}}x_{i_{2}}+\sum_{i_{1}<i_{2}}x_{i_{1}%
}x_{i_{2}}+\sum_{i_{1}=i_{2}}t_{\left(  2,1\right)  }x_{i_{1}}x_{i_{2}}\\
&  =\sum_{i<j}x_{i}x_{j}+\sum_{i}t_{\left(  1,2\right)  }x_{i}^{2}+\sum
_{i<j}x_{i}x_{j}+\sum_{i}t_{\left(  2,1\right)  }x_{i}^{2}\\
&  =p_{1}^{2}+\left(  s_{\left(  1,2\right)  }+s_{\left(  2,1\right)
}-1\right)  p_{2}\\
&  =p_{1}^{2}+\left(  t_{\left(  1,2\right)  }+t_{\left(  2,1\right)
}+1\right)  p_{2}%
\end{align*}

For a more complicated example, if $n=2$ and $V=\left\{  1,2,3\right\}  $,
then a longer computation shows that%
\begin{align*}
\widetilde{U}_{t}  &  =p_{1}^{3}+\left(  s_{\left(  1,2\right)  }+s_{\left(
2,1\right)  }+s_{\left(  1,3\right)  }+s_{\left(  3,1\right)  }+s_{\left(
2,3\right)  }+s_{\left(  3,2\right)  }-3\right)  p_{2}p_{1}\\
&  \ \ \ \ \ \ \ \ \ \ +\left(  s_{\left(  1,2\right)  }s_{\left(  2,3\right)
}+s_{\left(  2,3\right)  }s_{\left(  3,1\right)  }+s_{\left(  3,1\right)
}s_{\left(  1,2\right)  }\right. \\
&  \ \ \ \ \ \ \ \ \ \ \ \ \ \ \ \ \ \ \ \ \left.  +\ s_{\left(  1,3\right)
}s_{\left(  3,2\right)  }+s_{\left(  3,2\right)  }s_{\left(  2,1\right)
}+s_{\left(  2,1\right)  }t_{\left(  1,3\right)  }\right. \\
&  \ \ \ \ \ \ \ \ \ \ \ \ \ \ \ \ \ \ \ \ \left.  -\ s_{\left(  1,2\right)
}-s_{\left(  2,1\right)  }-s_{\left(  1,3\right)  }-s_{\left(  3,1\right)
}-s_{\left(  2,3\right)  }-s_{\left(  3,2\right)  }+2\right)  p_{3}\\
&  =p_{1}^{3}+\left(  t_{\left(  1,2\right)  }+t_{\left(  2,1\right)
}+t_{\left(  1,3\right)  }+t_{\left(  3,1\right)  }+t_{\left(  2,3\right)
}+t_{\left(  3,2\right)  }+3\right)  p_{2}p_{1}\\
&  \ \ \ \ \ \ \ \ \ \ +\left(  t_{\left(  1,2\right)  }t_{\left(  2,3\right)
}+t_{\left(  2,3\right)  }t_{\left(  3,1\right)  }+t_{\left(  3,1\right)
}t_{\left(  1,2\right)  }\right. \\
&  \ \ \ \ \ \ \ \ \ \ \ \ \ \ \ \ \ \ \ \ \left.  +\ t_{\left(  1,3\right)
}t_{\left(  3,2\right)  }+t_{\left(  3,2\right)  }t_{\left(  2,1\right)
}+t_{\left(  2,1\right)  }t_{\left(  1,3\right)  }\right. \\
&  \ \ \ \ \ \ \ \ \ \ \ \ \ \ \ \ \ \ \ \ \left.  +\ t_{\left(  1,2\right)
}+t_{\left(  2,1\right)  }+t_{\left(  1,3\right)  }+t_{\left(  3,1\right)
}+t_{\left(  2,3\right)  }+t_{\left(  3,2\right)  }+2\right)  p_{3}.
\end{align*}

Why are we calling $\widetilde{U}_{t}$ a deformation of $U_{D}$ ?

\begin{example}
Let $D=\left(  V,A\right)  $ be a digraph. Set $\mathbf{k}=\mathbb{Z}$, and
let%
\[
t_{a}:=%
\begin{cases}
-1, & \text{if }a\in A;\\
0, & \text{if }a\notin A
\end{cases}
\ \ \ \ \ \ \ \ \ \ \text{for each }a\in V\times V.
\]
Then, $\widetilde{U}_{t}=U_{D}$, as can be seen by comparing the definitions.
\end{example}

All the above results leading up to Theorem \ref{thm.UX.1} can be extended to
this deformation, culminating in the following deformation of Theorem
\ref{thm.UX.1}:

\begin{theorem}
\label{thm.UX.1deform}We have%
\[
\widetilde{U}_{t}=\sum_{\sigma\in\mathfrak{S}_{V}}\left(  \prod_{\gamma\text{
is a cycle of }\sigma}\left(  \prod_{a\in\operatorname*{CArcs}\gamma}%
s_{a}-\prod_{a\in\operatorname*{CArcs}\gamma}t_{a}\right)  \right)
p_{\operatorname*{type}\sigma}.
\]

\end{theorem}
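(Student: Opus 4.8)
The plan is to mirror the proof of Theorem~\ref{thm.UX.1} as closely as possible, introducing the multiparameter weights $s_a$ and $t_a$ as formal bookkeeping devices that interpolate the ``arc vs.\ non-arc'' dichotomy of the original. The original proof rested on an inclusion--exclusion over linear subsets $F$ of the arc set $A$, which converted each $D$-descent condition into a cancellation lemma (Lemma~\ref{lem.cancel}). Here there is no fixed digraph, so the first task will be to re-express $\widetilde U_t$ in a form amenable to the same machinery. Concretely, I would open up the product $\prod_{k:\,i_k=i_{k+1}} s_{(w_k,w_{k+1})}$ by writing each factor as $s_a = t_a + 1$ and expanding, so that the product becomes a sum over subsets $F$ of the set of ``equal-level consecutive pairs,'' with each chosen pair contributing $t_a$ and each unchosen pair contributing $1$. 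This turns $\widetilde U_t$ into a double sum over $V$-listings $w$ and over such subsets $F$, weighted by $\prod_{a\in F} t_a$ and with an indicator that the pairs in $F$ are exactly equal-level.

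The next step is to interchange summation and reorganize by the map $f: V \to \mathbb{P}$ encoding the levels $i_1 \le \cdots \le i_n$, exactly as in the proof of Theorem~\ref{thm.UX.1} via Lemma~\ref{lem.maps-vs-tups}. After this reindexing, the inner structure should reduce to a sum over pairs $(\sigma, F)$ where $\sigma \in \mathfrak{S}_V$ satisfies $f \circ \sigma = f$ and $F \subseteq \mathbf{A}_\sigma$ is linear, with each such configuration weighted by $\prod_{a\in F} t_a$ (replacing the sign $(-1)^{|F|}$ of the original). The analogues of Lemma~\ref{lem.hamps-by-lin}, Lemma~\ref{lem.hamps-by-lin-f}, and Lemma~\ref{lem.friendlies-by-f} should go through verbatim once $(-1)^{|F|}$ is systematically replaced by the monomial weight $\prod_{a\in F} t_a$; the level-decomposition apparatus (Propositions~\ref{prop.level.lin-sub-Af}, \ref{prop.level.decompose-perm}, \ref{prop.level.Asig-f2} and Lemma~\ref{lem.level.djunex}) is weight-agnostic and carries over unchanged. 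Finally, Lemma~\ref{lem.ptype-as-sum} delivers the $p_{\operatorname*{type}\sigma}$ factor after summing over all level-functions $f$ compatible with a fixed $\sigma$.

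The crucial computation is then the weighted version of Proposition~\ref{prop.linear-in-AsigA}: I must show that for each $\sigma$,
\[
\sum_{\substack{F \subseteq \mathbf{A}_\sigma\\ \text{is linear}}} \prod_{a \in F} t_a = \prod_{\gamma \text{ is a cycle of }\sigma} \left( \prod_{a \in \operatorname*{CArcs}\gamma} s_a - \prod_{a \in \operatorname*{CArcs}\gamma} t_a \right).
\]
As in the original, this factors over the cycles of $\sigma$ (using that $\mathbf{A}_\sigma = \bigsqcup_\gamma \operatorname*{CArcs}\gamma$ and the splitting lemma analogous to Lemma~\ref{lem.carcs.linear-split}), so it suffices to treat a single cycle $\gamma$ with $C = \operatorname*{CArcs}\gamma$. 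The linear subsets of $C$ are exactly the proper subsets of $C$ (by Lemma~\ref{lem.carcs.linear1}), so the weighted sum over linear subsets equals $\sum_{F \subseteq C} \prod_{a\in F} t_a - \prod_{a\in C} t_a = \prod_{a\in C}(1 + t_a) - \prod_{a\in C} t_a = \prod_{a\in C} s_a - \prod_{a\in C} t_a$, where the full sum over all subsets factors as $\prod_{a\in C}(1+t_a)$ and the excluded top term $F=C$ accounts for the non-linear subset. This is precisely the claimed per-cycle factor, and it specializes correctly: setting $t_a \in \{-1,0\}$ recovers Lemma~\ref{lem.carcs.linear2}.

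The main obstacle I anticipate is \emph{not} the final factorization, which is elementary, but rather the careful re-threading of the weight $\prod_{a\in F} t_a$ through the entire chain of level-decomposition lemmas while confirming that the ``equal-level'' indicator in the expansion of $\prod s_{(w_k,w_{k+1})}$ translates precisely into the condition $F \subseteq \mathbf{A}_\sigma \cap A_f$ of the deformed Lemma~\ref{lem.friendlies-by-f}. In the original proof this bookkeeping is handled by the definition of $(f,D)$-friendliness, which encodes a strict/weak inequality dichotomy; in the deformed setting there is no ambient $A$, so I expect to need a mildly reformulated notion tracking, for each consecutive pair, whether its two endpoints share a level (contributing an $s$-factor) or are strictly separated (contributing $1$). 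Once that correspondence is pinned down, every subsequent lemma is a mechanical substitution of $\prod_{a\in F} t_a$ for $(-1)^{|F|}$, and the proof concludes by assembling the per-cycle factors just as Theorem~\ref{thm.UX.1} assembled its $(-1)^{\varphi(\sigma)}$ factors.
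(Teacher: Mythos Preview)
Your proposal is correct and follows the first of the two approaches the paper indicates: the paper remarks that ``all the above results leading up to Theorem~\ref{thm.UX.1} can be extended to this deformation,'' which is exactly the program you outline (replacing $(-1)^{|F|}$ by $\prod_{a\in F} t_a$ throughout, with the per-cycle computation $\sum_{F\subsetneq C}\prod_{a\in F}t_a = \prod_{a\in C}s_a - \prod_{a\in C}t_a$ replacing Lemma~\ref{lem.carcs.linear2}). Your identification of the key per-cycle identity and the observation that the level-decomposition apparatus is weight-agnostic are both on target.

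The paper, however, also offers a second and considerably shorter route that you did not consider: the \emph{multilinearity trick}. One observes that both sides of Theorem~\ref{thm.UX.1deform} are polynomials of degree at most $1$ in each indeterminate $t_a$ (over the base ring $\mathbf{k}[[x_1,x_2,\ldots]]$), so it suffices to verify the identity when every $t_a$ is specialized to either $0$ or $-1$. But that specialization is precisely Theorem~\ref{thm.UX.1} applied to the digraph $D=(V,A)$ with $A=\{a : t_a = -1\}$. This reduces the deformed theorem to the already-proved one in a single stroke, bypassing the need to re-verify the entire lemma chain. Your approach has the merit of being self-contained and of making the weighted structure explicit, but the multilinearity argument is dramatically shorter and explains the paper's remark that Theorem~\ref{thm.UX.1deform} ``is not essentially more general than Theorem~\ref{thm.UX.1}.''
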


Alternatively, Theorem \ref{thm.UX.1deform} can be deduced from Theorem
\ref{thm.UX.1} via the \textquotedblleft multilinearity
trick\textquotedblright: View each $t_{a}$ as an indeterminate, and argue that
both sides in Theorem \ref{thm.UX.1deform} are polynomials in degree $\leq1$
in these indeterminates (over the base ring $\mathbf{k}\left[  \left[
x_{1},x_{2},x_{3},\ldots\right]  \right]  $). Thus, in order to prove their
equality, it suffices to prove that they are equal when each $t_{a}$ is
specialized to either $0$ or $-1$. But this is precisely the claim of Theorem
\ref{thm.UX.1}. (Thus, Theorem \ref{thm.UX.1deform} is not essentially more
general than Theorem \ref{thm.UX.1}.)

Theorem \ref{thm.UX.1deform} shows that the $\widetilde{U}_{t}$ are
$p$-integral symmetric functions (taking the $t_{a}$ as \textquotedblleft
integers\textquotedblright). There do not seem to be any good opportunities
for generalizing any of Theorem \ref{thm.UX.2} and Theorem \ref{thm.UX.3}, however.

\end{document}